\documentclass[11pt,b5paper,notitlepage]{article}
\usepackage[b5paper, margin={0.5in,0.75in}]{geometry}
\usepackage{amsmath,amscd,amssymb,amsthm,mathrsfs,amsfonts,layout,indentfirst,graphicx,caption,mathabx, stmaryrd,appendix,calc,imakeidx,upgreek} 
\usepackage[dvipsnames]{xcolor}
\usepackage{palatino}  
\usepackage{slashed} 
\usepackage{mathrsfs} 
\usepackage{extarrows} 
\usepackage{enumitem} 

\usepackage{fancyhdr} 


\usepackage{pdfrender}

\newcommand*{\hollowcolon}{%
	\textpdfrender{
		TextRenderingMode=Stroke,
		LineWidth=.1bp,
	}{:}%
}

\newcommand{\hcolondel}[1]{%
	\mathopen{\hollowcolon}#1\mathclose{\hollowcolon}%
}


\usepackage{tikz-cd}
\usepackage[nottoc]{tocbibind}   

\makeindex

\usepackage{lipsum}
\let\OLDthebibliography\thebibliography
\renewcommand\thebibliography[1]{
	\OLDthebibliography{#1}
	\setlength{\parskip}{0pt}
	\setlength{\itemsep}{2pt} 
}

\allowdisplaybreaks  
\usepackage{latexsym}
\usepackage{chngcntr}
\usepackage[colorlinks,linkcolor=blue,anchorcolor=blue, linktocpage,
]{hyperref}
\hypersetup{ urlcolor=cyan,
	citecolor=[rgb]{0,0.5,0}}

\setcounter{tocdepth}{1}	 

\counterwithin{figure}{subsection}

\pagestyle{plain}

\captionsetup[figure]
{
	labelsep=none	
}

\theoremstyle{definition}
\newtheorem{df}{Definition}[section]
\newtheorem{eg}[df]{Example}
\newtheorem{exe}[df]{Exercise}
\newtheorem{rem}[df]{Remark}
\newtheorem{ass}[df]{Assumption}
\newtheorem{cv}[df]{Convention}
\newtheorem{prin}[df]{Principle}

\theoremstyle{plain}
\newtheorem{thm}[df]{Theorem}
\newtheorem{ccl}[df]{Conclusion}

\newtheorem{pp}[df]{Proposition}
\newtheorem{co}[df]{Corollary}
\newtheorem{lm}[df]{Lemma}




\usepackage{calligra}
\DeclareMathOperator{\shom}{\mathscr{H}\text{\kern -3pt {\calligra\large om}}\,}

\newcommand{\fk}{\mathfrak}
\newcommand{\mc}{\mathcal}
\newcommand{\wtd}{\widetilde}
\newcommand{\wht}{\widehat}
\newcommand{\wch}{\widecheck}
\newcommand{\ovl}{\overline}

\newcommand{\tr}{\mathrm{t}} 

\newcommand{\End}{\mathrm{End}} 
\newcommand{\id}{\mathbf{1}}
\newcommand{\Hom}{\mathrm{Hom}}
\newcommand{\Conf}{\mathrm{Conf}}
\newcommand{\Res}{\mathrm{Res}}

\newcommand{\Rep}{\mathrm{Rep}}

\newcommand{\Diffp}{\mathrm{Diff}^+}

\newcommand{\Vir}{\mathrm{Vir}}

\newcommand{\Span}{\mathrm{Span}}

\newcommand{\bk}[1]{\langle {#1}\rangle}
\newcommand{\prth}[1]{( {#1})}
\newcommand{\bigbk}[1]{\big\langle {#1}\big\rangle}
\newcommand{\Bigbk}[1]{\Big\langle {#1}\Big\rangle}

\newcommand{\Vect}{\mathrm{Vec}}

\newcommand{\scr}{\mathscr}

\newcommand{\gk}{\mathfrak g}
\newcommand{\hk}{\mathfrak h}
\newcommand{\xk}{\mathfrak x}
\newcommand{\yk}{\mathfrak y}
\newcommand{\zk}{\mathfrak z}

\newcommand{\im}{\mathbf{i}}
\newcommand{\Co}{\complement}

\newcommand{\sgm}{\varsigma}
\newcommand{\SX}{{S_{\fk X}}}

\newcommand{\mbb}{\mathbb}
\newcommand{\mbf}{\mathbf}

\newcommand{\blt}{\bullet}
\newcommand{\coker}{\mathrm{coker}}
\newcommand{\Vbb}{\mathbb V}
\newcommand{\Ubb}{\mathbb U}
\newcommand{\Xbb}{\mathbb X}

\newcommand{\Abb}{\mathbb A}
\newcommand{\Wbb}{\mathbb W}
\newcommand{\Mbb}{\mathbb M}
\newcommand{\Gbb}{\mathbb G}
\newcommand{\Cbb}{\mathbb C}
\newcommand{\Nbb}{\mathbb N}
\newcommand{\Zbb}{\mathbb Z}
\newcommand{\Pbb}{\mathbb P}
\newcommand{\Rbb}{\mathbb R}

\newcommand{\Dbb}{\mathbb D}
\newcommand{\Hbb}{\mathbb H}
\newcommand{\cbf}{\mathbf c}
\newcommand{\Rbf}{\mathbf R}
\newcommand{\wt}{\mathrm{wt}}

\newcommand{\Ker}{\mathrm{Ker}}
\newcommand{\Coker}{\mathrm{Coker}}

\newcommand{\Rng}{\mathrm{Rng}}

\newcommand{\SXb}{{S_{\fk X_b}}}

\newcommand{\vbf}{\mathbf v}
\newcommand{\ubf}{\mathbf u}
\newcommand{\wbf}{\mathbf w}

\newcommand{\Sbb}{{\mathbb S}}

\newcommand{\fin}{\mathrm{fin}}
\newcommand{\Ann}{\mathbf{Ann}}
\newcommand{\Real}{\mathrm{Re}}
\newcommand{\Imag}{\mathrm{Im}}
\newcommand{\cl}{\mathrm{cl}}
\newcommand{\Ind}{\mathrm{Ind}}

\usepackage{tipa} 

\usepackage{tipx}

\numberwithin{equation}{section}

\title{Lectures on Vertex Operator Algebras and Conformal Blocks}
\author{{\sc Bin Gui}
}
\date{Last update: February 2023}
\begin{document}\sloppy 
	\pagenumbering{arabic}
	\setcounter{page}{0}
	\setcounter{section}{-1}


	\maketitle
\thispagestyle{empty}	
\begin{gather*}
{~}\\[3ex]
	\vcenter{\hbox{{
				\includegraphics[height=3.3cm]{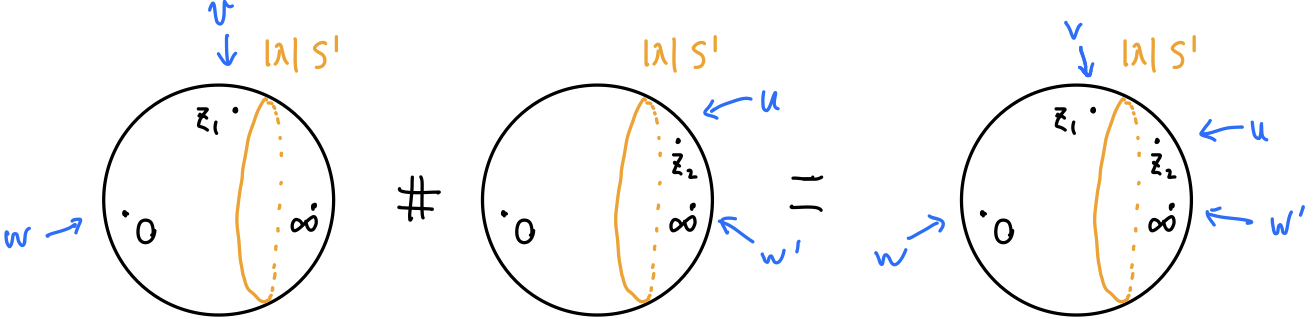}}}}\\[2ex]
	\vcenter{\hbox{{
				\includegraphics[height=3.9cm]{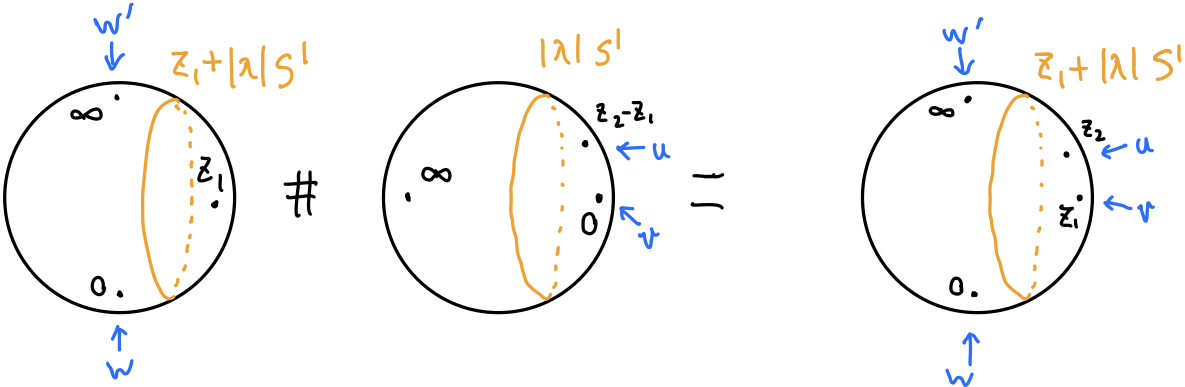}}}}\\[2ex]
	\vcenter{\hbox{{
				\includegraphics[height=3cm]{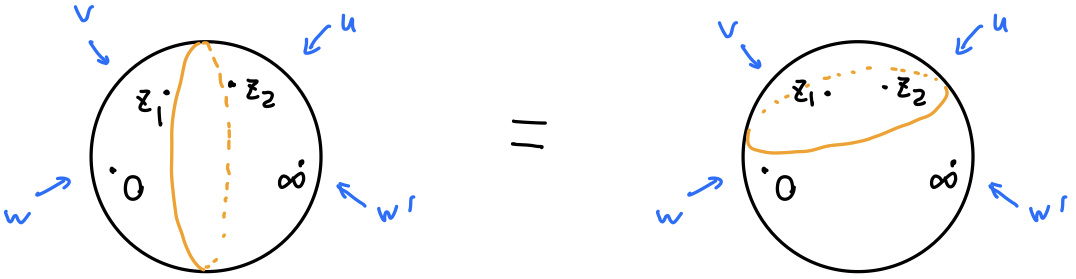}}}}
\end{gather*}

	\newpage
\tableofcontents
	


\newpage

\section*{Preface}

\subsection*{1}

This monograph is the lecture notes of a course I gave in 2022 spring at Tsinghua University, Yau Mathematical Sciences Center. It is an introduction to the basic theory of vertex operator algebras (VOAs) and conformal blocks. The audience of that course is assumed to be familiar with complex analysis, differential manifolds, and the relationship between (the representations of) Lie groups and Lie algebras. 

A key feature of this monograph is the emphasis on the \emph{complex analytic} aspects of VOAs and conformal blocks: We prove many well-known results by first proving the convergence (more precisely: absolute and locally uniform (a.l.u.) convergence) of  correlation functions which are defined a priori as formal power/Laurent series of some formal variables. These results include Dong's Lemma and Goddard uniqueness (and hence the reconstruction theorem), construction of contragredient modules, local freeness of sheaves of conformal blocks for $C_2$-cofinite VOAs (and families of compact Riemann surfaces). 

The algebraic construction of these correlation functions as formal series corresponds geometrically to deformations of pointed compact Riemann surfaces or pointed nodal curves. The usual algebraic approaches to VOAs (e.g. \cite{Kac,LL}) avoid showing the convergence of such series. In geometry, this means not considering analytic sewing, but only formal and infinitesimal sewing. As a compensation, formal calculus and delta functions are heavily used in these approaches. An advantage of our complex analytic approach is that by showing that these formal series are convergent, one can view the correlation functions as genuine functions but not just formal series, so one can understand the nature of VOAs and conformal blocks in a similar way as physicists do.

\subsection*{2}

Another feature of this monograph is that we give motivations for many definitions and results from the perspective of Segal's picture of conformal field theory. These include: the definitions of VOA and in particular Jacobi identity; the definitions of conformal blocks and sheaves of VOAs; translation covariance, scale covariance, and more generally Huang's change of coordinate theorem for vertex operators; the formula for the vertex operators of contragredient modules; the definition of connections for sheaves of conformal blocks. These motivational explanations are known to experts, but are not easily accessible in existing textbooks and articles. We hope that by incorporating these motivations into a monograph, we can make it easier for beginners to get started on these topics.

\subsection*{3}

The theory of conformal blocks is not only very beautiful, but also crucial to a geometric understanding of the representation theory of VOAs and conformal field theory. In recent years, there has been an increasing interplay between different approaches to conformal field theory. These approaches include VOAs, conformal nets and operator algebras, tensor categories, low-dimensional topology, probability, etc. We believe that conformal blocks are a key to understanding the relationships between these approaches. Unfortunately, most of the literature on conformal blocks is written in the language of algebraic geometry, which makes it more difficult for people from different fields to understand this subject. In our monograph, many central ideas about conformal blocks are explained in the language of (complex) differential manifolds and basic sheaf theory, so that they can (hopefully) be understood by a much wider audience. Of course, such elementary language is not sufficient for proving profound results. So we leave the technical proofs to my monograph \cite{Gui}. Indeed, the present monograph can also be viewed as an introduction to \cite{Gui}.

\subsection*{4}

There is some confusion in the proof of local freeness of sheaves of coinvariants and conformal blocks. This result has two versions: the algebraic one for algebraic families of smooth curves, and the analytic one for complex analytic families of compact Riemann surfaces. However, we believe that the following point is not sufficiently recognized in the literature: the proof for the algebraic version is not directly applicable to the analytic version.

The algebraic local freeness is proved in the following steps: 1. Prove that the sheaf of coinvariants over a base scheme $\mc B$ is a coherent $\scr O_{\mc B}$-module. 2. Prove that the sheaf of coinvariants admits a connection. 3. By a standard result, coherence and connections imply local freeness. When adapting this proof to the analytic setting, the difficulties arise in step 1: one can only show that the sheaf of coinvariants for a complex analytic family (over base manifold $\mc B$) is a finite-type $\scr O_{\mc B}$-module, but not that it is coherent (i.e., that moreover the sheaves of relations are of finite-type). The proof of algebraic coherence relies essentially on the \emph{Noetherian property} of the structure sheaf $\scr O_{\mc B}$ (as well as the quasi-coherence of the sheaves of coinvariants), which does not hold in the complex-analytic setting. 

In \cite{Gui}, we have given an analytic proof, first proving a finiteness result slightly stronger than the finite-type condition, and then using some sheaf-theoretic arguments. In this monograph, a different proof was given (cf. Sec. \ref{lb153}). Though this proof  relies on the same finiteness result, the subsequent argument is more analytic and has a clearer physical meaning: the crucial step is to prove the convergence of a formal series $\upphi_\tau$ of $\tau$ using differential equations, where $\upphi_\tau$ is constructed from a given conformal block $\upphi_0$ of a fixed fiber $\fk X_0$ of the analytic family $\fk X$; $\upphi_\tau$ is expected to be a conformal block for the nearby fiber $\fk X_\tau$ if the convergence were proved. Therefore, this proof is very similar to that of convergence of sewing conformal blocks in \cite{Gui} or \cite{Gui20}. Note that $\fk X_\tau$ is the deformation of a compact Riemann surface $\fk X_0$ (with marked points), while sewing is the deformation of a nodal curve. Thus, by presenting such a proof, we want to convey the idea that both types of deformations can be treated in the same way in the (complex analytic) theory of conformal blocks.

\subsection*{Acknowledgment}

The title of this monograph is a little boring. I have considered using a more interesting one such as \emph{Vertex Operator Algebras and Conformal Blocks for analysts/by an analyst}. I didn't do so because I don't want the monograph to look like it is only for analysts.

I have to confess that my preference for the analytic methods is due first and foremost to my own taste and the fact that I am (or I consider myself) a mathematical analyst. But it is also due in large part to the influence of my postdoctoral supervisor, Yi-Zhi Huang. I have benefited greatly from reading many of his works and from many discussions with him. I would like to express my deep gratitude to him. I also want to thank Hao Zhang for reading through this monograph and pointing out many typos and errors.\\[2ex]

\hfill May 2022

\newpage
	
Subsections marked with $\star$ can be skipped on first reading.

\section{Notations}

\begin{itemize}
\item $\Nbb=\{0,1,2,\dots\}$, $\Zbb_+=\{1,2,\dots\}$. 
\item $\im=\sqrt{-1}$, $\Sbb^1=$unit circle, $\Cbb^\times=\Cbb\setminus\{0\}$.
\item $\mbb D_r=\{z\in\Cbb:|z|<r\}$, $\mbb D_r^\times=\{z\in\Cbb:0<|z|<r\}$, $\Dbb_r^\cl=\{z\in\Cbb:|z|\leq r\}$
\item $\scr O(X)$ (resp. $\scr O_X$) is the space (resp. sheaf) of holomorphic functions on a complex manifold $X$. $\scr O_{X,x}$ is the stalk of $\scr O_X$ at $x$.
\item Configuration space $\Conf^n(X)=\{(x_1,\dots,x_n)\in X^n:x_i\neq x_j\text{ if }i\neq j\}$.
\item $z$ and $\zeta$ could mean either points, or the standard coordinate of $\Cbb$, or formal variables. We will give their meanings when the context is unclear. 
\item All vector spaces are over $\Cbb$, unless otherwise stated. If $W$ is a vector space equipped with a Hermitian form $\bk{\cdot|\cdot}$, we let $|\cdot\rangle$ be the linear variable and $\langle\cdot|$ be the antilinear (i.e. conjugate linear) one, following physicists' convention. 
\item If $W,W'$ are vector spaces, then $\Hom(W,W')$ denote the space of linear operators from $W$ to $W'$. We let $\End(W)=\Hom(W,W)$. 
\item We use symbols $\bk{\cdot,\cdot}$ or $(\cdot,\cdot)$ to denote bilinear forms (i.e., linear on both variables).
\item Given a vector space $W$ and a formal variable $z$,
\begin{subequations}
\begin{gather*}
W[z]=\{\text{polynomials of $z$ whose coefficients are elements of $W$}\}\\
W[[z]]=\Big\{\sum_{n\in\Nbb}w_nz^n:w_n\in W\Big\}\\
W((z))=\Big\{\sum_{n\in\Zbb}w_nz^n:w_n\in W,\text{ and }w_n=0\text{ when $n$ is sufficiently negative}\Big\}\\
W[[z^{\pm1}]]=\Big\{\sum_{n\in\Zbb}w_nz^n:w_n\in W\Big\}.
\end{gather*}
\end{subequations}
Each line is a subspace of the subsequent line. In case there are several formal variables, the spaces are defined in a similar way, expect $W((\cdots))$. For instance,
\begin{gather*}
W[[z,\zeta^{\pm1}]]:=W[[z]][[\zeta^{\pm1}]]=W[[\zeta^{\pm1}]][[z]]
\end{gather*}
consists of $\sum_{m\in\Nbb,n\in\Zbb}w_{m,n}z^m\zeta^n$ where each $w_{m,n}\in W$. However, note that $W((z))((\zeta))$ and $W((\zeta))((z))$ are not equal. (For instance, $\sum_{m\geq -n}\sum_{n\geq -1}z^m\zeta^n$ belongs to $\Cbb((z))((\zeta))$ but not $\Cbb((\zeta))((z))$.)

Elements in $W[[z^{\pm1}]]$ are called \textbf{formal Laurent series} of $z$.

\item We let
\begin{align*}
W((z_1,\dots,z_N))=\Big\{\sum_{n_1,\dots,n_N\geq L}w_{n_1,\dots,n_N}z_1^{n_1}\cdots z_N^{n_N}\text{ for some }L\in\Zbb\Big\}.
\end{align*}
Then $W((z_1,z_2))$ is a proper subspace of both $W((z_1))((z_2))$ and $W((z_2))((z_1))$.
\item We set
\begin{align}\label{eq18}
	\Res_{z=0}~\sum_{n\in\Zbb}w_nz^ndz=w_{-1}.
\end{align}
This is in line with the complex analytic residue.

\item A vector of $W_1\otimes\cdots\otimes W_N$ writen as $w_\blt$ means that it is of the form $w_1\otimes\cdots\otimes w_N$ where each $w_i\in W_i$. Depending on the context, $w_\blt$ will also mean a tuple $(w_1,\dots,w_N)$. Similarly, $W_\blt$ may mean $W_1\otimes\cdots\otimes W_N$ or $(W_1,\dots,W_N)$ depending on the context.

\item Unless otherwise stated, by a manifold, we mean one \emph{without} boundaries. Also, "with boundaries" means "possibly with boundaries".
\end{itemize}

\newpage

\section{Segal's picture of 2d CFT; motivations of VOAs and conformal blocks}

\subsection{}

Vertex operator algebras (VOAs) are mathematical objects defined to understand and construct 2-dimensional conformal field theory (CFT for short). A CFT describes propagations and interactions of strings. The are two types of strings: closed strings $\simeq\Sbb^1$ and open strings $\simeq[0,1]$. In this course, we will mainly focus on closed strings.

Let me explain how mathematicians understand CFT. Just like any quantum field theory (QFT), in CFT we must have a Hilbert space $\mc H$. The vectors in $\mc H$ are called ``states", but unlike ordinary QFT, a vector $\xi\in\mc H$ is not a state of a particle, but a state of a closed string $\Sbb^1$.

The most important and non-trivial part in CFT is to define/understand string interactions. According to Segal's picture \cite{Seg88}, an interaction is uniquely determined by a compact Riemann surface $\Sigma$ with boundaries $\partial\Sigma$, where $\partial\Sigma$ is a disjoint union of some circles (strings). Each string is called either an incoming string or an outcomming one. Suppose $\partial\Sigma$ has $N$ incoming strings and $M$ outgoing ones, then this picture describes an interaction where $N$ strings are going inside, and $M$ strings are going outside. 

Moreover, the boundary $\partial\Sigma$ must be \textbf{parametrized}. This means that to each connected component $\partial\Sigma_i$ a diffeomorphism $\eta_i:\partial\Sigma_i\xrightarrow{\simeq}\Sbb^1$ is associated. The orientation on $\partial\Sigma_i$ defined by pulling back the one of $\Sbb^1$ along $\eta_i$ is assumed to be the opposite of the one defined in Stokes' theorem, shown as follows
\begin{align}
	\vcenter{\hbox{{
				\includegraphics[width=0.35\linewidth]{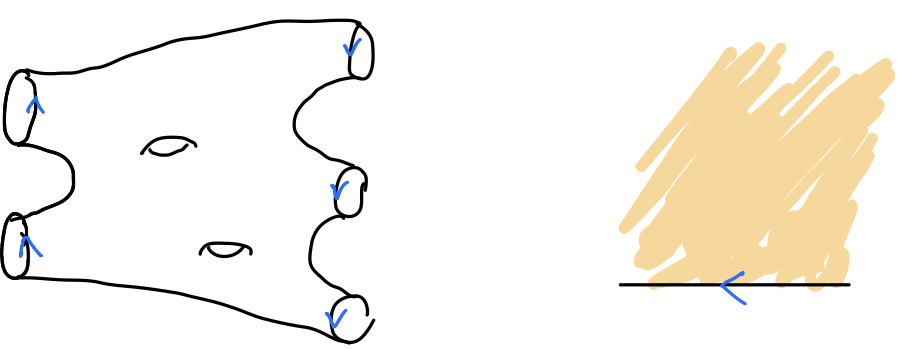}}}}\label{fig1}
\end{align}

\subsection{}\label{lb17}

Unless otherwise stated, we assume that the boundary parametrization is also \textbf{analytic}. Roughly speaking, this means that $\Sigma$ can be obtained by removing some open  disks from a compact Riemann surface $C$ (without boundary) such that the parametrizations of $\partial\Sigma$ are given by local holomorphic functions of $C$. 

Here is a more rigorous explanation. By a \textbf{local coordinate} $\eta$ of $C$ at $x\in C$, we mean $\eta$ is a holomorphic injective function  on a neighborhood $U$ of $x$ such that $\eta(x)=0$. So $\eta$ is a biholomorphism between $U$ and a neighborhood $\eta(U)$ of $0$. Now, suppose we have local coordinates $\eta_1,\dots,\eta_N$ at distinct points $x_1,\dots,x_N\in C$. The data
\begin{align}
\fk X:=(C;x_\blt;\eta_\blt)=(C;x_1,\dots,x_N;\eta_1,\dots,\eta_N)	
\end{align}
is called an \textbf{$N$-pointed compact Riemann surface with local coordinates}. 

Let each $\eta_i$ be defined on a neighborhood $U_i\ni x_i$. We assume moreover the following
\begin{ass}\label{lb11}
$U_i\cap U_j=\emptyset$ if $i\neq j$ (indeed, $\eta_i^{-1}(\Dbb_1^\cl)\cap \eta_j^{-1}(\Dbb_1^\cl)=\emptyset$ is sufficient), and $\eta_i(U_i)\supset\Dbb_1^\cl$ for each $i$. Here $\Dbb_1^\cl$ is the closed unit  disk.
\end{ass} 
By removing all $\eta_i^{-1}(\Dbb_1)$, we get $\Sigma$ with boundary strings $\eta_i^{-1}(\partial\Dbb_1^\cl)=\eta_i^{-1}(\Sbb^1)$ whose parametrizations are $\eta_i$.
\begin{align*}
	\vcenter{\hbox{{
				\includegraphics[height=2.3cm]{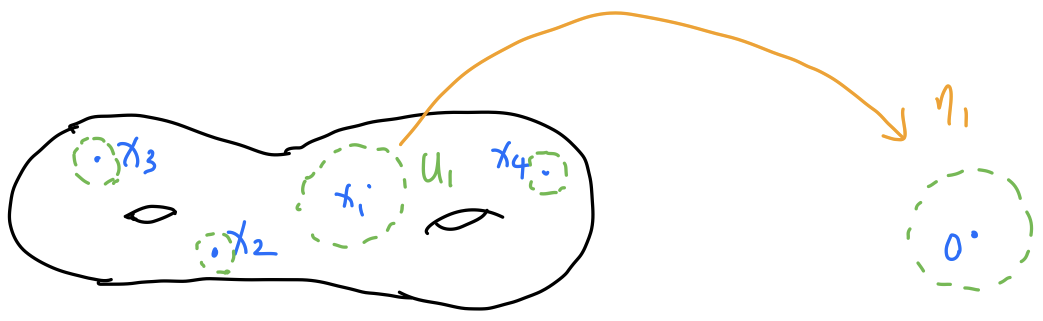}}}}
\end{align*}

\subsection{}

Any $\Sigma$ as above determines uniquely an interaction of strings. Suppose it has $N$ incoming strings and $M$ outgoing ones. Then mathematically, such an interaction is described by a bounded linear map $T=T_\Sigma:\mc H^{\otimes N}\rightarrow\mc H^{\otimes M}$.\index{T@$T_{\Sigma},T_\fk X$: The interaction map/correlation function} (The boundedness is automatic thanks to the uniform boundedness principle. But this is not an important point in this course.) Given $\xi_\blt=\xi_1\otimes\cdots\otimes \xi_N\in\mc H^{\otimes N}$ and $\eta_\blt=\eta_1\otimes\cdots\otimes\eta_M\in\mc H^{\otimes M}$, the value
\begin{align}
\bk{\eta_\blt|T\xi_\blt}	
\end{align}
describes the probability amplitude that the $N$ incoming closed strings with states $\xi_1,\dots,\xi_N$ become $\eta_1,\dots,\eta_M$ after interaction.

The word ``conformal" in conformal field theory reflects the fact that $T$ depends only on the complex structure of $\Sigma$ and its parametrization, but not on the metric for instance. Thus, a CFT is more rigid than a topological quantum field theory (TQFT): in the latter case, $T$ depends only on the topological structures of the manifolds.

\subsection{}\label{lb4}
Suppose we have another interaction $S:\mc H^{\otimes M}\rightarrow\mc H^{\otimes L}$ corresponding to the parametrized surface $\Sigma'$, then the composition of them $S\circ T:\mc H^{\otimes N}\rightarrow\mc H^{\otimes L}$ corresponds to the \textbf{sewing} $\Sigma\#\Sigma'$ of $\Sigma$ and $\Sigma'$, where the $j$-th outgoing string $\partial_+\Sigma_j$ of $\Sigma$ is sewn with the $j$-th incoming one $\partial_-\Sigma'_j$ of $\Sigma'$.
\begin{align*}
	\vcenter{\hbox{{
				\includegraphics[width=0.4\linewidth]{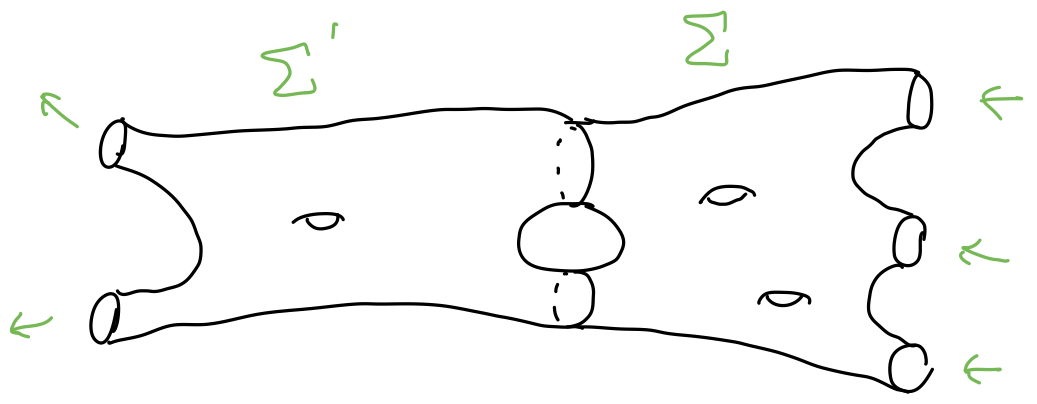}}}}
\end{align*}

It is important to specify how $\partial_+\Sigma_j$ (with parametrization $\eta_j$) is identified with $\partial_-\Sigma_j'$ (with parametrization $\eta_j'$). Pick $x\in\partial_+\Sigma_j$ and $y\in\partial_-\Sigma_j'$. Then
\begin{align}
x=y\quad\Longleftrightarrow \quad\eta_j(x)\eta_j'(y)=1.\label{eq3}
\end{align}

It is clear from the picture that the orientations of $\partial_+\Sigma_j$ and $\partial_-\Sigma_j$ are opposite to each other. This is related to the fact that our rule for sewing is $\eta_j(x)=1/\eta_j'(y)$ but not (say) $\eta_j(x)=\eta_j'(y)$.

Recall we assume that the parametrizations are analytic. We leave it to the readers to check that the sewing of $\Sigma$ and $\Sigma'$, a priori only a topological surface, has a natural complex analytic structure.

\subsection{}\label{lb5}
Suppose $T_1:\mc H^{\otimes N_1}\rightarrow\mc H^{\otimes M_1}$ corresponds to $\Sigma_1$ and $T_2:\mc H^{\otimes N_2}\rightarrow\mc H^{\otimes M_2}$ to $\Sigma_2$, then $T_1\otimes T_2:\mc H^{\otimes (N_1+N_2)}\rightarrow\mc H^{\otimes(M_1+M_2)}$ corresponds to the disjoint union $\Sigma_1\sqcup\Sigma_2$.
\begin{align*}
	\vcenter{\hbox{{
				\includegraphics[width=0.25\linewidth]{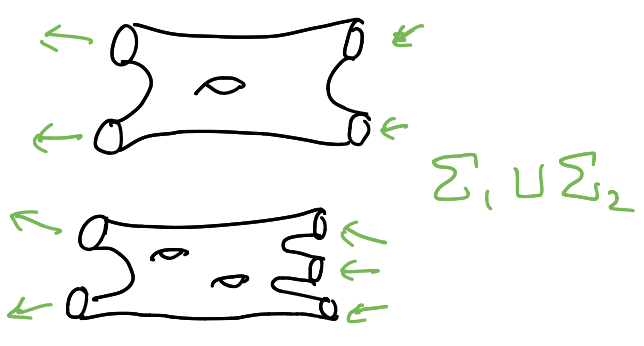}}}}
\end{align*}

\subsection{}\label{lb3}

Consider an annulus $A_{r,R}=\{z\in\Cbb:r<|z|<R\}$ obtained by removing two open  disks from the compact Riemann sphere $\Pbb^1$ via the local coordinate $\eta_1(z)=z/r$ at $0$ and $\eta_2(z)=R/z$ at $\infty$. We call such $A_{r,R}$ (with the given boundary parametrization) a \textbf{standard annlus}. Let $r\nearrow 1,R\searrow 1$. The limit of this annulus  is a ``degenerate" Riemann surface with 1 incoming boundary circle and 1 outing one. Both circles are $\Sbb^1$. The incoming one has parametrization $z\mapsto z$ and the outgoing one $z\mapsto z^{-1}$. We call this annulus the \textbf{standard thin annulus} and denote it by  $A_{1,1}$\index{A11@$A_{r,R}$ and the standard thin annulus $A_{1,1}$}. \emph{The map $T:\mc H\rightarrow\mc H$ associated to  $A_{1,1}$ is the identity map.} This reflects the fact that sewing any $\Sigma$ with a disjoint union of $A_{1,1}$ gives $\Sigma$.
\begin{align*}
	\vcenter{\hbox{{
				\includegraphics[height=1.5cm]{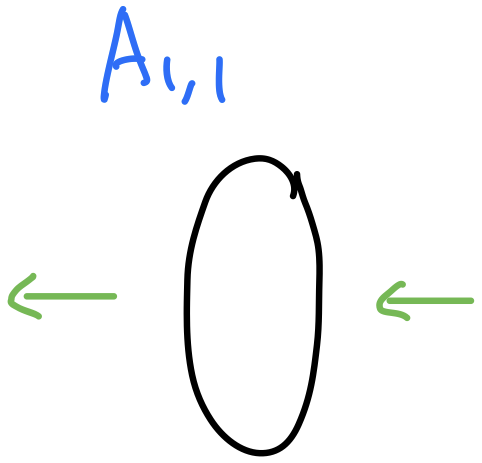}}}}
\end{align*}

\subsection{}\label{lb1}
We give a fancy way to summarize what we have so far: Let $\scr C$ be the monoidal category of compact $1$-dimensional smooth manifolds such that a morphism from an object $S_1$ to another $S_2$ is a compact Riemann surface with incomming parametrized boundary $\simeq S_1$ and outgoing one $\simeq S_2$, that the identity morphism for a union of $N$ circles  is a disjoint union of $N$ pieces of $A_{1,1}$, that the unit object is the empty set, and that the tensor product of objects and morphisms are respectively the disjoint unions of strings and Riemann surfaces. Then a CFT is a monoidal functor from $\scr C$ to the monoidal category of Hilbert spaces. So, roughly speaking, a CFT is a representation of $\scr C$.

Since we choose Hilbert spaces as our underlying spaces, we should expect that the representation of $\scr C$ is unitary. Technically, the functor mentioned above should be a $*$-functor: this means that for each morphism $\Sigma$ from $N$ strings to $M$ strings, we should define its adjoint morphism $\Sigma^*$ from $M$ strings to $N$ ones whose corresponding map is the adjoint $T^*:\mc H^{\otimes M}\rightarrow\mc H^{\otimes N}$ of $T$. $\Sigma^*$ is defined simply to be the \textbf{complex conjugate} $\ovl\Sigma$ \index{CSigma@$\ovl C,\ovl\Sigma$, the complex conjugate of $C$ and $\Sigma$} of $\Sigma$: 

\begin{df}\label{lb7}
$\ovl\Sigma$ consists of points $\ovl x$ where $x\in \Sigma$; the local holomorphic functions on $\ovl\Sigma$ are $\eta^*$ where $\eta$ is a locally defined holomorphic function on $\Sigma$ and \index{fzz@$f^*,\eta^*,\dots$ and $\ovl f,\ovl \eta,\dots$, where $f^*(\ovl x)=\ovl{f(x)}$ and $\ovl f(x)=\ovl{f(x)}$.}
\begin{align}
\eta^*(\ovl x)=\ovl{\eta(x)}	
\end{align}
whenever $\eta$ is defined on $x\in \Sigma$; similarly, boundary parametrizations are given by $\eta_j^*$. Note that if $\Sigma$ is obtained by removing open  disks from an $N$ pointed $\fk X=(C;x_\blt;\eta_\blt)$, then $\ovl\Sigma$ is obtained by removing  disks from
\begin{align}
\ovl{\fk X}:=(\ovl C;\ovl x_1,\dots,\ovl x_N;\eta_1^*,\dots,\eta_N^*)	
\end{align}
\end{df}

$\eta^*$ should not be confused with $\ovl \eta$ defined on $\Sigma$ by
\begin{align*}
\ovl \eta(x)=\ovl{\eta(x)}.
\end{align*}

In the present context, we should assume that an incoming (resp. outgoing) string of $\Sigma$ becomes an outgoing (resp. incoming) one of $\ovl\Sigma$ via the conjugate map $\Co:x\in \Sigma\mapsto\ovl x\in\ovl\Sigma$. In the future, we will often consider all strings as incoming ones if necessary (cf. \ref{lb2}). In that case, we shall also assume all the boundary strings of $\ovl\Sigma$ as incoming.

We should point out that although unitarity is a very important condition, there are important non-unitary CFTs, for instance, the logarithmic CFTs. (In such cases, $\mc H$ is a vector space without inner products.) Also, many VOA results and techniques do not rely on the unitarity. Nevertheless, assuming unitarity will often reasonably simply discussions or give motivations.

\begin{eg}
Let $\fk X=(\Pbb^1;0;\lambda \zeta)$ where $\zeta$ is the standard coordinate of $\Cbb$ and $\lambda\in\Cbb^\times$. We can identify the conjugate of $\Pbb^1$ with $\Pbb^1$ by letting $x\in\Pbb^1\mapsto \ovl x$ be the standard conjugate of $\Cbb$: $z\mapsto \ovl z$. Then $(\lambda\zeta)^*(\ovl z)=\ovl{\lambda\zeta(z)}=\ovl\lambda\cdot\ovl z=\ovl\lambda\zeta(\ovl z)$. So the conjugate of $\fk X$ is isomorphic to $\ovl{\fk X}=(\Pbb^1;0;\ovl\lambda\zeta)$.	
\end{eg}

\subsection{}\label{lb23}

An interaction process could have no incoming or outgoing strings. \emph{The Hilbert space for the empty string $\emptyset$ is $\Cbb$}. The most elementary and important example with no incoming boundary is the closed unit  disk $\Dbb_1^\cl$ with 1 outgoing boundary parametrized by $z\mapsto z^{-1}$. The corresponding map $\Cbb\rightarrow \mc H$ can be identified with its value at $1$. This element in $\mc H$ is denoted by $\id$ \index{1@$\id$, the vacuum vector} and called the \textbf{vacuum vector}.
\begin{align}
	\vcenter{\hbox{{
				\includegraphics[height=1cm]{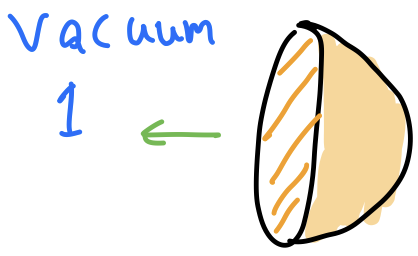}}}}\label{eq27}
\end{align}
Assume as before that out theory is unitary. Then conjugate of the above disk is the same disk and boundary parametrization, but the original outgoing string is now the incoming one. The corresponding map $\mc H\rightarrow \Cbb$ is, according to \ref{lb1}, the linear functional $\bk{\id|\cdot}$.

\subsection{}\label{lb2}

In general, one may wonder what the interaction $T:\mc H^{\otimes N}\rightarrow \Cbb$ means physically for a surface $\Sigma$ with $N$ incoming strings but no outgoing ones.  Choose $0<M<N$, and make $M$ of the $N$ strings of $\partial\Sigma$ be outgoing strings. Then the corresponding interaction is a map $\wtd T:\mc H^{\otimes (N-M)}\rightarrow \mc H^{\otimes M}$. In unitary CFT, $T$ can be related to $\wtd T$ by a anti-unitary (i.e. conjugate-unitary) map $\Theta$ \index{zz@$\Theta$, the CPT operator} on $\mc H$, called the \textbf{CPT operator}, such that for $\xi_1,\dots,\xi_N\in\mc H$ (where the last $M$ vectors are associated to the outgoing strings), we have
\begin{align}
T(\xi_1\otimes\cdots\otimes\xi_N)=\bk{\Theta \xi_{N-M+1}\otimes\cdots\otimes \Theta \xi_N|\wtd T(\xi_1\otimes\cdots\otimes\xi_{N-M})},	\label{eq1}
\end{align}
interpreted pictorially as
\begin{align*}
	\vcenter{\hbox{{
				\includegraphics[height=2cm]{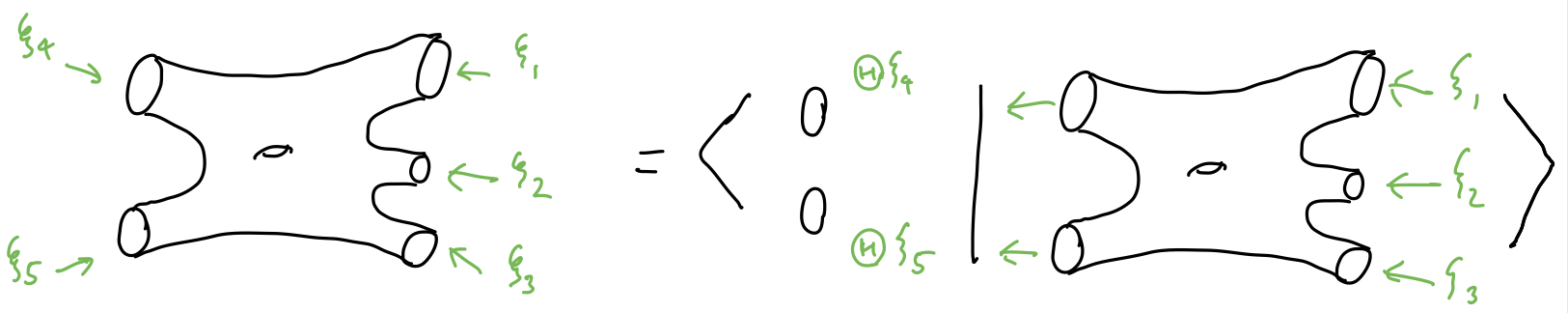}}}}
\end{align*}
The operator $\Theta$ is an involution, i.e., $\Theta^2=\id_{\mc H}$.

Such a linear functional $T$ corresponding to an interaction with no outgoing strings is called a \textbf{correlation function} (or an \textbf{$N$-point function}). These functions are the central objects in CFT (and indeed, in any quantum field theory). Relation \eqref{eq1} teaches us that: (1) correlation functions can be interpreted as probability amplitudes in string interactions with the help of $\Theta$, and (2) to study arbitrary interactions, it suffices to study those with no outgoing strings.

Let me close this subsection by mentioning an important fact: suppose the complex structure of $\Sigma$ and the (assumed analtytic) boundary parametrizations are parametrized holomorphically by some complex variables $\tau_\blt=(\tau_1,\dots,\tau_k)$, then the value of $T(\xi_\blt)$ is now a \emph{real analytic function} of $\tau_\blt$, i.e., it is locally a power series of $\tau_1,\dots,\tau_k$ and their conjugates. Actually, the word ``function" in ``correlation function" means a function of $\tau_\blt$, but not of $\xi_\blt$.

\subsection{}

You must be curious what CPT means. Indeed, $\Theta$ is responsible for the simultaneous symmetry of charge conjugation (C), parity transformation (P), and time reversal (T). P+T together means  an \emph{anti-biholomorphism} $\Sigma\rightarrow\Sigma'$. Now we have arrived at a point that we missed previously: since anti-holomorphic maps are also conformal maps, should we expect that the interaction maps (or the correlation functions) for   anti-biholomorphic surfaces are equal? The answer is no. (Namely, P+T are not preserved.) Indeed, if we let $\Sigma$ have $N$ incomes and no outcomes, let $\ovl\Sigma$ be its complex conjugate (cf. \ref{lb1}) but still with $N$ incomes, and let $T_\Sigma,T_{\ovl\Sigma}$ be the correlation functions associated to them. Then from \ref{lb1} and relation \eqref{eq1}, we have 
\begin{align}
T_\Sigma(\xi_1\otimes\cdots\otimes\xi_N)=\ovl{T_{\ovl\Sigma}(\Theta\xi_1\otimes\cdots\otimes\Theta\xi_N)}.\label{eq30}	
\end{align}

\begin{proof}
By the description in Subsec. \ref{lb1}, the interaction map $\wtd T_{\ovl\Sigma}$ associated $\ovl\Sigma$ with no input and $N$ outputs is $T_\Sigma^*:\Cbb\rightarrow\mc H^{\otimes N}$, the adjoint of $T_\Sigma$. By  $\Theta^2=1$, we have
\begin{align*}
&T_\Sigma(\xi_1\otimes\cdots\xi_N)=\bk{1|T_\Sigma(\xi_1\otimes\cdots\otimes\xi_N)}=\bk{T_\Sigma^*1|\xi_1\otimes\cdots\otimes\xi_N}\\
=&\ovl{\bk{\xi_1\otimes\cdots\otimes \xi_N|\wtd T_{\ovl\Sigma}1}}\xlongequal{\eqref{eq1}}\ovl{T_{\ovl\Sigma}(\Theta\xi_1\otimes\cdots\otimes\Theta\xi_N)}.	
\end{align*}
Note that mathematically, the point of formula \eqref{eq30} is to translate (using \eqref{eq1}) the relation $\wtd T_{\ovl\Sigma}=T_\Sigma^*$ (regarding all the strings of $\ovl\Sigma$ as outgoing) to the case that all the strings of $\ovl\Sigma$ are incoming. 
\end{proof}
Formula \eqref{eq30} explains CPT symmetry: the symmetries of charge (taking complex conjugate of the values of correlation functions) and parity+time (the conjugate biholomophism $\Co:\Sigma\rightarrow\ovl\Sigma$) are preserved, and the operator realizing this simultaneous symmetry is $\Theta$.

Note that mathematically, charge conjugation $C$  is related to taking complex conjugate of numbers (but not of $\Sigma$). Physically, it means making a string  into its ``antistring", or (in general QFT) making a particle (e.g. an election with negative charge) to its anti-particle (e.g. an antielectron with positive charge).

\subsection{}

The CFT we have described so far is actually very special: it has no conformal anomaly. There are indeed no nontrivial CFTs which are both unitary and without anomaly. In this course, we will be mainly interested in CFTs with {conformal anomaly}. Technically, the conformal anomaly is determined by a complex number $c$ (positive for unitary CFT), called \textbf{central charge}. To describe such CFT, we modify the previous descriptions as follows: The map (or the correlation function) $T_\Sigma$ for $\Sigma$ is only up to a positive scalar multiplication depending on $\Sigma$. $T_{\Sigma_1}\circ T_{\Sigma_2}=\lambda T_{\Sigma_1\#\Sigma_2}$ where $\lambda>0$. (The constants are not necessarily positive in non-unitary CFT.) If $\Sigma$ is parametrized holomorphically by some complex variables $\tau_\blt$, then by shrinking the domain of $\tau_\blt$, we can choose $T_\Sigma$ depending real analytically on $\tau_\blt$. 

There are many important cases where a real analytic (or even a holomorphic) $T_\Sigma$ can be chosen globally for $\tau_\blt$. This will be studied later in details. 

Unless otherwise stated, a CFT always means one with (possible) conformal anomaly. Using the fancy language of \ref{lb1}, one can say that a unitary CFT is a \emph{projective} monoidal $*$-functor from the category $\scr C$ in \ref{lb1} to the category of Hilbert spaces. Namely, it is a projective unitary representation of $\scr C$.

\subsection{}\label{lb35}

To study the representations of a topological group $G$, one must first understand very well the topological and the algebraic structures of $G$. Similarly, the study of CFTs relies heavily on the geometric and analytic structures of compact Riemann surfaces. However, from what we have discussed, there is a huge obstacle for studying CFTs: the correlation functions are real analytic, but not complex analytic (i.e. holomorphic) functions of the parameters $\tau_\blt$. Thus, in order to study CFTs using the powerful tools of complex analysis (residue theorem, for instance), we make the following Ansatz: A correlation function $T$ is a sum : $T_\Sigma=\sum_j \Phi^j_{\Sigma}\cdot \Psi^j_{\ovl\Sigma}$, where each $\Phi^j$ and $\Psi^j$ relies holomorphically on $\Sigma$ and $\ovl\Sigma$ respectively (so $\Psi^j_{\ovl\Sigma}$ relies anti-holomorphically on $\Sigma$).

This Ansatz is very vague. Let me explain it in more details. Consider the annulus $A_{r,R}$ with boundary parametrization as in \ref{lb3}. We move the inside circle to another one centered at $z$ (where $z\in A_{r,R}$ is reasonably small), still with radius $r$. The new eccentric annulus $A_{z,r,R}$ has larger outgoing string parametrized by $R/\zeta$ and the smaller incoming one parametrized by $(\zeta-z)/r$, where $\zeta$ is the standard coordinate of $\Pbb^1$. Namely, it is determined by the data
\begin{align}
(\Pbb^1;z,\infty;(\zeta-z)/r,R/\zeta).	\label{eq224}
\end{align}
Let $T_z:\mc H\rightarrow\mc H$ be the corresponding map. As we have said, for general vectors $\xi,\eta\in\mc H$, the expression $\bk{\eta|T_z\xi}=\bk{\Theta\eta,T_z\xi}$ can be chosen to be real analytic with respect to $z$. We now let
\begin{align}\label{eq2}
	\begin{aligned}
\Vbb=\{&\xi\in\mc H:\text{For all $r,R$, the map $T$ can be chosen such that} \\
	&\text{$z\mapsto\bk{\nu|T_z\xi}$ is holomorphic for all $\nu\in\mc H$, and}\\
	&\text{$\xi$ has ``finite energy"}\}		
	\end{aligned}	
\end{align}
``Finite energy" is a minor condition to be explained later. (See \ref{lb10}.)

We can sew $A_{z,r,R}$ with any $\Sigma$, and the motion of the smaller string inside the annulus becomes, after sewing, the motion of a boundary string of $\Sigma$: 
\begin{align}
	\vcenter{\hbox{{
				\includegraphics[height=2.3cm]{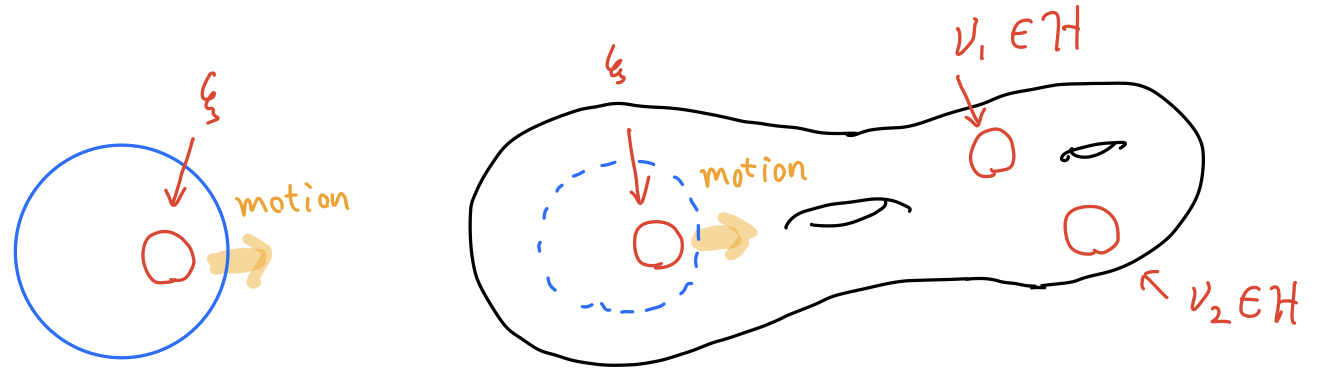}}}}\label{eq176}
\end{align}
Therefore, if a vector $\xi\in \Vbb$ is assigned to an incoming string of $\Sigma$ with (analytic) boundary parametrization $\eta_i$, then, when translating this parametrized string with respect to $\eta_i$, the correlation function $T_\Sigma(\xi\otimes\cdots)$ should be holomorphic with respect to the motion, whatever states we assign to the other strings.  We can therefore study $\Vbb$ with the help of complex analysis. $\Vbb$ is called a \textbf{vertex operator algebra} (VOA).

We have only described $\Vbb$ as a vector space. But in which sense is $\Vbb$ an algebra? An obvious candidate is as follows: consider $\Pbb^1$ with three marked points $0,z,\infty$ and usual coordinates, e.g. $\eta_0=\zeta/r_1,\eta_z=(\zeta-z)/r_2,\eta_\infty=R/\zeta$ at $0,z,\infty$ where $r_1,r_2>0$ are small and $R>0$ is large, and $\zeta$ is again the standard coordinate of $\Cbb$. We assume the strings around $0$ and $z$ are ingoing and that around $\infty$ outgoing. If we assign $\xi_1,\xi_2\in\Vbb$ to the incoming strings, then the outcome  can be viewed as a product of $\xi_1$ and $\xi_2$.
\begin{align*}
	\vcenter{\hbox{{
				\includegraphics[height=2.3cm]{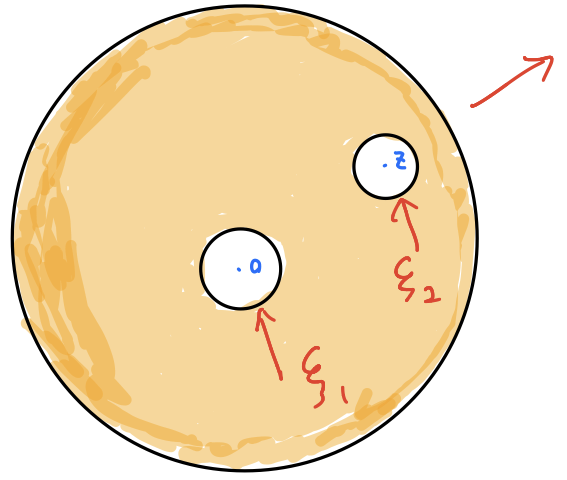}}}}
\end{align*} 
Although this product does not have finite energy, it does satisfy the statement before the last line in \eqref{eq2}. Thus, this product is almost a vector in $\Vbb$. By modifying this product suitably, we can ensure that the products of vectors in $\Vbb$ are always in $\Vbb$. Details will be give in later sections.

Similarly to \eqref{eq2}, we define $\wht\Vbb\subset \mc H$ to be the set of finite energy vectors $\xi$ such that $\bk{\nu|T_z\xi}$ is anti-holomorphic over $z$. The vacuum vector $\id$ belongs to $\Vbb\cap\wht\Vbb$: The result of gluing the unit  disk into the inside of $A_{z,r,R}$ is just the  disk with radius $R$ and parametrization $R/\zeta$, which is independent of $z$. So $T_z\id$ and hence $\bk{\nu|T_z\id}$ are constant over $z$, and hence both holomorphic and anti-holomorphic over $z$.

\subsection{}\label{lb6}

Now we can give a more detailed presentation of our Ansatz. We let $\mc H^\fin$ be the (indeed dense) subspace of vectors in $\mc H$ with ``finite energy", which is acted on by $\Vbb\otimes\wht\Vbb$. Ansatz:
\begin{enumerate}
\item $\mc H^\fin$ as a $\Vbb\otimes\wht\Vbb$-module has decomposition
\begin{align}
\mc H^\fin=\bigoplus_{i\in\fk I} \Wbb_i\otimes\wht\Wbb_i\qquad\supset \Vbb\otimes\wht\Vbb	
\end{align}
where each $\Wbb_i,\wht\Wbb_i$ are respectively irreducible $\Vbb$-modules and $\wht\Vbb$-modules. $\Vbb$ and $\wht \Vbb$ are (according to their definition cf. \eqref{eq2}) subspaces of $\mc H^\fin$ by identifying them with $\Vbb\otimes\id$ and $\id\otimes\wht\Vbb$ respectively. The vacuum vector $\id$ of $\mc H$ is identified with $\id\otimes\id$ (which belongs to $\Vbb\otimes\wht\Vbb$).
\item For some $\Sigma$ without outgoing boundaries, let $T_\Sigma:\mc H^{\otimes N}\rightarrow\Cbb$ be the corresponding map. Then, corresponding to the above direct sum decomposition, we have
\begin{gather}
T_\Sigma\Big|_{(\mc H^\fin)^{\otimes N}}=\sum_{i_1,\dots,i_N\in\fk I}\Phi_{\Sigma,i_\blt}\otimes\Psi_{\ovl\Sigma,i_\blt}	\label{eq174}
\end{gather}
where
\begin{gather*}
\Phi_{\Sigma,i_\blt}:\Wbb_{i_1}\otimes\cdots\otimes\Wbb_{i_N}\rightarrow\Cbb,\\
\Psi_{\ovl\Sigma,i_\blt}:\wht\Wbb_{i_1}\otimes\cdots\otimes\wht\Wbb_{i_N}\rightarrow\Cbb\end{gather*}
are linear. Moreover, when the complex structure and boundary parametrization  are parametrized analytically by complex variables $\tau_\blt$, then locally (with respect to the domain of $\tau_\blt$), $T_\Sigma,\Psi_{\Sigma,i_\blt},\Psi_{\ovl\Sigma,i_\blt}$ can be chosen such that $\Psi_{\Sigma,i_\blt}$ is holomorphic over $\tau_\blt$ (for all input vectors), and $\Psi_{\ovl\Sigma,i_\blt}$ holomorphic over $\ovl\tau_\blt$. $\Phi_{\Sigma,i_\blt}$ and $\Psi_{\ovl\Sigma,i_\blt}$ are called \textbf{conformal blocks} associated to $\Sigma$ (resp. $\ovl\Sigma$) and $\Vbb$ (resp. $\wht\Vbb$).
\end{enumerate}

In part one, $\bigoplus$ could be finite (our main focus in this course), infinite but discrete, or continuous. 

The second part can be summarized by saying that the CFT is separated into the  \textbf{chiral halves} (those $\Phi$ or $\Wbb_i$) and the \textbf{anti-chiral halves} (those $\Psi$ or $\wht\Wbb_i$). Here, ``chiral"=``holomorphic".

When physicists say a CFT is \textbf{rational}, they usually mean that the above direct sum is finite, and each $\Wbb_{i_k},\wht\Wbb_{i_k}$ are semi-simple (hence, by further decomposition, can be irreducible). So far, the mathematical theory of conformal blocks is complete almost only for rational CFTs. These will be the main examples of this course. For non-rational logarithmic CFTs, even the above Ansatz needs to be modified. (So far, it is not even clear how to do it.)

Physicists more or less consider the above description as the definition of conformal blocks. We mathematicians should do the opposite:  define conformal blocks in a different way, and use them to \emph{construct} CFTs following the above Ansatz.

\subsection{}\label{lb27}
You may notice that to make this Ansatz compatible with \ref{lb4} and \ref{lb5}, it is necessarily to assume that
\begin{enumerate}
	\item The tensor product of conformal blocks $\Phi_{\Sigma_1},\Phi_{\Sigma_2}$ associated to $\Sigma_1,\Sigma_2$ respectively should be a conformal block associated to $\Sigma_1\sqcup\Sigma_2$.
	\item The composition of $\Phi_{\Sigma_1},\Phi_{\Sigma_2}$ (or more precisely, their contractions) should be conformal blocks associated to the sewings of $\Sigma_1$ and $\Sigma_2$, where the pair of $\Vbb$-modules to be contracted must be dual to each other.
\begin{gather*}
	\vcenter{\hbox{{
				\includegraphics[height=1.5cm]{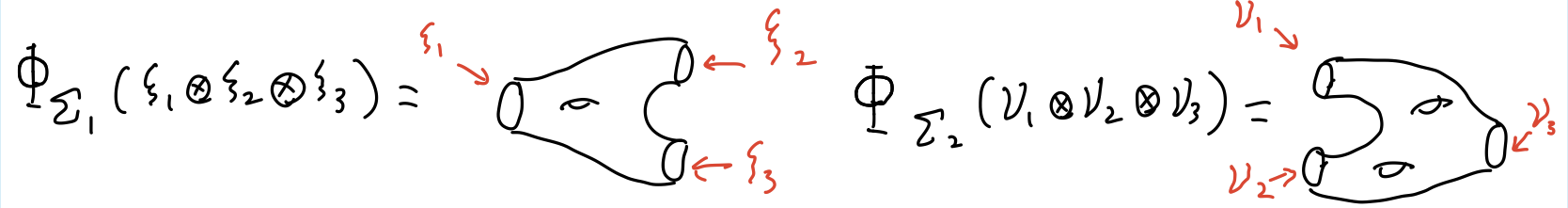}}}}\\
\vcenter{\hbox{{
				\includegraphics[height=1.5cm]{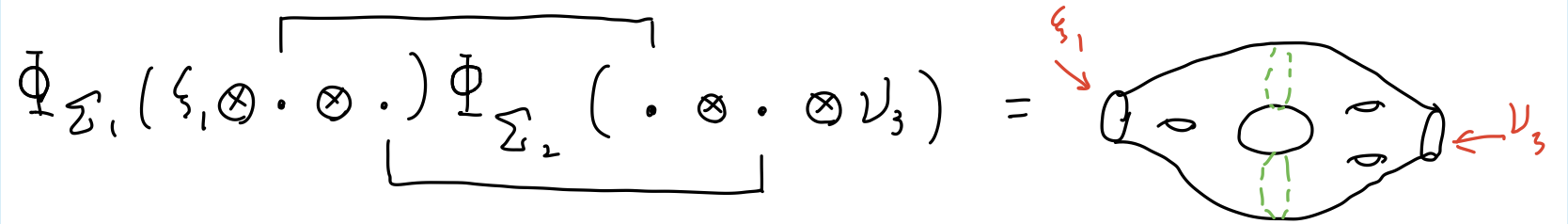}}}}
\end{gather*} 
\end{enumerate}

(A side note on linear algebra: If $V^\vee$ is the dual space (or a suitable dense subspace of the dual space) of a vetor space $V$, we choose a basis $\{v_\alpha\}_{\alpha\in\fk A}$ labeled by elements of $\fk A$, and choose a dual basis $\{v^\vee_\alpha\}_{\alpha\in\fk A}$ of $V^\vee$ (i.e. the one determined by $\bk{v_\alpha,v_\beta}=\delta_{\alpha,\beta}$), then taking  contraction means substituting $\sum_{\alpha\in\fk A}v_\alpha\otimes v^\vee_\alpha$ inside the linear functional on a tensor product of vector spaces such that $V,V^\vee$ are tensor components.)

After we define conformal blocks rigorously, we will see that the first point is obvious, while the second one is a non-trivial theorem.

We briefly explain the meaning of ``dual", and why the dual modules appear in $\mc H$. For instance, in the above picture, the unitary $\Vbb$-module containing $\xi_2$ is dual to the one containing $\eta_1$. As vector spaces, they are ``graded" dual spaces of each other.  (It is a dense subspace of the full dual space, the subspace of ``finite energy" linear functionals. We will talk about this in future sections.) In unitary CFTs, all $\Vbb$ and $\wht\Vbb$ modules are unitary, and  $\Theta(\Wbb_i\otimes\wht\Wbb_i)$ is equivalent to $\Wbb_i'\otimes\wht\Wbb_i'$ where $\Wbb_i'$
is a  $\Vbb$-module dual to $\Wbb_i$, and $\wht\Wbb_i'$ a $\wht\Vbb$-module dual to $\wht\Wbb_i$. The formal name for dual module is \textbf{contragredient module}, to be defined rigorously in Sec. \ref{lb192}.

\subsection{}\label{lb26}

Let us describe the equivalence $\Theta(\Wbb_i\otimes\wht\Wbb_i)\simeq\Wbb_i'\otimes\wht\Wbb_i'$ in more details.

For each $w_i\otimes\wht w_i\in\Wbb_i\otimes\wht\Wbb_i$, the vector $\Theta(w_i\otimes\wht w_i)$ is regarded as a linear functional on $\Wbb_i\otimes\wht\Wbb_i$ in the following way. Let the (clearly symmetric) bilinear form $\bk{\cdot,\cdot}:\mc H^{\otimes 2}\rightarrow\Cbb$ be the correlation function $T_{A_{1,1}}$ for the standard thin annulus $A_{1,1}$ (with two inputs and no outputs).\index{zz@$\bk{\cdot,\cdot}:\mc H^{\otimes 2}\rightarrow\Cbb$, the correlation function $T_{A_{1,1}}$ for $A_{1,1}$} Note that by \eqref{eq1}, for each $\xi,\nu\in\mc H$, we have
\begin{align}
	\bk{\Theta\xi,\nu}=\bk{\xi|\nu}.	\label{eq29}
\end{align} 
Then $\Theta(w_i\otimes\wht w_i)$ is equivalent to the linear functional
\begin{align}
\bk{\Theta(w_i\otimes\wht w_i),\cdot}=\bk{w_i\otimes\wht w_i|\cdot}	
\end{align}
restricted onto $\Wbb_i\otimes\wht\Wbb_i$.

A conformal block with $M+N$ inputs $\Phi_\Sigma:\Wbb_{i_1}\otimes\cdots\otimes\Wbb_{i_N}\otimes\Wbb_{j_1}\otimes\cdots\otimes\Wbb_{j_N}\rightarrow\Cbb$ can be regarded as one with $N$ inputs and $M$ outputs $\Phi_\Sigma:\Wbb_{j_1}\otimes\cdots\otimes\Wbb_{j_N}\rightarrow\mc H_{i_1}'\otimes\cdots\otimes\mc H_{i_M}'$ where $\mc H_{i_k}'$ is the Hilbert space completion of $\Wbb_{i_k}'$ and $\Wbb_{i_k}'$ is the contragredient $\Vbb$-module of $\Wbb_{i_k}$. Using \eqref{eq29}, it is not hard to show that taking compositions of conformal blocks with outputs is equivalent to taking contractions for conformal blocks without outputs.

\section{Virasoro relations; change of boundary  parametrizations; strings vs. punctures}\label{lb41}

\subsection{}\label{lb9}
The goal of this section is to understand conformal blocks associated to $2$-pointed Riemann spheres, equivalently, genus-$0$ surfaces with two boundary strings. We simply call them \textbf{annuli}, although their complex structures and boundary parametrizations are not necessarily the standard ones as in \ref{lb3}.

Let us first consider some degenerate examples whose boundary parametrizations are not necessarily analyic. Let $\Diffp(\Sbb^1)$ \index{Diff@$\Diffp(\Sbb^1)$} be the topological group of orientation preserving diffeomorphisms of $\Sbb^1$. For each $g\in\Diffp(\Sbb^1)$, we let $A_{1,1}^g$ be the thin annulus whose incoming and outgoing strings are both $\Sbb^1$ with parametrizations
\begin{gather*}
\text{Incoming}: z\mapsto z,\qquad \text{Outgoing}: z\mapsto 1/g(z).	
\end{gather*}

\begin{lm}
If $h\in\Diffp(\Sbb^1)$, then $A_{1,1}^{gh}$ is obtained by gluing the incoming circle of  $A_{1,1}^g$ with the outgoing one of $A_{1,1}^h$.
\end{lm}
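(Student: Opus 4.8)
The plan is to compute the two boundary parametrizations of the glued surface directly from the sewing prescription \eqref{eq3}. Since thin annuli are degenerate, each of $A_{1,1}^g,A_{1,1}^h$ carries no interior geometry and is determined entirely by its pair of parametrized boundary circles, so ``gluing'' amounts to identifying the relevant circles via \eqref{eq3} and then reading off the parametrizations of the result.

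Concretely, I would take $\Sigma=A_{1,1}^h$ and $\Sigma'=A_{1,1}^g$, so that gluing the incoming circle of $A_{1,1}^g$ to the outgoing circle of $A_{1,1}^h$ is precisely sewing the outgoing boundary of $\Sigma$ to the incoming boundary of $\Sigma'$ in the sense of \ref{lb4}. The outgoing circle of $\Sigma$ is $\Sbb^1$ with parametrization $\mu\colon z\mapsto 1/h(z)$, and the incoming circle of $\Sigma'$ is $\Sbb^1$ with parametrization $\nu\colon z\mapsto z$. By \eqref{eq3}, a point $x$ of the former and a point $y$ of the latter are identified exactly when $\mu(x)\nu(y)=1$, i.e. when $y=h(x)$. (It is exactly the reciprocal in \eqref{eq3} --- rather than a rule $\eta_j(x)=\eta_j'(y)$ --- that makes $g$ and $h$ \emph{compose} here instead of one of them appearing inverted, and this matches the opposite orientations of the two glued circles.)

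Then I would read off the glued annulus. Its incoming circle is the incoming circle of $\Sigma=A_{1,1}^h$, namely $\Sbb^1$ with parametrization $z\mapsto z$; its outgoing circle is the outgoing circle of $\Sigma'=A_{1,1}^g$, namely $\Sbb^1$ with parametrization $y\mapsto 1/g(y)$. Using the identification $y=h(x)$ to write the outgoing parametrization in the coordinate $x$ inherited from the incoming circle gives $x\mapsto 1/g(h(x))=1/(gh)(x)$, where $gh=g\circ h$ is the product in $\Diffp(\Sbb^1)$. Hence the glued thin annulus has incoming parametrization $z\mapsto z$ and outgoing parametrization $z\mapsto 1/(gh)(z)$, which is by definition $A_{1,1}^{gh}$.

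There is no real obstacle: the argument is bookkeeping with the reciprocal in \eqref{eq3} and with the direction of the identification, plus the harmless observation that the glued topological circle inherits a well-defined thin-annulus structure (it does, since both pieces are thin). As a sanity check, taking $h$ or $g$ to be the identity diffeomorphism recovers $A_{1,1}^g$ or $A_{1,1}^h$ respectively, consistent with the fact recalled in \ref{lb3} that sewing with $A_{1,1}$ acts as the identity.
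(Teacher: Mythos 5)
Your proposal is correct and follows essentially the same computation as the paper: apply the reciprocal sewing rule \eqref{eq3} to the outgoing parametrization $1/h$ and the incoming parametrization $\mathrm{id}$ to get the identification $y=h(x)$, then compose with the outgoing parametrization of $A_{1,1}^g$ to read off $x\mapsto 1/g(h(x))$. The paper additionally remarks afterward that this argument is only heuristic because the annuli are degenerate (a rigorous version would approximate by genuine annuli and take a limit), a caveat you gloss over but which does not affect the intended level of rigor here.
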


\begin{proof}
By \eqref{eq3}, a point $z\in A_{1,1}^h$ is glued with $\zeta\in A_{1,1}^g$ iff $\zeta\cdot 1/h(z)=1$, i.e., $\zeta=h(z)$. Now, a point $z$ of $A_{1,1}^h$ becomes the point $h(z)$ of $A_{1,1}^g$ after gluing, which is sent by the outgoing parametrization of $A_{1,1}^g$ to $1/g(h(z))$.
\end{proof}

This proof is not rigorous since we are considering degenerate annuli. A rigorous one would be approximating $A_{1,1}^g$ and $A_{1,1}^h$ by genuine annuli, identifying the sewn annuli, and then taking the limit. This proof is not easy, unless when $g$ and $h$ are real-analytic (e.g., rotations). Nevertheless, we only need this lemma to motivate our following discussions.

\subsection{}
Thus, we may consider $\Diffp(\Sbb^1)$ as the group of thin annuli whose product is the sewing. The merit of this viewpoint is that it convinces us to \emph{consider the semi-group $\Ann$ of  annuli as the complexification of $\Diffp(\Sbb^1)$}. The multiplication $A_1A_2$ of $A_1,A_2\in\Ann$ is the sewing of $A_1,A_2$ defined by gluing the inside of $A_1$ with the outside of $A_2$ using their parametrizations.

As an example, consider $\Pbb^1$ with marked points $0,\infty$ and local coordinates $\eta_0(z)=z,\eta_\infty(z)=e^{-\im \tau}/z$, which gives a thin annulus corresponding to the rotation $z\mapsto e^{\im\tau} z$ when $\tau$ is real. Now consider $\tau$ as a complex variable $\tau=s+\im t$. Then the outgoing circle is the one with radius $e^t$. This gives a genuine annulus whenever $t>0$.

The Ansatz in \ref{lb6} should be expanded to include the following point: for each annulus $A\in \Ann$, the comformal block decomposition of the interaction $T_A:\mc H\rightarrow\mc H$ (with one income and one outcome) with respect to $\mc H^\fin=\bigoplus_i\Wbb_i\otimes\wht\Wbb_i$ is of the form
\begin{align}
T_A=\sum_i 	\pi_i(A)\otimes\wht\pi_i(\ovl A)
\end{align}
where $\pi_i(A)$ is a bounded linear operator on the Hilbert space completion $\mc H_i$ of $\Wbb_i$, and $\wht\pi_i(\ovl A)$ is one on the completion $\wht {\mc H}_i$ of $\wht \Wbb_i$. ($\ovl A$ is the complex conjugate of $A$; see Def. \ref{lb7}. We assume the conjugate of the incomming string of $\ovl A$ is the incoming of $A$, and similarly for the outcoming strings.) The choice of $\pi_i(A)$ and $\wht\pi_i(\ovl A)$ are unique up to scalar multiplications, and if $A$ vary holomorphically over some complex variable $\tau_\blt$, then locally $\pi_i(A)$ and $\wht\pi_i(\ovl A)$ can be chosen to vary holomorphically with respect to $\tau_\blt$ and $\ovl\tau_\blt$ respectively. Finally, if $A_1,A_2\in\Ann$, then $\pi_i(A_1 A_2)$ equals $\pi_i(A_1)\pi_i(A_2)$ up to scalar multiplication, and a similar thing can be said about $\wht\pi_i$.

Namely, each $\pi_i$ is a projective representation of $\Ann$ on $\mc H_i$, and so is $\wht\pi_i$ on $\wht{\mc H}_i$. They should be the analytic extensions of projective unitary representations of $\Diffp(\Sbb^1)$. 

We emphasize that $\pi_i(A)$ and $\wht\pi_i(\ovl A)$ are conformal blocks associated to $A$ and $\ovl A$ respectively.  Roughly speaking, $\pi_i$ describes the conformal symmetries of chiral halves and $\wht\pi_i$ the anti-chiral halves. $A$ and $\ovl A$ have to act jointly on the full space $\mc H$.

\subsection{}

Thus, the study of CFT interactions for annuli reduces to that of the projective representations of $\Ann$. Our goal is to describe such representations in terms of Lie algebras. 

Let $\Vect(\Sbb^1)$ be the Lie algebra of smooth real vector fields of $\Sbb^1$, whose elements are of the form $f\partial_\theta$ where $\partial_\theta$ is the pushforward of the standard unit vector of the real line under the map $\theta\mapsto e^{\im\theta}$, and $f\in C^\infty(\Sbb^1,\Rbb)$. The action of  $f\partial_\theta$ on $h\in C^\infty(\Sbb^1,\Rbb)$ is the negative of the usual one, $-f(e^{\im\theta})\cdot \frac\partial{\partial\theta}h(e^{\im\theta})$. This is because the action of $g\in\Diffp(\Sbb^1)$ on $h$ should be $h\circ g^{-1}$ in order to respect the order of group multiplication. Therefore, the Lie bracket in $\Vect(\Sbb^1)$ is the negative of the usual one:
\begin{align}
[f_1\partial_\theta,f_2\partial_\theta]_{\Vect(\Sbb^1)}=(-f_1\partial_\theta f_2+f_2\partial_\theta f_1)\partial_\theta.	
\end{align}

\subsection{}\label{lb8}

A projective unitary representation $\pi$ of $\Vect(\Sbb^1)$ and the corresponding one $\pi$ of $\Diffp(\Sbb^1)$ (if exists) are related as follows. (Here unitary means that for each vector field $f\partial_\theta$, we have $\pi(f\partial_\theta)^\dagger=-\pi(f\partial_\theta)$, where $\dagger$ is the adjoint, or ``formal adjoint" when the underlying inner product space is not Cauchy-complete.) 

Let $t\in(-\epsilon,\epsilon)\mapsto g_t\in\Diffp(\Sbb^1)$ be a smooth family of diffeomorphisms satisfying $g_0=\id$. Then up to addition by a number of $\im\Rbb$, 
\begin{align}
\frac d{dt}\pi(g_t)\Big|_{t=0}=\pi(\partial_t g_0)	\label{eq5}
\end{align}
where $\partial_t g_0\in\Vect(\Sbb^1)$, the derivative of $g$ at $t_0$, is the vector field determined by
\begin{align}
(\partial_t g_0)(h)=\frac d{dt}(h\circ g_t)\Big|_{t=0}\label{eq4}
\end{align}
for all smooth function $h$ on $\Sbb^1$.

Let now $t\in\Rbb\mapsto\exp(tf\partial_\theta)\in\Diffp(\Sbb^1)$ be the flow generated by $f\partial_\theta\in\Vect(\Sbb^1)$. So its derivative at $t=0$ is $f\partial_\theta$, and $\exp((t_1+t_2)f\partial_\theta)=\exp(t_1f\partial_\theta)\circ\exp(t_2 f\partial_\theta)$. Then \eqref{eq4} implies that up to $\Sbb^1$-multiplication,
\begin{align}
\pi(\exp(tf\partial_\theta))=e^{t\pi(f\partial_\theta)},\label{eq6}
\end{align}
since the derivative of $\pi(\exp(tf\partial_\theta))e^{-t\pi(f\partial_\theta)}$ is $\pi(\exp(tf\partial_\theta))(\pi(f\partial_\theta)-\pi(f\partial_\theta))e^{-t\pi(f\partial_\theta)}=0$.

\subsection{}

The Witt algebra $\Span_\Cbb=\{l_n:n\in\Zbb\}$ is a complex dense Lie subalgebra of the complexification $\Vect(\Sbb^1)\otimes_\Rbb\Cbb$. Here,
\begin{align}
l_n=z^{n+1}\partial_z=-\im e^{\im n\theta}\partial_\theta	
\end{align}
where $z=e^{\im\theta}$ and $\partial_z=\frac 1{\im e^{\im\theta}}\partial_\theta$. (We use the chain rule to ``define" $\partial_z$.) One checks
\begin{align}
[l_m,l_n]=(m-n)l_{m+n}
\end{align}
where the bracket is the negative of the usual one for vector fields.

Let us assume for simplicity that the CFT is unitary. In the decomposition $\mc H^\fin=\bigoplus_i \Wbb_i\otimes\wht\Wbb_i$, each $\Wbb_i$ is a projective unitary representation $\pi_i$ of $\{l_n\}$ , and similarly $\wht\Wbb_i$ is one $\wht\pi_i$ of $\{l_n\}$.  We know that the choice of $\pi_i(l_n)$ is unique up to $\im\Rbb$-scalar addition. Here is a well-known fact about projective representations of Witt algebra (cf. for instance \cite[Sec. IV.1]{Was10}): one can make a particular choice of $\pi_i(l_n)$ (for each $n$), denoted by $L_n$, such that the \textbf{Virasoro relation}
\begin{gather}
		[L_m,L_n]=(m-n)L_{m+n}+\frac c{12}(m+1)m(m-1)\delta_{m,-n}\label{eq16}
	\end{gather} 
holds and $c\in\Cbb$ is called the \textbf{central charge}. In the case that $\pi_i$ is projectively unitary, $L_n$ can be chosen such that $L_n^\dagger=L_{-n}$ also holds.

We have abused the notation by writing the actions of $l_n$ on all $\Vbb$-modules $\Wbb_i$ (as chiral halves of the CFT) as $L_n$. We are justified to do so because, as we will see later, the actions of $l_n$ come from those of $\Vbb$. Technically: Virasoro algebra is inside the VOA. So the action of $\{l_n\}$ on $\Wbb_i$ is the restriction of that of $\Vbb$. In particular, all chiral halves $\Wbb_i$ share the same central charge $c$.

Similarly, we write the actions of $l_n$ on all $\wht\Wbb_i$ as $\ovl L_n$. (The bar over $L_n$ reflects the fact that  $\ovl L_n$ describes the conformal symmetries of the anti-chiral halves of the CFT. $\ovl L_n$ is not related with $L_n$ by the CPT operator $\Theta$.) The central charge $\wht c$ for $\{\ovl L_n\}$ is independent of $\wht\Wbb_i$ and in general could be different from the one $c$ of $\{L_n\}$, although in most important cases they are equal. (E.g., when the CFT contains both closed and open strings.)

\subsection{}\label{lb12}

We shall generalize \eqref{eq6} to complex vector fields. First of all, we consider an element
\begin{align*}
f(z)\partial_z=\sum_{n\in\Zbb}a_nz^{n+1}\partial_z	
\end{align*}
where the sum could be infinite. We treat $f(z)=\sum_n a_nz^{n+1}$ as a Laurent series. Let us now assume that $f(z)$ \emph{is a holomorphic function on a neighborhood $U\subset\Cbb$ of $\Sbb^1$.} 

$f\partial_z$ is a complex holomorphic vector field of $U$, which (after shrinking  $U$) gives a \textbf{holomorphic flow} $\tau\in \Delta\mapsto \exp(\tau f\partial_z)\in\scr O(U)$ where $\Delta\subset\Cbb$ is a neighborhood of $0$. (Recall from the notation section that $\scr O(U)$ is the space of holomorphic functions on $U$.) This means:
\begin{enumerate}[label=(\arabic*)]
\item $(\tau,z)\in\Delta\times U\mapsto \exp(\tau f\partial_z)(z)$ is holomorphic whose restriction to each slice $\tau\times U$ is injective (and hence, a biholomorphism onto its image).
\item $\exp(0 f\partial_z)(z)=z$.
\item $\exp((\tau_1+\tau_2)f\partial_z)=\exp(\tau_1 f\partial_z)\circ\exp (\tau_2 f\partial_z)$ on an open subset of $U$ containing $\Sbb^1$.
\item For any holomorphic function $h$ defined on an open set inside $U$,
\begin{align}
f\partial_z h=\frac \partial{\partial\tau} h\circ \exp(\tau f\partial_z)\Big|_{\tau=0}.\label{eq7}
\end{align}
(Compare \eqref{eq4}.) This condition is equivalent to
\begin{align}
\frac \partial{\partial\tau}\exp(\tau f\partial_z)\Big|_{\tau=0}=f.\label{eq8}	
\end{align}
\end{enumerate}
(To see the equivalence, set $h(z)=z$ for one direction, and use chain rule for the other one.)
\begin{align*}
\vcenter{\hbox{{
			\includegraphics[height=2.5cm]{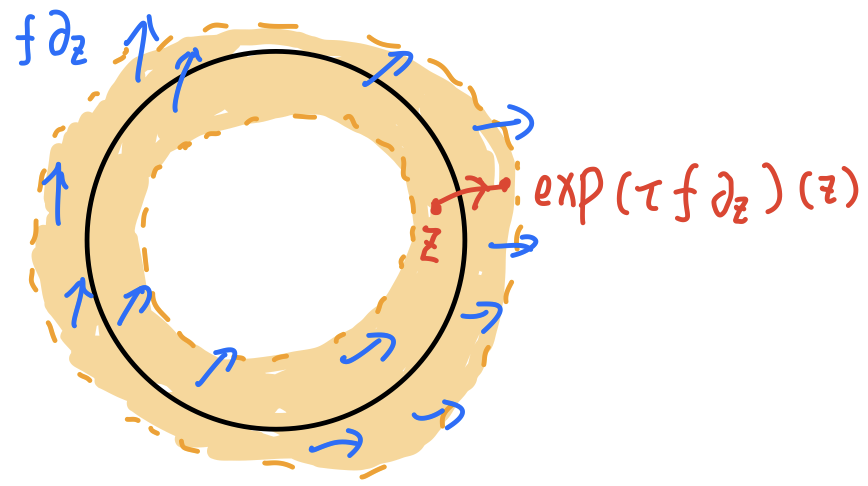}}}}	
\end{align*}


\begin{rem}
A caveat: The notations $f\partial_z$ and $\exp(\tau f\partial_z)$ are not compatible with those in the real case. Indeed, if we assume that $\tau$ only takes real values $\tau=t$, then by taking the real and the imaginary parts of \eqref{eq8}, we see that $\sigma_t$ is a real flow on the real surfaces $U$ generated by the real vector field $\Real f\cdot\partial_x+\Imag f\cdot \partial_y$. Writing $\partial_x=\partial_z+\partial_{\ovl z},\partial_y=\im(\partial_z-\partial_{\ovl z})$, we see that this vector field $f\partial_z$ should more precisely be written as $f\partial_z+\ovl f\partial_{\ovl z}$ where $\ovl f(x)=\ovl {f(x)}$.

This point  is also justified by the fact that if $k$ is antiholomorphic, then
\begin{align}
\ovl f\partial_{\ovl z}k=\frac\partial{\partial_{\ovl \tau}} k\circ \exp (\tau  f\partial_z)\Big|_{\tau=0}.	
\end{align}
(Proof: take $k=\ovl h$  in \eqref{eq8}.) Thus, a more precise notation for $\exp(\tau f\partial_z)$ should be $\exp(\tau f\partial_z+\ovl\tau\ovl f\partial_{\ovl z})$. But we prefer to suppress the term $\ovl\tau\ovl f\partial_{\ovl z}$ to keep the notations shorter.
\end{rem}

\subsection{}

One way to find the expression of $\sigma_\tau=\exp(\tau f\partial_z)$ is to solve the holomorphic nonlinear differential equation with initial condition:
\begin{gather}
\begin{gathered}\label{eq10}
\frac \partial{\partial\tau}\sigma_\tau(z)=f(\sigma_\tau(z)),\\
\sigma_0(z)=z.
\end{gathered}
\end{gather} 
This is due to \eqref{eq8} and $\sigma_{\tau_1+\tau_2}=\sigma_{\tau_1}\circ\sigma_{\tau_2}$. (Indeed, the existence of holomorphic flows is due to that of the solutions of such equations.)

Alternatively, one may calculate the flow by brutal force using the formula
\begin{align}\label{eq14}
\begin{aligned}
\exp(f\partial_z)(z)=&\sum_{k\in\Nbb}\frac 1{k!}(f(z)\partial_z)^kz\\
=&\sum_{k\in\Nbb}\frac 1{k!}\underbrace{f(z)\partial_z\Big(f(z)\partial_z\big(\cdots f(z)\partial_z}_{k\text{ times}} z)\cdots \big)\Big).
\end{aligned}
\end{align}
(One may treat this formula as a formal sum if one worries about the convergence issue.) To see why this formula is valid,  check that such defined $\exp(\tau f\partial_z)(z)=:\sigma_\tau(z)$ satisfies that $\sigma_{\tau_1+\tau_2}=\sigma_{\tau_1}\circ\sigma_{\tau_2}$, that $\partial_\tau\sigma_\tau|_{\tau=0}=f$, and that $\sigma_0(z)=z$. This is easy.

\subsection{}\label{lb10}

\begin{eg}\label{lb14}
$\sigma_\tau(z)=e^{\tau}z$ is the holomorphic flow generated by the vector field $l_0=z\partial_z$ since $\frac\partial{\partial\tau}e^\tau z|_{\tau=0}=z$. Namely,
\begin{align*}
\exp(\tau z\partial_z)(z)=e^\tau z.	
\end{align*}
\end{eg}

Set $\lambda=e^\tau$. In view of the  $A_{1,1}^g$ in \ref{lb9}, we consider the 2-pointed sphere $\fk X=(\Pbb^1;0,\infty;\zeta,\lambda^{-1}\zeta^{-1})$ where $\zeta:z\mapsto z$ is the standard coordinate of $\Cbb$. Then, when $|\lambda|\leq 1$, $\fk X$ defines an annulus $A$, either genuine or thin, whose incoming circle has radius $1$ and outcoming $1/|\lambda|$. Thus, the conformal block $\pi_i(A)$ associated to this annulus, which is a linear operator on the Hilbert space completion $\mc H_i$, should be $e^{\tau L_0}=\lambda^{L_0}$ (by replacing $z\partial_z$ with $L_0$). 

It is easy to check that $\ovl A$ is isomorphic to the annulus defined by $(\Pbb^1;0,\infty;\zeta,\ovl{\lambda^{-1}}\zeta^{-1})$. So the corresponding conformal block should be $\wht\pi_i(\ovl A)=\ovl\lambda^{\ovl L_0}$. Therefore, the interaction map $T_A:\mc H\rightarrow\mc H$ is determined by
\begin{align}
	T_A\big|_{\mc H_i\otimes\wht{\mc H}_i}=\lambda^{L_0}\otimes\ovl\lambda^{\ovl L_0}.	\label{eq9}
\end{align}

In a unitary CFT, $L_0$ and $\ovl L_0$ (or more precisely, their closures) are self-adjoint operators so that $\lambda^{L_0}$ and $\ovl\lambda^{\ovl L_0}$ can be defined and are unitary when $|\lambda|=1$. Moreover, in a unitary CFT:
\begin{ass}[Positive energy]\label{lb18}
The spectra of $L_0$ and $\ovl L_0$ are both positive (i.e. $\geq 0$). In these notes, we are mainly interested in the case that the spectra are discrete. We identify $L_0$ with $L_0\otimes\id$ and $\ovl L_0$ with $\id\otimes\ovl L_0$ so that $L_0,\ovl L_0$ are commuting diagonalizable operators on $\mc H^\fin$ with $\geq0$ eigenvalues.
\end{ass}


Now we can explain what we meant by finite energy: A vector $\xi$ of $\mc H$ has \textbf{finite energy} if $\xi$ is a finite sum of eigenvectors of both $L_0$ and $\ovl L_0$. (In general, a vector of $\mc H$ is an $l^2$-convergent sum, either finite or infinite, of eigenvectors.)

\subsection{}

\begin{eg}\label{lb20}
Let $n\neq 0$. To understand the geometric meanings of $e^{\tau L_{-n}}$ and $e^{\ovl\tau\ovl L_{-n}}$, we find the expression of $\sigma_\tau=\exp(\tau z^{-n+1}\partial_z)$ by solving the differential equation $\partial_\tau\sigma_\tau=(\sigma_\tau)^{-n+1}$ with initial condition $\sigma_0(z)=z$ (cf. \eqref{eq10}). The solution is
\begin{align}
\exp (\tau z^{-n+1}\partial_z)(z)=(z^n+n\tau)^{\frac 1n}.\label{eq17}
\end{align}
\hfill $\Box$
\end{eg}

Unfortunately, this flow does not give us any annulus in the usual sense. Take $n=1$ for instance. Then the flow is just the translation by $\tau$. However, the circle after a small translation will intersect the original one. So there is no annulus whose outgoing circle is the translation of the incoming one. In fact, in most cases,   $\exp(f\partial_z)$ is not the action of an annulus. We have to pursue another way of understanding this operator.

\subsection{}\label{lb13}

There are two ways to look at a group action $G\curvearrowright X$: (1) The action of $g\in G$ on $X$ is a transformation. So $gx\neq x$ in general. (2) $gx$ and $x$ are different expressions (under different coordinates) of the same element. The rule for change of coordinate is given by the action of $G$. We shall take the second viewpoint.

Let $\fk X=(C;x_1,\dots,x_N;\eta_1,\dots,\eta_N)$ be an $N$-pointed compact Riemann surface with local coordinates satisfying Assumption \ref{lb11}. Assume the setting of \ref{lb12}. Write $\sigma_\tau=\exp(\tau f\partial_z)$ and $f(z)=\sum_{n\in\Zbb}a_nz^{n+1}$ be defined on $U\supset\Sbb^1$. Let $\tau\in\Delta$ be close to $0$.

\begin{rem}
In case you want to know the precise meaning of ``close": for the local coordinate $\eta_i$ we are to discuss in the following, we choose $\epsilon>0$ such that  $\sigma_\tau(U\cap\Rng(\eta_i))$ contains $\Sbb^1$ for all  $\tau\in\Dbb_\epsilon$, where the open set $\Rng(\eta_i)$ is the range of $\eta_i$.
\end{rem}

\begin{prin}[Change of boundary parametrizations]
Suppose that the local coordinate $\eta_i$ at $x_i$ is changed to the boundary parametrization $\sigma_\tau\circ\eta_i$ and the boundary string $\eta_i^{-1}\circ(\Sbb^1)$ is gradually changed (with respect to the change of  $\tau$) to $\eta_i^{-1}\big(\sigma_\tau^{-1}(\Sbb^1)\big)$. Then, in the expressions of conformal blocks and correlation functions (without outputs), each $w_i\in\Wbb_i$ is replaced by $e^{\tau\sum_n a_nL_n}w_i$, and each $\wht w_i\in\wht\Wbb_i$ by $e^{\ovl\tau\sum \ovl{a_n}\ovl L_n}\wht w_i$.
\end{prin}

To be more precise, let $T_\Sigma:\mc H^{\otimes N}\rightarrow\Cbb$ be the correlation function where $\Sigma$ is obtained from $\fk X$. Assume $i=1$ for simplicity. Changing the local coordinate $\eta_1$ to $\sigma_\tau\circ\eta_1$ gives a new surface with parametrized boundary $\Sigma'$. Then up to scalar multiplication, $T_{\Sigma'}$ and $T_\Sigma$ are related by
\begin{align}\label{eq12}
T_\Sigma(\xi_1\otimes\xi_2\otimes\cdots\otimes\xi_N)=T_{\Sigma'}\Big(\big(e^{\tau\sum_n a_nL_n}\otimes e^{\ovl\tau\sum_n\ovl{a_n}\ovl L_n}\big)\xi_1\otimes\xi_2\otimes\cdots\otimes\xi_N \Big)	
\end{align}
for all $\xi_1,\dots,\xi_N$. Similarly, if $\Phi_\Sigma:\Wbb_{i_1}\otimes\cdots\otimes\Wbb_{i_N}\rightarrow\Cbb$ is a conformal block for $\Sigma$, then $\Phi_{\Sigma'}$ defined by
\begin{align}\label{eq11}
\Phi_{\Sigma}(w_1\otimes w_2\otimes\cdots\otimes w_N)=\Phi_{\Sigma'}\big(e^{\tau\sum_n a_nL_n} w_1\otimes w_2\otimes\cdots\otimes w_N\big)	
\end{align}
is one for $\Sigma'$.

\subsection{}\label{lb15}

The geometric intuition in the above subsection is the following: $\xi_1$ in the $\eta_1$-parametrization is the same (up to scalar multiplication) vector as $(e^{\tau\sum_n a_nL_n}\otimes e^{\ovl \tau\sum_n \ovl {a_n}\cdot\ovl L_n})\xi_1$ in the $\sigma_\tau\circ\eta_1$-parametrization.  We call this same ``abstract" vector $\wtd\xi_1$, which is unique up to scalar multiplication. We write $\xi_1=(\mc U(\eta_1)\otimes\mc U(\eta_1^*))\wtd \xi_1$, understanding $\mc U(\eta_1)\otimes\mc U(\eta_1^*)$ as the map sending an abstract vector to its concrete expression under the boundary parametrization $\eta_1$. Namely, $\mc U(\eta_1)\otimes\mc U(\eta_1^*)$ is a vector bundle trivialization.  The transition function from the $\eta_1$-parametrization to the $\sigma_\tau\circ\eta_1$-parametrization is
\begin{align}
\big(\mc U(\sigma_\tau\circ\eta_1)\otimes\mc U((\sigma_\tau\circ\eta_1)^*)\big)\big(\mc U(\eta_1)\otimes\mc U(\eta_1^*)\big)^{-1}=e^{\tau\sum_n a_nL_n}\otimes e^{\ovl \tau\sum_n \ovl {a_n}\cdot\ovl L_n}.	
\end{align}
We have a parametrization independent $T$ (more precisely, independent of a small change of parametrizations)  whose expressions under the concrete boundary parametrizations are (up to scalar multiplications)
\begin{align*}
&T(\wtd\xi_1\otimes\cdots)=T_\Sigma\Big(\big(\mc U(\eta_1)\otimes\mc U(\eta_1^*)\big)^{-1}\wtd\xi_1\otimes\cdots\Big)\\
=&T_{\Sigma'}\Big(\big(\mc U(\sigma_\tau\circ\eta_1)\otimes\mc U((\sigma_\tau\circ\eta_1)^*)\big)^{-1}\wtd\xi_1\otimes\cdots\Big).
\end{align*}

\subsection{}

Let us do an example to see how the change of parametrization formula works. 

\begin{eg}
Let $\fk X=(\Pbb^1;1/3,\infty;2(\zeta-1/3),\zeta^{-1})$ where $\zeta:z\mapsto z$ is the standard coordinate of $\Cbb$. We choose $1/3$ to be the input point, and $\infty$ the outgoing one. The associated boundary parametrized surface $\Sigma$ is an annulus whose incoming circle $\{z:|2(z-1/3)|=1\}$ has center $1/3$ and radius $1/2$, and whose outgoing circle is $\Sbb^1$. Let us find an expression for $T_\Sigma:\mc H\rightarrow\mc H$.

We know that the map for the standard thin annulus $A_{1,1}$ is $T_{A_{1,1}}=\id_{\mc H}$. Let $\fk X_1=(\Pbb^1;0,\infty;2\zeta,\zeta^{-1})$, which gives an annlus $\Sigma_1$ with incoming string $\frac 12\Sbb^1$ and outgoing one $\Sbb^1$. $A_{1,1}$ is changed to $\Sigma_1$ by changing the incoming boundary parametrization $\zeta$ to $2\zeta$. By Ex. \ref{lb14}, $2\zeta=\exp(\log 2\cdot z\partial_z)$. So, as $e^{\log 2L_0}=2^{L_0}$ and similarly $e^{\log 2 \ovl L_0}=2^{\ovl L_0}$, by \eqref{eq12}, $T_{\Sigma_1}$ could be $(1/2)^{L_0}\otimes (1/2)^{\ovl L_0}$.

$\Sigma_1$ is changed to $\Sigma$ by adding $2\zeta$ by $-2/3$. According to Ex. \ref{lb20},  $\exp(-2/3\partial_z)(z)=z-2/3$. Therefore, up to a scalar multiplication, $T_{\Sigma_1}(\xi)=T_\Sigma((e^{-\frac 23 L_{-1}}\otimes e^{-\frac 23 \ovl L_{-1}})\xi)$. Thus, the answer is
\begin{align*}
T_\Sigma=\big((1/2)^{L_0}\otimes (1/2)^{\ovl L_0}\big)\cdot \big((e^{\frac 23 L_{-1}}\otimes e^{\frac 23 \ovl L_{-1}})\big)=\big((1/2)^{L_0}e^{\frac 23L_{-1}}\big)\otimes 	\big((1/2)^{\ovl L_0}e^{\frac 23\ovl L_{-1}}\big).
\end{align*}
$(1/2)^{L_0}e^{\frac 23L_{-1}}$ is a conformal block for $\Sigma$. \hfill\qedsymbol
\end{eg}

\subsection{}\label{lb28}

What is the change of parametrization formula for $T_\Sigma$ (and hence $\Phi_\Sigma$) when some output strings are involved? Recall from Subsec. \ref{lb26} that the correlation function $T_{A_{1,1}}:\mc H^{\otimes 2}\rightarrow\Cbb$ is a symmetric bilinear form $\bk{\xi,\nu}=\bk{\nu,\xi}=\bk{\Theta\nu|\xi}$. With respect to this form, we actually have
\begin{align}
(L_n\otimes\id)^\tr=L_{-n}\otimes\id,\qquad (\id\otimes\ovl L_n)^\tr=\id\otimes \ovl L_{-n}.	\label{eq15}
\end{align}
More precisely, for each $\xi,\nu\in\mc H^\fin$, we have
\begin{align*}
\bk{(L_n\otimes\id)\xi,\nu}=\bk{\xi,(L_{-n}\otimes\id)\nu}	
\end{align*} 
and a similar relation for $\ovl L_n$. Rewrite the above relation in terms of $\bk{\cdot|\cdot}$, we have $\bk{\Theta(L_n\otimes\id)\xi|\nu}=\bk{\Theta\xi|(L_{-n}\otimes\id)\nu}	$, and noticing the unitarity property $L_n^\dagger=L_{-n}$, we get
\begin{align}
\Theta(L_n\otimes\id)=(L_n\otimes\id)\Theta,\qquad \Theta(\id\otimes\ovl L_n)=(\id\otimes\ovl L_n)\Theta.\label{eq33}
\end{align}
These relations truly hold, not just up to scalar addition or multiply.

From this, we see that for the maps $T_\Sigma,T_{\Sigma'}:\mc H^{\otimes(N-1)}\rightarrow\mc H$ with $N-1$ inputs and $1$ output,
\begin{align}
T_\Sigma=\Big(e^{\tau\sum_n a_nL_{-n}}\otimes e^{\ovl \tau\sum_n \ovl{a_n}\ovl L_{-n}}\Big)\circ T_{\Sigma'}.	\label{eq13}
\end{align}
You can easily generalize this formula to the case of more than one outputs.
\begin{proof}
Let $\xi_\blt\in\mc H^{\otimes(N-1)}$ and $\nu\in\mc H$. By \eqref{eq1}, the correlation function (with $N$-inputs and no outputs) for $\Sigma$ and $\Sigma'$ are $\bk{\Theta\cdot|T_\Sigma\cdot}$ and $\bk{\Theta\cdot|T_{\Sigma'}\cdot}$ respectively. So by  \eqref{eq12}, 
\begin{align*}
&\bk{\Theta\nu|T_\Sigma(\xi_\blt)}=\bk{\Theta (e^{\tau\sum_n a_nL_n}\otimes e^{\ovl \tau\sum_n \ovl{a_n}\ovl L_n})\nu|T_{\Sigma'}(\xi_\blt)}\\
\xlongequal{\eqref{eq33}}&	\bk{(e^{\ovl \tau\sum_n \ovl{a_n}L_n}\otimes e^{\tau\sum_n a_n\ovl L_n})\Theta \nu|T_{\Sigma'}(\xi_\blt)}\\
\xlongequal{\text{unitarity}}&\bk{\Theta \nu|(e^{\tau\sum_n a_nL_{-n}}\otimes e^{\ovl \tau\sum_n \ovl{a_n}\ovl L_{-n}})T_{\Sigma'}(\xi_\blt)}.
\end{align*}
\end{proof}

\begin{exe}
Show that the formula \eqref{eq9} in Example \ref{lb14} follows from \eqref{eq13}.
\end{exe}

\subsection{}

In case you want to know why $(L_{-n}\otimes\id)=(L_n\otimes\id)^\tr$, we give a geometric explanation below, in which we pretend to ignore the issue of the uniqueness up to scalar additions/multiplications.

\begin{proof}
Let $\fk X=(\Pbb^1;0,\infty;z,z^{-1})$ where $z$ is the standard coordinate of $\Cbb$, which gives the standard thin annulus $A_{1,1}$. Assume the two strings are incoming.  We know the correlation function is $\bk{\xi,\nu}$, where we assume $\xi$ is associated to the string around $0$ and $\nu$ the one around $\infty$.

Change the local coordinate $z$ at $0$ to $\sigma_\tau$, and keep the other data of $\fk X$.  This changes $A_{1,1}$ to a new weird annulus $A$. By \eqref{eq12}, the correlation function for $A$ is
\begin{align*}
T_A(\xi\otimes\nu)=\bk{(e^{-\tau\sum_n a_nL_n}\otimes e^{-\ovl\tau\sum_n \ovl{a_n}\cdot \ovl L_n }) \xi,\nu}.	
\end{align*}
Note that if we set $\zeta=\sigma_\tau(z)$, then $z^{-1}=1/\sigma_\tau^{-1}(\zeta)$, which equals $1/\sigma_{-\tau}(\zeta)$ by the definition of flows. Namely, $A$ is equivalent to the weird annulus whose incoming boundary parametrization is $z$ and outcoming $1/\sigma_{-\tau}(z)$. To compute the correlation function for this choice of boundary parametrization, we note that the original $1/z$ at $\infty$ is changed to $1/\sigma_{-\tau}(z)$. Therefore, if we let $\gamma_\tau(z)=1/\sigma_{-\tau}(1/z)$ which is a holomorphic flow generated by some $\sum_n b_n z^{n+1}$, then the expression for $T_A$ is
\begin{align*}
T_A(\xi\otimes\nu)=\bk{\xi,(e^{-\tau\sum_n b_nL_n}\otimes e^{-\ovl\tau\sum_n \ovl{b_n}\cdot\ovl L_n })\nu}.	
\end{align*}
For the two expressions of $T_A$, we take the holomorphic derivative $-\partial_\tau$ at $\tau=0$ to get
\begin{align*}
\sum a_n\bk{(L_n\otimes\id)\xi,\nu}=\sum b_n\bk{\xi,(L_n\otimes\id)\nu}.	
\end{align*}
To finish the proof, it suffices to prove $b_n=a_{-n}$.

Recall $\sum a_nz^{n+1}=\partial_\tau\sigma_\tau|_{\tau=0}$.  Similarly,  $\sum b_nz^{n+1}=\partial_\tau\gamma_\tau|_{\tau=0}$, which is
\begin{align*}
&\partial_\tau(1/\sigma_{-\tau}(1/z))\big|_{\tau=0}=-\frac 1{\sigma_0(1/z)^2}\cdot \partial_\tau(\sigma_{-\tau}(1/z))\big|_{\tau=0}	\\
=&z^2\cdot \sum a_n(1/z)^{n+1}=\sum a_nz^{-n+1}=\sum a_{-n}z^{n+1}.
\end{align*}
\end{proof}

\subsection{}\label{lb16}

As an easy application of our change of parametrization formula, we are able to describe the map $T_A:\mc H\rightarrow\mc H$ for an analytic annulus $A\in\Ann$ obtained from $(\Pbb^1;0,\infty;\eta_0,\eta_\infty)$ where $\eta_0$ and $\eta_\infty$ are local coordinates at $0,\infty$ respectively. Set $\varpi=1/z$. One can write
\begin{align*}
\eta_0(z)=\exp\Big(\sum_{n\in\Nbb} a_nz^{n+1}\partial_z\Big)(z),\qquad \eta_\infty(\varpi)=\exp\Big(\sum_{n\in\Nbb} b_n \varpi^{n+1}\partial_\varpi\Big)(\varpi),	
\end{align*}
where the coefficients $a_n,b_n$ can be determined using \eqref{eq14}. (We will say more about determining the coefficients in the future.) When $A$ is the standard thin annulus (i.e., when $\eta_0:z\mapsto z,\eta_\infty:z\mapsto z^{-1}$), we know $T_A=\id$. Thus, in general, by \eqref{eq12} and \eqref{eq15}, the map $T_A$ is (up to scalar multiplications)
\begin{align*}
T_A=\Big(e^{\sum_{n\in\Nbb}-b_nL_{-n}}\otimes e^{\sum_{n\in\Nbb}-\ovl{b_n}\cdot \ovl L_{-n}}\Big) \cdot \Big(e^{\sum_{n\in\Nbb} -a_nL_n}\otimes e^{\sum_{n\in\Nbb} -\ovl{a_n}\cdot \ovl L_n}\Big).	
\end{align*}

The reason that only $n\in\Nbb$ are involved is because $\eta_0$ and $\eta_\infty$ can be defined near $0$ and send $0$ to $0$. Indeed, for $f(z)=\sum_{n\in\Zbb} a_n z^{n+1}$, assume that $\exp(\tau f\partial_z)(z)$ is defined near $0$ and sends $0$ to $0$ for all small $\tau$. Then its derivative over $\tau$ at $z=0$, which is $f(\exp(\tau f\partial_z)(0))=f(0)$ by \eqref{eq9}, should also be $0$. So $f$ must be of the form $\sum_{n\geq 0}a_nz^{n+1}$.

\subsection{}

We call those in \ref{lb13} and \ref{lb15} \textbf{change of (boundary) parametrizations} in general, and those in \ref{lb16} \textbf{change of (local) coordinates}. The former contains the latter. 

When changing the boundary parametrizations, the standard coordinate $z$ could be changed to $\sigma_\tau$ not necessarily  defined at $0$, or more generally, a local coordinate (say) $\eta_1$ of an $N$-pointed $\fk X=(C;x_\blt;\eta_\blt)$ is changed to $\sigma_\tau\circ\eta_1$. This changes the boundary-parametrized Riemann surface $\Sigma$ to $\Sigma'$. Note that this process does not violate our definition of \emph{analytic} boundary parametrizations in \ref{lb17}: The new surface $\Sigma'$ is obtained from a new $N$-pointed one $\fk X'=(C';x_\blt;\sigma_\tau\circ\eta_1,\eta_1,\dots,\eta_N)$ where $C'$ is a new compact Riemann surface, which is defined by gluing $\Sigma$ with $N$ pieces of unit  disks $\Dbb_1$ using the maps $\sigma_\tau\circ\eta_1,\eta_2,\dots,\eta_N$. (If you use the maps $\eta_1,\dots,\eta_N$ instead, you simply get $C$.) Thus, \emph{for the change of boundary parametrizations in general, the underlying compact Riemann surfaces $C$ could be changed}. More details will be given in Examples \ref{lb109} and \ref{lb111}.

By change of coordinates, we mean $\fk X$ is changed to $\fk X'=(C;x_\blt;\eta_\blt')$ with the same underlying compact Riemann surface $C$ and the same marked points $x_\blt$ as the original ones but different local coordiates at these marked points. As mentioned in \ref{lb16}, in this process, only $L_0,L_1,L_2,\dots$ (and also $\ovl L_0,\ovl L_1,\ovl L_2,\dots$) are involved, while in the change of boundary parametrizations, all $L_n$ are involved.

In the previous discussions, almost all formulas hold only up to scalar multiplications or additions. However, when only $L_{-1}, L_0, L_1, L_2,\dots$ are involved, the interaction maps $T_\Sigma$ can indeed be chosen such that all the formulas truly hold, not just up to scalar multiplications or additions. This is because the conformal anomaly is due to the central term $c\cdot (m^3-m)\delta_{m,-n}/12$ in the Virasoro relation \eqref{eq16}, which vanishes when $m,n\geq -1$. Note that $L_{-1}$ is responsible for translation. Thus:

\begin{prin}\label{lb19}
$T_\Sigma$ can be chosen to have no ambiguity when changing the local coordinates, or when translating a marked point $x_i$ with respect to its local coordinate $\eta_i$. 

To be more precise: We fix a compact Riemann surface $C$. Then for each choice of $N$ marked points $x_\blt$ and local coordinates $\eta_\blt$, we can choose the correlation function $T_{\fk X}:\mc H^{\otimes N}\rightarrow\Cbb$ \index{T@$T_{\Sigma},T_\fk X$: The interaction map/correlation function} associated to the boundary parametrized surface associated to $\fk X=(C;x_\blt;\eta_\blt)$ such that
\begin{itemize}
\item For another choice of $N$-pointed $\fk X'=(C;x_\blt;\eta_\blt')$ with the same marked points and different local coordinates $\eta_\blt'$, $T_{\fk X}$ and $T_{\fk X'}$ are related by \eqref{eq12}.
\item If $\fk X'=(C;x_1',x_2,\dots,x_N;\eta_1',\eta_2,\dots,\eta_N)$ where $\eta_1'=\eta_1-\eta_1(x_1')$, and if $x_1'$ is inside an open  disk $U_1$ centered at $x_1$ on which $\eta_1$ is holomorphically defined (more precisely, this means $\eta_1(U_1)$ is an open  disk centered at $\eta_1(x_1)=0$), then $T_{\fk X}$ and $T_{\fk X'}$ are related by \eqref{eq12}, namely, (noticing \eqref{eq17} for $n=1$)
\begin{align}
T_{\fk X}(\xi_1\otimes\cdots\otimes \xi_N)=T_{\fk X'}\Big(\big(e^{-\eta_1(x_1')L_{-1}}\otimes e^{-\ovl{\eta_1(x_1')}\cdot \ovl L_{-1}}\big)\xi_1\otimes\xi_2\otimes\cdots\otimes\xi_N \Big).	
\end{align}
\end{itemize}
A similar principle holds when $T_{\fk X}$ has output strings.  \hfill$\qedsymbol$
\end{prin}

Recall the geometric picture described in \ref{lb15}. We see that when changing local coordinates, everything in \ref{lb15} truly holds, not just up to scalar multiplications. In particular, the abstract vector $\wtd\xi_1$ is uniquely determined when only the change of local coordinates are allowed.

\subsection{}\label{lb25}

\begin{ass}
We drop Assumption \ref{lb11} for the incoming strings when we associate only finite energy vectors (i.e., vectors of $\Wbb_i\otimes\wht\Wbb_i$, $\Vbb\otimes\wht\Vbb$, etc.) to the incoming strings. Instead, we only assume that the (distinct) incoming points are outside the outgoing strings.
\end{ass}

In this course, we will be mainly interested in finite energy vectors. Therefore, we do not assume that that each $\eta_i(U_i)$ contains $\Dbb^\cl_1$, or that $U_i$ and $U_j$ are disjoint for different $i$ and $j$. In the latter case, the two boundary strings $\eta_i^{-1}(\Sbb^1)$ and $\eta_j^{-1}(\Sbb^1)$ possibly overlap. What does this picture actually mean?

Note that multiplying $\eta_i$ by $\lambda\eta_i$ amounts to shrinking the size of the string $\eta_i^{-1}(\Sbb^1)$ by $|\lambda|$ and then rotating the string. If $\lambda>0$ then there is only shrinking but not rotating. Thus, \emph{for an local coordinated $N$-pointed $\fk X=(C;x_\blt;\eta_\blt)$, we can find  $\lambda_1,\dots,\lambda_N\in\Cbb^\times$ with large enough absolute values such that the new data $\fk X'=(C;x_\blt;\lambda_1\eta_1,\dots,\lambda_N\eta_N)$ satisfies Assumption \ref{lb11}.} Then for finite energy vectors $\xi_1,\dots,\xi_N\in\mc H^\fin=\bigoplus_i \Wbb_i\otimes\wht\Wbb_i$, $T_{\fk X}(\xi_1\otimes\cdots\otimes\xi_N)$ is understood as
\begin{align}
T_{\fk X}(\xi_1\otimes\cdots\otimes\xi_N):=T_{\fk X'}\Big(\big(\lambda_1^{L_0}\otimes\ovl{\lambda_1}^{\ovl L_0}\big)\xi_1\otimes\cdots\otimes \big(\lambda_N^{L_0}\otimes\ovl{\lambda_N}^{\ovl L_0}\big)\xi_N \Big).	
\end{align}
This definition is independent of the choice of sufficiently large $\lambda_1,\dots,\lambda_N$. And each $\lambda_j^{L_0}\otimes\ovl{\lambda_j}^{\ovl L_0}$ acts diagonally on $\mc H^\fin$ since $L_0\otimes\ovl L_0$ does. (Recall Assumption \ref{lb18}.)
\begin{align*}
	\vcenter{\hbox{{
			\includegraphics[height=2.3cm]{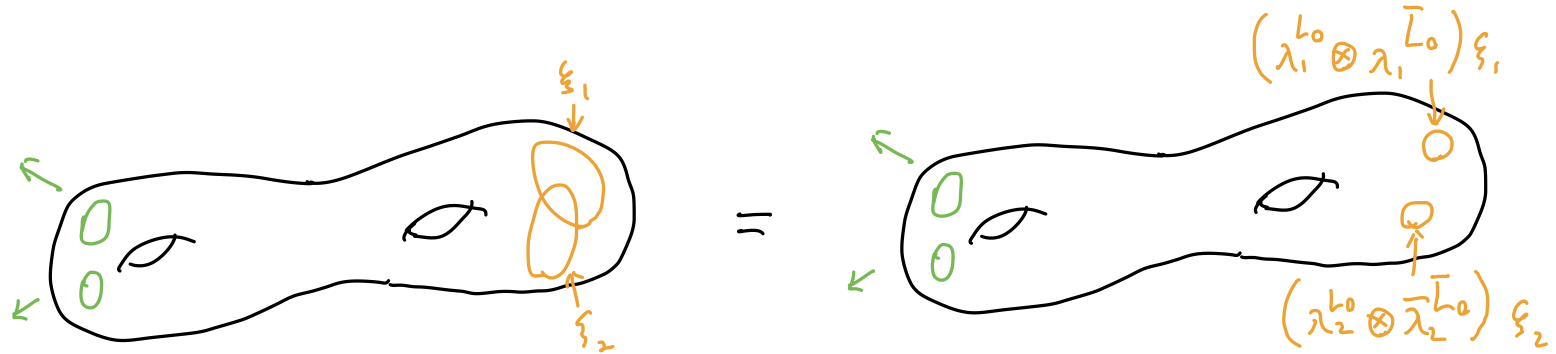}}}}	
\end{align*}

In the spirit of the previous subsection, you should view the finite energy vectors $\xi_j$ and $\big(\lambda_j^{L_0}\otimes\ovl{\lambda_j}^{\ovl L_0}\big)\xi_j$ not as different vectors, but as two coordinate representations of the same vector $\wtd\xi_j$. When $|\lambda_j|$ becomes infinitely large, the string for $\xi_j$ shrinks to an infinitesimal one around $x_j$, i.e., it shrinks to $x_j$ as a \textbf{puncture}. It is very useful to view the abstract finite energy vector $\wtd\xi_j$ not associated to any particular string, but associated to that puncture $x_j$. Thus, the marked points $x_\blt$ of $\fk X$ are also called punctures.

\begin{rem}
A side note: When we do local coordinate changes, finite energy vectors are changed to finite energy ones. 
\end{rem}
Therefore, in the above discussion, we don't have to stick to change of coordinates of the form $\eta_j\mapsto \lambda_j\eta_j$: any local coordinate change is valid. We will prove the above claim in later sections.

\subsection{}

Let us choose $\Wbb_i\otimes\wht\Wbb_i$ inside $\mc H^\fin$. According to Assumption \ref{lb18}, the eigenvalues of the diagonalizable operators $L_0$ (on $\Wbb_i$) and $\ovl L_0$ (on $\wht\Wbb_i$) are $\geq0$. Now choose eigenvectors $w\in\Wbb_i$ and $\wht w\in\wht\Wbb_i$ with $L_0w=\Delta w, \ovl L_0\wht w=\wht\Delta\wht w$ where $\Delta,\wht\Delta\geq0$.

Here is an important point about the two eigenvalues. They are not necessarily integers, which means that $\lambda^{L_0} w$ and $\ovl\lambda^{\ovl L_0}\wht w$ might be \emph{multivalued with respect to $\lambda$}, i.e., they may also depend on the choice of argument $\arg\lambda$. However, according to the No-Ambiguity Principle \ref{lb19}, the expression
\begin{align*}
	\big(\lambda^{L_0}\otimes\lambda^{\ovl L_0}\big)(w\otimes\wht w)=\lambda^\Delta\ovl\lambda^{\wht\Delta}\cdot w\otimes\wht w
\end{align*}
must be single-valued with respect to $\lambda$, namely, it does not rely on the choice of $\arg\lambda$. As $\lambda=|\lambda|e^{\im\arg\lambda}$ and hence $\lambda^\Delta\ovl\lambda^{\wht\Delta}=|\lambda|^{\Delta+\wht\Delta}e^{\im(\Delta-\wht\Delta)\arg\lambda}$, we conclude that
\begin{align}
\Delta-\wht\Delta\in\Zbb.	\label{eq28}
\end{align}
This gives a constraint on the possible $\Vbb\otimes\wht\Vbb$-submodules of $\mc H^\fin$.

That $\lambda^{L_0}w$ could be multivalued is a crucial property in CFT, and it is not related to conformal anomaly. Indeed, it is related to the non-uniqueness of decomposing $T_\Sigma$ into conformal blocks. Thus, \emph{the No-Ambiguity Principle \ref{lb19} does not hold for conformal blocks}.

\section{Definition of VOAs, I}\label{lb42}

\subsection{}

We first give the rigorous definition of vertex operators algebras and a slightly weaker version, graded vertex algebras. Then we explain the meanings of the axioms.

\begin{df}\label{lb24}
A \textbf{graded vertex algebra} is a (complex) vector space $\Vbb$ together with a diagonalizable operator $L_0$ acting on $\Vbb$ whose eigenvalues are inside $\Nbb$. We write the $L_0$-grading of $\Vbb$ as $\Vbb=\bigoplus_{n\in\Nbb}\Vbb(n)$. (Note: Starting from Sec. \ref{lb155}, we will assume that all $\Vbb(n)$ are finite-dimensional.) Any eigenvector $v$ of $L_0$ (including $0$) is called ($L_0$)-\textbf{homogeneous}, and if $v\in \Vbb(n)$ (i.e. $L_0v=nv$), we write $\wt v=n$ \index{vw@$\wt v,\wtd\wt w$} and call $\wt v$ the \textbf{weight} of $v$. Moreover, we have a linear map
\begin{gather}
\begin{gathered}
\Vbb\rightarrow\big(\End(\Vbb)\big)[[z^{\pm1}]]\\
u\mapsto Y(u,z)\equiv\sum_{n\in\Zbb}Y(u)_nz^{-n-1}
\end{gathered}	
\end{gather}
where each $Y(u)_n\in\End(\Vbb)$ is called a \textbf{(Fourier) mode}.\index{Y@$Y(u)_n$} Here, $z$ is treated as a formal variable. Thus $Y(u,z)v\in\Vbb[[z^{\pm1}]]$ for each $v\in\Vbb$. The reason for associating $z^{-n-1}$ to $Y(u)_n$ is because we could have (recalling \eqref{eq18})
\begin{align}
\Res_{z=0}~Y(u,z)z^ndz=Y(u)_n.	
\end{align}
$Y(u,z)$ is called a \textbf{vertex operator}. 

Moreover, the following axioms are satisfied:
\begin{itemize}
\item There is a distinguished vector $\id\in\Vbb(0)$ called \textbf{vacuum vector} such that
\begin{align*}
	Y(\id,z)=\id_\Vbb.
\end{align*}
Namely $Y(\id)_{-1}=\id_\Vbb$ and $Y(\id)_n=0$ if $n\neq -1$.
\item \textbf{Creation property}: For each $v\in\Vbb$, $Y(v,z)\id=v+\blt z+\blt z^2+\cdots$ where each $\blt$ is in $\Vbb$. Namely,
\begin{align}
Y(v)_{-1}\id =v,	
\end{align}
and $Y(v)_{n}\id=0$ for all $n>-1$. This property is abbreviated to
\begin{align*}
\lim_{z\rightarrow0}Y(v,z)\id=v.	
\end{align*}
\item \textbf{Grading property}: For each $v\in\Vbb$,
\begin{align}
[L_0,Y(v,z)]=Y(L_0v,z)+z\frac d{dz}Y(v,z).	\label{eq19}
\end{align}
\item \textbf{Translation property}: There is a distinguished linear operator $L_{-1}$ on $\Vbb$  such that 
\begin{align}
L_{-1}\id=0,	
\end{align}
and that for each $v\in\Vbb$,
\begin{align}
[L_{-1},Y(v,z)]=\frac d{dz}Y(v,z).
\end{align}
\item \textbf{Jacobi identity}: This is the most crucial yet complicated axiom. We postpone its definition to the next section. (See Def. \ref{lb36}.)
\end{itemize}

We say that $\Vbb$ is a \textbf{vertex operator algebra} (VOA) if $L_0,L_{-1}$ can be extended to a sequence of linear operators $(L_n)_{n\in\Zbb}$ on $\Vbb$ satisfying the Virasoro relation \eqref{eq16} for some central charge $c\in\Cbb$, and if there is a distinguished vector $\cbf\in\Vbb$, called the \textbf{conformal vector}, such that
\begin{align}
Y(\cbf)_n=L_{n-1},\label{eq22}
\end{align}
or equivalently,
\begin{align}
Y(\cbf,z)=\sum_{n\in\Zbb} L_nz^{-n-2}.	
\end{align}
\hfill\qedsymbol
\end{df}

You may wonder why the right hand side of \eqref{eq22} is not $L_n$ or $L_{n-a}$ for some constant $a\neq 1$. Indeed, if it were not $L_{n-1}$, then the Virasoro relation would be not compatible with the Jacobi identity. We will explain this in more details after defining the Jacobi identity. (See Exercise \ref{lb37}.)

We warn the readers that our definitions of graded vertex algebras and VOAs are slightly stronger than the usual ones in the VOA literature, which do not require  $L_0$ to have non-negative eigenvalues. This positivity condition $L_0\geq 0$ is very mild and satisfied by most interesting examples including all unitary ones. Since assuming this condition will simply proofs, we keep it in our definition.

Also, in most interesting cases, each $\Vbb(n)$ is finite-dimensional. We do not include this in our definition of VOA here, but we will assume this fact from Sec. \ref{lb155}.

Most VOA textbooks and articles use either $\omega$ or $\nu$ to denote the conformal vector $\cbf$. In our notes, $\omega$ and $\nu$ are reserved for other meanings and hence do not denote conformal vectors in order to avoid conflicts of notations. 

The reason why we should assume that $\sum L_nz^{-n-2}$ can be written as $Y(\cbf,z)$ for some $\cbf\in\Vbb$ will not be explained in this section. We will explain it in Subsec. \ref{lb43}.

There is a notion of \textbf{unitary VOA} which we do not define in this course (although our motivations are mainly from unitary CFTs). We refer the readers to \cite{CKLW18,DL14} for details.

\subsection{}

Before we give the motivations for these axioms, let us first derive some useful facts.

Expand the series \eqref{eq19} and take the coefficients before each $z^{-n-1}$. This gives us  the following equivalent form of grading property:
\begin{align}
[L_0,Y(v)_n]=Y(L_0v)_n-(n+1)Y(v)_n.	\label{eq20}
\end{align}
To be more concrete, assuming that $v$ is homogeneous, then
\begin{align}
[L_0,Y(v)_n]=(\wt v-n-1)Y(v)_n.	\label{eq21}
\end{align}
Namely: \emph{$Y(v)_n$ raises the weights by $\wt v-n-1$}. It is useful to keep in mind that in the VOA theory, $Y(v)_n$ raises weights when $n$ is sufficiently negative, and lowers weights when $n$ is sufficiently positive. As a related fact, as
\begin{align}\label{eq57}
[L_0,L_n]=-nL_n	
\end{align}
by the Virasoro relation \eqref{eq16}, $L_{-n}$ raises (resp. $L_n$ lowers) the weights by $n$. 

\begin{rem}\label{lb80}
As an application of \eqref{eq57}, we compute $L_n\cbf$ when $n\geq0$. Since
\begin{align}
	\cbf=Y(\cbf)_{-1}\id=L_{-2}\id,	
\end{align}
and since $L_{-2}$ raises the weights by $2$, we see that
\begin{align}
	L_0\cbf=2\cbf.
\end{align}	
By $[L_1,L_{-2}]=3L_{-1}$, $[L_2,L_{-2}]=4L_0+\frac 12 c$, and that $L_n\id=0$ whenever $n>0$ (since its weight is $<0$), we have
\begin{align}
L_1\cbf=0,\qquad L_2\cbf=\frac c2\id.	
\end{align}
\end{rem}

\subsection{}

By \eqref{eq21}, for each $u,v\in\Vbb$, we know that $Y(u)_nv$ vanishes when $n$ is sufficiently large. Equivalently, we have
\begin{align}\label{eq46}
Y(u,z)v\in\Cbb((z)).	
\end{align}
This important fact is called the \textbf{lower truncation property}. It allows us to use meromorphic functions to study VOAs.

In the definition of graded vertex algebras, if  the grading property is replaced by the lower truncation property, and if in particular the diagonalizable $L_0$ is not introduced, then $\Vbb$ is called a \textbf{vertex algebra}. We will not address this most general notion in our notes.

\subsection{}

We let \index{VW@$\Vbb',\Wbb'$, the graded dual spaces}
\begin{align*}
	\Vbb'=\bigoplus_{n\in\Nbb} \Vbb(n)^*
\end{align*}
where $\Vbb(n)^*$ is the dual space of $\Vbb$. $\Vbb'$ is called the \textbf{graded dual space} of $\Vbb$. We let $L_0$ act on $\Vbb'$ such that $L_0v'=nv'$ whenever $v'\in\Vbb(n)^*$. Then $L_0^\tr=L_0$. As before, a \textbf{homogeneous} vector of $\Vbb'$ is either $0$ or an eigenvector of $L_0$. From our definition, it is clear that the evaluation between $\Vbb'(m)=\Vbb(m)^*$ and $\Vbb(n)$ vanishes if $m\neq n$.

\begin{pp}\label{lb29}
For each $u,v\in\Vbb,v'\in\Vbb'$, $\bk{v',Y(u,z)v}:=\sum_{n\in\Zbb}\bk{v',Y(u)_nv}z^{-n-1}$ is a \textbf{Laurent polynomial} of $z$, i.e.,
\begin{align*}
\bk{v',Y(u,z)v}\in\Cbb[z^{\pm1}].	
\end{align*}
\end{pp}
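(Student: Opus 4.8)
The plan is to combine the lower truncation property with the grading property, approaching from both ends of the Laurent series $\bk{v',Y(u,z)v}=\sum_{n\in\Zbb}\bk{v',Y(u)_nv}z^{-n-1}$. Without loss of generality I would reduce to the case where $u$, $v$, $v'$ are all $L_0$-homogeneous, since each of $\Vbb$ and $\Vbb'$ is spanned by homogeneous vectors and the claim is linear in all three arguments. So assume $\wt u$, $\wt v$ are defined and $v'\in\Vbb(m)^*$ for some fixed $m$.

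First I would handle the negative powers of $z$, i.e.\ large $n$. This is exactly the lower truncation property \eqref{eq46}: $Y(u,z)v\in\Vbb((z))$, so $Y(u)_nv=0$ for all sufficiently large $n$. Hence $\bk{v',Y(u)_nv}=0$ for $n\gg 0$, which means only finitely many negative powers of $z$ occur.

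Next I would handle the positive powers of $z$, i.e.\ very negative $n$, using the grading property in the form \eqref{eq21}: since $u$ is homogeneous, $Y(u)_nv$ has weight $\wt u + \wt v - n - 1$, i.e.\ $L_0\big(Y(u)_nv\big)=(\wt u+\wt v-n-1)Y(u)_nv$. Thus $Y(u)_nv\in\Vbb(\wt u+\wt v-n-1)$ (interpreting $\Vbb(k)=0$ for $k\notin\Nbb$). Now pair against $v'\in\Vbb(m)^*$: the evaluation between $\Vbb(m)^*$ and $\Vbb(k)$ vanishes unless $k=m$, as noted just before the Proposition. So $\bk{v',Y(u)_nv}$ can be nonzero only when $\wt u+\wt v-n-1=m$, i.e.\ only for the single value $n=\wt u+\wt v-1-m$. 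Combined with the previous paragraph, only finitely many $n$ contribute in total (in the homogeneous case, in fact at most one), so $\bk{v',Y(u,z)v}\in\Cbb[z^{\pm1}]$, and by linearity this extends to general $u,v,v'$.

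I do not expect a genuine obstacle here; this is a short argument once the two truncation mechanisms are lined up. The only point requiring a little care is the bookkeeping with weights possibly lying outside $\Nbb$ (so that $\Vbb(k)$ must be read as $0$ there) and making sure the reduction to homogeneous vectors is clean — in particular that a general $v'\in\Vbb'$ is a \emph{finite} sum of homogeneous ones, which holds by definition of the graded dual $\Vbb'=\bigoplus_n\Vbb(n)^*$.
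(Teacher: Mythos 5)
Your proposal is correct and uses essentially the same argument as the paper: reduce to homogeneous $u,v,v'$ by linearity, note that $Y(u)_nv$ has weight $\wt u+\wt v-n-1$, and conclude that at most one $n$ pairs nontrivially with $v'$. Your preliminary appeal to the lower truncation property is harmless but redundant, since the single-weight matching already forces finiteness.
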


Thus, when evaluating between \textbf{finite energy vectors} (i.e., vectors of $\Vbb$ and $\Vbb'$), $Y(u,z)$ is not only a formal series, but a meromorphic function of $\Pbb^1$ with poles at $0,\infty$.

\begin{proof}
We must show that $\sum_{n\in\Zbb}\bk{v',Y(u)_nv}z^{-n-1}$ is a finite sum. By linearity, it suffices to assume that $u,v,v'$ are homogeneous. Then $Y(u)_nv$ is homogeneous with weight $\wt u+\wt v-n-1$. So $\bk{v',Y(u)_nv}$ is non-zero only if $\wt v'=\wt u+\wt v-n-1$. Thus
\begin{align*}
\bk{v',Y(u,z)v}=\bk{v',Y(u)_{\wt u+\wt v-\wt v'-1}\cdot v}\cdot z^{\wt v'-\wt u-\wt v}.	
\end{align*}
\end{proof}

\begin{rem}\label{lb30}
The formula $\lim_{z\rightarrow 0}Y(u,z)\id$ can now be understood in an analytic sense: By the creation property, for each $v'\in\Vbb$, $\bk{v',Y(u,z)\id}$ is a polynomial of $z$ since it has no negative powers of $z$. So 
\begin{align}
	\lim_{z\rightarrow 0}\bk{v',Y(u,z)\id}=\bk{v',u}	\label{eq38}
\end{align}
where the left hand side is the limit of a polynomial function.
\end{rem}

\subsection{}
The grading  and the translation properties were presented in the ``derivative form". We shall present them in the integral form. To prepare for this task, we introduce \index{VW@$\Vbb^\cl,\Wbb^\cl$, the algebraic completions}
\begin{align}
	\Vbb^\cl:=\prod_{n\in\Nbb}\Vbb(n)=\big\{(v_0,v_1,v_2,\dots):v_n\in\Vbb(n)\big\},	
\end{align}
called the \textbf{algebraic completion} of $\Vbb$. $\Vbb^\cl$ is a naturally a subspace of the dual space $(\Vbb')^*$ of $\Vbb'$. (Indeed, we are mostly interested in the case that each $\Vbb(n)$ is finite dimensional. In such case, one checks easily that $\Vbb^\cl=(\Vbb')^*$.) We let \index{Pn@$P_n$}
\begin{align}
P_n:\Vbb^\cl\rightarrow \Vbb(n),\qquad (v_0,v_1,v_2,\dots)\mapsto v_n\label{eq89}	
\end{align}
be the canonical projection onto the $n$-th component. Then for each $z\in\Cbb^\times=\Cbb\setminus\{0\}$, we have
\begin{align*}
	Y(u,z)v\in\Vbb^\cl
\end{align*} 
whose projection onto $\Vbb(\wt u+\wt v-n-1)$ is $Y(u)_nv\cdot z^{-n-1}$. 

Note that $L_0$ and $\lambda^{L_0}$ act on $\Vbb^\cl$ in an  obvious way:
\begin{gather*}
L_0(v_n)_{n\in\Nbb}=(nv_n)_{n\in\Nbb},\qquad \lambda^{L_0}	(v_n)_{n\in\Nbb}=(\lambda^n v_n)_{n\in\Nbb}.
\end{gather*}

\subsection{}

\begin{pp}[\textbf{Scale covariance}]\label{lb63}
For each $\lambda\in\Cbb^\times$, we have
\begin{align}
\lambda^{L_0}Y(u,z)\lambda^{-L_0}v=Y(\lambda^{L_0}u,\lambda z)v\label{eq23}
\end{align}
on the level of $\Vbb^\cl$. We drop the symbol $v$ and simply write the above relation as
\begin{align*}
\lambda^{L_0}Y(u,z)\lambda^{-L_0}=Y(\lambda^{L_0}u,\lambda z).
\end{align*}
\end{pp}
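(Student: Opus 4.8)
The plan is to verify \eqref{eq23} by comparing Fourier modes on both sides, using the grading property \eqref{eq21} in its homogeneous form. By linearity it suffices to assume $u\in\Vbb$ and $v\in\Vbb$ are $L_0$-homogeneous. Then $Y(u)_nv$ is homogeneous of weight $\wt u+\wt v-n-1$, so on the left-hand side I would compute
\begin{align*}
\lambda^{L_0}Y(u,z)\lambda^{-L_0}v=\lambda^{-\wt v}\sum_{n\in\Zbb}\lambda^{L_0}\big(Y(u)_nv\big)z^{-n-1}=\sum_{n\in\Zbb}\lambda^{\wt u-n-1}Y(u)_nv\cdot z^{-n-1}.
\end{align*}
On the right-hand side, $\lambda^{L_0}u=\lambda^{\wt u}u$, and substituting $\lambda z$ for the formal variable gives
\begin{align*}
Y(\lambda^{L_0}u,\lambda z)v=\lambda^{\wt u}\sum_{n\in\Zbb}Y(u)_nv\cdot(\lambda z)^{-n-1}=\sum_{n\in\Zbb}\lambda^{\wt u-n-1}Y(u)_nv\cdot z^{-n-1},
\end{align*}
which matches term by term.

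The main point to be careful about — and I expect this to be the only real obstacle — is making sense of all these manipulations in $\Vbb^\cl$ rather than in $\Vbb$ or in a formal series ring. For fixed $z\in\Cbb^\times$ and $\lambda\in\Cbb^\times$, each side is a genuine element of $\Vbb^\cl=\prod_{n}\Vbb(n)$, and the equality should be checked componentwise via the projections $P_m$ of \eqref{eq89}: on the $m$-th graded piece only the single term $n=\wt u+\wt v-m-1$ contributes, so both sides reduce on $\Vbb(m)$ to $\lambda^{m-\wt v}\,Y(u)_{\wt u+\wt v-m-1}v\cdot z^{-(n+1)}$, and the infinite sums cause no trouble. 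The action of $\lambda^{L_0}$ on $\Vbb^\cl$ is the obvious diagonal one recalled just before the statement, and the substitution $z\mapsto\lambda z$ is legitimate precisely because, after evaluating at a nonzero complex $z$, $Y(u,z)v$ is an honest vector of $\Vbb^\cl$ with $z$-powers attached componentwise. Reassembling the graded pieces then yields \eqref{eq23}, and dropping $v$ gives the stated operator form.
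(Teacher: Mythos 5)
Your proof is correct, but it takes a different route from the paper. You verify \eqref{eq23} directly, mode by mode: since $\lambda^{L_0}$ acts diagonally and each $Y(u)_nv$ is homogeneous of weight $\wt u+\wt v-n-1$, both sides collapse on each graded piece $\Vbb(m)$ to the single term $\lambda^{m-\wt v}Y(u)_{\wt u+\wt v-m-1}v\cdot z^{-n-1}$, and your care about interpreting the infinite sum componentwise in $\Vbb^\cl$ via the projections $P_m$ is exactly what is needed to make the substitution $z\mapsto\lambda z$ legitimate. The paper instead pairs both sides with $v'\in\Vbb'$, observes that each becomes a holomorphic function of $\lambda$ on $\Cbb^\times$ agreeing at $\lambda=1$, and shows both satisfy the ODE $\partial_\lambda f=(\wt v'-\wt v)\lambda^{-1}f$, concluding by uniqueness of solutions. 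Your argument is shorter and more elementary, and for this particular statement it is arguably the natural one; the paper even remarks afterwards that one should be able to ``immediately see'' the equivalence of the derivative and integral forms. What the paper's differential-equation method buys is a template: the same strategy is reused for translation covariance (Prop.~\ref{lb22}), where $e^{\tau L_{-1}}$ does not act diagonally and a termwise comparison is no longer available, and later for Huang's change-of-coordinate theorem. So your proof is a perfectly good replacement here, but it does not generalize in the way the paper's proof is designed to.
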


The method in the following proof will appear repeatedly in our notes. 

\begin{proof}
Recall $L_0^\tr=L_0$. Fix $z\in\Cbb^\times$. We prove that for each homogeneous $u,v,v'$,
\begin{align}
\bk{\lambda^{L_0}v',Y(u,z)\lambda^{-L_0}v}	=\bk{v',Y(\lambda^{L_0}u,\lambda z)v}.
\end{align}	
The left hand side $f$ is a scalar times $\lambda^{\wt v'-\wt v}$, and the right hand side $g$ is a Laurent polynomial of $\lambda$. So both are holomorphic functions on $\Cbb^\times$. Clearly these two expressions are equal when $\lambda=1$. Let us prove that they are equal for all $\lambda\neq 0$ by showing that they satisfy the same differential equation.

From the form of $f$, it is clear that $\partial_\lambda f(\lambda)=(\wt v'-\wt v)\lambda^{-1}f(\lambda)$. To compute $\partial_\lambda g$, we first  compute an easier derivative $\partial_\lambda\bk {v',Y(u,\lambda z)v}$. By the chain rule, we have
\begin{align*}
	\frac\partial{\partial\lambda} \bk{v',Y(u,\lambda z)v}=	z\frac d{d\zeta}\bk{v',Y(u,\zeta)v}\Big|_{\zeta=\lambda z},
\end{align*}
which, due to the grading property, equals 
\begin{align*}
&\lambda^{-1}\Bigbk{v',\big([L_0,Y(u,\lambda z)]-Y(L_0u,\lambda z)\big)v}\\
=&(\wt v'-\wt v-\wt u)\lambda^{-1}\bigbk{v',Y(u,\lambda z)v}.
\end{align*}
So
\begin{align*}
\partial_\lambda g(\lambda)=\partial_\lambda \bigbk{v',Y(\lambda^{L_0}u,\lambda z)v}=	\partial_\lambda \big(\lambda^{\wt u}\bigbk{v',Y(u,\lambda z)v}\big)=(\wt v'-\wt v)\lambda^{-1}g(\lambda).
\end{align*}
\end{proof}

Informally, the integral form \eqref{eq23} (i.e., the scale covariance) also implies the derivative form \eqref{eq20} by taking partial derivative over $\lambda$. Thus, on a non-rigorous level, these two forms are equivalent. But the integral form has a clearer geometric meaning, which we shall give later.

In the above proof, we have done our first serious VOA calculation. You should be so familiar with these computations that you can ``immediately see" the equivalence of the two forms.

The integral form of $[L_{-1},Y(u,z)]=\partial_z Y(u,z)$ is
\begin{align*}
e^{\tau L_{-1}}Y(u,z)e^{-\tau L_{-1}}=Y(u,z+\tau),
\end{align*}
called the \textbf{translation covariance}. You may give an informal proof yourself by checking that both sides satisfy the same ``linear differential equation". A rigorous treatment is more difficult than the scale covariance. So we leave it to the end of this section.

\subsection{}

We now explain the motivations behind the definition of VOAs. Namely, we shall explain how the axioms are natural assumptions from the point of view of the previous two sections. The following explanations are heuristic and non-rigorous.

Recall the non-rigorous ``definition" of $\Vbb$ in \eqref{eq2}. We know that $\Vbb$ and $\wht\Vbb$ are subspaces of $\mc H^\fin$, and the decomposition of $\mc H^\fin$ into $\Vbb\otimes\wht\Vbb$-submodules contains a piece $\Vbb\otimes\wht\Vbb$, which furthermore contains $\Vbb\simeq\Vbb\otimes\id$ and $\wht\Vbb\simeq\id\otimes\wht\Vbb$. The vacuum vector is $\id\simeq\id\otimes\id$.

We have said in Subsection \ref{lb23} that the standard unit closed  disk $\Dbb_1^\cl$ with no input and whose boundary $\Sbb^1$ is parametrized by $z\mapsto z^{-1}$ produces from nothing the vacuum vector $\id\otimes\id$. Namely, the vacuum vector comes from the data $(\Pbb^1;\infty;\zeta^{-1})$ where $\zeta$ is the standard coordinate. This data is equivalent to $(\Pbb^1;\infty;\lambda^{-1}\zeta^{-1})$ (where $\lambda\in\Cbb^\times$) via the biholomorphism $z\in\Pbb^1\mapsto \lambda z\in\Pbb^1$. By the change of local coordinate formula (Principle \ref{lb19}), the later geometric data produces uniquely the vector $(\lambda^{L_0}\otimes {\ovl\lambda}^{\ovl L_0})\id$, which is equal to $\id$ by the equivalence of the two geometric data. Apply $\partial_{\lambda}$ and $\partial_{\ovl\lambda}$ to $(\lambda^{L_0}\otimes {\ovl\lambda}^{\ovl L_0})\id=\id$, we see that $L_0\id=\ovl L_0\id=0$. This explain $\id\in\Vbb(0)$ in Def. \ref{lb24}.
\begin{align*}
	\vcenter{\hbox{{
				\includegraphics[height=1.5cm]{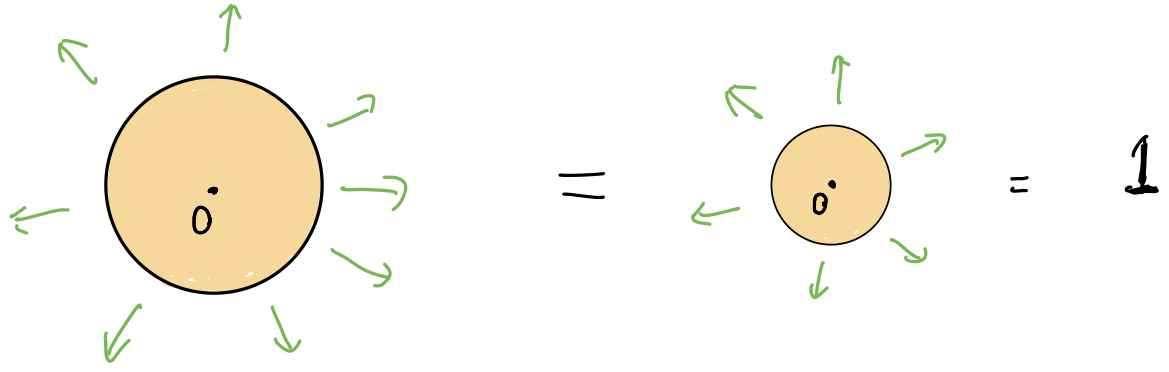}}}}	
\end{align*}
Consequently, by \eqref{eq28}, the eigenvalues of $L_0$ are integers, and hence $\geq0$ integers by the positive energy Assumption \ref{lb18}. This explains $\mathrm{Spec}(L_0)\subset\Nbb$.

Similarly, the standard  disk $\Dbb_1^\cl$ is equivalent to its translation by some $\tau\in\Cbb$. So we must have $(e^{\tau L_{-1}}\otimes e^{\ovl \tau\ovl L_{-1}})\id=\id$ and hence, similarly, $L_{-1}\id=\ovl L_{-1}\id=0$. This explains part of the translation property.
\begin{align*}
	\vcenter{\hbox{{
				\includegraphics[height=1.6cm]{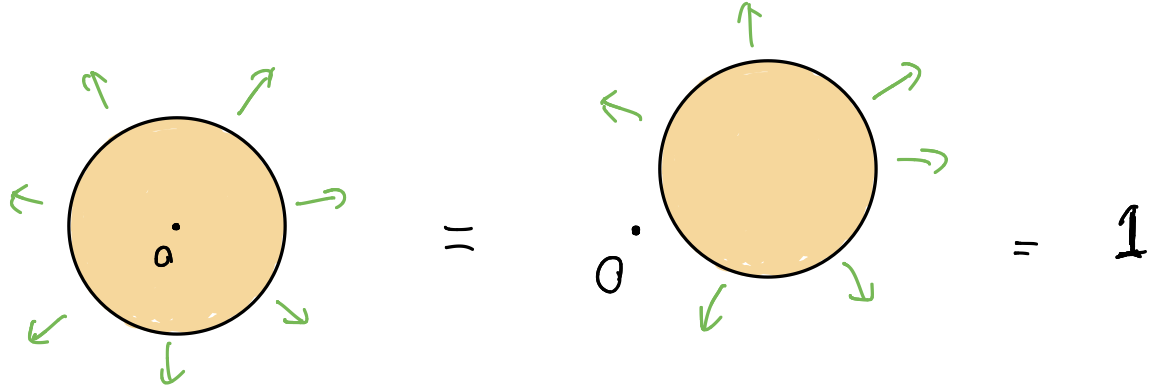}}}}	
\end{align*}

\subsection{}

Recall
\begin{align}
[L_0,L_n]=-nL_n,\qquad [\ovl L_0,\ovl L_n]=-n\ovl L_n.	
\end{align}
As the $L_0$ and $\ovl{L_0}$ spectral are $\geq 0$, and since $\id$ is a zero eigenvectors of them, we must have
\begin{align}
L_n\id=\ovl L_n\id=0\qquad(n\geq -1).\label{eq31}
\end{align}

From \eqref{eq31}, we see that for each $v\in\Vbb$, if the change of boundary parametrization does not involve $L_{-2},L_{-3},\dots$ and $\ovl L_{-2},\ovl L_{-3},\dots$, then all $\ovl L_n$ can be ignored:
\begin{align}
\big(e^{\sum_{n\geq-1}a_n L_n}\otimes  e^{\sum_{n\geq-1}\ovl{a_n} \ovl L_n}\big)v=	e^{\sum_{n\geq-1}a_n L_n}v.\label{eq32}
\end{align}
To see this, identify $v$ with $v\otimes\id\in\Vbb\otimes\wht\Vbb\subset\mc H$ and note that $\id$ is fixed by $e^{\sum_{n\geq-1}\ovl{a_n} \ovl L_n}$.

Thus, we conclude: \emph{The translation of the change of local coordinates formula for vectors of $\Vbb$ does not involve $\ovl L_n$.} In particular, note that the right hand side of \eqref{eq32} is almost a vector of $\Vbb$. It is a genuine vector of $\Vbb$ when it has finite energy. Thus, \emph{the change of local coordinates and the translation almost preserve $\Vbb$}. Indeed, the change of local coordinates  truly preserve $\Vbb$, as we will see in later sections.

A general change of \emph{boundary parametrization} does not necessarily preserve $\Vbb$ in any weak sense.

\subsection{}

Let us describe the meaning of $Y(u,z)v$. For each $z\in\Cbb^\times$, we define a local-coordinated $3$-pointed sphere
\begin{align}
\fk P_z=\{\Pbb^1;0,z,\infty;\zeta,\zeta-z,\zeta^{-1}\}	\label{eq34}
\end{align}
where $\zeta$ is the standard coordinate of $\Cbb$. 

Let us regard $0,z$ as incoming punctures and $\infty$ outgoing. Roughly speaking, $Y(u,z)v$ is just $T_{\fk P_z}(v\otimes u)$ where $v$ is associate to $0$ and $u$ to $z$, understood in a suitable way by change of coordinates. Assume first of all that $0<|z|<1$. After scaling $\zeta$ and $\zeta-z$ to $\lambda_1\zeta,\lambda_2(\zeta-z)$ and hence shrinking the two incoming strings, Assumption \ref{lb11} is satisfied. Let the new $N$-pointed sphere by denoted by $\fk P_z^{\lambda_1,\lambda_2}$. Note that $v$ in the $\zeta$ coordinate becomes $(\lambda_1^{L_0}\otimes\ovl{\lambda_1}^{\ovl L_0})v=\lambda_1^{L_0}v$ in the $\lambda_1\zeta$ coordinate. Similarly, $u$ becomes $\lambda_2^{L_0}u$ in the new coordinate. Then $Y(u,z)v$ is (physically) defined as $T_{\fk P_z^{\lambda_1,\lambda_2}}(\lambda_1^{L_0} v\otimes \lambda_2^{L_0}u)$. 

As in Subsec. \ref{lb25}, we can use the \emph{puncture picture} to view $u$ and $v$ as the states associated to the punctures $0,z$ with respect to the local coordinates $\zeta,\zeta-z$. Or moreover, formulated in a coordinate independent way as in Subsec. \ref{lb15}, we associate the abstract vector $\mc U(\zeta)^{-1}v$ (the one whose explicit expression under the coordinate $\zeta$ is $v$) to the puncture $0$ and $\mc U(\zeta-z)^{-1}v$ to $z$. Then:
\begin{align}
	\vcenter{\hbox{{
				\includegraphics[height=3.5cm]{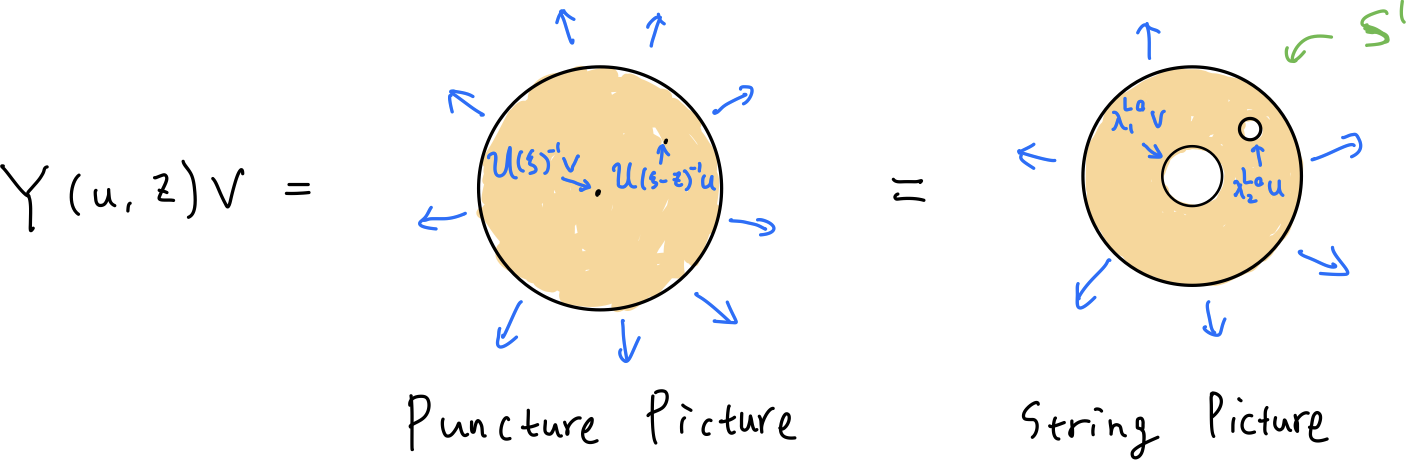}}}}	\label{eq35}
\end{align}
According to the notation in Subsec. \ref{lb15}, the abstract vectors should be written as $\big(\mc U(\zeta)\otimes \mc U(\zeta^*)\big)^{-1}v$ and $\big(\mc U(\zeta-z)\otimes \mc U((\zeta-z)^*)\big)^{-1}u$. Here we suppress the second tensor component because, by \eqref{eq32}, the change of local coordinates for vectors of $\Vbb$ does not involve $\ovl L_n$.

\subsection{}

In the string picture of \eqref{eq35}, setting $u$ to be $\id$ means filling the hole around $z$ using the solid disc. The result we get is an annulus $\mc A_{\lambda_1,1}$ with inside parametrization $\lambda_1\zeta$ and outside one $\zeta^{-1}$.\footnote{We have previously defined an annulus $A_{r,R}$ with incoming string $|z|=r$ and outgoing $|z|=R$. According to that definition, $\mc A_{r^{-1},1}=A_{r,1}$.} According to the change of coordinate formula, the interaction map $\mc H\rightarrow\mc H$ for this annulus satisfies $T_{\mc A_{\lambda_1,1}}(\lambda_1^{L_0}v)=T_{A_{1,1}}v=v$. This explains $Y(\id,z)v=v$.

If we set $v=\id$ instead, then we fill the hold around $0$ with the solid disc. The result we get is an eccentric annulus $\mc A_{z,\lambda_2,1}$ with inside boundary parametrization $\lambda_2(\zeta-z)$ and outside one $\zeta^{-1}$. Let $T_{\mc A_{z,\lambda_2,1}}:\mc H\rightarrow\mc H$ be the interaction map. Then, by \eqref{eq35}, $Y(u,z)\id=T_{\mc A_{z,\lambda_2,1}}(\lambda_2^{L_0}u)$. Let $z\rightarrow 0$. Then $\mc A_{z,\lambda_2,1}$ converges to $\mc A_{0,\lambda_2,1}$, which is just the concentric annulus $\mc A_{\lambda_2,1}$. We have $T_{\mc A_{\lambda_2,1}}(\lambda_2^{L_0}u)=u$. This explains $\lim_{z\rightarrow 0} Y(u,z)\id=u$.

\subsection{}

For a general $z\in\Cbb^\times$, in the string picture, we must also shrink the outgoing string in order to get a true surface $\Sigma$. We thus choose $\lambda\in\Cbb^\times$ with $|\lambda|>1$. Let
\begin{align*}
\fk P_z^{\lambda_1,\lambda_2,\lambda}=\{\Pbb^1;0,z,\infty;\lambda_1\zeta,\lambda_2(\zeta-z);\lambda\zeta^{-1}\}.	
\end{align*}
Then $Y(u,z)v$ is physically ``defined" to be
\begin{align}
Y(u,z)v=\lambda^{L_0}T_{\fk P_z^{\lambda_1,\lambda_2,\lambda}}(\lambda_1^{L_0}v\otimes\lambda_2^{L_0}u).	
\end{align}
In the puncture picture, it is
\begin{align*}
	\vcenter{\hbox{{
				\includegraphics[height=3.5cm]{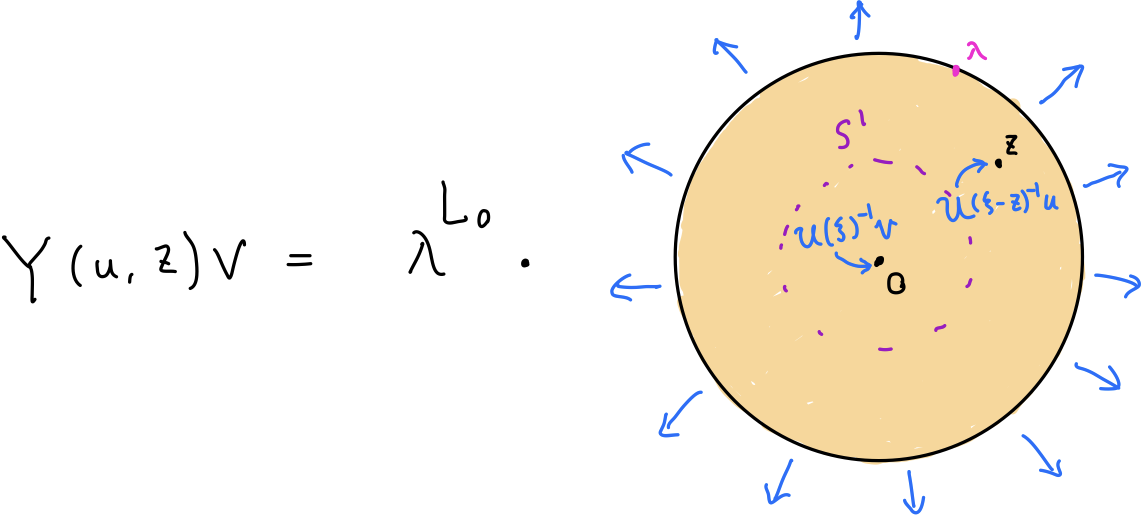}}}}	
\end{align*}
The meaning of $Y(u)_n$ is clear:
\begin{align*}
\bk{v',Y(u)_n v}=\oint_0 \bk{v',Y(u,z)v} z^n\cdot\frac{dz}{2\im\pi}=\Res_{z=0}~	\bk{v',Y(u,z)v} z^ndz.
\end{align*}
where the subscript under $\oint$ means that the integral is over any loop around $0$.

\subsection{}

If we prefer not to scale $\zeta^{-1}$, we can make the output point $\infty$ input. To do this, note that from Subsec. \ref{lb27} and \ref{lb26}, we know that each $\Theta(\Wbb_i\otimes\wht\Wbb_i)$ is equivalent to $\Wbb_i'\otimes\wht\Wbb_i'$, the space of finite energy dual vectors on $\Wbb_i\otimes\wht\Wbb_i$. In the case of $\Vbb$, we get an equivalence
\begin{align*}
\Theta\Vbb\xrightarrow{\simeq}\Vbb',\qquad \Theta w\mapsto\bk{\Theta w,\cdot}\big|_\Vbb=\bk{w|\cdot}\big|_\Vbb	
\end{align*}
where $\bk{\cdot,\cdot}$ is the correlation function associated to $A_{1,1}$. (From $\bk{w|\cdot}\big|_\Vbb$ you can see why this linear map is an isomorphism. Here, you may assume each $\Vbb(n)$ is finite-dimensional, or even pretend that $\Vbb$ is finite dimensional.) Then in the puncture picture, the vertex operator and the correlation function of $\fk P_z$ (restricted to a linear functional on $\Vbb\otimes\Vbb\otimes\Theta\Vbb\simeq \Vbb\otimes\Vbb\otimes\Vbb'$) are related by
\begin{align*}
\bk{\Theta w,Y(u,z)v}=\bk{w|Y(u,z)v}\xlongequal{\eqref{eq1}}		\vcenter{\hbox{{
			\includegraphics[height=2cm]{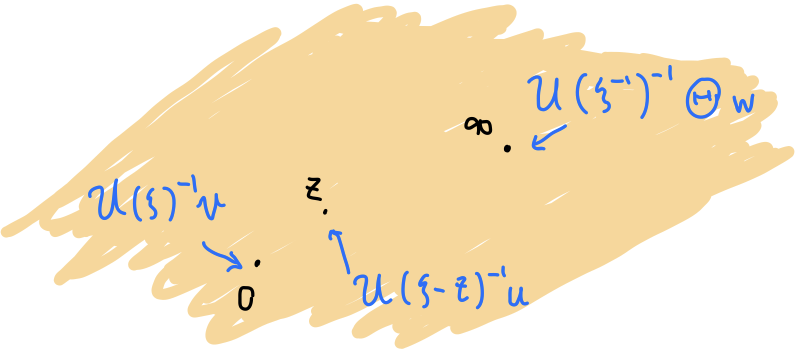}}}}	
\end{align*}
for all $u,v,w\in\Vbb$ and hence $\Theta w\in\Theta\Vbb\simeq\Vbb'$.

\subsection{}

We actually have
\begin{align}
	\Theta\Vbb=\Vbb
\end{align}
and similarly $\Theta\wht\Vbb=\wht\Vbb$. An explanation is as follows:

\begin{proof}
First of all, $\Theta$ maps finite energy vectors to finite energy ones since $\Theta$ commutes with the energy operators $L_0\otimes\id,\id\otimes\ovl L_0$. (See Subsec. \ref{lb28}.) By the physical definition of $\Vbb$ in \eqref{eq2}, for each $u\in\Vbb$, the correlation function $T_z$ associated to $\fk P_{z,r,R}=(\Pbb^1;z,\infty;(\zeta-z)/r,R/\zeta)$ varies holomorphically if $u$ is associated to the puncture $z$. Namely, $T_z(u\otimes\nu)$ is holomorphic for all $\nu\in\mc H$. It is easy to see that the conjugate of $\fk P_{z,r,R}$ is equivalent (via the standard conjugation of the complex plane) to $\fk P_{\ovl z,r,R}=(\Pbb^1;\ovl z,\infty;(\zeta-\ovl z)/r,R/\zeta)$, whose correlation function is $T_{\ovl z}$. Thus, by \eqref{eq30} 
\begin{align*}
	T_z(\Theta u\otimes \nu)=\ovl{T_{\ovl z}(u\otimes\Theta\nu)},	
\end{align*}
which is also holomorphic over $z$. This proves $\Theta u\in\Vbb$ if $u\in\Vbb$.
\end{proof}

Consequently, $\Vbb=\Theta\Vbb\simeq\Vbb'$. The equivalence is given by
\begin{align}
	\Vbb\xrightarrow{\simeq}\Vbb',\qquad u\mapsto \bk{u,\cdot}	
\end{align}
Due to this equivalence, we call the VOA $\Vbb$ to be \textbf{self-dual}.

So, in all unitary CFTs (and indeed, also in many non-unitary CFTs), the VOAs are self-dual. We remark that there is a mathematically rigorous definition of self-dualness, which plays an important role in the tensor categories of $\Vbb$-modules. However, the definition of a general VOA does not require self-dualness, because many properties can be derived without assuming self-dualness.

\subsection{}\label{lb31}

Let $\zeta$ be the standard coordinate of $\Cbb$ as usual. For each $\lambda\neq0$, we have an equivalence
\begin{align}
(\Pbb^1;0,z,\infty;\zeta,\zeta-z,\zeta^{-1})\simeq (\Pbb^1;0,\lambda z,\infty;\lambda^{-1}\zeta,\lambda^{-1}\zeta-z,\lambda\zeta^{-1})\label{eq36}
\end{align}
realized by the biholomorphism $\gamma\mapsto \lambda\gamma$ of $\Pbb^1$. (You should check that the pullback of the local coordinates on the right hand side equal those on the left.) The correlation function for the left hand side, evaluating on $v\otimes u\otimes w\in\Vbb^{\otimes 3}$, is $\bk{w,Y(u,z)v}$. The right hand side of \eqref{eq36} is obtained by scaling the local coordinates of $(\Pbb^1;0,\lambda z,\infty;\zeta,\zeta-\lambda z,\zeta^{-1})$ (whose correlation function on $\Vbb^{\otimes 3}$ takes the form $\bk{w,Y(u,\lambda z)v}$) by $\lambda^{-1},\lambda^{-1},\lambda$ respectively. By the change of coordinate formula, the correlation function for the right hand side of \eqref{eq36}, denoted temporarily by $\omega$, must satisfy
\begin{align*}
\bk{w,Y(u,\lambda z)v}=\omega(\lambda^{-L_0}v\otimes \lambda^{-L_0}u\otimes\lambda^{L_0}w),	
\end{align*}
namely, $\omega$ should be $\bk{\lambda^{-L_0}w,Y(\lambda^{L_0}u,\lambda z)\lambda^{L_0}v}$. This last equation must equal  $\bk{w,Y(u,z)v}$ due to the equivalence \eqref{eq36}. This explains the scale covariance.

\subsection{}\label{lb40}

Similarly, for each $\tau\in\Cbb$, consider the equivalence
\begin{align}
	(\Pbb^1;0,z,\infty;\zeta,\zeta-z,\zeta^{-1})\simeq \big(\Pbb^1;\tau,z+\tau,\infty;\zeta-\tau,\zeta-z-\tau,\frac 1{\zeta-\tau}\big)\label{eq37}
\end{align}
induced by the biholomorphism $\gamma\mapsto\gamma+\tau$ of $\Pbb^1$. The right hand side is a change of parametrization from $(\Pbb^1;0,z+\tau,\infty;\zeta,\zeta-z-\tau,\zeta^{-1})$ (whose correlation function is $\bk{w,Y(u,z+\tau)v}$), where $\zeta$ is changed to $\zeta-\tau$ (which is a translation), and $\eta:=\zeta^{-1}$ is changed to $1/(\eta^{-1}-\tau)$. The translation corresponds to $e^{-\tau L_{-1}}$. The second change of coordinate is $\exp(\tau z^{2}\partial_z)$ due to Ex. \ref{lb20}, which gives $e^{\tau L_1}$.

Let $\omega$ now be the correlation function (restricted to $\Vbb^{\otimes 3}$) of the right hand side. Then we have
\begin{align*}
\bk{w,Y(u,z+\tau)v}=\omega(e^{-\tau L_{-1}}v\otimes u\otimes e^{\tau L_1}w).
\end{align*} 
So $\omega$ is $\bk{e^{-\tau L_1}w,Y(u,z+\tau)e^{\tau L_1}v}=\bk{w,e^{-\tau L_{-1}}Y(u,z+\tau)e^{\tau L_1}v}$, which must equal $\bk{w,Y(u,z)v}$ due to the equivalence \eqref{eq37}. This explains the translation covariance.

\begin{exe}
Find a geometric explanation of $Y(u,z+\tau)=Y(e^{\tau L_{-1}}u,z)$.
\end{exe}

There is a another shorter geometric explanation of translation covariance: $e^{\tau L_{-1}}Y(u,z)v$ amounts to moving the outgoing large string in the string picture in \eqref{eq35} by $-\tau$. This is the same as fixing the outgoing string and translating the two incoming strings by $\tau$. Translating the one around $0$ changes $v$ to $e^{\tau L_{-1}}v$, and translating the one around $z$ just changes $z$ to $z+\tau$.

This second explanation is however less rigorous than the first one. But the first one is not  rigorous anyway. So why should we care about the issue of rigor here? Well, our first geometric explanation for translation covariance, as well as the one in Subsec. \ref{lb31} for rotation covariance, is much more rigorous in the sense that you can easily get the correct formulas using this method. You may try and give a short explanation for rotation covariance using our second method. Then you will realize that it is not easy to get the correct formula since the change of local coordinates is not so easy to visualize.

\subsection{}

Now we return to rigorous mathematics. We are going to prove translation covariance rigorously. For that purpose, we need to generalize the differential equation method in the proof of scale covariance to the following vector-valued form:

\begin{lm}\label{lb21}
Let $W$ be a (non-necessarily finite dimensional) vector space, and $f\in W[[z]]$. Suppose that $\frac d{dz}f(z)=Af(z)$ for some $A\in\End(W)$. Suppose also that $f(0)=0$, namely, the constant term in the power series $f(z)$ is $0$. Then $f=0$.	
\end{lm}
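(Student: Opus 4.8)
The plan is to argue coefficient-by-coefficient on the power series $f(z)=\sum_{n\in\Nbb}f_nz^n$ with each $f_n\in W$. Write $A\in\End(W)$ and expand the hypothesis $\frac{d}{dz}f(z)=Af(z)$: the left side is $\sum_{n\geq 1}nf_nz^{n-1}=\sum_{n\geq 0}(n+1)f_{n+1}z^n$, while the right side is $\sum_{n\geq 0}(Af_n)z^n$. Comparing coefficients of $z^n$ for each $n\in\Nbb$ gives the recursion $(n+1)f_{n+1}=Af_n$, i.e. $f_{n+1}=\frac{1}{n+1}Af_n$.

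Next I would feed in the initial condition $f(0)=0$, which says precisely $f_0=0$. Then an easy induction on $n$ using the recursion shows $f_n=0$ for all $n\in\Nbb$: the base case is $f_0=0$, and if $f_n=0$ then $f_{n+1}=\frac{1}{n+1}Af_n=0$. Hence $f=0$ as an element of $W[[z]]$. (Implicitly one uses that $\tfrac{1}{n+1}$ makes sense because $W$ is a complex vector space, which is fine here.)

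There is really no main obstacle: the statement is a formal analogue of the uniqueness theorem for linear ODEs, and the formal-power-series setting makes it completely elementary — no convergence or analyticity is needed, since we never sum the series. The only point worth a sentence of care is that the identity $\frac{d}{dz}f=Af$ is to be read coefficientwise in $W[[z]]$, so that extracting coefficients is legitimate; once that is granted, the recursion $f_{n+1}=\tfrac{1}{n+1}Af_n$ together with $f_0=0$ forces everything to vanish. This lemma will then be applied with $f(z)$ the difference of the two sides of translation covariance (as a $\Vbb^\cl$-valued, or matrix-coefficient, power series), $A$ essentially $\mathrm{ad}_{L_{-1}}$, and $f(0)=0$ the equality at $\tau=0$.
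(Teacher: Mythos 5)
Your proof is correct and is essentially identical to the paper's: both expand $f(z)=\sum_n f_nz^n$, compare coefficients to get the recursion $(n+1)f_{n+1}=Af_n$, and induct from $f_0=0$. Nothing further is needed.
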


\begin{proof}
Write $f(z)=\sum_{n\in\Nbb}f_nz^n$ where each $f_n\in W$. The assumptions say that $f_0=0$ and
\begin{align*}
\sum_{n\in\Nbb} nf_nz^{n-1}=\sum_{n\in\Nbb} Af_nz^n.	
\end{align*}
So $nf_n=Af_{n-1}$ where $n>0$. This proves that all $f_n$ are $0$.
\end{proof}

\subsection{}

We have said that the integral form of $[L_{-1},Y(u,z)]=\partial_z Y(u,z)$ is
\begin{align}
\bigbk{v',e^{\tau L_{-1}}Y(u,z)e^{-\tau L_{-1}}v}=\bigbk{v',Y(u,z+\tau)v}.\label{eq24}	
\end{align}
This relation is more difficult to address than the scale covariance since both sides actually involve infinite sums of powers of $\tau$. Our goal is to understand: on which domain does this relation hold? Certainly we need $\tau\neq -z$. But this condition is far from enough.

Let us first understand the two sides as infinite series of $\tau$ and $z$. Assume without loss of generality that $u,v,v'$ are homogeneous. The right hand side is of the form $a(z+\tau)^m$ for some $a\in\Cbb,m\in\Zbb$. Certainly this expression makes sense as a rational function, but we shall first regard it as a formal series of $\tau,z$ by expanding $(z+\tau)^m$ on the domain $|\tau|<|z|$, namely $(z+\tau)^m=\sum_{k\in\Nbb}{m\choose k} z^{m-k}\tau^k$. Thus, the right hand side of \eqref{eq24}, as an element of $\Cbb[z^{\pm 1}][[\tau]]$, is understood as
\begin{align*}
\bigbk{v',Y(u,z+\tau)v}=\sum_{n\in\Zbb}\sum_{k\in\Nbb}{-n-1\choose k}	\bigbk{v',Y(u)_nv}\cdot z^{-n-1-k}\tau^k.
\end{align*}
Here, the sum over $n\in\Zbb$ is finite, and when the vectors are homogeneous, there is only one possibly non-zero summand.

But why do we expand $(z+\tau)^m$ on $|\tau|<|z|$? Why not $|z|<|\tau|$? Well, this will give us $\sum_{k\in\Nbb}{m\choose k}z^k\tau^{m-k}$ which contains negative powers of $\tau$. But the left hand side of \eqref{eq24} actually has only non-negative powers of $\tau$.

So let us turn to the left hand side of \eqref{eq24}. It would be easier to first understand why
\begin{align}
\bigbk{v',e^{\lambda L_{-1}}Y(u,z)e^{-\mu L_{-1}}v}	\label{eq25}
\end{align}
is an element of $\Cbb[z^{\pm1}][[\lambda,\mu]]$. We first want to move $e^{\lambda L_{-1}}$ to the left hand side of the bracket. In general, if $L_n$ is defined on $\Vbb$, we define $L_{-n}$ on $\Vbb'$ to be the transpose of $L_n$: $L_{-n}=L_n^\tr$, or more precisely,
\begin{align}
\bk{L_{-n}v',v}:=\bk{v',L_nv}.	
\end{align}
In case you doubt why this transpose exists, we can write the definition even more precisely: Assume $v'\in\Vbb'(m)$. Then $L_{-n}v'$ is a linear functional on $\Vbb(m+n)$ (so $L_{-n}$ raises the weights by $n$) whose value at any $v\in\Vbb(m+n)$ is $\bk{v',L_nv}$. (Recall that $L_n$ lowers the weights by $n$ so $L_nv\in\Vbb(m)$.) And $L_{-n}v'$ vanishes on $\Vbb(a)$ if $a\neq m+n$.

Now,  \eqref{eq25} equals
\begin{align}
f(z,\lambda,\mu):=\bigbk{e^{\lambda L_1}v',Y(u,z)e^{-\mu L_{-1}}v}=\sum_{n,l\in\Nbb}\frac{\lambda^n(-\mu)^l}{n!l!}\bigbk{L_1^nv',Y(u,z)L_{-1}^lv}.	
\end{align}
This is in $\Cbb[z^{\pm1}][[\lambda,\mu]]$. Indeed, it is in $\Cbb[z^{\pm1}][[\mu]][\lambda]$ since $L_1^nv'$ lowers the weight by $n$, and hence vanishes when $n>\wt v'$. But we will not need this fact here.

Now, the left hand side of \eqref{eq24} can be understood as $f(z,\tau,\tau)$, noting the following fact:
\begin{lm}
Let $W$ be a vector space. If $\varphi(z_1,\dots,z_N)\in W[[z_1,\dots,z_N]]$, then $\varphi(z,\dots,z)$ naturally makes sense as an element of $W[[z]]$.
\end{lm}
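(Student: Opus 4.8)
The plan is to make the substitution explicit at the level of coefficients and reduce everything to a single finiteness statement. Write $\varphi(z_1,\dots,z_N)=\sum_{\mbf k\in\Nbb^N}w_{\mbf k}\,z_1^{k_1}\cdots z_N^{k_N}$, where $\mbf k=(k_1,\dots,k_N)$ and each $w_{\mbf k}\in W$. If one naively sets $z_1=\cdots=z_N=z$, the monomial indexed by $\mbf k$ becomes $w_{\mbf k}z^{|\mbf k|}$ with $|\mbf k|:=k_1+\cdots+k_N$, so one is led to \emph{define}
\[
\varphi(z,\dots,z):=\sum_{d\in\Nbb}\Big(\sum_{\mbf k\in\Nbb^N,\ |\mbf k|=d}w_{\mbf k}\Big)z^d .
\]

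First I would check that this is a legitimate element of $W[[z]]$, i.e. that each displayed coefficient actually lies in $W$. The only point is whether the inner sum $\sum_{|\mbf k|=d}w_{\mbf k}$ is finite, and it is: the fiber $\{\mbf k\in\Nbb^N:|\mbf k|=d\}$ is a finite set (of cardinality $\binom{d+N-1}{N-1}$), being contained in $\{0,1,\dots,d\}^N$. Hence the coefficient of $z^d$ is a finite sum of elements of $W$, so it is in $W$, and $\varphi(z,\dots,z)\in W[[z]]$ as claimed.

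Then I would record what ``naturally'' means: this assignment is the unique one that is $\Cbb$-linear in $\varphi$, sends $w\,z_1^{k_1}\cdots z_N^{k_N}$ to $w\,z^{k_1+\cdots+k_N}$, and commutes with finite-degree truncation, in the sense that truncating $\varphi$ to total degree $\le D$ and then substituting gives the same series as substituting and then truncating to degree $\le D$ (both equal $\sum_{d\le D}(\sum_{|\mbf k|=d}w_{\mbf k})z^d$), simply because substitution does not mix different total degrees. There is no genuine obstacle here; the entire content is the finiteness of $\{\mbf k\in\Nbb^N:|\mbf k|=d\}$. The one remark worth making is that this finiteness — and hence the lemma — fails as soon as one allows negative exponents in two or more variables (in $W[[z_1^{\pm1},z_2^{\pm1}]]$ the fiber over a fixed total degree is infinite), which is precisely why iterated Laurent series such as $W((z_1))((z_2))$ must be handled with more care and why the diagonal substitution was isolated as a separate lemma.
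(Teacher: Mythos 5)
Your proof is correct and follows essentially the same route as the paper's: expand $\varphi$ in coefficients, group terms by total degree, and note that the set of multi-indices with a fixed total degree is finite. The extra remarks on naturality and on the failure for Laurent series are fine but not needed for the lemma itself.
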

\begin{proof}
Write $\varphi(z_\blt)=\sum a_{n_1,\dots,n_N}z_1^{n_1}\cdots z_N^{n_N}$. Then
\begin{align*}
\varphi(z,\dots,z)=\sum_{n\in\Nbb} \sum_{n_1+\cdots+n_N=n}a_{n_1,\dots,n_N} z^n
\end{align*}
where the inside sum is clearly finite.
\end{proof}

\subsection{}

\begin{pp}[\textbf{Translation covariance}]\label{lb22}
For each $u,v\in\Vbb,v'\in\Vbb'$, the following equation holds on the level of $\Cbb[z^{\pm1}][[\tau]]$:
\begin{align}
\bigbk{v',e^{\tau L_{-1}}Y(u,z)e^{-\tau L_{-1}}v}=\bigbk{v',Y(u,z+\tau)v}.\label{eq26}	
\end{align}
Here, the right hand side, which is a priori a Laurent polynomial of $z+\tau$, is expanded as if $|\tau|<|z|$.
\end{pp}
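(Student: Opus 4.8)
The plan is to reduce the translation covariance identity \eqref{eq26} to the differential-equation uniqueness lemma, Lemma \ref{lb21}, applied to a suitable power series in $\tau$ with coefficients in the vector space $W=\Cbb[z^{\pm1}]$. Concretely, set
\begin{align*}
F(z,\tau)=\bigbk{v',e^{\tau L_{-1}}Y(u,z)e^{-\tau L_{-1}}v},\qquad G(z,\tau)=\bigbk{v',Y(u,z+\tau)v},
\end{align*}
where, as explained in the discussion preceding the statement, $F$ is understood via $F(z,\tau)=f(z,\tau,\tau)$ with $f(z,\lambda,\mu)=\bigbk{e^{\lambda L_1}v',Y(u,z)e^{-\mu L_{-1}}v}\in\Cbb[z^{\pm1}][[\lambda,\mu]]$, and $G$ via the expansion of $(z+\tau)^m$ on $|\tau|<|z|$, so that both $F$ and $G$ live in $\Cbb[z^{\pm1}][[\tau]]$. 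Put $\Delta(z,\tau)=F(z,\tau)-G(z,\tau)$. I want to show $\Delta=0$.

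First I would check the initial condition: at $\tau=0$ both sides reduce to $\bigbk{v',Y(u,z)v}$, so the constant term (in $\tau$) of $\Delta$ vanishes, i.e. $\Delta(z,0)=0$ in $\Cbb[z^{\pm1}]$. Next I would derive the differential equation $\partial_\tau\Delta=A\Delta$ for a fixed operator $A\in\End(\Cbb[z^{\pm1}])$, namely $A=\frac{d}{dz}$ (which indeed preserves $\Cbb[z^{\pm1}]$). For the right side this is immediate: $\partial_\tau G=\partial_\tau\bigbk{v',Y(u,z+\tau)v}=\frac{d}{dz}\bigbk{v',Y(u,z+\tau)v}=\frac{d}{dz}G$, since differentiating the expansion of $(z+\tau)^m$ in $\tau$ or in $z$ gives the same series. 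For the left side, I would compute $\partial_\tau f(z,\tau,\tau)=(\partial_\lambda f+\partial_\mu f)(z,\tau,\tau)$, using $\partial_\lambda$ to bring down a factor $L_1$ acting on $v'$ (equivalently $L_{-1}$ coming back across the bracket) and $\partial_\mu$ a factor $-L_{-1}$ acting on $v$; combining the two and invoking the translation property $[L_{-1},Y(u,z)]=\frac{d}{dz}Y(u,z)$ on the homogeneous-vector level rewrites the sum as $\frac{d}{dz}F$. This is the one genuinely computational step, and it is the analogue of the derivative-to-integral passage already carried out for scale covariance in the proof of Proposition \ref{lb63}; I would reduce to homogeneous $u,v,v'$ so everything is a finite sum and the manipulations are legitimate inside $\Cbb[z^{\pm1}][[\tau]]$.

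With $\partial_\tau\Delta=\frac{d}{dz}\Delta$ and $\Delta(z,0)=0$ in hand, Lemma \ref{lb21} with $W=\Cbb[z^{\pm1}]$ and $A=\frac{d}{dz}$ gives $\Delta=0$, which is \eqref{eq26}. The main obstacle I anticipate is bookkeeping rather than conceptual: one must be careful that the transpose operators $L_{\pm1}$ on $\Vbb'$ are well defined on homogeneous vectors (as spelled out before the statement, $L_1^n v'$ eventually vanishes for $n>\wt v'$), that the substitution $\lambda=\mu=\tau$ is the correct interpretation of the left-hand side (so that only nonnegative powers of $\tau$ occur, matching the $|\tau|<|z|$ expansion on the right), and that the identity $[L_{-1},Y(u,z)]=\frac{d}{dz}Y(u,z)$ is applied after moving the exponentials onto $v$ and $v'$. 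Once the differential equation is set up correctly, the uniqueness lemma closes the argument immediately.
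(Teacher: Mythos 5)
Your proposal is correct and follows essentially the same route as the paper: the same auxiliary two-variable series $f(z,\lambda,\mu)=\bigbk{e^{\lambda L_1}v',Y(u,z)e^{-\mu L_{-1}}v}$, the same chain-rule computation of $\partial_\tau$, the same initial-condition check at $\tau=0$, and the same appeal to Lemma \ref{lb21}. The only (valid) variation is that you push the translation property into the left-hand side so as to close the ODE as $\partial_\tau\Delta=\frac{d}{dz}\Delta$ with the fixed operator $A=\frac{d}{dz}\in\End(\Cbb[z^{\pm1}])$, whereas the paper keeps $v,v'$ as variables, takes $A$ to be precomposition with $\id\otimes L_1-L_{-1}\otimes\id$ on $\Hom(\Vbb\otimes\Vbb',\Cbb[z^{\pm1}])$, and applies the translation property on the right-hand side instead; both choices make Lemma \ref{lb21} applicable.
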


\begin{proof}
Let $f_z(\tau)$ and $g_z(\tau)$ be the left and the right hand sides of \eqref{eq26}, considered as formal power series of $\tau$ whose coefficients are elements of $\Cbb[z^{\pm 1}]$. Then clearly $f_z(0)=g_z(0)$ as polynomials of $z^{\pm1}$. So, it suffices to prove that $f_z$ and $g_z$ satisfy the same linear differential equation. The left hand side is $f_z(\tau,\tau)$ where
\begin{align*}
f_z(\lambda,\mu)=\bigbk{e^{\lambda L_1}v',Y(u,z)e^{-\mu L_{-1}}v}\qquad\in\Cbb[z^{\pm1}][[\lambda,\mu]].
\end{align*}
As a general result about multivariable formal power series, we have chain rule
\begin{align*}
\partial_\tau f_z(\tau,\tau)=(\partial_\lambda+\partial_\mu)f_z(\lambda,\mu)\big|_{\lambda=\mu=\tau}.	
\end{align*}
(It is reasonable to believe that this is true. But you can also give a rigorous proof by expanding the two series  and check that their coefficients agree!) So, as
\begin{gather*}
\partial_\lambda f_z(\lambda,\mu)=\bigbk{e^{\lambda L_1}L_1v',Y(u,z)e^{-\mu L_{-1}}v},\\
\partial_\mu f_z(\lambda,\mu)=-\bigbk{e^{\lambda L_1}v',Y(u,z)e^{-\mu L_{-1}}L_{-1}v},	
\end{gather*}
we have
\begin{align*}
\partial_\tau f_z(\tau)=\bigbk{e^{\tau L_1}L_1v',Y(u,z)e^{-\tau L_{-1}}v}-\bigbk{e^{\tau L_1}v',Y(u,z)e^{-\tau L_{-1}}L_{-1}v}.
\end{align*}

This expression is not a differential equation of the $\Cbb[z^{\pm1}]$-coefficients power series $f_z$. But we can make it an ODE by fixing $u$, varying $v,v'$, and view $f_z$ as a $\mc V:=\Hom(\Vbb\otimes\Vbb',\Cbb[z^{\pm1}])$-valued power series of $\tau$. Then $\partial_\tau f_z=Af_z$ where $A\in\End\mc V$ is defined by sending each $\Phi:\Vbb\otimes\Vbb'\rightarrow\Cbb[z^{\pm1}]$ to
\begin{align*}
A\Phi=\Phi\circ (\id\otimes L_1-L_{-1}\otimes\id).
\end{align*}

Now, we compute (noting that the following sum is finite for each fixed $u,v$)
\begin{align*}
&\partial_\tau g_z(\tau)=\partial_\tau \bigbk{v',Y(u,z+\tau)v}=\partial_\tau\Big(\sum_n a_n(z+\tau)^n\Big)\\
=& 	\sum_n na_n(z+\tau)^{n-1}=\partial_\zeta\Big(\sum_n a_n\zeta^n\Big)\Big|_{\zeta=z+\tau}=\partial_\zeta \bigbk{v',Y(u,\zeta)v}\big|_{\zeta=z+\tau}.
\end{align*}
By the translation property, the above equals
\begin{align*}
\partial_\tau g_z(\tau)=	\bigbk{v',[L_{-1},Y(u,\zeta)]v}\big|_{\zeta=z+\tau},
\end{align*}
which also equals $Ag_z(\tau)$ if we now vary $v,v'$ and regard $g$ as $\mc V$-valued. Therefore, $f_z(\tau)=g_z(\tau)$ due to Lemma \ref{lb21}.
\end{proof}

\subsection{}

Let us consider a useful  variant of Prop. \ref{lb22}. Notice that  \eqref{eq26} holds if $v'$ is replaced by $L_1^n$ and also both sides are multiplied by $\tau^n$. Thus, \eqref{eq26} holds on the level of $\Cbb[z^{\pm1}][[\tau]]$ if $v'$ is replaced by $e^{-\tau L_1}v'$. Namely:
\begin{align}
	\bigbk{v',Y(u,z)e^{-\tau L_{-1}}v}=\bigbk{e^{-\tau L_1}v',Y(u,z+\tau)v}.\label{eq40}
\end{align}

\begin{rem}
The left hand sides of \eqref{eq26} and \eqref{eq40} converges absolutely when $|\tau|<|z|$ since the right hand side does. These right hand sides are  linear combinations of $(z+\tau)^m$ for some $m\in\Zbb$, whose expansion $\sum_{j,k\in\Zbb}a_{j,k}z^j\tau^k:=\sum_{n\in\Nbb}{m\choose n}z^{m-n}\tau^n$ clearly satisfies
\begin{align}
\sup_{(z,\tau)\in K}	\sum_{j,k\in\Zbb}|a_{j,k}z^j\tau^k|<+\infty
\end{align}
on every compact subset $K$ of $\{(z,\tau):|\tau|<|z|\}$. Thus, the same convergence property holds for the left hand sides of \eqref{eq26} and \eqref{eq40}. We call this property the \textbf{absolute and locally uniform convergence}, which will be the focus of our study in this course.
\end{rem}

Thus, we have actually proved our first convergence result in this course. The method used here is standard in the VOA theory: we show that a formal power series converges by identifying it with the power series expansion of a holomorphic function, which can be achieved with the help of linear differential equations.

\subsection{}

Let us choose $v=\id$ in the formula \eqref{eq26}. Then, as $L_{-1}\id=0$, we obtain
\begin{align}
	\bigbk{v',e^{\tau L_{-1}}Y(u,z)\id}=\bigbk{v',Y(u,z+\tau)\id}\label{eq39}
\end{align}
on the level of $\Cbb[z,\tau]$, since, by Rem. \ref{lb30},  the right hand side is a polynomial of $z+\tau$.  As $z\rightarrow 0$, the left hand side converges to $\bigbk{e^{\tau L_1}v',u}=\bigbk{v',e^{\tau L_{-1}}u}$ by \eqref{eq38}. So we conclude:

\begin{co}\label{lb78}
For each $u\in\Vbb,v\in\Vbb'$, the equation
\begin{align*}
\bigbk{v',e^{\tau L_{-1}}u}=\bigbk{v',Y(u,\tau)\id}	
\end{align*}
holds as polynomials of $\tau$. Equivalently, the equation
\begin{align*}
e^{\tau L_{-1}}u=Y(u,\tau)\id	
\end{align*}
holds on the level of $\Vbb[[\tau]]$, which is equivalent to that for each $n\in\Nbb$,
\begin{align}
Y(u)_{-n-1}\id=\frac 1{n!}L_{-1}^n u.\label{eq52}	
\end{align}
\end{co}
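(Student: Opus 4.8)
The plan is to deduce Corollary~\ref{lb78} directly from the translation covariance result (Prop.~\ref{lb22}), specialized at $v=\id$, together with the creation property and the analytic interpretation of $\lim_{z\to0}Y(u,z)\id$ recorded in Remark~\ref{lb30}. Everything will take place on the level of formal power series of $\tau$ with coefficients that are (Laurent) polynomials of $z$; no new convergence input is needed beyond what Prop.~\ref{lb22} already supplies.

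First I would set $v=\id$ in \eqref{eq26}. Since $L_{-1}\id=0$, we have $e^{-\tau L_{-1}}\id=\id$, so the left side of \eqref{eq26} collapses to $\bigbk{v',e^{\tau L_{-1}}Y(u,z)\id}$ and the identity reads
\begin{align*}
\bigbk{v',e^{\tau L_{-1}}Y(u,z)\id}=\bigbk{v',Y(u,z+\tau)\id}.
\end{align*}
By the creation property, $Y(u,z)\id=u+(\text{higher order in }z)$, i.e. $\bigbk{v',Y(u,z)\id}\in\Cbb[z]$ has no negative powers of $z$; consequently $\bigbk{v',Y(u,z+\tau)\id}$ is genuinely a polynomial in $z+\tau$, and its expansion ``as if $|\tau|<|z|$'' is just the honest polynomial expansion. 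So in fact the above identity holds on the level of $\Cbb[z,\tau]$, as stated in \eqref{eq39}. This is the key simplification: we are no longer dealing with a Laurent tail in $z$, so the substitution $z\to0$ will be legitimate.

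Next I would take the limit $z\to0$. On the right side, $\bigbk{v',Y(u,z+\tau)\id}\big|_{z=0}=\bigbk{v',Y(u,\tau)\id}$, which again by the creation property is a polynomial in $\tau$. On the left side, $e^{\tau L_{-1}}$ acts on the $\Cbb[z]$-valued quantity $Y(u,z)\id$; moving $e^{\tau L_{-1}}$ onto $v'$ as its transpose $e^{\tau L_1}$ (using the definition $L_{-1}^{\tr}=L_1$ on $\Vbb'$, as recalled just before Prop.~\ref{lb22}), the left side becomes $\bigbk{e^{\tau L_1}v',Y(u,z)\id}$. Applying Remark~\ref{lb30} (specifically \eqref{eq38}) coefficientwise in $\tau$ — i.e. for each fixed power $\tau^n$ the coefficient is a polynomial in $z$ whose value at $z=0$ is the pairing of $L_1^n v'$ with $u$ up to the factorial — we get $\lim_{z\to0}\bigbk{e^{\tau L_1}v',Y(u,z)\id}=\bigbk{e^{\tau L_1}v',u}=\bigbk{v',e^{\tau L_{-1}}u}$. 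Equating the two limits yields $\bigbk{v',e^{\tau L_{-1}}u}=\bigbk{v',Y(u,\tau)\id}$ as polynomials of $\tau$, which is the first assertion. Since this holds for all $v'\in\Vbb'$ and $\Vbb'$ separates points of $\Vbb$ (the pairing between $\Vbb(n)$ and $\Vbb(n)^*$ is perfect), we upgrade it to $e^{\tau L_{-1}}u=Y(u,\tau)\id$ in $\Vbb[[\tau]]$. Finally, comparing the coefficient of $\tau^n$ on both sides — the left side contributes $\frac1{n!}L_{-1}^n u$, the right side contributes $Y(u)_{-n-1}\id$ since $Y(u,\tau)\id=\sum_{n\geq0}Y(u)_{-n-1}\id\,\tau^n$ by the creation property (all nonnegative-index modes kill $\id$) — gives \eqref{eq52}.

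I do not anticipate a genuine obstacle here: the substance is all in Prop.~\ref{lb22}, and the remaining work is bookkeeping. The one point that deserves care is the justification that the limit $z\to0$ may be taken coefficientwise in $\tau$ and that it commutes with the infinite $\tau$-sum on the left; this is clean because, after using $L_{-1}\id=0$, each $\tau$-coefficient is separately a polynomial in $z$ (not a Laurent series), so there is no analytic subtlety and $\lim_{z\to0}$ is just evaluation of polynomials. Thus the ``hard part'' is really only to notice that specializing $v=\id$ removes the Laurent tail and thereby makes the $z\to0$ specialization valid; everything else is a direct consequence of the creation property and \eqref{eq38}.
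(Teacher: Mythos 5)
Your proposal is correct and follows essentially the same route as the paper: specialize \eqref{eq26} at $v=\id$, use $L_{-1}\id=0$ and the creation property to see the identity lives in $\Cbb[z,\tau]$, then let $z\to0$ and apply \eqref{eq38} to identify the left side with $\bigbk{v',e^{\tau L_{-1}}u}$. The only additions beyond the paper's two-line argument are the explicit (and correct) justifications that the $z\to0$ limit may be taken coefficientwise and that $\Vbb'$ separates points of $\Vbb$.
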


We leave it to the reader to find a geometric explanation of  $e^{\tau L_{-1}}u=Y(u,\tau)\id$.

\section{Definition of VOAs, II: Jacobi Identity}

\subsection{}

\begin{prin}
When gluing Riemann spheres to get new spheres, the formula $T_{\Sigma_1}\circ T_{\Sigma_2}=T_{\Sigma_1\#\Sigma_2}$ truely holds if the local coordinates at the points for sewing are M\"obius transformations, i.e. of the form $z\mapsto \frac {az+b}{cz+d}$ where $ad-bc\neq0$.
\end{prin}

A rough reason for this No-Ambiguity Principle is that only $L_0,L_{\pm 1}$ are involved in the change of coordinate formulas between M\"obius transformations, and the Lie bracket relations between them do not involve the central charge.

\subsection{}\label{lb156}

We shall give motivations for the Jacobi identity.

We first remark on the sewing of compact Riemann surfaces in Subsec. \ref{lb4}. Suppose we have data $\fk X=(C;x_\blt;\eta_\blt)$ and $\fk X'=(C';y_\blt;\eta_\blt')$ and we sew them along $x_1$ and $x_1'$. For simplicity, we set $\xi=\eta_1,\varpi=\eta_1'$. From \eqref{eq3}, we know that the gluing law is that any $x\in\xi^{-1}(\Sbb^1)$ (recall that $\xi^{-1}(\Sbb^1)$ is a boundary string of the corresponding surface $\Sigma$ for $\fk X$) and any $y\in\varpi^{-1}(\Sbb^1)$ are identified following the rule
\begin{align}
x=y\qquad \Longleftrightarrow\qquad\xi(x)\varpi(y)=1.\label{eq41}
\end{align}

This definition of gluing is topological, but not complex analytic. Analytically, we are actually gluing a neighborhood of $\xi^{-1}(\Sbb^1)$ and one of $\varpi^{-1}(\Sbb^1)$ using the rule \eqref{eq41} for all $x$ in the first neighborhood and $y$ in the second one. It is clear that a (locally defined) function on the first neighborhood is holomorphic if and only if it is so on the second one. This defines the complex analytic structure on $C\#C'$.

\begin{rem}\label{lb33}
Let us be more precise on the shape of the neighborhoods. Let $\xi$ and $\varpi$ be defined (and injective) on $U,U'$ respectively. Choose $r>1,\rho>1$ such that $\xi(U)\supset \Dbb_r,\varpi(U')\supset\Dbb_\rho$. Then the following neighborhoods of $\xi^{-1}(\Sbb^1)$ and $\varpi^{-1}(\Sbb^1)$ are glued via the relation \eqref{eq41}:
\begin{gather}\label{eq154}
	\begin{gathered}
\xi^{-1}(A_{\rho^{-1},r})=\{x\in U: \rho^{-1}<|\xi(x)|<r\}\\
\Big\updownarrow\text{identified via } \eqref{eq41}\\	
\varpi^{-1}(A_{r^{-1},\rho})=\{y\in U':r^{-1}<|\varpi(y)|<\rho\}
	\end{gathered}	
\end{gather}
The parts $\{x\in U:|\xi(x)|\leq \rho^{-1}\}$ and $\{y\in U':|\varpi(y)|\leq r^{-1}\}$ are discarded when gluing.
\begin{align}
	\vcenter{\hbox{{
				\includegraphics[height=2cm]{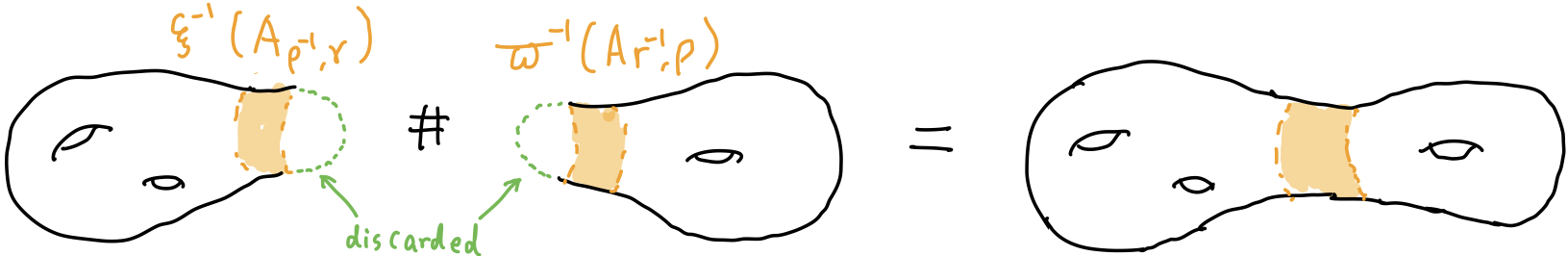}}}}  \label{eq43}
\end{align}
\end{rem}

\subsection{}

As pointed out before, when we associate finite energy vectors to the incoming  strings/points, we may scale their local coordiates. However, for the local coordinates at the output points and the points to be sewn, an arbitrary scaling is not allowed. We thus assume that Assumption \ref{lb11} holds after scaling (by some $\lambda$ with arbitrarily large $|\lambda|$) the local coordinates at the incoming points. This amounts to the following

\begin{ass}\label{lb32}
If $x_i$ is either an outgoing point or a point to be sewn with another point, then the local coordinate $\eta_i$ at $x_i$ defined on a neighborhood $U_i\ni x_i$ satisfies that $\eta_i(U_i)\supset\Dbb_1^\cl$, that $\eta_i^{-1}(\Dbb_1^\cl)\cap \eta_j^{-1}(\Dbb_1^\cl)=\emptyset$ if $x_j$ is either outgoing or a point to be sewn, and that $x_j\in\eta_i^{-1}(\Dbb_1^\cl)$ if $x_j$ is incoming and not to be sewn.
\end{ass}


\begin{rem}\label{lb34}
There is indeed one way we can slightly loosen the above assumption. Using the notation of \eqref{eq41}. Then we may assume that Assumption \ref{lb32} after scaling $\xi$ by some $\lambda\in\Cbb^\times$ and $\varpi$ by $\lambda^{-1}$. Then the rule for gluing \eqref{eq41} is not changed. On the side of interaction maps $T_{\Sigma}$, the change $\xi\rightsquigarrow \lambda \xi$ adds a factor $\lambda^{-L_0}\otimes(\ovl\lambda)^{-\ovl L_0}$ to one tensor component in $T_\Sigma$, and  $\xi\rightsquigarrow \lambda^{-1} \xi$ adds a factor $\lambda^{L_0}\otimes\ovl\lambda^{\ovl L_0}$. These two are canceled after taking contraction or composition.
\end{rem}

\subsection{}

We want to understand the product $\bk{w',Y(u,z_2)Y(v,z_1)w}$. Let $\zeta$ be the standard coordinate of $\Cbb$. By the sewing property in Segal's picture, this expression should correspond to the sewing of
\begin{gather*}
\fk P_{z_1}=(\Pbb^1_1;0,z_1,\infty;\zeta,\zeta-z_1,\zeta^{-1}),\qquad \fk P_{z_2}=(\Pbb^1_2;0,z_2,\infty;\zeta,\zeta-z_2,\zeta^{-1})	
\end{gather*}
along the points $\infty$ of $\fk P_{z_1}$ and $0$ of $\fk P_{z_2}$. (Here, both $\Pbb^1_1$ and $\Pbb^1_2$ are $\Pbb^1$. We assume the two $\infty$ are outgoing before sewing.) Assumption \ref{lb32} is satisfied when $0<|z_1|<1<|z_2|<+\infty$ if we consider all the points not for sewing as incoming. The sewing rule is that $\gamma_1\in\Pbb^1_1,0<|\gamma_1^{-1}|<+\infty$ is identified with $\gamma_2\in\Pbb^1_2,0<|\gamma_2|<+\infty$  if and only if  $\gamma_1^{-1}\cdot \gamma_2=1$, namely $\gamma_1=\gamma_2$. (Here, we set $r=\rho=\infty$ in order to apply Rem. \ref{lb33}. The discarded points are the $\infty$ of $\Pbb^1_1$ and the $0$ of $\Pbb^1_2$.) Thus, the sewing is just placing the first sphere onto the second one. 
\begin{align*}
\vcenter{\hbox{{
\includegraphics[height=2.2cm]{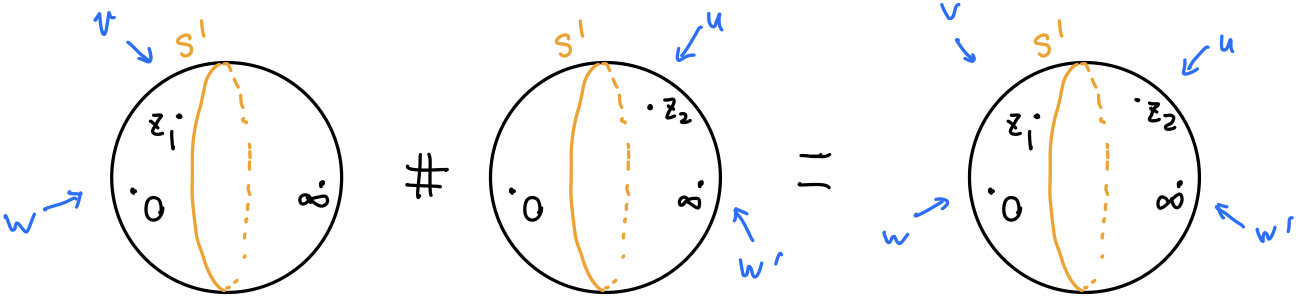}}}}
\end{align*}
The result of sewing is
\begin{gather}
\fk P_{z_1,z_2}=(\Pbb^1;0,z_1,z_2,\infty,\zeta-z_1,\zeta-z_2,\zeta^{-1})\label{eq61}	
\end{gather}
Assuming all the points of $\fk P_{z_1,z_2}$ as incoming,  for each $u,v,w,w'\in\Vbb$,
\begin{align}
T_{\fk P_{z_1,z_2}}(w,v,u,w')=\bk{w',Y(u,z_2)Y(v,z_1)w}\qquad(\text{if }0<|z_1|<|z_2|<+\infty).\label{eq42}	
\end{align}
The reason why the conditions $|z_1|<1$ and $1<|z_2|$ can be dropped is explained below.

\subsection{}\label{lb185}

We explain why \eqref{eq42} holds provided  $0<|z_1|<|z_2|<+\infty$.

Pick $\lambda\in\Cbb$ such that $|z_1|<|\lambda|<|z_2|$. Following the guide of Rem. \ref{lb34}, we replace the local coordinate $\zeta^{-1}$ of $\fk P_{z_1}$ by $\lambda\zeta^{-1}$ and the one $\zeta$ of $\fk P_{z_2}$ by $\zeta/\lambda$. Then Assumption \ref{lb32} is again satisfied. In particular, the outgoing string of $\Pbb^1_1$ around $\infty$ and the incoming one of $\Pbb^1_2$ around $0$ are both $|\lambda|\Sbb^1$.

The interaction map $T_{\fk P_{z_1}}:\mc H^{\otimes 2}\rightarrow\mc H$ acting on $w\otimes v$ is $\lambda^{-L_0}Y(v,z_1)w$. $T_{\fk P_{z_2}}$ sends $u\otimes \underline{~~~}\in\Vbb\otimes\Vbb$ to $Y(u,z_2)\lambda^{L_0}\underline{~~~}$. The composition of these two expressions, evaluated with $w'\in\Vbb$, is again the right hand side of \eqref{eq42}. And the result of sewing is again $\fk P_{z_1,z_2}$. So \eqref{eq42} holds in general.
\begin{align*}
	\vcenter{\hbox{{
				\includegraphics[height=2.2cm]{fig21.png}}}}
\end{align*}

\subsection{}

According to the physical definition of $\Vbb$ in Subsec. \ref{lb35} as well as the No-Ambiguity Principle \ref{lb19}, we know that when the vectors of $\Vbb$ are inserted, the correlation functions change holomorphically with respect to the translation of the marked points and their local coordinates. Thus $T_{\fk P_{z_1,z_2}}(w,v,u,v')$ is a holomorphic function on $\Conf^2(\Cbb^\times)=\{(z_1,z_2)\in\Cbb^\times:z_1\neq z_2\}$. Since, similar to \eqref{eq42}, we also have
\begin{align}
	T_{\fk P_{z_1,z_2}}(w,v,u,w')=\bk{w',Y(v,z_1)Y(u,z_2)w}\qquad(\text{if }0<|z_2|<|z_1|<+\infty),\label{eq44}	
\end{align}
we conclude that $\bk{w',Y(u,z_2)Y(v,z_1)w}$ defined on $0<|z_1|<|z_2|$ and $\bk{w',Y(v,z_1)Y(u,z_2)w}$ defined on $0<|z_2|<|z_1|$ can be continued to the same holomorphic function on $\Conf^2(\Cbb^\times)$. That this fact is true for all $w,w'\in\Vbb$ (or more generally, all $w\in\Vbb,w'\in\Vbb'$ if $\Vbb\simeq\Vbb'$ is not assumed) is simply written as
\begin{align}
Y(u,z_2)Y(v,z_1)\sim Y(v,z_1)Y(u,z_2).	
\end{align}
This property is called \textbf{commutativity}.

\subsection{}\label{lb186}
We now consider the sewing of
\begin{gather*}
	\fk P_{z_1}=(\Pbb^1_1;0,z_1,\infty;\zeta,\zeta-z_1,\zeta^{-1}),\qquad \fk P_{z_2-z_1}=(\Pbb^1_{21};0,z_2-z_1,\infty;\zeta,\zeta-z_2+z_1,\zeta^{-1})	
\end{gather*}
(where $\Pbb^1_{21}=\Pbb^1$) along the points $z_1\in\Pbb^1_1$ and $\infty\in\Pbb^1_{21}$. We assume $0<|z_2-z_1|<|z_1|<+\infty$. Choose $\lambda\in\Cbb$ satisfying $|z_2-z_1|<|\lambda|<|z_1|$.  Replace the local coordinate $\zeta-z_1$ of $\fk P_{z_1}$ by $\lambda^{-1}(\zeta-z_1)$ and the one $\zeta^{-1}$ of $\fk P_{z_2-z_1}$ by $\lambda \zeta^{-1}$. Then Assumption \ref{lb32} is satisfied. The rule for sewing is identifying $\gamma_1\in \Pbb^1_1,0<|\lambda^{-1}(\gamma_1-z_1)|<+\infty$ with $\gamma_{21}\in\Pbb^1_{21},0<|\lambda\gamma_{21}^{-1}|<+\infty$ if and only if $(\gamma_1-z_1)=\gamma_{21}$. Thus, gluing $\fk P_{z_1-z_1}$ to $\fk P_{z_1}$ amounts to translating $\fk P_{z_2-z_1}$ to $\fk P_{z_2}$. After sewing, the points $0$ and $z_2-z_1$ of $\fk P_{z_2-z_1}$ become $z_1$ and $z_2$. (The points $z_1$ of $\fk P_{z_1}$ and $\infty$ of $\fk P_{z_2-z_1}$ are discarded.) 
\begin{align*}
	\vcenter{\hbox{{
				\includegraphics[height=2.6cm]{fig22.png}}}}
\end{align*}
This sewing picture gives
\begin{align}
T_{\fk P_{z_1,z_2}}(w,v,u,w')=\bk{w',Y(Y(u,z_2-z_1)v,z_1)w}\quad(\text{if }0<|z_2-z_1|<|z_1|<+\infty).\label{eq45}	
\end{align}

We therefore have the \textbf{associativity} property
\begin{gather}\label{eq60}
\begin{gathered}
\bk{w',Y(u,z_2)Y(v,z_1)w}=\bk{w',Y(Y(u,z_2-z_1)v,z_1)w}\\
\text{if }0<|z_2-z_1|<|z_1|<|z_2|.
\end{gathered}	
\end{gather}
Geometrically, it means the equivalence of sewing spheres in the following way:
\begin{align*}
	\vcenter{\hbox{{
				\includegraphics[height=2cm]{fig23.png}}}}
\end{align*}

\subsection{}\label{lb38}

The fact that for all $u,v,w\in\Vbb,w'\in\Vbb'$, \eqref{eq42}, \eqref{eq44}, and \eqref{eq45} can be defined as holomorphic functions of $z_1,z_2$ on the given domain (the precise meaning will be given later), and that these three expressions can be extended to the same holomorphic function (namely $T_{\fk P_{z_1,z_2}}(w\otimes v\otimes u\otimes w')$) on $\Conf^2(\Cbb^\times)$ is called the \textbf{Jacobi identity} in the complex analytic form. (See Def. \ref{lb194} for the precise statement.) Roughly speaking,
\begin{align}
\text{Jacobi identity $=$ Commutativity $+$ Associativity}.\label{eq73}
\end{align}
For the moment, we will derive an algebraic version, and use it as the formal definition of Jacobi identity in Def. \ref{lb24}. 

Write $f(z_1,z_2)=T_{\fk P_{z_1,z_2}}(w\otimes v\otimes u\otimes w')$.  Fix $z_1\in \Cbb^\times$, and consider $f$ as a holomorphic function of $z_2$ on $\Cbb^\times\setminus\{z_1\}$. (Moreover, from \eqref{eq42}, \eqref{eq44}, \eqref{eq45}, and by the lower truncation property \eqref{eq46}, it is easy to see that $f$ has finite poles at $z_1=0,z_2,\infty$. So $f$ is a meromorphic function.) By the residue theorem, for each meromorphic  $1$-form $\mu$ on $\Pbb^1$ with possible poles only at $0,z_1,\infty$, we must have $(\Res_{z_2=0}+\Res_{z_2=z_1}+\Res_{z_2=\infty})f\mu=0$. It is easy to see that such $\mu$ are linear combinations of those of the form $z_2^m(z_2-z_1)^ndz_2$.

Equivalently, choose $C_+$ to be a circle around $0$ whose radius is  $>|z_1|$, $C_-$ is one around $0$ whose radius is $<|z_1|$, and $C_0$  a small circle around $z_1$ between $C_+$ and $C_-$. 
\begin{align*}
	\vcenter{\hbox{{
				\includegraphics[height=1.8cm]{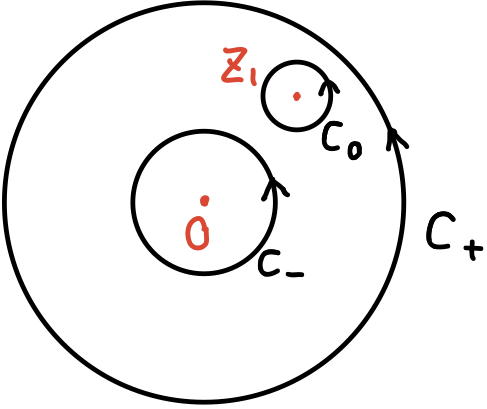}}}}
\end{align*}
Let $f_+,f_-,f_0$ be respectively the right hand sides of \eqref{eq42}, \eqref{eq44}, \eqref{eq45}. Then, when $z_2$ is on $C_+,C_-,C_0$ respectively, $f$ equals $f_+,f_-,f_0$. Then the fact that $f_+,f_-,f_0$ defined on their domains extend to the same meromophic function on $\Pbb^1$ with poles $0,z_1,\infty$ implies for any $m,n\in\Zbb$ and $\mu=z_2^m(z_2-z_1)^ndz_2$ that
\begin{align}
	\oint_{C_+}\frac{f_+\mu}{2\im\pi} -\oint_{C_-}\frac{f_-\mu}{2\im\pi}=\oint_{C_0} \frac{f_0\mu}{2\im\pi}.\label{eq48}
\end{align}
Indeed, the latter one also implies the previous one. This is guaranteed by the so called \emph{strong residue theorem}, which will be discussed in Subsec. \ref{lb193}. The strong residue theorem will imply that the analytic form and the algebraic form of Jacobi identity are equivalent.

Recall the general formula $\oint_C Y(u,z)z^k\frac {dz}{2\im\pi}=Y(u)_k$ if $C$ is a circle around the origin. When $z_2\in C_+$, $\mu$ has absolutely convergent expansion $\mu=\sum_{l\in\Nbb}{n\choose l}(-z_1)^lz_2^{m+n-l}dz_2$. So
\begin{align*}
&\oint_{C_+}\frac{f_+\mu}{2\im\pi}=\sum_{l\in\Nbb}	\oint_{C_+}{n\choose l}(-z_1)^lz_2^{m+n-l}\bk{w',Y(u,z_2)Y(v,z_1)w}\frac{dz_2}{2\im\pi}\\
=&\sum_{l\in\Nbb} {n\choose l}(-z_1)^l\bk{w',Y(u)_{m+n-l}Y(v,z_1)w}=:a(z_1)
\end{align*}
When $z_2\in C_-$, 
\begin{align*}
	&\oint_{C_-}\frac{f_-\mu}{2\im\pi}=\sum_{l\in\Nbb}	\oint_{C_+}{n\choose l}(-z_1)^{n-l}z_2^{m+l}\bk{w',Y(v,z_1)Y(u,z_2)w}\frac{dz_2}{2\im\pi}\\
	=&\sum_{l\in\Nbb}{n\choose l}(-z_1)^{n-l}\bk{w',Y(v,z_1)Y(u)_{m+l}w}=:b(z_1)
\end{align*}
When $z_2\in C_0$, since $0<|z_2-z_1|<|z_1|$, we have the absolutely convergent expansion $\mu=(z_1+(z_2-z_1))^m(z_2-z_1)^ndz_2=\sum_{l\in\Nbb}{m\choose l}z_1^{m-l}(z_2-z_1)^{n+l}dz_2$. So
\begin{align*}
&\oint_{C_0}\frac{f_0\mu}{2\im\pi}=\sum_{l\in\Nbb}\oint_{C_0}{m\choose l}z_1^{m-l}(z_2-z_1)^{n+l}\bk{w',Y(Y(u,z_2-z_1)v,z_1)w}\frac{dz_2}{2\im\pi}\\
=&\sum_{l\in\Nbb}{m\choose l}z_1^{m-l}\bk{w',Y(Y(u)_{n+l}v,z_1)w}:=c(z_1)
\end{align*}

Now we have $c(z_1)=a(z_1)-b(z_1)$. We vary $z_1$. For each $k\in\Zbb$, multiply both sides by $z_1^k\frac{dz_1}{2\im\pi}$ and apply the residue at $z_1=0$. We then get (by suppressing $w'$ and $w$)

\begin{df}[\textbf{Jacobi identity} (algebraic version)]\label{lb36} \index{00@Jacobi identity, algebraic version}
For each $u,v,w\in\Vbb$, and each $m,n,k\in\Zbb$, we have
\begin{align}\label{eq47}
	\begin{aligned}
&\sum_{l\in\Nbb}{m\choose l}Y\big(Y(u)_{n+l}v\big)_{m+k-l}\\
=&\sum_{l\in\Nbb}(-1)^l{n\choose l}Y(u)_{m+n-l}Y(v)_{k+l}-\sum_{l\in\Nbb}(-1)^{n+l}{n\choose l}Y(v)_{n+k-l} Y(u)_{m+l}	.
	\end{aligned}
\end{align}
This completes Definition \ref{lb24}.
\end{df}
In the above three terms, when acting on every $w\in\Vbb$, each sum over $l\in\Nbb$ is finite thanks to the lower truncation property.

\section{Consequences of Jacobi identity; reconstruction theorem}

\subsection{}
The algebraic form of Jacobi identity is very complicated. Very few people can write down exactly the right formula without checking the references or reproving this formula using the long argument in Subsec. \ref{lb38}. But we shall try our best to explain how to use this formula and what this formula implies.

First of all, if \eqref{eq47} holds whenever $m=0$ or $n=0$, then it holds in general. We will not give a rigorous proof for this statement. But, since \eqref{eq47} is derived from \eqref{eq48} for all $\mu=z_2^m(z_2-z_1)^ndz_2$, the readers can be convinced of this statement by the following elementary fact:

\begin{exe}
Show that $z_2^m(z_2-z_1)^n$ is a $\Cbb[z_1^{\pm1}]$-linear combination of $z_2^k$ and $(z_2-z_1)^l$ where $k,l\in\Zbb$ and $l<0$. (Hint: Assume without loss of generality that $m,n<0$. Prove the statement by induction on $|m|$ and $|n|$.)
\end{exe}

Thus, we may understand \eqref{eq47} by restricting to the special cases $m=0,n<0$ or $n=0$. 
\subsection{}

We now return to rigorous mathematics. Consider  the case that $n=0$, i.e., $\mu=z_2^mdz_2$. Then \eqref{eq47} reads
\begin{align}
\big[Y(u)_m,Y(v)_k\big]=\sum_{l\in\Nbb}{m\choose l}Y\big(Y(u)_lv\big)_{m+k-l}.	\label{eq49}
\end{align}
This is a Lie bracket relation. Interestingly, this general formula does not come from Lie groups, but from the residue theorem. However, in many concrete examples, such Lie bracket relations do have Lie-theoretic origins. 

Let me take this chance to say a few words about the similarity and the difference between the VOA theory and the Lie theory. In the VOA theory, the residue theorem is the standard way of passing from the complex analytic world to the algebraic world. The opposite direction is through the strong residue theorem. This is strikingly different from the Lie theory, in which one passes from the differential geometric formulation (i.e. Lie groups) to the algebraic one (i.e. Lie algebras) by taking derivatives, and vice versa by taking exponentiation/integral. Thus, although Lie brackets do appear in VOAs, it is not always fruitful to think of VOAs as generalizations of Lie algebras. These two mathematical objects have very different geometric intuitions. Also, if we view VOAs in the complex analytic way, then by \eqref{eq73}, VOAs are more like commutative algebras. Thus, VOAs can be viewed as a quantum version of both the Lie algebras and the commutative algebras.

\subsection{}

Take $u$ to be the conformal vector $\cbf$ in \eqref{eq49} and recall that $Y(\cbf)_{m+1}=L_m$. We obtain
\begin{align}
&[L_m,Y(v)_k]=\sum_{l\in\Nbb}{m+1\choose l}Y(L_{l-1}v)_{m+k+1-l}\nonumber\\	=&Y(L_{-1}v)_{m+k+1}+\sum_{l\in\Nbb}{m+1\choose l+1}Y(L_l v)_{m+k-l}.\label{eq54}
\end{align}
Multiply $z^{-k-1}$ to both sides and take the sum over all $k\in\Zbb$, we obtain
\begin{align}
[L_m,Y(v,z)]=z^{m+1}Y(L_{-1}v,z)+\sum_{l\in\Nbb}{m+1\choose l+1}z^{m-l}Y(L_l v,z)\label{eq68}
\end{align}
either on the level of $\End(\Vbb)[[z^{\pm1}]]$, or as Laurent polynomials of $z$ when evaluating between any $w\in\Vbb$ and $w'\in\Vbb'$. Then the cases $m=-1$ and $m=0$ imply
\begin{subequations}\label{eq50}
\begin{gather}
[L_{-1},Y(v,z)]=Y(L_{-1}v,z)\\
[L_0,Y(v,z)]=zY(L_{-1}v,z)+Y(L_0v,z).		
\end{gather}
\end{subequations}
Note that these two equations follow solely from the Jacobi identity. By the translation property, we have
\begin{align}
Y(L_{-1}v,z)=\frac d{dz}Y(v,z).	\label{eq51}
\end{align}
Equivalently, by applying $\Res_{z=0}(\cdot)z^ndz$, we get a crucial relation
\begin{align}
Y(L_{-1}v)_n=-nY(v)_{n-1}.\label{eq53}	
\end{align}
(The quickest way to get the formula on the right hand side is integration by parts.)

\begin{exe}
Show that \eqref{eq52} follows from \eqref{eq53} and the creation property.
\end{exe}

\begin{exe}
Assume that $\Vbb$ satisfies the lower truncation property \eqref{eq46} and all the axioms of VOAs in Def. \ref{lb24} except the grading and the translation property. Use \eqref{eq50} to prove that the following conditions are equivalent.
\begin{enumerate}
	\item The grading property.
	\item $Y(L_{-1}v,z)=\partial_zY(v,z)$ for all $v\in\Vbb$.
	\item The translation property.
	\item The translation property without assuming $L_{-1}\id=0$.
\end{enumerate} 
Thus, we may use the lower truncation property and any of these four conditions to replace the grading and the translation properties in the definition of VOAs.
\end{exe}

\begin{exe}\label{lb37}
In \eqref{eq54}, set $v=\cbf$, and show that this formula is compatible with the Virasoro relation. 
\end{exe}

\subsection{}\label{lb43}

We see that \eqref{eq68} for $m=0,-1$ (together with \eqref{eq51}) means the grading and the translation properties, which integrate to the rotation and the translation covariance. For general $m$, \eqref{eq68} also has a geometric explanation. To simplify discussions, we give such an explanation by assuming that $v$ is primary.

\begin{df}
A vector $v\in\Vbb$ is called a \textbf{primay vector} if it is homogeneous and $L_nv=0$ for all $n>0$.
\end{df}

Some important VOAs (affine VOAs for instance) are generated by primary vectors. And many important formulas in CFT were first proved by physics who assumed that their theories are generated by primary vectors in the following sense:
\begin{df}\label{lb163} \index{00@Generatig subsets of the VOA $\Vbb$}
	We say that a VOA $\Vbb$ is \textbf{generated} by a subset $E\subset\Vbb$ if $\Vbb$ is spanned by vectors of the form $Y(v_1)_{n_1}\cdots Y(v_k)_{n_k}\id$ where $k\in\Nbb$, $n_1,\dots,n_k\in\Zbb$, and $v_1,\dots,v_k\in E$.
\end{df}
Indeed, formula \eqref{eq68} for any primary vector $v$ is one such example, which (combined with \eqref{eq51}) reads
\begin{align}
	[L_m,Y(v,z)]=z^{m+1}\partial_zY(v,z)+(m+1)\wt v\cdot z^mY(v,z).\label{eq69}
\end{align}
This is called by physicists (or more precisely, is equivalent to what physicists call) the \textbf{conformal Ward identity}. 

Choose a holomorphic vector field $f(z)\partial_z=\sum_{n\in\Zbb} a_n z^{n+1}\partial_z$ on a neighborhood of $\Sbb^1$. Let $\sigma_\tau=\exp(\tau f\partial_z)$ be the holomorphic flow. Then \eqref{eq69} (with $L_m,z^m$ replaced by $\sum_m a_mL_m,\sum_m a_mz_m$) integrates to
\begin{align}
e^{\tau\sum_{n\in\Zbb}a_nL_n}Y(v,z)e^{-\tau\sum_{n\in\Zbb}a_nL_n}=\big(\partial_z\sigma_\tau(z)\big)^{\wt v}Y\big(v,\sigma_\tau(z) \big),\label{eq70}
\end{align}
called \textbf{conformal covariance}.  For now, we do not treat this formula in a rigorous way. But the readers can convince themselves by checking that both sides satisfy the same linear differental equation over $\tau$. 

The right hand side of \eqref{eq70} looks familiar to us. Set $\tau=1$, $\sigma=\sigma_1$, and $\Delta=\wt v$. Then formula \eqref{eq70} resembles the change of variable formula $\big(\partial(\varphi\circ\sigma)\big)^\Delta=\big(\partial\varphi\circ\sigma\big)^\Delta\cdot (\partial_z\sigma)^\Delta$ for a function $\varphi=\varphi(z)$ and $\partial$ is the standard holomorphic derivative. Indeed, the primary field $Y(v,z)$ can be viewed as the quantization of $(\partial\varphi)^\Delta$, or more generally, of $\partial\varphi_1\cdots\partial\varphi_\Delta$. It is also interesting to write \eqref{eq70} in the form
\begin{align}
	e^{\sum a_nL_n}\big(Y(v,z)dz^\Delta \big)e^{-\sum a_nL_n}=Y(v,\sigma)d\sigma^\Delta.
\end{align}

Conformal covariance \eqref{eq70} can be interpreted in a similar geometric way as we did for rotation and translation covariance in Subsec. \ref{lb31} and \ref{lb40}. (We will give this explanation in the future assuming $f=\sum_{n\geq 0}a_nz^{n+1}\partial_z$.) So, from the CFT point of view, this formula follows naturally from our change of parametrization formula in Sec. \ref{lb41} and the physical definition of the vertex operator $Y(v,z)$ in Sec. \ref{lb42} (if we ignore the issue of uniqueness up to scalar multiplications). In particular, the geometric intuition we are using for formula \eqref{eq69} is Lie theoretic, because the relationship between Virasoro algebras and change of parametrization formula is the one between the representations of Lie algebras and Lie groups. But we have also derived \eqref{eq69} from the Jacobi identity, whose geometric intuition relies on the residue theorem. How should we view this coincidence of the two geometric pictures?

My answer is that we should regard the Lie theoretic explanation as the fundamental one for conformal covariance/Ward identity. In fact, to use the Jacobi identity to   obtain \eqref{eq69}, we have assumed that $\sum L_nz^{-n-2}$ is the vertex operator of a vector of $\Vbb$, namely the conformal vector $\cbf$. But the reason that this assumption should be included in the definition of VOA was not explained in Sec. \ref{lb42}. Here we give a short explanation: we will see later (cf. the reconstruction Thm. \ref{lb39} and Rem. \ref{lb44}) that if the Fourier modes $A_m\in\End(\Vbb)$ of a  field $A(z)$ satisfy the correct Jacobi identity (such as \eqref{eq49} or \eqref{eq69}) with the modes $Y(v)_k$ for $v$ inside a generating subset $E\subset\Vbb$, then $A(z)$ must be $Y(u,z)$ for some $u\in \Vbb$. Thus, (in my opinion) the better point of view is that we use the conformal Ward identity (whose geometric intuition relies on the change of parametrization formula and the physical meaning of $Y(u,z)$) and the Jacobi identity to explain the fact that $\sum L_nz^{-n-2}$ is represented by a vector $\cbf$ in $\Vbb$, but not that we explain the Ward identity using the VOA Jacobi identity.

\subsection{}\label{lb56}
We say that $\Vbb$ is of \textbf{CFT-type} if $\dim\Vbb(n)<+\infty$ for each $n$, and $\Vbb(0)=\Cbb\id$. The CFT-type condition is a very natural and mild one satisfied by all the examples in our notes. It says that the only quantum states with zero energy are the vacuum.

In this subsection, we assume $\Vbb$ is CFT-type, and study \eqref{eq49} for vectors in $\Vbb(1)$. For each $u\in\Vbb(1)$, we write $Y(u)_m$ as $u_m$ for short. By \eqref{eq21}, $u_l$ lowers the weights by $l$. Then \eqref{eq49} says $[u_m,v_n]=(u_0v)_{m+n}+m(u_1v)_{m+n-1}$, where $u_lv$ vanishes when $l>1$ since its weight is $1-l$. Since $u_1v\in\Vbb(0)\in\Cbb$, we may write
\begin{align}
u_1v=\prth{u,v}\id	\label{eq58}
\end{align}
where $\prth{\cdot,\cdot}$ is a bilinear form on $\Vbb(1)$. Thus $(u_1v)_{m+n-1}=\prth{u,v}\delta_{m,-n}$ since $Y(\id,z)=\id$. Set
\begin{align}
[u,v]:=u_0v.	\label{eq55}
\end{align}
Then
\begin{align}
[u_m,v_n]=[u,v]_{m+n}+m\prth{u,v}\delta_{m,-n}.	\label{eq56}
\end{align}

\begin{pp}
$[\cdot,\cdot]$ defines a Lie algebra structure on $\Vbb(1)$, and $\prth{\cdot,\cdot}$ is an invariant symmetric bilinear form, namely, $\prth{u,v}=\prth{v,u}$ and $\prth{[w,u],v}=-\prth{u,[w,v]}$.
\end{pp}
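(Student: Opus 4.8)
The plan is to extract everything from the single identity \eqref{eq56}, namely $[u_m,v_n]=[u,v]_{m+n}+m(u,v)\delta_{m,-n}$, together with the creation property and the translation relation \eqref{eq53}. First I would establish the symmetry of $(\cdot,\cdot)$. The idea is to compute $[u_m,v_n]$ two ways: directly, and via $-[v_n,u_m]$. From \eqref{eq56} one gets $[u,v]_{m+n}+m(u,v)\delta_{m,-n}=-[v,u]_{m+n}-n(v,u)\delta_{m,-n}$. Specializing to $m=1,n=-1$ (so $m+n=0$), the terms $[u,v]_0$ and $-[v,u]_0$ must separately cancel — indeed antisymmetry of $[\cdot,\cdot]$ follows by taking $m=n=0$ in \eqref{eq56} and using that $[u,v]_0$ and $(u,v)\id$ land in different weight spaces is not quite the argument; rather, at $m=n=0$ the delta-term $m(u,v)\delta_{m,-n}$ vanishes, giving $[u_0,v_0]=[u,v]_0$, hence $[v_0,u_0]=[v,u]_0$, and the left sides are negatives of each other, so $[u,v]=-[v,u]$ after applying $Y(\cdot)_0$ is injective enough — more carefully, $[u,v]_0 = -[v,u]_0$ as operators, and evaluating on $\id$ using creation property $[u,v]_0\id = u_0v_0\id - v_0u_0\id$; since $v_0\id = Y(v)_0\id$, and by \eqref{eq53} with $n=0$ we have $Y(L_{-1}\cdot)_0 = 0$, but more usefully $Y(v)_0\id$: by creation $Y(v)_n\id = 0$ for $n\geq 0$, so $v_0\id = 0$. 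Hence $[u,v]_0\id = u_0(v_0\id) - v_0(u_0\id) = 0$, which is not what pins down $[u,v]$. Instead I would recover $[u,v] = u_0 v$ directly: $[u,v] = u_0 v = Y(u)_0 v$, and compare with $-v_0 u = -Y(v)_0 u$. The clean route: apply both sides of the operator identity $[u,v]_0 = -[v,u]_0$ — no. The standard trick is: $[u,v] = \Res_{z}Y(u,z)v\,dz$, and one shows $Y(u,z)v$ and $Y(v,z)u$ differ by a total derivative using skew-symmetry of vertex operators $Y(u,z)v = e^{zL_{-1}}Y(v,-z)u$; then $(u,v)\id$ is the coefficient extracted, and skew-symmetry forces $(u,v)=(v,u)$. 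Since skew-symmetry of $Y$ is a standard consequence of the Jacobi identity (not proven in the excerpt but derivable), I would instead stay self-contained and argue purely from \eqref{eq56}: put $m=-n$ and rearrange to see the delta-coefficients must match once the $[\cdot,\cdot]_0$ parts are handled by the $m=0$ case.

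Concretely, for antisymmetry of the bracket: set $n=0$ in \eqref{eq56}, giving $[u_m,v_0]=[u,v]_m$; swap roles to get $[v_m,u_0]=[v,u]_m$; then use the Jacobi identity for operators, $[u_0,[v_m,w_k]]$-type reasoning is overkill. Simplest: $[u,v]=u_0v$ and I claim $u_0 v = -v_0 u + L_{-1}(\text{something})$. But $L_{-1}$ raises weight by $1$, landing in $\Vbb(2)$, not $\Vbb(1)$, so modulo weight considerations the $\Vbb(1)$-component of $u_0v + v_0u$ must vanish — and both $u_0v$ and $v_0u$ already lie in $\Vbb(1)$. I would therefore use skew-symmetry of $Y$ at the level $[u,v] := u_0 v = -v_0 u$, which is exactly the weight-$1$ part of the skew-symmetry formula; this gives antisymmetry. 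For the invariant bilinear form and Jacobi identity of $[\cdot,\cdot]$: take the commutator of \eqref{eq56} with a third generator, i.e. compute $[[u_m,v_n],w_k]$ using \eqref{eq56} repeatedly and compare with the Jacobi identity $[[u_m,v_n],w_k] = [u_m,[v_n,w_k]] - [v_n,[u_m,w_k]]$ for the associative-algebra commutator on $\End(\Vbb)$ (which holds automatically). Expanding the left side gives $[[u,v],w]_{m+n+k} + m([u,v],w)\delta_{m+n,-k}$ plus cross terms; expanding the right side gives $[u,[v,w]]_{\cdots} - [v,[u,w]]_{\cdots}$ plus delta-terms. Matching the coefficients of the non-delta modes $X_{m+n+k}$ forces the Jacobi identity $[[u,v],w]=[u,[v,w]]-[v,[u,w]]$; matching the coefficients of the delta-terms (setting appropriate indices so that various Kronecker deltas fire) forces $([u,v],w)$ to satisfy $([u,w],v) = -([u,v],w)$, i.e. invariance $([w,u],v) = -(u,[w,v])$ after relabeling.

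The main obstacle I anticipate is bookkeeping the Kronecker deltas when expanding $[[u_m,v_n],w_k]$: each application of \eqref{eq56} introduces a delta and a new mode index, and one must carefully choose the values of $m,n,k$ that isolate the scalar (delta) part from the operator part without them interfering. The cleanest approach is to take $m+n+k \neq 0$ to kill all scalar terms and read off the bracket-Jacobi identity, then separately take, say, $k=-(m+n)$ with $m,n$ generic to extract the invariance relation, being careful that $\delta_{n,0}$, $\delta_{m,0}$ type terms don't spuriously appear. A secondary subtlety is justifying that two operators on $\Vbb$ that agree after the substitutions actually have equal "coefficients" — this is fine because the modes $X_j$ for $X\in\Vbb(1)$ acting with the weight-grading are determined by their action, and the decomposition into a weight-shifting operator plus a scalar multiple of the identity is unique. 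For symmetry of $(\cdot,\cdot)$, if I want to avoid invoking skew-symmetry of $Y$, I can instead derive it as a byproduct: invariance $([u,v],w) = -([w,v],u)$... no; rather, once antisymmetry of the bracket and invariance are in hand, $(u,v) = ([?],?)$ — actually symmetry follows from invariance plus antisymmetry only in the semisimple case, so I will simply cite the skew-symmetry of vertex operators (a direct consequence of the Jacobi identity, Def. \ref{lb36}, applicable even though not spelled out in the excerpt) to conclude $u_1v = v_1u$, hence $(u,v)=(v,u)$.
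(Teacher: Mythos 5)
Your treatment of the Jacobi identity and of invariance is essentially the paper's own argument: iterate \eqref{eq56} inside the operator identity $[w_k,[u_m,v_n]]=[[w_k,u_m],v_n]+[u_m,[w_k,v_n]]$ and match coefficients. The paper just avoids your bookkeeping worries by committing to two specializations ($k=-1,m=n=0$ for the Jacobi identity; $k=0,m=1,n=-1$ for invariance) and separating the operator part from the scalar part by applying everything to $\id$, using that $X\mapsto X_{-1}$ is injective (creation property: $X_{-1}\id=X$) while $X_0\id=0$. Where you genuinely diverge is on antisymmetry and symmetry. You correctly observe that the $m=n=0$ case only gives $[u,v]_0=-[v,u]_0$, which does not determine $[u,v]$ since $X_0\id=0$; but the fix is not to import skew-symmetry of $Y$ — it is to shift to the $-1$ mode. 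Indeed \eqref{eq56} gives $[u_0,v_{-1}]=[u,v]_{-1}$ and $[v_{-1},u_0]=[v,u]_{-1}$ (both delta terms vanish), so $[u,v]_{-1}=-[v,u]_{-1}$ and applying to $\id$ yields $[u,v]=-[v,u]$; similarly $[u_1,v_{-1}]=[u,v]_0+\prth{u,v}$ and $[v_{-1},u_1]=[v,u]_0-\prth{v,u}$, and antisymmetry of the commutator together with $[u,v]_0=-[v,u]_0$ gives $\prth{u,v}=\prth{v,u}$. Your route via $Y(u,z)v=e^{zL_{-1}}Y(v,-z)u$ does reach the same conclusions and is not circular (skew-symmetry is derivable from the axioms already in place), but it imports a lemma that is nowhere proved at this point of the text and whose proof is at least as long as the proposition itself, where a two-line computation with \eqref{eq56} suffices.

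One small error in your aside: the correction term in the $z^{-1}$ coefficient of the skew-symmetry formula is $L_{-1}(v_1u)$ with $v_1u\in\Vbb(0)$, so it lies in $\Vbb(1)$ — the same weight space as $u_0v$ — not in $\Vbb(2)$; weight counting does not kill it. It vanishes for a different reason, namely $v_1u\in\Cbb\id$ (CFT-type) and $L_{-1}\id=0$. The conclusion $u_0v=-v_0u$ survives, but not for the reason you gave.
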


\begin{proof}
$w\in\Vbb(1)\mapsto w_{-1}$ is injective since $w_{-1}\id=w$  by the creation property. By \eqref{eq56}, $[u,v]_{-1}=[u_0,v_{-1}]=-[v_{-1},u_0]=-[v,u]_{-1}$. This proves $[u,v]=-[v,u]$. By calculating $[u_1,v_{-1}]$ and $[v_{-1},u_1]$ using \eqref{eq56}, we obtain $\prth{u,v}=\prth{v,u}$. \eqref{eq56} implies 
\begin{align*}
[w_k,[u_m,v_n]]=[w,[u,v]]_{k+m+n}+k\prth{w,[u,v]}\delta_{k+m+n,0}.	
\end{align*}
Apply the Jacobi identity for the Lie bracket of linear operators, we obtain the Jacobi identity for $[\cdot,\cdot]$ on $\Vbb(1)$ if we set $k=-1,m=n=0$, and we obtain the invariance of $\prth{\cdot,\cdot}$ if we set $k=0,m=1,n=-1$.
\end{proof}

The vector space $\Span_\Cbb\{v_n,\id_\Vbb:n\in\Zbb\}$ is a Lie algebra whose bracket is the standard one for  linear operators. Since it satisfies \eqref{eq56}, we call it an \textbf{affine Lie algebra} associated to the finite-dimensional complex Lie algebra $\Vbb(1)$.  When $\Vbb$ is generated by $\Vbb(1)$, we say $\Vbb$ is an \textbf{affine VOA}.

We are mostly interested in the case that $\prth{\cdot,\cdot}$ is non-degenerate. This is always true when the CFT (or the VOA) is unitary, since $\prth{\cdot,\cdot}$ is indeed the negative of the correlation function $\bk{\cdot,\cdot}=\bk{\Theta\cdot|\cdot}$ of $A_{1,1}$ restricted to $\Vbb^{\otimes 2}$. Moreover, a unitary affine VOA $\Vbb$ is indeed uniquely determined by its Lie subalgebra $\Vbb(1)$, where $\Vbb(1)$ is a direct sum of an abelian Lie algebra and a semisimple one. (We refer the readers to \cite[Sec. 1 and 2]{Gui19} for a detailed account of the relationship between unitary VOAs and their ``unitary" Lie subalgebras $\Vbb(1)$.) Affine Lie algebras and affine VOAs in the strict sense are those such that $\Vbb(1)$ are simple Lie algebras. If on the other hand $\Vbb(1)$ is abelian, then $\Vbb$ is called a \textbf{free boson VOA} or a \textbf{Heisenberg VOA}.

If $\Vbb$ is generated by $\cbf$, we call $\Vbb$ a \textbf{Virasoro VOA}.

\subsection{}


We now turn to the case $m=0,n<0$ in the VOA Jacobi identity \eqref{eq47}. First consider $n=-1$. Then \eqref{eq47} reads
\begin{align}
Y\big(Y(u)_{-1}v\big)_k=\sum_{l\in\Nbb}Y(u)_{-1-l}Y(v)_{k+l}+\sum_{l\in\Nbb}Y(v)_{k-1-l}Y(u)_l.\label{eq59}
\end{align}
This formula can be written in a compact way. For a general series $f(z)=\sum_{l\in\Zbb} a_lz^{-l-1}\in W[[z^{\pm1}]]$ where $W$ is a vector space, we let
\begin{align}
f(z)_+=\sum_{l\in\Nbb}a_lz^{-1-l},\qquad f(z)_-=\sum_{l\in\Nbb}a_{-l-1}z^l	
\end{align}
(so we have $f(z)=f(z)_++f(z)_-$). Define the \textbf{normal-ordered product}
\begin{align}
\hcolondel {Y(u,z)Y(v,z)}=Y(u,z)_-Y(v,z)+Y(v,z)Y(u,z)_+	
\end{align}
which is non-commutative in general. Then \eqref{eq59} can be abbreviated to
\begin{align}
Y\big(Y(u)_{-1}v,z\big)=\hcolondel {Y(u,z)Y(v,z)}\label{eq88}
\end{align}

By \eqref{eq53} we have
\begin{align}
	Y(u)_{-j-1}=\frac 1{j!}Y(L_{-1}^ju)_{-1}
\end{align}
when $j\geq 0$. Combine this with $Y(L_{-1}^ju,z)=\partial_z^j Y(u,z)$, we obtain
\begin{align}
Y\big(Y(u)_{-j-1}v,z\big)=\frac{1}{j!}\hcolondel{\big(\partial_z^jY(u,z)\big) Y(v,z)}	\label{eq65}
\end{align}
where the normal-ordered product is defined in a similar way using the positive and the negative parts of $\partial_z^jY(u,z)$.  We leave it to the readers to check that this formula agrees with the Jacobi identity \eqref{eq47} when $m=0,n<0$.

Thus, once we know how $Y(u,z)$ looks like for all $u$ in a small generating subset $E$ of $\Vbb$, we can write down the formula of $Y(w,z)$ for any $w\in\Vbb$ using the formula
\begin{align}
Y\big(Y(u_1)_{-j_1-1}\cdots Y(u_k)_{-j_k-1}v,z\big)=\frac 1{j_1!\cdots j_k!}\hcolondel{\partial_z^{j_1} Y(u_1,z)\cdots \partial_z^{j_k}Y(u_k,z)\cdot Y(v,z)}	\label{eq67}
\end{align}
where the normal-ordered product for several operators is defined inductively by
\begin{align}
\hcolondel{A_1A_2\cdots A_n}=\hcolondel{A_1 (\hcolondel{A_2\cdots A_n})}	
\end{align}

\subsection{}\label{lb188}
One can also write down the explicit formula of $Y(Y(u)_nv,z)$ for $n\geq 0$ using \eqref{eq47} where $m=0,n\geq 0$. But as I said, \eqref{eq47} is determined by the special cases $m=0,n<0$ and $n=0$. So we  hope that $Y(Y(u)_nv,z),n\geq 0$ can be calculated using \eqref{eq49}. This is true.

Write \eqref{eq49} in the equivalent form
\begin{align}
\big[Y(u)_m,Y(v,z)\big]=\sum_{l\in\Nbb}{m\choose l}z^{m-l}Y\big(Y(u)_lv,z\big).\label{eq63}
\end{align}
Thus, for $m\geq0$, $Y(Y(u)_mv,z)$ can be computed inductively by
\begin{gather}\label{eq64}
\begin{gathered}
Y\big(Y(u)_0v,z\big)=\big[Y(u)_0,Y(v,z)\big]\\
Y\big(Y(u)_mv,z\big)=\big[Y(u)_m,Y(v,z)\big]-\sum_{l=0}^{m-1}{m\choose l}z^{m-l}Y\big(Y(u)_lv,z\big).
\end{gathered}	
\end{gather}

We now see the close relation between the Lie brackets of vertex operators and the data $Y\big(Y(u)_mv,z\big),m\geq0$. The latter plays a very different role from $Y\big(Y(u)_mv,z\big),m<0$. To understand this relation better, we write the associativity relation \eqref{eq60} as
\begin{align}
Y(u,z_2)Y(v,z_1)=\sum_{m\in\Zbb}(z_2-z_1)^{-m-1}Y(Y(u)_mv,z_1)\label{eq62}	
\end{align}
when $0<|z_2-z_1|<|z_1|$. Here, we understand $Y(u,z_2)Y(v,z_1)$ as $Y(v,z_1)Y(u,z_2)$ when $0<|z_1|<|z_2|$ or more generally, as a linear functional on $\Vbb^{\otimes 2}$ sending $w\otimes w'$ to $T_{\fk P_{z_1,z_2}}(w\otimes v\otimes u\otimes w')$ (the correlation function associated to \eqref{eq61}) for all $(z_1,z_2)\in\Conf^2(\Cbb^\times)$. Then the part $m\geq 0$ in \eqref{eq62} accounts for the poles of $T_{\fk P_{z_1,z_2}}(w\otimes v\otimes u\otimes w')$ at $z_2=z_1$.

The summand in \eqref{eq62} vanishes for sufficiently positive $m$. In physics, a series expansion of the form
\begin{align*}
	A(z_2)B(z_1)=\sum_{m\geq -N}(z_2-z_1)^mC^m(z_1)
\end{align*}
is called the  \textbf{operator product expansion (OPE)} of the fields $A(z_2),B(z_1)$. Thus, in the VOA context, \emph{OPEs are just the associativity property \eqref{eq60}}. OPE is useful to physicists because it allows them to reduce the calculation of $4$-point correlations functions to that of $3$-point ones, or in general, $N$-point to $(N-1)$-point.

We split the right hand side of \eqref{eq62} into two parts: $m\geq 0$, which is called the \textbf{regular terms} since it has no poles at $z_2=z_1$, and $m<0$ called the \textbf{singular terms}. Thus
\begin{align*}
Y(u,z_2)Y(v,z_1)=\frac{Y(Y(u)_{N-1}v,z_1)}{(z_2-z_1)^N}+\cdots+	\frac{Y(Y(u)_0v,z_1)}{(z_2-z_1)}+\text{regular terms},
\end{align*}
or, written in physics language,
\begin{align}
	Y(u,z_2)Y(v,z_1)\sim\frac{Y(Y(u)_{N-1}v,z_1)}{(z_2-z_1)^N}+\cdots+	\frac{Y(Y(u)_0v,z_1)}{(z_2-z_1)}.
\end{align}

Thus, to summarize, \emph{\eqref{eq49} establishes a close relationship between the Lie brackets of vertex operators,  the finite poles of the correlation function $T_{\fk P_{z_1,z_2}}$ at $z_1=z_2$, and the finitely may singular terms in the OPE of vertex operators}. As a special case, from \eqref{eq63} and \eqref{eq64} one sees that two vertex operators $Y(u,z_2),Y(v,z_1)$ commute (namely, their Fourier modes $Y(u)_m,Y(v)_k$ commute) iff there are no singular terms in the OPE of $Y(u,z_2)Y(v,z_1)$, iff $T_{\fk P_{z_1,z_2}}(\cdot\otimes v\otimes u\otimes\cdot)$ is holomorphic on a neighborhood of $z_2=z_1$.

\subsection{}\label{lb57}

In the previous subsection, we derived the relationship from the definition of VOAs (in particular, from the VOA Jacobi identity). So one may ask this natural question: does this relationship rely on the full Jacobi identity? For instance, does it rely on \eqref{eq65}?

The answer is no. In a very vague sense, any of the following three implies the others without assuming the full Jacobi identity.
\begin{enumerate}
\item Suitable Lie bracket relations hold for a pair of field operators $A(z_2),B(z_1)$.
\item The finite poles of (the analytic continuation of) $\bk{w',A(z_2)B(z_1)w}$ at $z_2=z_1$.
\item The finitely many singular terms in the OPE of $A(z_2)B(z_1)$ and, in particular, \emph{the existence of such OPE}.
\end{enumerate}
Clearly, the third one a priori implies the second one, since the second does not assume the existence of OPE. Thus, as we have said that OPEs are roughly the same as associativity, we see that the associativity (and indeed, the full Jacobi identity) can be derived from the first or the second statement above. This is called the \textbf{reconstruction theorem} because it allows us to build examples of VOAs by checking only a small part of the Jacobi identity, namely the Lie bracket relations. This theorem is the most important one for constructing examples of VOAs.

A rigorous and detailed discussion of the equivalence of the above three statements will be given in Sec. \ref{lb83}. The first and the second statements correspond to three seeming different but indeed equivalent definitions of the \textbf{locality} of $A(z_2),B(z_1)$. (There are two ways to describe the second one, a formal variable way and a complex analytic way.) Here, we first state the rigorous definition of the first one.

\subsection{}

We let $\Vbb=\bigoplus_{n\in\Nbb}\Vbb(n)$ be an $\Nbb$-graded vector space, graded by a diagonalizable operator $L_0$. We do not assume that $\Vbb$ and $L_0$ are  from any graded vertex algebra. 
\begin{df}\label{lb61}
An \textbf{($L_0$-)homogeneous field (operator)} on $\Vbb$ is an element
\begin{align*}
A(z)=\sum_{n\in\Zbb} A_nz^{-n-1}\in\End(\Vbb)[[z^{\pm1}]]	
\end{align*}
(where each $A_n$ is in $\End(\Vbb)$) satisfying
\begin{align}
[L_0,A(z)]=\Delta_A\cdot A(z)+z\partial_z A(z)	
\end{align}
or equivalently,
\begin{align}
[L_0,A_n]=(\Delta_A-n-1)A_n\qquad(\forall n\in\Zbb)	.
\end{align}
$\Delta_A$ is called the \textbf{weight} of $A(z)$.
\end{df}

Clearly, a homogeneous field $A(z)$ satisfies the \textbf{lower truncation property} $A(z)w\in\Cbb((z))$ (for all $w\in\Vbb$).

\begin{df}[\textbf{Local fields} (Lie algebraic version)]\label{lb59}
Given homogeneous fields $A(z)$ and $B(z)$, we say $A(z)$ is  \textbf{local} to $B(z)$ if there exist $C^j(z)=\sum_{n\in\Zbb}C^j_nz^{-n-1}\in\End(\Vbb)[[z^{\pm1}]]$ (where $j=0,1,\dots,N-1$ for some $N\in\Nbb$) satisfying
\begin{align}
[A_m,B_k]=\sum_{l=0}^{N-1}{m\choose l}C^l_{m+k-l}	\label{eq66}
\end{align}
for all $m,k\in\Zbb$.  We consider the right hand side of \eqref{eq66} as $0$ if $N=0$.
\end{df}

\begin{rem}\label{lb60}
$A(z)$ is local to $B(z)$ if and only if there exist $D^0(z),\dots,D^{N-1}(z)\in\End(\Vbb)[[z^{\pm1}]]$ satisfying for all $m,k\in\Zbb$ that
\begin{align}
[A_m,B_k]=\sum_{l=0}^{N-1}m^lD^l_{m+k}.	\label{eq76}
\end{align}
This is because $\wtd C^l_j:=C^l_{j-l}$ and $D^l_j$ are related by   $\wtd C^l_j+\sum_{p=l+1}^{N-1}a_{p,l}\cdot\wtd C^{p}_j=D^l_j$ where each $a_{p,l}\in\Rbb$ is determined by ${m\choose p}=m^p+\sum_{l=1}^{p-1}a_{p,l}\cdot m^{l}$.
\end{rem}

\begin{exe}
Use \eqref{eq76} to show that if $A(z)$ is local to $B(z)$ then $B(z)$ is local to $A(z)$.
\end{exe}

\subsection{}

Roughly speaking, reconstruction theorem says that if we have a small set $\mc E$ of operators $A(z)\in\End(\Vbb)$ that generates $\Vbb$ and satisfies all the axioms in the definition of graded vertex algebras/VOAs, except that the Jacobi identity is replaced by the weaker condition that the operators in $\mc E$ are mutually local, then the Jacobi identity is automatically satisfies, and hence $\Vbb$ is a graded vertex algebra/VOA. This theorem will be proved in Sec. \ref{lb84}.

\begin{thm}[\textbf{Reconstruction theorem}]\label{lb39}
Let $\mc E$ be a set of $L_0$-homogeneous fields on $\Vbb$.  Assume that the following conditions are satisfied. Then $\Vbb$ has a unique graded vertex algebra structure such that each $A(z)\in\mc E$ is a vertex operator (namely, is of the form $Y(u,z)$ for some $u\in\Vbb$), and that the vacuum vector $\id$ and the operator $L_{-1}$  are those described in the following.
\begin{itemize}
\item Creation property: There is a distinguished vector $\id\in\Vbb(0)$ such that $A(z)\id$ has no negative powers of $z$ for all $A(z)\in\mc E$.
\item Translation property: There is a distinguished $L_{-1}\in\End(\Vbb)$ such that $L_{-1}\id=0$, and that for each $A(z)\in\mc E$ we have $[L_{-1},A(z)]=\partial_zA(z)$.
\item Generating property: Vectors of the form $A^1_{n_1}\cdots A^k_{n_k}\id$ (where $k\in\Nbb$, $A^1(z),\dots,A^k(z)\in\mc E$, and $n_1,\dots,n_k\in\Zbb$) span $\Vbb$. \index{00@Generating sets of homogeneous fields}
\item Locality: Any two fields of $\mc E$ are local.
\end{itemize}
Moreover, if $L_0,L_{-1}$ can be extended to a sequence of operators $(L_n)_{n\in\Zbb}$ on $\Vbb$ such that $\sum_{n\in\Zbb}L_nz^{-n-2}$ belongs to $\mc E$, and that the Virasoro relation \eqref{eq16} is satisfied for some $c\in\Cbb$, then $\Vbb$ is a VOA whose conformal vector $\cbf$ satisfies $Y(\cbf,z)=\sum_{n\in\Zbb}L_nz^{-n-2}$.
\end{thm}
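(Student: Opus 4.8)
\textbf{Proof proposal for the Reconstruction Theorem \ref{lb39}.}

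The plan is to define the state-field correspondence by declaring that the image of $u = A^1_{n_1}\cdots A^k_{n_k}\id$ is the iterated normal-ordered product suggested by \eqref{eq67}, and then to verify that this is well-defined and satisfies all the axioms of a graded vertex algebra. First I would establish the \emph{existence of OPE from locality}: if $A(z)$ is local to $B(z)$ in the sense of Def. \ref{lb59}, then on each $w\in\Vbb$ the formal series $\langle w', A(z_2)B(z_1)w\rangle$ (with both sides of the bracket understood via lower truncation, expanded in $|z_1|<|z_2|$ and $|z_2|<|z_1|$ respectively) extend to a common rational function with poles only along $z_1=0$, $z_2=0$, $z_1=z_2$, and moreover the order of the pole at $z_1=z_2$ is bounded by $N$. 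This is the standard formal-variable manipulation (Dong's Lemma is the key combinatorial input: if $A$ is local to $B$ and to $C$, then $A$ is local to every coefficient field of the OPE of $B$ and $C$), and it is what lets the generating property of $\mc E$ propagate locality to all fields built from $\mc E$. In particular it guarantees that the normal-ordered products $\hcolondel{\partial_z^{j_1}A^1(z)\cdots\partial_z^{j_k}A^k(z)}$ are again $L_0$-homogeneous fields that are mutually local and local to everything in $\mc E$.

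Next I would define $Y(u,z)$ for $u$ in the spanning set by \eqref{eq67}, using the creation and translation properties of $\mc E$ to see that $Y(A^1_{n_1}\cdots A^k_{n_k}\id, z)\id$ has the right leading term (no negative powers, constant term $= u$); this also shows $Y(\id,z)=\id_\Vbb$ and, via Corollary \ref{lb78}-type bookkeeping, that $Y(L_{-1}v,z)=\partial_z Y(v,z)$, which upgrades to the translation covariance $[L_{-1},Y(v,z)]=\partial_z Y(v,z)$ for all $v$. The grading property $[L_0,Y(v,z)]=Y(L_0v,z)+z\partial_z Y(v,z)$ follows because normal-ordered products and $\partial_z$ interact with the $L_0$-weight in the predicted way, starting from the hypothesis that each $A(z)\in\mc E$ is $L_0$-homogeneous. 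The genuinely substantive step is \emph{well-definedness}: a priori a vector $u\in\Vbb$ has many expressions as $A^1_{n_1}\cdots A^k_{n_k}\id$, and I must show \eqref{eq67} produces the same field regardless. The mechanism is to prove, by induction on $k$ using Dong's Lemma and the commutativity/associativity of OPE that locality forces, that the assignment $u\mapsto Y(u,z)$ is compatible with the relations among the $A^i_{n_i}$ — equivalently, that $Y(u,z)$ is the unique field whose $z^{-1}$-coefficient applied to $\id$ equals $u$ and which is local to (hence "creates" correctly against) all of $\mc E$. A uniqueness-of-creating-field lemma (a Goddard-type argument: two local fields that agree on the vacuum and are both local to a generating set must coincide) does this cleanly.

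With $Y$ well-defined on all of $\Vbb$, the Jacobi identity follows from commutativity plus associativity exactly as in the schematic \eqref{eq73}: commutativity $Y(u,z_2)Y(v,z_1)\sim Y(v,z_1)Y(u,z_2)$ comes from the common-rational-function statement above applied with $u,v$ arbitrary (using that all fields $Y(u,z)$ are mutually local, which is where Dong's Lemma does the heavy lifting), and associativity $Y(u,z_2)Y(v,z_1)w = Y(Y(u,z_2-z_1)v,z_1)w$ in the region $0<|z_2-z_1|<|z_1|<|z_2|$ is then deduced from commutativity together with the translation covariance and the creation property (the "associativity from commutativity" argument: compare $Y(Y(u,z_2-z_1)v,z_1)$ with $Y(u,z_2)Y(v,z_1)$ by pairing against $w'$, using that both, as functions of the inserted vectors, agree when $v=\id$ and transform the same way under $L_{-1}$). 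Then the strong residue theorem (Subsec. \ref{lb193}) converts the analytic statement into the algebraic Jacobi identity \eqref{eq47}. Uniqueness of the graded vertex algebra structure is immediate: any $Y$ making the elements of $\mc E$ into vertex operators must, by the associativity relation \eqref{eq67} applied in reverse, be given by the formula we used to define it. Finally, for the VOA statement: if $(L_n)_{n\in\Zbb}$ satisfy the Virasoro relation with $\sum_n L_n z^{-n-2}\in\mc E$, set $\cbf$ to be the $z^{-1}$-coefficient of this field applied to $\id$ — i.e. $\cbf = L_{-2}\id$ — so that $Y(\cbf,z)=\sum_n L_n z^{-n-2}$ by construction; one checks $L_0\cbf = 2\cbf$ and $L_n\cbf = 0$ for $n\geq 3$, $L_1\cbf=0$, $L_2\cbf=\frac{c}{2}\id$ exactly as in Remark \ref{lb80}, and the compatibility of the Virasoro relation with the Jacobi identity (Exercise \ref{lb37}) confirms that $Y(\cbf)_{n+1}=L_n$. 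The main obstacle, as flagged, is the well-definedness of $Y$ on non-freely-generated $\Vbb$; everything else is bookkeeping around Dong's Lemma and the strong residue theorem.
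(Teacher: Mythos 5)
Your proposal is correct and follows essentially the same route as the paper: Dong's Lemma propagates locality from $\mc E$ to all derived fields, a Goddard-type uniqueness lemma (Prop.~\ref{lb162}) resolves the well-definedness issue you correctly identify as the crux, and the Jacobi identity for local fields (Prop.~\ref{lb71}) together with the identification $Y(Y(u)_nv,z)=(A_nB)(z)$ yields the VOA Jacobi identity, so your commutativity-then-associativity detour is an unnecessary (though valid) intermediate step. The one detail to patch is that \eqref{eq67} covers only negative modes, so for spanning vectors $A^1_{n_1}\cdots A^k_{n_k}\id$ with some $n_i\geq 0$ the definition must use the general OPE coefficients $(A_nB)(z)$ from \eqref{eq126}, which your Dong's-Lemma framework already supplies.
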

Note that the uniqueness of the graded vertex algebra/VOA structure follows directly from \eqref{eq67}. The non-trivial part of this theorem is of course the existence of such structure.

\begin{rem}\label{lb44}
The end of the reconstruction Thm. \ref{lb39} means that in order to show that a graded vertex algebra $\Vbb$ is a VOA, it suffices to show that $L_0,L_{-1}$ can be extended to $(L_n)_{n\in\Zbb}$ satisfying the Virasoro relation, that $T(z)=\sum L_nz^{-n-2}$ satisfies the creation property (namely, $L_n\id=0$ for all $n\geq-1$), and that $T(z)$ is local with any field in $\mc E$ (by showing for instance the conformal Ward identity $[L_m,A(z)]=z^{m+1}\partial_zA(z)+\Delta_A\cdot z^mA(z)$ for all $A(z)\in\mc E$ if one expects that all $A(z)$ are ``primary"). The translation property is automatically satisfied due to the Virasoro relation $[L_{-1},L_n]=-(n+1)L_{n-1}$.
\end{rem}

\section{Constructing examples of VOAs}

\subsection{}
In the previous section, we have mentioned some important examples of VOAs:  affine VOAs and Virasoro VOAs. But we didn't explain why they exist. This is the task of this section. The standard references for this section  are \cite[Chapter 6]{LL} and \cite{Was10} (with emphasis on the unitarity aspect). 

The style of this section is different from the previous ones: it has a strong flavor of Lie theory. The methods in this section will not be used in the future (except when we discuss examples of VOA modules). So the readers can safely skip this section if they do not want to bother with the existence issue. (But they should at least read Subsec. \ref{lb58} on tensor product VOAs.)

Our first class of examples are Virasoro VOAs, namely, those generated by the conformal vector $\cbf$. To begin with, the \textbf{Virasoro algebra} is a Lie algebra $\Vir=\Span_\Cbb\{L_n,K:n\in\Zbb\}$ satisfying the bracket relation
\begin{subequations}
\begin{gather*}
		[L_m,L_n]=(m-n)L_{m+n}+\frac K{12}(m+1)m(m-1)\delta_{m,-n},\\
		[K,L_n]=0.	
\end{gather*}
\end{subequations}

We know that any VOA must satisfy $L_n\id=0$ for all $n\geq-1$. Motivated by this fact, we have:
\begin{pp}\label{lb45}
Let $\Vbb$ be a representation of $\Vir$ such that $L_0$ is diagonalizable and has $\Nbb$-spectrum. Assume that $\Vbb$ has a distinguished vector $\id$ killed by $L_n$ for all $n\geq -1$,  that vectors of the form $L_{n_1}\cdots L_{n_k}\id$ (where $k\in\Nbb,n_1,\dots,n_k\in\Zbb$) span $\Vbb$, and that $K$ acts as a constant $c\in\Cbb$. Then $\Vbb$ has a unique natural structure of a Virasoro VOA. Its central charge is $c$.
\end{pp}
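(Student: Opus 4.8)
The plan is to invoke the reconstruction theorem (Thm.~\ref{lb39}) with $\mc E$ consisting of the single field $T(z)=\sum_{n\in\Zbb}L_nz^{-n-2}$. So I first need to recast the data $(\Vbb,L_0,L_{-1},\id,T(z))$ into the hypotheses of that theorem and then read off its conclusion. Note that in the normalization of the excerpt, $Y(\cbf,z)=\sum_n L_nz^{-n-2}$, i.e., $Y(\cbf)_n=L_{n-1}$, so the field $T(z)$ is $L_0$-homogeneous of weight $\Delta_T=2$: indeed from $[L_0,L_m]=-mL_m$ (which is the Virasoro relation restricted to $n=0$) one gets $[L_0,Y(\cbf)_m]=[L_0,L_{m-1}]=-(m-1)L_{m-1}=(2-m-1)Y(\cbf)_m$, matching Def.~\ref{lb61} with $\Delta_T=2$. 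This is the first routine check.

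Next I would verify the four bulleted hypotheses of Thm.~\ref{lb39} for $\mc E=\{T(z)\}$. \emph{Creation property}: $T(z)\id=\sum_n L_{n-1}\id\, z^{-n-2}$ has no negative powers of $z$ precisely because $L_m\id=0$ for all $m\geq -1$, which is part of the assumption; and $\id\in\Vbb(0)$ since $L_0\id=0$. \emph{Translation property}: I need $[L_{-1},T(z)]=\partial_z T(z)$ and $L_{-1}\id=0$; the latter is assumed, and the former is the computation $[L_{-1},Y(\cbf)_m]=[L_{-1},L_{m-1}]=(-1-(m-1))L_{m-2}=-mL_{m-2}=-mY(\cbf)_{m-1}$, which is exactly the coefficient identity equivalent to $[L_{-1},T(z)]=\partial_zT(z)$. \emph{Generating property}: vectors $L_{n_1}\cdots L_{n_k}\id$ span $\Vbb$ by hypothesis, and since $L_{n}=Y(\cbf)_{n+1}=T_{n+1}$ (in the field-mode notation of Thm.~\ref{lb39}) these are exactly the vectors $T_{m_1}\cdots T_{m_k}\id$. \emph{Locality}: $T(z)$ must be local to itself in the sense of Def.~\ref{lb59}; this is where the Virasoro relation enters essentially. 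Using $[T_m,T_k]=[L_{m-1},L_{k-1}]=(m-k)L_{m+k-2}+\frac{c}{12}(m-1)(m-2)(m-3)\delta_{m+k-2,0}$ and rewriting $L_{m+k-2}=T_{m+k-1}$ and the scalar term as a multiple of $\id_\Vbb=(\,\text{some field})_{m+k}$, one exhibits fields $C^0(z),\dots,C^{N-1}(z)$ with $[T_m,T_k]=\sum_{l=0}^{N-1}\binom{m}{l}C^l_{m+k-l}$; concretely $C^l(z)$ will be built from $T(z)$ itself and from $\id_\Vbb$ (the latter carrying the central term). Expanding $(m-1)(m-2)(m-3)$ and $(m-k)$ as integer-coefficient combinations of $\binom{m}{l}$, $l\le 3$, makes the locality data explicit; this is the conformal Ward identity $[L_m,Y(\cbf,z)]=z^{m+1}\partial_zY(\cbf,z)+2z^mY(\cbf,z)+\frac{c}{12}(\text{central piece})$ in disguise.

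With all hypotheses verified, Thm.~\ref{lb39} produces a unique graded vertex algebra structure on $\Vbb$ in which $T(z)=Y(\cbf,z)$ for some $\cbf\in\Vbb$; and since by construction $L_0,L_{-1}$ extend to the sequence $(L_n)$ with $\sum_n L_nz^{-n-2}=T(z)\in\mc E$ and the Virasoro relation holds with constant $c$, the ``Moreover'' clause of the theorem upgrades this to a VOA with conformal vector $\cbf$ satisfying $Y(\cbf,z)=\sum_n L_nz^{-n-2}$ and central charge $c$. Uniqueness of the structure follows from the reconstruction formula \eqref{eq67} applied to the generating set $\{\cbf\}$: any vertex operator structure with $Y(\cbf,z)=T(z)$ must assign $Y(L_{-n_1-1}\cdots L_{-n_k-1}\id,z)$ the prescribed normally-ordered expression, and since $\Vbb$ is spanned by such vectors (after reducing via $L_n\id=0$, $n\ge -1$, and \eqref{eq64} to handle the $n_i\ge 0$ modes), $Y$ is determined. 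The one genuinely non-trivial input is the reconstruction theorem itself, which is deferred to Sec.~\ref{lb84}; modulo that, the only mild obstacle here is the bookkeeping in the locality verification — matching the cubic polynomial $(m-1)(m-2)(m-3)$ and the factor $(m-k)$ against the basis $\{\binom{m}{l}\}$ so that the $C^l(z)$ are genuinely fields (elements of $\End(\Vbb)[[z^{\pm1}]]$), which they are since $T(z)$ and $\id_\Vbb$ are. I would present that matching as a short explicit computation rather than grinding all coefficients.
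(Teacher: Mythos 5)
Your proposal is correct and takes essentially the same route as the paper, whose entire proof is to cite the reconstruction Thm.~\ref{lb39} and observe (via Rem.~\ref{lb60}/\eqref{eq76}) that the Virasoro relation makes $\sum_{n}L_nz^{-n-2}$ local to itself. The only blemish is the central term in your bracket computation: with $T_m=L_{m-1}$ it is $\frac{c}{12}\,m(m-1)(m-2)\,\delta_{m+k,2}=\frac{c}{2}\binom{m}{3}\delta_{m+k,2}$ rather than $(m-1)(m-2)(m-3)$, but this slip is harmless since either cubic is an integer combination of $\binom{m}{l}$ with $l\le 3$.
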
 

\begin{proof}
This follows immediately from the reconstruction Thm. \ref{lb39}. Note that by \eqref{eq76}, $\sum_{n\in\Zbb}L_nz^{-n-2}$ is local to itself due to the Virasoro relation.
\end{proof}

\subsection{}

Thus, it remains to construct $\Vir$-modules satisfying the conditions in Prop. \ref{lb45}. Let us first find a ``largest" such module. We expect that this module should have basis $L_{-n_1}\cdots L_{-n_k}\id$ where $n_1\geq\cdots\geq n_k\geq 2$,  because:
\begin{exe}
Let $\Vbb$ be as in Prop. \ref{lb45}. Prove by induction on $k$ that $L_{n_1}\cdots L_{n_k}\id$ (for any $n_1,\dots,n_k$) can be written as a linear combination of $L_{-m_1}\cdots L_{-m_l}\id$ where $l\in\Nbb,m_1,\dots,m_l\geq2$. (Hint: if $n_j\leq -2$, move $L_{n_j}$ to the rightmost by using the Virasoro relation.)
\end{exe}

Now let us construct this largest module $V_\Vir(c,0)$ for each $c\in\Cbb$. Its basis consists of $(-n_1,\dots,-n_k)$ where $k\in\Nbb$ and $n_1\geq \cdots\geq n_k\geq2$. The one with $k=0$ is denoted by $\id$. If $n\geq n_1$, we simply define the action of $L_{-n}$ on each $(-n_1,\dots,-n_k)$ to be $(-n,-n_1,\dots,-n_k)$. But we also want to define the action of $L_n$ on $(-n_1,\dots,-n_k)=L_{-n_1}\cdots L_{-n_k}\id$ for all $n\in\Zbb$. In practice, we can write down the formula explicitly using the Virasoro relation. For instance: $L_0L_{-n_1}\cdots L_{-n_k}\id=(n_1+\cdots+n_k)L_{-n_1}\cdots L_{-n_k}\id$, and
\begin{align}
&L_3L_{-4}L_{-3}\id=[L_3,L_{-4}]L_{-3}\id+L_{-4}[L_3,L_{-3}]\id\nonumber\\
=&7L_{-1}L_{-3}\id+6L_{-4}L_0\id+2cL_{-4}\id=(14+2c)L_{-4}\id.\label{eq72}
\end{align}
There is a natural question about this approach: how do we verify that such defined action of $\Vir$ on $V_\Vir(c,0)$ preserves the Lie bracket relations of $\Vir$?

\subsection{}

The standard way to deal with is issue is to use the \textbf{Poincar\'e–Birkhoff–Witt (PBW)} theorem, which says the following: Let $\fk g$ be a Lie algebra (over any field). Let $U(\fk g)$ be its universal enveloping algebra, i.e., the largest unital associative algebra containing and generated by the vector space $\fk g$ such that $xy-yx=[x,y]$ for all $x,y\in\fk g$. If $E$ is a basis of $U(\gk)$ with a total order $\leq$, then vectors of the form
\begin{align}
x_1x_2\cdots x_k\qquad (k\in\Nbb,x_1\geq x_2\geq\cdots\geq x_k\in E)	\label{eq71}
\end{align}
(when $k=0$, we understand this expression as $1$) form a basis of $U(\fk g)$.

The remarkable point about the PBW theorem is that if we define a vector space $V$ to have a basis of vectors as in \eqref{eq71}, and if we define the action of $x\in\fk g$ using the Lie bracket relations of $\fk g$ (similar to the argument in \eqref{eq72}), then this gives a well defined action of $\fk g$ on $V$ preserving the bracket relations of $\fk g$, i.e., this gives a well defined representation of $\fk g$.

To apply the PBW theorem to our construction of VOAs, we need the following result:
\begin{exe}\label{lb46}
Suppose $\gk=\gk_1\oplus\gk_2$ where $\gk_1,\gk_2$ are Lie subalgebras of $\gk$. Use the PBW theorem to show that there is an isomorphism of vector spaces $U(\gk_1)\otimes U(\gk_2)\rightarrow U(\gk)$ sending each $x_1\cdots x_k\otimes y_1\cdots y_l$ to $x_1\cdots x_ky_1\cdots y_k$ where $x_\blt\in\gk_1,y_\blt\in\gk_2$. 
\end{exe}
The proof is an easy application of the PBW theorem, which we leave to the readers.

\subsection{}
Consider the following Lie subalgebras of $\Vir$:
\begin{align*}
V_-=\Span\{L_n:n\leq -2\},\qquad V_+=\Span\{K,L_n;n\geq -1\}.	
\end{align*}
$\Cbb_c=\Cbb$ is a representation of $V_+$ if we let $L_n$ act as $0$ and $K$ as $c$. So $\Cbb_c$ is also a $U(V_+)$-module. Now $U(\Vir)$ is clearly a right $U(V_+)$-module. So 
\begin{align*}
\Ind_{U(V_+)}^{U(V)}\Cbb_c:=	 U(\Vir)\otimes_{U(V_+)}\Cbb_c
\end{align*}
is a (left) $U(\Vir)$-module, called the \textbf{induced representation} of $\Cbb_c$. This is a $\Vir$-module, and by Exercise \ref{lb46}, this vector space is isomorphic to $U(V_-)\otimes_\Cbb U(V_+)\otimes_{(U(V_+))}\Cbb_c\simeq U(V_-)$, which by the PBW theorem has a basis of vectors the form $L_{-n_1}\cdots L_{-n_k}\id$ where $\id$ is the unit $1$ and $n_1\geq\dots\geq n_k\geq 2$. So we can view $V_\Vir(c,0)$ as $\Ind_{U(V_+)}^{U(V)}\Cbb_c$. In particular, this proves that $V_\Vir(c,0)$ carries a ntural structure of representation of $\Vir$. Hence, by Prop. \ref{lb45}, $V_\Vir(c,0)$ is a Virasoro VOA with central charge $c$.

\begin{exe}
Find an explicit expression of $Y(L_{-4}\cbf,z)$ on $V_\Vir(c,0)$ in terms of the Virasoro operators $L_n$.
\end{exe}

\subsection{}\label{lb47}

$V_\Vir(c,0)$ is not always an irreducible $\Vir$-module. But the irreducible cases are the most interesting one. For instance, every CFT-type unitary VOA is irreducible. (See \cite{CKLW18}.)

The method of getting irreducible examples is quite standard in Lie theory: We shall take the largest quotient of $V_\Vir(c,h)$. To be more precise, note that for any proper $\Vir$-invariant subspace $W$ of $V_\Vir(c,h)$, note that $L_0$ is diagonalizable on $W$,\footnote{In general, if $D$ is a diagonalizable linear operator on a vector space $M$ and $W$ is an $D$-invariant subspace of $M$, then $D|_W$ is diagonalizable. To see this, choose any $w\in M$ which is a finite sum $w_1+\cdots +w_k$ where each summand is an eigenvector of $D$ in $M$, and they have distinct eigenvalues $\lambda_1,\dots,\lambda_k$. Use polynomial interpolation to find a polynomial $p$ such that $p(\lambda_j)=\delta_{1,j}\lambda_1$.  So $w_1=p(D)w\in M$.} i.e., $W$ has a $L_0$-grading, whose lowest weight must not be $0$ since otherwise it contains $\id$ and hence must be $V_\Vir(c,0)$. Let $I$ be the span of all such $W$, then $I$ is the largest proper $\Vir$-subspace since $I$ has no non-zero weight-$0$ vectors. Then
\begin{align*}
L_\Vir(c,0):=V_\Vir(c,0)/I
\end{align*}
is an irreducible $\Vir$-module, which is also a Virasoro VOA of CFT type by Prop. \ref{lb45}.

\subsection{}

One may wonder when $L_\Vir(c,0)$ equals $V_\Vir(c,0)$, i.e., when $I$ is trivial. Indeed, $I$ is non-trivial if and only if
\begin{align}
c=c_{p,q}=1-\frac{6(p-q)^2}{pq}	\label{eq75}
\end{align}
where $p,q\in\{2,3,4,\dots\}$ are relatively prime. (Cf. \cite[Rem. 6.1.13]{LL} and the reference therein.) In this case, $L_\Vir(c,0)$ is called a \textbf{minimal model}. It has finitely many irreducible modules. Minimal models are an important class of ``rational" VOAs. More precisely: \textbf{rational and $C_2$-cofinite} VOAs. We will give precise meanings of these terms in later sections. The theory of conformal blocks for such VOAs is well-established.

It is a deep result that $L_\Vir(c,0)$ is a unitary $\Vir$-module if and only if $c\geq1$ or $c$ satisfies \eqref{eq75} with $|p-q|=1$, namely,
\begin{align}
	c=1-\frac 6{m(m+1)}\label{eq130}
\end{align}
for some integer $m\geq2$. We refer the readers to \cite[Chapter 8]{FMS} and \cite[Chapter IV]{Was10} for details.

\subsection{}

We now turn to  affine VOAs. We fix a finite dimensional complex Lie algebra $\gk$ together with a non-degenerate symmetric invariant bilinear form $(\cdot,\cdot)$. (Indeed, we will not use the non-degeneracy until we define the Virasoro operators.) Recall that invariance means
\begin{align}
([X,Y],Z)=-(Y,[X,Z]).\label{eq74}	
\end{align}
An \textbf{affine Lie algebra}  is $\wht\gk$ with basis $X_n,K$ (where $X\in\gk,n\in\Zbb$) satisfying the Lie bracket relation
\begin{gather*}
\begin{gathered}
[X_m,Y_n]=[X,Y]_{m+n}+m(X,Y)\delta_{m,-n}K,\\
[K,X_m]=0.
\end{gathered}	
\end{gather*}
It is more convenient to add a basis element $D$ (which will be the $L_0$ of our VOA) to $\wht\gk$ to get a slightly larger Lie algebra $\wtd\gk=\wht\gk\rtimes\Cbb D$ such that
\begin{align*}
[D,X_m]=-mX_m,\qquad [D,K]=0.
\end{align*}
$\wtd\gk$ is also called an affine Lie algebra.

\subsection{}\label{lb87}

$\wtd\gk$ decomposes into Lie subalgebras $\wtd\gk=\wtd\gk_-\oplus\wtd\gk_+$ where
\begin{align*}
\wtd\gk_-=\Span\{X_n:X\in\gk,n<0\},\qquad \wtd\gk_+=\Span\{X_n,K,D:X\in\gk,n\geq0\}.	
\end{align*}
Then $U(\wtd\gk)\simeq U(\wtd\gk_-)\otimes U(\wtd\gk_+)$ by Exercise \ref{lb46}. For each $l\in\Cbb$ called the \textbf{level}, we let $\Cbb_l=\Cbb$ be an $\wtd\gk_+$-module such that $K$ acts as $l$ and $X_n,D$ act trivially. We are interested in two types of associated VOAs:
\begin{align}
V_\gk(l,0):=\Ind_{U(\wtd\gk_+)}^{U(\wtd\gk)}\Cbb_l=U(\wtd \gk)_{\otimes U(\wtd\gk_+)}	\Cbb_l
\end{align}
which as a vector space is naturally equivalent to $U(\wtd \gk_-)$. Let $\id$ be the $1\otimes 1$ in $U(\wtd \gk)_{\otimes U(\wtd\gk_+)}	\Cbb_l$. Then $V_\gk(l,0)$ has a basis of vectors
\begin{align*}
X^{i_1}_{-n_1}\cdots X^{i_k}_{-n_k}\id	
\end{align*}
(which has $D$-weight $n_1+\cdots+n_k$) written in the lexicon order where $\{X^1,X^2,\dots\}$ is a basis of $\gk$ and $n_1,\dots,n_k>0$. Thus, $D$ is diagonaizable on $V_\gk(l,0)$ with non-negative spectrum, and each eigenspace is finite dimensional. Similar to the argument in Subsec. \ref{lb47}, we can take a simple quotient
\begin{align}
L_\gk(l,0)=V_\gk(l,0)/I	
\end{align}
where $I$ is the largest proper $\wtd\gk$-submodule.

$V_\gk(0,0)$ and $L_\gk(0,0)$ are never equal, because:
\begin{exe}
Show that $L_\gk(0,0)$ is spanned by $\id$. Equivalently, show that if $l=0$, then $I$ contains all $D$-eigenvectors with eigenvalues $>0$.
\end{exe}

In the following, we discuss how to make $L_\gk(l,0)$ a VOA since $L_\gk(l,0)$ is our main interest. The same method applies to $V_\gk(l,0)$.

For each $X\in\gk$, $X_n$ acts on $L_\gk(l,0)$ in an obvious way. We define $X(z)\in\End(L_\gk(l,0))[[z^{\pm1}]]$ to be
\begin{align*}
	X(z)=\sum_{n\in\Zbb}X_nz^{-n-1}.
\end{align*}
It is a homogeneous field (with respect to $D$) with weight $1$ since $[D,X_n]=-nX_n$. One checks easily that these fields satisfy the creation property and locality, and that they  generate $L_\gk(l,0)$. So it remains to construct $L_{-1}$ and verify the translation property. We shall actually construct all $L_n$ in a uniform way.

\subsection{}

Choose a basis $E$ of $\gk$, which gives a dual basis $\{\wch e:e\in E\}$, namely, for each $e,f\in E$, $(e,\wch f)=\delta_{e,f}$ with respect to the given non-degenerate symmetric bilinear form $(\cdot,\cdot)$. By linear algebra,
\begin{align}
\sum_{e\in E}e\otimes \wch e\in\gk\otimes \gk	
\end{align}
is independent of the choice of basis $E$. As an immediate consequence, we have
\begin{align}
\sum_{e\in E} \wch e\otimes e=\sum_{e\in E} e\otimes\wch e.\label{eq77}
\end{align}
With the help of $\gk\otimes\gk\rightarrow \gk,X\otimes Y\mapsto[X,Y]$, this shows $\sum[\wch e,e]=\sum[e,\wch e]=-\sum[\wch e,e]$, i.e.,
\begin{align}
\sum_{e\in E}[\wch e,e]=0.\label{eq81}
\end{align}
\begin{lm}\label{lb50}
For each $X\in\gk$, we have
\begin{align}
	\sum_{e\in E}\wch e\otimes [e,X]=-\sum_{e\in E}[\wch e,X]\otimes e.
\end{align}
\end{lm}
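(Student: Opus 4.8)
The plan is to detect the asserted identity by pairing both sides against an arbitrary $a\otimes b\in\gk\otimes\gk$ with the form $(\cdot,\cdot)\otimes(\cdot,\cdot)$ on $\gk\otimes\gk$. Since $(\cdot,\cdot)$ is non-degenerate, so is this tensor-product form, hence an element of $\gk\otimes\gk$ vanishes if and only if all such pairings vanish; so it suffices to show the two pairings agree for all $a,b$. The elementary tools are the two ``reproducing'' identities $\sum_{e\in E}(\wch e,a)\,e=a$ and $\sum_{e\in E}(e,b)\,\wch e=b$, which follow by expanding a vector in the basis $E$ (resp. in $\{\wch e:e\in E\}$) together with $(e,\wch f)=\delta_{e,f}$ and the symmetry of the form.

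First I would pair the left-hand side $\sum_{e}\wch e\otimes[e,X]$ with $a\otimes b$: this gives $\sum_{e}(\wch e,a)\,([e,X],b)$, and since $e\mapsto([e,X],b)$ is linear in $e$, moving the sum inside the bracket and the form yields $\big([\,\sum_{e}(\wch e,a)e\,,X],b\big)=([a,X],b)$. Then I would pair the right-hand side $-\sum_{e}[\wch e,X]\otimes e$ with $a\otimes b$: this gives $-\sum_{e}([\wch e,X],a)\,(e,b)$, and since $\wch e\mapsto([\wch e,X],a)$ is linear, the same manipulation yields $-\big([\,\sum_{e}(e,b)\wch e\,,X],a\big)=-([b,X],a)$.

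It then remains to check the scalar identity $([a,X],b)=-([b,X],a)$. This is immediate from the invariance relation \eqref{eq74} applied twice: $([a,X],b)=-(X,[a,b])$ and $([b,X],a)=-(X,[b,a])=(X,[a,b])$, so the two sides indeed differ by a sign, and the pairings of the left- and right-hand sides agree. This proves the lemma. The only thing requiring care — and hence the ``main obstacle,'' such as it is — is bookkeeping: keeping straight which tensor slot carries $e$ and which carries $\wch e$ when the summation is pushed through the bracket, and tracking the sign contributed by invariance; there is no genuine difficulty beyond this. (Equivalently, one may package the argument as the $\ad$-invariance of the Casimir element $\sum_{e}e\otimes\wch e$, but the direct pairing is self-contained and just as short.)
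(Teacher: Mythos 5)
Your proof is correct and follows essentially the same route as the paper's, which also evaluates both sides against an arbitrary $Y\otimes Z$ via the form $(\cdot,\cdot)$ and invokes the invariance condition \eqref{eq74}; you merely spell out the reproducing identities $\sum_e(\wch e,a)e=a$ and $\sum_e(e,b)\wch e=b$ that the paper leaves implicit. No issues.
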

\begin{proof}
Evaluate both sides by $Y\otimes Z$ using $(\cdot,\cdot)$, and use the invariance condition \eqref{eq74} to show that both sides equal $(Y,[X,Z])$.
\end{proof}
Thus, on each $\gk$-module $V$, we have $\sum \wch e[e,X]+\sum [\wch e,X]e=0$, namely,
\begin{align}
\sum_e [\wch ee,\gk]=0.	
\end{align}
So when $V$ is finite dimensional and  is either irreducible or trivial, $\Omega=\sum \wch ee\in\End(V)$ is a constant by Schur's lemma, called \textbf{Casimir element}. The operator $\Omega$ in general gives the nagative Laplactian of the Lie group action.

\begin{ass}\label{lb52}
We assume that for the adjoint representation $\gk\curvearrowright\gk,X\mapsto [X,\cdot]$, the Casimir element is a constant $2h^\vee\in\Cbb$, i.e., 
\begin{align}
\sum_{e\in E}[\wch e,[e,\cdot]]=2h^\vee\id_\gk.	
\end{align}
This is always true when $\gk$ is abelian (in which case $h^\vee=0$) or simple. We assume
\begin{align*}
	l+h^\vee\neq0.
\end{align*}
\end{ass}

\subsection{}

We define the Virasoro operator ``as if" the conformal vector is
\begin{align}
	\cbf=\gamma^{-1}\sum_e\wch e_{-1}e_{-1}\id\qquad\big(\text{where }\gamma=2(l+h^\vee)\big).\label{eq79}
\end{align}
Thus, using \eqref{eq59} and $L_m=Y(\cbf)_{m+1}$, and noting that $\wch e_ie_j=e_i\wch e_j$ by \eqref{eq77}, we write down the definition
\begin{align}
L_m=\gamma^{-1}\sum_e\Big(\sum_{k\leq-1} \wch e_ke_{m-k}+\sum_{k\geq0} \wch e_{m-k}e_k\Big)
\end{align}
acting on $L_\gk(l,0)$. This is called \textbf{Sugawara construction}. One checks that this sum is finite when acting on any vector.

To use the reconstruction theorem, we need the following crucial fact:
\begin{pp}\label{lb48}
For each $m,n\in\Zbb$ and $X\in\gk$,
\begin{align}
[L_m,X_n]=-nX_{m+n}.	\label{eq78}
\end{align}
\end{pp}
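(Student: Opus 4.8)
The plan is to verify \eqref{eq78} by a direct computation with the Sugawara modes, using the affine commutation relations $[X_m,Y_n]=[X,Y]_{m+n}+m(X,Y)\delta_{m,-n}l$ together with the structural identities \eqref{eq77}, \eqref{eq81}, and Lemma \ref{lb50}. Since $L_m=\gamma^{-1}\sum_e\big(\sum_{k\leq -1}\wch e_k e_{m-k}+\sum_{k\geq 0}\wch e_{m-k}e_k\big)$, the starting point is the general Leibniz identity $[AB,C]=A[B,C]+[A,C]B$ applied to each summand $\wch e_k e_{m-k}$ (or $\wch e_{m-k}e_k$), so that
\begin{align*}
[\wch e_k e_{m-k},X_n]=\wch e_k[e_{m-k},X_n]+[\wch e_k,X_n]e_{m-k}.
\end{align*}
The commutators on the right are computed from the affine relations, producing a ``bracket term'' $\wch e_k[e,X]_{m-k+n}$ (resp.\ $[\wch e,X]_{k+n}e_{m-k}$) plus a scalar ``central term'' proportional to $k(\wch e,X)\delta_{m-k+n,0}l$ (resp.\ the analogue).

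First I would collect the bracket terms. Summing over $e\in E$ and reindexing, the two families of bracket terms organize into an expression of the form $\gamma^{-1}\sum_e\big(\text{normally ordered product of }\wch e\text{ or }[e,X]\text{ with the complementary factor}\big)$; here I would use Lemma \ref{lb50} in the form $\sum_e \wch e\otimes[e,X]=-\sum_e[\wch e,X]\otimes e$ to see that the contribution involving $[e,X]_j$ paired with $\wch e_{\bullet}$ cancels against the contribution involving $[\wch e,X]_j$ paired with $e_\bullet$, \emph{except} for the terms where the two affine modes themselves fail to commute, i.e.\ where moving $[\wch e,X]_{k+n}$ past $e_{m-k}$ (or similarly) generates another affine bracket. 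That secondary commutator is again governed by the affine relations and, after summing over $e$, is controlled by the adjoint Casimir: the double-bracket terms assemble into $\sum_e[\wch e,[e,X]]=2h^\vee X$, contributing a term proportional to $h^\vee X_{m+n}$. The key bookkeeping is to track which normal-ordering rearrangements are needed (those are exactly the $k$-ranges where $\wch e_k$ and $e_{m-k}$, or their images under $\Ad X$, land on ``opposite sides'' of the normal order) and to confirm that all the genuinely normally-ordered pieces cancel by Lemma \ref{lb50}.

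Next I would collect the central (scalar) terms. Each summand contributes a multiple of $k\,(\wch e,X)\,\delta_{\bullet,0}\,l$ or the $e\leftrightarrow\wch e$ analogue; summing over $e\in E$ replaces $\sum_e(\wch e,X)\,e$ (resp.\ $\sum_e(e,X)\wch e$) by $X$ via the defining property of the dual basis, and the $\delta$-constraint forces $m+n=0$ while the summation over $k$ produces an explicit integer coefficient. Combining this with the normal-ordering contributions, the coefficient of $X_{m+n}$ should come out to $\gamma^{-1}\cdot(l+h^\vee)\cdot(-2n)\cdot\frac{?}{?}$, and the point is that the factor $\gamma=2(l+h^\vee)$ in \eqref{eq79} is precisely chosen so that everything collapses to $-nX_{m+n}$; the case $l+h^\vee=0$ is excluded in Assumption \ref{lb52} exactly because the normalization breaks down there.

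I expect the main obstacle to be the finiteness/reordering step: the sums $\sum_{k\leq -1}$ and $\sum_{k\geq 0}$ are infinite, and the manipulation $[AB,C]=A[B,C]+[A,C]B$ must be justified termwise when acting on a fixed vector of $L_\gk(l,0)$ (where only finitely many modes act nontrivially), and the regroupings that turn ``shifted'' normal orders back into the standard normal order generate the extra commutators that account for the $h^\vee$ shift. I would handle this by fixing $w\in L_\gk(l,0)$, noting that $e_j w=0$ and $\wch e_j w = 0$ for $j$ sufficiently large (lower truncation), so all sums are effectively finite, and then carrying out the reindexing rigorously on that level. Once \eqref{eq78} is established, it is the essential input for verifying the Virasoro relations for the $L_m$ and hence, via the reconstruction theorem (Thm.\ \ref{lb39} and Rem.\ \ref{lb44}), for realizing $L_\gk(l,0)$ as a VOA with conformal vector \eqref{eq79}.
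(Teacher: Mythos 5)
Your proposal is correct and follows essentially the same route as the paper: expand $[\gamma L_m,X_n]$ by the Leibniz rule and the affine relations, use Lemma \ref{lb50} (equivalently \eqref{eq86}) to cancel the bulk of the bracket terms, evaluate the central contributions via the dual-basis identity to get $-2nl\,X_{m+n}$, and extract the remaining $-2nh^\vee X_{m+n}$ from the adjoint Casimir, so that the normalization $\gamma=2(l+h^\vee)$ yields \eqref{eq78}. The only organizational difference is that the paper packages the bookkeeping into Lemma \ref{lb51} (the $A_{i,j,n}+B_{i,j,n}$ split) and telescopes the infinite $A$-sums down to a finite boundary sum handled by Lemma \ref{lb53}, whereas you phrase the same leftover as normal-ordering commutator corrections; these are the same computation.
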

(Note that if we assume the existence of the VOA structure, then \eqref{eq78} can be derived from the conformal Ward identity \eqref{eq69} and the fact that $X_{-1}\id$ is indeed primary.)

\begin{cv}\label{lb49}
In the remaining part of this section, we suppress $\sum_e$ if possible.
\end{cv}
From this proposition, we know that $T(z)=\sum_m L_mz^{-m-2}$ and $X(z)$ are local, and $X(z)$ satisfies the translation property. To use the reconstruction theorem, we need to check the following facts:

\begin{lm}
The following are true.
\begin{enumerate}[label=(\alph*)]
\item $T(z)$ satisfies the creation property, namely, $L_n\id=0$ if $n\geq-1$.
\item $L_0$ agrees with $D$.
\item $\{L_n\}$ satisfy the Virasoro relation.
\end{enumerate}
\end{lm}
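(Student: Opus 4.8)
The plan is to handle (a) and (b) by short structural arguments, and to prove (c) from the commutation relation $[L_m,X_n]=-nX_{m+n}$ of Prop.~\ref{lb48} together with the irreducibility of $L_\gk(l,0)$, rather than by a direct computation with the Sugawara modes.

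For (a), I would apply the defining formula $L_n=\gamma^{-1}\sum_e\big(\sum_{k\leq-1}\wch e_ke_{n-k}+\sum_{k\geq0}\wch e_{n-k}e_k\big)$ to $\id$. Since $X_j\id=0$ for every $X\in\gk$ and $j\geq0$ (this is how $\Cbb_l$ is an $\wtd\gk_+$-module), the second sum annihilates $\id$; and in the first sum $\wch e_ke_{n-k}\id\neq0$ requires $e_{n-k}\id\neq0$, i.e. $n-k\leq-1$, i.e. $k\geq n+1$, which is incompatible with $k\leq-1$ as soon as $n\geq-1$. Hence $L_n\id=0$ for $n\geq-1$. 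For (b), Prop.~\ref{lb48} with $m=0$ gives $[L_0,X_n]=-nX_n=[D,X_n]$, so $L_0-D$ commutes with every mode $X_n$, while $(L_0-D)\id=0$ by (a) and $D\id=0$; since $L_\gk(l,0)$ is spanned by the vectors $X^{i_1}_{-n_1}\cdots X^{i_k}_{-n_k}\id$ (Subsec.~\ref{lb87}), pushing $L_0-D$ to the right past these modes shows it kills everything, i.e. $L_0=D$.

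For (c), I would first combine the operator Jacobi identity in $\End\big(L_\gk(l,0)\big)$ with Prop.~\ref{lb48} to get, for all $m,n,k\in\Zbb$,
\[
\big[[L_m,L_n],X_k\big]=k(n-m)X_{m+n+k}=(m-n)\,[L_{m+n},X_k],
\]
so that $Z_{m,n}:=[L_m,L_n]-(m-n)L_{m+n}$ commutes with every $X_k$; as in (b), $L_\gk(l,0)$ is generated by $\id$ under the modes $X_k$, hence $Z_{m,n}=0$ iff $Z_{m,n}\id=0$. Moreover $[L_0,X_n]=-nX_n$ forces each summand $S$ of $L_m$ to satisfy $[L_0,S]=-mS$, hence $[L_0,L_m]=-mL_m$ and $[L_0,Z_{m,n}]=-(m+n)Z_{m,n}$, so $Z_{m,n}\id$ has $L_0$-weight $-(m+n)$. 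If $m+n\neq0$: when $-(m+n)<0$ the positive-energy grading forces $Z_{m,n}\id=0$; when $-(m+n)>0$, the vector $Z_{m,n}\id$ is killed by all $X_k$ with $k\geq0$ (since $X_k\id=0$), so if it were nonzero it would generate a $\wtd\gk$-submodule all of whose weights are positive, hence not containing $\id$ and therefore proper, contradicting irreducibility of $L_\gk(l,0)$; either way $[L_m,L_n]=(m-n)L_{m+n}$. If $m+n=0$: $Z_{m,-m}\id$ has weight $0$, and the weight-$0$ subspace of $L_\gk(l,0)$ is $\Cbb\id$ (it is a quotient of $V_\gk(l,0)$, whose PBW basis meets weight $0$ only in $\id$), so $Z_{m,-m}$ acts as a scalar $\beta_m$; thus $[L_m,L_n]=(m-n)L_{m+n}+\beta_m\delta_{m,-n}$ with $\beta_m\in\Cbb$, $\beta_0=0$, $\beta_{-m}=-\beta_m$. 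Feeding this bracket into the (automatically valid) operator Jacobi identity for the triple $L_m,L_n,L_{-m-n}$ forces $m\mapsto\beta_m$ to be a diagonal $2$-cocycle on the Witt algebra, hence $\beta_m=am^3+bm$ for constants $a,b$; and $\beta_1\id=[L_1,L_{-1}]\id-2L_0\id=0$ by (a), so $a+b=0$. Setting $c=12a$ gives the Virasoro relation \eqref{eq16}, and then (a), (b), (c) supply exactly the hypotheses of the reconstruction Thm.~\ref{lb39} still missing (locality of $T(z)=\sum L_nz^{-n-2}$ with the $X(z)$ and with itself follows from Prop.~\ref{lb48} and the Virasoro relation; cf.\ Rem.~\ref{lb44}), so $L_\gk(l,0)$ becomes a VOA with conformal vector $\cbf$ as in \eqref{eq79}.

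The main obstacle is the weight-$0$ case: showing $\beta_m$ is a genuine Virasoro cocycle (proportional to $m^3-m$) rather than an arbitrary cubic. The route above does this softly, using only the cocycle identity and $\beta_1=0$, and never evaluates $c$. If one instead wants the closed form $c=\dim\gk\cdot l/(l+h^\vee)$, one must grind through $L_mL_{-m}\id$ starting from the explicit shape $L_{-m}\id=\gamma^{-1}\sum_e\sum_{k=1-m}^{-1}\wch e_ke_{-m-k}\id$, using the affine commutation relations, $\sum_e(\wch e,e)=\dim\gk$, and the Casimir normalization of Assumption~\ref{lb52}; this bookkeeping, especially the normal-ordering terms, is the one genuinely computational step.
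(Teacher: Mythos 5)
Your proofs of (a) and (b) are exactly the paper's arguments. Part (c), however, takes a genuinely different and correct route. The paper first invokes the reconstruction theorem to make $L_\gk(l,0)$ a graded vertex algebra, observes $L_m=Y(\cbf)_{m+1}$, and then reads off $[L_m,L_n]$ from the VOA bracket formula \eqref{eq80}; the whole content reduces to the two explicit Sugawara computations $L_1\cbf=0$ and $L_2\cbf=\tfrac c2\id$, which simultaneously prove the Virasoro relation and identify $c=\gamma^{-1}\cdot 2l\dim\gk=\frac{l\dim\gk}{l+h^\vee}$. Your argument stays entirely inside Lie theory: Prop.~\ref{lb48} plus the operator Jacobi identity shows $Z_{m,n}=[L_m,L_n]-(m-n)L_{m+n}$ commutes with all $X_k$, so it is determined by $Z_{m,n}\id$; weight reasons, positivity of the grading, irreducibility of $L_\gk(l,0)$, and the classification of diagonal $2$-cocycles on the Witt algebra (with the normalization $\beta_1=0$ coming from (a)) then force the Virasoro relation. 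This is cleaner conceptually and avoids the VOA machinery, but it buys less in two respects. First, it does not evaluate $c$, which the subsequent theorem needs; as you note, pinning down $c$ still requires essentially the paper's computation (indeed $\beta_2\id=L_2L_{-2}\id=\gamma L_2\cbf/\gamma$ is exactly the paper's $L_2\cbf$ calculation, since $L_{-2}\id=\cbf$). Second, your exclusion of $Z_{m,n}\id\neq0$ when $-(m+n)>0$ leans on irreducibility and therefore does not transfer to $V_\gk(l,0)$, where for integrable levels there genuinely are nonzero positive-weight vectors killed by $\wtd\gk_+$; the paper's computational proof applies verbatim to $V_\gk(l,0)$, which the theorem following the lemma also covers. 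Within the stated scope of the lemma (operators on $L_\gk(l,0)$), your proof is complete and correct.
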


\begin{proof}
(a) Assume $m\geq-1$.  $\sum_{k\geq0}\wch e_{m-k}e_k\id$ is $0$ since all $X_0\id$ are zero by our construction. 	$\sum_{k\leq-1}\wch e_k e_{m-k}\id$ is $0$ because $m-k\geq m+1\geq0$.

(b) Since $L_0\id=0$ and $[L_0,X_n]=-nX_n=[D,X_n]$,  $L_0$ and $D$ act the same on any $X^1_{n_1}\cdots X^k_{n_k}\id$. So $L_0=D$.

(c) By the reconstruction theorem, $L_\gk(l,0)$ is a graded vertex algebra. Clearly $L_m=Y(\cbf)_{m+1}$ by our definition of $L_m$ and $\cbf$. We can use \eqref{eq49} or \eqref{eq54} to show
\begin{align}
[L_m,L_n]=Y(L_{-1}\cbf)_{m+n+2}+\sum_{l\geq 0}{m+1\choose l+1}Y(L_l\cbf)_{m+n+1-l}. \label{eq80}
\end{align}
By the expression $\cbf$, clearly $L_0\cbf=D\cbf=2\cbf$. Also, from the Sugawara construction, we clearly have $[D,L_m]=-mL_m$, i.e., $[L_0,L_m]=-mL_m$. So $L_l\cbf=0$ if $l>2$. To find $[L_m,L_n]$, we need to find $L_1\cbf$ and $L_2\cbf$.

Using \eqref{eq78}, we calculate that $\gamma L_1\cbf$ equals
\begin{align*}
L_1\wch e_{-1}e_{-1}\id=[L_1,\wch e_{-1}]e_{-1}\id+\wch e_{-1}[L_1,e_{-1}]\id=\wch e_0e_{-1}\id+\wch e_{-1}e_0\id=\wch e_0 e_{-1}\id.	
\end{align*}
And $\wch e_0e_{-1}\id=[\wch e_0,e_{-1}]\id=[\wch e,e]_{-1}\id$ equals $0$ by \eqref{eq81}. Recall $K$ acts as $l$ on $L_\gk(l,0)$. Then $\gamma L_2\cbf$ equals
\begin{align*}
&L_2\wch e_{-1}e_{-1}\id=[L_2,\wch e_{-1}]e_{-1}\id+\wch e_{-1}[L_2,e_{-1}]\id=\wch e_1e_{-1}\id+\wch e_{-1}e_1\id\\
=&	\wch e_1e_{-1}\id=[\wch e_1,e_{-1}]\id=[\wch e,e]_0\id+l(\wch e,e)\id,
\end{align*}
which equals $l\cdot \dim \gk\cdot \id$. Therefore, using \eqref{eq53}, we find that \eqref{eq80} becomes the Virasoro relation where $\frac c{2}=\gamma^{-1}l\cdot\dim\gk$.
\end{proof}

Thus, by the reconstruction Thm. \ref{lb39}, we conclude:

\begin{thm}
For $l\neq -h^\vee$, $V_\gk(l,0)$ and $L_\gk(l,0)$ are VOAs satisfying $Y(X_{-1}\id,z)=\sum_{n\in\Zbb}X_nz^{-n-1}$ (for all $X\in\gk$) if we define the conformal vector $\cbf$ as in \eqref{eq79}. The central charge is $\frac{l\dim\gk}{l+h^\vee}$.
\end{thm}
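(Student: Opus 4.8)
The plan is to apply the reconstruction theorem (Thm.~\ref{lb39}) to $\Vbb\in\{V_\gk(l,0),L_\gk(l,0)\}$ with the set of $L_0$-homogeneous fields
\[
\mc E=\{X(z):X\in\gk\}\cup\{T(z)\},\qquad T(z)=\sum_{m\in\Zbb}L_mz^{-m-2},
\]
where the $L_m$ are the Sugawara operators. Each $X(z)$ is a homogeneous field of weight $1$ since $[D,X_n]=-nX_n$ and $D=L_0$ (the Lemma, part (b)); $T(z)$ is one of weight $2$ since the Sugawara formula gives $[D,L_m]=-mL_m$; and in both cases the defining sums are finite on every vector by the lower truncation property of the $X(z)$.

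First I would check the four hypotheses of Thm.~\ref{lb39}. \emph{Creation:} $X(z)\id$ has no negative powers because $X_n\id=0$ for $n\geq0$, and $T(z)\id$ has none by part (a) of the Lemma. \emph{Translation:} take $L_{-1}$ from the Sugawara construction; then $L_{-1}\id=0$ by part (a), $[L_{-1},X(z)]=\partial_zX(z)$ is the $m=-1$ case of Prop.~\ref{lb48}, and $[L_{-1},T(z)]=\partial_zT(z)$ is the (central-term-free) $m=-1$ case of the Virasoro relation $[L_{-1},L_n]=-(n+1)L_{n-1}$ from part (c). \emph{Generating:} the PBW monomials $X^{i_1}_{-n_1}\cdots X^{i_k}_{-n_k}\id$ span $\Vbb$, and since the $L_m$ are expressible through the $X_n$, adjoining $T(z)$ to $\mc E$ does not disturb this. \emph{Locality:} the $X(z)$ are mutually local because $[X_m,Y_n]=[X,Y]_{m+n}+m(X,Y)\delta_{m+n,0}K$ has the form \eqref{eq76}; $T(z)$ is local with each $X(z)$ because Prop.~\ref{lb48} rewrites as $[L_m,X_n]=-(m+n)X_{m+n}+m\,X_{m+n}$, again of the form \eqref{eq76}; and $T(z)$ is local with itself by the Virasoro relation, exactly as in the proof of Prop.~\ref{lb45}.

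With these verified, Thm.~\ref{lb39} endows $\Vbb$ with a graded vertex algebra structure in which each $X(z)$ equals $Y(u,z)$ for some $u\in\Vbb$; comparing the $z^0$-coefficient and using $Y(u)_{-1}\id=u$ forces $u=X_{-1}\id$, hence $Y(X_{-1}\id,z)=\sum_nX_nz^{-n-1}$. Since part (c) of the Lemma extends $L_0,L_{-1}$ to operators $(L_n)_{n\in\Zbb}$ satisfying the Virasoro relation with $c/2=\gamma^{-1}l\dim\gk$, and $T(z)=\sum_nL_nz^{-n-2}$ lies in $\mc E$, the ``moreover'' clause of Thm.~\ref{lb39} promotes $\Vbb$ to a VOA whose conformal vector $\cbf$ satisfies $Y(\cbf,z)=T(z)$. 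Finally $\cbf=Y(\cbf)_{-1}\id=L_{-2}\id$, and feeding $\id$ into the Sugawara formula for $L_{-2}$ (every term $\wch e_{-2-k}e_k\id$ with $k\geq0$ vanishes, and among the terms $\wch e_ke_{-2-k}\id$ with $k\leq-1$ only $k=-1$ survives) recovers precisely $\cbf=\gamma^{-1}\sum_e\wch e_{-1}e_{-1}\id$ of \eqref{eq79}; the central charge is then $c=2\gamma^{-1}l\dim\gk=\frac{l\dim\gk}{l+h^\vee}$.

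Since every ingredient is already at hand, there is no genuinely hard step — the work is bookkeeping: one must confirm that \emph{each} reconstruction hypothesis holds for the augmented set $\mc E$ (not merely for $\{X(z):X\in\gk\}$), and that $L_\gk(l,0)$ is treated on equal footing with $V_\gk(l,0)$. The latter is immediate because the Sugawara operators are defined by the same normal-ordered formula directly on $L_\gk(l,0)$ and Prop.~\ref{lb48} and the Lemma hold there verbatim; alternatively, one first obtains the VOA structure on $V_\gk(l,0)$ and then uses \eqref{eq67} to see that the maximal proper $\wht\gk$-submodule $I$ is automatically a vertex algebra ideal, so that $L_\gk(l,0)=V_\gk(l,0)/I$ inherits the structure.
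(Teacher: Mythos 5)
Your proposal is correct and follows essentially the same route as the paper: verify the creation, translation, generating, and locality hypotheses for the fields $X(z)$ and the Sugawara field $T(z)$ (using Prop.~\ref{lb48} and the Lemma), then invoke the reconstruction Thm.~\ref{lb39} together with its Virasoro clause. Your extra check that $T_{-1}\id=L_{-2}\id$ recovers \eqref{eq79}, and your remark on transporting the structure to the quotient $L_\gk(l,0)$, are sound bookkeeping that the paper leaves implicit.
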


\subsection{$\star$}

It remains to prove Prop. \ref{lb48}. Recall Convention \ref{lb49} that we are suppressing $\sum_e$. The following discussions focus on $L_\gk(l,0)$, though the same argument works for $V_\gk(l,0)$.

\begin{lm}\label{lb51}
For all $i,j,n\in\Zbb$, on $L_\gk(l,0)$ we have $[\wch e_ie_j,X_n]=A_{i,j,n}+B_{i,j,n}$ where
\begin{subequations}
\begin{gather}
A_{i,j,n}=\wch e_i[e,X]_{j+n}-\wch e_{i+n}[e,X]_j\label{eq82}\\
B_{i,j,n}=-nl(\delta_{j,-n}X_i+\delta_{i,-n}X_j).	\label{eq83}
\end{gather}
\end{subequations}
In particular, $B_{i,j,n}=B_{j,i,n}$.
\end{lm}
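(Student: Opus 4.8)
The plan is to expand $[\wch e_ie_j,X_n]$ by the Leibniz rule for commutators and then reassemble the resulting pieces using the structure identities already at hand. First I would write, for each fixed $e\in E$,
\begin{align*}
[\wch e_ie_j,X_n]=\wch e_i[e_j,X_n]+[\wch e_i,X_n]e_j,
\end{align*}
which is just the identity $[AB,C]=A[B,C]+[A,C]B$. Next, using the bracket relations of $\wtd\gk$ together with the fact that $K$ acts as the scalar $l$ on $L_\gk(l,0)$, I would expand
\begin{align*}
[e_j,X_n]=[e,X]_{j+n}+jl(e,X)\delta_{j,-n},\qquad [\wch e_i,X_n]=[\wch e,X]_{i+n}+il(\wch e,X)\delta_{i,-n}.
\end{align*}
Substituting and recalling Convention \ref{lb49} (so that everything is implicitly summed over $e\in E$), this splits $[\wch e_ie_j,X_n]$ into a ``bracket part'' $\wch e_i[e,X]_{j+n}+[\wch e,X]_{i+n}e_j$ and a ``central part'' $jl(e,X)\delta_{j,-n}\wch e_i+il(\wch e,X)\delta_{i,-n}e_j$.

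For the bracket part I would invoke Lemma \ref{lb50}: reading the identity $\sum_e\wch e\otimes[e,X]=-\sum_e[\wch e,X]\otimes e$ mode-wise, with mode index $i+n$ attached to the first tensor slot and $j$ to the second, yields $\sum_e[\wch e,X]_{i+n}e_j=-\sum_e\wch e_{i+n}[e,X]_j$. Hence the bracket part becomes exactly $\sum_e A_{i,j,n}=\sum_e\big(\wch e_i[e,X]_{j+n}-\wch e_{i+n}[e,X]_j\big)$.

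For the central part I would use the elementary identity $\sum_e(e,X)\wch e=X=\sum_e(\wch e,X)e$, which follows from expanding $X$ in the basis $E$ and in its dual basis $\{\wch e\}$ and using $(e,\wch f)=\delta_{e,f}$. This turns $\sum_e(e,X)\delta_{j,-n}\wch e_i$ into $\delta_{j,-n}X_i$ and $\sum_e(\wch e,X)\delta_{i,-n}e_j$ into $\delta_{i,-n}X_j$; combined with the bookkeeping $j\delta_{j,-n}=-n\delta_{j,-n}$ and $i\delta_{i,-n}=-n\delta_{i,-n}$, the central part is precisely $B_{i,j,n}=-nl(\delta_{j,-n}X_i+\delta_{i,-n}X_j)$. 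The symmetry $B_{i,j,n}=B_{j,i,n}$ is then immediate from this closed form. I do not expect a genuine obstacle here; the only points requiring a moment's care are the correct mode-wise application of Lemma \ref{lb50} (placing the indices $i+n$ and $j$ on the right tensor factors) and the identity $\sum_e(e,X)\wch e=X$, which relies on the duality $(e,\wch f)=\delta_{e,f}$ rather than on $(e,f)$ being $\delta_{e,f}$.
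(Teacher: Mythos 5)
Your proposal is correct and follows essentially the same route as the paper: expand by the Leibniz rule, use the affine bracket relation with $K=l$, identify the central part with $B_{i,j,n}$ via the dual-basis identity $\sum_e(e,X)\wch e=X=\sum_e(\wch e,X)e$, and convert $[\wch e,X]_{i+n}e_j$ into $-\wch e_{i+n}[e,X]_j$ by applying Lemma \ref{lb50} mode-wise. The only difference is cosmetic: you write out the intermediate expansions that the paper compresses into a single displayed computation.
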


\begin{proof}
We compute
\begin{align*}
[\wch e_ie_j,X_n]=\wch e_i[e_j,X_n]+[\wch e_i,X_n]e_j=A_{i,j,n}+B_{i,j,n}	
\end{align*}	
where
\begin{gather*}
A_{i,j,n}=\wch e_i[e,X]_{j+n}+[\wch e,X]_{i+n}e_j\\
B_{i,j,n}=-nl \delta_{j,-n}\cdot \wch e_i(e,X)-nl\delta_{i,-n}(\wch e,X)e_j.	
\end{gather*}
$B_{i,j,n}$ clearly equals \eqref{eq83} by the basic property of (dual) basis. Note that in general, for all $i,j\in\Zbb$, by Lemma \ref{lb50} and the map $\gk\otimes\gk\rightarrow\End(L_\gk(l,0))$ sending $Y\otimes Z$ to $Y_iZ_j$, we have
\begin{align}
[\wch e,X]_ie_j=-\wch e_i[e,X]_j.\label{eq86}
\end{align}
This proves that $A_{i,j,n}$ equals \eqref{eq82}.
\end{proof}

\begin{proof}[Proof of Prop. \ref{lb48}]
We compute
\begin{align*}
&[\gamma L_m,X_n]=\sum_{k\leq-1}[\wch e_ke_{m-k},X_n]+\sum_{k\geq0}[\wch e_{m-k}e_k,X_n]\\
=&\sum_{k\leq-1}(A_{k,m-k,n}+B_{k,m-k,n})+\sum_{k\geq0}(A_{m-k,k,n}+B_{m-k,k,n}).	
\end{align*}
By Lemma \ref{lb51}, the sum of the two  $B$ is
\begin{align*}
\sum_{k\in\Zbb}B_{k,m-k,n}=-nl\sum_{k\in\Zbb} (\delta_{m-k,-n}X_k+\delta_{k,-n}X_{m-k})=-2nl X_{m+n}.
\end{align*}
Also,
\begin{align*}
\sum_{k\geq 0}A_{m-k,k,n}=\sum_{k\geq0}\wch e_{m-k}[e,X]_{k+n}-\sum_{k\geq 0}\wch e_{m+n-k}[e,X]_k	
\end{align*}
where the two sums are both finite when acting on any vector. But the first summand is just (setting $j=k+n$) $\sum_{j\geq n}\wch e_{m+n-j}[e,X]_j$. So
\begin{align}
\sum_{k\geq 0}A_{m-k,k,n}=-\big(\wch e_{m+n}[e,X]_0+\wch e_{m+n-1}[e,X]_1+\cdots+\wch e_{m+1}[e,X]_{n-1}\big).	\label{eq84}
\end{align}
Simiarly, setting $i=m-k$,
\begin{align}
&\sum_{k\leq-1}A_{k,m-k,n}=\sum_{i\geq m+1}\wch e_{m-i}[e,X]_{i+n}-\sum_{i\geq m+1}\wch e_{m+n-i}[e,X]_i\nonumber\\
=&-\big(\wch e_{n-1}[e,X]_{m+1}+\cdots+\wch e_0[e,X]_{m+n} \big).	\label{eq85}
\end{align}
By Lemma \ref{lb53}, the sum of \eqref{eq84} and \eqref{eq85} is $-2nh^\vee X_{m+n}$. This finishes the proof.
\end{proof}

\begin{lm}\label{lb53}
For each $i,j\in\Zbb$ and $X\in\gk$,
\begin{align}
\wch e_i[e,X]_j+\wch e_j[e,X]_i=2h^\vee X_{i+j}.	
\end{align}
\end{lm}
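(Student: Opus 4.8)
The plan is to reduce the identity to the affine bracket relations, the Casimir identity of Assumption \ref{lb52}, and the basis-independence relations \eqref{eq77}, \eqref{eq81}, \eqref{eq86}, together with the invariance \eqref{eq74} of the form $(\cdot,\cdot)$. Throughout I suppress $\sum_e$ as in Convention \ref{lb49} and compute as operators on $L_\gk(l,0)$, where $K$ acts as $l$; the same argument works verbatim on $V_\gk(l,0)$. Note that all sums over $e$ are finite, since $E$ is a basis of the finite-dimensional $\gk$ and $i,j$ are fixed integers.

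First I would rewrite the first summand using \eqref{eq86}: $\wch e_i[e,X]_j=-[\wch e,X]_ie_j$. Then I would commute $[\wch e,X]_i$ past $e_j$ using the affine bracket relation, obtaining
\begin{align*}
[\wch e,X]_ie_j=e_j[\wch e,X]_i+[[\wch e,X],e]_{i+j}+il\,([\wch e,X],e)\,\delta_{i,-j}.
\end{align*}
Summing over $e$, the central ``anomaly'' term drops out: by invariance \eqref{eq74} one has $([\wch e,X],e)=-(X,[\wch e,e])$, and $\sum_e[\wch e,e]=0$ by \eqref{eq81}, so $\sum_e([\wch e,X],e)=0$. For the bracket term, antisymmetry of $[\cdot,\cdot]$ plus the reindexing $\sum_e e\otimes\wch e=\sum_e\wch e\otimes e$ of \eqref{eq77}, applied to the linear map $Y\otimes Z\mapsto[Y,[Z,X]]$, turn $\sum_e[[\wch e,X],e]$ into $-\sum_e[\wch e,[e,X]]=-2h^\vee X$ by Assumption \ref{lb52}. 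Combining, $\wch e_i[e,X]_j=-e_j[\wch e,X]_i+2h^\vee X_{i+j}$.

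It then remains only to identify $e_j[\wch e,X]_i$ with $\wch e_j[e,X]_i$; this is again \eqref{eq77}, now applied to the map $Y\otimes Z\mapsto Y_j[Z,X]_i$. Substituting yields $\wch e_i[e,X]_j+\wch e_j[e,X]_i=2h^\vee X_{i+j}$, which is the assertion. Finally, feeding this into the sum of \eqref{eq84} and \eqref{eq85} (whose terms pair up as $\wch e_{m+n-p}[e,X]_p+\wch e_p[e,X]_{m+n-p}$ for $p=0,\dots,n-1$) gives the value $-2nh^\vee X_{m+n}$ used in the proof of Prop. \ref{lb48}.

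I do not expect any genuine obstacle: this is routine Sugawara-style bookkeeping. The only points requiring a little care are (i) verifying that the central term really vanishes after summing over $e$, and (ii) keeping straight which step invokes \eqref{eq77}, which invokes \eqref{eq86}, and which invokes the invariance and symmetry of $(\cdot,\cdot)$.
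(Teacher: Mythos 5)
Your proof is correct and uses essentially the same ingredients and route as the paper's: \eqref{eq86} to flip one term, the affine bracket relation to produce the Casimir term $[\wch e,[e,X]]=2h^\vee X$, invariance plus \eqref{eq81} to kill the central contribution, and \eqref{eq77} to recombine. The only difference is cosmetic bookkeeping (you flip the first summand and expand $[[\wch e,X]_i,e_j]$, while the paper flips the second and expands $[\wch e_i,[e,X]_j]$).
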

This is the only place we use the definition of $h^\vee$ (cf. Assumption \ref{lb52}).

\begin{proof}
By \eqref{eq86},
\begin{align*}
\wch e_i[e,X]_j+\wch e_j[e,X]_i=\wch e_i[e,X]_j-[\wch e,X]_je_i,	
\end{align*}
which, according to \eqref{eq77} and the map $\gk\otimes\gk\rightarrow\End(L_\gk(l,0)),Y\otimes Z\mapsto [Y,X]_jZ_i$, is
\begin{align*}
\wch e_i[e,X]_j-[e,X]_j \wch e_i=[\wch e_i,[e,X]_j]=[\wch e,[e,X]]_{i+j}+il\delta_{i,-j}(\wch e,[e,X]).
\end{align*}
Now, by the invariance of $(\cdot,\cdot)$, $(\wch e,[e,X])=([\wch e,e],X)$, which equals $0$ by \eqref{eq81}. By the definition of $h^\vee$, $[\wch e,[e,X]]=2h^\vee X$. We are done with the proof.
\end{proof}

\subsection{}

We now discuss the unitarity problem for affine VOAs. We first look at Heisenberg VOAs, namely, we assume $\gk$ is abelian. We assume that $\gk$ is equipped with an inner product $(\cdot|\cdot)$ (antilinear on the first variable) and an anti-unitary involution $X\in\gk\mapsto X^*\in\gk$. Recall that ``anti-unitary" means that $*$ is conjugate linear, bijective, and satisfies
\begin{align*}
(X^*|Y^*)=(Y|X).
\end{align*}
Involution means $X^{**}=X$.  By considering $\gk$ as an (abelian) \textbf{unitary Lie algebra}, we regard $*$ and $(\cdot|\cdot)$ as part of the data of $\gk$.
\begin{exe}
Show that $\gk$ is unitarily isomorphic to $\Cbb^n$ with the standard inner product, where the involution is $(z_1,\dots,z_n)\mapsto (\ovl{z_1},\dots,\ovl{z_n})$, the unique one fixing $\Rbb^n$. (Hint: First find an real isomorphism from $\{X\in\gk:X^*=X\}$ to $\Rbb^n$  preserving the inner products.)
\end{exe}

It is easy to check that the bilinear form $(\cdot,\cdot)$ on $\gk$ defined by
\begin{align}
(X,Y)=(X^*|Y)	
\end{align}
is symmetric. (It is obviously invariant.)  We define $V_\gk(l,0)$ using this bilinear form. 

\begin{pp}\label{lb54}
$l>0$ if and only if there exists an inner product $\bk{\cdot|\cdot}$ on $V_\gk(l,0)$ satisfying $\bk{\id|\id}=1$ such that the representation of $\wtd\gk$ on $V_\gk(l,0)$ is unitary, namely, for each $X\in\gk,u,v\in V_\gk(l,0),n\in\Zbb$,
\begin{align*}
\bk{u|X_nv}=\bk{(X^*)_{-n}u|v},\qquad \bk{u|Kv}=\bk{Ku|v},\qquad \bk{u|Dv}=\bk{Dv|u},
\end{align*} 
or simply $(X_n)^\dagger=X^*_{-n}$, $K^\dagger=K$, $D^\dagger=D$ for short. Such $\bk{\cdot|\cdot}$ is unique if it exists.
\end{pp}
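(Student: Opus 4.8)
The plan is to settle uniqueness first, then construct the form in the case $l>0$ by a Shapovalov-type recipe whose positivity I check by a direct Gram-matrix computation, and finally read off the necessity of $l>0$ from that same computation.

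\emph{Uniqueness.} Suppose $\bk{\cdot|\cdot}$ satisfies the stated conditions. By the PBW theorem and the description of $V_\gk(l,0)$ in Subsec.~\ref{lb87}, it is spanned by vectors $X^{i_1}_{-n_1}\cdots X^{i_k}_{-n_k}\id$ with all $n_j>0$, and $X_n\id=0$ for all $X\in\gk$, $n\ge0$. Given two such vectors $u=a\id$, $v=b\id$, I would write $\bk{u|v}=\bk{\id\,|\,a^\dagger b\,\id}$, where $a^\dagger$ reverses the letters of $a$ and replaces each $X_{-n}$ by $(X^*)_n$ (legitimate by $(X_{-n})^\dagger=(X^*)_n$, $K^\dagger=K$, $D^\dagger=D$), then push all annihilation operators in $a^\dagger b$ to the right one letter at a time, tracking the scalars produced by the affine commutation relations, and finally use $X_n\id=0$ ($n\ge0$). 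This reduces $\bk{u|v}$ to a definite scalar multiple — depending only on $l$ and on the two words — of $\bk{\id|\id}=1$, so $\bk{\cdot|\cdot}$ is determined. The argument uses no positivity, so it actually produces, for \emph{every} $l\in\Cbb$, a canonical sesquilinear form $\bk{\cdot|\cdot}_0$ with $\bk{\id|\id}_0=1$ satisfying all the adjoint relations; the entire content of the proposition is then whether $\bk{\cdot|\cdot}_0$ is positive definite.

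\emph{Existence for $l>0$.} By the exercise preceding the proposition I may assume $\gk=\Cbb^d$ with its standard inner product and involution, and take the standard real-orthonormal basis $X^1,\dots,X^d$, so $(X^i)^*=X^i$, $(X^i,X^j)=\delta_{ij}$, and $[X^i_m,X^j_n]=m\,\delta_{ij}\,\delta_{m,-n}\,l$ on $V_\gk(l,0)$. Then $V_\gk(l,0)$ has the PBW basis of monomials $v_{(m)}=\prod_{i,n}(X^i_{-n})^{m_{i,n}}\id$ indexed by finitely supported families $(m)=(m_{i,n})_{1\le i\le d,\,n\ge1}$ of non-negative integers (the creation operators mutually commute, so the product is unambiguous). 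Running the reduction above on $\bk{v_{(m)}|v_{(m')}}_0$: each annihilation operator $X^i_n$ either passes a matching $X^i_{-n}$, producing the scalar $nl$, or reaches $\id$ and dies. The outcome should be
\begin{align*}
\bk{v_{(m)}|v_{(m')}}_0=\delta_{(m),(m')}\prod_{i,n} m_{i,n}!\,(nl)^{m_{i,n}},
\end{align*}
so $\bk{\cdot|\cdot}_0$ is diagonal in this basis with real entries (hence Hermitian), and these entries are strictly positive exactly when $l>0$ (recall $n\ge1$). For $l>0$ this makes $\bk{\cdot|\cdot}_0$ an inner product, and the adjoint relations, being built in, make the $\wtd\gk$-action unitary. (Conceptually this is the statement that for $l>0$, $V_\gk(l,0)$ is the Fock space of a finite sum of harmonic-oscillator algebras, with its standard positive-definite inner product.)

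\emph{Necessity, and the hard part.} If any unitary inner product exists, then by uniqueness it is $\bk{\cdot|\cdot}_0$, and $\bk{X^1_{-1}\id\,|\,X^1_{-1}\id}_0=l$ forces $l>0$; combined with the previous paragraph this gives the ``if and only if'', with uniqueness already established. The conditions $K^\dagger=K$ and $D^\dagger=D$ cost nothing extra: $K$ acts by the scalar $l\in\Rbb$, and $\bk{\cdot|\cdot}_0$ annihilates pairs of $D$-eigenvectors of distinct weights because distinct weights force distinct families $(m)$. The one genuinely computational step, and the place where care is needed, is the displayed Gram identity — more precisely, the orthogonality of distinct PBW monomials (immediate from the abelian relations together with $X_n\id=0$, $n\ge0$) and the single-mode evaluation $\bk{(X_{-n})^m\id\,|\,(X_{-n})^m\id}_0=m!\,(nl)^m$, from which the general formula follows by the independence of the modes; everything else is bookkeeping.
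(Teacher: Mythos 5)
Your proof is correct, and the uniqueness and necessity steps coincide with the paper's. Where you diverge is the existence step for $l>0$: the paper builds the symmetric Fock space $S^\blt(W)$ for $W=t^{-1}\gk[t^{-1}]$, verifies $a^+(v)^\dagger=a^-(v^*)$ and the CCR, and transports the Fock inner product to $V_\gk(1,0)$ through the PBW identification; you instead write down the Gram matrix of the would-be form directly on the PBW basis of $V_{\Cbb^d}(l,0)$ and observe that it is diagonal with entries $\prod_{i,n}m_{i,n}!\,(nl)^{m_{i,n}}$. The two computations are the same in substance (your diagonal entries are exactly the squared norms $m_1!\cdots m_k!$ of the paper's basis \eqref{eq87} after the rescaling $Xt^{-n}\mapsto Xt^{-n}/\sqrt{nl}$), but your packaging is more elementary and self-contained, and it makes the ``iff'' transparent since the sign of every diagonal entry is visibly that of $l$; the paper's packaging buys the conceptual identification of $V_\gk(1,0)$ with a bosonic Fock space, which it reuses elsewhere. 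Your reduction to $\gk=\Cbb^d$ via the preceding exercise is legitimate, as the proposition sits in the abelian (Heisenberg) setting.

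One small point of hygiene: the uniqueness reduction by itself only shows that the values of any admissible form are \emph{forced}; it does not by itself produce a well-defined sesquilinear form, since a given vector has many expressions as a word applied to $\id$. You implicitly repair this by passing to the PBW basis and \emph{defining} $\bk{\cdot|\cdot}_0$ through the diagonal Gram matrix, but then the adjoint relations are no longer ``built in'' and must be checked on that definition. They do hold --- for $u=v_{(m)}$, $v=v_{(m')}$, $k>0$, both $\bigbk{v_{(m)}\big|X^j_{-k}v_{(m')}}_0$ and $\bigbk{X^j_kv_{(m)}\big|v_{(m')}}_0$ equal $(m'_{j,k}+1)\,kl$ times $\bk{v_{(m'_{j,k}\to m'_{j,k})}|v_{(m')}}_0$ evaluated appropriately, i.e.\ one line of bookkeeping with $[X^j_k,X^i_{-n}]=k\,l\,\delta_{ij}\delta_{k,n}$ --- so this is a presentational gap rather than a mathematical one, but it is worth making the logical order explicit: define by the Gram matrix, then verify $(X_n)^\dagger=X^*_{-n}$, $K^\dagger=K$, $D^\dagger=D$, then invoke uniqueness to conclude that any unitary form must be this one.
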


The if part is easy to explain: We compute that $\bk{X_{-1}\id|X_{-1}\id}=\bk{\id|X^*_1X_{-1}\id}=\bk{\id|[X^*,X]_0\id}+l(X^*,X)=l(X|X)$. So if $\bk{\cdot|\cdot}$ is an inner product, then for each $X\neq 0$, $l(X|X)$ is $>0$. So $l>0$. We now explain the only if part. To simplify discussions, by scaling $(\cdot|\cdot)$ and hence $(\cdot,\cdot)$ by $l$ and $K$ by $l^{-1}$, it suffices to assume $l=1$. (Indeed, people usually just assume $l=1$ when discussing Heisenberg VOAs.)

\subsection{$\star$}
Assume $l=1$. The uniqueness of $\bk{\cdot|\cdot}$ is easy to prove:
\begin{align*}
\bk{X^1_{n_1}\cdots X^k_{n_k}\id|Y^1_{m_1}\cdots Y^l_{m_l}\id}=	\bk{\id|(X^k)^*_{-n_k}\cdots (X^1)^*_{-n_1}Y^1_{m_1}\cdots Y^l_{m_l}\id}=:\bk{\id|w}.
\end{align*}
If $n_1+\cdots+n_k=m_1+\cdots+m_l$, then $w$ has $D$-weight $0$. But the weight-$0$ homogeneous vectors are $\Cbb\id$. So $w=\lambda\id$, and $\lambda$ uniquely determined by the Lie bracket relations. If $n_1+\cdots+n_k\neq m_1+\cdots+m_l$, then the weight of $w$ is not $0$. So $w=0$ since $\bk{D\id|w}=\bk{\id|Dw}$.

The existence part follows from the general construction of symmetric Fock spaces. Let $W$ be a (complex) inner product space together with an antiunitary involution $*$. Note that for each $N\in\Nbb$, $W^{\otimes N}$ is naturally an inner product space. We assume $W$ has an orthonormal basis $\{e_i:i\in\fk I\}$ (which spans $W$ algebraically). Let $\fk S_N$ be the set of permutations on $\{1,\dots,N\}$. For each $v_1,\dots,v_N\in W$, we define
\begin{align*}
v_1\cdots v_N:=\frac 1{\sqrt{N!}}\sum_{\sigma\in\fk S_N}v_{\sigma(1)}\otimes\cdots \otimes v_{\sigma(N)},
\end{align*}
and let $S^N(W)\subset W^{\otimes N}$ be spanned by all such vectors. We understand $S^0(W)$ to be the standard one dimensional inner product space $\Cbb$. In particular, it has a unit vector $1$. $S^N(W)$ has an orthonormal basis consisting of vectors
\begin{align}
\frac {(e_{i_1})^{m_1}\cdots (e_{i_k})^{m_k}}{\sqrt{m_1!\cdots m_k!}}\qquad(\text{where }i_1,\dots,i_k\in\fk I\text{ are distinct and }\sum_{j=1}^k m_j=N).	\label{eq87}
\end{align}

Define an inner product space
\begin{align}
S^\blt(W)=\bigoplus_{N\in\Nbb} S^N(W),	
\end{align}
called the \textbf{symmetric Fock space} associated to $W$. For each $v\in W$, define linear maps $a^+(v),a^-(v)$ on $S^\blt(W)$ determined by
\begin{subequations}
\begin{gather}
a^+(v)1=v,\qquad a^+(v)v_1\cdots v_N=vv_1\cdots v_N.\\
a^-(v)1=0,\qquad a^-(v)v_1\cdots v_N=\sum_{j=1}^N \bk{v^*|v_j}\cdot v_1\cdots v_{j-1}v_{j+1}\cdots v_N.
\end{gather}
\end{subequations}
The maps $a^\pm(v)$ are well-defined, thanks to the basis \eqref{eq87}.

\begin{exe}
Prove the following relations.
\begin{enumerate}
\item $a^+(v)^\dagger=a^-(v^*)$, namely, $\bk{\xi|a^+(v)\nu}=\bk{a^-(v^*)\xi|\nu}$ for all $\xi,\nu\in S^\blt (W)$. (Hint: write  $\xi,\nu,v$ in terms of the previously mentioned orthonormal basis vectors.)
\item $[a^-(u),a^+(v)]=\bk{u^*|v}\id_{S^\blt(W)}$. This is called the \textbf{canonical commutation relation (CCR)}.
\end{enumerate}
\end{exe}

Now let $W=t^{-1}\cdot\gk [t^{-1}]$ with inner product
\begin{align*}
\bk{Xt^{-m}|Yt^{-n}}=m(X|Y)\delta_{m,n}	
\end{align*}
for all $m,n\in\Zbb_+$. The involution is defined to be $(Xt^{-m})^*=X^*t^{-m}$. According to the description of the basis of $S^\blt(W)$, $V_\gk(1,0)$ is linearly equivalent to $S^\blt(W)$ by identifying $\id$ with $1$ and 
\begin{align}
X^1_{-n_1}\cdots X^k_{-n_k}\id\qquad\text{with}\qquad X^1t^{-n_1}\cdots X^kt^{-n_k}.
\end{align}
We use the inner product on $S^\blt(W)$ to define the one on $V_\gk(1,0)$. Using CCR, it is not hard to check that the action of $X_n$ on $V_\gk(1,0)\simeq S^\blt(W)$ is
\begin{align}
X_n=\left\{
\begin{array}{cc}
a^+(Xt^{-|n|}) &\text{if }n<0,\\
0 &\text{if }n=0,\\
a^-(Xt^{-n}) &\text{if }n>0.	
\end{array}
\right.	
\end{align}
Thus, the representation of $\wtd\gk$ on $V_\gk(1,0)$ is unitary.

\subsection{}

When $l>0$, $L_\gk(l,0)$ and $V_\gk(l,0)$ share the same unitarity property, because:

\begin{pp}
If $l\in\Cbb^\times$, then $V_\gk(l,0)$ is an irreducible $\wtd\gk$-module, i.e., $V_\gk(l,0)=L_\gk(l,0)$.
\end{pp}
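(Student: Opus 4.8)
The plan is to show directly that $V:=V_\gk(l,0)$ is an irreducible $\wtd\gk$-module; once this is done, the largest proper submodule $I$ introduced in Subsection \ref{lb87} must be $0$, and hence $V_\gk(l,0)=L_\gk(l,0)$. The first step is to make the Fock-space picture explicit. Since $\gk$ is abelian, $\wtd\gk_-$ is an abelian Lie algebra, so $U(\wtd\gk_-)=S(\wtd\gk_-)$ is the polynomial algebra on the generators $Xt^{-n}$ (with $X$ ranging over a fixed basis of $\gk$ and $n\in\Zbb_+$), and the map $u\mapsto u\id$ identifies $V$ with this polynomial algebra, $\id$ corresponding to the constant $1$. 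Under this identification: each creation operator $X_{-n}$ ($n>0$) acts as multiplication by $Xt^{-n}$; each $X_0$ acts as $0$; $K$ acts as $l$; $D=L_0$ acts as the grading operator for which $Xt^{-n}$ has degree $n$; and, using $[X_n,Y_{-m}]=n(X,Y)\delta_{n,m}K$ together with $X_n\id=0$, each annihilation operator $X_n$ ($n>0$) acts as $nl$ times the derivation $\delta_{X,n}$ determined by $\delta_{X,n}(Yt^{-m})=(X,Y)\delta_{n,m}$. I would record all of this at the outset; write $V(N)$ for the $D$-weight-$N$ subspace, so $V(0)=\Cbb\id$.

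Next, let $M\subseteq V$ be a nonzero $\wtd\gk$-submodule. Since $D$ is diagonalizable and $M$ is $D$-invariant, $M$ is $D$-graded (the footnote argument in Subsection \ref{lb47}), so $M=\bigoplus_{N\in\Nbb}M(N)$ with $M(N)=M\cap V(N)$. Let $N_0$ be minimal with $M(N_0)\neq 0$, and choose a nonzero $v\in M(N_0)$. The key claim is that $N_0=0$. Suppose instead $N_0\geq 1$: then $v$ is a nonzero polynomial each of whose monomials has degree $N_0\geq 1$, so $v$ has vanishing constant term, whence some partial derivative $\partial_{Xt^{-n}}v$ is nonzero for a basis vector $X$ and some $n$; moreover any such $n$ satisfies $n\leq N_0$. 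Because $(\cdot,\cdot)$ is nondegenerate, for fixed $n$ the operators $\{\delta_{X,n}:X\in\gk\}$ span the same space of derivations as $\{\partial_{Xt^{-n}}:X\in\mathrm{basis}\}$, so — using $l\neq 0$ and $n\neq 0$ — there is some $X\in\gk$ with $X_nv=nl\,\delta_{X,n}v\neq 0$. But $X_nv\in M(N_0-n)$ with $0\leq N_0-n<N_0$, contradicting minimality. Hence $N_0=0$, so $M(0)\neq 0$, which forces $\id\in M$ since $V(0)=\Cbb\id$. As $V=U(\wtd\gk_-)\id$ and $U(\wtd\gk_-)\subseteq U(\wtd\gk)$, this gives $M=V$, proving irreducibility and therefore $V_\gk(l,0)=L_\gk(l,0)$.

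I do not expect a serious obstacle here: this is the standard fact that the Fock representation of a Heisenberg algebra is irreducible whenever its central element acts by a nonzero scalar. The only points that genuinely need care are the reduction to $D$-homogeneous vectors (so that applying an annihilation operator actually lowers the weight and makes progress toward the minimality contradiction), the elementary observation that a nonzero polynomial with zero constant term has a nonzero first partial derivative, and the use of the \emph{nondegeneracy} of $(\cdot,\cdot)$ — not merely $l\neq 0$ — to pass from "$\partial_{Xt^{-n}}v\neq 0$ for some basis vector $X$" to "$X_nv\neq 0$ for some $X\in\gk$."
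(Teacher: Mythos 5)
Your proof is correct, but it takes a genuinely different route from the paper's. The paper proves the case $l>0$ by invoking the unitarity of $V_\gk(l,0)$ established just before (via the symmetric Fock space): for a nonzero homogeneous $w$ of minimal positive weight in a submodule, $\bk{w|w}\neq 0$ forces some annihilation operator $(X^1)^*_{n_1}$ not to kill $w$, contradicting minimality; the general case $l=|l|e^{\im\theta}\in\Cbb^\times$ is then reduced to $l=|l|>0$ by rescaling $(\cdot,\cdot)$ by $e^{\im\theta}$ and $K$ by $e^{-\im\theta}$. You instead argue directly on $U(\wtd\gk_-)=S(\wtd\gk_-)$, realizing $X_n$ ($n>0$) as $nl$ times a derivation and using the nondegeneracy of $(\cdot,\cdot)$ together with the fact that a characteristic-zero polynomial with all first partials zero is constant. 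Your argument is the classical proof that the Fock representation of a Heisenberg algebra with nonzero central charge is irreducible; it is self-contained, needs only nondegeneracy rather than positivity, and handles all $l\in\Cbb^\times$ uniformly without the rescaling trick. What the paper's route buys is brevity given that the unitary structure has already been constructed, and it showcases the general technique of deducing irreducibility from unitarity (which is how the text later treats the simple-Lie-algebra case). Both proofs are complete; the only points in yours that genuinely need the care you already flagged are the reduction to $D$-homogeneous vectors and the passage from a nonzero coordinate partial derivative to a nonzero $\delta_{X,n}$ via nondegeneracy.
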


\begin{proof}
We assume $l>0$ and prove the irreducibility using the unitarity. Choose any non-zero $\wtd\gk$-submodule $W$ of $V_\gk(l,0)$. We shall show $W=V_\gk(l,0)$.

Since $W$ is a $D$-invariant subspace, $D$ is diagonalizable on $W$. So $W$ has $D$-grading $W=\bigoplus_{n\geq a}W(n)$ where $a$ is the smallest eigenvalue of $D$ on $W$. We claim that $a=0$. Then, as the $D$-weight $0$ subspace of $V_\gk(l,0)$ is clearly spanned by $\id$, we must have $\id\in W$. From this one sees that $W=V_\gk(l,0)$.

Suppose $a>0$. We choose a non-zero $w\in W(a)$, which must be a sum of vectors of the form $X^1_{-n_1}\cdots X^k_{-n_k}\id$ where the sum of the positive integers $n_1,\dots,n_k$ is $a$. Then by the unitarity, $\bk{w|w}$ (which is non-zero) is a sum of $\bk{\id|(X^k)^*_{n_k}\cdots (X^1)^*_{n_1}w}$. So for some $X^1_{n_1}$, the vector $v=(X^1)^*_{n_1}w$ must be nonzero. But $v$ has $D$-weight $a-n_1<a$, and clearly $v\in W$. This is a contradiction. 

Now, for a general $l=|l|e^{\im\theta}\in\Cbb^\times$,  we may replace $(\cdot,\cdot)$ by $e^{\im\theta}(\cdot,\cdot)$ and $K$ by $e^{-\im\theta}K$. Then  $(\cdot|\cdot)$ and the new $(\cdot,\cdot)$ are related by $(X|Y)=(e^{\im\theta}X^*,Y)$, and $X\mapsto e^{\im\theta}X^*$ is clearly an antiunitary involution. So $V_\gk (l,0)$ becomes $V_\gk(|l|,0)$ under the new involution and bilinear form, and the latter has been proved irreducible.
\end{proof}

\subsection{}

In general, we say a finite-dimensional (complex) Lie algebra $\gk$ is \textbf{unitary} if it is equipped with an inner product $(\cdot|\cdot)$ and an antiunitary involution $*$ satisfying the following conditions:
\begin{enumerate}
\item $[X,Y]^*=[Y^*,X^*]$. 
\item The inner product is \textbf{invariant}, namely, the adjoint representation of $\gk$ on $\gk$ is unitary:
\begin{align*}
	([X,Y]|Z)=(Y|[X^*,Z]).	
\end{align*}
\end{enumerate}
Then $(X,Y):=(X^*|Y)$ defines a symmetric invariant bilinear form on $\gk$.

\begin{exe}
Let $\fk k$ be an $\gk$-invariant and $*$-invariant (i.e. $[\gk,\fk k]\subset\fk k$, $\fk k^*=\fk k$) subspace of $\gk$.  Let $\fk k^\perp$ be the orthogonal complement of $\fk k$ in $\gk$.
\begin{enumerate}
\item Show that $\fk k^\perp$ is also $\gk$-invariant and $*$-invariant.
\item Show that $[\fk k,\fk k^\perp]=0$ and hence $[\gk,\fk k]=[\fk k,\fk k]$. Consequently, if $\fk k$ is an irreducible $\gk$-submodule, then $\fk k$ is an irreducible $\fk k$-module, which is (by definition) a simple Lie algebra if moreover $[\fk k,\fk k]\neq 0$.
\end{enumerate}
\end{exe}

Let $\zk$ be the center of $\gk$, which is clearly $\gk$- and $*$-invariant. Let $\gk_{\mathrm{ss}}=\zk^\perp$ so that $\gk=\zk\oplus^\perp\gk_{\mathrm{ss}}$. Then the adjoint representation $\gk\curvearrowright\gk_{\mathrm{ss}}$ (equivalently, $\gk_{\mathrm{ss}}\curvearrowright\gk_{\mathrm{ss}}$) has orthogonal irreducible $*$-invariant decomposition $\gk_{\mathrm{ss}}=\gk_1\oplus^\perp\cdots\oplus^\perp\gk_N$. Each $\gk_j$ is a simple unitary Lie algebra, which is classified by the type A-G Dynkin diagrams.

Conversely, suppose $\gk$ is a complex simple Lie algebra, which is the complexification of $\gk_\Rbb$ which is the real Lie algebra of a finite dimensional compact real Lie group $G$. It is well known in Lie theory that the real vector space $\gk_\Rbb$ has a unique up to $\Rbb_{>0}$-scalar multiplication  $G$-invariant (equivalently, $\gk_\Rbb$-invariant) inner product, which extends to a complex invariant inner product $(\cdot|\cdot)$ on $\gk$ thanks to  the real direct sum $\gk=\gk_\Rbb\oplus\im\gk_\Rbb$. The antiunitary involution on $\gk$ is defined to be the unique one fixing $\im\gk_\Rbb$. Thus $\gk$ is unitary. 

Therefore, in general, if $\fk z$ is abelian and $\gk_1,\dots,\gk_N$ are simple, then $\gk=\fk z\oplus\gk_1\oplus\cdots\oplus\gk_N$ is naturally a unitary Lie algebra. So the study of unitary affine VOAs for unitary Lie algebras reduces to that of the abelian case (which we have finished) and the simple case.

\subsection{}

When $\gk$ is simple, the unitarity properties of $V_\gk(l,0)$ and $L_\gk(l,0)$ are very different from the abelian case. Indeed, in the abelian case, scaling the inner product does not change the unitary equivalence class of abelian unitary Lie algebras. (This is because scaling the vectors by a non-zero constant is an isomorphism of abelian Lie algebras.) But this is no longer true for non-abelian Lie algebras. Also, it turns out that for a simple $\gk$, $V_\gk(l,0)$ is never a unitary $\wtd\gk$-module, and $L_\gk(l,0)$ is unitary for a discrete set of levels $l$ if one fixes the invariant inner product, or for a discrete set of invariant inner product if one fixes the level $l$.

Assume $\gk$ is a simple Lie algebra with compact form decomposition $\gk=\gk_\Rbb\oplus\im\gk_\Rbb$. Let $*$ be the unique involution fixing $\im\gk_\Rbb$. As we have said, the invariant bilinear forms on $\gk_\Rbb$ (and hence on $\gk$) are unique up to scalar multiplication. So it would be better to fix one. The one that people usually choose is:

\begin{cv}\label{lb55}
We choose the invariant inner product on $\gk$ (under which $*$ is antiunitary) to be the unique one such that the longest roots of $\gk$ have length $\sqrt 2$.
\end{cv}
It follows from  the invariance of $(\cdot|\cdot)$ that  $h^\vee$ (defined in Assumption \ref{lb52}) is a positive number. (To see this, one may choose $E$ to be an orthonormal basis of $\gk$, and check that its dual basis $\{\wch e:e\in E\}$ satisfies $\wch e=e^*$.) The $h^\vee$ corresponding to the inner product in  Convention \ref{lb55} is called the \textbf{dual Coxeter number}. We have said that $L_\gk(l,0)$ and $V_\gk(l,0)$ are VOAs if $l\neq -h^\vee$. So this is true when $l\geq0$.

\begin{thm}
$L_\gk(l,0)$ is unitary if and only if $l\in\Nbb$. For such $l$, $L_\gk(l,0)$ is called a \textbf{Weiss-Zumino-Witten (WZW)} model. 
\end{thm}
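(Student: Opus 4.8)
The plan is to treat the two implications separately, in both cases passing to the copy of the affine $\mathfrak{sl}_2$ inside $\wtd\gk$ sitting over the highest root $\theta$ of $\gk$.

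\emph{The ``only if'' direction.} Suppose $L_\gk(l,0)$ carries an inner product $\bk{\cdot|\cdot}$ with $\bk{\id|\id}=1$ making the $\wtd\gk$-action unitary, so that $(X_n)^\dagger=(X^*)_{-n}$ and $K^\dagger=K$. Fix a triple $e_\theta,f_\theta,h_\theta\in\gk$ attached to $\theta$, normalized via the compact involution so that $e_\theta^*=f_\theta$, $h_\theta^*=h_\theta$, $[e_\theta,f_\theta]=h_\theta$, $[h_\theta,e_\theta]=2e_\theta$; since $\theta$ is a long root, Convention \ref{lb55} gives $(\theta,\theta)=2$ and $(e_\theta,f_\theta)=1$. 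Put
\[
E=(e_\theta)_1,\qquad F=(f_\theta)_{-1},\qquad H=[E,F].
\]
The affine bracket relations give $H=(h_\theta)_0+(e_\theta,f_\theta)K$, which acts as $(h_\theta)_0+l$ on $L_\gk(l,0)$; hence $[H,E]=2E$ and $[H,F]=-2F$, so $\{E,F,H\}$ is an $\mathfrak{sl}_2$-triple of operators with $F^\dagger=E$. Since positive modes annihilate $\id$ and $(h_\theta)_0\id=0$, the vacuum satisfies $E\id=0$ and $H\id=l\id$, i.e.\ it is a highest-weight vector of weight $l$. A short positivity induction then finishes it: one checks $E F^k\id=k(l-k+1)F^{k-1}\id$, whence $\|F^k\id\|^2=k!\,l(l-1)\cdots(l-k+1)$, and this is $\geq 0$ for every $k\in\Nbb$ only if $l\in\Nbb$.

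\emph{The ``if'' direction.} If $l=0$ then $L_\gk(0,0)=\Cbb\id$ is trivially unitary, so assume $l\in\Zbb_+$. I would reduce to $l=1$ via the diagonal comultiplication $\wtd\gk\to\wtd\gk\oplus\cdots\oplus\wtd\gk$ ($l$ summands), $X_n\mapsto(X_n,\dots,X_n)$, $K\mapsto(K,\dots,K)$, $D\mapsto(D,\dots,D)$: this makes $L_\gk(1,0)^{\otimes l}$ a level-$l$ representation of $\wtd\gk$ in which $\id^{\otimes l}$ is killed by all $X_n$ with $n\geq 0$ and by $D$, while $K$ acts on it as $l$. Hence the $\wtd\gk$-submodule $M$ generated by $\id^{\otimes l}$ is a quotient of $V_\gk(l,0)$; if $L_\gk(1,0)$ is unitary then so is $L_\gk(1,0)^{\otimes l}$ (tensor-product inner product), hence so is $M$, and a unitary $\wtd\gk$-module generated by a single vector that spans its lowest $D$-weight space must be irreducible --- so $M\cong L_\gk(l,0)$ and $L_\gk(l,0)$ is unitary. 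It remains to prove the $l=1$ case. When $\gk$ is simply laced, $L_\gk(1,0)$ is the lattice vertex operator algebra attached to the (even, positive-definite) root lattice of $\gk$ (the Frenkel--Kac--Segal construction), and lattice VOAs of positive-definite even lattices are unitary; for the non-simply-laced types $B_n,C_n,F_4,G_2$ one embeds $\gk$ with Dynkin index one into a simply-laced $\gk'$ (e.g.\ $B_n\hookrightarrow D_{n+1}$, $G_2\hookrightarrow D_4$) and uses that a $\wtd\gk$-submodule of a unitary module is unitary. See \cite{Was10,FMS,CKLW18} for these constructions.

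\emph{Main obstacle.} The ``only if'' half is elementary once the $\mathfrak{sl}_2$-triple is correctly set up; the real work is the existence of the unitary structure on $L_\gk(1,0)$ for every simple $\gk$ --- i.e.\ the level-one lattice/free-field realizations and the folding arguments in the non-simply-laced case. A uniform alternative for the ``if'' direction is to identify $L_\gk(l,0)$ ($l\in\Nbb$) with the integrable highest-weight $\wtd\gk$-module $L(l\Lambda_0)$ and invoke the general theorem that integrable highest-weight modules over symmetrizable affine Kac--Moody algebras are unitarizable with respect to the compact involution \cite{Kac}; but proving that theorem needs the theory of the contravariant (Shapovalov) form and its positivity, which lies well beyond the present notes.
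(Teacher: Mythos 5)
The paper does not actually prove this theorem: it explicitly defers to \cite[Ch.\ III, Sec.\ 2 and 10]{Was10}, so there is no in-text argument to compare against. Your outline is the standard route and is essentially correct. The ``only if'' half is complete as written: the $\mathfrak{sl}_2$-triple $E=(e_\theta)_1$, $F=(f_\theta)_{-1}$, $H=(h_\theta)_0+K$ over the highest root, with $(e_\theta,f_\theta)=1$ under Convention \ref{lb55}, makes $\id$ a highest-weight vector of weight $l$, and the norm computation $\|F^k\id\|^2=k!\,l(l-1)\cdots(l-k+1)$ forces $l\in\Nbb$; this is the same positivity argument the paper runs in miniature when it computes $\bk{X_{-1}\id|X_{-1}\id}=l(X|X)$ in the Heisenberg case. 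The reduction of the ``if'' half to level $1$ is also sound: the diagonal action on $L_\gk(1,0)^{\otimes l}$ has level $l$, the cyclic submodule $M$ generated by $\id^{\otimes l}$ is a quotient of $V_\gk(l,0)$ (so its $D$-weight-$0$ space is $\Cbb\id^{\otimes l}$), and the paper's own unitarity-implies-irreducibility argument (from the proof that $V_\gk(l,0)=L_\gk(l,0)$ for abelian $\gk$, $l>0$) applies verbatim to give $M\simeq L_\gk(l,0)$. What remains --- unitarity of $L_\gk(1,0)$ for every simple $\gk$ --- is where all the depth lives, and you correctly isolate it; two points there deserve explicit attention: first, for non-simply-laced $\gk$ you must check not only that the folding embedding $\gk\hookrightarrow\gk'$ has Dynkin index $1$ (so that the level restricts to $1$ and the normalized invariant form of $\gk'$ restricts to that of $\gk$) but also that it intertwines the compact involutions, so the $*$-structures match; second, the identification $L_\gk(1,0)\simeq V_L$ for the root lattice $L$ and the unitarity of lattice VOAs are themselves substantial theorems, not formalities. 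Compared with the source the paper cites, your level-$1$ input (lattice VOAs plus folding) replaces Wassermann's fermionic Fock space construction; both are legitimate, and the $\mathfrak{sl}_2$-positivity converse is common to all treatments.
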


This is a highly non-trivial result whose proof relies on deep Lie theory. We refer the readers to \cite[Chapter III, Sec. 2 and 10]{Was10} for a proof. Moreover, just like minimal models, WZW models are $C_2$-cofinite and rational. So their representation categories are extremely nice. Due to these properties, WZW models are central objects in the study of CFT and VOAs. (However, Heisenberg VOAs are neither $C_2$-cofinite nor rational.)

\subsection{}\label{lb58}

We have shown the existence of affine VOAs when the unitary Lie algebra $\gk$ is abelian or simple. The general case can be addressed by  tensor product VOAs.

Let $\Vbb_1,\Vbb_2$ be VOAs. We use the diagonalizable operator $L_0\otimes\id_{\Vbb_2}+\id_{\Vbb_1}\otimes L_0$ to define the grading on $\Vbb_1\otimes\Vbb_2$. The vacuum vector is $\id\otimes\id$. $\Vbb_1\otimes\Vbb_2$ is clearly generated by $Y(v_1)_m\otimes\id_{\Vbb_2}$ and $\id_{\Vbb_1}\otimes Y(v_2)_n$ where $v_j\in\Vbb_j$, and $Y(v_1,z)\otimes\id_{\Vbb_2}$ is clearly local to  $Y(u_1,z)\otimes\id_{\Vbb_2}$ (where $u_1\in\Vbb_1$) and $\id_{\Vbb_1}\otimes Y(v_2,z)$. One checks that $L_{-1}\otimes\id_{\Vbb_2}+\id_{\Vbb_1}\otimes L_{-1}$ satisfies the translation property. So $\Vbb\otimes\Vbb$ is naturally a graded vertex algebra by the reconstruction theorem. Its vertex operator satisfies
\begin{align}
Y(v_1\otimes\id,z)=Y(v_1,z)\otimes\id_{\Vbb_2},\qquad Y(\id\otimes v_2,z)=\id_{\Vbb_1}\otimes Y(v_2,z).
\end{align}

\begin{exe}
Use \eqref{eq59} or \eqref{eq88} to show
\begin{align}
Y(v_1\otimes v_2,z)=Y(v_1,z)\otimes Y(v_2,z).
\end{align}
Equivalently,
\begin{align}
Y(v_1\otimes v_2)_n=\sum_{n\in\Zbb}~ \sum_{n_1+n_2=n-1}Y(v_1)_{n_1}Y(v_2)_{n_2}.	
\end{align}
\end{exe}
 
When $\Vbb_1,\Vbb_2$ are VOAs with conformal vectors $\cbf_1,\cbf_2$ and central charges $c_1,c_2$, it is easy to check that $\Vbb_1\otimes\Vbb_2$ is a VOA with conformal vector $\cbf_1\otimes\id+\id\otimes\cbf_2$. In particular, its Virasoro operators are $Y(\cbf_1\otimes\id+\id\otimes\cbf)_{n+1}=L_n\otimes\id_{\Vbb_2}+\id_{\Vbb_1}\otimes L_n$. We call $\Vbb_1\otimes\Vbb_2$ the \textbf{tensor product VOA} of $\Vbb_1$ and $\Vbb_2$.

\begin{exe}
Show that $\Vbb_1\otimes\Vbb_2$ has central charge $c_1+c_2$.
\end{exe}

We remark that if $\Vbb_1$ and $\Vbb_2$ are unitary, then their tensor product is also unitary (cf. \cite{DL14,CKLW18}).

\begin{exe}
Let $\gk_1,\dots,\gk_N$ be either abelian or simple. Let $\Vbb=L_{\gk_1}(l_1,0)\otimes\cdots\otimes L_{\gk_N}(l_N,0)$. Show that the weight-$1$ subspace $\Vbb(1)$, as a Lie algebra (cf. Subsec. \ref{lb56}), is naturally isomorphic to $\gk:=\gk_1\oplus\cdots\oplus\gk_N$. Show that $\Vbb(1)$ generates $\Vbb$.
\end{exe}

\begin{exe}
Show that $L_{\Cbb^n}(1,0)\simeq\underbrace{L_\Cbb(1,0)\otimes\cdots\otimes L_\Cbb(1,0)}_{n\text{ times}}$.
\end{exe}

\section{Local fields}\label{lb83}

\subsection{}

Having explored some important examples, we now return to the general theory. The goal of this section is to understand the close relationship between the three statements in Subsec. \ref{lb57}. The precise formulation of statement 1 is the Lie bracket version of local fields, as defined in Def. \ref{lb59} or Rem. \ref{lb60}. For statement 2 we give two rigorous descriptions: the complex analytic version and the formal variable version of local fields. We first give the complex analytic version, which is more intuitive. 

We first need to define:

\begin{df}
Let $\Omega$ be a locally compact Hausdorff space. A  series of functions $\sum_n f_n$ is said to \textbf{converge absolutely and locally uniformly (a.l.u.) \index{00@Absolute and locally uniform (a.l.u.) convergence} on $\Omega$} if  each $x_0\in\Omega$ is contained in a neighborhood $U$ such that
\begin{align*}
\sup_{x\in U}\sum_n|f_n(x)|<+\infty.	
\end{align*}
Equivalently, for each compact subset $K\subset \Omega$, we have $\sup_{x\in K}\sum_n|f_n(x)|<+\infty$
\end{df}

Clearly, if each $\sum f_n$ converges a.l.u. and each $f_n$ is continuous (resp. holomorphic), then so is the limit $\sum f_n$.

\subsection{}

Now let $\Vbb=\bigoplus_{n\in\Nbb}\Vbb(n)$ be graded by a diagonalizable $L_0$. Recall the projection $P_n:\Vbb^\cl=\prod_{m\in\Nbb}\Vbb(m)\rightarrow \Vbb(n)$ (cf. \eqref{eq89}). Let $A(z)=\sum A_nz^{-n-1}$, $B(z)=\sum B_nz^{-n-1}$ be homogeneous fields with weights $\Delta_A,\Delta_B$ (cf. Def. \ref{lb61}). For each $n\in\Nbb$ and $v,v'\in\Vbb$, we have
\begin{align}
\bk{v',A(z_1)P_nB(z_2)v} \quad \in \scr O(\Cbb^\times\times\Cbb^\times)
\end{align}
since, when $v,v'$ are homogeneous, this expression equals
\begin{align*}
\bk{v',A_{n_1}B_{n_2}v}z_1^{-n_1-1}z_2^{-n_2-1}	
\end{align*}
where $n_2,n_1$ are determined by $\Delta_B+\wt v-n_2-1=n$ and $\Delta_A+n-n_1-1=\wt v'$.

\begin{df}[\textbf{Local fields} (complex analytic version)]\label{lb62}
We say $A(z)$ and $B(z)$ are \textbf{local} to each other if for each $v\in\Vbb,v'\in\Vbb'$ the following hold.
\begin{enumerate}
\item The series
\begin{subequations}
\begin{gather}
\bk{v',A(z_1)B(z_2)v}:=\sum_{n\in\Nbb}\bk{v',A(z_1)P_nB(z_2)v}\label{eq90}\\
\bk{v',B(z_2)A(z_1)v}:=\sum_{n\in\Nbb}\bk{v',B(z_2)P_nA(z_1)v}\label{eq91}
\end{gather}
\end{subequations}
converge a.l.u. respectively on the open sets $\Omega_1=\{(z_1,z_2)\in\Cbb^2:0<|z_2|<|z_1|\}$ and $\Omega_2=\{(z_1,z_2)\in\Cbb^2:0<|z_1|<|z_2|\}$. So \eqref{eq90} and \eqref{eq91} are automatically holomorphic functions on $\Omega_1$ and $\Omega_2$.
\item \eqref{eq90} and \eqref{eq91} can be analytically continued to the same holomorphic function $f_{v,v'}$ on $\Conf^2(\Cbb^\times)$. Moreover, there exists $N\in\Nbb$ depending only on $A,B$ but not on $v,v'$ such that the function
\begin{align}
(z_1-z_2)^Nf_{v,v'}(z_1,z_2)	
\end{align}
is holomorphic on $\Cbb^\times\times\Cbb^\times$.
\end{enumerate}
\end{df}

Roughly speaking, this definition says that \eqref{eq90} and \eqref{eq91} converge a.l.u on $\Omega_1,\Omega_2$ and extend to the same holomorphic function on $\Conf^2(\Cbb^\times)$ which has poles of order at most $N$ at $z_1=z_2$, where $N$ is independent of $v,v'$.

\subsection{}

The readers will immediately notice that there is another natural convergence condition on $A(z_1)B(z_2)$: that $\bk{v',A(z_1)B(z_2)v}$ as a formal Laurent series of $z_1,z_2$ converges a.l.u. on $\Omega_1$. Or  more precisely, the joint series
\begin{align}
\sum_{m,n\in\Zbb}\bk{v',A_mB_nv}z_1^{-m-1}z_2^{-n-1}\label{eq96}	
\end{align}
converges a.l.u. on $\Omega$. Is this equivalent to the convergence statement in Def. \ref{lb62}? The answer is yes. But people will easily overlook the need to justify this equivalence. And we need both versions of convergence since they are useful in different situations. For instance, to prove that formal variable implies complex analytic, it is easier to prove the a.l.u. convergence of the formal Laurent series; to prove the other direction, it is better to use the a.l.u. convergence of the RHS of \eqref{eq90} and \eqref{eq91}.

There is (unfortunately) one more way to understand the convergence \eqref{eq90}: we regard the RHS as a series of formal Laurent series of $z_1,z_2$, which converges formally to the LHS also as a formal Laurent series in the following sense:

\begin{df}\label{lb66}
	We say that a sequence (indexed by $k$)
	\begin{align*}
		f_k(z_1,\dots,z_M)=\sum_{n_1,\dots,n_M\in\Zbb}f_{k,n_\blt}z_1^{n_1}\cdots z_M^{n_M}	
	\end{align*}
	of elements of $W[[z_1^{\pm1},\dots,z_M^{\pm1}]]$ \textbf{converges formally}\index{00@Formal convergence} to
	\begin{align*}
		f(z_1,\dots,z_M)=\sum_{n_1,\dots,n_M\in\Zbb}f_{n_\blt}z_1^{n_1}\cdots z_M^{n_M}	
	\end{align*}
	if for each $n_\blt$, the coefficient $f_{k,n_\blt}$ equals $f_{n_\blt}$ except for finitely many $k$.
\end{df}

Note that in applications, $k$ can be in any countable set: $\Nbb,\Zbb,\Zbb^2$, etc.

We will show the equivalence of the two a.l.u. convergences with the help of the following obvious lemma.

\begin{lm}\label{lb65}
Let $X$ be a complex manifold. Let $f_k(x,z_\blt)$ be a series of $\scr O(X)$ -coefficients monomials of $z_1^{\pm1},\dots,z_M^{\pm1}$, i.e.,   $f_k(x,z_\blt)=g_k(x)z_1^{n_{k,1}}\cdots z_M^{n_{k,M}}$ where each $g_k\in\scr O(X)$ and $n_{k,j}\in\Zbb$. Assume that if $k\neq k'$ then $n_{k,j}\neq n_{k',j}$ for some $1\leq j\leq M$. Then $\sum_k f_k(x,z_\blt)$ clearly converges formally to some $f\in\scr O(X)[[z_1^{\pm1},\dots,z_M^{\pm1}]]$.  Namely, the following holds formally:
\begin{align}
f(x,z_\blt)=\sum_k f_k(x,z_\blt).\label{eq98}
\end{align}
Moreover, let $\Omega$ be an open subset of $\Cbb^M$. Then $f(x,z_\blt)$ as an $\scr O(X)$-coefficients formal Laurent series of $z_\blt$ (indexed by the powers of $z_\blt$) converges a.l.u. on $X\times\Omega$ if and only if the series $\sum_k f_k(x,z_\blt)$ (indexed by $k$) converges a.l.u. on $X\times\Omega$. If so, then the two limits are equal, i.e., \eqref{eq98} holds as holomorphic functions on $X\times\Omega$.
\end{lm}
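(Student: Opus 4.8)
The plan is to observe that, thanks to the distinctness hypothesis on the monomials, the two series under comparison are built out of \emph{exactly the same terms}, merely indexed differently; once one matches them up, the whole lemma reduces to the order-insensitivity of sums of nonnegative quantities and of a.l.u.-convergent series of holomorphic functions.

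First I would dispose of the formal convergence. Since $k\neq k'$ forces $(n_{k,1},\dots,n_{k,M})\neq(n_{k',1},\dots,n_{k',M})$, the assignment $k\mapsto(n_{k,1},\dots,n_{k,M})$ is an injection into $\Zbb^M$, so for each multi-index $m_\blt=(m_1,\dots,m_M)\in\Zbb^M$ there is at most one $k$ with $(n_{k,1},\dots,n_{k,M})=m_\blt$. Hence the coefficient of $z_1^{m_1}\cdots z_M^{m_M}$ in $\sum_k f_k$ is $g_k(x)\in\scr O(X)$ for that unique $k$ (and $0$ if there is none). By Definition \ref{lb66} this already says that $\sum_k f_k$ converges formally --- with at most one nonzero summand contributing to each coefficient --- to an element $f(x,z_\blt)=\sum_{m_\blt\in\Zbb^M}f_{m_\blt}(x)z_1^{m_1}\cdots z_M^{m_M}$ of $\scr O(X)[[z_1^{\pm1},\dots,z_M^{\pm1}]]$, which is \eqref{eq98} at the formal level.

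Next, for the a.l.u.\ comparison I would introduce the abbreviations $z^{m_\blt}:=z_1^{m_1}\cdots z_M^{m_M}$ and $z^{n_{k,\blt}}:=z_1^{n_{k,1}}\cdots z_M^{n_{k,M}}$ and exploit the pointwise identity, valid for every $(x,z_\blt)\in X\times\Omega$,
\begin{gather*}
\sum_{m_\blt\in\Zbb^M}\bigl|f_{m_\blt}(x)\,z^{m_\blt}\bigr|=\sum_k\bigl|g_k(x)\,z^{n_{k,\blt}}\bigr|=\sum_k\bigl|f_k(x,z_\blt)\bigr|,
\end{gather*}
where the first equality holds because the nonzero terms on the left are precisely those on the right under the relabeling $m_\blt=(n_{k,1},\dots,n_{k,M})$, and a series of nonnegative numbers over a countable index set has a sum in $[0,+\infty]$ independent of its enumeration. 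Consequently, for any open $U\subset X\times\Omega$, $\sup_U$ of the left-hand sum is finite if and only if $\sup_U$ of the right-hand sum is; letting $U$ range over neighborhoods of points of $X\times\Omega$ gives the asserted equivalence of a.l.u.\ convergence. When both converge, each of the two limit functions coincides with the sum of the single family $\{(x,z_\blt)\mapsto f_{m_\blt}(x)z^{m_\blt}\}_{m_\blt\in\Zbb^M}$ of functions holomorphic on $X\times\Omega$, which is a.l.u.\ absolutely convergent there; since such a series of continuous functions may be rearranged freely without affecting its locally uniform limit, both limits equal it, i.e.\ \eqref{eq98} holds as an identity of holomorphic functions on $X\times\Omega$.

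Since the whole argument is essentially bookkeeping, I do not expect a genuine obstacle. The two points that warrant a word of care are: (i) making explicit that ``a.l.u.\ convergence of the formal Laurent series $f$'' is to be read as a.l.u.\ convergence of the countable family of monomials $f_{m_\blt}(x)z^{m_\blt}$, which is exactly the notion fixed at the beginning of this section; and (ii) noting that if some $f_k$ carries a negative power of a variable $z_j$ while $\Omega$ meets $\{z_j=0\}$, then near that locus both absolute sums above are $+\infty$, so neither notion of a.l.u.\ convergence on all of $X\times\Omega$ can hold, and the stated equivalence remains (vacuously) correct there.
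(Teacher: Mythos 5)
Your proof is correct. The paper states this lemma without proof (calling it "obvious"), and your argument — injectivity of $k\mapsto(n_{k,1},\dots,n_{k,M})$ for the formal convergence, followed by the pointwise identity of the two sums of absolute values under relabeling and rearrangement-invariance of nonnegative and of absolutely convergent series — is exactly the bookkeeping the author intends, written out completely and with no gaps.
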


\subsection{}\label{lb184}

We now show that \eqref{eq90} as an infinite sum over $n$ converges a.l.u. on $\Omega_1$ iff the LHS of \eqref{eq90} as a formal Laurent series of $z_1,z_2$ converges a.l.u. on $\Omega_1$. Note that both convergences are preserved by taking linear combinations. So it suffices to assume that $v,v'$ are homogeneous.\footnote{We cannot directly apply Lemma \ref{lb65} if $v,v'$ are not homogeneous.} Let us prove our claim by checking that  the sum \eqref{eq90} satisfies the assumption in Lemma \ref{lb65}:

Since $B(z_2)$ is homogeneous, similar to the proof of Prop. \ref{lb63}, we have the translation covariance
\begin{align}
	B(\lambda z_2)=\lambda^{-\Delta_B}\cdot\lambda^{L_0}B(z_2)\lambda^{-L_0}.\label{eq103}	
\end{align}
This shows
\begin{align}
B(z_2)=z_2^{-\Delta_B}\cdot z_2^{L_0}B(1)z_2^{-L_0}.\label{eq117}	
\end{align}
A similar relation holds for $A(z_1)$. So for each $n\in\Nbb$, we have (in the sense of  $\Cbb[z_1^{\pm1},z_2^{\pm1}]$)
\begin{align}
	&\bk{v',A(z_1)P_nB(z_2)v}=	\bk{v',z_1^{L_0-\Delta_A}A(1)z_1^{-L_0}P_nz_2^{-\Delta_B+L_0}B(1)z_2^{-L_0}v}\nonumber\\
	=&z_1^{\wt v'-\Delta_A}z_2^{-\Delta_B-\wt v}\cdot \big(\frac{z_2}{z_1}\big)^n\bigbk{v',A(1)P_nB(1)v},\label{eq97}
\end{align}
noting that $z_1^{-L_0}P_n=z_1^{-n}P_n$ and $P_nz_2^{L_0}=P_nz_2^n$. 

\begin{exe}\label{lb89}
Let $\Vbb$ be a graded vertex algebra. Choose $u,v\in\Vbb$ and $v'\in\Vbb'$.  Use \eqref{eq40} and Lemma \ref{lb65} to show that
\begin{align}
\sum_{n\in\Nbb}\bigbk{v',Y(u,z)P_ne^{-\tau L_{-1}}v}=\sum_{n\in\Nbb}\bigbk{v',e^{-\tau L_{-1}}P_nY(u,z+\tau)v},
\end{align}
where both sides converge a.l.u. on $\{z\neq 0,|\tau|<|z|\}$ to the same function. (Note that the RHS is a finite sum.)
\end{exe}

\subsection{}

\begin{df}[\textbf{Local fields} (formal variable version)]
There exists $N\in\Nbb$ depending only on $A$ and $B$ such that the equation
\begin{align}
(z_1-z_2)^N[A(z_1),B(z_2)]=0\label{eq92}	
\end{align}
holds on the level of $\End(\Vbb)[[z_1^{\pm1},z_2^{\pm1}]]$.
\end{df}

This version of local fields is the most common in the literature, partly because it is the most concise. Indeed, since locality implies Jacobi identity, many people use locality instead of Jacobi identity in the definition of VOAs. We do not take this approach because locality has its own limitation: in the definition of VOA modules and conformal blocks, we need the full Jacobi identity, but not just locality.

\subsection{}\label{lb64}

Almost everyone will have the following question when they first see this definition: doesn't \eqref{eq92} imply $[A(z_1),B(z_2)]=0$? The answer is no: for a vector space $W$ in general, it is possible that $fg=0$ for some $f(z_1,z_2),g(z_1,z_2)\in W[[z_1^{\pm1},z_2^{\pm1}]]$ although $f\neq 0,g\neq 0$. In other words, assuming $W=\Cbb$ for simplicity, then $\Cbb[[z_1^{\pm1},z_2^{\pm1}]]$ (unlike $\Cbb[[z_1,z_2]]$) has ``zero divisors". (We put quotation marks here because $\Cbb[[z_1^{\pm1},z_2^{\pm1}]]$ is actually not a ring.)

Indeed, choose $N>0$. Then $(z_1-z_2)^{-N}$ can be expanded in two ways: $f=\sum_{j\geq 0}{-N\choose j}z_1^j(-z_2)^{-N-j}$ as if $|z_1|<|z_2|$, and $g=\sum_{j\geq 0}{-N\choose j}z_1^{-N-j}(-z_2)^j$ as if $|z_1|>|z_2|$. Then $f\neq g$, but $(z_1-z_2)^Nf=(z_1-z_2)^Ng=1$. So $(z_1-z_2)^N$ is a zero divisor. Similarly, one shows that $(1+z)^N$ (where $N>0$) is a zero divisor in $\Cbb[[z^{\pm1}]]$ by expanding $(1+z)^{-N}$ as if $|z|<1$ and as if $|z|>1$.

This phenomenon is closely related to the fact that $\Cbb[[z_1^{\pm1},z_2^{\pm1}]]$ (and similarly $\Cbb[[z^{\pm1}]]$) is not a ring: the product of two arbitrary elements cannot be defined. This is in contrast to the following basic fact:
\begin{lm}
If $\mbb F$ is a field, then $\mbb F((z))$ is naturally a field. In particular, $\mbb F((z))$ is closed under taking product and inverse (for non-zero elements).
\end{lm}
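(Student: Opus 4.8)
The statement to prove is that for a field $\mathbb F$, the ring $\mathbb F((z))$ of formal Laurent series (with only finitely many negative-power terms) is itself a field. The plan is to check the two non-obvious closure properties: that $\mathbb F((z))$ is closed under multiplication, and that every nonzero element has a multiplicative inverse in $\mathbb F((z))$. Closure under addition and the ring axioms are routine and inherited from the coefficientwise structure, so I would dispatch those in a sentence; the content is entirely in the product and the inverse.

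First I would show multiplication is well-defined. Given $f=\sum_{m\geq M}a_mz^m$ and $g=\sum_{n\geq N}b_nz^n$ with $a_M,b_N$ possibly zero but the series lower-truncated, the candidate product is $fg=\sum_{k}\bigl(\sum_{m+n=k}a_mb_n\bigr)z^k$. The key point is that for each fixed $k$ the inner sum $\sum_{m+n=k}a_mb_n$ ranges only over pairs with $m\geq M$ and $n\geq N$, hence over the finitely many $m$ with $M\leq m\leq k-N$; so each coefficient is a finite sum in $\mathbb F$ and is well-defined. Moreover $fg$ has no terms below degree $M+N$, so $fg\in\mathbb F((z))$. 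Associativity and distributivity then follow by the same finiteness bookkeeping, so $\mathbb F((z))$ is a commutative ring (with unit $1=z^0$).

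Next, the inverse. Let $f\neq 0$; write $f=z^M u$ where $M\in\mathbb Z$ is the order of $f$ (the least index with nonzero coefficient) and $u=\sum_{j\geq 0}c_jz^j$ with $c_0\neq 0$. Since $c_0\in\mathbb F^\times$, it suffices to invert $u$ in $\mathbb F[[z]]$: I would solve $u\cdot(\sum_{j\geq 0}d_jz^j)=1$ recursively, taking $d_0=c_0^{-1}$ and, for $j\geq 1$, $d_j=-c_0^{-1}\sum_{i=1}^{j}c_id_{j-i}$, which is a well-defined element of $\mathbb F$ once $d_0,\dots,d_{j-1}$ are known. This produces $u^{-1}\in\mathbb F[[z]]\subset\mathbb F((z))$, and then $f^{-1}=z^{-M}u^{-1}\in\mathbb F((z))$. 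One checks $f\cdot f^{-1}=1$ by the same degreewise computation. Combined with the ring structure, this shows $\mathbb F((z))$ is a field.

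I do not expect any genuine obstacle here; the only thing that requires care (and the place a careless argument goes wrong) is the lower-truncation hypothesis — it is exactly what makes both the product coefficients finite sums and the recursive inversion terminate at each stage. A remark worth including is that this is precisely why $\mathbb F((z))$ behaves so differently from $\mathbb F[[z^{\pm1}]]$, which is not even a ring (cf. Subsec.~\ref{lb64}): dropping lower-truncation destroys the finiteness of the convolution sums.
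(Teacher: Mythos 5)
Your proof is correct and is exactly the standard argument the paper has in mind: it states the lemma as a basic fact and leaves the inversion algorithm as the adjacent exercise, and your recursion $d_0=c_0^{-1}$, $d_j=-c_0^{-1}\sum_{i=1}^{j}c_id_{j-i}$ after factoring $f=z^Mu$ is precisely that algorithm. Your emphasis on lower-truncation being the sole source of finiteness in both the convolution and the recursion is also the right point to stress, and it correctly explains the contrast with $\mathbb F[[z^{\pm1}]]$ discussed in the surrounding subsection.
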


\begin{exe}
Suppose $f(z)\in\mbb F((z))$ is not zero. Find an algorithm of determining the inverse $1/f(z)$.
\end{exe}

Thus, by taking $\mbb F=\Cbb((z_1))$, we see that $\Cbb((z_1))((z_2))$ is also a field. This implies that $(z_1-z_2)^N$ is not a zero divisor in $\Cbb((z_1))((z_2))$: Suppose that $(z_1-z_2)^Nf(z_1,z_2)=0$,  and that $f\in\Cbb((z_1))((z_2))$, i.e., 
\begin{align*}
f(z_1,z_2)=\sum_{\begin{subarray}{c}
	n_2\geq L\\	
n_1\geq K_{n_2} 
\end{subarray}}f_{n_1,n_2}z_1^{n_1}z_2^{n_2}
\end{align*}
for some $L\in\Zbb$ and $K_{n_2}\in\Zbb$ for each $n_2$. Then $f=0$ because $f=(z_1-z_2)^{-N}(z_1-z_2)^Nf=0$ where $(z_1-z_2)^{-N}$ is the \emph{inverse of $(z_1-z_2)^N$ in $\Cbb((z_1))((z_2))$}, which is  $\sum_{j\geq0}{-N\choose j}z_1^{-N-j}(-z_2)^j$. (If we expand $(z_1-z_2)^{-N}$ as if $|z_1|<|z_2|$, we get the inverse of $(z_1-z_2)^N$ in $\Cbb((z_2))((z_1))$.)

If, however, $f\in\Cbb[[z_1^{\pm1},z_2^{\pm1}]]$ is neither in $\Cbb((z_2))((z_1))$ nor in $\Cbb((z_1))((z_2))$, then $(z_1-z_2)^Nf=0$ does not imply $f=0$ since we cannot multiply both sides by either inverse of $(z_1-z_2)^N$. (There is no associativity law $(fg)h=f(gh)$ in $\Cbb[[z_1^{\pm1},z_2^{\pm1}]]$ even if both sides can be defined.)

\subsection{}

Each of the three versions has its own advantage, and it is the goal of this section to prove the equivalence of them. This is a crucial step for proving the reconstruction theorem. Moreover, note that in each of these three versions there is a number $N$. \emph{We can prove the  equivalence of the three versions for the same $N$.}

The Lie algebraic version is the easiest to verify in concrete examples: we have already seen this in the previous section. In contrast, the complex analytic one is the most difficult to verify. But the complex analytic version is closest to how physicists understand local fields. So it allows us to prove results in a similar fashion as in physics literature. For instance: we will prove the existence of OPE using the complex analytic version of local fields. And with the help of OPE, we can prove that complex analytic implies Lie bracket version in the same way as deriving the algebraic Jacobi identity from the complex analytic one using residue theorem. Finally, to prove the complex analytic version from the Lie algebraic one, we need the help of the formal variable version. Also, using the formal variable version, we can generalize the statements in Def. \ref{lb62} to more than two fields. This generalization is crucial for proving the reconstruction theorem.
\begin{equation*}
\begin{tikzcd}
	\text{Lie algebraic} \arrow[r,"\ref{lb67}"] & \text{Formal variable} \arrow[r, bend left,"\ref{lb68}"] & \text{Complex analytic} \arrow[l, bend left,"\ref{lb69}"'] \arrow[ld, bend left,"\ref{lb70}"'] \\
	& \text{OPE} \arrow[lu, bend left,"\ref{lb72}"']             &                                                                    
\end{tikzcd}
\end{equation*}

From the above chart, we see that a direct proof from complex analytic to formal variable is not necessary for proving the equivalence of the three versions. We will still give such a proof because: In the VOA theory,  many definitions and properties can be stated in both algebraic (i.e., formal variable)  and complex analytic language. It is important to learn how to translate between these two.

\subsection{}\label{lb67}

The proof that Lie algebraic implies formal variable is by brutal force. Assume the homogeneous fields $A(z),B(z)$ satisfies \eqref{eq66}. Let us prove that $(z_1-z_2)^N[A(z_1),B(z_2)]=0$.

\begin{proof}
Showing $(z_1-z_2)^N[A(z_1),B(z_2)]=0$ amounts to showing that  for all $m,n\in\Zbb$, the following expression vanishes:
\begin{align}\label{eq102}
\begin{aligned}
&\Res_{z_1=0}\Res_{z_2=0}~z_1^mz_2^n\cdot (z_1-z_2)^N[A(z_1),B(z_2)]dz_1dz_2\\
=&\sum_{j=0}^N \Res_{z_1=0}\Res_{z_2=0}~{N\choose j}z_1^{m+j}z_2^{n+N-j}(-1)^{N-j}[A(z_1),B(z_2)]dz_1dz_2\\
=&\sum_{j=0}^N{N\choose j}(-1)^{N-j}[A_{m+j},B_{n+N-j}]\\
=&\sum_{j=0}^N{N\choose j}(-1)^{N-j}\sum_{l=0}^{N-1}{m+j\choose l}C^l_{m+n+N-l}.	
\end{aligned}
\end{align}
This expression vanishes because of  the next lemma.
\end{proof}

\begin{lm}
For each $N\in\Zbb_+$, $m\in\Zbb$, and  $l=0,1,\dots,N-1$, we have
\begin{align*}
\sum_{j=0}^N{N\choose j}(-1)^{N-j}{m+j\choose l}=0.	
\end{align*}
\end{lm}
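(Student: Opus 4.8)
The statement to prove is the finite‑difference identity
\[
\sum_{j=0}^N \binom{N}{j}(-1)^{N-j}\binom{m+j}{l}=0
\qquad(N\in\Zbb_+,\ m\in\Zbb,\ 0\le l\le N-1).
\]
The plan is to recognize the left‑hand side as the $N$‑th finite difference of the polynomial $p(x)=\binom{x+m}{l}$ evaluated at $x=0$, and to use the elementary fact that the $N$‑th finite difference annihilates every polynomial of degree $<N$.

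First I would set $\Delta$ to be the forward‑difference operator $(\Delta f)(x)=f(x+1)-f(x)$, and recall the standard formula $(\Delta^N f)(0)=\sum_{j=0}^N\binom{N}{j}(-1)^{N-j}f(j)$, which follows by an immediate induction on $N$ from the binomial theorem (or from writing $\Delta=S-\mathrm{id}$ where $S$ is the shift operator and expanding $(S-\mathrm{id})^N$). Applying this with $f(x)=\binom{x+m}{l}$ turns the sum in the lemma into $(\Delta^N f)(0)$.

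Next I would observe that $f(x)=\binom{x+m}{l}=\dfrac{(x+m)(x+m-1)\cdots(x+m-l+1)}{l!}$ is a polynomial in $x$ of degree exactly $l$. Since $l\le N-1<N$, it remains only to check that $\Delta$ lowers the degree of a nonconstant polynomial by exactly one (so $\Delta^N$ kills anything of degree $<N$): if $p(x)=a_d x^d+\cdots$ with $d\ge 1$, then $p(x+1)-p(x)=d\,a_d x^{d-1}+\cdots$ has degree $d-1$; iterating, $\Delta^l f$ is a nonzero constant and $\Delta^{l+1}f=0$, hence $\Delta^N f=0$ for $N>l$. Evaluating at $0$ gives the claim.

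There is essentially no obstacle here; the only mild subtlety worth a sentence in the write‑up is that $m$ ranges over all of $\Zbb$, so $\binom{m+j}{l}$ is to be read as the polynomial $\frac{1}{l!}\prod_{i=0}^{l-1}(m+j-i)$ (valid for negative arguments too), and the degree‑in‑$j$ count is unaffected by the value or sign of $m$. I would state the finite‑difference lemma as a one‑line sublemma and then dispatch the main identity in two sentences.
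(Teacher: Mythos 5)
Your proof is correct, but it takes a different route from the paper's. You identify the sum as the $N$-th forward difference $(\Delta^N f)(0)$ of the degree-$l$ polynomial $f(x)=\binom{x+m}{l}$ and invoke the fact that $\Delta^N$ annihilates polynomials of degree less than $N$; this is a clean, purely algebraic argument, and your remark about reading $\binom{m+j}{l}$ as the polynomial $\frac{1}{l!}\prod_{i=0}^{l-1}(m+j-i)$ for arbitrary $m\in\Zbb$ correctly disposes of the only subtlety. The paper instead argues via a generating function: it writes $(1+z)^m z^N=(1+z)^m\bigl((1+z)-1\bigr)^N$, expands the right-hand side by the binomial theorem to get $\sum_{j=0}^N\binom{N}{j}(-1)^{N-j}(1+z)^{m+j}=\sum_l\bigl(\sum_j\binom{N}{j}(-1)^{N-j}\binom{m+j}{l}\bigr)z^l$, and observes that the left-hand side, being $z^N$ times a power series, has vanishing coefficients in degrees $l<N$. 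The two proofs are essentially dual presentations of the same cancellation (the shift operator $S$ acting on polynomials versus multiplication by $(1+z)$ acting on series); yours is arguably more elementary and self-contained, while the paper's is a one-line computation in the spirit of the series and residue manipulations used throughout the text.
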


\begin{proof}
The function $f(z)=(1+z)^mz^N$ is holomorphic on $\Dbb_1$, and its power series expansion contains no less-than-$N$ powers of $z$. But we can expand $f(z)$ in the following way:
\begin{align*}
&f(z)=(1+z)^m(-1+1+z)^N=\sum_{j=0}^N(1+z)^m\cdot{N\choose j} (-1)^{N-j}(1+z)^j\\
=&	\sum_{j=0}^N \cdot{N\choose j}(-1)^{N-j}(1+z)^{m+j}=\sum_{j=0}^N\sum_{l\in\Nbb} {N\choose j}(-1)^{N-j}{m+j\choose l}z^l.
\end{align*}
The coefficient before $z^l$ vanishes when $l<N$. This proves our formula.
\end{proof}

\subsection{}\label{lb68}

Let us prove that formal variable implies complex analytic. The method is due to \cite{FHL93}.
\begin{proof}
Assume $(z_1-z_2)^N[A(z_1),B(z_2)]=0$. Choose homogeneous $v\in\Vbb,v'\in\Vbb'$. Let
\begin{align*}
	f(z_1,z_2)=	\bk{v',A(z_1)B(z_2)v},\qquad g(z_1,z_2)=\bk{v',B(z_2)A(z_1)v}
\end{align*}
which are both in $\Cbb[[z_1^{\pm1},z_2^{\pm1}]]$. So is
\begin{align*}
	\phi(z_1,z_2)=(z_1-z_2)^Nf(z_1,z_2)=(z_1-z_2)^Ng(z_1,z_2).	
\end{align*}

Step 1. We claim that $\phi$ is actually in $\Cbb[z_1^{\pm1},z_2^{\pm1}]$. Note that
\begin{align}
f(z_1,z_2)=\sum_{m,n\in\Zbb}\bk{A^\tr_mv',B_nv}z_1^{-m-1}z_2^{-n-1}.\label{eq106}
\end{align}
Since $B_n$ increases the weights by $\Delta_B-n-1$, we have $B_nv=0$ for sufficiently positive $n$. $A_m^\tr$ is the transpose of $A$ sending each $u'\in\Vbb'(k)$ to $u'\circ A_m$. One checks easily that $A_m^\tr$ lowers the weights by $\Delta_A-m-1$. So $A_m^\tr v'$ vanishes for sufficiently negative $m$. Therefore, the coefficients of $f$ vanish the if powers of $z_2$ are sufficiently negative or the powers of $z_1$ are sufficiently positive. The same can be said about $\phi=(z_1-z_2)^Nf$. Similarly, the coefficients of $g$ vanishes when the powers of $z_1$ (resp. $z_2$) are sufficiently negative (resp. positive), and the same can be said about $\phi$. Therefore $\phi$ has finitely many terms: $\phi(z_1,z_2)\in\Cbb[z_1^{\pm1},z_2^{\pm1}]$. In particular, $\phi\in\scr O(\Cbb^\times\times\Cbb^\times)$.

Step 2. From \eqref{eq106}, it is clear that $f(z_1,z_2)$ is in $\Cbb[z_1^{\pm1}]((z_2))\subset\Cbb((z_1))((z_2))$. So $f(z_1,z_2)=(z_1-z_2)^{-N}\phi(z_1,z_2)$ where $(z_1-z_2)^{-N}\in\Cbb((z_1))((z_2))$ is the inverse of $(z_1-z_2)^N$ expanded in $|z_2|<|z_1|$ (cf. Subsec. \ref{lb64}). So the formal Laurent series $f(z_1,z_2)$ converges a.l.u. to the rational function $(z_1-z_2)^{-N}\phi(z_1,z_2)$ on $0<|z_2|<|z_1|$ since the series expansion of $(z_1-z_2)^{-N}\phi(z_1,z_2)$ does. Similarly, $g(z_1,z_2)$ converges a.l.u. on $0<|z_1|<|z_2|$ to $(z_1-z_2)^{-N}\phi(z_1,z_2)$. This finishes the proof.
\end{proof}

\subsection{}\label{lb69}

We now prove that complex analytic implies formal variable. To prepare for the proof, note that for any $k\in\Nbb$, any $m,n\in\Zbb$, and any $R_1,R_2>0$,
\begin{align}
	\oint_{|z_1|=R_1}\oint_{|z_2|=R_2}z_1^mz_2^n\bk{v',A(z_1)P_kB(z_2)v}\frac {dz_1dz_2}{(2\im\pi)^2}=\bk{v',A_mP_kB_nv}.\label{eq101}
\end{align}
Indeed, this is obvious when $\Vbb(k)$ is finite dimensional, in which case $\bk{v',A(z_1)P_kB(z_2)v}=\sum_e \bk{v',A(z_1)e}\bk{\wch e,B(z_2)v}$ where $\{e\}$ is a basis of $\Vbb(k)$ and $\{\wch e\}$ is its dual basis. In the general case, we may first fix $z_2$ and integrate $z_1$ by considering $P_kB(z_2)v$ as a fixed vector in $\Vbb(k)$, and then integrate $z_2$ by considering $\bk{v',A_mP_k \cdot}$ as an element of $\Vbb'(k)=\Vbb(k)^*$.

\begin{proof}
Assume the statements in Def. \ref{lb62} hold. Let $f_{v,v'}\in\scr O(\Conf^2(\Cbb^\times))$ be as in Def. \ref{lb62}. Since $\phi:=(z_1-z_2)^Nf_{v,v'}$ belongs to $\scr O(\Cbb^\times\times\Cbb^\times)$, by complex analysis, for each $m,n\in\Zbb$ the value of
\begin{align*}
\Gamma:=\oint_{|z_1|=R_1}\oint_{|z_2|=R_2}z_1^mz_2^n \phi(z_1,z_2)	\frac{dz_1dz_2}{(2\im\pi)^2}
\end{align*}
is independent of the specific values of $R_1,R_2$. (This is where we use the fact that $\phi$ has no poles at $z_1=z_2$.)

We compute $\Gamma$ in two ways. Assume $R_1>R_2$. Then since $0<|z_2|<|z_1|$, we have
\begin{align*}
\phi(z_1,z_2)=\sum_{k\in\Nbb}(z_1-z_2)^N\bk{v',A(z_1)P_kB(z_2)v}.	
\end{align*}
Thus, using \eqref{eq101}, we can compute
\begin{align*}
&\Gamma=	\oint_{|z_1|=R_1}\oint_{|z_2|=R_2}\sum_{k\in\Nbb} z_1^mz_2^n (z_1-z_2)^N\bk{v',A(z_1)P_kB(z_2)v}	\frac{dz_1dz_2}{(2\im\pi)^2}\\
=&\sum_{k\in\Nbb} \oint_{|z_1|=R_1}\oint_{|z_2|=R_2}\sum_{j=0}^N {N\choose j}z_1^{m+j}z_2^{n+N-j} (-1)^{N-j}\bk{v',A(z_1)P_kB(z_2)v}	\frac{dz_1dz_2}{(2\im\pi)^2}\\
=&\sum_{k\in\Nbb}\sum_{j=0}^N{N\choose j}(-1)^{N-j}\bk{v',A_{m+j}P_kB_{n+N-j}v}=\sum_{j=0}^N{N\choose j}(-1)^{N-j}\bk{v',A_{m+j}B_{n+N-j}v}
\end{align*}
where $\sum_{k\in\Nbb}$ commutes with the two contour integrals thanks to the a.l.u. convergence. Similarly, if we assume $R_1<R_2$, then $\phi(z_1,z_2)=\sum_{k\in\Nbb}(z_1-z_2)^N\bk{v',B(z_2)P_kA(z_1)v}$, and hence
\begin{align*}
	\Gamma=\sum_{j=0}^N{N\choose j}(-1)^{N-j}\bk{v',B_{n+N-j}A_{m+j}v}.
\end{align*}
This shows $\sum_{j=0}^N{N\choose j}(-1)^{N-j}[A_{m+j},B_{n+N-j}]=0$. If we compare this with the first several lines of \eqref{eq102}, we see that this is equivalent to $(z_1-z_2)^N[A(z_1),B(z_2)]=0$ in $\End\Vbb[[z_1^{\pm1},z_2^{\pm1}]]$. 
\end{proof}

\subsection{}\label{lb70}

In this subsection, we assume the statements in Def. \ref{lb62}, and derive the OPE $A(z_1)B(z_2)=\sum_{k\in\Zbb}(z_1-z_2)^{-k-1}(A_kB)(z_2)$ similar to $Y(u,z_1)Y(v,z_2)=\sum_{k\in\Zbb}(z_1-z_2)^{-k-1}Y(Y(u)_kv,z_2)$ for some fields $(A_kB)(z)$. This is simply done by taking Laurent series expansions of $z_1-z_2$ of the function $f_{v,v'}$ in Def. \ref{lb62}. Thus, the existence of OPE simply follows from complex analysis. Since we are treating multivariable holomorphic functions, to be serious about the domain of a.l.u. convergence, we provide some details below.

\begin{df}
For each $k\in\Zbb$ and $z\in\Cbb^\times$, let $f_{v,v'}\in\scr O(\Conf^2(\Cbb^\times))$ be as in Def. \ref{lb62}. We define the linear map \index{AB@$(A_kB)(z)$}
\begin{align*}
(A_kB)(z):\Vbb'\otimes \Vbb\rightarrow\Cbb,\qquad v'\otimes v\mapsto\bk{v',(A_kB)(z)v}	
\end{align*}
to be
\begin{align}
\bk{v',(A_kB)(z_2)v}=\oint_{C(z_2)}	(z_1-z_2)^kf_{v,v'}(z_1,z_2)\frac{dz_1}{2\im\pi}\label{eq105}
\end{align}
where $C(z_2)$ is any circle in $\Cbb^\times$ surrounding $z_2$. Note that $(A_kB)(z)v$ is naturally an element of $(\Vbb')^*=\prod_{n\in\Nbb}\Vbb(n)^{**}$, the (algebraic) dual space of $\Vbb'$. Also, $\bigbk{v',(A_kB)(z_2)v}$ is clearly a holomorphic function of $z_2$ on $\Cbb^\times$.
\end{df}	

\begin{lm}
$A_kB=0$ whenever $k\geq N$.
\end{lm}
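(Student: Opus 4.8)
The plan is to use the one feature of Definition \ref{lb62} that directly controls the diagonal $z_1=z_2$: there is an integer $N$, independent of $v,v'$, such that $\phi_{v,v'}:=(z_1-z_2)^N f_{v,v'}(z_1,z_2)$ extends to an element of $\scr O(\Cbb^\times\times\Cbb^\times)$. Fix $v\in\Vbb$, $v'\in\Vbb'$, and $z_2\in\Cbb^\times$. For $k\geq N$ we may write
\begin{align*}
(z_1-z_2)^k f_{v,v'}(z_1,z_2)=(z_1-z_2)^{k-N}\,\phi_{v,v'}(z_1,z_2),
\end{align*}
and since $k-N\geq 0$ the factor $(z_1-z_2)^{k-N}$ is a genuine polynomial in $z_1$. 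Hence, as a function of $z_1$ alone, the integrand appearing in \eqref{eq105} is holomorphic on all of $\Cbb^\times$; in particular it has no pole at $z_1=z_2$.

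I would then evaluate \eqref{eq105} using a circle $C(z_2)$ small enough that the closed disk it bounds is contained in $\Cbb^\times$ (possible since $z_2\neq 0$); by the independence of the value on the choice of contour this is harmless. On that disk the integrand $z_1\mapsto(z_1-z_2)^{k-N}\phi_{v,v'}(z_1,z_2)$ is holomorphic, so Cauchy's theorem gives
\begin{align*}
\bk{v',(A_kB)(z_2)v}=\oint_{C(z_2)}(z_1-z_2)^k f_{v,v'}(z_1,z_2)\,\frac{dz_1}{2\im\pi}=0.
\end{align*}

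Since this holds for every $v\in\Vbb$, $v'\in\Vbb'$, and $z_2\in\Cbb^\times$, and since $(A_kB)(z_2)v\in(\Vbb')^*$ is determined by its pairings with all $v'\in\Vbb'$, we conclude $(A_kB)(z_2)=0$ for all $z_2$, hence $A_kB=0$ whenever $k\geq N$. I do not expect a genuine obstacle here: the statement is essentially the observation that clearing the factor $(z_1-z_2)^N$ removes the diagonal pole, so any further nonnegative power of $(z_1-z_2)$ integrates to zero around $z_2$. The only points worth stating carefully are that $(z_1-z_2)^{k-N}$ must be read as a polynomial (rather than as one of its two Laurent expansions), which is exactly what $k\geq N$ buys, and that the contour may be taken to avoid the origin so that Cauchy's theorem applies on the disk it bounds.
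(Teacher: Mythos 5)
Your proof is correct and is essentially the paper's own argument: the paper simply notes that for $k\geq N$ the function $(z_1-z_2)^kf_{v,v'}$ has no pole at $z_1=z_2$, so the residue in \eqref{eq105} vanishes. Your elaboration — writing the integrand as $(z_1-z_2)^{k-N}\phi_{v,v'}$ with $\phi_{v,v'}\in\scr O(\Cbb^\times\times\Cbb^\times)$ and shrinking the contour so Cauchy's theorem applies on a disk avoiding the origin — just makes the same one-line observation fully explicit.
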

\begin{proof}
When $k\geq N$, $(z_1-z_2)^kf_{v,v'}$ has no poles at $z_1=z_2$. So the RHS of \eqref{eq105} vanishes.
\end{proof}

\begin{pp}\label{lb73}
For each $v\in\Vbb,v'\in\Vbb'$, we have
\begin{align}
f_{v,v'}(z_1,z_2)=	\sum_{k\in\Zbb}(z_1-z_2)^{-k-1}\bigbk{v',(A_kB)(z_2)v}\label{eq112}
\end{align}
where the series on the RHS converges a.l.u. on $\Omega_0=\{(z_1,z_2):0<|z_1-z_2|<|z_2|\}$ to the LHS.
\end{pp}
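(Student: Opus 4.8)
The plan is to recognize the right-hand side of \eqref{eq112} as nothing but the Laurent expansion of the holomorphic function $f_{v,v'}$ in the variable $z_1-z_2$, and then to extract an a.l.u.\ convergence statement from ordinary one-variable complex analysis applied fiberwise in $z_2$. First I would fix a homogeneous pair $v\in\Vbb,v'\in\Vbb'$ (the general case follows by linearity, as both sides are linear in $v,v'$ and finite linear combinations of a.l.u.\ convergent series converge a.l.u.). By Def. \ref{lb62}, $f_{v,v'}\in\scr O(\Conf^2(\Cbb^\times))$ and $(z_1-z_2)^Nf_{v,v'}$ extends holomorphically across $z_1=z_2$; in particular, for each fixed $z_2\in\Cbb^\times$, the function $z_1\mapsto f_{v,v'}(z_1,z_2)$ is holomorphic on the punctured disk $\{z_1:0<|z_1-z_2|<|z_2|\}$ (this disk lies in $\Cbb^\times\setminus\{z_2\}$ because its closure avoids $0$), with a pole of order at most $N$ at $z_1=z_2$. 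Hence it has a Laurent expansion $\sum_{k\in\Zbb}c_k(z_2)(z_1-z_2)^{-k-1}$ convergent on that annulus, where $c_k(z_2)=0$ for $k\geq N$, and the coefficient is exactly the contour integral $c_k(z_2)=\oint_{C(z_2)}(z_1-z_2)^k f_{v,v'}(z_1,z_2)\,\frac{dz_1}{2\im\pi}=\bigbk{v',(A_kB)(z_2)v}$ by the very definition \eqref{eq105}. This already gives \eqref{eq112} as a pointwise identity in $z_1$ for each fixed $z_2$, together with convergence on each slice.

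**Upgrading to a.l.u.\ convergence on $\Omega_0$.** The remaining task is to make the convergence locally uniform jointly in $(z_1,z_2)$. I would invoke the standard two-variable version of Cauchy's estimates: given a compact $K\subset\Omega_0$, cover it by finitely many bidisks of the form $\{|z_2-b|<\rho\}\times\{r_1<|z_1-z_2|<r_2\}$ with closures still inside $\Omega_0$, and it suffices to bound $\sum_k|c_k(z_2)(z_1-z_2)^{-k-1}|$ uniformly on each such bidisk. On the compact set $\{|z_2-b|\le\rho\}\times\{|z_1-z_2|=r_0\}$ (for a suitable intermediate radius $r_0$ with the circle $C(z_2)$ contained in the allowed annulus for every $z_2$ in the slab) the function $f_{v,v'}$ is continuous hence bounded by some $M$; then $|c_k(z_2)|=\big|\oint_{|z_1-z_2|=r_0}(z_1-z_2)^kf_{v,v'}\frac{dz_1}{2\im\pi}\big|\le M r_0^{k+1}$ for all $k\le N-1$, uniformly in $z_2$. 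Since $c_k=0$ for $k\ge N$, the tail $\sum_{k<N}|c_k(z_2)||z_1-z_2|^{-k-1}\le M\sum_{k<N}(r_0/|z_1-z_2|)^{k+1}$ is dominated, for $|z_1-z_2|>r_0$, by a convergent geometric series with ratio bounded away from $1$ on the bidisk (shrink the bidisk so that $|z_1-z_2|\ge r_2'>r_0$), giving a uniform bound. This is precisely the a.l.u.\ convergence on $\Omega_0$; because the summands are holomorphic, the limit is holomorphic, and it agrees with $f_{v,v'}$ by the slicewise identity already established.

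**Anticipated obstacle.** The conceptual content is routine one-variable complex analysis, so the only real care is bookkeeping about domains: one must check that for every $(z_1,z_2)$ in a neighborhood of a given point of $\Omega_0$ there is a single circle $C(z_2)$ that simultaneously (i) surrounds $z_2$, (ii) lies in $\Cbb^\times$, and (iii) has $z_1$ outside it, so that the geometric-series estimate applies and the contour integral defining $c_k(z_2)$ is radius-independent. The condition $|z_1-z_2|<|z_2|$ in the definition of $\Omega_0$ is exactly what guarantees such an intermediate radius $r_0$ exists (any $r_0$ with $|z_1-z_2|>r_0$ and $r_0<\mathrm{dist}(z_2,0)=|z_2|$ works, and both inequalities can be met on a small enough bidisk). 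Thus the slightly delicate step is the uniform choice of $r_0$ over a neighborhood, but this is immediate by continuity once one writes the two inequalities out. No deeper input than Cauchy's integral formula and the Laurent expansion is needed; Def. \ref{lb62} has already done the hard analytic work of producing $f_{v,v'}$ with a pole of bounded order at $z_1=z_2$.
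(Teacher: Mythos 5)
Your overall strategy is exactly the paper's: identify the right-hand side of \eqref{eq112} as the Laurent expansion of $f_{v,v'}$ in the variable $z_1-z_2$, match the coefficients with \eqref{eq105}, and then get local uniformity from Cauchy estimates on the contour integrals. (The paper packages the second step into the general expansion Lemma \ref{lb76}, applied on the regions $\{0<|z_1-z_2|<r,\ r<|z_2|\}$ whose union is $\Omega_0$; you reprove that lemma's content by hand, which is fine.)

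However, the key estimate is carried out with the inequality in the wrong direction, and as written the step fails. You require the contour radius $r_0$ to satisfy $|z_1-z_2|>r_0$ (``$z_1$ outside $C(z_2)$''), and then claim that $M\sum_{k<N}(r_0/|z_1-z_2|)^{k+1}$ is a convergent geometric series. It is not: the index $k$ runs down to $-\infty$, so $k+1$ runs over all integers $\le N$, and the terms $(r_0/|z_1-z_2|)^{k+1}=(|z_1-z_2|/r_0)^{-(k+1)}$ blow up as $k\to-\infty$ precisely when $|z_1-z_2|/r_0>1$. The terms with $k\to-\infty$ are the \emph{regular} part of the Laurent series (arbitrarily high nonnegative powers of $z_1-z_2$), and a power series centered at $z_2$ whose coefficients are estimated on the circle of radius $r_0$ converges only \emph{inside} that circle. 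So you need $z_1$ strictly inside $C(z_2)$: choose $r_0$ with $|z_1-z_2|<r_0<|z_2|$ (the second inequality keeps $C(z_2)$ inside $\Cbb^\times\setminus\{0\}$, where $f_{v,v'}(\cdot,z_2)$ is holomorphic away from $z_2$). Such an $r_0$ exists exactly because $(z_1,z_2)\in\Omega_0$, and it can be chosen uniformly on each of your small bidisks; the finitely many pole terms $0\le k\le N-1$ are then bounded separately using that $|z_1-z_2|$ is bounded below on a compact set. With the inequalities flipped in this way (and item (iii) of your ``anticipated obstacle'' corrected from ``$z_1$ outside'' to ``$z_1$ inside''), your argument is correct and coincides with the paper's proof.
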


\begin{proof}
It suffices to prove the claim on $\{(z_1,z_2):0<|z_1-z_2|<r,r<|z_2|\}$ for all $r>0$. Then this follows easily from the following basic lemma.
\end{proof}

\begin{lm}\label{lb76}
Let $U$ be an open subset of $\Cbb^m$ and let $f=f(z_1,\dots,z_m,q_1,\dots,q_n)$ be a holomorphic function on $U\times A_{r_1,R_1}\times\cdots\times A_{r_n,R_n}$ where each $0\leq r_i<R_i\leq+\infty$ and $A_{r_i,R_i}=\{q_i\in\Cbb:r_i<|q_i|<R_i\}$. Then $f$ has Laurent series expansion
	\begin{align}
		f(z_\blt,q_\blt)=\sum_{k_1,\dots,k_n\in\Zbb}f_{k_\blt}(z_\blt)q_1^{-k_1-1}\cdots q_n^{-k_n-1}	
	\end{align}
converging a.l.u. on $U\times A_{r_1,R_1}\times\cdots\times A_{r_n,R_n}$, where each 
\begin{align}
f_{k_\blt}(z_\blt)=\oint_{C_n}\cdots \oint_{C_1}f(z_\blt,q_\blt)q_1^{k_1}\cdots q_n^{k_n}\frac{dq_1\cdots dq_n}	{(2\im\pi)^n}\label{eq104}
\end{align}
(where $C_j$ is an anticlockwise circle around $0$) is clearly holomorphic on $U$.
\end{lm}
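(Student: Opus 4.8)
\textbf{Proof proposal for Lemma \ref{lb76}.}

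The plan is to reduce the multivariable statement to the single-variable Laurent expansion theorem by induction on $n$, treating each annular variable one at a time, and then to package the iterated expansion into a genuinely multi-indexed a.l.u.-convergent series using the uniform Cauchy estimates that come from local compactness. First I would recall the classical single-variable fact: if $g$ is holomorphic on $V\times A_{r,R}$ (with $V$ open in some $\Cbb^N$), then $g(w,q)=\sum_{k\in\Zbb}g_k(w)q^{-k-1}$ with $g_k(w)=\oint_{|q|=\rho}g(w,q)q^k\,dq/(2\im\pi)$ (independent of $\rho\in(r,R)$ by Cauchy's theorem), each $g_k$ holomorphic on $V$, and the convergence absolute and locally uniform on $V\times A_{r,R}$. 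The a.l.u. statement here is the standard consequence of the Cauchy estimate $|g_k(w)|\leq \rho^k\sup_{|q|=\rho}|g(w,q)|$: on a compact $K\subset V$ and a compact annular band $r<\rho_-\leq|q|\leq\rho_+<R$, choosing $\rho$ slightly outside $[\rho_-,\rho_+]$ on the appropriate side for positive versus negative $k$ yields a geometric bound $\sum_k\sup_K|g_k(w)q^{-k-1}|<\infty$.

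Next I would run the induction. For $n=1$ this is exactly the statement just recalled, with $U$ playing the role of the auxiliary open set and $q_1$ the annular variable. For the inductive step, suppose the result holds for $n-1$ annular variables. Apply the single-variable expansion in the variable $q_n$ with auxiliary open set $U\times A_{r_1,R_1}\times\cdots\times A_{r_{n-1},R_{n-1}}$: this gives
\begin{align*}
f(z_\blt,q_\blt)=\sum_{k_n\in\Zbb}h_{k_n}(z_\blt,q_1,\dots,q_{n-1})q_n^{-k_n-1},
\end{align*}
where $h_{k_n}(z_\blt,q_1,\dots,q_{n-1})=\oint_{C_n}f(z_\blt,q_\blt)q_n^{k_n}\,dq_n/(2\im\pi)$ is holomorphic on $U\times A_{r_1,R_1}\times\cdots\times A_{r_{n-1},R_{n-1}}$, and the series converges a.l.u. there. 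Now apply the inductive hypothesis to each $h_{k_n}$ in the remaining $n-1$ annular variables, obtaining $h_{k_n}(z_\blt,q_1,\dots,q_{n-1})=\sum_{k_1,\dots,k_{n-1}}f_{k_\blt}(z_\blt)q_1^{-k_1-1}\cdots q_{n-1}^{-k_{n-1}-1}$; substituting the iterated contour-integral formula and applying Fubini for contour integrals (all integrands continuous, contours compact) recovers the claimed formula \eqref{eq104} for $f_{k_\blt}$, and each $f_{k_\blt}$ is holomorphic on $U$.

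The one genuine subtlety — and the step I expect to be the main obstacle — is upgrading the \emph{iterated} a.l.u. convergence (first in $q_n$, then in the rest) to \emph{joint} a.l.u. convergence of the single $\Zbb^n$-indexed series, since an iterated estimate does not a priori control the joint tail. The fix is to go back to the Cauchy integral formula directly: fix a compact $K\subset U$ and compact bands $\rho_j^-\leq|q_j|\leq\rho_j^+$ inside $A_{r_j,R_j}$, and estimate $|f_{k_\blt}(z_\blt)|$ by deforming each contour $C_j$ to a circle $|q_j|=s_j$ with $s_j\in(r_j,R_j)$ chosen on the correct side of $[\rho_j^-,\rho_j^+]$ according to the sign of $k_j$ (outside if $k_j\geq 0$, inside if $k_j<0$); this gives $|f_{k_\blt}(z_\blt)|\leq\big(\prod_j s_j^{k_j}\big)\sup\{|f|:z_\blt\in K,\ |q_j|=s_j\}$, and one must note the sup is finite because $K\times\prod_j\{|q_j|=s_j\}$ is a compact subset of the domain of $f$. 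Summing $\sup_{K\times\text{bands}}|f_{k_\blt}(z_\blt)q_1^{-k_1-1}\cdots q_n^{-k_n-1}|$ then factors as a product of $n$ convergent geometric series (one per coordinate, each handled by the $k_j\geq 0$ and $k_j<0$ halves separately), giving the joint bound and hence joint a.l.u. convergence on $U\times\prod_j A_{r_j,R_j}$. Finally, since the joint series and the iterated series have the same partial sums along rectangular exhaustions and both converge, they agree with $f$, completing the proof.
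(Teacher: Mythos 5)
Your strategy is sound and, on the multivariable point, actually more careful than what the paper does: the paper's proof treats only $n=1$ ``for simplicity,'' obtaining a.l.u.\ convergence by writing $f$ via the Cauchy formula on two circles $C_\pm$, expanding the kernel $\frac{1}{p-q}$ as a geometric series in $q/p$ (on $C_+$) and $p/q$ (on $C_-$), and integrating term by term. You instead estimate the coefficients $f_{k_\blt}$ directly by deforming contours and then sum a product of geometric series; this is exactly the right way to get the \emph{joint} a.l.u.\ convergence of the $\Zbb^n$-indexed series, a point the paper's reduction to $n=1$ silently skips. Your identification of the iterated-versus-joint convergence issue as the real content of the multivariable case is correct, and your resolution works.

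One concrete slip: you have the contour deformation on the wrong side in each case. With the paper's convention $f=\sum f_{k_\blt}q_1^{-k_1-1}\cdots q_n^{-k_n-1}$ and $f_{k_\blt}=\oint\cdots\oint f\,q_1^{k_1}\cdots q_n^{k_n}\,\frac{dq_1\cdots dq_n}{(2\im\pi)^n}$, putting $C_j$ on $|q_j|=s_j$ gives
\begin{align*}
\big|f_{k_\blt}(z_\blt)\,q_1^{-k_1-1}\cdots q_n^{-k_n-1}\big|\;\leq\;\Big(\sup_{K\times\prod_j\{|q_j|=s_j\}}|f|\Big)\prod_{j}\Big(\frac{s_j}{|q_j|}\Big)^{k_j+1},
\end{align*}
so for $k_j\geq 0$ you need $s_j<|q_j|$, i.e.\ the circle must be moved \emph{inward} (toward $r_j$, below the band $[\rho_j^-,\rho_j^+]$), and for $k_j<0$ you need $s_j>|q_j|$, i.e.\ \emph{outward} toward $R_j$ --- the opposite of what you wrote. (Your base-case estimate $|g_k(w)|\leq\rho^k\sup|g|$ is likewise off by one factor of $\rho$; it should be $\rho^{k+1}$, which is harmless.) With the sides corrected the geometric series converge and the argument goes through.
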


\begin{proof}
For simplicity, we assume $n=1$ and write $q_1=q,r_1=r,R_1=R$. We shall prove the a.l.u. convergence on $(z_\blt,q)\in U\times A_{\wtd r,\wtd R}$ for all $\wtd r,\wtd R$ such that $r<\wtd r<\wtd R<R$.  Let $C_-=\{q\in\Cbb:|q|=(r+\wtd r)/2\}$ and $C_+=\{q\in\Cbb:|q|=(R+\wtd R)/2\}$. Then on $U\times A_{\wtd r,\wtd R}$,
\begin{align*}
f(z_\blt,q)=\Res_{p=q}~\frac{f(z_\blt,p)}{p-q}dp=\bigg(\oint_{C_+}-\oint_{C-}\bigg)\frac{f(z_\blt,p)}{p-q}\frac{dp}{2\im\pi}.
\end{align*}
We have $\frac{f(z_\blt,p)}{p-q}=\sum_{k\leq-1}q^{-k-1}p^kf(z_\blt,p)$ where the RHS converges on $(z_\blt,q,p)\in (U\times A_{\wtd r,\wtd R}\times C_+)$ to the LHS by basic analysis. The same can be said about $\frac{f(z_\blt,p)}{p-q}=-\sum_{k\geq0}q^{-k-1}p^kf(z_\blt,p)$ if $C_+$ is replaced by $C_-$. So in view of \eqref{eq104}, and noting that integrals commute with infinite sums due to a.l.u. convergence,  the RHS of $\oint_{C_+}\frac{f(z_\blt,p)}{p-q}\frac{dp}{2\im\pi}=\sum_{k\leq-1}f_k(z_\blt)q^{-k-1}$ (resp. $\oint_{C_-}\frac{f(z_\blt,p)}{p-q}\frac{dp}{2\im\pi}=-\sum_{k\geq0}f_k(z_\blt)q^{-k-1}$) converges a.l.u. on $U\times A_{\wtd r,\wtd R}$ to the RHS. This completes the proof.
\end{proof}

\subsection{}\label{lb72}

We continue our discussion in the previous section. Let $(A_nB)_k:\Vbb'\otimes\Vbb\rightarrow\Cbb$ such that
\begin{align*}
\bk{v',(A_nB)_kv}=\Res_{z=0}\bk{v',(A_nB)(z)v}z^kdz.	
\end{align*}
In other words, $(A_nB)_k$ is a linear map $\Vbb\rightarrow(\Vbb')^*=\prod_{n\in\Nbb}\Vbb(n)^{**}$.

\begin{pp}\label{lb71}
Assume that $A(z),B(z)$ satisfy Def. \ref{lb62}. Then the following Jacobi identity holds:
\begin{align}\label{eq107}
\begin{aligned}
&\sum_{l\in\Nbb}{m\choose l}(A_{n+l}B)_{m+k-l}\\
=&\sum_{l\in\Nbb}(-1)^l{n\choose l}A_{m+n-l}B_{k+l}-\sum_{l\in\Nbb}(-1)^{n+l}{n\choose l}B_{n+k-l} A_{m+l}.
\end{aligned}
\end{align}
\end{pp}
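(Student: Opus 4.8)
The plan is to mimic, in the complex analytic setting, the derivation of the algebraic Jacobi identity from the analytic one that was carried out in Subsec.~\ref{lb38}, using the residue theorem on $\mathbb P^1$. Fix homogeneous $v\in\Vbb$ and $v'\in\Vbb'$, and let $f_{v,v'}\in\scr O(\Conf^2(\Cbb^\times))$ be the common analytic continuation provided by Def.~\ref{lb62}, so that $\phi:=(z_1-z_2)^N f_{v,v'}\in\scr O(\Cbb^\times\times\Cbb^\times)$. The three regions of interest are $\Omega_1=\{0<|z_2|<|z_1|\}$, $\Omega_2=\{0<|z_1|<|z_2|\}$, and $\Omega_0=\{0<|z_1-z_2|<|z_2|\}$; on these $f_{v,v'}$ is represented a.l.u.\ respectively by $\bk{v',A(z_1)B(z_2)v}$, by $\bk{v',B(z_2)A(z_1)v}$, and by the OPE series $\sum_k(z_1-z_2)^{-k-1}\bk{v',(A_kB)(z_2)v}$ of Prop.~\ref{lb73}.

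First I would fix $z_2\in\Cbb^\times$ and regard $f_{v,v'}(\cdot,z_2)$ as a meromorphic function of $z_1$ on $\Pbb^1$ with poles only at $0$, $z_2$, $\infty$ (the pole orders are finite: at $z_1=z_2$ of order $\le N$ by Def.~\ref{lb62}, at $0$ and $\infty$ by the lower truncation/weight-shift bounds as in Step 1 of Subsec.~\ref{lb68}). For any $m,n\in\Zbb$ apply the residue theorem to the $1$-form $z_1^m(z_1-z_2)^n f_{v,v'}(z_1,z_2)\,dz_1$: the sum of the residues at $z_1=0$, $z_1=z_2$, $z_1=\infty$ vanishes. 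Equivalently, with $C_+$ a circle about $0$ of radius $>|z_2|$, $C_-$ one of radius $<|z_2|$, and $C_0$ a small circle about $z_2$,
\begin{align*}
\oint_{C_+}\frac{z_1^m(z_1-z_2)^n f_{v,v'}\,dz_1}{2\im\pi}-\oint_{C_-}\frac{z_1^m(z_1-z_2)^n f_{v,v'}\,dz_1}{2\im\pi}=\oint_{C_0}\frac{z_1^m(z_1-z_2)^n f_{v,v'}\,dz_1}{2\im\pi}.
\end{align*}

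Next I would evaluate the three contour integrals by substituting the appropriate a.l.u.\ expansions and integrating term by term (legitimate by a.l.u.\ convergence, exactly as in Subsec.~\ref{lb38} and in the proofs \ref{lb69}, \ref{lb72}). On $C_+$ expand $(z_1-z_2)^n=\sum_l\binom nl(-z_2)^l z_1^{n-l}$ and use $\bk{v',A(z_1)B(z_2)v}$ together with $\oint Y\text{-type modes}$; on $C_-$ expand the same binomial but use $\bk{v',B(z_2)A(z_1)v}$ with the roles of $z_1$-powers reversed; on $C_0$ expand $z_1^m=(z_2+(z_1-z_2))^m=\sum_l\binom ml z_2^{m-l}(z_1-z_2)^l$ and use the OPE series, picking out $\oint_{C_0}(z_1-z_2)^{n+l}\sum_k(z_1-z_2)^{-k-1}\bk{v',(A_kB)(z_2)v}\,dz_1/2\im\pi=\bk{v',(A_{n+l}B)(z_2)v}$. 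This yields, after relabeling, an identity of holomorphic functions of $z_2$ on $\Cbb^\times$:
\begin{align*}
\sum_{l\in\Nbb}\binom ml z_2^{m-l}\bk{v',(A_{n+l}B)(z_2)v}=\sum_{l\in\Nbb}(-1)^l\binom nl\bk{v',A_{m+n-l}B(z_2)_{?}v}-\cdots,
\end{align*}
more precisely the $z_2$-dependent analogue of \eqref{eq48}–\eqref{eq60}. Finally, multiply by $z_2^k\,dz_2/2\im\pi$ and take $\Res_{z_2=0}$; using $\oint (A_nB)(z_2)z_2^j\,dz_2/2\im\pi=(A_nB)_j$ and $\oint A(z_1)z_1^j\,dz_1/2\im\pi=A_j$ etc., and the fact that all sums over $l$ are finite on each fixed vector (lower truncation), we obtain \eqref{eq107} as a relation of maps $\Vbb\to(\Vbb')^*$. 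Since $v,v'$ were arbitrary homogeneous vectors and both sides are linear, this gives the full statement.

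The main obstacle I anticipate is bookkeeping rather than conceptual: one must be careful that $f_{v,v'}(\cdot,z_2)$ genuinely extends to a \emph{meromorphic} function on $\Pbb^1$ with controlled pole orders at $0$ and $\infty$ (not merely holomorphic on $\Cbb^\times$), so that the residue theorem applies and the contour deformations $C_+\rightsquigarrow C_-,C_0$ are valid; this uses the homogeneity of $A,B$ and the weight-shift bounds to see that $\bk{v',A(z_1)B(z_2)v}$ is, for fixed homogeneous $v,v'$, a \emph{finite} sum of monomials times the scalars $\bk{v',A_m B_n v}$, hence a Laurent polynomial in each region after multiplying by $(z_1-z_2)^N$. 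The other point requiring care is the justification that term-by-term contour integration commutes with the infinite sums $\sum_{n\in\Nbb}P_n$ and with the OPE sum; this is precisely the content of the a.l.u.\ convergence statements in Def.~\ref{lb62} and Prop.~\ref{lb73}, which may be invoked directly. Once these are in place the computation is the same residue-theorem manipulation already rehearsed in Subsec.~\ref{lb38}.
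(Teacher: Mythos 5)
Your proposal is correct and follows essentially the same route as the paper: fix $z_2$, integrate $z_1^m(z_1-z_2)^n f_{v,v'}\,dz_1$ over the contours $C_+,C_-,C_0$, substitute the a.l.u.\ expansions from Def.~\ref{lb62} and Prop.~\ref{lb73} to exchange sums and integrals, and finally apply $\Res_{z_2=0}(\cdot)z_2^k\,dz_2$. The only remark worth making is that the contour identity needs just holomorphy of $f_{v,v'}$ on the region between the circles (which Def.~\ref{lb62} gives directly), so the meromorphy of $f_{v,v'}(\cdot,z_2)$ at $0$ and $\infty$ that you flag as a potential obstacle is not actually required.
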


\begin{rem}
There are two immediate consequences of this proposition. First, by setting $m=0$, we get a formula to express $(A_nB)_k$ in terms of the modes of $A(z)$ and $B(z)$. From that expression, one easily checks that $(A_nB)_k$ sends each $\Vbb(a)$ to $\Vbb(b)$ where $b=a+\Delta_A+\Delta_B-n-k-2$. This shows that $(A_nB)_k$ is a linear operator on $\Vbb$, and that $(A_nB)(z)$ is a homogeneous field with weight $\Delta_A+\Delta_B-n-1$. Second, by setting $n=0$, we see that $A(z)$ is local to $B(z)$ in the Lie algebraic sense.
\end{rem}

\begin{proof}[Proof of Prop. \ref{lb71}]
The idea is the same as the proof of VOA Jacobi identity in Subsec. \ref{lb38}. (Note that roles of $z_1,z_2$ in Subsec. \ref{lb38} are switched here.) For each $z_2\in\Cbb^\times$, we choose a large circle $C_+$ and a small one $C_-$ centered at $0$, and a small one $C_0$ centered at $z_2$. Choose $\mu=z_1^m(z_1-z_2)^ndz_1$. Set $f=f_{v,v'}$. Then 
\begin{align}\label{eq108}
	\oint_{C_+}\frac{f\mu}{2\im\pi} -\oint_{C_-}\frac{f\mu}{2\im\pi}=\oint_{C_0} \frac{f\mu}{2\im\pi}.	
\end{align}
When $z_1$ is on $C_+$, $f$ takes the form \eqref{eq90}. Moreover, the RHS of
\begin{align*}
z_1^m(z_1-z_2)^ndz_1f(z_1,z_2)=\sum_{l,s\in\Nbb}{n\choose l}(-z_2)^lz_1^{m+n-l}\bk{v',A(z_1)P_sB(z_2)v}	
\end{align*}
converges a.l.u. on $0<|z_2|<|z_1|$ to the LHS. So
\begin{align*}
&\oint_{C_+}\frac{f\mu}{2\im\pi}=\oint_{C_+}\sum_{l,s\in\Nbb}{n\choose l}(-z_2)^lz_1^{m+n-l}\bk{v',A(z_1)P_sB(z_2)v}\frac{dz_1}{2\im\pi}\\
=&\sum_{l,s\in\Nbb}{n\choose l}\oint_{C_+}(-z_2)^lz_1^{m+n-l}\bk{v',A(z_1)P_sB(z_2)v}\frac{dz_1}{2\im\pi}\\
=&\sum_{l,s\in\Nbb}{n\choose l}(-z_2)^l\bk{(A_{m+n-l})^\tr v',P_sB(z_2)v}\frac{dz_1}{2\im\pi}
\end{align*}
where the contour integral commutes with the infinite sum due to the a.l.u. convergence; $(A_{m+n-l})^\tr$ is the transpose of $A_{m+n-l}$, sending $v'$ to a vector of $\Vbb(s)$ where $s=\wt v'-\Delta_A+m+n-l+1$. So when $s$ is not this weight, the above summand vanishes. We can thus write the above expression as
\begin{align*}
\sum_{l\in\Nbb}{n\choose l}(-z_2)^l\bk{(A_{m+n-l})^\tr v',B(z_2)v}\frac{dz_1}{2\im\pi}.
\end{align*}

The integral on $C_-$ can be treated in a similar way. And by Prop. \ref{lb73},
\begin{align*}
\oint_{C_0}\frac{f\mu}{2\im\pi}=\oint_{C_0}	\sum_{l,s\in\Nbb}{m\choose l}z_2^{m-l}(z_1-z_2)^{n+l}\cdot (z_1-z_2)^{-s-1}\bk{v',(A_sB)(z_2)v}\frac{dz_1}{2\im\pi}
\end{align*}
where series inside the integrand converge a.l.u. on $0<|z_1-z_2|<|z_2|$. So we can exchange the integral and the sum to compute the result
\begin{align*}
\sum_{l\in\Nbb}{m\choose l}z_2^{m-l}\bk{v',(A_{n+l}B)(z_2)v}.
\end{align*}
This computes \eqref{eq108}. Now all three terms are clearly holomorphic functions of $z_2$ on $\Cbb^\times$. Multiply them by $z_2^kdz_2$ and evaluate the residue at $z_2=0$, we get \eqref{eq107}.
\end{proof}

\subsection{}

We are now ready to prove the equivalence of the complex analytic version and the algebraic version of Jacobi identity.

\begin{df}[\textbf{Jacobi identity} (complex-analytic version)]\label{lb194} \index{00@Jacobi identity, complex-analytic version}
For each $u,v,w\in\Vbb$ and $w'\in\Vbb$, the following series
\begin{subequations}
\begin{gather}
\bigbk{w',Y(u,z_1)Y(v,z_2)w}:=\sum_{n\in\Nbb}\bigbk{w',Y(u,z_1)P_nY(v,z_2)w},\label{eq109}\\
\bigbk{w',Y(v,z_2)Y(u,z_1)w}:=\sum_{n\in\Nbb}\bigbk{w',Y(v,z_2)P_nY(u,z_1)w},\label{eq110}\\
\bigbk{w',Y\big(Y(u,z_1-z_2)v,z_2\big)w}:=\sum_{n\in\Nbb}\bigbk{w',Y\big(P_nY(u,z_1-z_2)v,z_2\big)}\label{eq111}
\end{gather}	
\end{subequations}
converge a.l.u. respectively on
\begin{subequations}
\begin{gather}
\{(z_1,z_2)\in\Cbb^2:0<|z_2|<|z_1|\},\\
\{(z_1,z_2)\in\Cbb^2:0<|z_2|<|z_2|\},\\
\{(z_1,z_2)\in\Cbb^2:0<|z_1-z_2|<|z_2|\}
\end{gather}	
\end{subequations}
and can be extended to the same holomorphic function $f_{w,u,v,w'}$ on $\Conf^2(\Cbb^\times)$.
\end{df}

\begin{thm}
The complex analytic and the algebraic versions of Jacobi identity are equivalent.
\end{thm}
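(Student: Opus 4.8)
The statement asserts the equivalence of the complex-analytic Jacobi identity (Def.~\ref{lb194}) with the algebraic one (Def.~\ref{lb36}). My plan is to reduce this to the equivalence of the three notions of locality that was established over the course of Subsections~\ref{lb67}--\ref{lb72}, applied not to a single pair of fields but to the three-field configuration $Y(u,z_1)$, $Y(v,z_2)$, and the ``iterated'' field $Y\big(Y(u,z_1-z_2)v,z_2\big)$. The key observation is that Def.~\ref{lb194} is precisely the assertion that $A(z_1)=Y(u,z_1)$ and $B(z_2)=Y(v,z_2)$ are local in the complex-analytic sense of Def.~\ref{lb62}, \emph{together with} the identification of the associativity expansion \eqref{eq111} as the OPE of $A$ and $B$ in the sense of Prop.~\ref{lb73}. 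So the real content is: (i) complex-analytic Jacobi identity $\Rightarrow$ algebraic Jacobi identity, and (ii) algebraic Jacobi identity $\Rightarrow$ complex-analytic Jacobi identity.

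\textbf{Direction (i): complex-analytic $\Rightarrow$ algebraic.} Assume Def.~\ref{lb194}. Then in particular $A(z_1)=Y(u,z_1)$ and $B(z_2)=Y(v,z_2)$ satisfy Def.~\ref{lb62} (the function $f_{w,u,v,w'}$ plays the role of $f_{v,v'}$, with the lower-truncation property \eqref{eq46} supplying the bound $N$ on the pole order at $z_1=z_2$). By Subsec.~\ref{lb70}, the OPE fields $(A_kB)(z)$ exist and satisfy \eqref{eq112}; by the uniqueness of Laurent expansions, comparing \eqref{eq112} with the hypothesis that \eqref{eq111} converges to the same $f_{w,u,v,w'}$ on $\{0<|z_1-z_2|<|z_2|\}$ forces
\begin{align*}
	\bk{w',(A_kB)(z_2)w}=\bk{w',Y\big(Y(u)_kv,z_2\big)w},
\end{align*}
i.e.\ $A_kB=Y(Y(u)_kv,\cdot)$ as maps $\Vbb\to(\Vbb')^*$, and hence $(A_kB)_j=Y(Y(u)_kv)_j$. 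Now apply Prop.~\ref{lb71}: the Jacobi identity \eqref{eq107} holds for the modes of $A$ and $B$, and substituting the identifications $A_m=Y(u)_m$, $B_k=Y(v)_k$, $(A_{n+l}B)_{m+k-l}=Y(Y(u)_{n+l}v)_{m+k-l}$ turns \eqref{eq107} into exactly \eqref{eq47}. (One should check that both sides are genuine operators on $\Vbb$, not merely maps into $(\Vbb')^*$; the Remark following Prop.~\ref{lb71} — that setting $m=0$ expresses $(A_nB)_k$ via modes of $A,B$, which do preserve $\Vbb$ — handles this.) This gives the algebraic Jacobi identity.

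\textbf{Direction (ii): algebraic $\Rightarrow$ complex-analytic.} Assume \eqref{eq47} holds. Setting $n=0$ in \eqref{eq47} recovers the Lie-bracket relation \eqref{eq49}, which says precisely that $A(z_1)=Y(u,z_1)$ is local to $B(z_2)=Y(v,z_2)$ in the Lie-algebraic sense of Def.~\ref{lb59} (with $C^l=Y(Y(u)_lv,\cdot)$ and $N$ the lower-truncation bound for $Y(u)_\blt v$). By the chain of implications Subsec.~\ref{lb67} $\to$ Subsec.~\ref{lb68}, these fields are then local in the complex-analytic sense of Def.~\ref{lb62}: the series \eqref{eq109} and \eqref{eq110} converge a.l.u.\ on $\{0<|z_2|<|z_1|\}$ and $\{0<|z_1|<|z_2|\}$ respectively and continue to a common $f_{w,u,v,w'}\in\scr O(\Conf^2(\Cbb^\times))$ with a pole of order $\leq N$ at $z_1=z_2$. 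It remains to identify the third expansion \eqref{eq111} with the OPE of this $f_{w,u,v,w'}$. For that, use Subsec.~\ref{lb70} to produce the OPE fields $(A_kB)(z)$, then use Prop.~\ref{lb71} (which only requires Def.~\ref{lb62}, already verified) to get \eqref{eq107}; setting $m=0$ there, together with \eqref{eq47} at $m=0$, gives $(A_kB)_j=Y(Y(u)_kv)_j$ and hence $(A_kB)(z)=Y(Y(u)_kv,z)$. Feeding this back into the a.l.u.\ OPE expansion \eqref{eq112} shows that \eqref{eq111} converges a.l.u.\ on $\{0<|z_1-z_2|<|z_2|\}$ to $f_{w,u,v,w'}$, which is exactly the remaining assertion of Def.~\ref{lb194}.

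\textbf{Main obstacle.} The bookkeeping nuisance — and the step I expect to require the most care — is the $(\Vbb')^*$-versus-$\End(\Vbb)$ issue. The maps $(A_kB)(z)$ and $(A_nB)_k$ are defined a priori only as maps $\Vbb\to(\Vbb')^*$, whereas the algebraic Jacobi identity \eqref{eq47} is a statement in $\End(\Vbb)$; one must verify that the identifications $(A_kB)_j=Y(Y(u)_kv)_j$ hold as equalities of operators on $\Vbb$ and that all the rearrangements of infinite sums in Prop.~\ref{lb71} are legitimate term by term (each sum over $l$ being finite on any fixed $w$ by lower truncation). A secondary point worth spelling out is that the uniform pole-order bound $N$ is the \emph{same} on both sides of the equivalence — it is $\max$ over the relevant $N$'s coming from lower truncation of $Y(u)_\blt v$, $Y(u)_\blt w$, etc. — which is what makes the locality machinery of Subsections~\ref{lb67}--\ref{lb72} directly applicable; this is the ``equivalence for the same $N$'' remark made at the start of Subsec.~\ref{lb69}. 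Once these two points are handled, everything else is a direct invocation of results already proved.
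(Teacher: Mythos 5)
Your proof is correct and follows essentially the same route as the paper: the forward direction is the residue argument of Subsec.~\ref{lb38} repackaged through Prop.~\ref{lb71}, and the reverse direction proceeds exactly as in the paper by extracting Lie-algebraic locality from the $n=0$ case of \eqref{eq47}, running the locality chain to get complex-analytic locality, identifying $(A_nB)(z)=Y(Y(u)_nv,z)$ by comparing the $m=0$ cases of \eqref{eq107} and \eqref{eq47}, and deducing the a.l.u.\ convergence of \eqref{eq111} from that of \eqref{eq112}. The extra care you take with the $(\Vbb')^*$-versus-$\End(\Vbb)$ bookkeeping and the uniformity of $N$ is sound but does not change the argument.
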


\begin{proof}
Complex analytic implies algebraic: This follows from the argument in Subsec. \ref{lb38} or the proof of Prop. \ref{lb71}.

Algebraic implies complex analytic: Assume that $u,v,w,w'$ are homogeneous. Let $A(z)=Y(u,z)$ and $B(z)=Y(v,z)$. Then $A$ and $B$ are local. Moreover, the VOA Jacobi identity expresses $Y(Y(u)_nv,z)$ in terms of $Y(u,z),Y(v,z)$, and \eqref{eq107} expresses $(A_nB)(z)$ in terms of $A(z),B(z)$. From these two expressions, it is clear that
\begin{align}
Y(Y(u)_nv,z)=(A_nB)(z).	\label{eq118}
\end{align}
Thus, the complex analytic locality of $A$ and $B$ proves the complex analytic Jacobi identity. Note that the a.l.u. convergence of $\eqref{eq111}=\sum_{m\in\Zbb}\bigbk{w',Y(Y(u)_mv)w}(z_1-z_2)^{-m-1}$ (note that $P_n Y(u,z_1-z_2)v=Y(u)_m(z_1-z_2)^{-m-1}v$ where $n=\wt u+\wt v-m-1$) follows from that of \eqref{eq112}.
\end{proof}

\section{$n$-point functions for vertex operators; proof of reconstruction theorem} \label{lb84}

\subsection{}\label{lb85}

The goals of this section are twofold. We first prove two analytic properties for $n$-point functions generalizing Def. \ref{lb62}. Then we use these results to prove the reconstruction theorem.

\begin{thm}\label{lb74}
Assume that the homogeneous fields $A^1(z),\dots,A^M(z)\in(\End\Vbb)[[z^{\pm1}]]$ are mutually local. Then for each $v\in\Vbb,v'\in\Vbb'$ and each permutation $\sigma$ of $\{1,\dots,M\}$, the series
\begin{align}
\bigbk{v',A^{\sigma(1)}(z_{\sigma(1)})\cdots A^{\sigma(M)}(z_{\sigma(M)})v}\qquad\in(\End\Vbb)[[z_1^{\pm1},\dots,z_M^{\pm1}]]\label{eq116}
\end{align}
converges a.l.u. on
\begin{align}
\Omega_\sigma=\{z_\blt\in\Cbb^M:0<|z_{\sigma(M)}|<\cdots<|z_{\sigma(1)}|\}	\label{eq115}
\end{align}
and can be extended to some $f_{v,v'}\in\scr O(\Conf^M(\Cbb^\times))$ independent of $\sigma$. Moreover, there exists $N\in\Nbb$ for all $v,v'$ such that
\begin{align}
f_{v,v'}(z_\blt)\cdot \prod_{1\leq i<j\leq M}(z_i-z_j)^N	
\end{align}
is holomorphic on $(\Cbb^\times)^M$. (Indeed, it is an element of $\Cbb[z_1^{\pm1},\dots,z_M^{\pm1}]$.)
\end{thm}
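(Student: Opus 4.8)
The plan is to generalize the two-field argument of Subsec.\ \ref{lb68} (due to \cite{FHL93}) almost verbatim, the one new point being the reduction of the $\sigma$-independence to the pairwise locality relations. Since all the asserted statements are preserved under taking finite linear combinations in $v,v'$, I may assume $v,v'$ and the fields $A^1(z),\dots,A^M(z)$ are homogeneous. Pick $N_{ij}\in\Nbb$ with $(z_i-z_j)^{N_{ij}}[A^i(z_i),A^j(z_j)]=0$ and set $N=\max_{i<j}N_{ij}$, which depends only on the fields. For each permutation $\sigma$ the coefficient of $z_1^{-n_1-1}\cdots z_M^{-n_M-1}$ in \eqref{eq116} is the single scalar $\bk{v',A^{\sigma(1)}_{n_{\sigma(1)}}\cdots A^{\sigma(M)}_{n_{\sigma(M)}}v}$, so \eqref{eq116} is a genuine element $\Phi_\sigma\in\Cbb[[z_1^{\pm1},\dots,z_M^{\pm1}]]$; moreover, applying the weight-shift bounds of Subsec.\ \ref{lb68} successively to $A^{\sigma(M)}(z_{\sigma(M)})v$, then to $A^{\sigma(M-1)}(z_{\sigma(M-1)})A^{\sigma(M)}(z_{\sigma(M)})v$, and so on (lower truncation plus the $\Nbb$-grading of $\Vbb$), one sees that $\Phi_\sigma$ actually lies in the iterated Laurent series field $\Cbb((z_{\sigma(1)}))((z_{\sigma(2)}))\cdots((z_{\sigma(M)}))$.

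The algebraic heart of the proof is to put $\Psi_\sigma:=\big(\prod_{1\leq i<j\leq M}(z_i-z_j)^N\big)\Phi_\sigma$ (multiplication of a formal Laurent series by a Laurent polynomial, always legitimate) and to prove $\Psi_\sigma$ is independent of $\sigma$. It suffices to treat $\sigma'=\sigma s_k$ for an adjacent transposition $s_k$: then the two operator words differ only by interchanging the adjacent modes in slots $k,k+1$, so coefficientwise $\Phi_\sigma-\Phi_{\sigma'}=\bk{v',\mc U(z_\blt)[A^a(z_a),A^b(z_b)]\mc V(z_\blt)v}$, where $\{a,b\}=\{\sigma(k),\sigma(k+1)\}$ and $\mc U,\mc V$ are the products of the remaining fields. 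Multiplying this identity of formal series by $(z_a-z_b)^N$ — a coefficientwise operation — moves that factor onto the commutator and annihilates it, since $(z_a-z_b)^N[A^a(z_a),A^b(z_b)]=0$ in $\End(\Vbb)[[z_a^{\pm1},z_b^{\pm1}]]$; multiplying further by the remaining Laurent-polynomial factors of $\prod(z_i-z_j)^N$ gives $\Psi_\sigma=\Psi_{\sigma'}$, and chaining adjacent transpositions yields a common $\Psi_\sigma=:\Psi$ for all $\sigma$. Next one shows $\Psi\in\Cbb[z_1^{\pm1},\dots,z_M^{\pm1}]$: taking the ordering with $A^i(z_i)$ rightmost shows $\Phi$, hence $\Psi$, has $z_i$-exponents bounded below (the later fields do not involve $z_i$, and $A^i(z_i)v\in\Vbb((z_i))$), while the ordering with $A^i(z_i)$ leftmost turns pairing-with-$v'$ into $(A^i_m)^\tr$ acting on $v'$, whose weight-lowering together with the $\Nbb$-grading of $\Vbb'$ bounds the $z_i$-exponents above; ordering-independence of $\Psi$ then makes the $z_i$-exponents finite for every $i$.

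Finally, since $\Psi$ is a Laurent polynomial it belongs to every field $\Cbb((z_{\sigma(1)}))\cdots((z_{\sigma(M)}))$, in which $\prod(z_i-z_j)^N$ is invertible; hence there $\Phi_\sigma=\big(\prod_{i<j}(z_i-z_j)^{-N}\big)\Psi$, each $(z_i-z_j)^{-N}$ being expanded in the one of $z_i,z_j$ that comes later in the $\sigma$-order. Each such geometric-type expansion converges a.l.u.\ on $\Omega_\sigma$ (a compact subset of $\Omega_\sigma$ forces the smaller variable to be uniformly dominated by the larger), so by the same reasoning as in Subsec.\ \ref{lb184} (Lemma \ref{lb65} together with the product rule for a.l.u.\ convergence) the formal series $\Phi_\sigma$ converges a.l.u.\ on $\Omega_\sigma$ to the rational function $f_{v,v'}:=\big(\prod_{i<j}(z_i-z_j)^{-N}\big)\Psi$. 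This $f_{v,v'}$ has poles only along the diagonals $z_i=z_j$, so $f_{v,v'}\in\scr O(\Conf^M(\Cbb^\times))$; it is visibly independent of $\sigma$; and $f_{v,v'}\cdot\prod_{i<j}(z_i-z_j)^N=\Psi\in\Cbb[z_1^{\pm1},\dots,z_M^{\pm1}]$ with $N$ depending only on the fields, which is exactly the claim.

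I expect the main difficulty to be one of care rather than of depth. The delicate point is that $\Cbb[[z_1^{\pm1},\dots,z_M^{\pm1}]]$ is not a ring (Subsec.\ \ref{lb64}), so the crucial identity $\Phi_\sigma=\prod(z_i-z_j)^{-N}\Psi$ must be read inside a genuine iterated Laurent series field — which is precisely why the preliminary observation $\Phi_\sigma\in\Cbb((z_{\sigma(1)}))\cdots((z_{\sigma(M)}))$ cannot be skipped — and the reduction to adjacent transpositions must be carried out without ever ``composing'' two formal Laurent series that cannot be composed (only the coefficientwise manipulations above are used). The second place where vigilance is required is the faithful translation between formal convergence, a.l.u.\ convergence of the joint Laurent series, and a.l.u.\ convergence of the $P_n$-truncated series, exactly the three-way matching already emphasized in Subsec.\ \ref{lb184}.
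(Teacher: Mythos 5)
Your proposal is correct and follows essentially the same route as the paper's proof: multiply by $\prod_{i<j}(z_i-z_j)^N$ to obtain a $\sigma$-independent Laurent polynomial, invert that factor inside the iterated Laurent series field $\Cbb((z_{\sigma(1)}))\cdots((z_{\sigma(M)}))$, and identify the resulting expansion with the a.l.u.\ convergent geometric-series expansion of the rational function on $\Omega_\sigma$. The only differences are expository — you spell out the $\sigma$-independence via adjacent transpositions (which the paper merely asserts) and replace the paper's substitution $q_i=z_i/z_{i-1}$ by a direct compactness argument for the convergence of $\prod(z_i-z_j)^{-N}$.
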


$f_{v,v'}$ is called the \textbf{$(M+2)$-point (genus $0$ correlation) function} associated to the fields $A^\blt(z)$. In case each $A^i(z)$ is a vertex operator $Y(u_i,z)$, $f_{v,v'}$ is the correlation function associated to (setting $\zeta$ to be the standard coordinate of $\Cbb$)
\begin{align}
(\Pbb^1;0,z_1,\dots,z_M,\infty;\zeta,\zeta-z_1,\dots,\zeta-z_M,\zeta^{-1}),	
\end{align}
where $v,u_1,\dots,u_M,v'$ are going into the punctures $0,z_1,\dots,z_M,\infty$ respectively.

\begin{rem}
The a.l.u. convergence on $\Omega_\sigma$ of the formal Laurent series \eqref{eq116} is equivalent to that  of the series of functions
\begin{align}
\begin{aligned}
&\bigbk{v',A^{\sigma(1)}(z_{\sigma(1)})\cdots A^{\sigma(M)}(z_{\sigma(M)})v}	\\
:=&\sum_{n_2,\dots,n_M\in\Nbb}\bigbk{v',A^{\sigma(1)}(z_{\sigma(1)})P_{n_2}A^{\sigma(2)}(z_{\sigma(2)})P_{n_3}\cdots P_{n_M}A^{\sigma(M)}(z_{\sigma(M)})v}.
\end{aligned}	
\end{align}
Indeed, assume for simplicity that $\sigma=1$ and $v,v'$ are homogeneous. Then by scale covariance \eqref{eq117}, the RHS of the above formula equals
\begin{align}\label{eq99}
\begin{aligned}
\sum_{n_2,\dots,n_M\in\Nbb}&\bigbk{v',A^1(1)P_{n_2}A^2(1)P_{n_3}\cdots P_{n_M}A^M(1)v}\\
&\cdot \Big(\frac {z_2}{z_1}\Big)^{n_2}\Big(\frac {z_3}{z_2}\Big)^{n_3}\cdots \Big(\frac {z_M}{z_{M-1}}\Big)^{n_M}\cdot z_1^{\wt v'}z_M^{-\wt v}\cdot \prod_{i=1}^Mz_i^{-\Delta_{A^i}},
\end{aligned}	
\end{align}
which together with Lemma \ref{lb65} proves the claim.
\end{rem}

\subsection{}

\begin{proof}[Proof of Thm. \ref{lb74}] (Cf. \cite{FHL93}.)
The method is the same as in Subsec. \ref{lb68}. Choose $N$ such that $(z_i-z_j)^N[A^i(z_i),A^j(z_j)]=0$ for all $i,j$. Set
\begin{align}
f^\sigma(z_\blt)=\bigbk{v',A^{\sigma(1)}(z_{\sigma(1)})\cdots A^{\sigma(M)}(z_{\sigma(M)})v}\qquad\in\Cbb[[z_\blt^{\pm1}]]=\Cbb[[z_1^{\pm1},\dots,z_M^{\pm1}]].	
\end{align}
Then the formal Laurent series
\begin{align}
\phi(z_\blt)=f^\sigma(z_\blt)	\cdot \prod_{1\leq i<j\leq M}(z_i-z_j)^N\label{eq113}
\end{align}
is independent of the permutation $\sigma$. From
\begin{align*}
f^1(z_\blt)=\sum_{m,n\in\Zbb}\bk{(A^1_m)^\tr v',A^2(z_2)\cdots A^{M-1}(z_{m-1})A^M_nv}\cdot z_1^{-m-1}z_M^{-n-1}
\end{align*}
and the lower truncation property, we see that the coefficients of $f^1(z_\blt)$ and hence of $\phi(z_\blt)$ vanish if the powers of $z_M$ (resp. $z_1$) is sufficiently negative (resp. positive). Since we can replace $M$ with $\sigma(M)$ and $1$ with $\sigma(1)$, we see that the coefficients of $\phi$ vanish except when the powers $z_1,\dots,z_M$ are all bounded from below and from above by some fixed constants. Namely, $\phi(z_\blt)\in\Cbb[z_1^{\pm1},\dots,z_M^{\pm1}]$. In particular, $\phi$ can be regarded as a holomorphic function on $(\Cbb^\times)^M$.

By expanding $f^1(z_\blt)$ as a formal Laurent series of $z_1,\dots,z_M$, it is not hard to see (e.g. by induction on $M$ or by \eqref{eq99}) that
\begin{align}
	f^1(z_\blt)\in\mbb F:=\Cbb((z_1))((z_2))\cdots((z_M)).
\end{align}
So, for $\sigma=1$, \eqref{eq113} holds in the field $\mbb F$. So
\begin{align}
f^1(z_\blt)=\phi(z_\blt)\cdot\prod_{1\leq i<j\leq M}(z_i-z_j)^{-N}	\label{eq114}
\end{align}
where $g(z_\blt)=\prod_{1\leq i<j\leq M}(z_i-z_j)^{-N}\in\mbb F$ is expanded in the region $\Omega_1$ (defined by \eqref{eq115}), cf. Subsec. \ref{lb64}. Namely,
\begin{align*}
g(z_\blt)=\prod_{1\leq i<j\leq M}z_i^{-N}\cdot \sum_{k\in\Nbb}{-N\choose k}\Big(-\frac{z_j}{z_i}\Big)^k\qquad\in\mbb F
\end{align*}
We claim that the series $g(z_\blt)$ converges a.l.u. on $\Omega_1$. Then $\phi(z_\blt)g(z_\blt)$ as a formal series converges a.l.u. on $\Omega_1$ to $\phi(z_\blt)g(z_\blt)$ as a rational function (which is holomorphic on $\Omega_1$). We denote this rational function by $f_{v,v'}$. Then, this statement also holds when the permutation $1$ is replaced by any $\sigma$, our proof is therefore completed.

To prove the claim, it suffices to show that $g(q_1,q_1q_2,\dots,q_1q_2\cdots q_M)$, as a formal Laurent series of $q_1=z_1,q_2=z_2/z_1,q_3=z_3/z_2,\dots,q_M=z_M/z_{M-1}$, converges a.l.u. on $\Omega'=\{(q_1,\dots,q_M)\in\Cbb^M:0<|q_1|,0<|q_j|<1\text{ if }2\leq j\leq M\}$. But this is clearly true because
\begin{align*}
g=\prod_{1\leq i<j\leq M}(q_1q_2\cdots q_i)^{-N}\cdot \sum_{k\in\Nbb}{-N\choose k}(-q_{i+1}q_{i+2}\cdots q_j)^k\qquad\in\Cbb((q_1,\dots,q_M))
\end{align*}
is the Laurent series expansion on the polyannulus $\Omega'$ of the holomorphic function
\begin{align*}
\prod_{1\leq i<j\leq M}(q_1q_2\cdots q_i-q_1q_2\cdots q_j)^{-N}\qquad\in\scr O(\Omega')
\end{align*}
\end{proof}

\subsection{}
\begin{df}\label{lb75}
	Let $A^1,\dots,A^M$ be mutually local. For each $z_\blt\in\Conf^M(\Cbb^\times)$,
	\begin{align}
		A^1(z_1)\cdots A^M(z_M):\Vbb'\otimes\Vbb\rightarrow\Cbb	
	\end{align}
is defined to be the linear map sending $v'\otimes v$ to $f_{v,v'}$ in Thm. \ref{lb74}. Equivalently, $A^1(z_1)\cdots A^M(z_M)$ is a linear map from $\Vbb$ to $(\Vbb')^*=\prod_{n\in\Nbb}\Vbb(n)^{**}$.
\end{df}
According to this notation, for local $A,B$ we have
\begin{align}
	A(z_1)B(z_2)=B(z_2)A(z_1),\qquad (A_nB)(z_2)=\Res_{z_1=z_2}A(z_1)B(z_2)(z_1-z_2)^ndz_1.
\end{align}

The following is our second analytic property for $n$-point functions.

\begin{thm}\label{lb77}
Assume $A^1,\cdots,A^m,B^1,\dots,B^n$ are mutually local. Then on
\begin{align*}
\Omega=\{(z_1,\dots,z_m,\zeta_1,\dots,\zeta_n)\in\Conf^{m+n}(\Cbb^\times):|z_i|>|\zeta_j|\text{ for all }i,j\},
\end{align*}
for each $v\in\Vbb,v'\in\Vbb'$ the RHS of
\begin{align}
&\bk{v',A^1(z_1)\cdots A^m(z_m)B^1(\zeta_1)\cdots B^n(\zeta_n)v}\nonumber\\
=&\sum_{k\in\Nbb}	\bk{v',A^1(z_1)\cdots A^m(z_m)P_kB^1(\zeta_1)\cdots B^n(\zeta_n)v}\label{eq122}
\end{align}
converges a.l.u. to the LHS.
\end{thm}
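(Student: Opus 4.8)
The plan is to reduce Theorem \ref{lb77} to Theorem \ref{lb74}, which already provides the needed a.l.u.\ convergence and analytic-continuation statements for a single product of mutually local fields. The key observation is that the left-hand side of \eqref{eq122}, read as a formal Laurent series in all $m+n$ variables, is by definition (Thm.\ \ref{lb74}, applied to the mutually local family $A^1,\dots,A^m,B^1,\dots,B^n$) the expansion of the rational-times-holomorphic function $f_{v,v'}\in\scr O(\Conf^{m+n}(\Cbb^\times))$ in the region $\Omega_\sigma$ where $|z_1|>\cdots>|z_m|>|\zeta_1|>\cdots>|\zeta_n|$. The right-hand side of \eqref{eq122}, on the other hand, is obtained by first summing over the powers of $\zeta_1,\dots,\zeta_n$ (this is what inserting $P_k$ and then the $B$-product does, once one further decomposes the $B$-product with its own intermediate projections as in the Remark after Thm.\ \ref{lb74}) and only then summing over $k$, i.e.\ over the powers of $z_1,\dots,z_m$. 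So the content of the theorem is a Fubini-type statement: grouping the terms of an a.l.u.-convergent multivariable Laurent series first by the $\zeta$-degrees and then by the $z$-degrees still converges a.l.u., on the subregion $\Omega\subset\Omega_\sigma$ cut out by $|z_i|>|\zeta_j|$ for all $i,j$ (without the full strict ordering among the $z_i$'s or among the $\zeta_j$'s).

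First I would make the grouping precise using scale covariance \eqref{eq117} for each homogeneous field, exactly as in \eqref{eq99}: assuming $v,v'$ homogeneous, write
\begin{align*}
&\bk{v',A^1(z_1)\cdots A^m(z_m)P_kB^1(\zeta_1)\cdots B^n(\zeta_n)v}\\
=&\sum \bk{v',A^1(1)P_{\cdots}\cdots A^m(1)P_k B^1(1)P_{\cdots}\cdots B^n(1)v}\cdot(\text{monomial in }z_\blt,\zeta_\blt)
\end{align*}
where the inner sum runs over the remaining intermediate projection indices and the monomial records, via the powers $(z_2/z_1)^{\bullet},\dots,(\zeta_n/\zeta_{n-1})^{\bullet}$ together with the ``matching'' factor $(\zeta_1/z_m)^{k}$ (more precisely each $P_k$ inserted between the last $A$ and the first $B$ contributes $(\zeta_1/z_m)^k$ after pulling $z_m^{-L_0}$ and $\zeta_1^{L_0}$ past it), that the whole expression is a single $\scr O((\Cbb^\times)^{m+n})$-coefficient monomial with distinct exponent-tuples for distinct index-choices. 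This puts us in the situation of Lemma \ref{lb65} (with $M=m+n$ variables and $X$ a point): the series of monomials indexed by all the intermediate indices including $k$ converges a.l.u.\ on a given open set if and only if, grouped in any fixed way into subsums each of which is itself an $\scr O(\Cbb^\times\cdots)$-coefficient Laurent monomial series, it converges a.l.u.\ there — and the grouped limit equals the ungrouped one. Thus it suffices to show the \emph{ungrouped} series of monomials converges a.l.u.\ on $\Omega$.

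The ungrouped convergence on $\Omega$ I would get from Theorem \ref{lb74} together with a change of variables into ``ratio'' coordinates, in the spirit of the last paragraph of the proof of Thm.\ \ref{lb74}. On $\Omega$ introduce $q_0=z_m$ (or $z_1$), $q_i = z_{i+1}/z_i$, and $p_1 = \zeta_1/z_m$, $p_j=\zeta_j/\zeta_{j-1}$; then $\Omega$ maps onto a polyannulus $\{|q_0|>0,\ |q_i|<1,\ 0<|p_1|,\ |p_j|<1\}$ — note that the condition $|z_i|>|\zeta_j|$ for all $i,j$ is implied by $|q_i|<1$, $|p_1|<1$, $|p_j|<1$ only after also requiring $|z_m|>|\zeta_1|$, i.e.\ $|p_1|<1$; no lower bound on $|p_1|$ other than positivity is needed, and there is no constraint forcing $|q_i|$ or $|p_j|$ ($j\ge 2$) small beyond $<1$, which is exactly what distinguishes $\Omega$ from $\Omega_\sigma$. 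The $(m{+}n{+}2)$-point function $f_{v,v'}$ of Thm.\ \ref{lb74} is holomorphic on $\Conf^{m+n}(\Cbb^\times)$, hence in particular on the image of $\Omega$, and its Laurent expansion there (computed by iterated contour integrals, Lemma \ref{lb76}) converges a.l.u.; pulling this back gives a.l.u.\ convergence of the monomial series on $\Omega$, and identifies the limit with $f_{v,v'}$, hence with the LHS of \eqref{eq122}. Finally, drop the homogeneity assumption on $v,v'$ by linearity, as a.l.u.\ convergence is preserved under finite sums.

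The main obstacle I anticipate is bookkeeping rather than a genuine analytic difficulty: one must be careful that the ``matching factor'' $(\zeta_1/z_m)^k$ attached to $P_k$ really does allow the $k$-sum to be resummed independently of the $\zeta$-sums, i.e.\ that after scale covariance the exponent of each variable in a monomial is an affine function of the index-tuple with the property that distinct tuples give distinct tuples of exponents (so Lemma \ref{lb65} applies verbatim); and that the region $\Omega$, not $\Omega_\sigma$, is genuinely where this grouped series converges — which is why the change to ratio coordinates is the clean way to see it, since in those coordinates $\Omega$ is literally a polyannulus on which Lemma \ref{lb76} directly gives the expansion. No strong residue theorem or new estimate is needed; everything rides on Thm.\ \ref{lb74}, Lemma \ref{lb65}, and Lemma \ref{lb76}.
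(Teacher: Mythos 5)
Your reduction works only on the totally ordered locus, not on all of $\Omega$, and this is precisely where the content of the theorem lies. The fully ungrouped monomial series you want to feed into Lemma \ref{lb65} is the joint formal Laurent expansion of $\bk{v',A^{1}(z_1)\cdots B^n(\zeta_n)v}$ with \emph{all} intermediate projections inserted, and by Thm.\ \ref{lb74} that series converges a.l.u.\ only where $|z_1|>\cdots>|z_m|>|\zeta_1|>\cdots>|\zeta_n|$ (already for $m=2$, $n=0$ the series $\sum_s\bk{v',A^1(z_1)P_sA^2(z_2)v}$ diverges when $|z_2|>|z_1|$). Correspondingly, your ratio coordinates do not map $\Omega$ onto a polyannulus: at a point of $\Omega$ with $|z_2|>|z_1|$ one has $|q_1|=|z_2/z_1|>1$. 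So your argument yields a.l.u.\ convergence of the RHS of \eqref{eq122} only on the union, over permutations within the $z$'s and within the $\zeta$'s, of the totally ordered regions --- a dense open subset of $\Omega$ missing the real hypersurfaces such as $\{|z_1|=|z_2|\}$. A.l.u.\ convergence off such a hypersurface gives no control (not even pointwise convergence) on it, and Lemma \ref{lb65} cannot bridge this, since it equates grouped and ungrouped convergence \emph{on the same open set}.

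The missing idea is to track only the index $k$ by a single auxiliary variable. Writing $P_kq^k=P_kq^{L_0}$ and pushing $q^{L_0}$ through the $B$-block by scale covariance, the series $\sum_k\bk{v',A^\blt(z_\blt)P_kB^\blt(\zeta_\blt)v}\,q^k$ becomes $\sum_kq^d\bk{v',A^\blt(z_\blt)P_kB^\blt(q\zeta_\blt)v}$, whose $k$-th coefficient is holomorphic on all of $\Conf^m(\Cbb^\times)\times\Conf^n(\Cbb^\times)$ --- it is a product of already-resummed correlation functions from Thm.\ \ref{lb74}, not a monomial. On the small totally ordered region your computation identifies this with the Laurent expansion in $q$ of the holomorphic function $g(z_\blt,\zeta_\blt,q)=q^d\bk{v',A^\blt(z_\blt)B^\blt(q\zeta_\blt)v}$, which is defined on all of $\Omega_r\times\Dbb_r^\times$, where $\Omega_r=\{|z_i|>r|\zeta_j|\}$ and $r>1$. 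By uniqueness of Laurent coefficients and the identity theorem the coefficients agree on all of $\Omega_r$, and Lemma \ref{lb76} applied in the single variable $q$ then gives a.l.u.\ convergence on $\Omega_r\times\Dbb_r^\times$; setting $q=1$ and noting that every compact subset of $\Omega$ lies in some $\Omega_r$ finishes the proof. This one-variable resummation is what carries the convergence past the ordered locus; your all-variables resummation cannot.
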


The meaning of the product of $A^1(z_1)\cdots A^m(z_m)$ and $B^1(\zeta_1)\cdots B^n(\zeta_n)$ is clear: it corresponds to the sewing of (setting $\zeta$ to be the standard coordinate of $\Cbb$)
\begin{gather*}
\fk X_1=(\Pbb^1;0,z_1,\dots,z_m,\infty;\zeta,\zeta-z_1,\dots,\zeta-z_m,\zeta^{-1}),\\
\fk X_2=(\Pbb^1;0,\zeta_1,\dots,\zeta_n,\infty;\zeta,\zeta-\zeta_1,\dots,\zeta-\zeta_n,\zeta^{-1})	
\end{gather*}
along $0\in\fk X_1$ and $\infty\in\fk X_2$, in case all these fields are vertex operators.  Moreover, this picture, as well as the theorem, can be easily generalized to the products of several strings of mutually local fields.

\begin{rem}
Note that each summand on the RHS of \eqref{eq122} is holomorphic on $(z_\blt,\zeta_\blt)\in\Conf^m(\Cbb^\times)\times\Conf^n(\Cbb^\times)$. When $\Vbb(k)$ is finite-dimensional, this is due to Thm. \ref{lb74} and that $P_k$ can be written as $\sum_e e\rangle\langle\wch e$ for a basis $\{e\}$ of $\Vbb(k)$ and dual basis $\{\wch e\}$. 

In the general case that $\Vbb(k)$ is not necessarily finite dimensional, $P_k$ is the projection from $(\Vbb')^*$ onto $\Vbb(k)^{**}$. In each series $A^i(z_i)=\sum_{n\in\Nbb}A^i_nz_i^{-n-1}$ in \eqref{eq122}, $A^i_n$ is understood as $(A^i_n)^{\tr\tr}$ sending each $\Vbb(a)^{**}$ to $\Vbb(b)^{**}$ where $b=a+\Delta_A-n-1$. Then, in this sense $A^1,\dots,A^m$ are mutually local. Each summand on the RHS of \eqref{eq122} is continuous over $(z_\blt,\zeta_\blt)\in\Conf^m(\Cbb^{\times})\times\Conf^n(\Cbb^\times)$; for fixed $z_\blt$, it is holomorphic over $\zeta_\blt$ by treating $\bk{v',A^1(z_1)\cdots A^m(z_m)P_k\cdot}$ as an element of $\Vbb(k)^*$; similarly, it is holomorphic over $z_\blt$. So, again, it is holomorphic on $\Conf^m(\Cbb^\times)\times\Conf^n(\Cbb^\times)$.
\end{rem}

\subsection{}

The idea of the proof of Thm. \ref{lb77} is the following. To show that a series of functions $\sum_n f_n(z_\blt)$ converges a.l.u. on a domain $U$: We try to find $r>1$ and a smaller $U'$ such that $\sum_n f_n(z_\blt)q^n$ converges a.l.u. on $z_\blt\in U'$ and $q\in\Dbb_r^\times$ to a function $f$ holomorphic on $U\times\Dbb_r^\times$. Then by Lemma \ref{lb76}, $\sum_n f_n(z_\blt)q^n$ is the series expansion of $f$, which must converge a.l.u. on $U\times\Dbb_r^\times$. 

\begin{proof}[Proof of Thm. \ref{lb77}]
It suffices to prove the proposition when $\Omega$ is replaced by all possible
\begin{align*}
	\Omega_r=\{(z_1,\dots,z_m,\zeta_1,\dots,\zeta_n)\in\Conf^{m+n}(\Cbb^\times):|z_i|>r|\zeta_j|\text{ for all }i,j\}
\end{align*}
where $r>1$. To clarify the subtlety, we let $\wtd A^i_n:\Vbb(a)^{**}\rightarrow\Vbb(b)^{**}$ denote the double transpose of $A^i_n:\Vbb(a)\rightarrow\Vbb(b)$.   To simplify discussions we assume $m=n=2$. Consider the following element of $\scr O(\Conf^2(\Cbb^\times)^2)[[q]]$:
\begin{align}
\sum_{k\in\Nbb}\bk{v',\wtd A^1(z_1)\wtd A^2(z_2)P_kB^1(\zeta_1)B^2(\zeta_2)v}q^k.\label{eq121}
\end{align}
Note that $P_kq^k=P_kq^{L_0}$. By scale covariance, as elements of $\Hom(\Vbb'\otimes\Vbb,\Cbb)$,
\begin{align}
q^{L_0}B^1(\zeta_1)B^2(\zeta_1)=q^{\Delta_{B^1}+\Delta_{B^2}}B^1(q\zeta_1)B^2(q\zeta_2)q^{L_0}	
\end{align}
whenever $0<|\zeta_2|<|\zeta_1|$. So it holds for all $\zeta_\blt\in\Conf^2(\Cbb^\times)$ by holomorphicity. Thus, there is  $d\in\Zbb$ such that \eqref{eq121}, as a series of functions of $(z_\blt,\zeta_\blt,q)$, equals
\begin{align}
\sum_{k\in\Nbb}q^d\bk{v',\wtd A^1(z_1)\wtd A^2(z_2)P_kB^1(q\zeta_1)B^2(q\zeta_2)v}.	\label{eq100}
\end{align}

On $\Omega_r'=\{(z_\blt,\zeta_\blt):0<r|\zeta_2|<r|\zeta_1|<|z_2|<|z_1|\}$ and $q\in\Dbb_r^\times$, \eqref{eq100} equals
\begin{align}\label{eq267}
\begin{aligned}
&\sum_{k\in\Nbb}\sum_{s\in\Nbb}q^d\bk{v',\wtd A^1(z_1)P_s\wtd A^2(z_2)P_kB^1(q\zeta_1)B^2(q\zeta_2)v}\\
\xlongequal{\text{Rem. \ref{lb195}}}&\sum_{k\in\Nbb}\sum_{s\in\Nbb}\sum_{t\in\Nbb}q^d\bk{v',\wtd A^1(z_1)P_s\wtd A^2(z_2)P_kB^1(q\zeta_1)P_tB^2(q\zeta_2)v}\\
=&\sum_{k\in\Nbb}\sum_{s\in\Nbb}\sum_{t\in\Nbb}q^d\bk{v',A^1(z_1)P_s A^2(z_2)P_kB^1(q\zeta_1)P_tB^2(q\zeta_2)v}
\end{aligned}
\end{align}
By Thm. \ref{lb74}, this series (and hence series \eqref{eq121}) converges a.l.u. on $\Omega_r\times\Dbb_r^\times$ to the holomorphic function
\begin{align*}
g(z_\blt,\zeta_\blt,q)=q^d	\bk{v', A^1(z_1) A^2(z_2)B^1(q\zeta_1)B^2(q\zeta_2)v}.
\end{align*}
Therefore, \eqref{eq121} is the Laurent series expansion of $g$ when $(z_\blt,\zeta_\blt)\in\Omega_r'$. Namely: the coefficients of \eqref{eq121} equal those in the expansion of $g$ when $(z_\blt,\zeta_\blt)\in\Omega_r'$. By Lemma \ref{lb76} applied to the holomorphic function $g(z_\blt,\zeta_\blt,q)$ on $\Omega_r\times\Dbb_r^\times$, this statement is true when $(z_\blt,\zeta_\blt)\in\Omega_r$ (since the coefficients are holomorphic on $\Omega_r$), and the series expansion of $g$  converges a.l.u. on $\Omega_r\times\Dbb_r^\times$ to $g$. So \eqref{eq121} converges a.l.u. on $\Omega_r\times\Dbb_r^\times$ to $g$. This finishes the proof if we set $q=1$.
\end{proof}

\begin{rem}\label{lb195}
Note that in \eqref{eq267}, we have used the obvious fact that $\sum_t P_kB^1(q\zeta_1)P_t B^2(q\zeta_2)v$ converges $*$-weakly in $\Vbb(k)^{**}$, in the sense that its evaluation with every element of $\Vbb(k)^*$ converges. We have also used the fact that, assuming $v'\in\Vbb(m)^*$, the linear opeartor $P_m\wtd A^1(z_1)P_s\wtd A^2(z_2)P_k:\Vbb(k)^{**}\rightarrow \Vbb(m)^{**}$, which is the double transpose of $P_m A^1(z_1)P_s A^2(z_2)P_k:\Vbb(k)\rightarrow \Vbb(m)$, is continuous with respect to the weak-$*$ topology. This is justified by the following easy lemma.  
\end{rem}

\begin{lm}
Let $T:U\rightarrow V$ be a linear map of vector spaces. Then $T^\tr:V^*\rightarrow U^*$ is $*$-weakly  continuous, which means if $v^*_n$ is a sequence (or more generally, a net) in $V^*$ which converges weakly-$*$ to $v^*\in V^*$ in the sense that $\lim_n \bk{v_n^*,v}=\bk{v^*,v}$ for all $v\in V$, then $T^\tr v^*_n$ converges $*$-weakly  to $T^\tr v^*$.
\end{lm}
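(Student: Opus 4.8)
The plan is simply to unwind the two definitions in play and observe that the statement collapses to a one-line computation. Recall that weak-$*$ convergence of a net $(u^*_n)$ to $u^*$ in $U^*$ means, by definition, that $\bk{u^*_n,u}\to\bk{u^*,u}$ for every $u\in U$; and the transpose $T^\tr$ is characterized by $\bk{T^\tr\psi,u}=\bk{\psi,Tu}$ for all $\psi\in V^*$ and $u\in U$. So it suffices to test $T^\tr v^*_n$ against an arbitrary vector of $U$.

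Concretely, I would fix $u\in U$. Then $\bk{T^\tr v^*_n,u}=\bk{v^*_n,Tu}$ for every index $n$, and likewise $\bk{T^\tr v^*,u}=\bk{v^*,Tu}$. Since $Tu$ is a single fixed element of $V$ and $(v^*_n)$ converges weakly-$*$ to $v^*$, the hypothesis gives $\lim_n\bk{v^*_n,Tu}=\bk{v^*,Tu}$. Chaining these equalities yields $\lim_n\bk{T^\tr v^*_n,u}=\bk{T^\tr v^*,u}$, and since $u\in U$ was arbitrary this is exactly the assertion that $T^\tr v^*_n\to T^\tr v^*$ weakly-$*$ in $U^*$.

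There is no real obstacle here; the only point worth recording is that the argument uses nothing beyond the defining property of the transpose and the one-variable nature of weak-$*$ convergence, so it applies verbatim to nets as well as sequences — no countability of the index set, no topology on $U$ or $V$ themselves, and no finiteness or completeness assumptions enter. This is precisely the continuity invoked in Remark \ref{lb195} to commute the operator $P_m\wtd A^1(z_1)P_s\wtd A^2(z_2)P_k$ past the weakly-$*$ convergent sum $\sum_t P_kB^1(q\zeta_1)P_tB^2(q\zeta_2)v$ appearing in \eqref{eq267}.
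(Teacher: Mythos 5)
Your proof is correct, and it is exactly the one-line computation the paper has in mind when it calls this an ``easy lemma'' and omits the argument: test $T^\tr v^*_n$ against a fixed $u\in U$, use $\bk{T^\tr\psi,u}=\bk{\psi,Tu}$, and apply the hypothesis to the single vector $Tu$. Your remark that the argument works for nets and uses no topology on $U$ or $V$ is also the right observation for its application in Remark \ref{lb195}.
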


\subsection{}

We now discuss the proof of the reconstruction Thm. \ref{lb39}. Assume that the assumptions for graded vertex algebras in Thm. \ref{lb39} hold. We may extend $\mc E$ to also include the identity field $\id(z)=\id_\Vbb$. Namely, $\id_n=\delta_{n,-1}\id_\Vbb$. Motivated by \eqref{eq118}, for each $A^1,\dots,A^k\in\mc E$ and $n_1,\dots,n_k\in\Zbb$, we define
\begin{align}
	Y\big(A^1_{n_1}\cdots A^k_{n_k}\id,z\big)=\big(A^1_{n_1}\cdots A^k_{n_k}\id\big)(z)\label{eq119}
\end{align}
where the right hand side is defined inductively by
\begin{align*}
	\big(A^1_{n_1}\cdots A^k_{n_k}\id\big)(z)=\big(A^1_{n_1}(A^2_{n_2}\cdots A^k_{n_k}\id)\big)(z).
\end{align*}
By the generating property, we can define $Y(u,z)$ for every $u\in\Vbb$ using \eqref{eq119} and linearity. 

There are two immediate problems with this approach: First, to define the RHS of \eqref{eq119} inductively, we need the fact that $A^1_{n_1}(z)$ is local to $(A^2_{n_2}\cdots A^k_{n_k}\id)(z)$ (``Dong's lemma"). Second, we need to show that the above definition of $Y(u,z)$ is unique, i.e., independent of how $u$ is written as a linear combination of  $A^1_{n_1}\cdots A^k_{n_k}\id$ (``Goddard uniqueness"). Besides these two, we also need to check that such defined $Y(u,z)$ satisfies the translation property. Let us first check the translation property.

\subsection{}

\begin{lm}\label{lb79}
Assume that homogeneous fields $A(z),B(z)$ are local and satisfy the translation property $[L_{-1},A_k]=-kA_{k-1}$, $[L_{-1},B_k]=-kB_{k-1}$. Then so does each $A_nB$:
\begin{align}
[L_{-1},(A_nB)_k]=-k(A_nB)_{k-1}.\label{eq120}	
\end{align}
\end{lm}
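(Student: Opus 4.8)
The plan is to mimic the differential-equation argument that proved translation covariance (Prop.~\ref{lb22}), but now at the level of the correlation function $f_{v,v'}$ attached to the local pair $A,B$. Recall from Subsec.~\ref{lb70} that $(A_nB)(z)$ is defined via the contour integral \eqref{eq105} of $f_{v,v'}(z_1,z_2)$, and that $(A_nB)_k$ is the residue at $z=0$ of $\bigbk{v',(A_nB)(z)v}z^k$. So \eqref{eq120} is really a statement about how $L_{-1}$ acts on the function $f_{v,v'}$, and the natural strategy is: first establish the ``integral form'' $[L_{-1},(A_nB)(z)]=\partial_z(A_nB)(z)$ (equivalently $[L_{-1},(A_nB)(z)]=-z^{-1}\cdot(\text{Euler-type relation})$ — but really just $\partial_z$), and then apply $\Res_{z=0}(\cdot)z^k\,dz$ together with integration by parts exactly as in the passage from \eqref{eq51} to \eqref{eq53}.

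First I would set up the key identity at the function level. For homogeneous $v\in\Vbb$, $v'\in\Vbb'$, write $g_{v,v'}(z_1,z_2)=\bigbk{v',A(z_1)B(z_2)v}$ on $\Omega_1$, which extends to $f_{v,v'}$ on $\Conf^2(\Cbb^\times)$. The translation property of $A$ and $B$ says, in integral form, that $e^{\tau L_{-1}}A(z_1)e^{-\tau L_{-1}}=A(z_1+\tau)$ and likewise for $B$; hence for the product one expects
\begin{align*}
\bigbk{v',e^{\tau L_{-1}}A(z_1)B(z_2)e^{-\tau L_{-1}}v}=f_{v,v'}(z_1+\tau,z_2+\tau),
\end{align*}
valid on the level of $\Cbb[z_1^{\pm1},z_2^{\pm1}][[\tau]]$ with the right side expanded as if $|\tau|$ small. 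The cleanest way to get this is to move $e^{\tau L_{-1}}$ onto $v'$ as $e^{-\tau L_1}v'$ (using $L_1=L_{-1}^\tr$, as in the derivation of \eqref{eq40}), run the same two-variable power-series differential equation argument as in Prop.~\ref{lb22} — the generator is now $\id\otimes L_1 - L_{-1}\otimes\id$ acting through both $A(z_1)$ and $B(z_2)$ — and appeal to Lemma~\ref{lb21}. Differentiating in $\tau$ at $\tau=0$ then yields the ``derivative form''
\begin{align*}
\bigbk{v',[L_{-1},A(z_1)B(z_2)]v}=(\partial_{z_1}+\partial_{z_2})f_{v,v'}(z_1,z_2).
\end{align*}

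Next I would feed this into the definition of $(A_nB)(z)$. Applying $\oint_{C(z_2)}(z_1-z_2)^n\,\frac{dz_1}{2\im\pi}$ to the last display and noting that $\oint_{C(z_2)}(z_1-z_2)^n\partial_{z_1}f\,dz_1=0$ (it is the integral of a derivative of a single-valued function, or integrate by parts: the boundary term vanishes and $\partial_{z_1}(z_1-z_2)^n$ re-indexes but the total residue is unchanged — more simply, $\partial_{z_1}$ of anything contributes no residue against $(z_1-z_2)^n$ up to a shift that cancels), one gets
\begin{align*}
\bigbk{v',[L_{-1},(A_nB)(z_2)]v}=\partial_{z_2}\bigbk{v',(A_nB)(z_2)v},
\end{align*}
i.e. $[L_{-1},(A_nB)(z)]=\partial_z(A_nB)(z)$. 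Finally, taking $\Res_{z=0}(\cdot)z^k\,dz$ and integrating by parts (as in the remark after \eqref{eq53}) converts $\partial_z(A_nB)(z)$ into $-k(A_nB)_{k-1}$, which is exactly \eqref{eq120}.

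I expect the main obstacle to be the careful bookkeeping in the first step: justifying that $\bigbk{v',e^{\tau L_{-1}}A(z_1)B(z_2)e^{-\tau L_{-1}}v}$ makes sense and converges as a formal series, and that one may differentiate term by term and apply the vector-valued uniqueness Lemma~\ref{lb21} when the intermediate projections $P_n$ are present (one works with $\bigbk{v',e^{\tau L_{-1}}A(z_1)P_nB(z_2)e^{-\tau L_{-1}}v}$ and sums, invoking a.l.u. convergence from Def.~\ref{lb62} to exchange $\sum_n$ with $\partial_\tau$ and with the contour integral, much as in Exercise~\ref{lb89}). The interchange of $\partial_{z_1}$-integration-by-parts with the residue extraction is routine once one is allowed to treat $f_{v,v'}$ as a genuine holomorphic function near $C(z_2)$, which is guaranteed by locality. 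No new ideas beyond those already deployed for Prop.~\ref{lb22} and Prop.~\ref{lb71} should be needed; the content is ``the derivative-in-$\tau$ of a product rule, then integrate out $z_1$''.
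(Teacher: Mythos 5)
Your route is genuinely different from the paper's. The paper proves \eqref{eq120} by pure mode algebra: it expands $(A_nB)_k$ explicitly via the Jacobi identity (formula \eqref{eq126}), computes $[L_{-1},(A_nB)_k]$ term by term from the translation properties of $A$ and $B$, and matches binomial coefficients after reindexing the two sums. You instead work at the level of the correlation function $f_{v,v'}$, first establishing $\bigbk{v',[L_{-1},A(z_1)B(z_2)]v}=(\partial_{z_1}+\partial_{z_2})f_{v,v'}$ and then pushing this through the contour integral \eqref{eq105} defining $(A_nB)(z_2)$. Both work: the paper's argument is self-contained combinatorics, while yours reuses the analytic machinery of Subsec.~\ref{lb70} and makes transparent the slogan that translation covariance is inherited by OPE coefficients. (For your first step the derivative form suffices: $[L_{-1},A(z_1)B(z_2)]=(\partial_{z_1}+\partial_{z_2})A(z_1)B(z_2)$ already holds in $\End(\Vbb)[[z_1^{\pm1},z_2^{\pm1}]]$ by reindexing modes, and passes to $f_{v,v'}$ by analytic continuation from the domain $0<|z_2|<|z_1|$; the full $e^{\tau L_{-1}}$ argument via Lemma~\ref{lb21} is more than you need.)

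One intermediate claim is false as written, though your conclusion survives. The assertion $\oint_{C(z_2)}(z_1-z_2)^n\partial_{z_1}f\,dz_1=0$ fails for $n\neq 0$: integrating by parts gives $\oint_{C(z_2)}(z_1-z_2)^n\partial_{z_1}f\,\tfrac{dz_1}{2\im\pi}=-n\bigbk{v',(A_{n-1}B)(z_2)v}$. What rescues the argument is that the identical term reappears when you differentiate $(A_nB)(z_2)=\oint_{C(z_2)}(z_1-z_2)^nf\,\tfrac{dz_1}{2\im\pi}$ in $z_2$, because $\partial_{z_2}(z_1-z_2)^n=-n(z_1-z_2)^{n-1}$ — a term you also silently dropped, so the two omissions cancel. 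The clean statement is
\begin{align*}
\bigbk{v',[L_{-1},(A_nB)(z_2)]v}-\partial_{z_2}\bigbk{v',(A_nB)(z_2)v}=\oint_{C(z_2)}\partial_{z_1}\big[(z_1-z_2)^nf_{v,v'}(z_1,z_2)\big]\frac{dz_1}{2\im\pi}=0,
\end{align*}
the integral of an exact derivative of a single-valued holomorphic function over a closed contour. With that correction, $[L_{-1},(A_nB)(z)]=\partial_z(A_nB)(z)$ holds, and applying $\Res_{z=0}(\cdot)z^k\,dz$ gives \eqref{eq120} exactly as in the passage from \eqref{eq51} to \eqref{eq53}.
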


\begin{proof}
By the Jacobi identity \eqref{eq107},
\begin{align}
(A_nB)_k=\sum_{l\in\Nbb}(-1)^l{n\choose l}A_{n-l}B_{k+l}-\sum_{l\in\Nbb}(-1)^{n+l}{n\choose l}B_{n+k-l}A_l, \label{eq126}	
\end{align}
and hence
\begin{align*}
-k(A_nB)_{k-1}=\sum_{l\in\Nbb}(-1)^l{n\choose l}(-k)A_{n-l}B_{k+l-1}+\sum_{l\in\Nbb}(-1)^{n+l}{n\choose l}kB_{n+k-l-1}A_l.	
\end{align*}
So by the translation property of $A,B$,
\begin{align*}
&[L_{-1},(A_nB)_k]=\sum_{l\in\Nbb}(-1)^l{n\choose l}(-n+l)A_{n-l-1}B_{k+l}+\sum_{l\in\Nbb}(-1)^l{n\choose l}(-k-l)A_{n-l}B_{k+l-1}\\
&+\sum_{l\in\Nbb}(-1)^{n+l}{n\choose l}(n+k-l)B_{n+k-l-1}A_l+\sum_{l\geq1}(-1)^{n+l}{n\choose l}lB_{n+k-l}A_{l-1}.
\end{align*}
Look at the RHS. In the first sum, notice  $(-1)^l{n\choose l}(-n+l)=(-1)^{l+1}{n\choose l+1}(l+1)$ and replace $l$ by $l-1$; in the fourth sum, notice $(-1)^{n+l}{n\choose l}l=(-1)^{n+l-1}{n\choose l-1}(l-1-n)$ and replace $l$ by $l+1$. Then we see why \eqref{eq120} is true.
\end{proof}

\subsection{}\label{lb86}

\index{00@Dong's Lemma}
\begin{pp}[\textbf{Dong's lemma}]
Let $A(z),B(z),C(z)$ be mutually local homogeneous fields. Then for each $n\in\Zbb$, $C(z)$ is local to $(A_nB)(z)$. 
\end{pp}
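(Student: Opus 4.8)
The plan is to use the formal-variable characterization of locality, since for three mutually local fields one can estimate the orders of the poles at the various diagonals directly. Fix $N\in\Nbb$ large enough that
\begin{align*}
(z_1-z_2)^N[A(z_1),B(z_2)]=0,\quad (z_1-z_2)^N[A(z_1),C(z_2)]=0,\quad (z_1-z_2)^N[B(z_1),C(z_2)]=0
\end{align*}
all hold in $\End(\Vbb)[[z_1^{\pm1},z_2^{\pm1}]]$ (take the max of the three exponents). By the equivalence proved in Subsec. \ref{lb69}--\ref{lb68} (Lie algebraic $\Leftrightarrow$ formal variable $\Leftrightarrow$ complex analytic), it suffices to show that there is some $M\in\Nbb$ with $(z-w)^M[C(z),(A_nB)(w)]=0$. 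The standard trick is to produce the $4$-point function $\bk{v',C(z)A(w_1)B(w_2)v}$ from Thm.~\ref{lb74}, which we may do since $A,B,C$ are mutually local: call it $f_{v,v'}(z,w_1,w_2)$, holomorphic on $\Conf^3(\Cbb^\times)$, with the property that $f_{v,v'}\cdot\prod_{i<j}(\cdot_i-\cdot_j)^N$ extends holomorphically (indeed as a Laurent polynomial) over $(\Cbb^\times)^3$. Recall that $(A_nB)(w)$ is recovered as $\Res_{w_1=w_2}A(w_1)B(w_2)(w_1-w_2)^n\,dw_1$ (Def.~\ref{lb75}), and that by the locality of $C$ with both $A$ and $B$ the function $f_{v,v'}$ is symmetric under analytic continuation in the sense that $\bk{v',C(z)A(w_1)B(w_2)v}$ and $\bk{v',A(w_1)B(w_2)C(z)v}$ are expansions of the same $f_{v,v'}$ on the respective domains.

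First I would run the residue-theorem argument in the variable $z$, exactly as in Subsec.~\ref{lb72} (the proof of Prop.~\ref{lb71}). Fix $w_1,w_2$ and integrate $z$ over a large circle $C_+$, a small circle $C_-$ (both around $0$), and a small circle $C_0$ around $w_1$ and one around $w_2$; but since $C$ is local to both $A$ and $B$, the only poles of $z\mapsto f_{v,v'}(z,w_1,w_2)$ inside the annular region lie at $z=w_1$ and $z=w_2$, each of order at most $N$. Multiplying by $(z-w_1)^N(z-w_2)^N$ kills these, so
\begin{align*}
(z-w_1)^N(z-w_2)^N\bk{v',[C(z),A(w_1)B(w_2)]v}=0
\end{align*}
in the appropriate formal sense. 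Now I would apply $\Res_{w_1=w_2}(\cdot)(w_1-w_2)^n\,dw_1$. On the region where $C_0$ encircles $w_2$ alone we may set $w_1$ near $w_2$, so $(z-w_1)^N(z-w_2)^N\rightsquigarrow (z-w_2)^{2N}$ up to corrections that are regular at $w_1=w_2$ and hence contribute nothing to the residue; this produces $(z-w_2)^{2N}[C(z),(A_nB)(w_2)]=0$. Taking $M=2N$ gives the formal-variable locality of $C(z)$ with $(A_nB)(z)$, hence by the equivalence theorem all three versions, which is the assertion.

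The main obstacle, as usual with these arguments, is bookkeeping the domains of a.l.u.\ convergence so that the contour manipulations are legitimate: one has to know that $\bk{v',C(z)A(w_1)B(w_2)v}$ converges on $0<|w_2|<|w_1|<|z|$, that $\bk{v',A(w_1)B(w_2)C(z)v}$ converges on $0<|z|<|w_2|<|w_1|$, and that $\bk{v',A(w_1)C(z)B(w_2)v}$ converges on $0<|w_2|<|z|<|w_1|$, all to the same $f_{v,v'}$ — this is precisely the content of Thm.~\ref{lb74} applied to the mutually local triple $\{A,C,B\}$ in the three relevant orderings. Once that is in hand, the pole-order estimate at $z=w_1$ comes from writing $f_{v,v'}$ near that diagonal via the OPE $C(z)A(w_1)=\sum_{k}(z-w_1)^{-k-1}(C_kA)(w_1)$ with $C_kA=0$ for $k\ge N$ (Subsec.~\ref{lb70}), and similarly at $z=w_2$; there is a minor subtlety that $(z-w_2)^N f_{v,v'}$ may still have a pole at $w_1=w_2$, but that is exactly the pole that the residue in $w_1$ is designed to pick up and does not affect the vanishing statement $(z-w_2)^{2N}[C(z),(A_nB)(w_2)]=0$ once we have cleared the $z=w_1$ pole. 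I expect the cleanest writeup mirrors Subsec.~\ref{lb72} almost verbatim, replacing the pair $(A,B)$ there by $(C,A_nB)$ and importing the needed three-field convergence from Thm.~\ref{lb74}.
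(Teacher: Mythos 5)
Your overall architecture is the one the paper uses: build the three-variable correlation function $f_{v,v'}$ from Thm.~\ref{lb74} and Thm.~\ref{lb77}, recover $(A_nB)(w_2)$ as $\Res_{w_1=w_2}(w_1-w_2)^nA(w_1)B(w_2)\,dw_1$, and bound the order of the pole of the result at the remaining diagonal $z=w_2$. Up through the identity $(z-w_1)^N(z-w_2)^N\bk{v',[C(z),A(w_1)B(w_2)]v}=0$ your argument is sound. The gap is in the very last step. Applying $\Res_{w_1=w_2}(\cdot)(w_1-w_2)^n\,dw_1$ to that identity and expanding $(z-w_1)^N=\sum_{k=0}^N{N\choose k}(-1)^k(z-w_2)^{N-k}(w_1-w_2)^k$ yields
\begin{align*}
\sum_{k=0}^N{N\choose k}(-1)^k(z-w_2)^{2N-k}\,\big[C(z),(A_{n+k}B)(w_2)\big]=0,
\end{align*}
since $\Res_{w_1=w_2}(w_1-w_2)^{n+k}A(w_1)B(w_2)\,dw_1=(A_{n+k}B)(w_2)$. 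The terms with $k\geq 1$ are \emph{not} negligible: the factor $(w_1-w_2)^k$ is indeed regular at $w_1=w_2$, but the product $A(w_1)B(w_2)$ has a pole there of order up to $N$, so $(w_1-w_2)^{n+k}A(w_1)B(w_2)$ has a nonzero residue whenever $A_{n+k}B\neq 0$. Your claimed conclusion $(z-w_2)^{2N}[C(z),(A_nB)(w_2)]=0$ is in fact false for sufficiently negative $n$: for the Heisenberg current $J$ one has $N=2$, while $(J_{-k}J)(w)$ is proportional to the normally ordered product of $\partial^{k-1}J$ with $J$, whose OPE with $J(z)$ has a pole of order $k+1$ at $z=w$, exceeding $2N=4$ once $k\geq 4$.

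There are two ways to close the gap. One is a downward induction on $n$: the displayed identity expresses $(z-w_2)^{2N}[C(z),(A_nB)(w_2)]$ through the commutators with $(A_{n+k}B)$, $k\geq 1$, and since $A_mB=0$ for $m\geq N$ the induction starts trivially; the price is that the exponent $M_n$ grows as $n$ decreases (which is harmless, since in Def.~\ref{lb59}--\ref{lb62} the exponent may depend on the pair of fields, hence on $n$). The other is the paper's route, which stays on the complex-analytic side: write $(z_1-z_2)^nf$ as a combination of $z_1^az_2^bz_3^c(z_1-z_2)^{n-N}(z_1-z_3)^{-N}(z_2-z_3)^{-N}$, expand around $z_1=z_2$, and read off that the residue has a pole at $z_2=z_3$ of order at most $3N-n-1$; this makes explicit exactly the contribution you discarded, namely the pole of $f$ at $z=w_1$ being converted by the residue into a higher-order pole at $z=w_2$. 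Either fix works, but as written the sentence ``contribute nothing to the residue'' is where the proof breaks.
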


We prove that $C(z)$ is complex-analytically local to $(A_nB)(z)$. 
\begin{proof}
Step 1. Choose $v\in\Vbb,v'\in\Vbb'$. Then we have series
\begin{align}
\sum_{k\in\Nbb}\bk{v',(A_nB)(z_2)P_kC(z_3)}=\sum_{k\in\Nbb}\Res_{z_1=z_2}(z_1-z_2)^n\bk{v',A(z_1)B(z_2)P_kC(z_3)v}dz_1.\label{eq123}
\end{align}
On the region  $\Omega_1=\Conf^3(\Cbb^\times)\cap\{|z_1|>|z_3|,|z_2|>|z_3|\}$, the RHS of
\begin{align*}
f(z_1,z_2,z_3):=\bk{v',A(z_1)B(z_2)C(z_3)v}=\sum_{k\in\Nbb}\bk{v',A(z_1)B(z_2)P_kC(z_3)v}	
\end{align*}
converges a.l.u. to the LHS by Thm. \ref{lb77}. Therefore, on $\Omega_1$, the sum and the residue (i.e. contour integral) on the RHS of \eqref{eq123} commute, and \eqref{eq123} converges a.l.u. on $|z_2|>|z_3|>0$ to
\begin{align*}
g(z_2,z_3)=\Res_{z_1=z_2}(z_1-z_2)^nf(z_1,z_2,z_3)dz_1	
\end{align*}
which is holomorphic on $\Conf^2(\Cbb^\times)$ since $f$ is holomorphic on $\Conf^3(\Cbb^\times)$ by Thm. \ref{lb74}. Similarly,  $\sum_{k\in\Nbb}\bk{v',C(z_3)P_k(A_nB)(z_2)}$ converges a.l.u. on $|z_3|>|z_2|>0$ to $g(z_2,z_3)$.\\

Step 2. To complete the proof, we need to show that $(z_2-z_3)^kg$ is holomorphic near $z_2=z_3$ for some $k$. By Thm. \ref{lb74}, $(z_1-z_2)^nf$ is a linear combination of
\begin{align*}
z_1^az_2^bz_3^c(z_1-z_2)^{n-N}(z_1-z_3)^{-N}(z_2-z_3)^{-N}	
\end{align*}
for some $N\in\Nbb$ and $a,b,c\in\Zbb$. To prove the claim, we may assume that $(z_1-z_2)^nf$ is just of this form. Then near $z_1=z_2$, $(z_1-z_2)^nf$ has a.l.u. convergent series expansion
\begin{align*}
(z_1-z_2)^nf=\sum_{i,j\geq0}{a\choose i}(z_1-z_2)^{n-N+i}z_2^{a-i+b}z_3^c{-N\choose j}(z_1-z_2)^j(z_2-z_3)^{-2N-j}.	
\end{align*}
Apply $\Rep_{z_1=z_2}\cdot dz_1$. This means taking the coefficient of $(z_1-z_2)^{n-N+i+j}$ where $n-N+i+j=-1$. So we set $i=N-n-j-1$. Since $i\geq0$, we take $0\leq j\leq N-n-1$. So
\begin{align*}
g(z_2,z_3)=\sum_{j=0}^{N-n-1}	{a\choose N-n-j-1}z_2^{a-(N-n-j-1)+b}z_3^c{-N\choose j}(z_2-z_3)^{-2N-j},
\end{align*}
which clearly has finite poles at $z_2=z_3$.
\end{proof}

\subsection{}
\index{00@Goddard uniqueness}
\begin{pp}[\textbf{Goddard uniqueness}]\label{lb162}
Let $\mc E$ be a set of homogeneous fields satisfying the assumptions for graded vertex algebras in the reconstruction Thm. \ref{lb39}. If $A^1(z),A^2(z)\in\mc E$ satisfy $A^1_{-1}\id=A^2_{-1}\id$, then $A^1(z)=A^2(z)$.
\end{pp}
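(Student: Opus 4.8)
The plan is to exploit the one distinguished state we can feed into a field—the vacuum $\id$—together with the creation property, the translation property, and locality, to promote the agreement $A^1_{-1}\id = A^2_{-1}\id$ of the $(-1)$-st modes on the vacuum to agreement of the whole fields on all of $\Vbb$. Set $A(z) = A^1(z) - A^2(z)$, a homogeneous field with the property that $A(z)\id$ has no negative powers of $z$ (by the creation property hypothesis in Thm. \ref{lb39}) and $A_{-1}\id = 0$. The goal is to show $A(z) = 0$.

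First I would show $A(z)\id = 0$ as an element of $\Vbb[[z]]$. The creation property already gives $A(z)\id \in \Vbb[[z]]$, so write $A(z)\id = \sum_{n\geq 0} A_{-n-1}\id\, z^n$; we know the $n=0$ term vanishes. Now apply the translation property: since $[L_{-1}, A(z)] = \partial_z A(z)$ and $L_{-1}\id = 0$, we get $\partial_z\big(A(z)\id\big) = L_{-1}\,A(z)\id$. This is exactly the hypothesis of Lemma \ref{lb21} with $W = \Vbb$, $f(z) = A(z)\id$, and the operator $A$ of that lemma equal to $L_{-1}$: we have a power series annihilated at $z=0$ satisfying $f' = L_{-1}f$, hence $f = 0$. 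Therefore $A_{-n-1}\id = 0$ for all $n\geq 0$, i.e.\ $A(z)\id = 0$.

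Next I would use locality to spread this vanishing from $\id$ to an arbitrary $v\in\Vbb$. Each $A^i(z)$ is local to every field of $\mc E$, and one checks readily (the Lie-algebraic locality condition \eqref{eq66} is linear in each slot) that $A(z) = A^1(z) - A^2(z)$ is local to every $B(z)\in\mc E$; pick $N$ with $(z_1-z_2)^N[A(z_1),B(z_2)] = 0$. Let $v\in\Vbb$; by the generating property it suffices to treat $v = B^1_{k_1}\cdots B^k_{k_m}\id$ with $B^j(z)\in\mc E$, and by a straightforward induction on $m$ it suffices to handle $v = B_k w$ where $B(z)\in\mc E$ and $A(z)w = 0$ is already known (the base case $w = \id$ being the previous paragraph). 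Applying $(z_1-z_2)^N[A(z_1),B(z_2)] = 0$ to $w$ gives $(z_1-z_2)^N A(z_1)B(z_2)w = (z_1-z_2)^N B(z_2)A(z_1)w = 0$, since $A(z_1)w = 0$. Now $A(z_1)B(z_2)w$ is a series whose $z_2$-coefficients are $A(z_1)(B_k w)$-type expressions lying in $\Vbb((z_1))$ for each fixed power of $z_2$ (lower truncation for $A$); multiplying by $(z_1-z_2)^N$ and using that $(z_1-z_2)^N$ is not a zero divisor in $\Vbb((z_1))((z_2))$ (Subsec.\ \ref{lb64}, applied coefficientwise in $\Vbb$) forces $A(z_1)B(z_2)w = 0$ in $\Vbb((z_1))((z_2))$. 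Reading off the coefficient of each power of $z_2$ gives $A(z_1)\big(B_k w\big) = 0$ for every $k$, completing the induction. Hence $A(z)v = 0$ for all $v\in\Vbb$, i.e.\ $A^1(z) = A^2(z)$.

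The main obstacle is the zero-divisor subtlety flagged in Subsec.\ \ref{lb64}: one must be careful that $(z_1-z_2)^N g = 0$ implies $g = 0$ only because the relevant $g = A(z_1)B(z_2)w$ genuinely lies in $\Vbb((z_1))((z_2))$ (equivalently, that for each fixed power of $z_2$ the $z_1$-series is lower-truncated, which is where homogeneity of the fields and the lower truncation property enter), not merely in $\Vbb[[z_1^{\pm1},z_2^{\pm1}]]$ where cancellation can fail. Everything else—the translation-property input to Lemma \ref{lb21} and the induction over generators—is routine. I would also remark that the same argument, with $A^2$ replaced by the field built from $\mc E$ via \eqref{eq119}, is exactly what makes the vertex operator $Y(u,z)$ in the reconstruction theorem well defined, which is the role this proposition plays in Sec.\ \ref{lb84}.
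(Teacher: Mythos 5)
Your proof is correct, and its first step (showing $A(z)\id=0$ via the translation property, $L_{-1}\id=0$, and Lemma \ref{lb21}) coincides with the paper's. The second step, however, takes a genuinely different route. The paper stays within its complex-analytic framework: it invokes the $n$-point function machinery of Thm.~\ref{lb74} and Thm.~\ref{lb77} to view $\bk{v',A(z)B^1(z_1)\cdots B^k(z_k)\id}$ as a single holomorphic function on $\Conf^{k+1}(\Cbb^\times)$ independent of the operator ordering, evaluates it in the region $0<|z|<|z_1|<\cdots<|z_k|$ where it becomes $\sum_s\bk{v',B^1(z_1)\cdots B^k(z_k)P_sA(z)\id}=0$, and then recovers the mode identities by contour integration. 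You instead run a purely formal induction on the generators: from $(z_1-z_2)^N[A(z_1),B(z_2)]=0$ and $A(z_1)w=0$ you get $(z_1-z_2)^NA(z_1)B(z_2)w=0$, and since $A(z_1)B(z_2)w$ lies in $\Vbb((z_1))((z_2))$ (lower truncation of $B$ bounds the $z_2$-powers below, lower truncation of $A$ handles each $z_2$-coefficient), where $(z_1-z_2)^N$ is invertible, you conclude $A(z_1)B_kw=0$. You correctly identify the zero-divisor issue of Subsec.~\ref{lb64} as the one genuine subtlety, and your resolution is sound. The trade-off: your argument needs only the Lie-algebraic-to-formal-variable implication of Subsec.~\ref{lb67} and is entirely algebraic (it is essentially the classical proof one finds in the algebraic VOA literature), whereas the paper's argument reuses the convergence and analytic-continuation results already established for Dong's lemma and so comes nearly for free in context, at the cost of resting on that heavier analytic machinery. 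One small point worth making explicit in your write-up: $A=A^1-A^2$ is homogeneous only when $\Delta_{A^1}=\Delta_{A^2}$, which is automatic if $A^1_{-1}\id=A^2_{-1}\id\neq0$; if the common value is $0$ one simply applies the argument to $A^1$ and $A^2$ separately (the paper's "assume without loss of generality that $A\in\mc E$" glosses over the same point), and in any case your induction only uses lower truncation and locality of $A$, both of which are preserved under differences.
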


\begin{proof}
Set $A=A^1-A^2$, and assume without loss of generality that $A\in\mc E$. Then $A_{-1}\id=0$. By the generating property, we can  show $A(z)=0$ by show that for any $v'\in\Vbb'$, $B^1,\dots,B^k\in\mc E$, and $n,n_1,\dots,n_k\in\Zbb$,
\begin{align}
\bk{v',A_nB^1_{n_1}\cdots B^k_{n_k}\id}=0.	\label{eq124}
\end{align} 
Suppose we can show that
\begin{align}
\bk{v',A(z)B^1(z_1)\cdots B^k(z_k)\id}=0	\label{eq125}
\end{align}
as functions on $\Conf^{k+1}(\Cbb^\times)$. Then multiplying it by any Laurent polynomial of $z,z_1,\dots,z_N$ and taking contour integrals over $|z|=R,|z_1|=r_1,\dots,|z_k|=r_k$ where $0<r_k<\cdots<r_1<R$, we will get \eqref{eq124}.

Since the LHS of \eqref{eq125} is holomorphic, it suffices to prove \eqref{eq125} when $0<|z|<|z_1|<\cdots<|z_k|$, i.e., to prove in this domain that
\begin{align*}
\sum_{s\in\Nbb}\bk{v',B^1(z_1)\cdots B^k(z_k)P_sA(z)\id}=0.	
\end{align*}
Therefore, it suffices to prove $A(z)\id=0$. Since $A(z)$ satisfies the translation property and the creation property $\lim_{z\rightarrow0}A(z)\id=A_{-1}\id$, similar to the proof of Cor. \ref{lb78} we have $A(z)\id=e^{zL_{-1}}A_{-1}\id$. So $A(z)\id$ must be $0$.
\end{proof}

\subsection{}

\begin{proof}[Proof of the reconstruction Thm. \ref{lb39}]
Assume that $\mc E$ contains the identity field $\id(z)=\id_\Vbb$. If $A(z),B(z)\in\mc E$, then using \eqref{eq126}, one checks easily that $A_nB$ satisfies the creation property with
\begin{align}
(A_nB)_{-1}\id=A_nB_{-1}\id.
\end{align}
So by Lemma \ref{lb79} and Dong's lemma, if we include $A_nB$ in $\mc E$, then the new $\mc E$ still satisfies the assumptions for graded vertex algebras in Thm. \ref{lb39}. By induction, when $A^1,\dots,A^k\in\mc E$  we have
\begin{align}
(A^1_{n_1}\cdots A^k_{n_k}\id)_{-1}\id=	A^1_{n_1}\cdots A^k_{n_k}\id.
\end{align}
Therefore, by including any linear combination of vectors of the form $A^1_{n_1}\cdots A^k_{n_k}\id$ in $\mc E$, we may assume that for each homogeneous $u\in\Vbb$ there exists  $A(z)\in\mc E$ such that $A_{-1}\id=u$. By Goddard uniqueness, such $A(z)$ is unique and hence can be written as $Y(u,z)$.

We now prove the Jacobi identity for $Y$ since the other axioms of graded vertex algebras are obvious. Choose $A(z)=Y(u,z)$ and $B(z)=Y(v,z)$ in $\mc E$.  Note that $v=B_{-1}\id$. By extending $\mc E$, we may assume that each $A_nB$ is in $\mc E$. To show the VOA Jacobi identity \eqref{eq47},  by \eqref{eq107}, it suffices to show $Y(Y(u)_nv,z)=(A_nB)(z)$. This follows from Goddard uniqueness and
\begin{align*}
(A_nB)_{-1}\id=A_nB_{-1}\id=A_nv=Y(u)_nv.	
\end{align*}
So $\Vbb$ is a graded vertex algebra. The last paragraph of Thm. \ref{lb39} about VOA is obvious.
\end{proof}

\section{VOA modules; contragredient modules}\label{lb192}

\subsection{}

Let $\Vbb$ be a VOA. 

\begin{df}
A vector space $\Wbb$ equipped with a linear map
\begin{gather*}
Y_\Wbb:\Vbb\rightarrow(\End\Wbb)[[z^{\pm1}]],\\
v\mapsto Y_\Wbb(v,z)=\sum_{n\in\Zbb}Y_\Wbb(v)_nz^{-n-1}	
\end{gather*}
(where each $Y_\Wbb(v)_n\in\End\Wbb$) is called a \textbf{weak $\Vbb$-module} if the following hold:
\begin{itemize}
\item (Lower truncation) $Y_\Wbb(v,z)w\in\Wbb((z))$ for each $v\in\Vbb,w\in\Vbb$.
\item $Y_\Wbb(\id,z)=\id_\Wbb$.
\item (Jacobi identity) \index{00@Jacobi identity, algebraic version} For each $u,v\in\Vbb$, 
\begin{align}\label{eq128}
	\begin{aligned}
		&\sum_{l\in\Nbb}{m\choose l}Y_\Wbb\big(Y(u)_{n+l}v\big)_{m+k-l}\\
		=&\sum_{l\in\Nbb}(-1)^l{n\choose l}Y_\Wbb(u)_{m+n-l}Y_\Wbb(v)_{k+l}-\sum_{l\in\Nbb}(-1)^{n+l}{n\choose l}Y_\Wbb(v)_{n+k-l} Y_\Wbb(u)_{m+l}	.
	\end{aligned}
\end{align}
\end{itemize}
\end{df}

\begin{df}\index{00@Weak $\Vbb$-modules and (finitely) admissible $\Vbb$-modules}
An \textbf{admissible $\Vbb$-module} $\Wbb$ (or simply a $\Vbb$-module) is a weak $\Vbb$-module  such that $\Wbb=\bigoplus_{n\in\Nbb}\Wbb(n)$ is graded by a diagonalizable operator $\wtd L_0$ \index{L0@$\wtd L_0$} satisfying the grading property
\begin{align}
[\wtd L_0,Y_\Wbb(v,z)]=Y_\Wbb(L_0v,z)+z\partial_zY_\Wbb(v,z)	
\end{align}
for each $v$.  Equivalently, for homogeneous $v$,
\begin{align}
[\wtd L_0,Y_\Wbb(v)_n]=(\wt v-n-1)Y_\Wbb(v)_n,	\label{eq127}
\end{align} 
i.e., $Y_\Wbb(v,z)$ is $\wtd L_0$-homogeneous with weight $\wt v$. Zero and eigenvectors of $\wtd L_0$ are called \textbf{($\wtd L_0$)-homogeneous vectors}. If $w\in\Wbb(n)$, then $\wtd\wt w:=n$ \index{vw@$\wt v,\wtd\wt w$} is called the \textbf{($\wtd L_0$)-weight} of $w$. If each $\Wbb(n)$ is finite-dimensional, we say $\Wbb$ is \textbf{finitely-admissible}.
\end{df}

The lower-truncation property is redundant in the definition of admissible modules since it follows from the grading property.

\begin{cv}
$\Vbb$ itself is an admissible $\Vbb$-module, called the \textbf{vacuum module}. (It is analogous to the adjoint representations of Lie algebras.) We always choose the operator $\wtd L_0$ on $\Vbb$ to be $L_0$.
\end{cv}

\subsection{}

\begin{pp}
Let $\Wbb$ be a weak $\Vbb$-module. Then for each $u\in\Vbb$, the following translation property holds:
\begin{align}
[L_{-1},Y_\Wbb(v,z)]=Y_\Wbb(L_{-1}v,z)=\partial_zY_\Wbb(v,z).	
\end{align}
\end{pp}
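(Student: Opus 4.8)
The statement has two parts: the commutator identity $[L_{-1},Y_\Wbb(v,z)]=Y_\Wbb(L_{-1}v,z)$, and the fact that both equal $\partial_z Y_\Wbb(v,z)$. The plan is to derive both from the Jacobi identity \eqref{eq128} alone, mimicking the vacuum-module computation in \eqref{eq50}--\eqref{eq51}. The starting point is to specialize \eqref{eq128} to $u=\cbf$, the conformal vector, so that $Y(\cbf)_{n+l}$ becomes $L_{n+l-1}$ by \eqref{eq22}. Taking $m=-1$ (and then, if convenient, also $m=0$) and summing against $z^{-k-1}$ over all $k\in\Zbb$ should produce, on the weak module $\Wbb$, the analogue of \eqref{eq68}: namely
\begin{align*}
[L_{-1},Y_\Wbb(v,z)]=Y_\Wbb(L_{-1}v,z),
\end{align*}
using that $L_n v=0$ for $n\geq 2$ is \emph{not} needed here — the term with $L_{-1}v$ is the only one surviving the binomial coefficient $\binom{0}{l+1}$ when $m=-1$, exactly as in the derivation of the first equation of \eqref{eq50}. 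So the first equality is a purely formal consequence of \eqref{eq128} with $u=\cbf$, requiring no new ideas beyond the bookkeeping already carried out in Section~\ref{lb84}'s ambient discussion.

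For the second equality, $Y_\Wbb(L_{-1}v,z)=\partial_z Y_\Wbb(v,z)$, the plan is to combine the first equality with the $m=0$ specialization of the same Jacobi identity. Setting $u=\cbf$, $m=0$ in \eqref{eq128} and summing against $z^{-k-1}$ gives the module analogue of the second line of \eqref{eq50}:
\begin{align*}
[\wtd L_0,Y_\Wbb(v,z)]=zY_\Wbb(L_{-1}v,z)+Y_\Wbb(L_0v,z).
\end{align*}
But on an admissible module the grading property is assumed, so the left side also equals $Y_\Wbb(L_0v,z)+z\partial_z Y_\Wbb(v,z)$. Comparing the two expressions for $[\wtd L_0,Y_\Wbb(v,z)]$ and cancelling $Y_\Wbb(L_0 v,z)$ yields $z\partial_z Y_\Wbb(v,z)=zY_\Wbb(L_{-1}v,z)$, hence $\partial_z Y_\Wbb(v,z)=Y_\Wbb(L_{-1}v,z)$ after dividing by $z$ (legitimate for formal Laurent series in $z$, since multiplication by $z$ is injective on $(\End\Wbb)[[z^{\pm1}]]$). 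This is the same argument structure as in the vacuum case, just transported to $\Wbb$.

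One subtlety to watch: the proposition is stated for a \emph{weak} $\Vbb$-module, where no grading operator $\wtd L_0$ is available, yet the second-equality argument above invokes the grading property. So the honest route for the weak case is different — I would instead derive $Y_\Wbb(L_{-1}v,z)=\partial_z Y_\Wbb(v,z)$ directly from the $m=-1$ specialization by extracting the coefficient of each $z^{-k}$, getting $Y_\Wbb(L_{-1}v)_n = -nY_\Wbb(v)_{n-1}$, which is exactly \eqref{eq53} read on $\Wbb$ and is equivalent to $\partial_z Y_\Wbb(v,z)=Y_\Wbb(L_{-1}v,z)$ by integration by parts; this uses only Jacobi identity and costs nothing extra. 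The main (mild) obstacle is therefore just making sure the specializations $m=-1$ (and, for the commutator with $\wtd L_0$ in the admissible refinement, $m=0$) of \eqref{eq128} are manipulated correctly with the binomial coefficients $\binom{m}{l}$ and $\binom{m+1}{l+1}$, and that the use of $Y(\cbf)_{n}=L_{n-1}$ is applied on the correct side. No analytic input is needed; this is entirely formal algebra paralleling \eqref{eq49}--\eqref{eq53}.
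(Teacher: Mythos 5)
Your first equality is fine and is exactly the paper's step: the module Jacobi identity \eqref{eq128} with $u=\cbf$, $m=0$, $n=0$ (i.e.\ the commutator formula for $Y_\Wbb(\cbf)_0=L_{-1}$) gives $[L_{-1},Y_\Wbb(v)_k]=Y_\Wbb(L_{-1}v)_k$ with only the $l=0$ term surviving. You also correctly spotted that the grading-property route to the second equality is unavailable for weak modules (and in fact it would be circular even for admissible ones, since identifying $[\wtd L_0,\cdot]$ with $[L_0,\cdot]$ already presupposes Rem.~\ref{lb81}, which rests on the translation property you are trying to prove).

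The gap is in your replacement argument for the second equality. You assert that $Y_\Wbb(L_{-1}v)_n=-nY_\Wbb(v)_{n-1}$ follows ``directly from the $m=-1$ specialization,'' but that specialization (with $u=\cbf$) only ever produces the commutator identity $[L_{-1},Y_\Wbb(v)_k]=Y_\Wbb(L_{-1}v)_k$; no choice of binomial bookkeeping in the $u=\cbf$ case turns this into the derivative formula, because on a weak module there is no axiom telling you how $L_{-1}$ interacts with $z$. In the vacuum module, \eqref{eq53} is \emph{not} a pure Jacobi consequence --- it uses the translation axiom of $\Vbb$ --- so ``\eqref{eq53} read on $\Wbb$'' is precisely what needs proof. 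The missing idea is the paper's: use Cor.~\ref{lb78} (i.e.\ \eqref{eq52}) to write $L_{-1}v=Y(v)_{-2}\id$ \emph{inside the VOA}, and then apply \eqref{eq128} with $m=0$, $n=-2$ and the vacuum in the last slot to compute
\begin{align*}
Y_\Wbb\big(Y(v)_{-2}\id\big)_k=\sum_{l\in\Nbb}(-1)^l\tbinom{-2}{l}Y_\Wbb(v)_{-2-l}Y_\Wbb(\id)_{k+l}-\sum_{l\in\Nbb}(-1)^{l}\tbinom{-2}{l}Y_\Wbb(\id)_{k-2-l}Y_\Wbb(v)_{l};
\end{align*}
since $Y_\Wbb(\id)_j=\delta_{j,-1}\id_\Wbb$, at most one term of each sum survives and both cases collapse to $-kY_\Wbb(v)_{k-1}$, using $\binom{-2}{l}=(-1)^l(l+1)$. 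That single substitution $L_{-1}v=Y(v)_{-2}\id$ is the step your proposal is missing; with it, the rest of your outline goes through.
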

\begin{proof}
Applying the Jacobi identity to $[Y_\Wbb(\cbf)_0,Y_\Wbb(u)_k]$ gives $[L_{-1},Y(u)_k]=Y(L_{-1}u)_k$. By \eqref{eq52}, $L_{-1}u=Y(u)_{-2}\id$. Applying the Jacobi identity to $Y_\Wbb(Y(u)_{-2}\id)_k$ shows that it equals $-kY_\Wbb(u)_{k-1}$.
\end{proof}

\begin{pp}
Let $\Wbb$ be a weak $\Vbb$-module. Define the action of $L_n$ on $\Wbb$ to be
\begin{align}
L_n=Y_\Wbb(\cbf)_{n+1}	
\end{align}
Then $(L_n)_{n\in\Zbb}$ satisfy the Viarsoro relation with the same central charge $c$ as that of $\Vbb$.
\end{pp}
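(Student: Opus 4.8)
The plan is to obtain the Virasoro relation for $(L_n)_{n\in\Zbb}$ on $\Wbb$ by specializing the module Jacobi identity to $u=v=\cbf$, in exact parallel with how the commutator formula \eqref{eq49} reproduces the Virasoro relation on $\Vbb$ itself (cf. Exercise \ref{lb37}). The two ingredients I would prepare first are the module versions of \eqref{eq49} and \eqref{eq53}. For the former, setting $n=0$ in \eqref{eq128} gives, for all $u,v\in\Vbb$ and $m,k\in\Zbb$,
\[
[Y_\Wbb(u)_m,Y_\Wbb(v)_k]=\sum_{l\in\Nbb}\binom{m}{l}Y_\Wbb\big(Y(u)_lv\big)_{m+k-l}.
\]
For the latter, the relation $Y_\Wbb(L_{-1}v)_n=-nY_\Wbb(v)_{n-1}$ has already been established in the translation-property proposition proved just above, so I would simply cite it.

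Next I would specialize. Take $u=v=\cbf$ and write $m=p+1$, $k=q+1$, so that $Y_\Wbb(\cbf)_{p+1}=L_p$ and $Y_\Wbb(\cbf)_{q+1}=L_q$. Since $Y(\cbf)_l=L_{l-1}$ on $\Vbb$ by \eqref{eq22}, the inner terms become $Y_\Wbb(L_{l-1}\cbf)_{p+q+2-l}$. By Remark \ref{lb80}, in $\Vbb$ one has $L_0\cbf=2\cbf$, $L_1\cbf=0$, $L_2\cbf=\tfrac c2\id$, and $L_{l-1}\cbf=0$ for $l\geq4$ because its weight $3-l$ is negative; the only surviving indices are thus $l=0,1,3$. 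Using the translation relation above for $l=0$, the three contributions are $-(p+q+2)L_{p+q}$, $2(p+1)L_{p+q}$, and $\binom{p+1}{3}Y_\Wbb(\tfrac c2\id)_{p+q-1}=\tfrac c2\binom{p+1}{3}\delta_{p+q,0}\,\id_\Wbb$, where the last step uses $Y_\Wbb(\id)_n=\delta_{n,-1}\id_\Wbb$. Since $-(p+q+2)+2(p+1)=p-q$ and $\tfrac c2\binom{p+1}{3}=\tfrac c{12}(p+1)p(p-1)$, this yields
\[
[L_p,L_q]=(p-q)L_{p+q}+\frac{c}{12}(p+1)p(p-1)\delta_{p+q,0}\,\id_\Wbb,
\]
which is exactly \eqref{eq16} with the central charge of $\Vbb$.

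There is no genuine obstacle here; the computation is routine. The only points that need care are the index bookkeeping between $Y_\Wbb(\cbf)_n$ and $L_{n-1}$, and recognizing that the central charge is inherited precisely because the scalar $\tfrac c2$ in $L_2\cbf=\tfrac c2\id$ is a feature of $\Vbb$ itself — this is where the normalization $Y(\cbf)_n=L_{n-1}$ enters, via the derivation in Remark \ref{lb80} from the Virasoro relation on $\Vbb$. I would also note in passing that the truncation of the sum to $l\leq3$ is the module-level manifestation of $\sum_n L_nz^{-n-2}$ being a weight-$2$ field whose self-OPE has a pole of order exactly four.
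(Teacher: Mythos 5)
Your proof is correct and follows exactly the route the paper indicates: the paper's own proof is the one-line instruction to compute $[Y_\Wbb(\cbf)_{m+1},Y_\Wbb(\cbf)_{k+1}]$ using the Jacobi identity, the translation property, and Remark \ref{lb80}, which is precisely what you carry out in detail. The index bookkeeping, the truncation of the sum to $l\le 3$ by the weight argument, and the identification of the central term all check out.
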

\begin{proof}
Use the Jacobi identity, the translation property, and Rem. \ref{lb80} to compute $[Y_\Wbb(\cbf)_{m+1},Y_\Wbb(\cbf)_{k+1}]$.
\end{proof}

\begin{exe}
Show that $[L_0,Y_\Wbb(v,z)]=Y_\Wbb(L_0v,z)+z\partial_zY_\Wbb(v,z)$.
\end{exe}

\begin{rem}\label{lb81}
The above exercise shows that if $\Wbb$ is admissible, then $A:=\wtd L_0-L_0$ commutes with the action of $\Vbb$ on $\Wbb$, i.e., $A\in\End_\Vbb(\Wbb)$. In particular, it commutes with $L_0=Y_\Wbb(\cbf)_1$ and hence with $\wtd L_0$. Therefore, $\wtd L_0-L_0$ is an endomorphism of the admissible $\Vbb$-module $\Wbb$ commuting with $\wtd L_0$. Note also that by \eqref{eq127}, $L_n$ lowers the $\wtd L_0$-weights by $n$:
\begin{align}
[\wtd L_0,L_n]=-nL_n.\label{eq134}	
\end{align}
\end{rem}

\begin{cv}
	The grading of an admissible $\Vbb$-module always means the $\wtd L_0$-grading, even when $L_0$ is diagonalizable.
\end{cv}

\subsection{}

We discuss some basic properties of irreducible modules. 

\begin{cv}
A homomorphism of weak/admissible/finitely admissible modules $A:\Wbb_1\rightarrow\Wbb_2$ always means a linear map intertwining the actions of $\Vbb$.
\end{cv}

\begin{df}
	An \textbf{irreducible $\Vbb$-module} \index{00@Irreducible VOA modules} is a finitely admissible $\Vbb$-module with no proper graded $\Vbb$-invariant subspaces (i.e., no proper $\Vbb$- and $\wtd L_0$-invariant subspaces). 
\end{df}

\begin{lm}[Schur's lemma]\label{lb82}
Let $\Wbb$ be an irreducible $\Vbb$-module. Let $A\in\End_\Vbb(\Wbb)$ satisfying $[\wtd L_0,A]=0$. Then $A\in\Cbb\id_\Wbb$.
\end{lm}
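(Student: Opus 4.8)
The plan is to run the usual Schur argument, taking care to stay within the graded category. Since $A$ commutes with $\wtd L_0$, it preserves every $\wtd L_0$-eigenspace, so $A\Wbb(n)\subset\Wbb(n)$ for all $n\in\Nbb$. The module $\Wbb$ is nonzero, so there is some $n$ with $\Wbb(n)\neq 0$; as $\Wbb$ is \emph{finitely} admissible, $\Wbb(n)$ is finite-dimensional, and since $\Cbb$ is algebraically closed the operator $A|_{\Wbb(n)}$ has an eigenvalue $\lambda\in\Cbb$.

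Next I would set $B:=A-\lambda\id_\Wbb$. Then $B\in\End_\Vbb(\Wbb)$ and $[\wtd L_0,B]=0$, and $B$ is not injective because it annihilates a nonzero vector of $\Wbb(n)$. The subspace $\Ker B$ is $\Vbb$-invariant, since $B$ intertwines the $\Vbb$-action: $Bw=0$ implies $B\,Y_\Wbb(v)_m w=Y_\Wbb(v)_m\,Bw=0$ for all $v\in\Vbb$, $m\in\Zbb$. It is also $\wtd L_0$-invariant, since $B\wtd L_0 w=\wtd L_0 Bw=0$ whenever $Bw=0$; and because $\wtd L_0$ is diagonalizable on $\Wbb$, an $\wtd L_0$-invariant subspace is automatically graded. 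Hence $\Ker B$ is a nonzero graded $\Vbb$-invariant subspace of $\Wbb$.

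By irreducibility, a nonzero graded $\Vbb$-invariant subspace must equal $\Wbb$, so $\Ker B=\Wbb$, i.e. $A=\lambda\id_\Wbb$, as claimed. There is no serious obstacle in this argument; the only places the hypotheses enter are the finite-dimensionality of some $\Wbb(n)$, needed to produce the eigenvalue $\lambda$, and the diagonalizability of $\wtd L_0$, needed so that $\Ker B$ is graded and irreducibility can be invoked.
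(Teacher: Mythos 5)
Your proof is correct and follows essentially the same route as the paper: restrict $A$ to a nonzero finite-dimensional graded piece $\Wbb(n)$ to extract an eigenvalue $\lambda$, then observe that $\Ker(A-\lambda)$ is a nonzero $\Vbb$- and $\wtd L_0$-invariant subspace and invoke irreducibility. The extra remarks about diagonalizability of $\wtd L_0$ are fine but not needed, since the paper's definition of irreducibility already refers directly to $\wtd L_0$-invariant subspaces.
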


\begin{proof}
By $[\wtd L_0,A]=0$, $A$ restricts to a linear operator on each $\Wbb(n)$. Choose $n$ such that $\Wbb(n)\neq0$. Since $\Wbb(n)$ is finite-dimensional, $A|_{\Wbb(n)}$ has an eigenvalue $\lambda$. So the (clearly $\Vbb$-invariant) subspace $\Ker(A-\lambda)$ is non-zero. It is also $\wtd L_0$ invariant since $[\wtd L_0,A-\lambda]=0$. So $\Ker(A-\lambda)=\Wbb$.
\end{proof}

\begin{co}\label{lb92}
Let $\Wbb$ be an irreducible $\Vbb$-module. Then $L_0=\wtd L_0+\lambda$ for some $\lambda\in\Cbb$. In particular, $L_0$ is diagonalizable on $\Wbb$.
\end{co}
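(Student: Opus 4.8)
The plan is to invoke the structural fact recorded in Remark \ref{lb81} together with the version of Schur's lemma just proved in Lemma \ref{lb82}. First I would set $A:=\wtd L_0-L_0$ acting on $\Wbb$. By the exercise preceding Remark \ref{lb81}, the operator $L_0=Y_\Wbb(\cbf)_1$ satisfies the grading-type relation $[L_0,Y_\Wbb(v,z)]=Y_\Wbb(L_0v,z)+z\partial_zY_\Wbb(v,z)$, which is exactly the same relation that $\wtd L_0$ satisfies by the definition of admissible module; subtracting the two shows $[A,Y_\Wbb(v,z)]=0$ for all $v\in\Vbb$, i.e. $A\in\End_\Vbb(\Wbb)$. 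Moreover $A$ commutes with $\wtd L_0$: indeed $A$ commutes with $L_0=Y_\Wbb(\cbf)_1$ because $A\in\End_\Vbb(\Wbb)$, and it trivially commutes with itself, so $[A,\wtd L_0]=[A,L_0]+[A,A]=0$.

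Next I would apply Lemma \ref{lb82} to $A$: since $\Wbb$ is irreducible and $A\in\End_\Vbb(\Wbb)$ with $[\wtd L_0,A]=0$, we conclude $A=\mu\,\id_\Wbb$ for some $\mu\in\Cbb$. Setting $\lambda=-\mu$ gives $L_0=\wtd L_0-A=\wtd L_0+\lambda$. Finally, since $\wtd L_0$ is diagonalizable by the definition of an (finitely) admissible module, adding the scalar $\lambda$ keeps it diagonalizable with the same eigenspaces shifted by $\lambda$; hence $L_0$ is diagonalizable on $\Wbb$, completing the proof.

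There is essentially no obstacle here: the entire content has already been packaged into Remark \ref{lb81} (that $\wtd L_0-L_0$ is a $\Vbb$-module endomorphism commuting with $\wtd L_0$) and Lemma \ref{lb82} (Schur's lemma for endomorphisms commuting with $\wtd L_0$). The only point requiring a word of care is the verification that $A$ commutes with $\wtd L_0$ so that Schur's lemma applies in the stated form; but this is immediate once one knows $A$ commutes with the $\Vbb$-action, since $L_0$ itself is built from that action as $Y_\Wbb(\cbf)_1$. So the "hard part" is really just bookkeeping, and the corollary follows in a few lines.
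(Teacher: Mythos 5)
Your argument is correct and is exactly the paper's proof: the paper also deduces the corollary immediately from Remark \ref{lb81} (that $\wtd L_0-L_0\in\End_\Vbb(\Wbb)$ commutes with $\wtd L_0$) together with Schur's lemma (Lemma \ref{lb82}). Your write-up simply spells out the details the paper leaves implicit.
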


\begin{proof}
This follows immediately from Rem. \ref{lb81} and Schur's lemma \ref{lb82}.
\end{proof}

From this corollary, we see that the $\wtd L_0$-gradings of an irreducible module are unique up to scalar addition.

\begin{co}
Any irreducible $\Vbb$-module $\Wbb$ has no proper $\Vbb$-invariant subspace.
\end{co}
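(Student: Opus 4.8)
The plan is to reduce the statement to the definition of irreducibility by showing that \emph{every} $\Vbb$-invariant subspace of $\Wbb$ is automatically graded. Let $M\subseteq\Wbb$ be a $\Vbb$-invariant subspace. The crucial observation is that the operator $L_0$ on $\Wbb$ is internal to the module structure: since $\cbf\in\Vbb$, we have $L_0=Y_\Wbb(\cbf)_1$, which is one of the Fourier modes $Y_\Wbb(v)_n$. Hence $\Vbb$-invariance of $M$ immediately gives that $L_0$ preserves $M$.

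Next I would invoke Cor. \ref{lb92}: since $\Wbb$ is irreducible, $L_0=\wtd L_0+\lambda$ for some $\lambda\in\Cbb$, so $L_0$ is diagonalizable on $\Wbb$ with eigenspaces the subspaces $\Wbb(n)$ (with eigenvalue $n+\lambda$). Now apply the elementary fact (the same one recorded in the footnote to Subsec. \ref{lb47}) that a subspace invariant under a diagonalizable operator is itself spanned by eigenvectors: this yields $M=\bigoplus_{n\in\Nbb}\big(M\cap\Wbb(n)\big)$. In particular $M$ is $\wtd L_0$-invariant, i.e., $M$ is a graded $\Vbb$-invariant subspace of $\Wbb$.

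Finally, by the definition of an irreducible $\Vbb$-module, the only graded $\Vbb$-invariant subspaces are $0$ and $\Wbb$, so $M\in\{0,\Wbb\}$, which is the claim. There is no real obstacle here; the only point worth isolating is that $L_0$, being a mode of $Y_\Wbb(\cbf)$, need not be assumed separately to preserve $M$, together with the use of Cor. \ref{lb92} to know it is diagonalizable so that the linear-algebra grading argument applies.
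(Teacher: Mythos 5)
Your proposal is correct and is essentially the paper's own argument: the paper likewise notes that a $\Vbb$-invariant subspace is $L_0$-invariant because $L_0=Y_\Wbb(\cbf)_1$, then (implicitly via Cor. \ref{lb92}) passes to $\wtd L_0$-invariance and hence gradedness, and concludes by irreducibility. You merely spell out the diagonalizability step more explicitly; no substantive difference.
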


\begin{proof}
Let $\Mbb$ be a $\Vbb$-invariant subspace of $\Wbb$. Then $\Mbb$ is $L_0$-invariant since $L_0=Y_\Wbb(\cbf)_1$. So $\Mbb$ is $\wtd L_0$-invariant, i.e., a graded subspace. So $\Mbb$ is not proper.
\end{proof}

By the same reasoning, we have:
\begin{co}[Schur's lemma]
Let $\Wbb$ be an irreducible $\Vbb$-module. Then $\End_\Vbb(\Wbb)=\Cbb\id_\Wbb$.
\end{co}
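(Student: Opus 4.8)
The statement is the final Corollary: for an irreducible $\Vbb$-module $\Wbb$, we have $\End_\Vbb(\Wbb)=\Cbb\id_\Wbb$. The plan is to reduce this to the version of Schur's lemma already proved (Lemma \ref{lb82}), which gives the conclusion under the \emph{additional} hypothesis that the endomorphism commutes with $\wtd L_0$. So the whole task is to show that every $A\in\End_\Vbb(\Wbb)$ automatically commutes with $\wtd L_0$; once that is done, Lemma \ref{lb82} applies verbatim and yields $A\in\Cbb\id_\Wbb$.

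First I would invoke Corollary \ref{lb92}: since $\Wbb$ is irreducible, $L_0=\wtd L_0+\lambda$ for some $\lambda\in\Cbb$, so in particular $\wtd L_0$ and $L_0$ differ by a scalar and hence commute with exactly the same operators. Next, observe that any $A\in\End_\Vbb(\Wbb)$ intertwines the action of $\Vbb$, and in particular commutes with every Fourier mode $Y_\Wbb(v)_n$; applying this to $v=\cbf$ and $n=1$ gives $[A,L_0]=[A,Y_\Wbb(\cbf)_1]=0$. Combining with $L_0=\wtd L_0+\lambda$, we get $[A,\wtd L_0]=[A,L_0]-[A,\lambda\id_\Wbb]=0$. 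Thus $A$ commutes with $\wtd L_0$, and Lemma \ref{lb82} immediately gives $A\in\Cbb\id_\Wbb$. The reverse inclusion $\Cbb\id_\Wbb\subset\End_\Vbb(\Wbb)$ is trivial.

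I do not anticipate a genuine obstacle here: the entire content is the chain of reductions "irreducible $\Rightarrow$ $L_0=\wtd L_0+\lambda$ (Cor. \ref{lb92}) $\Rightarrow$ any $\Vbb$-endomorphism commutes with $L_0$, hence with $\wtd L_0$ $\Rightarrow$ apply Lemma \ref{lb82}." The only point worth stating carefully is that commuting with the $\Vbb$-action includes commuting with $Y_\Wbb(\cbf)_1=L_0$, which is where the conformal vector $\cbf\in\Vbb$ (guaranteed by the VOA structure) is used; without a conformal vector one could not run this argument. If one prefers, the same reasoning can be phrased by noting that $\End_\Vbb(\Wbb)$ is contained in the set of operators commuting with all $L_n$, in particular with $\wtd L_0$, reducing once more to Lemma \ref{lb82}. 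Either way the proof is a two-line deduction, so the write-up should simply cite Cor. \ref{lb92} and Lemma \ref{lb82} and spell out the bracket computation $[A,\wtd L_0]=0$.
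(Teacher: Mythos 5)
Your proof is correct and follows essentially the same route the paper intends by its remark "by the same reasoning": commuting with the $\Vbb$-action gives $[A,L_0]=[A,Y_\Wbb(\cbf)_1]=0$, Cor.~\ref{lb92} upgrades this to $[A,\wtd L_0]=0$, and Lemma~\ref{lb82} finishes. Nothing is missing.
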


\begin{df}\label{lb164}
We say that $\Vbb$ is \textbf{rational} \index{00@Rational VOAs} if any admissible $\Vbb$-module $\Wbb$ is completely reducible, i.e., $\Wbb$ is a direct sum of irreducible $\Vbb$-modules.
\end{df}

\subsection{}

By replacing $L_0$ with $\wtd L_0$, all the results in Sec. \ref{lb83} and Subsec. \ref{lb85}-\ref{lb86} hold for admissible modules. 
\begin{exe}
Give a complex analytic definition of Jacobi identity (cf. Def. \ref{lb194}) for admissible $\Vbb$-modules that is equivalent to the algebraic Jacobi identity \eqref{eq128}.
\end{exe}
However, due to the lack of vacuum vector $\id$, the Goddard uniqueness and hence the reconstruction theorem do not hold for modules. Therefore, checking locality is not enough to prove the existence of $\Vbb$-module structures. To construct examples of modules, new methods are needed. 

Here is one easy method to construct VOA modules. Suppose $\Vbb$ is a subalgebra of a VOA $\Ubb$ such that the $L_0$ on $\Ubb$ restricts to that of $\Vbb$. (We do not assume $\Vbb$ and $\Ubb$ have the same conformal vector.) If we have constructed an admissible $\Ubb$-module $\Mbb$ (for instance, $\Mbb=\Ubb$), then by regarding $\Mbb$ as a $\Vbb$-module, any $\Vbb$-invariant graded subspace of $\Mbb$ is clearly a $\Vbb$-module.  In particular, if we already know that $\Mbb$ is a unitary $\Ubb$-module, then such constructed $\Vbb$-modules are unitary.

\subsection{}

Here we state some results on the irreducible modules associated to affine and Virasoro VOAs without providing proofs. The readers are referred to \cite[Chapter 6]{LL}, \cite{Was10}, and \cite{FZ92,Wang93} for details.

Let $\gk$ be either abelian or a simple Lie algebras. Let $W$ be a finite dimensional irreducible representation of $\gk$. Fix a level $l\in\Cbb$ such that $l+h^\vee\neq0$.   Recall the decomposition $\gk=\gk_-\oplus\gk_+$ into Lie subalgebras defined in Subsec. \ref{lb87}:
\begin{align}
	\wtd\gk_-=\Span\{X_n:X\in\gk,n<0\},\qquad \wtd\gk_+=\Span\{X_n,K,D:X\in\gk,n\geq0\}.	\label{eq129}
\end{align}
Then the $\gk$-module $W$ extends to a $\wtd\gk_+$-module structure such that $X_n$ acts trivially on $W$ if $n>0$, $X_0$ acts as $X$, and $D=0,K=l$ on $W$. Let
\begin{align*}
V_\gk(l,W)=\Ind_{\wtd\gk_+}^{\wtd\gk}(W)=U(\wtd\gk)\otimes_{U(\wtd\gk_+)}W,\qquad L_\gk(l,W)=V_\gk(l,W)/I 
\end{align*}
where $I$ is the largest proper $D$- and $\wtd\gk$-invariant subspace. Then $V_\gk(l,W)$ and $L_\gk(l,W)$ have  unique finitely admissible $V_\gk(l,0)$-module structures such that $D=\wtd L_0$ and that, letting $\Wbb$ be either of them, $Y_\Wbb(X_{-1}\id)_n$ equals the action of $X_n$ on $\Wbb$ for each $X\in\gk,n\in\Zbb$. $L_\gk(l,W)$ is irreducible. When $\gk$ is abelian, $W$ is unitary, and $l>0$, then $V_\gk(l,W)=L_\gk(l,W)$ are unitary modules.

Assume that $\gk$ is simple and $l\in\Nbb$. Then $W$ is naturally a unitary $\gk$-module. Then all irreducible modules of the WZW model $L_\gk(l,0)$ are unitary, and are given by all $L_\gk(l,W)$ where $W$ is an irreducible $\gk$-module satisfying the following property: (Skip this part if you are not familiar with Lie algebra representations.) Let $\lambda$ be the highest weight of $W$. Let $\theta$ be the highest  root (which is also a longest root) of $\gk$, namely, the highest weight of the adjoint representation of $\gk$. Recall the inner product $(\cdot|\cdot)$ on $\gk$ satisfying $(\theta|\theta)=2$, which restricts an inner product on the Cartan subalgebra $\hk$. It gives canonically an inner product on the dual space $\hk^*$ (i.e. the weight space). Then $(\theta|\lambda)$ (which is always $\geq0$) should be $\leq l$. There are only finitely many equivalence classes of such $W$.

Similarly, $\Vir=\Vir^+\oplus\Vir^-$ where
\begin{align*}
\Vir^-=\Span\{L_n:n\leq-1\},\qquad \Vir^+=\Span\{K,L_n:n\geq0\}.	
\end{align*}
For each $c,h\in\Cbb$, let $\Cbb_{c,h}$ be the one dimensional $\Vir^+$-module such on which $K=c,L_0=h$ and $L_n=0$ for all $n>0$. Let
\begin{align*}
M_\Vir(c,h)=\Ind_{\Vir^+}^{\Vir}\Cbb_{c,h}=U(\Vir)\otimes_{U(\Vir^+)}\Cbb_{c,h},\qquad L_\Vir(c,h)=M_\Vir(c,h)/I	
\end{align*}
where $I$ is again the largest proper submodule. Then there exist unique finitely admissible $V_\Vir(l,0)$-module structure on $\Wbb=M_\Vir(l,h)$ or $\Wbb=L_\Vir(l,h)$ with $\wtd L=L_0-h$ such that $Y_\Wbb(\cbf)_n$ is the action of $L_{n-1}$ on $\Wbb$.  

When $c$ satisfies \eqref{eq75}, the irreducible modules of the minimal model $L_\Vir(c,0)$ are classified by all $L_\Vir(c,h_{m,n})$ where $m,n$ are integers with $0<m<p,0<n<q$ and 
\begin{align}
h_{m,n}=\frac{(np-mq)^2-(p-q)^2}{4pq}.	
\end{align}
When $c$ satisfies \eqref{eq130},  $L_{\Vir}(c,0)$ and all its irreducible modules are unitary.

\subsection{}

The remaining part of this section is devoted to the study of contragredient modules (i.e., dual modules). Let $\Wbb=\bigoplus_{n\in\Nbb}\Wbb(n)$ be an admissible $\Vbb$-module. As usual, for each $n$ we define the projection of algebraic completion to $\Wbb(n)$ in the canonical way: \index{VW@$\Vbb^\cl,\Wbb^\cl$, the algebraic completions} \index{Pn@$P_n$}
\begin{align}
P_n:\Wbb^\cl=\prod_{n\in\Nbb}\Wbb(n)\rightarrow\Wbb(n).	
\end{align}
Define the graded dual space \index{VW@$\Vbb',\Wbb'$, the graded dual spaces}
\begin{align*}
\Wbb'=\bigoplus_{n\in\Nbb}\Wbb'(n):=\bigoplus_{n\in\Nbb}\Wbb(n)^*
\end{align*}
as usual. Then $P_n:\Wbb'\rightarrow\Wbb(n)^*$ is defined in an obvious way.

\subsection{}

Our goal is to define an admissible $\Vbb$-module structure $Y_{\Wbb'}$ on $\Wbb'$. To find the formula of $Y_{\Wbb'}$, consider the data
\begin{align*}
\fk X=(\Pbb^1;0,z,\infty;\zeta,\zeta-z,\zeta^{-1})	
\end{align*}
where $\zeta$ is the standard coordinate of $\Cbb$. If $\mc H^\fin$ contains $\Wbb\otimes\wht\Wbb$ where $\wht\Wbb$ is a $\wht\Vbb$-module, and if $w\otimes\wht w\in\Wbb\otimes\wht\Wbb$, $v\otimes\wht v\in\Vbb\otimes\wht\Vbb$, $w'\otimes\wht w'\in\Wbb'\otimes\wht\Wbb'$ are going into the punctures $0,z,\infty$ respectively, then the correlation function is given by
\begin{align}
\bk{w'\otimes\wht w',Y_\Wbb(v,z)w\otimes W_{\wht\Wbb}(\wht v,\ovl z)\wht w}=	\bk{w',Y_\Wbb(v,z)w}\bk{\wht w',Y_{\wht\Wbb}(\wht v,\ovl z)\wht w}.\label{eq132}
\end{align} 

To simplify discussions, we focus on the chiral halves. The standard conformal block for $\Wbb,\Vbb,\Wbb'$ associated to $0,z,\infty$ is given by $\bk{w',Y_\Wbb(v,z)w}$. Indeed, if we choose $\wht v=\id$, choose $\wht w,\wht w'$ such that $\bk{\wht w',\wht w}=1$, and identify $\Wbb$ with $\Wbb\otimes \wht w$ by identifying $w$ with $w\otimes\wht w$, and similarly identify $\Wbb'$ with $\Wbb'\otimes\wht w'$, then the correlation function \eqref{eq132} becomes exactly the conformal block $\bk{w',Y_\Wbb(v,z)w}$. So we can also view $\bk{w',Y_\Wbb(v,z)w}$ as a (restricted) correlation function.

We wish that the correlation function/standard conformal block associated to
\begin{align*}
\fk Z=(\Pbb^1;0,z^{-1},\infty;\zeta,\zeta-z^{-1},\zeta^{-1})	
\end{align*}
is
\begin{align*}
\psi(w'\otimes v\otimes w)=\bk{Y_{\Wbb'}(v,z^{-1})w',w}	
\end{align*}
where $\Wbb',\Vbb,\Wbb$ are associated to $0,z^{-1},\infty$. Now, the biholomorphism $\gamma\in\Pbb^1\mapsto\gamma^{-1}\in\Pbb^1$ gives almost an equivalence of $\fk X$ and $\fk Y$: the only exception is that the local coordinate $\zeta-z$, pulled back along this map, is $\zeta^{-1}-z$ but not $\zeta-z^{-1}$. So let us consider 
\begin{align*}
\fk Y=(\Pbb^1;0,z^{-1},\infty;\zeta,\zeta^{-1}-z,\zeta^{-1})	
\end{align*}
equivalent to $\fk X$ via $\gamma\mapsto\gamma^{-1}$. Again, we associate $\Wbb',\Vbb,\Wbb$  to $0,z^{-1},\infty$ as for $\fk Z$. Then the standard conformal block for $\fk Y$ is still
\begin{align*}
\phi(w'\otimes v\otimes w)=\bk{w',Y_\Wbb(v,z)w}.	
\end{align*}

Now we relate $\phi$ and $\psi$ using the change of coordinate formula, noting that $\zeta-z^{-1}=\vartheta_z\circ(\zeta^{-1}-z)$ where (for each $t\in\Pbb^1$) \index{zz@$\vartheta_z$}
\begin{align}
\vartheta_z(t)=\frac1{z+t}-\frac 1z.\label{eq195}
\end{align}
Therefore
\begin{align}
\phi(w'\otimes v\otimes w)=\psi(w'\otimes \mc U(\vartheta_z) v\otimes w)\label{eq131}	
\end{align}
where $\mc U(\vartheta_z)$ is the operator on (the Hilbert space completion of) $\Wbb$ associated to $\Vbb$.

\subsection{}

It remains to find $\mc U(\vartheta_z)$. To avoid conflict of notations, we write $z^{n+1}\partial_z$ in Sec. \ref{lb41} as $\zeta^{n+1}\partial_\zeta$. Then by \eqref{eq17}, $\exp(z\zeta^2\partial_\zeta)$ sends $\gamma$ to $(1/\gamma-z)^{-1}$ and hence $-z^{-2}\gamma$ to $\vartheta_z(\gamma)$. This means
\begin{align}
\vartheta_z=\exp(z\zeta^2\partial_\zeta)\circ \exp(\log(-z^{-2})\zeta\partial_\zeta)=\exp(z\zeta^2\partial_\zeta)\circ (-z^{-2})^{\zeta\partial_\zeta}.
\end{align}
Thus, on $\Vbb$,
\begin{align}
\mc U(\vartheta_z)=e^{zL_1}(-z^{-2})^{L_0}.	\label{eq196}
\end{align}
Expanding \eqref{eq131}, we get
\begin{align*}
\bigbk{w',Y_\Wbb(v,z)w}=\bigbk{Y_{\Wbb'}\big(e^{zL_1}(-z^{-2})^{L_0}v,z^{-1}\big)w',w}	
\end{align*}
Exchange the role of $\Wbb$ and $\Wbb'$, and we get our definition:

\begin{df}
Let $\Wbb$ be an admissible $\Vbb$-module. Then $Y_{\Wbb'}:\Vbb\rightarrow(\End\Wbb')[[z^{\pm1}]]$ is defined by
\begin{align}
\bigbk{Y_{\Wbb'}(v,z)w',w}=\bigbk{w',Y_\Wbb\big(e^{zL_1}(-z^{-2})^{L_0}v,z^{-1}\big)w}\label{eq141}		
\end{align}
for each $v\in\Vbb,w\in\Wbb,w'\in\Wbb'$. Assuming $v$ to be homogeneous, this means
\begin{align}
Y_{\Wbb'}(v,z)=\sum_{k\in\Nbb}\frac {z^k}{k!}\cdot (-z^{-2})^{\wt v}\cdot Y_\Wbb\big(L_1^kv,z^{-1}\big)^\tr.
\end{align}
Expanding both sides, we see that for each $n\in\Zbb$,
\begin{align}
Y_{\Wbb'}(v)_n=\sum_{k\in\Nbb}\frac{(-1)^{\wt v}}{k!}\big(Y_\Wbb(L_1^kv)_{-n-k-2+2\wt v}\big)^\tr.\label{eq133}
\end{align}
\end{df}

\begin{exe}
Let $L_n$ be $Y_{\Wbb'}(\cbf)_{n+1}$ on $\Wbb'$. Use \eqref{eq133} to show that for each $w\in\Wbb,w'\in\Wbb'$,
\begin{align}
\bk{L_nw',w}=\bk{w',L_{-n}w}.	
\end{align}
\end{exe}

\subsection{}

The purpose of this subsection is to prove Cor. \ref{lb91}.

\begin{exe}\label{lb99}
	Use $[\wtd L_0,L_1]=-L_1$ to show that when acting $w\in\Wbb$,
	\begin{subequations}
		\begin{gather}
			L_1	\lambda^{\wtd L_0}=\lambda^{\wtd L_0+1}L_1,\label{eq137}\\
			e^{\tau L_1}\lambda^{\wtd L_0}=\lambda^{\wtd L_0}e^{\tau\lambda L_1}	\label{eq138}
		\end{gather}
	\end{subequations}
	in $\Wbb[\lambda]$ and $\Wbb[\lambda,\tau]$ respectively.
	
	(Hint. Method 1: Compute $\partial_\lambda$ for the first equation,  $\partial_{\tau}$ for the second one, and apply Lemma \ref{lb21}. Method 2: Use the fact that $L_1$ lowers the weights by $1$ to verify the equations when $v$ is homogeneous.)
\end{exe}
By taking $\partial_\lambda$ of \eqref{eq138} at $\lambda=1$, we get
\begin{align}
	e^{\tau L_1}\wtd L_0=\wtd L_0e^{\tau L_1}+\tau L_1e^{\tau L_1}.\label{eq139}	
\end{align}

\begin{co}\label{lb91}
	we have
	\begin{align}
		Y_\Wbb(v,z)=Y_{\Wbb'}\big(e^{zL_1}(-z^{-2})^{L_0}v,z^{-1}\big)^\tr.	
	\end{align}
	Thus, if $\Wbb$ is finitely admissible, then $\Wbb''=\Wbb$ and $Y_{\Wbb''}=Y_\Wbb$. 
\end{co}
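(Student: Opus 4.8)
The plan is to read off both assertions from the definition \eqref{eq141} of $Y_{\Wbb'}$ together with a single algebraic identity for the change-of-coordinate operator $\mc U(\vartheta_z)=e^{zL_1}(-z^{-2})^{L_0}$ of \eqref{eq196}, namely that $\mc U(\vartheta_{z^{-1}})$ is its inverse on $\Vbb$:
\begin{align*}
e^{z^{-1}L_1}(-z^{2})^{L_0}\cdot e^{zL_1}(-z^{-2})^{L_0}=\id_\Vbb .
\end{align*}
Here all four factors are understood, exactly as in \eqref{eq133}, by letting them act on a homogeneous $v\in\Vbb$; then $e^{zL_1}(-z^{-2})^{L_0}v=(-1)^{\wt v}z^{-2\wt v}\sum_{k\geq 0}\tfrac{z^k}{k!}L_1^kv$ is a \emph{finite} sum since $L_1$ lowers weights, so no convergence issue arises. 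Geometrically the identity reflects $\vartheta_{z^{-1}}\circ\vartheta_z=\id_{\Pbb^1}$ (a short computation with \eqref{eq195}), but I would prove it directly: by the commutation relation \eqref{eq138}, valid here with $\wtd L_0=L_0$ since $\Vbb$ is the vacuum module, one has $(-z^2)^{L_0}e^{zL_1}=e^{-z^{-1}L_1}(-z^2)^{L_0}$, so the left-hand side above collapses to $(-z^2)^{L_0}(-z^{-2})^{L_0}$, which sends a homogeneous $v$ to $(-1)^{2\wt v}v=v$ because $L_0$ has integer (indeed $\Nbb$-valued) eigenvalues on a VOA.

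Granting this identity, the first formula follows by a substitution. In \eqref{eq141} replace $v$ by $u:=\mc U(\vartheta_z)v=e^{zL_1}(-z^{-2})^{L_0}v$ and the formal variable $z$ by $z^{-1}$; since the operator attached to the variable $z^{-1}$ is $\mc U(\vartheta_{z^{-1}})=e^{z^{-1}L_1}(-z^{2})^{L_0}$ and the exponent $z^{-1}$ becomes $z$, the right-hand side becomes $\bigbk{w',Y_\Wbb(\mc U(\vartheta_{z^{-1}})\mc U(\vartheta_z)v,z)w}=\bigbk{w',Y_\Wbb(v,z)w}$ by the identity, while the left-hand side is $\bigbk{Y_{\Wbb'}(\mc U(\vartheta_z)v,z^{-1})w',w}$. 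Comparing, and rewriting the pairing, gives precisely $Y_\Wbb(v,z)=Y_{\Wbb'}\big(e^{zL_1}(-z^{-2})^{L_0}v,z^{-1}\big)^\tr$; the legitimacy of the substitution (that every expression involved is a well-defined $\End$-valued formal Laurent series) is the same bookkeeping that already makes \eqref{eq133} meaningful.

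For the consequence: if $\Wbb$ is finitely admissible then each $\Wbb(n)$ is finite-dimensional, the canonical maps $\Wbb(n)\to\Wbb(n)^{**}$ are isomorphisms, and hence $\Wbb''=\bigoplus_n\Wbb(n)^{**}=\Wbb$ as $\wtd L_0$-graded vector spaces. Now apply the definition \eqref{eq141} to the admissible module $\Wbb'$ (whose graded dual is $\Wbb''=\Wbb$): it expresses $Y_{\Wbb''}(v,z)$ in terms of $Y_{\Wbb'}$. On the other hand, the first assertion of the corollary, already proved for every admissible module and applied with $\Wbb$ replaced by $\Wbb'$, reads $Y_{\Wbb'}(v,z)=Y_{\Wbb''}\big(\mc U(\vartheta_z)v,z^{-1}\big)^\tr$. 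Feeding the latter into the former (again using the inverse identity to cancel $\mc U(\vartheta_{z^{-1}})\mc U(\vartheta_z)$) yields $Y_{\Wbb''}(v,z)=Y_\Wbb(v,z)$ for all $v$, which is the claim.

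The only genuinely delicate point is the identity in the first paragraph: it mixes a formal power series in $z$ (coming from $e^{zL_1}$) with a multivalued fractional power of $z$ (coming from $(-z^{\pm2})^{L_0}$), so the factors must be manipulated using \eqref{eq137}--\eqref{eq139} from Exercise \ref{lb99} rather than naively, and the $(-1)^{\wt v}$ signs must be tracked carefully; the integrality of $\mathrm{Spec}(L_0)$ on $\Vbb$ is exactly what makes the residual scalar trivial. Everything after that is formal and routine.
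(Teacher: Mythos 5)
Your proof of the displayed formula is correct and is essentially the paper's own argument: substitute $v\rightsquigarrow e^{zL_1}(-z^{-2})^{L_0}v$, $z\rightsquigarrow z^{-1}$ into \eqref{eq141} and collapse the composition $e^{z^{-1}L_1}(-z^{2})^{L_0}e^{zL_1}(-z^{-2})^{L_0}$ to the identity via \eqref{eq138} and the integrality of $\mathrm{Spec}(L_0)$ on $\Vbb$. Your explicit attention to the finiteness of $e^{zL_1}(\cdot)$ on homogeneous vectors and to the cancellation of the $(-1)^{\wt v}$ signs is exactly the bookkeeping the paper leaves implicit.

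However, your deduction of $Y_{\Wbb''}=Y_\Wbb$ is circular as written. You combine (i) the definition \eqref{eq141} applied to $\Wbb'$, namely $Y_{\Wbb''}(v,z)=Y_{\Wbb'}\big(\mc U(\vartheta_z)v,z^{-1}\big)^\tr$, with (ii) the first assertion applied to $\Wbb'$, namely $Y_{\Wbb'}(u,\zeta)=Y_{\Wbb''}\big(\mc U(\vartheta_\zeta)u,\zeta^{-1}\big)^\tr$. Feeding (ii) into (i) with $u=\mc U(\vartheta_z)v$ and $\zeta=z^{-1}$ and cancelling $\mc U(\vartheta_{z^{-1}})\mc U(\vartheta_z)$ produces $Y_{\Wbb''}(v,z)=\big(Y_{\Wbb''}(v,z)^\tr\big)^\tr=Y_{\Wbb''}(v,z)$, a tautology that never mentions $Y_\Wbb$. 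The correct (and shorter) route is to pair (i) with the first assertion applied to the \emph{original} module $\Wbb$: that assertion says precisely that the right-hand side of (i), viewed as an operator on $\Wbb''=\Wbb$, equals $Y_\Wbb(v,z)$; hence $Y_{\Wbb''}=Y_\Wbb$ with no further cancellation needed. The gap is thus a one-line fix, but the chain of substitutions you describe does not establish the claim.
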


\begin{proof}
	By \eqref{eq141},
	\begin{align*}
		Y_{\Wbb'}\big(e^{zL_1}(-z^{-2})^{L_0}v,z^{-1}\big)^\tr=Y_\Wbb\big(e^{z^{-1}L_1}(-z^2)^{L_0}e^{zL_1}(-z^{-2})^{L_0}v,z^{-1} \big)^\tr,	
	\end{align*}
	which equals $Y_\Wbb(v,z)$ since $(-z^2)^{L_0}e^{zL_1}=e^{-z^{-1}L_1}(-z^2)^{L_0}$ due to \eqref{eq138}.
\end{proof}

\subsection{}

In the rest of  this section, we prove the following main result of this section.

\begin{thm}\label{lb94}
Let $(\Wbb,Y_\Wbb)$ be an admissible $\Vbb$-module. Then $(\Wbb',Y_{\Wbb'})$ is an admissible $\Vbb$-module, called the \textbf{contragredint $\Vbb$-module} of $\Wbb$.\index{W@$\Wbb'$, the contragredient $\Vbb$-module of $\Wbb$} If $\Wbb$ is finitely-admissible, then so is $\Wbb'$, and under the canonical identification $\Wbb=\Wbb''$ we have $Y_\Wbb=Y_{\Wbb''}$.
\end{thm}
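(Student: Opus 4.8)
The plan is to verify the two defining axioms of an admissible module for $(\Wbb',Y_{\Wbb'})$: the identity axiom $Y_{\Wbb'}(\id,z)=\id_{\Wbb'}$ (immediate from \eqref{eq141} since $L_1\id=0$, $L_0\id=0$, $Y_\Wbb(\id,z^{-1})=\id_\Wbb$), and then the Jacobi identity \eqref{eq128}, which is the only serious point. The lower-truncation property will follow once we establish the grading property, so I would first check that $Y_{\Wbb'}(v,z)$ is $\wtd L_0$-homogeneous of weight $\wt v$ when $v$ is homogeneous; here the natural $\wtd L_0$ on $\Wbb'$ is the transpose of the $\wtd L_0$ on $\Wbb$, i.e.\ $\wtd L_0 w'=n w'$ for $w'\in\Wbb'(n)$. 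From \eqref{eq133} one reads off that $Y_{\Wbb'}(v)_n$ sends $\Wbb'(a)$ to $\Wbb'(b)$ with $b-a = \wt v - n - 1$ (using that $Y_\Wbb(L_1^kv)_{-n-k-2+2\wt v}$ raises $\Wbb$-weights by $\wt(L_1^kv) - (-n-k-2+2\wt v) - 1 = (\wt v - k) + n + k + 2 - 2\wt v - 1 = n + 1 - \wt v$, whose transpose lowers $\Wbb'$-weights by the same amount), giving exactly \eqref{eq127}.

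For the Jacobi identity, rather than manipulating the triple sums in \eqref{eq128} directly, I would use the complex-analytic reformulation. The point is that the correlation function $\bigbk{w',Y_\Wbb(v_1,z_1)Y_\Wbb(v_2,z_2)w}$ for the module $\Wbb$ satisfies the complex-analytic Jacobi identity (the module analogue of Def.\ \ref{lb194}, which holds by the remark at the start of Subsec.\ on ``By replacing $L_0$ with $\wtd L_0$, all the results in Sec.\ \ref{lb83}\ldots hold for admissible modules''). The transformation $\gamma\mapsto\gamma^{-1}$ of $\Pbb^1$ is a biholomorphism carrying the configuration of punctures $0,z_1,z_2,\infty$ to $0,z_1^{-1},z_2^{-1},\infty$; combined with the change-of-coordinate operator $\mc U(\vartheta_z)=e^{zL_1}(-z^{-2})^{L_0}$ computed in \eqref{eq196}, this identifies the $(M+2)$-point function built from $Y_{\Wbb'}$ and two insertions of $\Vbb$-vectors with the $\Wbb$-correlation function precomposed by the appropriate coordinate changes. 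Concretely, I would show that $\bigbk{Y_{\Wbb'}(v_1,z_1)Y_{\Wbb'}(v_2,z_2)w',w}$, $\bigbk{Y_{\Wbb'}(v_2,z_2)Y_{\Wbb'}(v_1,z_1)w',w}$, and $\bigbk{Y_{\Wbb'}(Y(v_1,z_1-z_2)v_2,z_2)w',w}$ all extend to the single function $f_{w,v_1,v_2,w'}(z_1^{-1},z_2^{-1})\cdot(\text{Jacobian factors})$ on $\Conf^2(\Cbb^\times)$, with a pole of bounded order at $z_1=z_2$; the three a.l.u.\ convergence domains for $Y_{\Wbb'}$ map under inversion to the three domains for $Y_\Wbb$. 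This gives the complex-analytic Jacobi identity for $Y_{\Wbb'}$, hence (via the equivalence theorem, in its module form) the algebraic one.

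An alternative, more computational route — which I would keep as a fallback — is to derive the Jacobi identity for $Y_{\Wbb'}$ from that of $Y_\Wbb$ purely formally, using the adjoint formula \eqref{eq141} together with the identities $(-z^2)^{L_0}e^{zL_1}=e^{-z^{-1}L_1}(-z^2)^{L_0}$ from Exercise \ref{lb99} and the standard change-of-variable behaviour of the three products under $z\mapsto z^{-1}$. This is the approach in \cite{FHL93}; it is correct but involves a somewhat delicate bookkeeping of which expansions (i.e.\ which regions $|z_1|\gtrless|z_2|$, $|z_1-z_2|<|z_2|$) are being used on each side, which is precisely what the analytic picture makes transparent.

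The finitely-admissible case is then easy: if each $\Wbb(n)$ is finite-dimensional then so is each $\Wbb'(n)=\Wbb(n)^*$, so $\Wbb'$ is finitely admissible; and Cor.\ \ref{lb91} already gives $\Wbb''=\Wbb$ as graded vector spaces (the canonical pairing) together with $Y_{\Wbb''}=Y_\Wbb$, so nothing further is needed. The main obstacle is the second paragraph: making the inversion-plus-coordinate-change identification of correlation functions precise enough to transfer the a.l.u.\ convergence and the bounded pole order, while correctly tracking the Jacobian/weight factors $(\partial_\gamma\gamma^{-1})^{\wt v}=(-\gamma^{-2})^{\wt v}$ that produce exactly the operator $\mc U(\vartheta_z)$ appearing in \eqref{eq141}. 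Everything else is routine once that bridge is in place.
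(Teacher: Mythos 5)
Your overall route is the paper's: establish the grading property (hence lower truncation), prove locality of the fields $Y_{\Wbb'}(v,\cdot)$ by pulling correlation functions back through $\gamma\mapsto\gamma^{-1}$ to correlation functions of $Y_\Wbb$ with $\mc U(\vartheta_z)$-transformed insertions, then upgrade locality to the full Jacobi identity; the grading computation, the treatment of the two product series, and the finitely-admissible/double-dual part (already contained in Cor.~\ref{lb91}) are all fine.

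The gap is in the step you dismiss as ``correctly tracking the Jacobian/weight factors.'' Locality of $A=Y_{\Wbb'}(u,\cdot)$ and $B=Y_{\Wbb'}(v,\cdot)$ only yields the Jacobi identity \eqref{eq107} for $A$, $B$ and the OPE fields $(A_kB)$; to get the module Jacobi identity you must still prove $(A_kB)(z_2)=Y_{\Wbb'}(Y(u)_kv,z_2)$, i.e.\ identify the iterate series with the continuation of the products. Unwinding \eqref{eq141}, this reduces to
\[
\bigbk{v',Y\big(e^{z_1L_1}(-z_1^{-2})^{L_0}u,\,z_1^{-1}-z_2^{-1}\big)\,e^{z_2L_1}(-z_2^{-2})^{L_0}v}=\bigbk{v',e^{z_2L_1}Y\big((-z_2^{-2})^{L_0}u,\,z_2^{-2}(z_2-z_1)\big)(-z_2^{-2})^{L_0}v}
\]
with a.l.u.\ convergence on $0<|z_1-z_2|<|z_2|$. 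The scale-covariance half of this is routine, but commuting $e^{z_2L_1}$ past a vertex operator is not bookkeeping: it is the ``$e^{\tau L_1}$-covariance'' of Prop.~\ref{lb90}, a special case of Huang's change-of-coordinate theorem, which is not yet available at this point of the development. The paper has to supply an elementary proof of it, and that proof requires first establishing the translation property $[L_{-1},Y_{\Wbb'}(v,z)]=\partial_zY_{\Wbb'}(v,z)$ for the contragredient operators (a separate computation via \eqref{eq68} and \eqref{eq139}) and then transposing the resulting translation covariance of $Y_{\Wbb'}$ back to $Y_\Wbb$. Your proposal never touches this translation property, so the bridge you yourself flag as ``the main obstacle'' is genuinely unbuilt. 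To close the gap, either prove Prop.~\ref{lb90} along the paper's lines, forward-reference Thm.~\ref{lb95}, or carry out the formal-variable computation of \cite{FHL93} that you mention only as a fallback.
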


The very last sentence of this theorem is proved. To verify that $\Wbb'$ is an admissible module, we begin with the following simple observation.

\begin{lm}
If $v\in\Vbb$ is homogeneous, then $Y_{\Wbb'}(v,z)$ is $\wtd L_0$-homogeneous with weight $\wt v$.
\end{lm}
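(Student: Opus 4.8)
The plan is to verify the homogeneity statement by a direct computation at the level of Fourier modes, using the explicit formula \eqref{eq133} for $Y_{\Wbb'}(v)_n$ in terms of transposes of modes $Y_\Wbb(L_1^kv)_m$ of the original module. The point is to show that for each $n\in\Zbb$, the operator $Y_{\Wbb'}(v)_n$ raises the $\wtd L_0$-weight on $\Wbb'$ by exactly $\wt v - n - 1$, which is the required homogeneity condition \eqref{eq127} for $Y_{\Wbb'}$.

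First I would recall that on $\Wbb$, since $L_1$ lowers the $\wtd L_0$-weight by $1$ (cf. \eqref{eq134}), the vector $L_1^k v$ is $L_0$-homogeneous on $\Vbb$ with $\wt(L_1^kv) = \wt v - k$; note $L_1$ here acts via the VOA $\Vbb$ on itself, so this is the $L_0$-grading, and $L_1^kv = 0$ unless $k \leq \wt v$, so all sums are finite. Then $Y_\Wbb(L_1^kv)_m$ is $\wtd L_0$-homogeneous on $\Wbb$ with weight $\wt(L_1^kv) - m - 1 = \wt v - k - m - 1$, i.e.\ it raises $\wtd L_0$-weight on $\Wbb$ by $\wt v - k - m - 1$. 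Taking the transpose, $\big(Y_\Wbb(L_1^kv)_m\big)^\tr$ raises the $\wtd L_0$-weight on $\Wbb'$ by $-(\wt v - k - m - 1) = -\wt v + k + m + 1$, using that $\wtd L_0$ on $\Wbb'$ is defined so that $\wtd L_0^\tr = \wtd L_0$ and the pairing between $\Wbb(a)$ and $\Wbb'(b)$ vanishes unless $a = b$. In the summand of \eqref{eq133} we have $m = -n - k - 2 + 2\wt v$, so this weight shift equals $-\wt v + k + (-n-k-2+2\wt v) + 1 = \wt v - n - 1$, which is independent of $k$. Hence every summand in \eqref{eq133} raises the $\wtd L_0$-weight on $\Wbb'$ by the same amount $\wt v - n - 1$, so $Y_{\Wbb'}(v)_n$ does too, which is precisely the assertion that $Y_{\Wbb'}(v,z)$ is $\wtd L_0$-homogeneous with weight $\wt v$.

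There is no real obstacle here; the only thing to be careful about is bookkeeping of signs and the two distinct roles of $L_1$ (acting on $\Vbb$ to produce $L_1^kv$, versus not acting on $\Wbb'$ at all in this computation) and of $\wtd L_0$ versus $L_0$. One should also remark that the scalar factor $(-1)^{\wt v}/k!$ in \eqref{eq133} is weight-neutral, so it does not affect the grading argument. This lemma will then be the first ingredient toward checking the full Jacobi identity for $Y_{\Wbb'}$ in the proof of Theorem \ref{lb94}.
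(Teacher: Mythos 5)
Your proof is correct and follows essentially the same route as the paper, which likewise derives the homogeneity by combining the mode formula \eqref{eq133} with the weight-shift properties of $L_1$ and of the modes $Y_\Wbb(L_1^kv)_m$; you have simply written out the bookkeeping that the paper leaves as "one easily computes." The only cosmetic remark is that the weight of $L_1^kv$ in $\Vbb$ is governed by \eqref{eq57} (the $L_0$-grading of the vacuum module) rather than \eqref{eq134}, but this does not affect the argument.
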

\begin{proof}
Using \eqref{eq134} and \eqref{eq133}, one easily computes $[\wtd L_0,Y_{\Wbb'}(v)_n]=(\wt v-n-1)Y_{\Wbb'}(v)_n$.
\end{proof}

It is clear that $Y_{\Wbb'}(\id,z)=\id_{\Wbb'}$. To prove that $Y_{\Wbb'}$ satisfies the axioms of an admissible module, it remains to check the Jacobi identity. We first prove the locality:

\begin{lm}\label{lb88}
Let $u,v\in\Vbb$ be homogeneous. Then $Y_{\Wbb'}(u,z)$ and $Y_{\Wbb'}(v,z)$ are local.
\end{lm}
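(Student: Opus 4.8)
The plan is to deduce locality of the contragredient fields from locality of the module fields of $\Wbb$, using transposition together with the substitution $z\mapsto z^{-1}$. Write $T_v(z):=Y_\Wbb\big(e^{zL_1}(-z^{-2})^{L_0}v,z^{-1}\big)$, an element of $(\End\Wbb)[[z^{\pm1}]]$; by the defining formula \eqref{eq141} we have $Y_{\Wbb'}(v,z)=T_v(z)^\tr$, and the coefficients of $T_v(z)^\tr$ do land in $\End\Wbb'$ by the homogeneity lemma just proved. Since products of fields in two \emph{distinct} variables are always well defined in $(\End\Wbb')[[z_1^{\pm1},z_2^{\pm1}]]$, and since transposition (taken coefficientwise) reverses products and is injective — the graded pairing between $\Wbb$ and $\Wbb'$ being non-degenerate — one obtains
\begin{align*}
	Y_{\Wbb'}(u,z_1)Y_{\Wbb'}(v,z_2)=\big(T_v(z_2)T_u(z_1)\big)^\tr,\qquad \big[Y_{\Wbb'}(u,z_1),Y_{\Wbb'}(v,z_2)\big]=-\big[T_u(z_1),T_v(z_2)\big]^\tr.
\end{align*}
Hence it suffices to produce $N\in\Nbb$ with $(z_1-z_2)^N[T_u(z_1),T_v(z_2)]=0$ in $(\End\Wbb)[[z_1^{\pm1},z_2^{\pm1}]]$.

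First I would expand the twisted vectors: since $u$ is homogeneous, $L_1$ lowers weights by $1$, and $\Vbb$ is $\Nbb$-graded, we have $L_1^ku=0$ for $k>\wt u$, so $e^{zL_1}(-z^{-2})^{L_0}u=(-1)^{\wt u}\sum_{k=0}^{\wt u}\tfrac{z^{k-2\wt u}}{k!}L_1^ku$ is a \emph{finite} sum $\sum_i z^{a_i}u_i$ with $a_i\in\Zbb$ and $u_i\in\Vbb$ homogeneous; hence $T_u(z)=\sum_i z^{a_i}Y_\Wbb(u_i,z^{-1})$, and likewise $T_v(z)=\sum_j z^{b_j}Y_\Wbb(v_j,z^{-1})$. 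Because $\Wbb$ is an admissible module, all the locality results of Sec. \ref{lb83} apply to it (with $L_0$ replaced by $\wtd L_0$); in particular, for each of the finitely many pairs $(u_i,v_j)$ the homogeneous fields $Y_\Wbb(u_i,w_1)$ and $Y_\Wbb(v_j,w_2)$ are local, and I would take $N$ to be the largest of the corresponding locality bounds, so that $(w_1-w_2)^N[Y_\Wbb(u_i,w_1),Y_\Wbb(v_j,w_2)]=0$ for all $i,j$. Substituting $w_1=z_1^{-1}$, $w_2=z_2^{-1}$ — the relabelling $w_1^mw_2^n\mapsto z_1^{-m}z_2^{-n}$, which is harmless here because it is only ever composed with multiplication by the polynomial $(w_1-w_2)^N$ — and then multiplying by the monomial $(z_1z_2)^N$, using $(z_1z_2)^N(z_1^{-1}-z_2^{-1})^N=(z_2-z_1)^N$, gives $(z_2-z_1)^N[Y_\Wbb(u_i,z_1^{-1}),Y_\Wbb(v_j,z_2^{-1})]=0$. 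Multiplying by $z_1^{a_i}z_2^{b_j}$ and summing over $i,j$ then yields $(z_1-z_2)^N[T_u(z_1),T_v(z_2)]=0$, and transposing back finishes the proof, with the same $N$ as locality bound for $Y_{\Wbb'}(u,z)$ and $Y_{\Wbb'}(v,z)$.

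There is no genuine analytic content here; the lemma is essentially bookkeeping. The only points that require attention are $(i)$ verifying that the substitution $z\mapsto z^{-1}$ is legitimate — i.e. that at no stage does one divide by $z_1-z_2$, only multiply by $(z_1-z_2)^N$ and by monomials — and $(ii)$ keeping track of the transposes, together with the observation that the $\Nbb$-grading of $\Vbb$ forces $e^{zL_1}(-z^{-2})^{L_0}u$ to be a \emph{finite} expansion into homogeneous pieces, which is exactly what allows one to reduce to finitely many module-locality bounds and take their maximum.
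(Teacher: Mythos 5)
Your proof is correct, but it takes a genuinely different route from the paper's. You establish the \emph{formal-variable} version of locality, $(z_1-z_2)^N[Y_{\Wbb'}(u,z_1),Y_{\Wbb'}(v,z_2)]=0$, by expanding $e^{zL_1}(-z^{-2})^{L_0}u$ into a finite sum of homogeneous vectors (legitimate since $L_1^ku=0$ for $k>\wt u$), invoking the formal-variable locality of the module fields $Y_\Wbb(u_i,\cdot)$, $Y_\Wbb(v_j,\cdot)$ (available by the remark that Sec.~\ref{lb83} carries over to admissible modules with $\wtd L_0$ in place of $L_0$), and then performing the substitution $z\mapsto z^{-1}$ together with multiplication by Laurent monomials and transposition — all of which, as you correctly note, are operations that commute with multiplication by the Laurent \emph{polynomial} $(z_1-z_2)^N$ and never require inverting it. The paper instead proves the \emph{complex-analytic} version directly: it rewrites $\sum_n\bk{Y_{\Wbb'}(u,z_1)P_nY_{\Wbb'}(v,z_2)w',w}$ via \eqref{eq141} as a finite $\Cbb[z_1^{\pm1},z_2^{\pm1}]$-combination of four-point functions $\bk{w',Y_\Wbb(\cdot,z_2^{-1})P_nY_\Wbb(\cdot,z_1^{-1})w}$ and deduces a.l.u.\ convergence, common analytic continuation $f(z_1,z_2)$, and the pole bound from the analytic locality of $Y_\Wbb$. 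The trade-off: your argument is purely algebraic bookkeeping and avoids all convergence issues, but to use the lemma in the next subsection one must pass through the equivalence of the three locality versions (Subsec.~\ref{lb68}) to recover the holomorphic function $f$, since the paper's verification of the Jacobi identity for $Y_{\Wbb'}$ works directly with $f$ (e.g.\ \eqref{eq105}, \eqref{eq143}, \eqref{eq145}); the paper's proof constructs $f$ explicitly as a by-product, which is why it opts for the analytic route. One cosmetic remark: the injectivity of the transpose map is not actually needed in your argument — you only transport the vanishing identity from $[T_u,T_v]$ to its transpose, which requires no injectivity.
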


\begin{proof}
We prove the complex analytic locality. For each $w\in\Wbb,w'\in\Wbb'$,
\begin{align}
&\sum_{n\in\Nbb}\bigbk{Y_{\Wbb'}(u,z_1)P_nY_{\Wbb'}(v,z_2)w',w}\\
=&\sum_{n\in\Nbb}\bigbk{w',Y_\Wbb\big(e^{z_2L_1}(-z_2^{-2})^{L_0}v,z_2^{-1}\big)P_nY_\Wbb\big(e^{z_1L_1}(-z_1^{-2})^{L_0}u,z_1^{-1}\big)w}
\end{align}
which converges a.l.u. on $0<|z_1^{-1}|<|z_2^{-1}|$ by the locality of $Y_{\Wbb}(u,z)$ and $Y_{\Wbb}(v,z)$. Moreover, this locality shows that the above expression and 
\begin{align}
&\sum_{n\in\Nbb}\bigbk{Y_{\Wbb'}(v,z_2)P_nY_{\Wbb'}(u,z_1)w',w}\\
=&\sum_{n\in\Nbb}\bigbk{w',Y_\Wbb\big(e^{z_1L_1}(-z_1^{-2})^{L_0}u,z_1^{-1}\big)P_n Y_\Wbb\big(e^{z_2L_1}(-z_2^{-2})^{L_0}v,z_2^{-1}\big)w}\label{eq142}
\end{align}
(which converges a.l.u. on $0<|z_2^{-1}|<|z_1^{-1}|$) can be extended to the same holomorphic function $f$ on $\Conf^2(\Cbb^\times)$. This function is a $\Cbb[z_1^{\pm1},z_2^{\pm1}]$-linear combination of $4$-point correlations functions of the form $\bk{w',Y_\Wbb(\cdot,z_1)Y_{\Wbb}(\cdot,z_2)w}$ which is holomorphic on $\Cbb^\times\times\Cbb^\times$ when multiplied by $(z_1-z_2)^N$ for some $N$. So $f$ shares the same property. 
\end{proof}

\subsection{}

Write $A(z)=Y_{\Wbb'}(u,z)$ and $B(z)=Y_{\Wbb'}(v,z)$. Since $A$ and $B$ are local,  we have the Jacobi identity \eqref{eq107} for $A(z),B(z),(A_\blt B)(z)$, which implies the Jacobi identity for $Y_{\Wbb'}$ if we can show that for all $k\in\Zbb$ and homogeneous $w\in\Wbb,w'\in\Wbb'$, as elements of $\Cbb[z_2^{\pm1}]$ we have
\begin{align}
\bigbk{(A_kB)(z_2)w',w}=\bigbk{Y_{\Wbb'}(Y(u)_kv,z_2)w',w}.	\label{eq145}
\end{align}

By \eqref{eq105}, the LHS of \eqref{eq145} is $\Res_{z_1=z_2}(z_1-z_2)^kf(z_1,z_2)dz_1$ where $f$ was defined in the proof of Lemma \ref{lb88}. By \eqref{eq142} and the complex analytic Jacobi identity for $Y_\Wbb$,  the RHS of
\begin{align}
f(z_1,z_2)=	\sum_{n\in\Nbb}\bigbk{w', Y_\Wbb\big(P_nY\big(e^{z_1L_1}(-z_1^{-2})^{L_0}u,z_1^{-1}-z_2^{-1}\big)\cdot e^{z_2L_1}(-z_2^{-2})^{L_0}v,z_2^{-1}\big)w}\label{eq143}
\end{align}
converges a.l.u. on $0<|z_1^{-1}-z_2^{-1}|<|z_2^{-1}|$ to the LHS. 

The RHS of \eqref{eq145} is the application of $\Res_{z_1-z_2=0}\cdot (z_1-z_2)^kd(z_1-z_2)$ to the following  elements of $\Cbb[z_2^{\pm1}][[(z_1-z_2)^{\pm1}]]$:
\begin{align}
&\sum_{n\in\Zbb}(z_1-z_2)^{-n-1}\bigbk{Y_{\Wbb'}\big(Y(u)_nv,z_2\big)w',w}\nonumber\\
=&\bigbk{Y_{\Wbb'}\big(Y(u,z_1-z_2)v,z_2\big)w',w}\nonumber\\
=&\bigbk{w',Y_\Wbb\big(e^{z_2L_1}(-z_2^{-2})^{L_0}Y(u,z_1-z_2)v,z_2\big)w}\nonumber\\
=&\bigbk{w',Y_\Wbb\big(e^{z_2L_1}Y\big((-z_2^{-2})^{L_0})u,z_2^{-2}(z_2-z_1)\big)(-z_2^{-2})^{L_0}v,z_2\big)w}.\label{eq144}
\end{align}
where the scale covariance is used in the last equality.
\begin{exe}
Set the following element of $\Cbb[z_2^{\pm1}][[(z_1-z_2)^{\pm1}]]$:
\begin{align*}
g_n(z_2,z_1-z_2)=\bigbk{w',Y_\Wbb\big(P_ne^{z_2L_1}Y\big((-z_2^{-2})^{L_0})u,z_2^{-2}(z_2-z_1)\big)(-z_2^{-2})^{L_0}v,z_2\big)w}.	
\end{align*}
Show that for each $k\in\Zbb$,
\begin{align}
\Res_{z_1-z_2=0}~(z_1-z_2)^kg_n(z_2,z_1-z_2)d(z_1-z_2)	\label{eq149}
\end{align}
is a monomial of $z_2^{\pm1}$ that vanishes when $n>\wt u+\wt v-k-1$. Conclude that the application of $\Res_{z_1-z_2=0}\cdot (z_1-z_2)^kd(z_1-z_2)$ to \eqref{eq144} equals the (automtically finite) sum over all $n$ of \eqref{eq149}.
\end{exe}


It follows that \eqref{eq145} holds 
if we can show: For any $v'\in\Vbb'(n)=\Vbb(n)^*$ (e.g.,  $\bk{v',\cdot}=\bk{w',Y_\Wbb(P_n\cdot,z_2)w}$), as holomorphic functions of $z_2$ on $\Cbb^\times$,
\begin{align}
&\Res_{z_1=z_2}(z_1-z_2)^k\bigbk{v',Y\big(e^{z_1L_1}(-z_1^{-2})^{L_0}u,z_1^{-1}-z_2^{-1}\big)\cdot e^{z_2L_1}(-z_2^{-2})^{L_0}v}dz_1\nonumber\\
=&	\Res_{z_1-z_2=0}(z_1-z_2)^k \bigbk{v',e^{z_2L_1}Y\big((-z_2^{-2})^{L_0}u,z_2^{-2}(z_2-z_1)\big)(-z_2^{-2})^{L_0}v}d(z_1-z_2)\label{eq146}
\end{align}
where the LHS is the residue of a holomorphic function and the RHS is that of a formal Laurent series. This follows if we can show that
\begin{align}
	&\bigbk{v',Y\big(e^{z_1L_1}(-z_1^{-2})^{L_0}u,z_1^{-1}-z_2^{-1}\big)\cdot e^{z_2L_1}(-z_2^{-2})^{L_0}v}\nonumber\\
	=&\bigbk{v',e^{z_2L_1}Y\big((-z_2^{-2})^{L_0}u,z_2^{-2}(z_2-z_1)\big)(-z_2^{-2})^{L_0}v}\label{eq148}
\end{align}
where the RHS as a formal Laurent series of $z_2,z_1-z_2$ converges a.l.u. on $0<|z_1-z_2|<|z_2|$ to the LHS as a holomorphic function of $z_1,z_2$.

Clearly, as elements of $\Cbb[[z_2^{\pm1},(z_1-z_2)^{\pm1}]]$ the sum
\begin{align}
&\bigbk{v',e^{z_2L_1}Y\big((-z_2^{-2})^{L_0}u,z_2^{-2}(z_2-z_1)\big)(-z_2^{-2})^{L_0}v}\nonumber\\
=&\sum_{n\in\Nbb}\bigbk{v',e^{z_2L_1}P_nY\big((-z_2^{-2})^{L_0}u,z_2^{-2}(z_2-z_1)\big)(-z_2^{-2})^{L_0}v}	\label{eq147}
\end{align}
satisfies the conditions in Lemma \ref{lb65}. If we can prove the claim that the RHS converges a.l.u. on $0<|z_1-z_2|<|z_2|$ to the LHS of \eqref{eq148},
then by Lemma \ref{lb65}, we are done with the proof. The claim follows from the following ``$e^{\tau L_1}$-covariance" (where $\tau=z_2,z=z_2^{-2}(z_2-z_1)$), which we prove for $Y_\Wbb$ though we actually just need it for $Y=Y_\Vbb$.

\begin{pp}\label{lb90}
Let $\Wbb$ be admissible. Then for each $v\in\Vbb,w\in\Wbb,w'\in\Wbb'$, the LHS of
\begin{align}\label{eq151}
\begin{aligned}
\sum_{n\in\Nbb}\bigbk{w',e^{\tau L_1}P_nY_\Wbb(v,z)w}=\bigbk{w',Y_\Wbb\big(e^{\tau(1-\tau z)L_1}(1-\tau z)^{-2L_0}v,z/(1-\tau z)\big)e^{\tau L_1}w}
\end{aligned}	
\end{align}
converges a.l.u. on $\{(z,\tau)\in\Cbb^\times\times\Cbb:|\tau|<|z^{-1}|\}$ to the RHS.
\end{pp}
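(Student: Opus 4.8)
## Proof plan for Proposition~\ref{lb90}

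The plan is to prove this ``$e^{\tau L_1}$-covariance'' by the same differential-equation-plus-convergence technique that was used for translation covariance (Prop.~\ref{lb22}) and for the analytic properties of $n$-point functions. The geometric origin is the change-of-coordinate formula: the flow $\exp(\tau\zeta^2\partial_\zeta)$ is generated by $L_1$ (cf.~Ex.~\ref{lb20}, giving $\exp(\tau\zeta^2\partial_\zeta)(\zeta)=\zeta/(1-\tau\zeta)$), and conjugating a vertex operator $Y_\Wbb(v,z)$ by $e^{\tau L_1}$ should, by conformal covariance, move the insertion point $z$ to $z/(1-\tau z)$ while transforming $v$ by the jet of that M\"obius map; the factor $(1-\tau z)^{-2L_0}$ and the extra $e^{\tau(1-\tau z)L_1}$ are exactly the data of that transformation (and $e^{\tau L_1}$ on the right accounts for moving the output puncture at $\infty$). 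Since only $L_1$, $L_0$ appear and no central term intervenes, one expects the identity to hold on the nose, not merely projectively.

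First I would set up the formal-variable identity. Fix homogeneous $v\in\Vbb$ and let $w\in\Wbb$, $w'\in\Wbb'$ be homogeneous. Moving $e^{\tau L_1}$ across the pairing to $\Wbb'$ via its transpose, the left-hand side becomes $\bigbk{e^{\tau L_{-1}}w',\,Y_\Wbb(v,z)w}$ in the notation where $L_{-1}=L_1^\tr$ acts on $\Wbb'$; expanding $e^{\tau L_{-1}}$ and using that $L_1$ lowers weights by $1$ shows this lies in $\Cbb[z^{\pm1}][[\tau]]$. For the right-hand side, expand $(1-\tau z)^{-2L_0}$, $e^{\tau(1-\tau z)L_1}$, and $Y_\Wbb(v,z/(1-\tau z))$ as formal series in $\tau$ with coefficients in $\Cbb[z^{\pm1}]$ (the point $z/(1-\tau z)$ is expanded as $z\sum_{k\ge0}(\tau z)^k$, i.e.~on $|\tau z|<1$); the lower truncation property of $Y_\Wbb$ makes each $\tau$-coefficient a Laurent polynomial in $z$. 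Both sides agree at $\tau=0$ (both equal $\bigbk{w',Y_\Wbb(v,z)w}$). Then, exactly as in the proof of Prop.~\ref{lb22}, I would exhibit a first-order linear ODE in $\tau$ satisfied by both sides: differentiating the left side produces $\bigbk{w', L_1 e^{\tau L_1}\,\text{(stuff)}}$-type terms reorganized via $[\wtd L_0, L_1]=-L_1$ (Ex.~\ref{lb99}, formulas \eqref{eq137}--\eqref{eq139}), and differentiating the right side uses $\partial_\tau\big(z/(1-\tau z)\big)=\big(z/(1-\tau z)\big)^2$ together with $[L_1,Y_\Wbb(v,z)]$ computed from \eqref{eq68}/\eqref{eq69} applied with $m=1$ (writing $v$ as a sum of images under the $L_n$ of a primary if needed, or just using the general form of \eqref{eq68}). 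Packaging this as a $\Hom(\Vbb\otimes\Wbb\otimes\Wbb',\Cbb[z^{\pm1}])$-valued (or $\Wbb$-valued, varying $w,w'$) ODE and invoking Lemma~\ref{lb21} gives equality of the two formal power series in $\tau$.

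Next I would upgrade formal equality to a.l.u.~convergence. The right-hand side of \eqref{eq151} is, once expanded, a finite $\Cbb$-linear combination of expressions $\bigbk{w', Y_\Wbb(v_j, z/(1-\tau z))\,w}$ times rational prefactors in $(z,\tau)$ that are holomorphic on $\{|\tau z|<1\}$, where each $\bigbk{w',Y_\Wbb(v_j,\zeta)w}$ is a Laurent polynomial in $\zeta$ (Prop.~\ref{lb29} applied to the module $\Wbb$, i.e.~its weight-grading analogue). Substituting $\zeta=z/(1-\tau z)$ gives a function holomorphic on $\{(z,\tau):z\ne0,\ |\tau|<|z^{-1}|\}$ whose Laurent/Taylor expansion in $\tau$ (in the region $|\tau z|<1$) has coefficients that are Laurent polynomials in $z$ summing a.l.u.\ there; this is precisely the hypothesis of Lemma~\ref{lb65}, so the series on the left converges a.l.u.~on that domain to the holomorphic function on the right. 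One should be slightly careful that $\sum_{n\in\Nbb}P_n$ on the left is the same grouping as the $\tau$-power grouping --- this is again handled by Lemma~\ref{lb65} (the summands are $\scr O$-coefficient monomials in $\tau$ after using the scale-covariance normalization $Y_\Wbb(v,z)=z^{-\Delta_v}z^{\wtd L_0}Y_\Wbb(v,1)z^{-\wtd L_0}$ as in Subsec.~\ref{lb184}), so I would phrase the convergence step through that lemma rather than by hand.

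The main obstacle I anticipate is bookkeeping rather than conceptual: getting the conjugation identity $e^{\tau L_1}Y_\Wbb(v,z)e^{-\tau L_1}=Y_\Wbb\big(e^{\tau(1-\tau z)L_1}(1-\tau z)^{-2L_0}v,\,z/(1-\tau z)\big)$ in the exact stated form --- in particular pinning down the composite coordinate change $\zeta\mapsto z/(1-\tau z)$ and its decomposition into the flows of $L_1$ and $L_0$ so that the transformation operator on $v$ comes out as $e^{\tau(1-\tau z)L_1}(1-\tau z)^{-2L_0}$ and not some other equivalent-looking operator. Verifying the ODE for the right-hand side requires knowing $[L_1,Y_\Wbb(v,z)]$ precisely, which is \eqref{eq68} with $m=1$: $[L_1,Y_\Wbb(v,z)]=z^2 Y_\Wbb(L_{-1}v,z)+2z\,Y_\Wbb(L_0v,z)+Y_\Wbb(L_1v,z)$; feeding this in and checking it matches $\partial_\tau$ of the right side of \eqref{eq151} is the computational heart, and I would do it first for $v$ primary (where it collapses to $\partial_\tau[(1-\tau z)^{-2\wt v}\bigbk{w',Y_\Wbb(v,z/(1-\tau z))w}]$-type bookkeeping) and then note the general case follows since both sides are linear in $v$ and the $L_n$ generate everything from primaries --- or, more cleanly, just run the ODE argument directly with the general commutator \eqref{eq68}, treating $L_{-1}v, L_0v, L_1v$ as independent inputs in the $\Hom(\Vbb\otimes\cdots)$-valued formulation.
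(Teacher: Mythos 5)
Your proposal is correct, but it takes a genuinely different route from the paper's. The paper does not run a fresh differential-equation argument: it observes that the $e^{\tau L_1}$-covariance of $Y_\Wbb$ is the \emph{transpose} of the translation covariance of the contragredient operator $Y_{\Wbb'}$. Concretely, it first proves $[L_{-1},Y_{\Wbb'}(v,z)]=\partial_zY_{\Wbb'}(v,z)$ (this is where \eqref{eq68} with $m=1$ and the relation \eqref{eq139} enter), so that the already-established translation covariance in the form \eqref{eq150} applies to $Y_{\Wbb'}$; it then rewrites the LHS of \eqref{eq151} via Cor.~\ref{lb91} as $\sum_n\bigbk{Y_{\Wbb'}(e^{zL_1}(-z^{-2})^{L_0}v,z^{-1})P_ne^{\tau L_{-1}}w',w}$, applies \eqref{eq150} to shift $z^{-1}$ to $z^{-1}-\tau$, and transposes back with \eqref{eq141}, the operator $e^{\tau(1-\tau z)L_1}(1-\tau z)^{-2L_0}$ emerging from \eqref{eq138}; the a.l.u.\ convergence is inherited from \eqref{eq150} rather than re-derived. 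Your direct route --- both sides lie in $\Cbb[z^{\pm1}][[\tau]]$, agree at $\tau=0$, satisfy the same $\Hom(\Vbb\otimes\Wbb\otimes\Wbb',\Cbb[z^{\pm1}])$-valued linear ODE in $\tau$ (Lemma \ref{lb21}), and Lemma \ref{lb65} upgrades formal equality to a.l.u.\ convergence since the RHS is a finite combination of Laurent polynomials evaluated at $z/(1-\tau z)$ with prefactors holomorphic on $|\tau z|<1$ --- is exactly the method the paper later uses for Huang's change-of-coordinate theorem (Thm.~\ref{lb93}), of which this proposition is a special case, so it is viable and arguably more conceptual. What it costs you is the step you correctly flagged as the computational heart: writing $M=e^{\tau(1-\tau z)L_1}(1-\tau z)^{-2L_0}$ and $\zeta=z/(1-\tau z)$, one has $\partial_\tau M=(L_1+2\zeta L_0)M$ and $\partial_\tau\zeta=\zeta^2$, so matching the two ODEs requires the $\fk{sl}_2$ conjugation identity $M(z^2L_{-1}+2zL_0+L_1)M^{-1}=\zeta^2L_{-1}+2\zeta L_0+L_1$; this does hold (the $L_0$- and $L_1$-coefficients reduce to $2z(\tau z+1-\tau z)/(1-\tau z)=2\zeta$ and $(\tau z+1-\tau z)^2=1$), so your plan closes. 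One small correction: \eqref{eq68} with $m=1$ gives $[L_1,Y_\Wbb(v,z)]=z^2Y_\Wbb(L_{-1}v,z)+2zY_\Wbb(L_0v,z)+Y_\Wbb(L_1v,z)$ for \emph{all} $v$ (the terms $Y(L_lv,z)$ with $l\geq 2$ carry $\binom{2}{l+1}=0$), so no reduction to primary vectors is needed --- which is just as well, since a general VOA is not spanned by $L_{-n}$-descendants of primaries; your fallback of running the ODE with the general commutator is the right move.
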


This theorem is a special case of the conformal covariance Thm. \ref{lb95} which will be explained later. However, the proof of Thm. \ref{lb95} is quite involved. So in the following we give an elementary proof of Prop. \ref{lb90}.

\subsection{}

We view the $e^{\tau L_1}$-covariance of $Y_\Wbb$ as the transpose of the translation covariance of $Y_{\Wbb'}$. So we first need to prove the latter.

\begin{lm}
We have $[L_{-1},Y_{\Wbb'}(v,z)]=\partial_zY_{\Wbb'}(v,z)$.\label{eq136}
\end{lm}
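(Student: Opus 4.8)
The plan is to verify the translation property $[L_{-1}, Y_{\Wbb'}(v,z)] = \partial_z Y_{\Wbb'}(v,z)$ directly from the defining formula \eqref{eq141}, reducing it to facts about $Y_\Wbb$ that are already available. Recall that on $\Wbb'$ the operator $L_{-1}$ is by definition the transpose of $L_1$ acting on $\Wbb$ (more precisely, $L_{-1} = Y_{\Wbb'}(\cbf)_0$, which one checks equals $L_1^\tr$ via \eqref{eq133}; alternatively this is forced by the exercise showing $\bk{L_n w', w} = \bk{w', L_{-n}w}$). So for $w \in \Wbb$, $w' \in \Wbb'$, pairing $[L_{-1}, Y_{\Wbb'}(v,z)]w'$ against $w$ turns the $L_{-1}$ on the $\Wbb'$ side into an $L_1$ on the $\Wbb$ side: one gets a combination of $\bk{w', L_1 Y_\Wbb(\cdots,z^{-1})w}$-type and $\bk{w', Y_\Wbb(\cdots,z^{-1})L_1 w}$-type terms. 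The first step is therefore to rewrite $\bk{[L_{-1}, Y_{\Wbb'}(v,z)]w', w}$ entirely in terms of $Y_\Wbb$ evaluated at $z^{-1}$, using \eqref{eq141} and the adjointness $L_{-1}^\tr = L_1$ (on $\Wbb$).

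The second step is to compute the right-hand side $\bk{\partial_z Y_{\Wbb'}(v,z)w', w} = \partial_z \bk{w', Y_\Wbb(e^{zL_1}(-z^{-2})^{L_0}v, z^{-1})w}$ by brute-force differentiation under the pairing (legitimate since, for homogeneous $v$, \eqref{eq133} shows $Y_{\Wbb'}(v,z)$ is a Laurent polynomial in $z$ when paired between $\Wbb$ and $\Wbb'$, cf. Prop. \ref{lb29} and Rem. \ref{lb30}). The chain rule produces three contributions: one from $\partial_z e^{zL_1} = L_1 e^{zL_1}$, one from $\partial_z (-z^{-2})^{L_0} = -2z^{-1} L_0 (-z^{-2})^{L_0}$ (using the grading property / the commutation $L_1 (-z^{-2})^{L_0}$ relations analogous to \eqref{eq138}), and one from differentiating the inner argument $z^{-1}$, which brings in $[L_{-1}, Y_\Wbb(\cdot,\zeta)] = Y_\Wbb(L_{-1}\cdot,\zeta)$ evaluated at $\zeta = z^{-1}$, i.e. the translation property of $Y_\Wbb$ itself (Prop. \ref{lb22} and the module version proved just above). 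The third step is to collect these terms against the output of Step 1 and check they match; the algebraic identities needed are exactly $[L_1, L_{-1}] = 2L_0$, $[L_1, L_0] = -L_1$, $[L_0, L_{-1}] = L_{-1}$ (Virasoro with vanishing central term for $n = 0, \pm 1$, cf. Principle \ref{lb19}), together with $L_1\id = 0$-type normalizations already recorded in Rem. \ref{lb80}. Equivalently, one can phrase Steps 2–3 as: differentiate the operator $\mc U(\vartheta_z) = e^{zL_1}(-z^{-2})^{L_0}$ of \eqref{eq196} in $z$ and recognize the result as the infinitesimal form of the change-of-coordinate cocycle, which automatically encodes the translation covariance.

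The main obstacle will be the bookkeeping in Step 2: keeping track of how $\partial_z$ interacts with the two-parameter family $e^{zL_1}(-z^{-2})^{L_0}$, since $L_1$ and $L_0$ do not commute, and making sure the formal manipulations are justified term-by-term (this is where one invokes Lemma \ref{lb21} or the homogeneity reduction, treating both sides as $\Wbb[[z^{\pm1}]]$-valued and checking coefficients). A cleaner route to sidestep the messiest part is to differentiate the already-proven identity of Cor. \ref{lb91}, $Y_\Wbb(v,z) = Y_{\Wbb'}(e^{zL_1}(-z^{-2})^{L_0}v, z^{-1})^\tr$, in $z$: the left side gives $\partial_z Y_\Wbb(v,z) = [L_{-1}, Y_\Wbb(v,z)]$ by the module translation property, and matching with the differentiated right side — after applying the translation covariance already known for the \emph{source} module in the pairing — yields the desired relation for $Y_{\Wbb'}$ by the same homogeneity/coefficient-comparison argument. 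I would present the proof along these lines, doing the differentiation carefully once and letting the Virasoro bracket relations do the rest.
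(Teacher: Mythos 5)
Your proposal is correct and follows essentially the same route as the paper's proof: transpose the identity into a statement about $Y_\Wbb\big(e^{zL_1}(-z^{-2})^{L_0}v,z^{-1}\big)$, expand the resulting commutator with $L_1$, differentiate in $z$ by the chain rule, and match terms using $e^{zL_1}L_0=L_0e^{zL_1}+zL_1e^{zL_1}$ (i.e.\ \eqref{eq139}). Two small points: the bracket you need is $[L_1,L_0]=L_1$ (not $-L_1$), and the matching step requires the full $m=1$ Ward identity $[L_1,Y_\Wbb(u,\zeta)]=\zeta^2Y_\Wbb(L_{-1}u,\zeta)+2\zeta Y_\Wbb(L_0u,\zeta)+Y_\Wbb(L_1u,\zeta)$ from \eqref{eq68}, not merely the Virasoro brackets among $L_0,L_{\pm1}$.
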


\begin{proof}
Assume $v$ is homogeneous. It suffices prove 
\begin{align}
\big[L_1,Y_\Wbb\big(e^{zL_1}z^{-2L_0}v,z^{-1}\big)\big]=-\partial_zY_\Wbb\big(e^{zL_1}z^{-2L_0}v,z^{-1}\big).\label{eq135}	
\end{align}
which is the transpose of the formula we want to prove multiplied by $(-1)^{\wt v}$. The Jacobi identity for $Y_\Wbb$ implies \eqref{eq68} where $Y$ is replaced by $Y_\Wbb$. By \eqref{eq68},
\begin{align}
	[L_1,Y_\Wbb(v,z)]=z^2Y_\Wbb(L_{-1}v,z)+2zY_\Wbb(L_0v,z)+Y_\Wbb(L_1v,z).	
\end{align}
Using this relation, one checks that the LHS of \eqref{eq135} equals
\begin{align}
Y_\Wbb(L_1e^{zL_1}z^{-2L_0}v,z^{-1})+2z^{-1}Y_\Wbb(L_0e^{zL_1}z^{-2L_0}v,z^{-1})+z^{-2}Y_\Wbb(L_{-1}e^{zL_1}z^{-2L_0}v,z^{-1}).\label{eq140}	
\end{align}

It is easy to guess by chain rule and verify rigorously by series expansions that
\begin{align*}
-\partial_z Y_\Wbb(v,z^{-1})=z^{-2}Y_\Wbb(L_{-1}v,z^{-1}).	
\end{align*}
Thus, the RHS of \eqref{eq135} is
\begin{align*}
-Y_\Wbb(L_1e^{zL_1}z^{-2L_0}v,z^{-1})+2z^{-1}Y_\Wbb(e^{zL_1}L_0z^{-2L_0}v,z^{-1})+z^{-2}Y_\Wbb(L_{-1}e^{zL_1}z^{-2L_0}v,z^{-1})
\end{align*}
which equals \eqref{eq140} due to \eqref{eq139}.
\end{proof}

\subsection{}

Now that we have the translation property for $Y_{\Wbb'}$, we have the translation covariance in the form of \eqref{eq40} or (equivalently) Exercise \ref{lb89}. We need the latter form: the LHS of 
\begin{align}
	\sum_{n\in\Nbb}\bigbk{Y_{\Wbb'}(u,z)P_ne^{\tau L_{-1}}w',w}=\bigbk{Y_{\Wbb'}(u,z-\tau)w',e^{\tau L_1}w},\label{eq150}
\end{align}
converges a.l.u. on $|\tau|<|z|$ to the RHS.

\begin{proof}[Proof of Prop. \ref{lb90}]
By Cor. \ref{lb91}, as sums of holomorphic functions we have
\begin{align*}
&\sum_{n\in\Nbb}\bigbk{w',e^{\tau L_1}P_nY_\Wbb(v,z)w}=\sum_{n\in\Nbb}\bigbk{P_ne^{\tau L_{-1}}w',Y_\Wbb(v,z)w}\\
=&\sum_{n\in\Nbb}\bigbk{Y_{\Wbb'}(e^{zL_1}(-z^{-2})^{L_0}v,z^{-1})P_ne^{\tau L_{-1}}w',w},
\end{align*}
which by \eqref{eq150} converges a.l.u. on $|\tau|<|z^{-1}|$ to
\begin{align*}
\bigbk{Y_{\Wbb'}\big(e^{zL_1}(-z^{-2})^{L_0}v,z^{-1}-\tau\big)w',e^{\tau L_1}w}.
\end{align*}
We move $Y_{\Wbb'}$ to the right using \eqref{eq141}. Then the above becomes
\begin{align*}
\bigbk{w',Y_\Wbb\big(e^{(z^{-1}-\tau)L_1}(-(z^{-1}-\tau)^{-2})^{L_0}e^{zL_1}(-z^{-2})^{L_0}v,(z^{-1}-\tau)^{-1}\big)e^{\tau L_1}w}.
\end{align*}
This equals the RHS of \eqref{eq151} since, by \eqref{eq138}, when acting on $\Vbb$,
\begin{align*}
(-(z^{-1}-\tau)^{-2})^{L_0}e^{zL_1}=e^{-z(z^{-1}-\tau)^2L_1}	(-(z^{-1}-\tau)^{-2})^{L_0}.
\end{align*}
\end{proof}

The proof of Thm. \ref{lb94} is complete.

\subsection{}

\begin{df}
	We say that $\Vbb$ is \textbf{self-dual} \index{00@Self-dual VOAs} if the vacuum module $\Vbb$ (with grading $\wtd L_0=L_0$) is isomorphic to its contragredient module $\Vbb'$.
\end{df}

The construction of tensor product modules is much easier:

\begin{pp}
Let $\Vbb_1,\Vbb_2$ be VOAs and $\Wbb_i$ be an admissible $\Vbb_i$-module. Then the vector space $\Wbb_1\otimes\Wbb_2$ has a unique admissible $\Vbb_1\otimes\Vbb_2$-module structure with grading $\wtd L_0\otimes\id_{\Wbb_2}+\id_{\Wbb_1}\otimes \wtd L_0$ such that for each $v_i\in\Vbb_i$,
\begin{align}
Y_{\Wbb_1\otimes\Wbb_2}(v_1\otimes v_2,z)=Y_{\Wbb_1}(v_1,z)\otimes Y_{\Wbb_2}(v_2,z).	
\end{align}
Equivalently, for each $w_i\in \Wbb_i,w_i'\in\Wbb_i'$,
\begin{align}
\bigbk{w_1'\otimes w_2',Y_{\Wbb_1\otimes\Wbb_2}(v_1\otimes v_2,z)(w_1\otimes w_2)}=\bk{w_1',Y_{\Wbb_1}(v_1,z)w_1}\cdot \bk{w_2',Y_{\Wbb_2}(v_2,z)w_2}.	\label{eq152}
\end{align}
\end{pp}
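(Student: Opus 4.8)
The plan is to define $Y_{\Wbb_1\otimes\Wbb_2}$ by the prescribed formula on simple tensors, extend it by linearity, and then check the axioms of an admissible module one at a time. First I would observe that $(v_1,v_2)\mapsto Y_{\Wbb_1}(v_1,z)\otimes Y_{\Wbb_2}(v_2,z)$ is bilinear into $(\End(\Wbb_1\otimes\Wbb_2))[[z^{\pm1}]]$, hence descends to a well-defined linear map $Y_{\Wbb_1\otimes\Wbb_2}\colon\Vbb_1\otimes\Vbb_2\to(\End(\Wbb_1\otimes\Wbb_2))[[z^{\pm1}]]$; uniqueness of a structure satisfying the displayed identity is then immediate, since simple tensors span $\Vbb_1\otimes\Vbb_2$ and $Y$ is required to be linear in its first argument. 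Taking $\wtd L_0:=\wtd L_0\otimes\id_{\Wbb_2}+\id_{\Wbb_1}\otimes\wtd L_0$, which is diagonalizable with $\Nbb$-spectrum and gives $(\Wbb_1\otimes\Wbb_2)(n)=\bigoplus_{n_1+n_2=n}\Wbb_1(n_1)\otimes\Wbb_2(n_2)$, the easy axioms are routine: $Y_{\Wbb_1\otimes\Wbb_2}(\id\otimes\id,z)=\id_{\Wbb_1}\otimes\id_{\Wbb_2}$; lower truncation holds because the tensor product of an element of $\Wbb_1((z))$ with one of $\Wbb_2((z))$ lies in $(\Wbb_1\otimes\Wbb_2)((z))$; and the grading property follows by expanding $[\wtd L_0,\,Y_{\Wbb_1}(v_1,z)\otimes Y_{\Wbb_2}(v_2,z)]$ with the Leibniz rule for $z\partial_z$ and the grading properties of $\Wbb_1,\Wbb_2$ separately.

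The substance is the Jacobi identity \eqref{eq128}, which I would establish through its complex-analytic incarnation, i.e.\ the module version of the equivalence proved at the end of Sec. \ref{lb83} (the complex-analytic form for admissible modules being the one isolated in the exercise there). By multilinearity in all four slots it suffices to treat $u=u_1\otimes u_2$, $v=v_1\otimes v_2$, $w=w_1\otimes w_2$, $w'=w_1'\otimes w_2'$ with $u_i,v_i\in\Vbb_i$, $w_i\in\Wbb_i$, $w_i'\in\Wbb_i'$; such $w'$ form a separating family of $(\Wbb_1\otimes\Wbb_2)'$, which is all one needs in order to read off the operator identity \eqref{eq128}. Writing $P_n=\sum_{n_1+n_2=n}P_{n_1}\otimes P_{n_2}$ one gets
\begin{align*}
&\bigbk{w',Y_{\Wbb_1\otimes\Wbb_2}(u,z_1)P_nY_{\Wbb_1\otimes\Wbb_2}(v,z_2)w}\\
=&\sum_{n_1+n_2=n}\bigbk{w_1',Y_{\Wbb_1}(u_1,z_1)P_{n_1}Y_{\Wbb_1}(v_1,z_2)w_1}\cdot\bigbk{w_2',Y_{\Wbb_2}(u_2,z_1)P_{n_2}Y_{\Wbb_2}(v_2,z_2)w_2},
\end{align*}
and likewise for the opposite ordering and for the iterate $Y(Y(u,z_1-z_2)v,z_2)$; for the last one I would additionally use the tensor-product VOA formula $Y_{\Vbb_1\otimes\Vbb_2}(u_1\otimes u_2,z_1-z_2)(v_1\otimes v_2)=\big(Y_{\Vbb_1}(u_1,z_1-z_2)v_1\big)\otimes\big(Y_{\Vbb_2}(u_2,z_1-z_2)v_2\big)$ (which is a simple tensor of formal Laurent series) together with the coefficientwise factorization $Y_{\Wbb_1\otimes\Wbb_2}(a_1\otimes a_2,z_2)=Y_{\Wbb_1}(a_1,z_2)\otimes Y_{\Wbb_2}(a_2,z_2)$. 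Since a product of two series that each converge a.l.u.\ on the same open set again converges a.l.u.\ there (for a fixed point the double sum of absolute values factors as the product of the two sums, then take suprema over a compact set), and since regrouping an a.l.u.-convergent sum preserves a.l.u.\ convergence, each of the three series for $\Wbb_1\otimes\Wbb_2$ converges a.l.u.\ on the appropriate domain to $f^{(1)}_{w_1,w_1'}\cdot f^{(2)}_{w_2,w_2'}$, where $f^{(i)}$ is the common holomorphic continuation on $\Conf^2(\Cbb^\times)$ furnished by the complex-analytic Jacobi identity for $\Wbb_i$. Thus all three extend to the single function $f^{(1)}f^{(2)}\in\scr O(\Conf^2(\Cbb^\times))$, which is the complex-analytic Jacobi identity for $Y_{\Wbb_1\otimes\Wbb_2}$, and hence \eqref{eq128} holds.

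It then remains to record the leftovers: if each $\Wbb_i$ is finitely admissible, so is $\Wbb_1\otimes\Wbb_2$, because $(\Wbb_1\otimes\Wbb_2)(n)=\bigoplus_{n_1+n_2=n}\Wbb_1(n_1)\otimes\Wbb_2(n_2)$ is a finite direct sum of finite-dimensional spaces; and \eqref{eq152} is just the pairing of the identity $Y_{\Wbb_1\otimes\Wbb_2}(v_1\otimes v_2,z)=Y_{\Wbb_1}(v_1,z)\otimes Y_{\Wbb_2}(v_2,z)$ against $w_1'\otimes w_2'$ and $w_1\otimes w_2$, which conversely pins down $Y_{\Wbb_1\otimes\Wbb_2}$ since the simple tensors $w_1'\otimes w_2'$ separate points of $\Wbb_1\otimes\Wbb_2$. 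The main obstacle is entirely the convergence bookkeeping in the Jacobi step — verifying that the double series $\sum_{n_1,n_2}$ factors, converges a.l.u., and may be regrouped into $\sum_n$, and handling the iterate series via the coefficientwise factorization of the iterated vertex operator — rather than any conceptual difficulty. A purely algebraic verification of \eqref{eq128} is also possible, substituting the mode formulas for $Y_{\Vbb_1\otimes\Vbb_2}$ and $Y_{\Wbb_1\otimes\Wbb_2}$ and reindexing, but it obscures this transparency under combinatorics with the binomial and sign factors, so I would not take that route.
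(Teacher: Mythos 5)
Your proposal is correct and follows essentially the same route as the paper, which defines $Y_{\Wbb_1\otimes\Wbb_2}$ by \eqref{eq152} and verifies the complex-analytic Jacobi identity by factoring each of the three series into the product of the corresponding series for $\Wbb_1$ and $\Wbb_2$. Your write-up simply makes explicit the convergence and regrouping bookkeeping that the paper leaves as "easy to verify."
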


\begin{proof}
Using \eqref{eq152}, it is easy to verify that $Y_{\Wbb_1\otimes\Wbb_2}$ satisfies the complex analytic Jacobi identity.
\end{proof}

\section{Change of coordinate theorems}

\subsection{}

The goal of this section is to study the change of local coordinates in a rigorous way. Due to some convergence issues, it is very difficult to show that a given local coordinate of $\Cbb$ at $0$ can be written as $\exp(f\partial_z)$ for a holomorphic vector field $f\partial_z$. So we first discuss formal coordinates and find the formal vector fields generating them. 

Define the following two subspaces of $z\cdot \Cbb[[z]]$ \index{G@$\scr G,\scr G_+$}
\begin{gather}
\mc G=\Big\{\sum_{n\in\Zbb_+} a_nz^n:a_1\neq 0\Big\},\qquad \mc G_+=\Big\{z+\sum_{n\geq 2} a_nz^n\in\mc G\Big\}.	
\end{gather}
Elements of $\mc G$ are viewed as formal local coordinates of $\Cbb$ at $0$. Likewise, set \index{G@$\Gbb,\Gbb_+$}
\begin{align}
\Gbb=\Big\{\alpha(z)\in\mc G:\sum_n |a_n|r^n<+\infty\text{ for some }r>0\Big\},\qquad \Gbb_+=\Gbb\cap\mc G_+.
\end{align}
Then elements of $\Gbb$ are local coordinates of $\Cbb$ at $0$, or equivalently, transformations of local coordinates.

There is an obvious right action of $\mc G$ on $\Cbb((z))$ defined by composition $f\mapsto f\circ\alpha$ if $f\in\Cbb((z))$ and $\alpha\in\mc G$. We leave it to the readers to check that it is well-defined. So $\mc G$ is a group whose product is the composition and whose identity is $z$.




\subsection{}

According to Sec. \ref{lb41}, to find the change of coordinate operator $\mc U(\alpha)$ for each $\alpha\in\mc G$, we need to write it as $\alpha=\exp(\sum_{n\geq 0}c_n z^{n+1}\partial_z)$. This task is easy if $\alpha\in\mc G_+$. Indeed, write
\begin{align}
\alpha(z)=z+\sum_{n\geq 2}a_nz^n.	
\end{align}
Then we can indeed choose $c_0=0$, and
\begin{align}\label{eq163}
\begin{aligned}
&\alpha(z)=\sum_{k\in\Nbb}\frac 1{k!}\Big(\sum_{n\geq 1}c_n z^{n+1}\partial_z\Big)^k(z)\\
=&z+\sum_{n_1\geq 1}c_{n_1}z^{n_1+1}+\frac 1{2!}\sum_{n_1,n_2\geq 1}(n_1+1)c_{n_1}c_{n_2}z^{n_1+n_2+1}\\
&+\frac 1{3!}\sum_{n_1,n_2,n_3\geq1}(n_1+1)(n_1+n_2+1)c_{n_1}c_{n_2}c_{n_3}z^{n_1+n_2+n_3+1}+\cdots.
\end{aligned}
\end{align}
This means that for each $m\geq 1$,
\begin{align}
a_{m+1}=c_m+\sum_{\begin{subarray}{c}
2\leq l\leq m\\
n_1,\dots,n_l\in\Zbb_+\\
n_1+\cdots+n_l=m		
\end{subarray}}	
\frac 1{l!}(n_1+1)\cdots (n_1+n_2+\cdots +n_{l-1}+1)c_{n_1}\cdots c_{n_l}.\label{eq158}
\end{align}
This shows that one can solve $c_1,c_2,\dots$ given the coefficients $a_2,a_3,\dots$.

For a general $\alpha\in\mc G$, instead of solving $\alpha=\exp(\sum_{n\geq 0}c_n z^{n+1}\partial_z)$, it is easier to solve
\begin{align}
\alpha(z)=\alpha'(0)\cdot \exp\Big(\sum_{n\geq 1} c_nz^{n+1}\partial_z\Big)(z).	
\end{align}
since $\alpha(z)/\alpha'(0)\in\mc G_+$. The first several terms are
\begin{subequations}
\begin{gather}
c_1=\frac 12\frac{\alpha''(0)}{\alpha'(0)},\\
c_2=\frac 16\frac{\alpha'''(0)}{\alpha'(0)}-\frac 14\Big(\frac{\alpha''(0)}{\alpha'(0)}\Big)^2.
\end{gather}
\end{subequations}
The corresponding linear operator on an admissible $\Vbb$-module $\Wbb$ is given by \index{U@$\mc U(\alpha),\mc U(\eta),\mc U(\eta_\blt)$}
\begin{align}
\mc U(\alpha)=\alpha'(0)^{\wtd L_0}\exp\Big(\sum_{n\geq1} c_nL_n\Big)=	\alpha'(0)^{\wtd L_0}\sum_{k\in\Nbb}\frac 1{k!}\Big(\sum_{n\geq 1}c_n L_n\Big)^k.\label{eq192}
\end{align}
Its inverse is $\mc U(\alpha)^{-1}=\exp(-\sum_{n\geq 1}c_nL_n)\alpha'(0)^{-\wtd L_0}$.

The point of replacing $L_0$ with $\wtd L_0$  is to avoid the ambiguity caused by the non-integral eigenvalues of $L_0$. Since (by Cor. \ref{lb92}) $\wtd L_0-L_0$ is a constant if $\Wbb$ is irreducible,  it is not a big deal to make such a replacement.

\begin{rem}
By the fact that $L_n$ lowers the weights by $n$, the  above double sum is finite when $\mc U(\alpha)$ (and similarly $\mc U(\alpha)^{-1}$) is acting on any vector. Moreover, they preserve $\Wbb^{\leq n}$ for each $n\in\Nbb$ where \index{W@$\Wbb^{\leq n}$}
\begin{align}
\Wbb^{\leq n}=\bigoplus_{0\leq j\leq n}\Wbb(j).	
\end{align}
So $\mc U(\alpha)$ restricts to a linear isomorphism on each $\Wbb^{\leq n}$. Note that $\mc U(\alpha)$ does not preserve $\Wbb(n)$.
\end{rem}

\subsection{}\label{lb97}

In applications, we need to consider a \textbf{holomorphic family of (analytic) transformations} \index{00@Holomorphic families of transformations} \index{00@Holomorphic families of transformations} $\rho:X\rightarrow\Gbb$, which means that $\rho=\rho_x(z)$ is a holomorphic function on a neighborhood of $X\times \{0\}$ in $X\times\Cbb$ where $X$ is a complex manifold (here $(x,z)\in X\times \Cbb$), and $\rho_x(0)=0$ and $\rho_x'(0)\equiv\partial_z\rho_x(0)\neq0$ for all $x\in X$.

We now restrict to the case that $X$ is an open subset $U$ of $\Cbb$ and let $\zeta$ be the standard variable of $U$, but consider a slightly more general situation that $\rho=\scr O(U)[[z]]$ with $\rho_\zeta(0)=0$ and $\rho_\zeta'(0)\neq0$ for all $\zeta\in U$. Equivalently,
\begin{align}
	\rho_\zeta(z)=\sum_{n\geq 1}\frac 1{n!}\rho_\zeta^{(n)}(0)z^n
\end{align}
where each $\zeta\mapsto\rho_\zeta^{(n)}(0)$ is an element of $\scr O(U)$ and $\rho_\zeta'(0)\neq0$. Note that when $z\neq0$, $\rho_\zeta(z)$ does not make sense as a value. We call $\rho:U\rightarrow\mc G$ a \textbf{family of formal coordinates}.

\begin{rem}\label{lb96}
We can take limits and derivatives for elements of $\scr O(U)[[z^{\pm1}]]$ by treating each $\scr O(U)$-coefficient. So, for instance, the derivative $\partial_z\rho_\zeta(z)$ at $\zeta_0\in U$ makes sense analytically as the value of the limit $\lim_{\zeta\rightarrow\zeta_0}\frac{\rho_\zeta(z)-\rho_{\zeta_0}(z)}{\zeta-\zeta_0}$.
\end{rem}

By \eqref{eq158},
\begin{align}
	\rho_\zeta=\rho'_\zeta(0)\exp\Big(\sum_{n\geq 1}c_n(\zeta)z^{n+1}\partial_z\Big)	\label{eq165}
\end{align}
where $c_1,c_2,\dots\in\scr O(U)$. So
\begin{align}
\mc U(\rho_\zeta)=\rho_\zeta'(0)^{\wtd L_0}\exp\Big(\sum_{n\geq 1}c_n(\zeta)L_n\Big),\label{eq167}
\end{align}
which shows that
\begin{align}
	\mc U(\rho_\zeta)\big|_{\Wbb^{\leq k}}\in\End(\Wbb^{\leq k})\otimes\scr O(U)\label{eq170}	
\end{align}
for each $k\in\Nbb$.

\subsection{}

Let $\rho:U\rightarrow\mc G$ be a family of formal coordinates.

\begin{pp}\label{lb98}
Suppose $0\in U$ and $\rho_0(z)=z$. Then, when acting on each vector of $\Wbb$, or equivalently, when restricted to each $\Wbb^{\leq k}$,
\begin{align}
	\partial_\zeta \mc U(\rho_\zeta)\Big|_{\zeta=0}=\sum_{n\geq 1}\frac 1{n!}\Big(\partial_\zeta\rho_\zeta^{(n)}(0)\Big|_{\zeta=0}\Big) \wtd L_{n-1}\label{eq166}
\end{align}
where $\wtd L_n=L_n$ when $n\geq 1$, and $\partial_\zeta\rho^{(n)}_\zeta(z)=\partial_\zeta\partial_z^n\rho_\zeta(z)$.
\end{pp}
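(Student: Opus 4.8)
The plan is to reduce the statement to the special case $\rho_\zeta(z)=\alpha(z)$ plus a first-order deformation, using the explicit formula \eqref{eq167} for $\mc U(\rho_\zeta)$ and the fact (Rem. \ref{lb96}) that differentiation in $\zeta$ can be carried out coefficientwise on $\scr O(U)[[z^{\pm1}]]$. Since $\rho_0(z)=z$ by hypothesis, we have $\rho_0'(0)=1$ and $c_n(0)=0$ for all $n\geq 1$ in the expansion \eqref{eq165}. First I would write $\mc U(\rho_\zeta)=\rho_\zeta'(0)^{\wtd L_0}\exp\big(\sum_{n\geq 1}c_n(\zeta)L_n\big)$ and apply the product rule at $\zeta=0$. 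Because the exponential factor equals $\id$ at $\zeta=0$ and the $\rho_\zeta'(0)^{\wtd L_0}$ factor equals $\id$ at $\zeta=0$, the derivative splits as a sum of two terms:
\begin{align*}
\partial_\zeta\mc U(\rho_\zeta)\big|_{\zeta=0}=\big(\partial_\zeta\rho_\zeta'(0)\big|_{\zeta=0}\big)\wtd L_0+\sum_{n\geq 1}\big(\partial_\zeta c_n(\zeta)\big|_{\zeta=0}\big)L_n,
\end{align*}
where for the first term I use that $\partial_\zeta(\lambda(\zeta)^{\wtd L_0})\big|_{\zeta=0}=\lambda'(0)\lambda(0)^{-1}\wtd L_0$ when acting on any vector in $\Wbb^{\leq k}$ (this is the scalar-valued analogue of the derivative of $\lambda^{\wtd L_0}$, valid since $\wtd L_0$ acts as a fixed finite-dimensional operator on $\Wbb^{\leq k}$), and $\lambda(0)=\rho_0'(0)=1$. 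Here $\rho_\zeta'(0)=\rho_\zeta^{(1)}(0)$, so $\partial_\zeta\rho_\zeta'(0)\big|_{\zeta=0}=\partial_\zeta\rho_\zeta^{(1)}(0)\big|_{\zeta=0}$, which is exactly the $n=1$ coefficient $\frac 1{1!}\partial_\zeta\rho_\zeta^{(1)}(0)\big|_{\zeta=0}\cdot\wtd L_0$ on the right-hand side of \eqref{eq166}, using $\wtd L_0=\wtd L_{n-1}$ for $n=1$.

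Next I would identify $\partial_\zeta c_n(\zeta)\big|_{\zeta=0}$ with $\frac 1{(n+1)!}\partial_\zeta\rho_\zeta^{(n+1)}(0)\big|_{\zeta=0}$. This is where the expansion \eqref{eq163}–\eqref{eq158} enters. Differentiating the relation \eqref{eq158} (with $\rho_\zeta$ in place of $\alpha$, so $a_{m+1}=a_{m+1}(\zeta)=\frac 1{(m+1)!}\rho_\zeta^{(m+1)}(0)$ and $c_j=c_j(\zeta)$) in $\zeta$ and evaluating at $\zeta=0$: every summand in the sum over $2\leq l\leq m$ contains a product $c_{n_1}(\zeta)\cdots c_{n_l}(\zeta)$ of at least two factors, each vanishing at $\zeta=0$; hence its $\zeta$-derivative at $0$ vanishes by the Leibniz rule. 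Therefore $\partial_\zeta a_{m+1}(\zeta)\big|_{\zeta=0}=\partial_\zeta c_m(\zeta)\big|_{\zeta=0}$, i.e.
\begin{align*}
\partial_\zeta c_m(\zeta)\big|_{\zeta=0}=\frac 1{(m+1)!}\partial_\zeta\rho_\zeta^{(m+1)}(0)\big|_{\zeta=0}.
\end{align*}
Substituting this into the sum over $n\geq 1$ and re-indexing $m=n-1$ gives $\sum_{n\geq 2}\frac 1{n!}\big(\partial_\zeta\rho_\zeta^{(n)}(0)\big|_{\zeta=0}\big)L_{n-1}$. Combining with the $\wtd L_0$-term from the previous paragraph yields precisely \eqref{eq166}.

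The remaining point is to justify that all these manipulations are legitimate when interpreted as acting on a fixed $\Wbb^{\leq k}$: by \eqref{eq170}, $\mc U(\rho_\zeta)\big|_{\Wbb^{\leq k}}$ is a holomorphic $\End(\Wbb^{\leq k})$-valued function of $\zeta$ (a finite-dimensional object), so $\partial_\zeta$ at $\zeta=0$ exists in the ordinary sense, the product rule applies, and the exponential $\exp(\sum_{n\geq 1}c_n(\zeta)L_n)$ restricted to $\Wbb^{\leq k}$ is a polynomial in the $c_n(\zeta)$ (finitely many $n$ contribute since $L_n$ lowers weights), so its $\zeta$-derivative at $0$ is computed by the elementary rule $\partial_\zeta\exp(B(\zeta))\big|_{\zeta=0}=B'(0)$ when $B(0)=0$. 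I do not expect a genuine obstacle here; the only mild subtlety is bookkeeping the coefficientwise versus analytic interpretation of derivatives (Rem. \ref{lb96}), and checking that the cross-term $\partial_\zeta\big(\rho_\zeta'(0)^{\wtd L_0}\big)\big|_{\zeta=0}\cdot\exp(0)+\rho_0'(0)^{\wtd L_0}\cdot\partial_\zeta\exp(\cdots)\big|_{\zeta=0}$ has no additional contributions because both prefactors are the identity at $\zeta=0$. The main computational content is the vanishing-of-higher-order-terms argument for $\partial_\zeta c_m\big|_{\zeta=0}$, which is the crux but is routine given \eqref{eq158}.
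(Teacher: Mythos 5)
Your proposal is correct and follows essentially the same route as the paper: differentiate \eqref{eq167} at $\zeta=0$ using $\rho_0'(0)=1$ and $c_n(0)=0$ to get $\partial_\zeta\rho_\zeta'(0)\big|_{\zeta=0}\wtd L_0+\sum_{n\geq1}\partial_\zeta c_n(0)L_n$, then kill the higher-order terms in \eqref{eq158} (equivalently, the paper's remainder $R_\zeta$) because they are at least quadratic in the $c_j$'s. One cosmetic point: \eqref{eq158} applies to the normalized coordinate $\rho_\zeta/\rho_\zeta'(0)$, so strictly $a_{m+1}(\zeta)=\frac{1}{(m+1)!}\rho_\zeta^{(m+1)}(0)/\rho_\zeta'(0)$; the extra term from the product rule vanishes at $\zeta=0$ since $\rho_0^{(m+1)}(0)=0$ for $m\geq1$, so your conclusion is unaffected.
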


\begin{rem}\label{lb112}
The geometric meaning of Prop. \ref{lb98} is the following. Assume that $\rho:U\rightarrow\Gbb$ is a holomorphic family with $\rho_0(z)=z$. Then for each $z_0$ near $0$, $\zeta\mapsto \rho_\zeta(z_0)$ is a path in $\Cbb$ whose initial value is $z_0$. So $\partial_\zeta\rho_\zeta(z_0)\partial_z\big|_{\zeta=0}$ is the tangent vector at $z_0$ describing the velocity of the path. By assembling these tangent vectors, we get a holomorphic tangent vector field $\partial_\zeta\rho_\zeta(z)\partial_z\big|_{\zeta=0}$, which equals
\begin{align}
\partial_\zeta\rho_\zeta(z)\partial_z\Big|_{\zeta=0}=\sum_{n\geq 1}\frac 1{n!}\partial_\zeta\rho_\zeta^{(n)}(0)z^n\partial_z\Big|_{\zeta=0}.
\end{align}
In view of the correspondence $z^n\partial_z\leftrightarrow L_{n-1}$, Prop. \ref{lb98} says that $\partial_\zeta\mc U(\rho_\zeta)\big|_{\zeta=0}$ is exactly the linear operator corresponding to the tangent vector field.
\end{rem}

\begin{proof}[Proof of Prop. \ref{lb98}]
From \eqref{eq167}, $\partial_\zeta \mc U(\rho_\zeta)$ is expressed in terms of $c_n$. So we need to express $c_n$ in terms of $\partial_\zeta\rho_\zeta^{(n)}(0)$. By \eqref{eq165} and \eqref{eq163},
\begin{align*}
\rho_\zeta(z)=\rho'_\zeta(0)\Big(z+\sum_{n\geq 1}c_n(\zeta)z^{n+1}\Big)+R_\zeta(z)	
\end{align*}
where $R_\zeta(z)$ is a sum of polynomials of $z$ multiplied by at least two terms of $c_1(\zeta),c_2(\zeta),\dots$. Since $\rho_0(z)=z$, equivalently, $\rho'_0(0)=1$ and $c_1(0)=c_2(0)=\cdots=0$, we have $\partial_\zeta R(z)|_{\zeta=0}=0$. So
\begin{align}
\partial_\zeta\rho_\zeta(z)\Big|_{\zeta=0}=\partial_\zeta\rho'_\zeta(0)z+\sum_{n\geq 1}\partial_\zeta c_n(0) z^{n+1}.\label{eq168}
\end{align}
A similar argument applied to the derivative of \eqref{eq167} shows
\begin{align}
\partial_\zeta\mc U(\rho_\zeta)\Big|_{\zeta=0}=\partial_\zeta\rho'_\zeta(0)\wtd L_0+\sum_{n\geq 1}\partial_\zeta c_n(0)L_n.	\label{eq169}
\end{align}
By \eqref{eq168}, for all $n\geq 2$,
\begin{align*}
\frac 1{n!}\partial_\zeta\rho^{(n)}_\zeta(0)	\Big|_{\zeta=0}=\partial_\zeta c_{n-1}(0).
\end{align*}
Substituting this relation into \eqref{eq169} proves \eqref{eq166}.
\end{proof}

\subsection{}

\begin{thm}[{\cite[Sec. 4.2]{Hua97}}]\label{lb100}
$\mc U:\mc G\rightarrow\End(\Wbb)$ is a group representation. Namely, $\mc U(\alpha\circ\beta)=\mc U(\alpha)\mc U(\beta)$ for all $\alpha,\beta\in\mc G$.
\end{thm}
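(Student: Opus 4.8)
The plan is to reduce the group-homomorphism statement $\mc U(\alpha\circ\beta)=\mc U(\alpha)\mc U(\beta)$ to an infinitesimal identity, which we verify using Proposition \ref{lb98}, and then integrate. The key observation is that both sides of the claimed identity, when restricted to a fixed $\Wbb^{\leq k}$, are holomorphic functions of finitely many coefficients of $\alpha$ and $\beta$; by \eqref{eq170} and the analytic continuation principle it suffices to prove the identity on a suitable connected family of coordinates containing $z$, and then argue that $\mc G$ is ``connected'' in the relevant sense. Concretely, first I would fix $\beta\in\mc G$ and introduce a one-parameter holomorphic family $\alpha_\zeta\in\Gbb$ with $\alpha_0(z)=z$, e.g. $\alpha_\zeta=\exp(\zeta f\partial_z)$ for a formal (or holomorphic) vector field $f=\sum_{n\geq 0}c_nz^{n+1}\partial_z$. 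Then I would show the two sides $F(\zeta):=\mc U(\alpha_\zeta\circ\beta)$ and $G(\zeta):=\mc U(\alpha_\zeta)\mc U(\beta)$, viewed as $\End(\Wbb^{\leq k})$-valued functions of $\zeta$, agree at $\zeta=0$ (trivial, since $\alpha_0=z$ gives $\mc U(\alpha_0)=\id$) and satisfy the same linear ODE in $\zeta$.

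The ODE for $G(\zeta)$ is immediate: $\partial_\zeta \mc U(\alpha_\zeta) = \big(\partial_\zeta\mc U(\alpha_\zeta)\big)$, and by Proposition \ref{lb98} applied to the shifted family $\tilde\rho_{\zeta}:=\alpha_{\zeta+\zeta_0}\circ\alpha_{\zeta_0}^{-1}$ (which has $\tilde\rho_0(z)=z$), $\partial_\zeta\mc U(\alpha_\zeta)\big|_{\zeta=\zeta_0} = X(\zeta_0)\,\mc U(\alpha_{\zeta_0})$ where $X(\zeta_0)=\sum_{n\geq 0}(\text{coeff})\,\wtd L_{n}$ is the linear operator attached to the velocity vector field of the path at $\alpha_{\zeta_0}$. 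For $F(\zeta)$ one computes the velocity field of $\zeta\mapsto \alpha_\zeta\circ\beta$ at $\zeta_0$: by the chain rule this is $\big(\partial_\zeta\alpha_\zeta\big|_{\zeta_0}\big)\circ(\alpha_{\zeta_0}\circ\beta)$ evaluated appropriately, i.e.\ the \emph{same} velocity field transported along $\beta$ — but since $\mc U$ on the left only sees $\alpha_\zeta\circ\beta$ near the moving point $\alpha_{\zeta_0}\circ\beta$, Proposition \ref{lb98} again gives $\partial_\zeta F\big|_{\zeta_0}=X(\zeta_0)\,\mc U(\alpha_{\zeta_0}\circ\beta)=X(\zeta_0)F(\zeta_0)$, with the \emph{same} $X(\zeta_0)$ because the velocity vector field of the composed path, read off at its value, depends only on $\partial_\zeta\alpha_\zeta\big|_{\zeta_0}$ precomposed with nothing new — the outer coordinate is what moves. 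Then Lemma \ref{lb21} (in its $\End(\Wbb^{\leq k})$-valued form) forces $F\equiv G$. Finally, to pass from ``$\alpha$ in a flow starting at $z$'' to arbitrary $\alpha\in\mc G$, I would use that every $\alpha\in\mc G$ with $\alpha'(0)=1$ lies on such a flow by \eqref{eq163}, and handle the scaling part $\alpha'(0)^{\wtd L_0}$ separately via the obvious $\lambda^{\wtd L_0}\mu^{\wtd L_0}=(\lambda\mu)^{\wtd L_0}$ together with the already-established multiplicativity on $\mc G_+$, using $\mc U(\alpha)=\alpha'(0)^{\wtd L_0}\mc U(\alpha/\alpha'(0))$ and the scale-covariance conjugation $\lambda^{\wtd L_0}L_n\lambda^{-\wtd L_0}=\lambda^{-n}L_n$ from \eqref{eq134} to commute the scalar past the exponential.

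The main obstacle I anticipate is making the chain-rule computation of the velocity vector field on the left-hand side genuinely rigorous at the level of \emph{formal} power series in $z$ (the family $\rho:U\to\mc G$ need not be ``analytic'' in $z$), so that Proposition \ref{lb98} truly applies: one must check that $\alpha_{\zeta+\epsilon}\circ\beta = \big(\alpha_{\zeta}\circ\beta\big)$ composed on the outside with a near-identity formal coordinate whose $\partial_\epsilon$-derivative at $\epsilon=0$ is exactly the transported field, and that this composition is well-defined in $\scr O(U)[[z]]$. This is bookkeeping with the substitution operation on $\Cbb((z))$, but it is where the argument could go wrong if one is cavalier. A secondary point is that $\mc U(\alpha_\zeta)$ restricted to $\Wbb^{\leq k}$ must be verified to be holomorphic in $\zeta$ with values in $\End(\Wbb^{\leq k})$ so that Lemma \ref{lb21} applies — this is exactly \eqref{eq170}, so it is already available. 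Once these formal-calculus details are in place, the uniqueness-of-solutions step and the reduction to $\mc G_+$ and to scalings are routine.
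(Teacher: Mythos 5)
Your plan has a genuine circularity at its core step. Proposition \ref{lb98} computes $\partial_\zeta\mc U(\rho_\zeta)$ only at a parameter value where $\rho_{\zeta_0}(z)=z$. To get the ODE for $G(\zeta)=\mc U(\alpha_\zeta)\mc U(\beta)$ you can avoid this (for a one-parameter subgroup $\alpha_\zeta=\exp(\zeta\sum_n a_nl_n)$ in $\mc G_+$ the definition \eqref{eq192} gives $\mc U(\alpha_\zeta)=\exp(\zeta\sum_n a_nL_n)$ directly, hence $\partial_\zeta G=(\sum_n a_nL_n)G$). But for $F(\zeta)=\mc U(\alpha_\zeta\circ\beta)$ the family $\zeta\mapsto\alpha_\zeta\circ\beta$ is never the identity, and the only way to reduce to Prop. \ref{lb98} is to write $\mc U(\alpha_{\zeta_0+\epsilon}\circ\beta)=\mc U\big((\alpha_\epsilon)\circ(\alpha_{\zeta_0}\circ\beta)\big)$ and then factor this as $\mc U(\alpha_\epsilon)\,\mc U(\alpha_{\zeta_0}\circ\beta)$ --- which is exactly the multiplicativity you are trying to prove. (The paper itself flags this: the remark immediately after Thm. \ref{lb100} says that computing $\partial_\zeta\mc U(\rho_\zeta)$ at a point where $\rho$ is not the identity is done \emph{with the help of this theorem}.) Alternatively one could differentiate $\exp(\sum_n d_n(\zeta)L_n)$ for the $\zeta$-dependent non-commuting family coming from $\alpha_\zeta\circ\beta$, but then the derivative-of-the-exponential formula $\partial_\zeta e^{A}=\big(\tfrac{e^{\mathrm{ad}_A}-1}{\mathrm{ad}_A}\dot A\big)e^{A}$ enters, and identifying the resulting coefficient operator with $\sum_n a_nL_n$ is essentially the Baker--Campbell--Hausdorff computation in disguise. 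So the step ``Proposition \ref{lb98} again gives $\partial_\zeta F|_{\zeta_0}=X(\zeta_0)F(\zeta_0)$'' is the missing idea, not the formal-power-series bookkeeping you flagged as the main risk.

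For comparison, the paper's proof sidesteps the ODE entirely: it reduces to (a) $\alpha,\beta\in\mc G_+$ and (b) $\beta$ a scaling, and in case (a) applies the Campbell--Hausdorff theorem twice --- once in the Lie algebra $\Span\{l_1,l_2,\dots\}$ to write $\alpha\circ\beta=\exp(Z)$ with $Z=\sum_{n\geq1}c_nl_n$, and once to the operators $\pi(X)=\sum a_nL_n$, $\pi(Y)=\sum b_nL_n$, using that $l_n\mapsto L_n$ ($n\geq1$) is an honest (central-extension-free) Lie algebra representation and that the BCH terms $H_n$ raise the index, so all sums are locally finite on $\Wbb$. Case (b) is the elementary conjugation $\exp(\sum a_nL_n)\lambda^{\wtd L_0}=\lambda^{\wtd L_0}\exp(\sum a_n\lambda^nL_n)$, which matches your treatment of the scaling part. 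Your ODE-plus-uniqueness strategy is the right instinct for many identities in this subject (it is how the paper proves scale covariance, translation covariance, and Thm. \ref{lb93}), but for Thm. \ref{lb100} itself it cannot get off the ground without first knowing the infinitesimal multiplicativity at an arbitrary base point, and that is precisely what BCH supplies.
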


With the help of this theorem, we can calculate $\partial_\zeta\mc U(\rho_\zeta)$ at $\zeta=0$ without assuming $\rho_0(z)=z$ by computing $\partial_\zeta\mc U(\rho_\zeta\circ\rho^{-1}_0)$ using Prop. \ref{lb98}.

\begin{proof}
It suffices to consider the following two cases: (a) $\alpha,\beta\in\mc G_+$ (b) $\alpha\in\mc G_+$ and $\beta$ is a scaling. Let $l_n=z^{n+1}\partial_z$.

Case (a). We write $\alpha(z)=\exp(\sum_{n\geq 1} a_nl_n)(z)=\exp(X)(z)$ and $\beta(z)=\exp(\sum_{n\geq 1} b_nl_n)(z)=\exp(Y)(z)$. By the Campbell-Hausdorff theorem \cite[Sec. V.5]{Jac}, $\alpha\circ\beta=\exp(Z)$ where
\begin{align*}
Z=&X+Y+\frac 12[X,Y]+\frac 1{12}\big([X,[X,Y]]+[Y,[Y,X]]\big)-\frac 1{24}[X,[Y,[X,Y]]]\\
&+H_5+H_6+\cdots
\end{align*}
where each $H_n$ is a finite sum of $n-1$ iterated brackets of $X$ and $Y$, and hence an infinite linear combination of $l_n,l_{n+1},\dots$. So $H_n$ increases the powers of $z$ by at least $n$. From this we see that $Z$ is also of the form $\sum_{n\geq 1}c_nl_n$ for some $c_1,c_2,\dots\in\Cbb$.

The representation $l_n\mapsto \pi(l_n)=L_n$  is a representation of the Lie subalgebra $\Span_\Cbb\{l_1,l_2,\dots\}$ of the Witt algebra. (There is no central term!) Write $\pi(X)=\sum_{n\geq 1}a_nL_n$ and $\pi(Y),\pi(Z),\pi(H_n)$ in a similar way. Note that each $\pi(H_n)=\blt L_n+\blt L_{n+1}+\cdots$ lowers the $\wtd L_0$-weights by at least $n$. So $\sum_{n\geq 1}\pi(H_n)$ is well defined.  By Campbell-Hausdorff theorem (applied to $\pi(X)$ and $\pi(Y)$), we have
\begin{align*}
\mc U(\alpha)\mc U(\beta)=\exp(\pi(X))\exp(\pi(Y))=\exp\Big(\sum_{n\geq 1}\pi(H_n)\Big)	=\exp(\pi(Z))=\mc U(\alpha\circ\beta).
\end{align*}

Case (b). Write $\alpha(z)=\exp(\sum_{n\geq 1} a_nl_n)(z)$ and $\beta(z)=\lambda z$ where $\lambda\neq0$. One checks easily that
\begin{align*}
\alpha\circ\beta(z)=\lambda\cdot \exp\Big(\sum_{n\geq 1}a_n\lambda^nl_n\Big)(z).	
\end{align*}
Similar to the argument in Exercise \ref{lb99},  $[\wtd L_0,L_n]=-L_n$ implies
\begin{align*}
\exp\Big(\sum_{n\geq 1} a_nL_n\Big)\lambda^{\wtd L_0}=\lambda^{\wtd L_0}\exp\Big(\sum_{n\geq 1} a_n\lambda^nL_n\Big)	
\end{align*}
This finishes the proof.
\end{proof}

\subsection{}

Our goal is to find the covariance formula for $Y_\Wbb$ under the change of local coordinate of $0\in\Cbb$ from the standard one $\zeta$ to any $\alpha\in\Gbb$ defined on $\Dbb_r$. Choose $z\in\Dbb_r^\times$, and consider
\begin{align}
\fk A=(\Pbb^1;0,\infty;\alpha^{-1},1/\zeta),\qquad \fk P=(\Pbb^1;0,z,\infty;\zeta,\zeta-z,1/\zeta).	
\end{align}
where $\alpha^{-1}$ is the inverse function of $\alpha$, not to be confused with $1/\alpha$.

We associate $\Wbb,\Wbb',\Wbb,\Vbb,\Wbb'$ to the five marked points in the order listed above. By the change of coordinates formula in Sec. \ref{lb41}, the standard conformal blocks associated to these two are
\begin{align}
\bk{w',\mc U(\alpha)w},\qquad \bk{w',Y_\Wbb(v,z)w}.	\label{eq155}
\end{align}
We sew $\fk A$ and $\fk P$ along $0\in\fk A$ and $\infty\in\fk P$. We follow Rem. \ref{lb34} to change the $\alpha^{-1}$ of $\fk A$ to $\alpha^{-1}/r$ and the $1/\zeta$ of $\fk P$ to $r/\zeta$. Replace $r$ by a slightly smaller number $>|z|$. Then the range of $\alpha^{-1}/r$ contains $\Dbb_1^\cl$ (which is pulled back to $\alpha(\Dbb_r^\cl)$ in $\fk A$), and the pullback of the unit  disk under $r/\zeta$ is $\Pbb^1\setminus\Dbb_r$, which is disjoint from $z$ and $0$. So Assumption \ref{lb32} is satisfied.

This sewing identifies the following  parts of $\fk A,\fk P$ respectively
\begin{gather*}
A_1=\{\gamma:0<|\alpha^{-1}(\gamma)|<r\}\\
A_2=\{\gamma:1/r<|1/\gamma|<+\infty\}=\{\gamma:0<|\gamma|<r\}	
\end{gather*}
(cf. \eqref{eq154}) via the rule $\alpha^{-1}(\gamma_1)\cdot 1/\gamma_2=1$, or more precisely,
\begin{align}
\gamma_1\in A_1 \text{ is glued to }\gamma_2\in A_2\qquad\Longleftrightarrow\qquad \gamma_1=\alpha(\gamma_2). 	
\end{align}
The point $0$ of $\fk A$ and the part $\{\gamma:|1/\gamma|\leq 1/r\}=\{\gamma:r\leq |\gamma|\leq+\infty\}$ of $\fk P$ are discarded. We thus have an isomorphism
\begin{align}
\fk A\#\fk P\qquad\xlongrightarrow{\simeq}\qquad\fk X=\big(\Pbb^1;0,\alpha(z),\infty;\alpha^{-1},\alpha^{-1}-z,1/\zeta\big)	
\end{align}
where any $\gamma_1\in\Pbb^1\setminus\{0\}$ of $\fk A$ is identified with $\gamma_1\in\fk X$, and any $\gamma_2\in\Dbb_r$ of $\fk P$ is identified with $\alpha(\gamma_2)$ of $\fk P$.

\subsection{}

On the one hand, the standard conformal block for $\fk A\#\fk P$ is the contraction of the two in \eqref{eq154}, which is
\begin{align}
\bk{w',\mc U(\alpha)Y_\Wbb(v,z)w}.\label{eq156}	
\end{align}
On the other hand, since $\bk{w',Y_\Wbb(v,\alpha(z))w}$ is the standard conformal block for $(\Pbb^1;0,\alpha(z),\infty;\zeta,\zeta-\alpha(z),1/\zeta)$, by the change or coordinate formula in Sec. \ref{lb41}, the standard conformal block  of $\fk P$ should be
\begin{align}
\bigbk{w',Y_\Wbb\big(\mc U(\varrho(\alpha|\id)_z)v,\alpha(z)\big)\mc U(\alpha)w}	\label{eq157}
\end{align}
where $\varrho(\alpha|\id)_z\in\Gbb$ is the change from $\alpha^{-1}-z$ to $\zeta-\alpha(z)$, namely, \index{zz@$\varrho(\alpha\lvert\id),\varrho(\eta\lvert\mu)$}
\begin{align}
\varrho(\alpha|\id)_z(t)=\alpha(z+t)-\alpha(z).\label{eq161}
\end{align}
(The meaning of the notation $\varrho(\alpha|\id)$ will be explained in \eqref{eq177}.) So \eqref{eq156} and \eqref{eq157} should be equal. That this result is a rigorous mathematical theorem is due to Huang. 


\begin{thm}[\cite{Hua97}]\label{lb93}
Let $\Wbb$ be an admissible $\Vbb$-module. Then for each $w\in\Wbb,w'\in\Wbb',v\in\Vbb$ and $\alpha\in\Gbb$, the following equation holds in $\Cbb((z))$
\begin{align}
\bk{w',\mc U(\alpha)Y_\Wbb(v,z)w}=\bigbk{w',Y_\Wbb\big(\mc U(\varrho(\alpha|\id)_z)v,\alpha(z)\big)\mc U(\alpha)w}.\label{eq159}
\end{align}
Equivalently, in $\Cbb((z))$,
\begin{align}
\bk{w',\mc U(\alpha)Y_\Wbb(v,z)\mc U(\alpha)^{-1}w}=\bigbk{w',Y_\Wbb\big(\mc U(\varrho(\alpha|\id)_z)v,\alpha(z)\big)w}.\label{eq160}
\end{align}
\end{thm}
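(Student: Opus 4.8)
\textbf{Proof proposal for Theorem \ref{lb93}.}

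The plan is to reduce everything to the differential equation method that has been used throughout the notes (scale covariance, translation covariance, Proposition \ref{lb90}), combined with Huang's group homomorphism property $\mc U(\alpha\circ\beta)=\mc U(\alpha)\mc U(\beta)$ (Theorem \ref{lb100}) and Proposition \ref{lb98} computing $\partial_\zeta\mc U(\rho_\zeta)|_{\zeta=0}$. The two displayed forms \eqref{eq159} and \eqref{eq160} are manifestly equivalent by moving $\mc U(\alpha)$ across the bracket using $\mc U(\alpha)^\tr$ on $\Wbb'$ (note that by Theorem \ref{lb100} and \eqref{eq192}, $\mc U(\alpha)$ restricts to an invertible operator on each $\Wbb^{\leq k}$, so this transpose is harmless), so I will work with \eqref{eq160}. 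Both sides are elements of $\Cbb((z))$: the left side because $Y_\Wbb(v,z)w\in\Wbb((z))$ and $\mc U(\alpha)^{\pm1}$ preserve each $\Wbb^{\leq k}$, the right side because $\varrho(\alpha|\id)_z$ is a holomorphic family of formal coordinates in $z$ (its linear coefficient $\alpha'(z)$ is invertible near $0$) so $\mc U(\varrho(\alpha|\id)_z)v$ is an $\scr O$-coefficient formal series in $z$ by \eqref{eq170}, and $Y_\Wbb(\cdot,\alpha(z))$ applied to it still lands in $\Cbb((z))$ after pairing with $w'$.

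The main idea is to embed $\alpha$ into a one-parameter family and differentiate. Since $\mc G$ is generated (in the relevant topological sense) by scalings $z\mapsto \lambda z$ and elements of $\mc G_+$, and since both sides of \eqref{eq160} transform consistently under composition — here is where Theorem \ref{lb100} is essential, together with the cocycle identity $\varrho(\alpha\circ\beta|\id)_z = \varrho(\alpha|\id)_{\beta(z)}\circ\varrho(\beta|\id)_z$ which I would verify directly from \eqref{eq161} — it suffices to prove the identity for $\alpha$ a scaling (which is exactly scale covariance, Proposition \ref{lb63}, rewritten: $\varrho(\lambda\zeta|\id)_z(t)=\lambda t$, $\mc U(\lambda\zeta)=\lambda^{\wtd L_0}$) and for $\alpha\in\Gbb_+$. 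For the latter, write $\alpha=\exp(\sum_{n\geq1}a_n z^{n+1}\partial_z)$ and introduce $\alpha_\tau=\exp(\tau\sum_{n\geq1}a_n z^{n+1}\partial_z)$, so $\alpha_0=\mathrm{id}$, $\alpha_1=\alpha$, and $\alpha_{\tau_1}\circ\alpha_{\tau_2}=\alpha_{\tau_1+\tau_2}$. Then $\mc U(\alpha_\tau)=\exp(\tau\sum a_n L_n)$ by the Case (a) computation in the proof of Theorem \ref{lb100}. Define $F(z,\tau)$ and $G(z,\tau)$ (valued in $\Hom(\Vbb\otimes\Wbb\otimes\Wbb',\Cbb((z)))$, say, after fixing $v$) to be the two sides of \eqref{eq160} with $\alpha$ replaced by $\alpha_\tau$; they agree at $\tau=0$, and I will show both satisfy the same linear ODE $\partial_\tau F = \mathcal A F$ in $\tau$ for a suitable operator $\mathcal A$, then invoke the vector-valued uniqueness Lemma \ref{lb21}.

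Computing $\partial_\tau$ of the left side is routine: $\partial_\tau\mc U(\alpha_\tau) = (\sum a_n L_n)\mc U(\alpha_\tau)$ and $\partial_\tau\mc U(\alpha_\tau)^{-1}=-\mc U(\alpha_\tau)^{-1}(\sum a_n L_n)$, together with the Ward-type relation $[L_m, Y_\Wbb(v,z)]=z^{m+1}Y_\Wbb(L_{-1}v,z)+\sum_{l\in\Nbb}\binom{m+1}{l+1}z^{m-l}Y_\Wbb(L_l v,z)$ (equation \eqref{eq68}, valid for modules). Computing $\partial_\tau$ of the right side is the delicate part: I need $\partial_\tau$ of $\mc U(\varrho(\alpha_\tau|\id)_z)$ and of $Y_\Wbb(\cdot,\alpha_\tau(z))$, for which I would use the cocycle identity to write $\varrho(\alpha_{\tau+s}|\id)_z=\varrho(\alpha_\tau|\id)_z\circ\varrho(\alpha_s\circ\alpha_0^{-1}\,|\,\mathrm{shifted})\cdots$ — more precisely differentiate $\varrho(\alpha_{\tau+s}|\id)_z = \varrho(\alpha_s|\id)_{\alpha_\tau(z)}\circ\varrho(\alpha_\tau|\id)_z$ at $s=0$ and apply Theorem \ref{lb100} plus Proposition \ref{lb98} (with $\rho_s = \varrho(\alpha_s|\id)_{w}\circ\,(\text{base point shift})$, whose $s$-derivative at $s=0$ is the vector field $\sum a_n w^{n+1}\partial_w$ read off from $\partial_s\alpha_s|_{s=0}=f$), and also $\partial_\tau\alpha_\tau(z)=f(\alpha_\tau(z))$ from \eqref{eq10}. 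The upshot should be that $\partial_\tau G$ equals, by the Ward identity \eqref{eq68} applied at the point $\alpha_\tau(z)$, the same $\mathcal A G$. The hard part will be organizing this last computation so that the chain-rule and base-point-shift terms assemble into precisely the commutator $[\sum a_n L_n, -]$ appearing on the left; this is essentially the infinitesimal form of the change-of-coordinate geometry, and bookkeeping the formal series in $z$ (as opposed to genuine functions) while differentiating in $\tau$ requires care, but Lemma \ref{lb21} lets me avoid any convergence subtleties in $\tau$ since everything is a power series in $\tau$ with $\Cbb((z))$-coefficients after fixing the data.
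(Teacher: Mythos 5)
Your proposal is correct and follows essentially the same route as the paper's proof: reduce to scalings plus $\Gbb_+$, embed $\alpha\in\Gbb_+$ in the flow $\alpha_\tau$, differentiate the cocycle identity $\varrho(\alpha_{\zeta+\tau}|\id)_z=\varrho(\alpha_\zeta|\id)_{\alpha_\tau(z)}\circ\varrho(\alpha_\tau|\id)_z$ via Theorem \ref{lb100} and Proposition \ref{lb98}, combine with \eqref{eq68} to show both sides of \eqref{eq160} satisfy $\partial_\tau(\cdot)=\sum_n c_n\bk{w',[L_n,\cdot]w}$, and conclude by Lemma \ref{lb21}. The only cosmetic difference is that the paper works directly with \eqref{eq160} rather than formally packaging the two sides as $\Hom$-valued series $F,G$, but the computation is the same.
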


\subsection{}

We explain the meanings of  both sides of \eqref{eq159}; \eqref{eq160} is understood in the similar way. 

The meaning of the LHS of \eqref{eq159} is clear. Suppose $\alpha\in\scr O(\Dbb_r)$. Then
$\bk{w',Y(v,\alpha(z))w}$ is  a Laurent polynomial of $\alpha(z)$, which is clearly holomorphic on $\Dbb_r^\times$ with finite poles at $0$. $z\mapsto \varrho(\alpha|\id)_z$ is a holomorphic family of transformations. So $\mc U(\varrho(\alpha|\id))v$ is in $\Vbb\otimes\scr O(\Dbb_r)$ by \eqref{eq170}. By linearity, the holomorphicity of $\bk{w',Y(v,\alpha(z))w}\in \Cbb[z^{\pm1}]$ implies that the RHS of \eqref{eq159} is also holomrophic on  $\Dbb_r^\times$ with finite poles at $0$. So, \emph{the RHS of \eqref{eq159} is understood as an element of $\Cbb((z))$ by taking Laurent series expansion of the holomorphic function}.

More generally, let $\alpha:X\rightarrow\Gbb$ be a holomorphic family of transformations over a Riemann surface $X$. If $\alpha$ is holomorphic on $X\times\Dbb_r$, then the RHS of \eqref{eq159} is naturally a holomorphic function on $X\times \Dbb_r^\times$ with finite poles at $z=0$. Thus, as an element of $\scr O(X)((z))$ obtained by taking Laurent series expansion, it converges a.l.u. on $X\times \Dbb_r^{\times}$ by Lemma \ref{lb76}. So is the LHS. We conclude:

\begin{thm}\label{lb95}
Suppose $\alpha:X\rightarrow\Gbb$ is a holomorphic family of transformations that is holomorphic on $X\times\Dbb_r$. Then both sides of \eqref{eq159} and \eqref{eq160} are elements of $\scr O(X)((z))$ and converge a.l.u. on $X\times\Dbb_r^\times$ to the same function. Moreover, the following series 
\begin{align}
\sum_{n\in\Nbb}\bk{w',\mc U(\alpha)P_nY_\Wbb(v,z)w}	
\end{align}
of elements of $\scr O(X\times\Cbb^{\times})$ converges a.l.u. on $X\times\Dbb_r^\times$ to \eqref{eq159}.
\end{thm}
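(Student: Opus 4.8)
The plan is to deduce Theorem \ref{lb95} from Huang's change of coordinate theorem (Theorem \ref{lb93}), which already gives the \emph{formal} identity \eqref{eq159} in $\Cbb((z))$ for each fixed base point, together with the family version of the elementary facts established earlier in this section. First I would fix a point $x_0\in X$ and a coordinate neighborhood, so that without loss of generality $X$ is an open subset $U$ of $\Cbb^k$ on which $\alpha=\alpha_x(z)\in\scr O(U)[[z]]$ is holomorphic on $U\times\Dbb_r$ with $\alpha_x(0)=0$, $\alpha_x'(0)\neq0$. I would then assemble the three ingredients whose combination proves the statement. (i) The right-hand side of \eqref{eq159} is genuinely a holomorphic function on $U\times\Dbb_r^\times$: by linearity it suffices to treat homogeneous $v$, and then $\bigbk{w',Y_\Wbb(v',\alpha_x(z))w}$ is a Laurent \emph{polynomial} of $\alpha_x(z)$ (Prop.\ \ref{lb29} applied to $\Wbb,\Wbb'$), hence holomorphic on $U\times\Dbb_r^\times$ with finite poles at $z=0$; moreover $z\mapsto\varrho(\alpha_x|\id)_z(t)=\alpha_x(z+t)-\alpha_x(z)$ is a holomorphic family of transformations on $U\times\Dbb_r$ (it lies in $\Gbb_+$ for each $(x,z)$ since its $z^0$-term vanishes and its $z^1$-term is $\alpha_x'(z)\neq0$), so $\mc U(\varrho(\alpha_x|\id)_z)v\in\Wbb^{\leq k_0}\otimes\scr O(U\times\Dbb_r)$ by \eqref{eq170}, where $k_0=\wt v$. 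Combining, the RHS of \eqref{eq159} belongs to $\scr O(U\times\Dbb_r^\times)$ and has finite-order poles at $z=0$, hence lies in $\scr O(U)((z))$ once expanded in Laurent series in $z$ — and by Lemma \ref{lb76} that Laurent series converges a.l.u.\ on $U\times\Dbb_r^\times$ to the function.

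(ii) Theorem \ref{lb93}, applied at each fixed $x\in U$, asserts that the formal Laurent series in $\Cbb((z))$ given by the LHS of \eqref{eq159} equals, coefficient by coefficient, the Laurent expansion of the RHS. Since by (i) the latter is (the expansion of) an element of $\scr O(U)((z))$, the formal series $\bigbk{w',\mc U(\alpha_x)Y_\Wbb(v,z)w}$ has $\scr O(U)$ coefficients and equals that element of $\scr O(U)((z))$; therefore it too converges a.l.u.\ on $U\times\Dbb_r^\times$, with the same limit. The same argument applies verbatim with $\alpha$ replaced by $\alpha^{-1}$-conjugation to give \eqref{eq160}. This handles the first assertion of the theorem.

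(iii) For the final displayed series, I would argue exactly as in Subsec.\ \ref{lb184} and the remark after Def.\ \ref{lb62}: each summand $\bigbk{w',\mc U(\alpha_x)P_nY_\Wbb(v,z)w}$ is an $\scr O(U\times\Cbb^\times)$-coefficient Laurent monomial in $z$. Indeed, for homogeneous $v,w,w'$ the grading forces $P_nY_\Wbb(v)_mw$ to be supported on a single $m$, and by scale covariance $Y_\Wbb(v,z)=z^{-\wt v}z^{\wtd L_0}Y_\Wbb(v,1)z^{-\wtd L_0}$, so $P_nY_\Wbb(v,z)w=z^{-\wt v}(z/1)^{\wtd\wt w'}\cdot(\text{something})$ — more precisely the term is $\mathrm{const}\cdot z^{\,c_n}$ with $c_n$ distinct for distinct $n$ — and then $\mc U(\alpha_x)$, acting on the finite-dimensional $\Wbb^{\leq k_0}$, multiplies by a matrix of holomorphic functions on $U$. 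Thus the series of functions indexed by $n$ satisfies the hypotheses of Lemma \ref{lb65}, with $X$ there equal to $U\times\Cbb^\times$: it converges formally to the formal Laurent series $\bigbk{w',\mc U(\alpha_x)Y_\Wbb(v,z)w}$, and by Lemma \ref{lb65} it converges a.l.u.\ on $U\times\Omega$ for an open $\Omega\subset\Cbb$ if and only if that formal Laurent series (viewed as indexed by powers of $z$) does, with equal limits. Taking $\Omega=\Dbb_r^\times$ and using (ii), the series converges a.l.u.\ on $U\times\Dbb_r^\times$ to the RHS of \eqref{eq159}. Since all conclusions are local in $x$, they hold over the original manifold $X$, completing the proof.

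The main obstacle I anticipate is purely bookkeeping rather than conceptual: keeping careful track of the region of a.l.u.\ convergence through each application of Lemma \ref{lb76} and Lemma \ref{lb65} — in particular ensuring that the "finite poles at $z=0$" claim in step (i) is uniform in $x$ on compact subsets, so that the Laurent expansion really lands in $\scr O(U)((z))$ with a single lower truncation bound, and that the reduction to homogeneous $v,w,w'$ (needed before Lemma \ref{lb65} applies) is compatible with taking the arbitrary linear combinations that a general module element requires. None of this is deep, but it is exactly the kind of step that, as the author warns elsewhere, is easy to overlook.
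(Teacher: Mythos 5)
Your proof is correct and follows essentially the same route as the paper: establish that the RHS of \eqref{eq159} is holomorphic on $X\times\Dbb_r^\times$ with finite poles at $z=0$ (via the Laurent-polynomial property and \eqref{eq170}), invoke Lemma \ref{lb76} for a.l.u.\ convergence of its expansion, transfer this to the LHS through the pointwise identity of Thm.\ \ref{lb93}, and handle the $P_n$-series by Lemma \ref{lb65} after reducing to homogeneous vectors. The only blemish is a bookkeeping slip in step (iii): the vectors $P_nY_\Wbb(v,z)w$ do not all lie in a single $\Wbb^{\leq k_0}$, but since $\mc U(\alpha_x)$ restricts to $\End(\Wbb^{\leq n})\otimes\scr O(X)$ for every $n$, each coefficient is still holomorphic in $x$ and the application of Lemma \ref{lb65} goes through unchanged.
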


\begin{proof}
The last statement is due to Lemma \ref{lb65} when $v,w,w'$ are homogeneous.
\end{proof}

\subsection{$\star$}

We present the proof of \eqref{eq160} below. The idea is the same as in the proofs of scale and translation covariance. Also, it is not hard to see that the following proof works for all $\alpha$ in $\mc G$.

\begin{proof}[Proof of Thm. \ref{lb93}]
Step 1. Let us first assume $\alpha\in\Gbb_+$ so that $\alpha'(0)=1$. Choose $c_1,c_2,\dots\in\Cbb$ such that
\begin{align}
	\alpha(z)=\exp\Big(\sum_{n\geq 1} c_nz^{n+1}\partial_z\Big)(z),\label{eq172}	
\end{align}
and set
\begin{align*}
	\alpha_\tau(z)=\exp\Big(\sum_{n\geq 1}\tau c_nz^{n+1}\partial_z\Big)(z)	\qquad\in\Cbb[\tau][[z]]
\end{align*}
so that $\alpha_1(z)=\alpha(z)$. Note that we can write
\begin{align}
	\alpha_\tau(z)=z+\sum_{n\geq 2}p_n(\tau)z^n	\label{eq164}
\end{align}
where $p_n(\tau)\in\Cbb[\tau]$. So we can view $\alpha_\tau(z)$ as a $\Cbb[[z]]$-valued holomorphic function. The limit $\partial_\tau\alpha_\tau(z)=\lim_{\gamma\rightarrow\tau}\frac{\alpha_\gamma(z)-\alpha_\tau(z)}{\gamma-\tau}$ makes sense analytically as in Rem. \ref{lb96}.

\eqref{eq164} shows that
\begin{align*}
	1/\alpha_\tau(z)\in z^{-1}\Cbb[\tau][[z]].
\end{align*}
Therefore, $\bk{w',Y_\Wbb(v,\alpha_\tau(z))w}$, which is a Laurent polynomial of $\alpha_\tau(z)$, must also be in $\Cbb[\tau]((z))$. It is not hard to verify that $\partial_\tau  \alpha_\tau(z)|_{\tau=0}=\sum c_nz^{n+1}$ and that  $\alpha_\gamma\circ\alpha_\tau(z)=\alpha_{\gamma+\tau}(z)$ for each $\gamma,\tau\in\Cbb$. By taking derivative in the sense of Rem. \ref{lb96}, we obtain
\begin{align*}
	\partial_\tau\alpha_\tau(z)=\sum_{n\geq 1}c_n\alpha_\tau(z)^{n+1}.	
\end{align*}
From this and the translation property, we obtain in $\Cbb[\tau]((z))$ that
\begin{align}
	\partial_\tau\bk{w',Y_\Wbb(v,\alpha_\tau(z))w}=\sum_{n\geq 1}c_n\alpha_\tau(z)^{n+1}\cdot \bk{w',Y_\Wbb(L_{-1}v,\alpha_\tau(z))w}\label{eq173}
\end{align}
as $\Cbb((z))$-valued holomorphic functions of $\tau\in\Cbb$.  \\

Step 2. Let us calculate $\partial_\tau \mc U(\varrho(\alpha_\tau|\id)_z)v$. Note that any formal power series composed with $z+t$ is an element of $\Cbb[[z,t]]$. So, even though $\alpha_\tau$ is a formal coordinate, $\alpha_\tau(z+t)$ still makes sense, and we can use \eqref{eq161} again to define $\varrho(\alpha_\tau|\id)_z$. Namely, in view of \eqref{eq164},
\begin{align*}
\varrho(\alpha_\tau|\id)_z(t)=t+\sum_{n\geq 2}p_n(\tau)\sum_{j=1}^n{n\choose j}z^{n-j}t^j\qquad \in\Cbb[\tau][[z]][[t]].	
\end{align*}
Similarly, 
\begin{align}\label{eq171}
	\begin{aligned}
&\varrho(\alpha_\zeta|\id)_{\alpha_\tau(z)}(t):=\alpha_\zeta(\alpha_\tau(z)+t)-\alpha_\zeta(\alpha_\tau(z))	\\
=&t+\sum_{n\geq 2}p_n(\zeta)\sum_{j=1}^n{n\choose j}\alpha_\tau(z)^{n-j}t^j
	\end{aligned}
\end{align}
makes sense as an element of $\Cbb[\zeta,\tau][[z]][[t]]$. Using $\alpha_\zeta(\alpha_\tau(z))=\alpha_{\zeta+\tau}(z)$, one checks easily that
\begin{align*}
\varrho(\alpha_\zeta|\id)_{\alpha_\tau(z)}\circ \varrho(\alpha_\tau|\id)_z(t)=\varrho(\alpha_{\zeta+\tau}|\id)_z(t).	
\end{align*}
Apply Thm. \ref{lb100} to the above relation and take $\partial_\zeta$ at $\zeta=0$, we obtain
\begin{align}
\partial_\tau \mc U(\varrho(\alpha_\tau|\id)_z)v=\partial_\zeta\mc U(\varrho(\alpha_\zeta|\id)_{\alpha_\tau(z)})\big|_{\zeta=0}\cdot \mc U(\varrho(\alpha_\tau|\id_z))v.	
\end{align}

Clearly $\varrho(\alpha_0|\id)_{\alpha_\tau(z)}(t)=t$. By going through the proof of Prop. \ref{lb98}, we see that Prop. \ref{lb98} also applies to the present situation: acting on $\Vbb$ we have
\begin{align}
\partial_\zeta\mc U(\varrho(\alpha_\zeta|\id)_{\alpha_\tau(z)})\Big|_{\zeta=0}=\sum_{k\geq 1}\frac 1{k!}\Big(\partial_\zeta\varrho(\alpha_\zeta|\id)_{\alpha_\tau(z)}^{(k)}(0)\Big|_{\zeta=0} \Big)L_{k-1}.	
\end{align}
By \eqref{eq171}, it is clear that
\begin{align*}
\partial_\zeta\varrho(\alpha_\zeta|\id)_{\alpha_\tau(z)}^{(k)}(0)=\partial_\zeta\alpha_\zeta^{(k)}(\alpha_\tau(z)).	
\end{align*}
Since, by \eqref{eq172}, we have $\partial_\zeta\alpha_\zeta(z)\big|_{\zeta=0}=\sum_{n\ge 1}c_nz^{n+1}$ and hence
\begin{align*}
\frac 1{k!}\partial_\zeta\alpha_\zeta^{(k)}(z)\big|_{\zeta=0}=\sum_{n\geq 1}{n+1\choose k}c_nz^{n-k+1},	
\end{align*}
we obtain
\begin{align}
&\partial_\zeta\mc U(\varrho(\alpha_\zeta|\id)_{\alpha_\tau(z)})\Big|_{\zeta=0}=\sum_{k,n\geq 1}{n+1\choose k}c_n\alpha_\tau(z)^{n-k+1}L_{k-1}\nonumber\\
=&\sum_{n\geq 1}c_n\sum_{l\geq 0}{n+1\choose l+1}\alpha_\tau(z)^{n-l}L_l.
\end{align}
To sum up, we get
\begin{align}
&\partial_\tau\bigbk{w',Y_\Wbb(\mc U(\varrho(\alpha_\tau|\id)_z)v,z)w}\nonumber\\
=&	\sum_{n\geq 1}c_n\sum_{l\geq 0}{n+1\choose l+1}\alpha_\tau(z)^{n-l}\bigbk{w',Y_\Wbb(L_l\mc U(\varrho(\alpha_\tau|\id)_z)v,z)w}.
\end{align}
Combining this relation with \eqref{eq173} and \eqref{eq68} yields
\begin{align}
\partial_\tau\bigbk{w',Y_\Wbb(\mc U(\varrho(\alpha_\tau|\id)_z)v,\alpha_\tau(z))w}=\sum_{n\geq 1}c_n	\bigbk{w',[L_n,Y_\Wbb(\mc U(\varrho(\alpha_\tau|\id)_z)v,\alpha_\tau(z))]w}.
\end{align}
(We leave it to the readers to check that this infinite sum is well-defined.) A similar calculation shows
\begin{align}
\partial_\tau\bigbk{w',\mc U(\alpha_\tau)Y_\Wbb(v,z)\mc U(\alpha_\tau)^{-1}w}=\sum_{n\geq 1}c_n	\bigbk{w',[L_n,\mc U(\alpha_\tau)Y_\Wbb(v,z)\mc U(\alpha_\tau)^{-1}]w}.
\end{align}
Thus, by Lemma \ref{lb21}, we get \eqref{eq160} for all $\alpha\in\Gbb_+$. We have also proved \eqref{eq160} when $\alpha$ is a scaling. The general case follows from the combination of these two cases. We leave the details the readers. 
\end{proof}

\section{Definitions of conformal blocks and sheaves of VOAs}\label{lb155}

\subsection{}

The goal of this section is to give two equivalent definitions of conformal blocks, both due to \cite{FB04}.

\begin{ass}\label{lb101}
Starting from this section, we assume $\dim\Vbb(n)<+\infty$ for each $n$, and write $Y_\Wbb$ as $Y$ when possible. By ``$\Vbb$-modules", we mean admissible $\Vbb$-modules.
\end{ass}

Let \index{00@$N$-pointed compact Riemann surfaces}
\begin{align}
\fk X=(C;x_1,\dots,x_N;\eta_1,\dots,\eta_N)	\label{eq181}
\end{align}
be an $N$-pointed compact Riemann surface with local coordinates.  Assume that $\eta_j$ is holomorphic (and injective) on an neighborhood $U_j$ of $x_j$. Assume that $x_j\notin U_i$ if $i\neq j$. 

\begin{ass}\label{lb102}
Unless otherwise stated, by an $N$-pointed compact Riemann surface, we assume that each connected component contains at least one marked point.
\end{ass}

Recall that in Segal's picture, we have decomposition $\mc H^\fin=\bigoplus \Wbb_i\otimes\wht\Wbb_i$, and the correlation function decomposes to $\Vbb$- and $\wht\Vbb$-conformal blocks $T_{\fk X}=\sum_{i_1,\dots,i_N\in\fk I}\upphi_{\fk X,i_\blt}\otimes\uppsi_{\ovl{\fk X},i_\blt}$ as in \eqref{eq174}, where each $\upphi_{\fk X,i_\blt}$ is a linear functional on $\Wbb_{i_\blt}:=\Wbb_{i_1}\otimes\cdots\otimes \Wbb_{i_N}$. 

In the following discussions, we fix a vector $\wht w_i$ in each $\wht\Wbb_i$, and identify each $\Wbb_i$ with $\Wbb_i\otimes \wht w_i$ so that we can restrict the correlation function $T_{\fk X}$ onto $\Wbb_{i_\blt}$ to get a conformal block. Thus, we shall not distinguish between conformal blocks and (restrictions of) correlation functions.


\subsection{}\label{lb160}
We write $\Wbb_{i_k}=\Wbb_k$ and $\upphi_{\fk X_{i_\blt}}=\upphi$ for simplicity. So the $\Vbb$-modules $\Wbb_1,\dots,\Wbb_N$ are associated to $x_1,\dots,x_N$. Recall the notation $\Wbb_\blt=\Wbb_1\otimes\cdots\otimes\Wbb_N$.

We add a point $x$ to $\fk X$ different from $x_1,\dots,x_N$. Then we get a new $(N+1)$-pointed compact Riemann surface $\wr\fk X_x$. We insert vectors of $\Vbb\simeq \Vbb\otimes\id$ to $x$. Then we get a new conformal block $\wr\upphi_x:\Vbb\otimes\Wbb_\blt\rightarrow\Cbb$, which is the restriction of the correlation function $T_{\wr\fk X_x}$ to $\Vbb\otimes\Wbb_\blt$. $\wr\upphi_x$ has the following two features. (Let $\zeta$ be the standard coordinate of $\Cbb$.)

First, assume  $\eta_j(U_j)\supset\Dbb_{r_j}$.  Let $x\in\eta_j^{-1}(\Dbb_{r_j})$. We assign local coordinate $\eta_j-\eta_j(x)$ to $x$ so that every marked point of $\wr\fk X_x$ has an associated local coordinate. Let 
\begin{align}
	\fk P_{\eta_j(x)}=(\Pbb^1;0,\eta_j(x),\infty;\zeta,\zeta-\eta_j(x),1/\zeta).	
\end{align}
Consider the sewing $\fk P_{\eta_j(x)}\#\fk X$ along $\infty\in\fk P_{\eta_j(x)}$ and $x_j\in\fk X$.  We have an equivalence
\begin{align}
	\fk P_{\eta_j(x)}\#\fk X \simeq \wr\fk X_x	
\end{align}
where the parts $\Pbb^1\setminus\Dbb_{r_j}$ and $x_j$ of $\fk P_{\eta_j(x)}$ and $\fk X$ are discarded; any $\gamma\in\Dbb_{r_j}$ is equivalent to $\eta_j^{-1}(\gamma)$ of $\wr\fk X_x$, and is glued with $\eta_j^{-1}(\gamma)$ of $\fk X$ when $\gamma\in\Dbb_{r_j}^\times$; in particular, the marked points $0,\eta_j(x)$ of $\fk P_{\eta_j(x)}$ (which are not discarded) are identified respectively with $x_j,x$ of $\wr\fk X_x$. 
\begin{align}
	\begin{aligned}
		&\vcenter{\hbox{{
					\includegraphics[height=2.3cm]{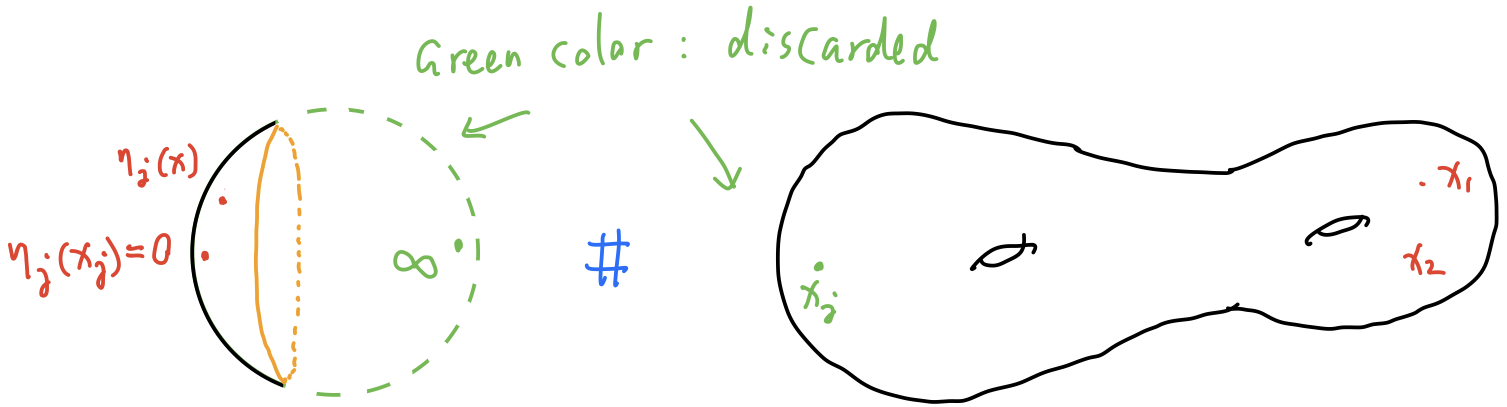}}}}\\[1ex]
		\simeq&\qquad\vcenter{\hbox{{
					\includegraphics[height=1.9cm]{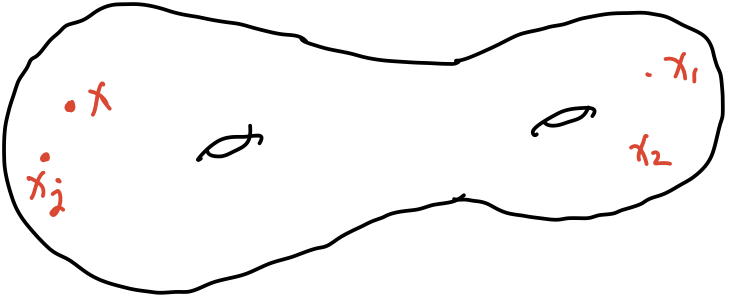}}}}
	\end{aligned}	
\end{align}
Therefore, by the sewing-contraction correspondence, the conformal block $\wr\upphi_x$ associated to $\wr\fk X_x$ (where the local coordinate at $x$ is $\eta_j-\eta_j(x)$) is
\begin{align}
	\wr\upphi_x(v\otimes w_\blt)=\upphi\big(w_1\otimes\cdots \otimes Y(v,\eta_j(x))w_j\otimes\cdots \otimes w_N\big)	\label{eq175}
\end{align}
where the RHS is short for the following two equivalent series (cf. Lemma \ref{lb65}) and is converging a.l.u. to the LHS of \eqref{eq175}:
\begin{align}\label{eq191}
\begin{aligned}
&\text{RHS of }\eqref{eq175}=\sum_{n\in\Zbb}\upphi\big(w_1\otimes\cdots \otimes Y(v)_nw_j\otimes\cdots \otimes w_N\big)z^{-n-1}\big|_{z=\eta_j(x)}\\
=&\sum_{n\in\Nbb}\upphi\big(w_1\otimes\cdots \otimes P_nY(v,\eta_j(x))w_j\otimes\cdots \otimes w_N\big).
\end{aligned}	
\end{align}

\subsection{}

The second feature is: according to \eqref{eq176}, for any $x$ on $C$ not necessarily close to any of $x_\blt$, $\wr\upphi_x(v\otimes w_\blt)$ is holomorphic with respect to the motion of $x$. A downside of this description is that it depends on a particular choice of local coordinates at $x$: if in one local coordinate $v$ is a constant, then in another one $v$ will vary. So let us give an coordinate-independent descrption: 

Besides the translation of $x$, we also allow $v$ to vary holomorphically with respect to $x$. Namely, let $U\subset C$ be open, choose a sufficiently large $n\in\Nbb$, and assume $v$ is a $\Vbb^{\leq n}$-valued holomorphic function on $U$. (Recall that $\Vbb^{\leq n}$ is finite dimensional by Convention \ref{lb101}.) Namely,
\begin{align}
v\in\Vbb^{\leq n}\otimes_\Cbb \scr O(U).	
\end{align}
Assume that there is a \textbf{univalent} \index{00@Univalent functions} (i.e., holomorphic+injective) function $\mu:U\rightarrow\Cbb$.\footnote{Indeed, one only needs to assume that $d\mu$ is nowhere zero on $U$. Then $\mu$ must be locally univalent, which is sufficient for applications.} (It is helpful to think of $\mu$ vanishing at some point $y\in U$, i.e., $\mu$ is a local coordinate at $y$. But technically this is not necessary.) Then at each $x\in U$ there is a natural local coordinate $\mu-\mu(x)$. If we let $\wr\upphi_x$ act on abstract vectors instead of concrete ones, then for each $v$ as above (so that each $\mc U(\mu-\mu(x))^{-1}v(x)$ is an abstract vector)
\begin{align}
x\in U\mapsto \wr\upphi_x\big(\mc U(\mu-\mu(x))^{-1}v(x)\otimes w_\blt\big)	\label{eq178}
\end{align}
is a holomorphic function. The choice of local coordinate $\mu-\mu(x)$ is in accordance with $(\zeta-z)/r$ in \eqref{eq224} if we assume $r=1$ and identify $\mu$ with the standard coordinate $\zeta$ of $\Cbb$.

\subsection{}

We explain why this description is independent of the choice of $\mu$. Let $\eta\in\scr O(U)$ be also univalent. Let $\varrho(\eta|\mu)_x\in\Gbb$ be the change of coordinate from $\mu-\mu(x)$ to $\eta-\eta(x)$. \index{zz@$\varrho(\alpha\lvert\id),\varrho(\eta\lvert\mu)$} Namely
\begin{align}
\varrho(\eta|\mu)_x\big(\mu(y)-\mu(x)\big)=\eta(y)-\eta(x)	\label{eq177}
\end{align}
for any $y\in C$ close to $x$. Equivalently,
\begin{align}
\varrho(\eta|\mu)_x(z)=\eta\circ\mu^{-1}(z+\mu(x))-\eta(x),\label{eq205}
\end{align}
from which we see that $\varrho(\eta|\mu):U\rightarrow\Gbb,x\mapsto\varrho(\eta|\mu)_x$ is a holomorphic family of transformations. Thus, by \eqref{eq170}, $\mc U(\varrho(\eta|\mu))\big|_{\Vbb^{\leq n}}$ is in $\End\Vbb^{\leq n}\otimes\scr O(U)$. Thus, by $\scr O(U)$-linearity, $\mc U(\varrho(\eta|\mu))$ sends each section of $\Vbb^{\leq n}\otimes\scr O(U)$ to $\Vbb^{\leq n}\otimes\scr O(U)$ such that its valued at each $x$ is an automorphism of $\Vbb^{\leq n}$. 

This property can be summarized in the following way: Let $\scr O_U$ be the trivial holomorphic line (i.e. $1$-dimensional vector bundle) over $U$. So $\Vbb^{\leq n}\otimes_\Cbb\scr O_U$ is the trivial (holomorphic) vector bundle\footnote{In our notes, all vector bundles are holomorphic with finite ranks unless otherwise stated.} with fiber $\Vbb^{\leq n}$. Then  we have an automorphism of vector bundle (equivalently, an automorphism of $\scr O_U$-module)
\begin{align*}
\mc U(\varrho(\eta|\mu)):\Vbb^{\leq n}\otimes_\Cbb \scr O_U\xrightarrow{\simeq} \Vbb^{\leq n}\otimes_\Cbb \scr O_U.	
\end{align*}

By Subsec. \ref{lb15},
\begin{align*}
\wr\upphi_x\big(\mc U(\mu-\mu(x))^{-1}v(x)\otimes w_\blt\big)=	\wr\upphi_x\big(\mc U(\eta-\eta(x))^{-1}u(x)\otimes w_\blt\big)	
\end{align*}
where $u(x)=\mc U\mc(\varrho(\eta|\mu)_x)v(x)$. Thus, the function $v$ on $U$ is holomorphic iff $u$ is so. This implies that the holomorphicity of \eqref{eq178} is independent of the choice of $\mu$.

\begin{eg}
Let $\zeta$ be the standard coordinate of $\Cbb^\times$. Then for each $\gamma\in\Cbb^\times$,
\begin{align}
\varrho\big(1/\zeta\big|\zeta\big)_\gamma=\varrho\big(\zeta\big|1/\zeta\big)_{1/\gamma}=\vartheta_\gamma
\end{align}
where $\vartheta_\gamma(z)=\frac 1{\gamma+z}-\frac 1\gamma$ (cf. \eqref{eq195}). Therefore, by \eqref{eq196},
\begin{align}
\mc U\big(\varrho\big(1/\zeta\big|\zeta\big)_\gamma\big)=\mc U\big(\varrho\big(\zeta\big|1/\zeta\big)_{1/\gamma}\big)=e^{\gamma L_1}(-\gamma^{-2})^{L_0}.\label{eq197}
\end{align}
\end{eg}

\subsection{}\label{lb119}

The combination of these two features gives the definition of conformal blocks. To simplify the definition and make it more precise, let us introduce some new notions.

We define a vector bundle $\scr V^{\leq n}_C$ \index{V@$\scr V_C^{\leq n},\scr V_C$} over $C$ whose fibers are equivalent to $\Vbb^{\leq n}$ as follows. Recall that holomorphic vector bundles can be constructed once we have holomorphic transation functions. By \eqref{eq117}, for univalent $\eta_i\in\scr O(U)$, $i=1,2,3$, we have
\begin{align}
\varrho(\eta_1|\eta_2)_x\circ\varrho(\eta_2|\eta_3)_x=\varrho(\eta_1|\eta_3)_x	
\end{align}
and hence the cocycle condition
\begin{align}
\mc U(\varrho(\eta_1|\eta_2))\mc U(\varrho(\eta_2|\eta_3))=\mc U(\varrho(\eta_1|\eta_3))	
\end{align}
due to Thm. \ref{lb100}. Thus, we have a unique (up to equivalence) vector bundle $\scr V^{\leq n}_C$ whose transition functions are of the form $\mc U(\varrho(\eta|\mu))$. More precisely, for any open $U\subset C$ with a univalent $\eta\in\scr O(U)$ is associated with a trivialization (i.e., an equivalence of vector bundles/$\scr O_U$-modules) \index{U@$\mc U_\varrho(\eta)$}
\begin{align}
\mc U_\varrho(\eta):\scr V^{\leq n}_C\big|_U\xrightarrow{\simeq}	\Vbb^{\leq n}\otimes_\Cbb\scr O_U\label{eq179}
\end{align}
compatible with the restriction of $\eta$ to open subsets (i.e., if $V\subset U$ is open then $\mc U_\varrho(\eta|_V)=\mc U_\varrho(\eta)|_V$) such that if $\mu\in\scr O(U)$ is also univalent, then
\begin{align}
\mc U_\varrho(\eta)\mc U_\varrho(\mu)^{-1}=\mc U(\varrho(\eta|\mu)):\Vbb^{\leq n}\otimes_\Cbb\scr O_U\xrightarrow{\simeq}	\Vbb^{\leq n}\otimes_\Cbb\scr O_U.\label{eq180}
\end{align}

\begin{rem}\label{lb106}
Intuitively, the fiber of $\scr V^{\leq n}_C$ at each $x\in C$ is the vector space $\scr W(\Vbb^{\leq n})$ of abstract VOA vectors whose energies are $\leq n$. The trivialization $\mc U_\varrho(\eta)$ sends each fiber $\scr V^{\leq n}_C|_x$ at $x$  to $\Vbb^{\leq n}$ via the isomorphism $\mc U(\eta-\eta(x))$, and sends each abstract VOA vector to its $(\eta-\eta(x))$-coordinate representation.  If $v\in\Vbb^{\leq n}\otimes\scr O(U)$, then the map $x\mapsto \mc U(\eta-\eta(x))^{-1}v(x)$ is just the section $\mc U_\varrho(\eta)^{-1}v$ of $\scr V^{\leq n}_C$ on $U$, and any section on $U$ is of this form. $\scr V^{\leq n}_C(U)$, the space of all sections of $\scr V^{\leq n}_C$ on $U$, \emph{is the space of all VOA vectors with energies $\leq n$ varying and moving holomorphically on $U$}. 
\end{rem}

\begin{rem}
The vacuum vector $\id$ is fixed by any change of coordinate operator $\mc U(\varrho(\eta|\mu))$ since it is killed by $L_{\geq0}$. So we let $\id$ denote also the element of $\scr V^{\leq n}_C(C)$ whose trivialization under any local univalent map $\eta$ is the vacuum vector $\id$. We call $\id$ the \textbf{vacuum section}. \index{1@$\id$, the vacuum section}
\end{rem}

\subsection{}

Now, the property that  $\wr\upphi$ is holomorphic with respect to the motion and variation of the inserted VOA vectors can be expressed in the following form:
\begin{enumerate}
\item For each open subset $U$ of $C\setminus\{x_\blt\}$,
\begin{align}
\wr\upphi(\cdot\otimes w_\blt):\scr V^{\leq n}_C(U)\rightarrow \scr O(U),\qquad \vbf\mapsto \wr\upphi(\vbf\otimes w_\blt)	
\end{align}
is an $\scr O(U)$-module (homo)morphism. (The reason that it intertwines the actions of $\scr O(U)$ is clear.)
\item $\wr\upphi(\cdot\otimes w_\blt)$ is compatible with the restriction to open subsets. Namely, if $V\subset U$ is open, then $\wr\upphi(\vbf|_V\otimes w_\blt)=\wr\upphi(\vbf\otimes w_\blt)|_V$.
\end{enumerate}

The above two points can be summarized using the sheaf theoretic language: $\wr\upphi(\cdot\otimes w_\blt)$ is a morphism of $\scr O_{C\setminus\{x_\blt\}}$-modules $\scr V^{\leq n}_{C\setminus\{x_\blt\}}\rightarrow\scr O_{C\setminus\{x_\blt\}}$. Equivalently,
\begin{align*}
\wr\upphi(\cdot\otimes w_\blt)\in H^0\big(C\setminus\{x_\blt\},(\scr V_C^{\leq n})^\vee\big).	
\end{align*}

\subsection{}

To simplify the formulation of definitions and theorems, we consider the direct limit sheaf \index{V@$\scr V_C^{\leq n},\scr V_C$}
\begin{align*}
\scr V_C=\varinjlim_{n\in\Nbb}\scr V^{\leq n}_C	
\end{align*}
whose space of sections on any connected open $U\subset C$ (or more generally, any open $U$ with finitely many connected component) is
\begin{align*}
\scr V_C(U)=\varinjlim_{n\in\Nbb}\scr V^{\leq n}_C(U).
\end{align*}
This is possible since for each $n_1\leq n_2$ we have an obvious injective $\scr O_C$-module morphism (i.e., morphism of vector bundles) $\scr V^{\leq n_1}_C\rightarrow\scr V^{\leq n_2}_C$ which under any trivialization as in \eqref{eq179} is the obvious inclusion $\Vbb^{\leq n_1}\otimes\scr O_U\hookrightarrow\Vbb^{\leq n_2}\otimes\scr O_U$. Both $\scr V_C$ and $\scr V_C^{\leq n}$ are called \textbf{sheaves of VOAs} associated to $C$ and $\Vbb$.

Equivalently, $\scr V_C$ is an infinite-rank vector bundle such that for each connected open $U\subset C$ with a univalent $\eta$, we have a trivialization
\begin{align*}
\mc U_\varrho(\eta)	:\scr V_C|_U\xrightarrow{\simeq}\Vbb\otimes\scr O_U
\end{align*}
compatible with the restriction of $\eta$ to connected open subsets, such that for any another univalent $\mu\in\scr O(U)$ we also have $\mc U_\varrho(\eta)\mc U_\varrho(\mu)^{-1}=\mc U(\varrho(\eta|\mu))$ as an automorphism of the $\scr O_U$-module $\Vbb\otimes\scr O_U$.

Thus, roughly speaking, $\scr V_C(U)$ is the set of all sections $v$ belonging to $\scr V_C^{\leq n}(U)$ for some $n\in\Nbb$.

In the rest of these notes, the readers may replace $\scr V_C$ by  $\scr V_C^{\leq n}$ for all possible $n$ if they are not comfortable with  locally free sheaves of infinite ranks.

\subsection{}

We are now ready to state the definition of conformal blocks. Recall the data $\fk X$ in \eqref{eq181} and that each $\eta_i$ is defined on $U_i\ni x_i$. Let $\Vbb$ be a VOA, and let $\Wbb_1,\dots,\Wbb_N$ be admissible $\Vbb$-modules associated respectively to the marked points $x_1,\dots,x_N$.

\begin{df}[Complex analytic version]\label{lb103}
A linear functional $\upphi:\Wbb_\blt=\Wbb_1\otimes\cdots\otimes\Wbb_N\rightarrow\Cbb$ is called a \textbf{conformal block} associated to $\fk X$ and $\Wbb_\blt$ if the following holds: For each $w_\blt\in\Wbb_\blt$, there exists a (necessarily unique) $\scr O_{C\setminus\{x_\blt\}}$-module morphism
\begin{align*}
\wr\upphi(\cdot,w_\blt):\scr V_{C\setminus\{x_\blt\}}\rightarrow\scr O_{C\setminus\{x_\blt\}}
\end{align*}
(equivalently,  $\wr\upphi(\cdot,w_\blt)\in H^0(C\setminus\{x_\blt\},\scr V_C^\vee)$) such that  for each $1\leq i\leq N$, by identifying
\begin{align}
\scr V_C|_{U_i}=\Vbb\otimes\scr O_{U_i}\qquad\text{via }\mc U_\varrho(\eta_i)\label{eq186}
\end{align}
and identifying
\begin{align}
U_i=\eta_i(U_i)\qquad\text{via }\eta_i	\label{eq187}
\end{align}
so that $\eta_i$ becomes the standard coordinate $z$, for each $v\in \scr V_C(U_i)=\Vbb\otimes\scr O(U_i)$ (restricted to $U_i\setminus\{x_i\}=\eta_i(U_i)\setminus\{0\}$), the equality
\begin{align}
\wr\upphi(v,w_\blt)_z=\upphi\big(w_1\otimes\cdots\otimes Y(v(z),z)w_i\otimes\cdots\otimes w_N\big) \label{eq182}
\end{align}
holds in $\Cbb[[z^{\pm1}]]$. \hfill\qedsymbol
\end{df}

Note that the LHS of \eqref{eq182} is an element of $\scr O(\eta_i(U_i)\setminus\{0\})$, regarded as one in $\Cbb[[z^{\pm1}]]$ by taking Laurent series expansions. The RHS is understood as
\begin{align*}
\sum_{m\in\Nbb,n\in\Zbb}\upphi(\cdots \otimes Y(v_m)_nw_i\otimes\cdots)z^{m-n-1}	
\end{align*}
if $v$ has expansion $v(z)=\sum_{m\geq 0}v_mz^m$ where each $v_m\in\Vbb$. In particular, \eqref{eq182} is in $\Cbb((z))$.

\subsection{}

Let us make some comments on this definition.

\begin{rem}
By Lemma \ref{lb76}, we see that if $\eta_i(U_i)\supset\Dbb_{r_i}$, then the formal Laurent  series of $z$ on the RHS of \eqref{eq182}, and equivalently (cf. \eqref{eq191}), the series of functions
\begin{align*}
\sum_{n\in\Nbb}\upphi\big(w_1\cdots\otimes P_nY(v(z),z)w_i\otimes\cdots\otimes w_N\big)	
\end{align*}
converge a.l.u. on $z\in\Dbb_{r_i}^\times$ to the LHS of \eqref{eq182}. This explains why  Def. \ref{lb103} is viewed as a complex analytic definition.
\end{rem}

\begin{rem}
The uniqueness of $\wr\upphi(\cdot,w_\blt)$ is due to the following reason. It suffices to restrict $\omega=\wr\upphi(\cdot,w_\blt)$ to $\scr V^{\leq n}_C$ for each $n\geq 0$. Suppose $\omega'=\wr'\upphi(\cdot,w_\blt)$ is another morphism satisfying the descriptions in Def. \ref{lb103}.  Then $\omega$ and $\omega'$ are sections of the vector bundle $(\scr V_C^{\leq n})^\vee$ over $C\setminus\{x_\blt\}$. Moreover, by \eqref{eq182}, $\omega-\omega'$ vanishes on each $U_i\setminus\{x_i\}$.     Thus, if we let $\Omega\subset C\setminus\{x_\blt\}$ be the set of all points $y$ such that $\omega-\omega'$ vanishes on a neighborhood of $y$, then by Assumption \ref{lb102}, $\Omega$ intersects each connected component of $C$. By complex analysis, $\Omega$ is both open and closed. So $\Omega=C\setminus\{x_\blt\}$.
\end{rem}

\begin{rem}\label{lb191}
By this uniqueness, we may define $\wr\upphi(\cdot,w)$ for all $w\in\Wbb_\blt$ such that $\wr\upphi(\cdot,w)$ is linear over $w$.
\end{rem}

\begin{rem}
By complex analysis, it is clear that the definition of conformal blocks is independent of the sizes and shapes of the neighborhoods $U_1,U_2,\dots$ of $x_\blt$.
\end{rem}

\begin{rem}
By $\scr O(U_i)$-linearity, to verify \eqref{eq182} for all $v\in\Vbb\otimes\scr O(U_i)$, it suffices to verify it for all constant $v\in\Vbb\simeq \Vbb\otimes \id$.
\end{rem}

\subsection{}

\begin{eg}\label{lb173}
Fix $\gamma\in\Cbb^\times$, and let $\fk P=(\Pbb^1;0,\gamma,\infty;\zeta,\zeta-\gamma,1/\zeta)$ where $\zeta$ is the standard coordinate of $\Cbb$. Let $\Wbb$ be an admissible $\Vbb$-module, and associate $\Wbb,\Vbb,\Wbb'$ to $0,\gamma,\infty$. Then the following linear functional is a conformal block, called the \textbf{conformal block associated to the vertex operation $Y_\Wbb$}.
\begin{gather}
\upomega:\Wbb\otimes\Vbb\otimes\Wbb'\rightarrow\Cbb,\qquad w_\blt=w\otimes v\otimes w'\mapsto \bk{w',Y(v,\gamma)w}
\end{gather}

\end{eg}

\begin{proof}
We construct the $\scr O_{\Cbb^\times\setminus\{\gamma\}}$-module morphism $\wr\upomega(\cdot,w_\blt):\scr V_{\Cbb^\times\setminus\{\gamma\}}\rightarrow\scr O_{\Cbb^\times\setminus\{\gamma\}}$ as follows. For every open $U\subset \Cbb^\times\setminus\{\gamma\}$, set
\begin{gather*}
\wr\upomega(\cdot,w_\blt):\scr V_{\Cbb^\times\setminus\{\gamma\}}(U)\rightarrow\scr O(U),\\
\mc U_\varrho(\zeta)^{-1}u\mapsto \bk{w',Y(u(z),z)Y(v,\gamma)w}
\end{gather*}
where $u\in\Vbb\otimes\scr O(U)$, and we have used the convention in Def. \ref{lb75} so that the above termed is defined and holomorphic when $z\neq 0,\gamma,\infty$ and $u$ is holomorphic.

Assume without loss of generality that $u$ is a constant section, i.e. $u\in\Vbb$. By the complex analytic Jacobi identity for $Y_\Wbb$,  \eqref{eq182} holds for $\upomega$ when $\gamma$ is close to $0$ or $\gamma$. When $z$ is close to $\infty$,  $\wr\upomega(\cdot,w_\blt)$ sends $\mc U_\varrho(\zeta)^{-1}u$ to $\bk{w',Y(u,z)Y(v,\gamma)w}$. Thus, it sends
\begin{align*}
\mc U_\varrho(1/\zeta)^{-1}u=\mc U_\varrho(\zeta)^{-1}\mc U(\varrho(\zeta|1/\zeta))u
\end{align*}
to
\begin{align*}
\bigbk{w',Y\big(\mc U(\varrho(\zeta|1/\zeta)_z)u,z\big)Y(v,\gamma)w}\xlongequal{\eqref{eq197}} \bigbk{w',Y\big(e^{z^{-1}L_1}(-z^2)^{L_0}u,z\big)Y(v,\gamma)w},
\end{align*}
which by \eqref{eq141} equals
\begin{align*}
\bigbk{Y\big(u,z^{-1}\big)w',Y(v,\gamma)w}=\bigbk{Y\big(u,\eta_\infty(z)\big)w',Y(v,\gamma)w}
\end{align*}
where $\eta_\infty=1/\zeta$ is the local coordinate at $\infty$. This proves \eqref{eq182} when $z$ is near $\infty$. 
\end{proof}

\begin{exe}
Let $\Wbb_1,\Wbb_2$ be admissible $\Vbb$-modules, and let $T:\Wbb_1\rightarrow\Wbb_2$ be a $\Vbb$-module homomorphism, i.e., a linear map intertwines the $\Vbb$-actions. Let $\fk P=(\Pbb^1;0,\infty;\zeta,1/\zeta)$, and associate $\Wbb_1,\Wbb_2'$ to $0,\infty$ respectively. Show that the following linear functional is a conformal block associated to $\fk P$ and $\Wbb_1,\Wbb_2'$.
\begin{gather}
\Wbb_1\otimes\Wbb_2'\rightarrow\Cbb\qquad w_1\otimes w_2'\mapsto \bk{Tw_1,w_2'} 
\end{gather}
\end{exe}

\subsection{}\label{lb193}

Due to the fact that \eqref{eq182} belongs to $\Cbb((z))$, we may regard $\wr\upphi(\cdot,w_\blt)$ as a section of $(\scr V_C^{\leq n})^\vee$ that has finite poles at $x_\blt$:
\begin{align}
\wr\upphi(\cdot,w_\blt)\in H^0(C,(\scr V_C^{\leq n})^\vee(\star x_\blt)).
\end{align}
The meaning of the notation is the following. Let $\scr E$ be a vector bundle over $C$. Then for each $k_1,\dots,k_N\in\Zbb$,
\begin{align*}
\scr E(k_1x_1+\cdots+k_Nx_N)
\end{align*}
denotes the $\scr O_C$-module whose space of sections on each open $U\subset C$ are all $s\in\scr E(U\setminus\{x_\blt\})$ such that for each $1\leq i\leq N$ the function $\eta_i^{k_i}\cdot s:x\mapsto \eta_i(x)^{k_i}s(x)$ is holomorphic on a neighborhood of $x_i$ (equivalently, on $U_i$). Thus, when $k_1,\dots,k_N\geq 0$, it is the sheaf of sections of $\scr E_{C\setminus\{x_\blt\}}$ that have poles of order at most $k_i$ at $x_i$. Then \index{E@$\scr E(\star x_\blt)$}
\begin{align}\label{eq255}
\scr E(\star x_\blt)=\varinjlim_{k_1,\dots,k_N\in\Nbb}\scr E(k_1x_1+\cdots+k_Nx_N)
\end{align} 
is the sheaf of sections of $\scr E_{C\setminus\{x_\blt\}}$ that have finite poles at $x_1,\dots,x_N$.

This viewpoint allows us to use the strong residue theorem to obtain the algebraic definition of conformal blocks. Let $\omega_C$ \index{zz@$\omega_C$} be holomorphic cotangent line bundle of $C$, i.e., the sheaf of holomorphic $1$-forms on the open subsets of $C$.  The by residue theorem/Stokes' theorem,
\begin{align}
\sum_{i=1}^N \Res_{x_i}\lambda=0
\end{align}
for all $\lambda\in H^0(C\setminus\{x_\blt\},\omega_C)$, and hence for all $\lambda\in H^0(C,\omega_C(\star x_\blt))$.

\begin{thm}[\textbf{Strong residue theorem}]\label{lb104}
Let $\scr E$ be a vector bundle on $C$. For each $1\leq i\leq N$, use a trivialization of $\scr E$ and the corresponding dual trivialization for the dual vector bundle $\scr E^\vee$ to fix an identification
\begin{align}
\scr E|_{U_i}=E_i\otimes\scr O_{U_i},\qquad \scr E^\vee|_{U_i}=E_i^*\otimes\scr O_{U_i}\label{eq184}
\end{align}
where $E_i$ is a finite dimensional vector space and $E_i^*$ is its dual space. Choose
\begin{align}
s_i=\sum_{n\in\Zbb} e_{i,n}\eta_i^n\qquad\in E_i((\eta_i)). \label{eq185}
\end{align}
Then the following are equivalent.
\begin{enumerate}[label=(\alph*)]
\item There exists $s\in H^0(C,\scr E(\star x_\blt))$ whose Laurent series expansion at each $x_i$ is $s_i$.
\item For each $\sigma\in H^0(C,\scr E^\vee\otimes\omega_C(\star x_{\blt}))$,
\begin{align}
\sum_{i=1}^N \Res_{x_i}\bk{s_i,\sigma}=0. \label{eq183}
\end{align}
\end{enumerate}
\end{thm}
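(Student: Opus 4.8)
The implication (a) $\Rightarrow$ (b) is the easy direction: if $s\in H^0(C,\scr E(\star x_\blt))$ restricts to $s_i$ near $x_i$, then for any $\sigma\in H^0(C,\scr E^\vee\otimes\omega_C(\star x_\blt))$ the pairing $\langle s,\sigma\rangle$ is a global meromorphic $1$-form on $C$ with poles only at $x_1,\dots,x_N$, i.e.\ an element of $H^0(C,\omega_C(\star x_\blt))$. The ordinary residue theorem (Stokes' theorem) then gives $\sum_i\Res_{x_i}\langle s,\sigma\rangle=0$, and since $\langle s,\sigma\rangle$ expands to $\langle s_i,\sigma\rangle$ at $x_i$, this is exactly \eqref{eq183}.

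The substance is (b) $\Rightarrow$ (a). First I would reduce to the case where the prescribed principal parts are actually \emph{polar and finite}: write $s_i=s_i^-+s_i^+$ where $s_i^-\in E_i[\eta_i^{-1}]\eta_i^{-1}$ is the principal part and $s_i^+\in E_i[[\eta_i]]$; because the $s_i^+$ are holomorphic, I can at the outset subtract a smooth (or just locally defined holomorphic) section supported near the $x_i$ and reduce to the situation where each $s_i$ has only finitely many negative-power terms and no non-negative terms, so only finitely many coefficients $e_{i,n}$ ($n<0$) matter, and the data lives in the finite-dimensional vector space $P:=\bigoplus_i\bigoplus_{n<0}E_i$. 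The next step is a dimension count via Serre duality. Fix $k\gg0$ so that all poles are of order $\le k$; then the space of admissible principal parts modulo those coming from a global section sits in the cokernel of the evaluation/restriction map
\begin{align*}
H^0\big(C,\scr E(k x_\blt)\big)\ \longrightarrow\ \bigoplus_{i=1}^N \scr E(kx_i)/\scr E\ \cong\ \bigoplus_{i=1}^N\bigoplus_{n=-k}^{-1}E_i,
\end{align*}
whose cokernel is, by the long exact cohomology sequence of $0\to\scr E\to\scr E(kx_\blt)\to\bigoplus_i\scr E(kx_i)/\scr E\to0$, a subquotient of $H^1(C,\scr E)$ that stabilizes and is computed by $H^1(C,\scr E(\star x_\blt))$. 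By Serre duality $H^1(C,\scr E(kx_\blt))^*\cong H^0(C,\scr E^\vee\otimes\omega_C(-kx_\blt))$, and taking the limit over $k$ the obstruction space is dual to $H^0(C,\scr E^\vee\otimes\omega_C(\star x_\blt))$. The pairing realizing this duality is precisely $(s_\blt,\sigma)\mapsto\sum_i\Res_{x_i}\langle s_i,\sigma\rangle$ — this is the content of the local-duality/residue pairing between principal parts and holomorphic sections of the dual twisted by $\omega_C$. Hence the system $(s_i)$ lifts to a global $s$ \emph{iff} it is annihilated by every $\sigma$, which is exactly (b).

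The step I expect to be the main obstacle is making the identification of the Serre-duality pairing with the residue sum \eqref{eq183} genuinely rigorous — i.e.\ checking that the connecting homomorphism $H^0(\bigoplus_i\scr E(kx_i)/\scr E)\to H^1(\scr E)$, composed with Serre duality, is the residue pairing and not merely proportional to it, and that everything is compatible with the trivializations \eqref{eq184} and the stabilization as $k\to\infty$. One clean way to handle this, which I would adopt, is to avoid invoking Serre duality as a black box and instead argue directly with \v{C}ech cohomology on the two-set cover $\{C\setminus\{x_\blt\},\ \coprod_i U_i\}$: a principal-part datum is a \v{C}ech $1$-cocycle, it is a coboundary (hence comes from a global section) iff it pairs to zero against $H^1$ of the dual twisted bundle, and the \v{C}ech pairing of a $1$-cocycle with a global $1$-form section is manifestly $\sum_i\Res_{x_i}$. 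The remaining routine points — finiteness of $H^1$ on a compact Riemann surface, independence of the choice of $k$, and the reduction to finitely many polar coefficients — are standard and I would only sketch them.
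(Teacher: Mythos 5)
Your direction (a)~$\Rightarrow$~(b) is exactly the paper's argument (the pairing $\bk{s,\sigma}$ is a global meromorphic $1$-form, apply the residue theorem). For (b)~$\Rightarrow$~(a) the paper gives no proof at all — it states that sheaf cohomology and Serre duality are needed and defers to the references — so your choice of strategy (Serre duality, or equivalently \v{C}ech on the two-set cover) is the standard and intended one. The issue is not the strategy but a genuine gap in your reduction step.

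You propose to split $s_i=s_i^-+s_i^+$ and subtract off the regular parts $s_i^+$ so that only finitely many polar coefficients remain. This fails for two reasons. First, $s_i^+=\sum_{n\ge 0}e_{i,n}\eta_i^n$ is only a \emph{formal} power series: its convergence is part of the \emph{conclusion} of the theorem (the paper explicitly flags this as the remarkable content of the statement), so there need not exist any locally defined holomorphic section with that Taylor expansion to subtract; a smooth Borel-type representative takes you out of the holomorphic category and does not help. Second, even granting convergence, after discarding the $s_i^+$ your argument can only produce a global $s$ with the prescribed \emph{principal} parts, and such an $s$ is ambiguous up to $H^0(C,\scr E)$ with no control on its regular part at $x_i$ — whereas (a) demands the full Laurent expansion equal $s_i$. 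This is mirrored in your Serre-duality bookkeeping: with the sequence $0\to\scr E\to\scr E(kx_\blt)\to\bigoplus_i\scr E(kx_i)/\scr E\to 0$ the obstruction lands in $H^1(C,\scr E)$, which is dual to $H^0(C,\scr E^\vee\otimes\omega_C)$ with \emph{no} poles; the sections $\sigma$ with poles at $x_\blt$, which (b) quantifies over and which are precisely what pin down the coefficients $e_{i,n}$ for $n\ge 0$, never enter your argument. The correct setup is a two-sided truncation: for $k,m\gg 0$ use
\begin{align*}
0\to\scr E(-(m+1)x_\blt)\to\scr E(kx_\blt)\to\scr Q_{k,m}\to 0,
\end{align*}
where $\scr Q_{k,m}$ is the skyscraper with stalk $\bigoplus_{n=-k}^{m}E_i\,\eta_i^n$ at $x_i$; the truncation of $(s_i)$ lifts to $H^0(C,\scr E(kx_\blt))$ iff its image under the connecting map in $H^1(C,\scr E(-(m+1)x_\blt))\cong H^0(C,\scr E^\vee\otimes\omega_C((m+1)x_\blt))^*$ vanishes, which is (b) for those $\sigma$; one then lets $m\to\infty$, using $H^0(C,\scr E(-(m+1)x_\blt))=0$ for large $m$ to see the lifts stabilize to a single $s$, and this stabilization is also what yields the convergence of the $s_i$. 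Your final \v{C}ech reformulation inherits the same convergence problem, since a divergent $s_i$ does not define a section of $\scr E$ on $U_i\setminus\{x_i\}$ and hence not a $1$-cochain.
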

Here, $\scr E^\vee\otimes\omega_C$ is the tensor product of the two vector bundles. Recall that in general, if $\scr E$ and $\scr F$ are vector bundles over a complex manifold $X$, then $\scr E\otimes\scr F$ (or more precisely, $\scr E\otimes_{\scr O_X}\scr F$) is the one whose transition functions are given by the tensor products of those of $\scr E$ and $\scr F$. Equivalently, $\scr E\otimes\scr F$ is the sheafification of the presheaf whose space of sections over any open $U\subset X$ is $\scr E(U)\otimes_{\scr O(U)}\scr F(U)$. $(\scr E\otimes\scr F)(U)$ equals $\scr E(U)\otimes_{\scr O(U)}\scr F(U)$ when $\scr E_U$ and $\scr F_U$ are trivializable (i.e. equivalent to free $\scr O_U$-modules). (To see this, simply assume $\scr E_U=\scr O_U^{\oplus m}$ and $\scr F_U=\scr O_U^{\oplus n}$.)

\subsection{}

The LHS of \eqref{eq183} is understood in the following way. In view of \eqref{eq184}, at each $x_i$, $\sigma$ has expansion $\sigma=\sum_{n\in\Zbb}\varepsilon_{i,n} \eta_i^nd\eta_i$ where $\varepsilon_{i,n}\in E_i^*$. Then $\bk{s_i,\sigma}=\sum_{m,n\in\Zbb}\bk{e_{i,m},\varepsilon_{i,n}}\eta_i^{m+n}d\eta_i$. So \eqref{eq183} reads
\begin{align*}
\sum_{i=1}^N\sum_{m+n=-1}\bk{e_{i,m},\varepsilon_{i,n}}=0
\end{align*}
where the sum over all  $m,n\in\Zbb$ satisfying $m+n=-1$ is finite. 

\begin{rem}
Suppose $\eta_i(U_i)\supset\Dbb_{r_i}$. Then it is clear that if (a) or (b) holds, then the series $s_i=\sum_{n\in\Zbb} e_{i,n}\eta_i^n$ converges a.l.u. on $\eta_i\in\Dbb_{r_i}^\times$.  It is remarkable that this analytic property follows from the algebraic condition \eqref{eq183}. This is analogous to that the formal variable version of local fields implies the complex analytic one, and that the algebraic Jacobi identity for VOAs implies the complex analytic one.
\end{rem}

That $(a)\Rightarrow(b)$ follows from the residue theorem, since $\bk{s,\sigma}$ is an element of $H^0(C,\omega_C(\star x_\blt))$. The other direction is more difficult. To prove it one needs more advance tools  such as sheaf cohomology and Serre duality, which we are not able to present here due to page limitations. We refer the readers to \cite[Sec. 1.2.2]{Muk10}\footnote{Though \cite{Muk10} only discusses the case that $\scr E=\scr O_C$, its proof applies to all vector bundles.}, \cite[Sec. 1.4]{Gui}, or \cite[Sec. 7]{Gui21} for details.

\subsection{}

We now apply the strong residue theorem to the case that $\scr E=(\scr V_C^{\leq n})^\vee$. The trivialization \eqref{eq184} is given by $\mc U_\varrho(\eta_i)$ (cf. \eqref{eq185}) and its dual. In particular, $E_i=(\Vbb^{\leq n})^*$. The series $s_i$  we choose is the RHS of \eqref{eq182}, namely,
\begin{align*}
s_i=\sum_{n\in\Zbb}s_{i,n}\eta_i^n\qquad \in (\Vbb^{\leq n})^*((\eta_i))
\end{align*}
where $s_{i,n}\in (\Vbb^{\leq n})^*$ sends each $v\in\Vbb^{\leq n}$ to
\begin{align*}
s_{i,n}(v)=\upphi\big(w_1\otimes\cdots\otimes Y(v)_{-n-1}w_i\otimes\cdots\otimes w_N\big).
\end{align*}

Now, Def. \ref{lb103} says simply that (for all $n$) all $s_1,\dots,s_N$ are series expansions at $x_1,\dots,x_N$ of the same section of $H^0(C,\scr E(\star x_\blt))$, namely $\wr\upphi(\cdot,w_\blt)$. Thus, by the strong residue Thm. \ref{lb104}, the statements in Def. \ref{lb103} (when restricted to $\scr V^{\leq n}_C$) are equivalent to $\sum_{i=1}^N \Res_{x_i}\bk{s_i,\sigma}=0$ for all $\sigma\in H^0(C,\scr V_C^{\leq n}\otimes\omega_C(\star x_\blt))$. Namely, $\upphi$ vanishes on
\begin{align}
\sigma\cdot w_\blt=\sum_{i=1}^N w_1\otimes\cdots \otimes \sigma\cdot w_i\otimes\cdots\otimes w_N\label{eq188}
\end{align}
where for each $i$,
\begin{gather}
\sigma\cdot w_i=\Res_{z=0} ~Y(v_i(z),z)w_idz \qquad\in\Wbb_i\label{eq190}
\end{gather}
and $\sigma|_{U_i}=v_i(z)dz$ under the identifications \eqref{eq186} and \eqref{eq187}.

For instance, if $\sigma|_{U_i}=uz^kdz$ where $u\in\Vbb$, then
\begin{align}
(uz^kdz)\cdot w_i=Y(u)_kw_i.
\end{align}

\begin{df}\label{lb120}
We define a linear action of $H^0\big(C,\scr V_C\otimes\omega_C(\star x_\blt)\big)$ on $\Wbb_\blt$ \index{00@The action of $H^0\big(C,\scr V_C\otimes\omega_C(\star x_\blt)\big)$ on $\Wbb_\blt$} such that for each $\sigma,w_\blt$ in the two vector spaces respectively, $\sigma\cdot w_\blt$ is defined by \eqref{eq188} and \eqref{eq190}. We call it the \textbf{residue action}.
\end{df}

Thus, taking all $n\in\Nbb$ into account, we see that the complex analytic Def. \ref{lb103} of conformal blocks is equivalent to the following algebraic one:

\begin{df}[Algebraic version]
A linear functional $\upphi:\Wbb_\blt\rightarrow\Cbb$ is called a \textbf{conformal block} associated to $\fk X$ and $\Wbb_\blt$ if it vanishes on the following subspace
\begin{align}
H^0\big(C,\scr V_C\otimes\omega_C(\star x_\blt)\big)\cdot\Wbb_\blt\label{eq189}
\end{align}
of $\Wbb_\blt$, where we have suppressed $\Span_\Cbb$ in \eqref{eq189}.
\end{df}

\begin{df}
The vector space \index{T@$\scr T_{\fk X}(\Wbb_\blt),\scr T_{\fk X}^*(\Wbb_\blt)$}
\begin{align}
\scr T_{\fk X}(\Wbb_\blt)=\frac{\Wbb_\blt}{H^0\big(C,\scr V_C\otimes\omega_C(\star x_\blt)\big)\cdot\Wbb_\blt}
\end{align}
is called the \textbf{space of coinvariants} (also called space of covacua) associated to $\fk X$ and $\Wbb_\blt$. Its dual space is denoted by $\scr T_{\fk X}^*(\Wbb_\blt)$ and called the \textbf{space of conformal blocks} (or space of vacua, space of invariants). 
\end{df}

\section{Pushforward and Lie derivatives in sheaves of VOAs}

\subsection{}\label{lb105}

We continue our discussions in the previous section. The residue action of $\sigma$ on $w_i$ is crucial in the theory conformal blocks. Let us present its definition in a form that indicates the choice of local coordinate $\eta_i$.

We now only assume that $\sigma$ is a section of $\scr V_C\otimes\omega_C(\star x_\blt)$ defined on a neighborhood of $x_i$, say on $U_i$. (Namely, $\sigma$ is a section of $\scr V_C\otimes\omega_C$ on $U_i\setminus\{x_i\}$ with finite poles at $x_i$.) Let $\mc V_\varrho(\eta_i)\sigma$ be $v_i(z)dz$ in \eqref{eq190}. Then \eqref{eq190} reads
\begin{gather}
\sigma\cdot w_i=\Res_{z=0} ~Y(\mc V_\varrho(\eta_i)\sigma,z)w_i. \label{eq193}
\end{gather}

Let us  describe $\mc V_\varrho(\eta_i)$ in a more geometric way. Notice that we
have an obvious equivalence
\begin{align*}
(\eta_i)_*:\scr O_{U_i}\xrightarrow{\simeq}\scr O_{\eta_i(U_i)}
\end{align*}
sending $f$ to $f\circ\eta_i^{-1}$. Then $\id_\Vbb\otimes(\eta_i)_*:\Vbb\otimes_\Cbb\scr O_{U_i}\xrightarrow{\simeq}\Vbb\otimes_\Cbb\scr O_{\eta_i(U_i)}$. We define the \textbf{pushforward} \index{V@$\mc V_\varrho(\eta_i),\mc V_\varrho(\varphi)$}
\begin{gather}\label{eq218}
\begin{gathered}
\mc V_\varrho(\eta_i):\scr V_{U_i}\xrightarrow{\simeq}\Vbb\otimes\scr O_{\eta_i(U_i)}\\
\mc V_\varrho(\eta_i)=(\id_\Vbb\otimes(\eta_i)_*)\mc U_\varrho(\eta_i)
\end{gathered}
\end{gather}
Its tensor product with $(\eta_i)_*=(\eta_i^{-1})^*:\omega_{U_i}\xrightarrow{\simeq}\omega_{\eta_i(U_i)}$ is also denoted by $\mc V_\varrho(\eta_i)$:
\begin{gather}
\begin{gathered}
\mc V_\varrho(\eta_i)\equiv \mc V_\varrho(\eta_i)\otimes(\eta_i)_*:\scr V_{U_i}\otimes\omega_{U_i}(\star x_i)\xrightarrow{\simeq}\Vbb\otimes\omega_{\eta_i(U_i)}(\star 0).
\end{gathered}
\end{gather}

\subsection{}

The above geometric description is convenient when treating simultaneously more than one local coordinate at $x_i$ and the corresponding trivializations. As an application, let us show that the action of $\sigma$ on $\Wbb_i$ can be formulated in a coordinate-independent way.

From now on, we do not fix the local coordinates of $\fk X=(C;x_1,\dots,x_N)$. Let $\scr W(\Wbb_i)$ be an abstract vector space isomorphic to $\Wbb_i$. To be more precise, we consider $\scr W(\Wbb_i)$ \index{W@$\scr W(\Wbb_i)$, $\scr W_{\fk X}(\Wbb_\blt)$} as a (infinite rank) vector bundle over a single point with trivialization \index{U@$\mc U(\alpha),\mc U(\eta),\mc U(\eta_\blt)$}
\begin{align}
\mc U(\eta_i):\scr W(\Wbb_i)\xrightarrow{\simeq} \Wbb_i
\end{align}
for any local coordinate $\eta_i$ of $C$ at $x_i$, such that if $\mu_i$ is also a local coodinate at $x_i$, then the transition function  is
\begin{align}
\mc U(\eta_i)\mc U(\mu_i)^{-1}=\mc U(\eta_i\circ\mu_i^{-1}):\Wbb_i\xrightarrow{\simeq}\Wbb_i.
\end{align}
Note that $\eta_i\circ\mu_i^{-1}\in\Gbb$ is the change of coordinate from $\mu_i$ to $\eta_i$, and $\mc U(\eta_i\circ\mu_i^{-1})$ is the corresponding invertible operator on $\Wbb_i$ defined by \eqref{eq192}.

For each $\sigma\in H^0(U_i,\scr V_{U_i}\otimes\omega_{U_i}(\star x_i))$ and $\wbf\in\scr W(\Wbb_i)$, define
\begin{align}
\sigma\cdot \wbf=\mc U(\eta_i)^{-1}\cdot \sigma\cdot \mc U(\eta_i)\wbf\label{eq194}
\end{align}
where the action of $\sigma$ on $\mc U(\eta_i)\wbf$ is defined by \eqref{eq193}.

\subsection{}

\begin{pp}\label{lb123}
The definition of residue action $\sigma\cdot \wbf$ is independent of the choice of local coordinates $\eta_i$ at $x_i$.
\end{pp}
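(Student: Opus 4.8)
The plan is to show that if $\eta_i$ and $\mu_i$ are two local coordinates of $C$ at $x_i$, then the two definitions of $\sigma\cdot\wbf$ given by \eqref{eq194} agree. Unwinding \eqref{eq194} and \eqref{eq193}, what must be verified is that
\begin{align*}
\mc U(\eta_i)^{-1}\Res_{z=0}Y\big(\mc V_\varrho(\eta_i)\sigma,z\big)\mc U(\eta_i)\wbf=\mc U(\mu_i)^{-1}\Res_{z=0}Y\big(\mc V_\varrho(\mu_i)\sigma,z\big)\mc U(\mu_i)\wbf
\end{align*}
for all $\wbf\in\scr W(\Wbb_i)$. Writing $w=\mc U(\mu_i)\wbf\in\Wbb_i$ and $\alpha=\eta_i\circ\mu_i^{-1}\in\Gbb$ (the change of coordinate from $\mu_i$ to $\eta_i$, so $\mc U(\eta_i)\wbf=\mc U(\alpha)w$), this reduces to the purely local identity
\begin{align}\label{eq-target}
\Res_{z=0}Y\big(\mc V_\varrho(\mu_i)\sigma,z\big)w=\mc U(\alpha)^{-1}\Res_{z=0}Y\big(\mc V_\varrho(\eta_i)\sigma,z\big)\mc U(\alpha)w.
\end{align}
So the problem is coordinate-independence of the residue of a vertex operator, which should follow from Huang's change of coordinate theorem (Thm.~\ref{lb93}/Thm.~\ref{lb95}) together with the transition law for the sheaf $\scr V_C$.

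The key computation is to relate $\mc V_\varrho(\eta_i)\sigma$ and $\mc V_\varrho(\mu_i)\sigma$. Using the identification of $\mu_i$ with the standard coordinate $z$ of $\Cbb$ (so $\eta_i$ becomes $\alpha(z)$ on the relevant disk), one has by \eqref{eq180} and \eqref{eq218}: if $\mc V_\varrho(\mu_i)\sigma = v(z)\,dz$, then $\mc V_\varrho(\eta_i)\sigma$, expressed in the variable $w'=\alpha(z)$, is $\mc U(\varrho(\alpha|\id)_z)\,v(z)\cdot(dz/dw')\,dw'$, where $\varrho(\alpha|\id)_z(t)=\alpha(z+t)-\alpha(z)$ as in \eqref{eq161}; this is just the statement that $\sigma$ is a genuine section of $\scr V_C\otimes\omega_C$, so its trivializations transform by the transition functions of $\scr V_C$ tensored with those of $\omega_C$. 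First I would substitute this into the right-hand side of \eqref{eq-target}, change the contour integration variable from $w'=\alpha(z)$ back to $z$ (the Jacobian $dz/dw'$ cancels the $dw'$, leaving an integral in $z$), and then apply Thm.~\ref{lb93} in the form \eqref{eq160}: for each fixed $z$,
\begin{align*}
\bigbk{w',Y\big(\mc U(\varrho(\alpha|\id)_z)v(z),\alpha(z)\big)w}=\bigbk{w',\mc U(\alpha)Y\big(v(z),z\big)\mc U(\alpha)^{-1}w}
\end{align*}
as elements of $\Cbb((z))$, pairing against an arbitrary $w'\in\Wbb_i'$. Pulling $\mc U(\alpha)^{\pm1}$ out of the contour integral in $z$ (legitimate since on $\Wbb^{\le k}$ it is a fixed finite-rank operator, by the Remark after \eqref{eq192}) then yields exactly the left-hand side of \eqref{eq-target}.

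The main obstacle, and the point requiring care, is the interchange of the contour integral (the $\Res_{z=0}$) with the change of variables $w'=\alpha(z)$ and with the operator $\mc U(\alpha)$, and making sure all the series involved converge on a common punctured disk so that the manipulations are genuine identities of holomorphic functions rather than merely formal. Here I would invoke Thm.~\ref{lb95}: taking $X$ to be a small punctured disk parametrizing $z$ (or rather using that $z\mapsto\varrho(\alpha|\id)_z$ is a holomorphic family of transformations and $z\mapsto v(z)$ is holomorphic), both sides of the change-of-coordinate identity are elements of $\scr O(X)((\,\cdot\,))$ converging a.l.u., so the residue (a contour integral in the disk variable) commutes with everything in sight. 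A secondary bookkeeping point is that one should restrict throughout to $\scr V_C^{\le k}$ for each $k$, where $\scr W(\Wbb_i)$, $\Vbb^{\le k}$, and the operators $\mc U(\alpha)$ are all finite-dimensional/finite-rank, so that taking pairings with $w'\in\Wbb_i'$ and then letting $k\to\infty$ causes no trouble. Once the contour-integral interchange is justified, the identity \eqref{eq-target} is immediate, completing the proof.
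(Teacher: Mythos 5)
Your proposal is correct and follows essentially the same route as the paper: reduce to a local identity, express $\mc V_\varrho(\eta_i)\sigma$ in terms of $\mc V_\varrho(\mu_i)\sigma$ via the transition function $\mc U(\varrho(\eta|\mu))$ and the $\omega_C$-Jacobian, change the contour variable so the Jacobians cancel, and conclude with Huang's change of coordinate theorem (Thm.~\ref{lb93}/\ref{lb95}), using the a.l.u.\ convergence there to commute the residue with the sum over $P_n$. (The only quibble is that $\mc U(\alpha)|_{\Wbb^{\leq k}}$ need not be finite-rank for general admissible $\Wbb$; the interchange is instead justified, as you also note, by pairing with $w'\in\Wbb'$ and invoking the a.l.u.\ convergence of Thm.~\ref{lb95}, which is exactly what the paper does.)
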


The proof of this proposition is a good exercise of computing $\mc V_\varrho(\eta)\sigma$ when $\eta,\scr V_{U_i},\scr W(\Wbb_i)$ are not identified with the standard ones using the trivializations.

\begin{proof}
Write $x_i=x,U_i=U,\Wbb_i=\Wbb$ for simplicity. Let $\mu,\eta\in\scr O(U)$ be coordinates of $U$ at $x$. (So $\eta(x)=\mu(x)=0$.) Identify $U$ with $\mu(U)$ via $\mu$ so that $\mu$ is identified with the standard coordinate $\id_\Cbb$ of $\Cbb$. We have $\eta\in\Gbb$. Identify $\scr W(\Wbb)$ with $\Wbb$ via $\mc U(\mu)$. So $\mc U(\mu)=\id$, and $\mc U(\eta):\scr W(\Wbb)\rightarrow\Wbb$ agrees with the operator associated with the transformation $\eta$. We  write $\wbf\in\scr W(\Wbb)$ as $w\in\Wbb$.

Due to the above identifications, we have $\mu_*=\id$ and hence $\mc V_\varrho(\mu)=\mc U_\varrho(\mu)$. Write
\begin{align*}
\mc V_\varrho(\mu)\sigma=\mc U_\varrho(\mu)\sigma=u(z)dz
\end{align*}
where $u=u(z)$ belongs to $H^0(U,\Vbb\otimes\scr O_U(\star 0))$. So the action $\sigma\cdot w$ defined by $\mu$ is simply $\Res_{z=0}Y(u(z),z)wdz$.

Let us compute $\sigma\cdot w$ using $\eta$. In view of \eqref{eq193} and \eqref{eq194}, we compute $\mc V_\varrho(\eta)\sigma$. First,
\begin{align*}
\mc U_\varrho(\eta)\sigma=\mc U_\varrho(\eta)\mc U_\varrho(\mu)^{-1}u(z)dz=\mc U(\varrho(\eta|\mu))u(z)dz=\mc U(\varrho(\eta|\id_\Cbb)_z)u(z)dz.
\end{align*}
Here $z$ is the standard variable of $\Cbb$. Applying $\eta_*=(\eta^{-1})^*$, we get
\begin{align*}
\mc V_\varrho(\eta)\sigma=\mc U(\varrho(\eta|\id_\Cbb)_{\eta^{-1}(z)})u(\eta^{-1}(z))d{\eta^{-1}(z)}
\end{align*}
defined on $\eta(U)\subset\Cbb$. Thus, when evaluated with any vector  $w'\in\Wbb'$, we have
\begin{align*}
&\mc U(\eta)^{-1}\cdot \sigma\cdot \mc U(\eta)w=\sum_{n\in\Nbb}\mc U(\eta)^{-1}P_n\cdot \sigma\cdot \mc U(\eta)w\\
=&\sum_{n\in\Nbb}\Res_{z=0}~\mc U(\eta)^{-1}P_n Y\big(\mc V_\varrho(\eta)\sigma,z\big)\mc U(\eta) w\\
=&\sum_{n\in\Nbb}\Res_{z=0}~\underbrace {\mc U(\eta)^{-1}P_n Y\big(\mc U(\varrho(\eta|\id_\Cbb)_{\eta^{-1}(z)})u(\eta^{-1}(z)),z\big)\mc U(\eta) w}_{A_n}\cdot d{\eta^{-1}(z)}.
\end{align*}
By the change of coordiante Thm. \ref{lb95}, $\sum_n\bk{w',A_n}$ converges a.l.u. when $z\neq 0$ is small. Thus we can move the infinite sum into the residue, and by Thm. \ref{lb95} again, the above equals
\begin{align*}
\Res_{z=0}~Y(u(\eta^{-1}(z)),\eta^{-1}(z))w\cdot d\eta^{-1}(z)
\xlongequal{\zeta=\eta^{-1}(z)}\Res_{\zeta=0}~Y(u(\zeta),\zeta)w\cdot d\zeta.
\end{align*}
This finishes the proof.
\end{proof}

\subsection{}\label{lb124}

We are now ready to give a coordinate independent definition of conformal blocks. Let $\fk X=(C;x_\blt)$ be an $N$-pointed compact Riemann surface, for which we do not fix local coordinates. Again, we associate admissible $\Vbb$-modules $\Wbb_\blt$ to the markd points $x_\blt$. Let  \index{W@$\scr W(\Wbb_i)$, $\scr W_{\fk X}(\Wbb_\blt)$}
\begin{align}
\scr W_{\fk X}(\Wbb_\blt)=\scr W(\Wbb_1)\otimes\cdots\otimes\scr W(\Wbb_N).
\end{align}
Then for each choice of local coordinates $\eta_\blt$, we have trivialization \index{U@$\mc U(\alpha),\mc U(\eta),\mc U(\eta_\blt)$}
\begin{align}
\mc U(\eta_\blt):=\mc U(\eta_1)\otimes\cdots\otimes\mc U(\eta_N):\scr W_{\fk X}(\Wbb_\blt)\xrightarrow{\simeq}\Wbb_\blt.
\end{align}
If $\mu_\blt$ is another set of local coordinates, then we have transition function
\begin{align}\label{eq225}
\begin{aligned}
\mc U(\eta_\blt)\mc U(\mu_\blt)^{-1}=\mc U(\eta_\blt\circ\mu_\blt^{-1}):=\mc U(\eta_1\circ\mu_1^{-1})\otimes\cdots\otimes\mc U(\eta_N\circ\mu_N^{-1}).
\end{aligned}
\end{align}

For each $\wbf_\blt=\wbf_1\otimes\cdots\wbf_N\in\scr W_{\fk X}(\Wbb_\blt)$ and $\sigma\in H^0(C;\scr V_C\otimes\omega_C(\star x_\blt))$, define the residue action
\begin{align}
\sigma\cdot\wbf_\blt=\sum_{i=1}^N \wbf_1\otimes\cdots\otimes \sigma\cdot \wbf_i\otimes\cdots\otimes \wbf_N
\end{align}
(where each $\sigma\cdot \wbf_i$ is defined by \eqref{eq194}). This gives a linear action of $H^0(C;\scr V_C\otimes\omega_C(\star x_\blt))$ on $\scr W_{\fk X}(\Wbb_\blt)$.

\begin{df}
The vector space \index{T@$\scr T_{\fk X}(\Wbb_\blt),\scr T_{\fk X}^*(\Wbb_\blt)$}
\begin{align}
\scr T_{\fk X}(\Wbb_\blt)=\frac{\scr W_{\fk X}(\Wbb_\blt)}{H^0\big(C,\scr V_C\otimes\omega_C(\star x_\blt)\big)\cdot\scr W_{\fk X}(\Wbb_\blt)}
\end{align}
and its dual space $\scr T^*_{\fk X}(\Wbb_\blt)$ are called respectively  the \textbf{space of coinvariants} and the \textbf{space of conformal blocks} associated to $\fk X$ and $\Wbb_\blt$.
\end{df}

\subsection{}\label{lb128}

Let us generalize the pushforward in Subsec. \ref{lb105} to a more general geometric setting. Let $X$ and $Y$ be (non-necessarily compact)  Riemann surfaces, and let $\varphi:X\xrightarrow{\simeq}Y$ be a bi-holomorphism. Let
\begin{align}
\varphi_*:\scr O_X\rightarrow\scr O_Y,\qquad f\mapsto f\circ\varphi^{-1}
\end{align}
be the pushforward of the structure sheaves. We let $\varphi_*$ also denote
\begin{align}
\varphi_*\equiv\id_\Vbb\otimes\varphi_*:\Vbb\otimes\scr O_X\xrightarrow{\simeq}\Vbb\otimes\scr O_Y.\label{eq198}
\end{align}

Let $U\subset X$ and $V\subset Y$ be open and connected such that $V=\varphi(U)$. Suppose there is a univalent $\eta\in\scr O(Y)$. Recall that we have an equivalence
\begin{align}
\mc V_\varrho(\eta)=\eta_*\cdot\mc U_\varrho(\eta):\scr V_V\xrightarrow{\simeq}\Vbb\otimes\scr O_{\eta(V)}\label{eq201}
\end{align}
where the pushforward $\eta_*:\Vbb\otimes\scr O_V\rightarrow\Vbb\otimes\scr O_{\eta(V)}$ is similar to \eqref{eq198}. We define \index{V@$\mc V_\varrho(\eta_i),\mc V_\varrho(\varphi)$}
\begin{gather}\label{eq200}
\begin{gathered}
\mc V_\varrho(\varphi):\scr V_U\xrightarrow{\simeq}\scr V_V,\qquad \mc V_\varrho(\eta)\mc V_\varrho(\varphi)=\mc V_\varrho(\eta\circ\varphi).
\end{gathered}
\end{gather}
Equivalently,
\begin{align}
\mc U_\varrho(\eta)\mc V_\varrho(\varphi)=\varphi_*\cdot \mc U_\varrho(\eta\circ\varphi).\label{eq199}
\end{align}
\begin{proof}
Note that $\mc V_\varrho(\eta)=\eta_*\cdot \mc U_\varrho(\eta)$, $\mc V_\varrho(\eta\circ\varphi)=(\eta\circ\varphi)_*\cdot\mc U_\varrho(\eta\circ\varphi)$, and $(\eta\circ\varphi)_*=\eta_*\cdot \varphi_*$.
\end{proof}

\begin{lm}
The definition of $\mc V_\varrho(\varphi)$ is independent of the choice of univalent map $\eta$.
\end{lm}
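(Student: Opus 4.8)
The plan is to show that if $\eta, \theta \in \scr O(Y)$ are two univalent maps, then the two candidate definitions of $\mc V_\varrho(\varphi)$ — namely the unique $\scr O$-module isomorphism $\scr V_U \xrightarrow{\simeq} \scr V_V$ satisfying $\mc V_\varrho(\eta)\mc V_\varrho(\varphi) = \mc V_\varrho(\eta\circ\varphi)$, and the analogous one built from $\theta$ — coincide. Equivalently, it suffices to prove the identity
\begin{align*}
\mc V_\varrho(\eta)^{-1}\mc V_\varrho(\eta\circ\varphi) = \mc V_\varrho(\theta)^{-1}\mc V_\varrho(\theta\circ\varphi)
\end{align*}
as isomorphisms $\scr V_U \to \scr V_V$ (after restricting to open sets where the relevant univalent functions are defined; a gluing/partition argument in the last step removes the global hypothesis).

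First I would unwind the definition $\mc V_\varrho(\eta) = (\id_\Vbb \otimes \eta_*)\mc U_\varrho(\eta)$ from \eqref{eq218}/\eqref{eq201}, so that the left-hand side becomes
\begin{align*}
\mc U_\varrho(\eta)^{-1}(\id_\Vbb\otimes(\eta^{-1})^*)(\id_\Vbb\otimes(\eta\circ\varphi)_*)\mc U_\varrho(\eta\circ\varphi)
= \mc U_\varrho(\eta)^{-1}(\id_\Vbb\otimes\varphi_*)\mc U_\varrho(\eta\circ\varphi),
\end{align*}
using $(\eta\circ\varphi)_* = \eta_*\varphi_*$; this is exactly the equivalent form \eqref{eq199}. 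So the claim reduces to showing that $\mc U_\varrho(\eta)^{-1}(\id_\Vbb\otimes\varphi_*)\mc U_\varrho(\eta\circ\varphi)$ does not depend on $\eta$. Now I would compare the $\eta$-version with the $\theta$-version by inserting $\mc U_\varrho(\theta)\mc U_\varrho(\theta)^{-1}$ and $\mc U_\varrho(\theta\circ\varphi)\mc U_\varrho(\theta\circ\varphi)^{-1}$, and invoke the transition-function identity \eqref{eq180}: $\mc U_\varrho(\eta)\mc U_\varrho(\theta)^{-1} = \mc U(\varrho(\eta|\theta))$ on $Y$, and $\mc U_\varrho(\eta\circ\varphi)\mc U_\varrho(\theta\circ\varphi)^{-1} = \mc U(\varrho(\eta\circ\varphi|\theta\circ\varphi))$ on $X$. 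The task then collapses to the purely coordinate-theoretic statement that $\varphi_*$ intertwines these two change-of-coordinate operators, i.e. that for each point $x\in U$ with image $y=\varphi(x)$,
\begin{align*}
\varrho(\eta\circ\varphi|\theta\circ\varphi)_x = \varrho(\eta|\theta)_y \quad\text{as elements of }\Gbb,
\end{align*}
which is immediate from the defining formula \eqref{eq205}: $\varrho(\eta\circ\varphi|\theta\circ\varphi)_x(z) = \eta\circ\varphi\circ(\theta\circ\varphi)^{-1}(z+\theta(\varphi(x)))-\eta(\varphi(x)) = \eta\circ\theta^{-1}(z+\theta(y))-\eta(y) = \varrho(\eta|\theta)_y(z)$. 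Feeding this back through $\mc U$ and using $\scr O$-linearity of $\varphi_*$ gives the desired equality of the two trivialization-independent expressions.

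The step I expect to be the only real (and still minor) obstacle is handling the \emph{global} nonexistence of a univalent $\eta$ on all of $Y$: a general Riemann surface $Y$ need not admit one. The remedy is standard — define $\mc V_\varrho(\varphi)$ locally, covering $Y$ by connected open sets on which univalent functions exist, verify on overlaps via the argument above that the local definitions agree, and glue; the sheaf axioms then produce a well-defined global $\mc V_\varrho(\varphi)$, and its independence of all choices follows from the local computation. One should also note in passing that the cocycle identity $\mc U(\varrho(\eta_1|\eta_2))\mc U(\varrho(\eta_2|\eta_3)) = \mc U(\varrho(\eta_1|\eta_3))$ (Thm. \ref{lb100}) is what guarantees the local pieces are mutually compatible, so no new input beyond what is already in the excerpt is needed.
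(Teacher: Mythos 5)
Your proof is correct and follows essentially the same route as the paper: the heart of both arguments is the pointwise identity $\varrho(\eta\circ\varphi|\theta\circ\varphi)_x=\varrho(\eta|\theta)_{\varphi(x)}$, which the paper packages as $\mc U(\varrho(\eta\circ\varphi|\mu\circ\varphi))=\varphi_*^{-1}\mc U(\varrho(\eta|\mu))\varphi_*$ and combines with \eqref{eq199} exactly as you do. Your closing remark on gluing local definitions is consistent with how the paper already sets up $\mc V_\varrho(\varphi)$ locally, so nothing is missing.
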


\begin{proof}
Let $\mu\in\scr O(V)$ be univalent. Using \eqref{eq177}, one checks easily that
\begin{align*}
\mc U(\varrho(\eta\circ\varphi|\mu\circ\varphi))=\varphi^{-1}_*\cdot\mc U(\varrho(\eta|\mu))\cdot\varphi_*
\end{align*}
as morphisms $\Vbb\otimes\scr O_U\rightarrow\Vbb\otimes\scr O_U$. This means
\begin{align}
\mc U_\varrho(\eta\circ\varphi)\mc U_\varrho(\mu\circ\varphi)^{-1}=\varphi^{-1}_*\cdot\mc U_\varrho(\eta)\mc U_\varrho(\mu)^{-1}\cdot\varphi_*.
\end{align}
The independence follows immediately from the above formula and \eqref{eq199}.
\end{proof}

By this lemma, we have a global equivalence
\begin{align}
\mc V_\varrho(\varphi):\scr V_X\xrightarrow{\simeq}\scr V_Y
\end{align}
defined locally by \eqref{eq200} or \eqref{eq199}. We call $\mc V_\varrho(\varphi)$ the \textbf{pushforward} associated to $\varphi$. We also use $\mc V_\varrho(\varphi)$ to denote
\begin{align}
\mc V_\varrho(\varphi)\equiv \mc V_\varrho(\varphi)\otimes\varphi_*:\scr V_X\otimes\omega_X\xrightarrow{\simeq}\scr V_Y\otimes\omega_Y\label{eq206}
\end{align}
where $\varphi_*$ is  $(\varphi^*)^{-1}=(\varphi^{-1})^*:\omega_X\rightarrow\omega_Y$.

\subsection{}

\begin{rem}
From \eqref{eq200}, it is clear that $\mc V_\varrho(\psi\circ\varphi)=\mc V_\varrho(\psi)\mc V_\varrho(\varphi)$ if $\psi:Y\rightarrow Z$ is a bi-holomorphism of complex manifolds.
\end{rem}

\begin{rem}\label{lb107}
The geometric meanings of $\mc V_\varrho(\varphi):\scr V_X\rightarrow\scr V_Y$ and the formula \eqref{eq199} are as follows. Let $x\in X$. Choose a vector $\ubf$ in the fiber $\scr V_X|x$, considered an abstract VOA vector. Let $\vbf=\mc V_\varrho(\varphi)\ubf$. Then by \eqref{eq199} and the geometric meanings of $\mc U_\varrho(\eta)$ and $\mc U_\varrho(\mu)$ (cf. Rem. \ref{lb106}), $\ubf$ and $\vbf$ are related by the property that for any univalent $\eta$ holomorphic on a neighborhood $y$, if we set $\mu=\eta\circ\varphi$, then  the coordinate representation of $\ubf$ under $\mu-\mu(x)$ is the same as that of $\vbf$ under $\eta-\eta(y)$. 

We will simply say that the $\mu$-trivialization of $\ubf$ and the $\eta$-trvialization of $\vbf$ are equal.
\end{rem}

\begin{rem}
Now $\mc V_\varrho(\eta)$ has two meanings: as an equivalence $\scr V_V\rightarrow\Vbb\otimes\scr O_{\eta(V)}$ defined by \eqref{eq201}, and as an equivalence $\scr V_V\rightarrow\scr V_{\eta(V)}$ defined similar to $\mc V_\varrho(\varphi)$. These two meanings agree if we identify  $\scr V_{\eta(V)}$ with $\Vbb\otimes\scr O_{\eta(V)}$ via the trivialization $\mc U_\varrho(\zeta)$ where $\zeta$ is the standard coordinate of $\Cbb$.
\end{rem}

\subsection{}\label{lb150}

That one can define pushforward for (co)tangent bundles as well as for sheaves of VOAs implies that these two classes of objects are closely related. Indeed, one can view $\scr V_C$ as a twisted direct sum of tensor products of the holomorphic tangent line bundle $\Theta_C$ of $C$. \index{zz@$\Theta_C$} (Note that $\omega_C$ is the dual of $\Theta_C$.)

To see this, let us look at the transition function $\mc U(\varrho(\eta|\mu)):\Vbb^{\leq n}\otimes\scr O_U\xrightarrow{\simeq}\Vbb^{\leq n}\otimes\scr O_U$ where $\mu,\eta\in\scr O(U)$ are univalent. By \eqref{eq192}, $\mc U(\varrho(\eta|\mu))_x=\varrho(\eta|\mu)_x'(0)^{L_0}(1+\text{products of }L_{>0})$ on $\Vbb$. From \eqref{eq177}, $\varrho(\eta|\mu)_x'(0)=\frac{\partial\eta}{\partial\mu}(x)$. Thus, as $L_{>0}$ lowers weights, we conclude that for each $v=v(x)\in\Vbb^{\leq n}\otimes\scr O_U$,
\begin{align}
\mc U_\varrho(\eta)\mc U_\varrho(\mu)^{-1}v=\mc U(\varrho(\eta|\mu))v=(\partial\eta/\partial\mu)^nv\qquad\mod~\Vbb^{\leq n-1}\otimes\scr O_U.
\end{align}
Thus, the transition function $\mc U(\varrho(\eta|\mu))$ from the $\mu$-coordinate to the $\eta$-coordinate for the quotient bundle $\scr V^{\leq n}_C/\scr V^{\leq n-1}_C$ is $(\partial\eta/\partial\mu)^n$, which agrees that of $\Vbb(n)\otimes_\Cbb\Theta_C^{\otimes n}$. We conclude:
\begin{pp}\label{lb178}
There is an equivalence of $\scr O_C$-modules
\begin{align}
\scr V^{\leq n}_C/\scr V^{\leq n-1}_C\simeq \Vbb(n)\otimes_\Cbb\Theta_C^{\otimes n}\label{eq247}
\end{align}
such that if $U\subset C$ is open and $\eta\in\scr O(U)$ is univalent, then for each $v\in\Vbb(n)$, $v\otimes \partial_\eta^n$ (which is an element in the RHS of \eqref{eq247}) is equivalent to the equivalence class of $\mc U_\varrho(\eta)^{-1}v$ in the LHS of \eqref{eq247}.
\end{pp}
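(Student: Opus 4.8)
The plan is to construct the equivalence in \eqref{eq247} directly from the transition functions, using the filtration $\scr V_C^{\leq 0}\subset \scr V_C^{\leq 1}\subset\cdots$ and the explicit shape of $\mc U(\varrho(\eta|\mu))$ given by \eqref{eq192}. First I would recall that, since $\dim\Vbb(n)<\infty$ (Assumption \ref{lb101}), the quotient sheaf $\scr V_C^{\leq n}/\scr V_C^{\leq n-1}$ is a locally free $\scr O_C$-module of rank $\dim\Vbb(n)$, because locally, under a trivialization $\mc U_\varrho(\eta)$ coming from a univalent $\eta\in\scr O(U)$, it is identified with $(\Vbb^{\leq n}/\Vbb^{\leq n-1})\otimes_\Cbb\scr O_U\cong\Vbb(n)\otimes_\Cbb\scr O_U$. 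So the content of the statement is purely about transition functions: I must show that on an overlap $U$ carrying two univalent functions $\mu,\eta$, the transition map induced on the quotient by $\mc U_\varrho(\eta)\mc U_\varrho(\mu)^{-1}=\mc U(\varrho(\eta|\mu))$ equals the scalar $(\partial\eta/\partial\mu)^n$ acting on $\Vbb(n)\otimes\scr O_U$, which is by definition the transition function of $\Theta_C^{\otimes n}$ (tensored with the constant space $\Vbb(n)$).

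The key computation, which is already sketched right before the statement, is the following. By \eqref{eq192}, for $\alpha=\varrho(\eta|\mu)_x\in\Gbb$ we have $\mc U(\alpha)=\alpha'(0)^{\wtd L_0}\exp(\sum_{k\geq 1}c_k L_k)$ on $\Vbb$; since each $L_k$ with $k\geq 1$ strictly lowers $L_0$-weight, the operator $\exp(\sum_{k\geq 1}c_kL_k)$ is unipotent with respect to the filtration and acts as the identity modulo $\Vbb^{\leq n-1}$ on $\Vbb^{\leq n}$. Hence, modulo $\Vbb^{\leq n-1}\otimes\scr O_U$, the transition function $\mc U(\varrho(\eta|\mu))$ acts on $\Vbb(n)\otimes\scr O_U$ as multiplication by $\varrho(\eta|\mu)_x'(0)^{n}$. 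Then I would invoke \eqref{eq205}, which gives $\varrho(\eta|\mu)_x(z)=\eta\circ\mu^{-1}(z+\mu(x))-\eta(x)$, so differentiating at $z=0$ yields $\varrho(\eta|\mu)_x'(0)=(\eta\circ\mu^{-1})'(\mu(x))=\dfrac{\partial\eta}{\partial\mu}(x)$. This identifies the transition function on the quotient with $(\partial\eta/\partial\mu)^n$, which is exactly the cocycle defining $\Theta_C^{\otimes n}$; tensoring with the trivial bundle $\Vbb(n)\otimes_\Cbb\scr O_C$ gives the desired equivalence \eqref{eq247}.

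Finally, to make the equivalence canonical (not just an abstract isomorphism of line-bundle-type sheaves) I would spell out the correspondence claimed in the last sentence of the statement: under the trivialization $\mc U_\varrho(\eta)$ over $U$, the section $\mc U_\varrho(\eta)^{-1}v$ of $\scr V_C^{\leq n}$ for $v\in\Vbb(n)$ maps, in the quotient, to $v\otimes\partial_\eta^{\,n}\in\Vbb(n)\otimes\Theta_C^{\otimes n}(U)$, where $\partial_\eta$ denotes the local frame of $\Theta_C$ dual to $d\eta$. One checks this is consistent under change of univalent map: passing from $\eta$ to $\mu$ multiplies the $\scr V_C$-side representative by $(\partial\eta/\partial\mu)^n$ modulo lower terms (by the computation above) and multiplies $\partial_\eta^{\,n}$ by the same factor, since $\partial_\eta=(\partial\mu/\partial\eta)\,\partial_\mu$. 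So the two prescriptions agree on overlaps and glue to a well-defined global $\scr O_C$-module isomorphism. I do not anticipate a serious obstacle here; the only point requiring a little care is bookkeeping the unipotent part of $\mc U(\alpha)$ and confirming it is trivial on the associated graded — that is, making precise that the products of $L_{>0}$ land in $\Vbb^{\leq n-1}$ when applied to $\Vbb(n)$ — which follows immediately from \eqref{eq21} ($Y(v)_k$, hence $L_k$, lowers weight by $k$ for $k>0$).
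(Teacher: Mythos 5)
Your proposal is correct and follows essentially the same route as the paper (Subsec.~\ref{lb150}): write the transition function as $\mc U(\varrho(\eta|\mu))_x=\varrho(\eta|\mu)_x'(0)^{L_0}(1+\text{products of }L_{>0})$ via \eqref{eq192}, observe that the $L_{>0}$ part is trivial on the associated graded since it lowers weights, and identify $\varrho(\eta|\mu)_x'(0)=\partial\eta/\partial\mu$ so the quotient has the cocycle $(\partial\eta/\partial\mu)^n$ of $\Vbb(n)\otimes_\Cbb\Theta_C^{\otimes n}$. The extra care you take in checking the explicit correspondence $\mc U_\varrho(\eta)^{-1}v\mapsto v\otimes\partial_\eta^n$ is consistent on overlaps is a fine addition but not a departure from the paper's argument.
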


Thus, in general, an element of $\Vbb(n)\otimes\Theta_C^{\otimes n}(U)$ is a sum of those of the form $v\otimes f\partial_\eta^n$ where $v\in\Vbb(n)$ and $f\in\scr O(U)$. It is identified with $f\cdot \mc U_\varrho(\eta)^{-1}v$ in the LHS of \eqref{eq247}.

\subsection{}

If we focus on only primary vectors, we can get subbundles of $\scr V_C$ naturally equivalent to direct sums of tensor products of $\Theta_C$ without taking quotient. Recall that a primary vector $v$ in $\Vbb(n)$ is  one killed by $L_{>0}$.  So the change of coordinate formula for $v$ is $\mc U(\varrho(\eta|\mu))v=(\partial\eta/\partial\mu)^nv$. Thus, if we let $\mbf P(n)$ be the subspace of weight $n$ primary vectors of $\Vbb$, then $\scr V_C$ has a vector subbundle $\scr P^n_C$ with local trivialization $\mc U_\varrho(\eta):\scr P^n_C|_U\xrightarrow{\simeq} \mbf P(n)\otimes_\Cbb\scr O_U$ for any univalent $\eta\in\scr O(U)$. Moreover, $\scr P_C^n$ has the same transition functions as $\Theta_C^{\otimes n}$. So $\scr P_C^n\simeq \mbf P(n)\otimes_\Cbb\Theta_C^{\otimes n}$.

Since the basic properties of line bundles $\Theta_C^{\otimes n}$ are well known, in the early development of the mathematical theory of conformal blocks, sheaves of VOAs were not yet defined, and the sheaves $\scr P^n_C$ were sometimes used instead to define and study conformal blocks. Specifically, in the landmark paper \cite{TUY89}, conformal blocks for a WZW model $\Vbb=L_l(\gk,0)$ (where $\gk$ is simple and $l\in\Nbb$) was defined using
\begin{align*}
\scr P^1_C\otimes \omega_C(\star x_\blt)\simeq\mbf P(1)\otimes_\Cbb \Theta_C\otimes\omega_C(\star x_\blt)=\gk\otimes_\Cbb\scr O_C(\star x_\blt).
\end{align*}
(Note that $\Theta_C\otimes\omega_C\simeq\scr O_C$ since $\omega$ is dual to $\Theta_C$.) Thus, for WZW models, the space of coinvariants was defined (for $\fk X$ with local coordinates) in \cite{TUY89} to be
\begin{align*}
\frac{\Wbb_\blt}{H^0\big(C,\gk\otimes_\Cbb\scr O_C(\star x_\blt)\big)\cdot\Wbb_\blt}
\end{align*}
Fortunately, this definition agrees with the one defined using $H^0(C,\scr V_C\otimes\omega_C(\star x_\blt))$. See \cite[Sec. 9.3]{FB04}.

\subsection{}

In differential geometry, the Lie derivatives of sections of (tensor products of) tangent and cotangent bundles are defined using the pushforward or the pullback maps associated to flows. Likewise, we can define Lie derivatives for sections of $\scr V_C$.

Let $W\subset C$ be an open subset, and choose $\fk x\in\Theta_C(W)$, namely, $\xk$ is a holomorphic tangent field on $W$. Note that for any precompact open subset $V\subset W$ (i.e., the closure of $V$ in $W$ is compact), there is a neighborhood $T\subset\Cbb$ of $0$ (with variable $\zeta$) such that the holomorphic flow $\exp(\zeta \fk x)$ is holomorphic on $T\times V$ and is injective as a function on $V$ for each $\zeta\in T$. (Cf. Subsec. \ref{lb12}.)

In the following, we write $\exp(\zeta\xk)(x)$ as $\exp_{\zeta\xk}(x)$.

\begin{df}\label{lb129}
For any $\vbf\in\scr V_C^{\leq n}(W)$ and $\xk\in\Theta_C(W)$, define the \textbf{Lie derivative} $\mc L_\xk\vbf$  \index{L@$\mc L_\xk$, the Lie derivative} to be an element of $\scr V_C^{\leq n}(W)$ (if the limit exists) as follows . Choose any precompact open subset $V$ in $W$. Then
\begin{align}\label{eq207}
\mc L_\xk\vbf\big|_V=\lim_{\zeta\rightarrow0} \frac{\mc V_\varrho(\exp_{\zeta\xk})^{-1}\big(\vbf\big|_{\exp_{\zeta\xk}(V)}\big)-\vbf\big|_V}{\zeta}
\end{align}
\end{df}

Intuition: For each $p\in V$, $\vbf(p)\in\scr V_C^{\leq n}|p$ is an abstract VOA vector at $p$. Let $q=\exp_{\zeta\xk}(p)$. Then  $\vbf(q)\in\scr V_C^{\leq n}|q$ is an abstract VOA vector at $\vbf(q)$,  which is pulled back to the vector $\mc V_\varrho(\exp_{\zeta\xk})^{-1}\vbf(q)\in\scr V_C^{\leq n}|p$ via the map $\exp_{\zeta\xk}$.
\begin{align}\label{eq202}
\vcenter{\hbox{{
\includegraphics[height=2cm]{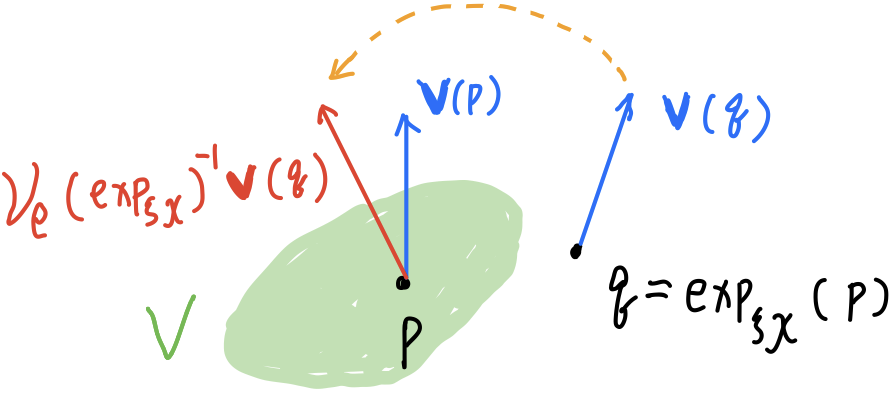}}}}
\end{align}
Then for small $\zeta$,
\begin{align}
(\mc L_\xk\vbf)(p)\approx \frac{\mc V_\varrho(\exp_{\zeta\xk})^{-1}\vbf(q)-\vbf(p)}{\zeta}
\end{align}

\subsection{}

\begin{pp}\label{lb108}
Assume that $\eta\in\scr O(W)$ is univalent, and set
\begin{align*}
u=\mc U_\varrho(\eta)\vbf\qquad\in\Vbb^{\leq n}\otimes_\Cbb\scr O(W).
\end{align*}
Write $\xk=h\partial_\eta$ where $h\in\scr O(W)$. Then $\mc L_\xk\vbf$ exists (i.e. the limit on the RHS of \eqref{eq207} exists) as an element of $\scr V^{\leq n}(W)$, and its $\eta$-trivialization is
\begin{align}
\mc U_\varrho(\eta)\mc L_\xk\vbf=h\partial_\eta u-\sum_{k\geq1} \frac 1{k!}\partial_\eta^k h\cdot L_{k-1}u.\label{eq204}
\end{align}
\end{pp}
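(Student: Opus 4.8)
The plan is to verify that the limit in \eqref{eq207} exists and compute it directly in the $\eta$-trivialization, reducing everything to a computation with the flow of the vector field and the operators $\mc U(\rho_\zeta)$ studied in Prop. \ref{lb98}. First I would fix a precompact open $V\subset W$ and choose $T\ni 0$ so that $\exp_{\zeta\xk}$ is holomorphic and injective on $V$ for $\zeta\in T$. Identify $W$ (or a suitable neighborhood of $\overline V$) with $\eta(W)\subset\Cbb$ via $\eta$, so that $\eta$ becomes the standard coordinate and $\xk = h\partial_\eta$ becomes $h(\zeta_{\mathrm{coord}})\partial$. Under this identification the flow $\exp_{\zeta\xk}$ becomes a map $\sigma_\zeta$ on an open subset of $\Cbb$ with $\sigma_0=\id$, $\partial_\zeta\sigma_\zeta|_{\zeta=0}=h$, and for each fixed point $x$ the germ $t\mapsto \sigma_\zeta(x+t)-\sigma_\zeta(x)$ is a family of formal/analytic coordinates $\rho^{x}_\zeta\in\Gbb$ depending holomorphically on $(x,\zeta)$ with $\rho^x_0(t)=t$.

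Next I would unwind the definition of $\mc V_\varrho(\exp_{\zeta\xk})$ from \eqref{eq199}: applying $\mc U_\varrho(\eta)$ to $\mc V_\varrho(\exp_{\zeta\xk})^{-1}(\vbf|_{\exp_{\zeta\xk}(V)})$ and using $\mc U_\varrho(\eta)\mc V_\varrho(\varphi)=\varphi_*\cdot\mc U_\varrho(\eta\circ\varphi)$ (with $\varphi=\exp_{\zeta\xk}$, hence $\varphi^{-1}$ in the inverse), one sees that the $\eta$-trivialization of $\mc V_\varrho(\exp_{\zeta\xk})^{-1}(\vbf|_{\exp_{\zeta\xk}(V)})$ at a point $x$ equals $\mc U\big(\rho^{x}_\zeta\big)^{-1}\,u\big(\sigma_\zeta(x)\big)$, where $\rho^x_\zeta(t)=\sigma_\zeta(x+t)-\sigma_\zeta(x)=\varrho(\sigma_\zeta\circ\eta\,|\,\eta)$ in the notation of \eqref{eq177}. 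Since $u\in\Vbb^{\leq n}\otimes\scr O(W)$ and $\Vbb^{\leq n}$ is finite dimensional, and since $\mc U(\rho^x_\zeta)$ restricted to $\Vbb^{\leq n}$ lies in $\End(\Vbb^{\leq n})\otimes\scr O$ depending holomorphically on $(x,\zeta)$ by \eqref{eq170}, the difference quotient in \eqref{eq207} is a holomorphic-in-$x$ family with a genuine limit as $\zeta\to0$, obtained by applying $\partial_\zeta|_{\zeta=0}$ by the product rule: one term where $\partial_\zeta$ hits $u(\sigma_\zeta(x))$, giving $(\partial_\zeta\sigma_\zeta(x)|_{\zeta=0})\partial_\eta u = h\,\partial_\eta u$, and one term where $\partial_\zeta$ hits $\mc U(\rho^x_\zeta)^{-1}$, giving $-\big(\partial_\zeta\mc U(\rho^x_\zeta)|_{\zeta=0}\big)u$.

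The remaining computation is to evaluate $\partial_\zeta\mc U(\rho^x_\zeta)|_{\zeta=0}$. Since $\rho^x_0 = \id$ (i.e. $\rho^x_0(t)=t$), Prop. \ref{lb98} applies verbatim (this is exactly the ``family of formal coordinates with $\rho_0(z)=z$'' situation, with parameter $\zeta$), giving $\partial_\zeta\mc U(\rho^x_\zeta)|_{\zeta=0}=\sum_{k\geq1}\frac1{k!}\big(\partial_\zeta(\rho^x_\zeta)^{(k)}(0)|_{\zeta=0}\big)L_{k-1}$, and from $\rho^x_\zeta(t)=\sigma_\zeta(x+t)-\sigma_\zeta(x)$ one reads off $\partial_\zeta(\rho^x_\zeta)^{(k)}(0)|_{\zeta=0}=\partial_\zeta\sigma_\zeta^{(k)}(x)|_{\zeta=0}=\partial_\eta^k h(x)$, because $\partial_\zeta\sigma_\zeta|_{\zeta=0}=h$. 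Substituting yields $-\sum_{k\geq1}\frac1{k!}\partial_\eta^k h\cdot L_{k-1}u$, and combining the two terms gives exactly \eqref{eq204}; note that since $v\in\Vbb^{\leq n}$ only finitely many $L_{k-1}$ contribute (the higher ones lower weight below $0$), so the sum is finite and the result indeed lies in $\Vbb^{\leq n}\otimes\scr O(W)$, i.e. $\mc L_\xk\vbf\in\scr V^{\leq n}_C(W)$. The main obstacle, though it is minor, is being careful about the dependence on the base point $x$: one must check that $\mc U(\rho^x_\zeta)^{-1}$ is jointly holomorphic in $(x,\zeta)$ (so that differentiating under the identification is legitimate and the limit is taken in the Fréchet sense of $\scr O(W)$-coefficients, cf. Rem. \ref{lb96}), which follows from \eqref{eq170} together with the holomorphic dependence of the flow $\sigma_\zeta$ on its initial point, and that the formula is independent of the precompact $V$ chosen, which is immediate from uniqueness of holomorphic extension.
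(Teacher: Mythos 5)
Your argument is correct and is essentially the paper's own proof: both reduce the pulled-back section to the $\eta$-trivialization $\mc U(\varrho(\eta|\eta\circ\exp_{\zeta\xk})_x)\,u(\exp_{\zeta\xk}(x))$ (you write the transition operator as $\mc U(\rho^x_\zeta)^{-1}$ with $\rho^x_\zeta=\varrho(\eta\circ\exp_{\zeta\xk}|\eta)_x$, which is the same thing by Thm. \ref{lb100}), then apply the product rule at $\zeta=0$ and evaluate the derivative of the transition operator via Prop. \ref{lb98}. The extra care you take with joint holomorphicity in $(x,\zeta)$ is exactly the point the paper defers to \cite[Sec. 2.6]{Gui}, so nothing is missing.
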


\begin{proof}
We need to find the $\eta$-trivialization of $\mc V_\varrho(\exp_{\zeta\xk})^{-1}\big(\vbf\big|_{\exp_{\zeta\xk}(V)}\big)$ at any $p\in V$, namely, the $\eta$-trivialization of  the red vector in  \eqref{eq202}. Since the $\eta$-trivialization of $\vbf(q)$ is $u(q)$, by \eqref{eq199} or Rem. \ref{lb107}, the $\eta\circ\exp_{\zeta\xk}$-trivialization of the red vector is also $u(q)$. So the $\eta$-trivialization of the red vector (which is at $p$) is
\begin{align}
\mc U(\varrho(\eta|\eta\circ\exp_{\zeta\xk})_p)u(\exp_{\zeta\xk}(p))\label{eq203}.
\end{align}
Its derivative over $\zeta$ at $\zeta=0$ gives $\mc L_\xk\vbf(p)$ under the $\eta$-trivialization. (The readers can check \cite[Sec. 2.6]{Gui} if they are not satisfied with the rigorousness of the proof here.)

The derivative at $\zeta=0$ of $u(\exp_{\zeta\xk}(p))$ is just the action of the vector field $\xk$ on $u$, namely $h\partial_\eta u$ at $p$. (Notice \eqref{eq7}.) The derivative of $\mc U(\varrho(\eta|\eta\circ\exp_{\zeta\xk})_p)$ at $0$ can be calculated using Prop. \ref{lb98}: if we identify $\eta$ with the standard coordinate of $\Cbb$, then 
\begin{align*}
&\partial_\zeta\mc U(\varrho(\eta|\eta\circ\exp_{\zeta\xk})_p)(t)\big|_{\zeta=0}\xlongequal{\eqref{eq205}}\partial_\zeta\big(\exp_{-\zeta\xk}(t+\exp_{\zeta\xk}(p))-p\big)\big|_{\zeta=0}\\
=&-h(t+p)+h(p).
\end{align*}
Its $k$-th derivative over $t$ at $t=0$ is then $-\partial_\eta^kh(p)$. Thus, by Prop. \ref{lb98},
\begin{align*}
\partial_\zeta \mc U(\varrho(\eta|\eta\circ\exp_{\zeta\xk})_p)\big|_{\zeta=0}=-\sum_{k\geq1} \frac 1{k!}\partial_\eta^k h\cdot L_{k-1}.
\end{align*}
\end{proof}

\subsection{}

In Prop. \ref{lb108}, if we assume that  $u\in\mbf P(n)\otimes_\Cbb\scr O(W)$, i.e., the values of $u$ are primary with weights $n$, then the Lie derivative formula is $h\partial_\eta u-n\partial_\eta h\cdot u$. Not surprisingly, this result agrees with the formula of Lie derivatives in $\Theta_C^{\otimes n}$, including the case $n=-m<0$ where we understand $\Theta_C^{\otimes (-m)}=\omega_C^{\otimes m}$.  

Since we have pushforward for sections of $\scr V_C^{\leq n}\otimes\omega_C$ (cf. \eqref{eq206}), we can also define Lie derivatives in this bundle using the same formula \eqref{eq207}. The result is easy to guess by Leibniz rule and prove rigorously:

\begin{co}\label{lb130}
Let $\sigma\in\scr V^{\leq n}_C\otimes\omega_C(W)$, and set
\begin{align*}
u\cdot d\eta=\mc U_\varrho(\eta)\sigma\qquad\in\Vbb^{\leq n}\otimes_\Cbb\omega_C(W)
\end{align*}
where $u\in\Vbb^{\leq n}\in\scr O(W)$. Write $\xk=h\partial_\eta$ where $h\in\scr O(W)$. Then
\begin{align}
\mc U_\varrho(\eta)\mc L_\xk\sigma=h\partial_\eta u\cdot d\eta-\sum_{k\geq1} \frac 1{k!}\partial_\eta^k h\cdot L_{k-1}u\cdot d\eta+\partial_\eta h\cdot u\cdot d\eta.
\end{align}
\end{co}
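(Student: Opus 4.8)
\textbf{Proof proposal for Corollary \ref{lb130}.}

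The plan is to reduce the statement for sections of $\scr V^{\leq n}_C\otimes\omega_C$ to the already-established formula for sections of $\scr V^{\leq n}_C$ in Proposition \ref{lb108}, together with the classical Lie-derivative formula for the line bundle $\omega_C$, via the Leibniz rule. The key point is that $\mc V_\varrho(\varphi)$ on $\scr V^{\leq n}_C\otimes\omega_C$ was \emph{defined} (see \eqref{eq206}) as the tensor product $\mc V_\varrho(\varphi)\otimes\varphi_*$, so pullback along a flow factors through the two tensor components; hence the difference quotient in \eqref{eq207} factors accordingly and the limit obeys the Leibniz rule.

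First I would work locally: fix the univalent $\eta\in\scr O(W)$, identify $W$ with $\eta(W)$ so that $\eta$ is the standard coordinate, write $\xk=h\partial_\eta$, and write $\sigma|_V = \mc U_\varrho(\eta)^{-1}(u)\otimes d\eta$ on a precompact open $V$, where $u\in\Vbb^{\leq n}\otimes\scr O(W)$. Since $\mc V_\varrho(\exp_{\zeta\xk})^{-1} = \mc V_\varrho(\exp_{-\zeta\xk})$ and this splits as the tensor product of its action on the $\scr V^{\leq n}_C$-part and the $\omega_C$-part, the pulled-back section $\mc V_\varrho(\exp_{\zeta\xk})^{-1}(\sigma|_{\exp_{\zeta\xk}(V)})$ equals $\big(\mc V_\varrho(\exp_{\zeta\xk})^{-1}\mc U_\varrho(\eta)^{-1}u\big)\otimes\big((\exp_{-\zeta\xk})^*\,d\eta\big)$ after trivializing. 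Expanding both factors to first order in $\zeta$: the first factor contributes, by Proposition \ref{lb108}, the term $\mc U_\varrho(\eta)\mc L_\xk\big(\mc U_\varrho(\eta)^{-1}u\big) = h\partial_\eta u - \sum_{k\geq 1}\frac1{k!}\partial_\eta^k h\cdot L_{k-1}u$ (as the coefficient of $\zeta$), while the second factor contributes the classical Lie derivative of $d\eta$ along $h\partial_\eta$, namely $\mc L_{h\partial_\eta}(d\eta) = d(h) = \partial_\eta h\, d\eta$ (one can verify $(\exp_{-\zeta\xk})^* d\eta = d\eta + \zeta\,\partial_\eta h\, d\eta + O(\zeta^2)$ directly from $\partial_\zeta\exp_{\zeta\xk}(t)|_{\zeta=0}=h(t)$). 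Multiplying out the two first-order expansions and collecting the coefficient of $\zeta^1$ gives exactly $h\partial_\eta u\cdot d\eta - \sum_{k\geq 1}\frac1{k!}\partial_\eta^k h\cdot L_{k-1}u\cdot d\eta + \partial_\eta h\cdot u\cdot d\eta$, which is the claimed formula. Existence of the limit follows because each of the two tensor factors has a limit (the first by Proposition \ref{lb108}, the second by the smoothness of the flow), and on the finite-rank bundle $\scr V^{\leq n}_C\otimes\omega_C$ restricted to $V$ the product rule for limits applies entrywise.

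I expect the only genuine obstacle to be bookkeeping: making precise that the difference quotient for a tensor product of two pulled-back sections converges to the Leibniz combination of the individual Lie derivatives, i.e. writing $\frac{a_\zeta\otimes b_\zeta - a_0\otimes b_0}{\zeta} = \frac{a_\zeta-a_0}{\zeta}\otimes b_\zeta + a_0\otimes\frac{b_\zeta - b_0}{\zeta}$ and passing to the limit using that $b_\zeta\to b_0$ and both difference quotients converge (uniformly on the compact closure of $V$, which justifies interchanging the limit with evaluation). Since $\scr V^{\leq n}_C$ is finite rank and $\omega_C$ is a line bundle, there are no functional-analytic subtleties here; everything reduces to the finite-dimensional product rule applied fiberwise. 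The rest is the elementary computation of $(\exp_{-\zeta\xk})^* d\eta$ to first order, which I would do in one line, and then substitution.
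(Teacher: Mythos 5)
Your proposal is correct and is essentially the paper's intended argument: the paper gives no written proof of Corollary \ref{lb130}, saying only that the result follows from Proposition \ref{lb108} by the Leibniz rule, which is exactly the reduction you carry out. One small bookkeeping slip: since $\mc V_\varrho(\exp_{\zeta\xk})$ acts on the $\omega_C$-factor as $(\exp_{\zeta\xk})_*=(\exp_{-\zeta\xk})^*$, its inverse acts as the genuine pullback $(\exp_{\zeta\xk})^*$, so the second factor in your expansion should be $(\exp_{\zeta\xk})^*d\eta=d\eta+\zeta\,\partial_\eta h\cdot d\eta+O(\zeta^2)$ rather than $(\exp_{-\zeta\xk})^*d\eta$ (whose first-order term has the opposite sign); your stated expansion and final formula are nonetheless the correct ones.
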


\section{Families of compact Riemann surfaces and parallel sections of conformal blocks}

\subsection{}

\begin{df}
A \textbf{family of compact Riemann surfaces} is the data $\pi:\mc C\rightarrow\mc B$ where $\mc B,\mc C$ are Riemann surfaces,  the surjective holomorphic map $\pi$ is proper (i.e. $\pi^{-1}(\text{compact})$ is compact) and a submersion (i.e. the linear map $d\pi$ between holomorphic tangent spaces is everywhere surjective), and for each $b\in\mc B$ the fiber $\mc C_b=\pi^{-1}(b)$ \index{Cb@$\mc C_b=\pi^{-1}(b)$, the fiber of $\mc C$ at $b$} is a compact Riemann surface. Clearly, $\pi$ is an open map.
\end{df}

By Ehresmann's fibration theorem, if $\mc B$ is connected, then all fibers of the family are diffeomorphic; moreover,  as a family of differential manifolds, $\pi:\mc C\rightarrow\mc B$ is locally trivial, i.e. as a projection of $C\times V\rightarrow V$ when $V\subset \mc B$ is open and $C$ is a surface. However, it is not locally trivial as a family of complex manifolds.                                                                                                                                                                                                                                                                                                                                                                                                                                                                                                                                                                                                                                                                                                                                                                                                                                                                                                                                                                                                                                                                                                                                                              

\begin{df}\index{00@Families of $N$-pointed compact Riemann surfaces}
A \textbf{family of $N$-pointed compact Riemann surfaces} is the data $\fk X=(\pi:\mc C\rightarrow\mc B;\sgm_1,\dots,\sgm_N)$ where $\pi:\mc C\rightarrow\mc B$ is a family of compact Riemann surfaces, and the following conditions hold:
\begin{enumerate}[label=(\alph*)]
\item Each $\sgm_i:\mc B\rightarrow\mc C$ is a section, i.e., a holomorphic map such that $\pi\circ\sgm_i=\id_{\mc B}$. (So $\sgm_i(b)$ is are points on the fiber $\mc C_b$.)
\item $\sgm_1(b),\dots,\sgm_N(b)$ are distinct, considered as marked points of each fiber $\mc C_b$.
\item Each connected component of each fiber $\mc C_b$ contains at least one of $\sgm_1(b),\dots,\sgm_N(b)$.
\end{enumerate}
The following is a hypersurface in $\mc C$. \index{SX@$\SX=\bigcup_{j=1}^N\sgm_j(\mc B)$}
\begin{align}\label{eq212}
\SX=\bigcup_{j=1}^N\sgm_j(\mc B)
\end{align}

A \textbf{local coordinate} $\eta_i$ of the family at $\sgm_i$ is a holomorphic function on a neighborhood $U_i$ of $\sgm_i(\mc B)$ that restricts to a local coordinate $\eta_i|_{\mc C_b\cap U_i}$ of $\mc C_b$ at $\sgm_i(b)$ for each $b\in\mc B$, i.e., $\eta_i(\sgm_i(b))=0$ and $\eta_i$ is injective on the fiber
\begin{align*}
U_{i,b}=\mc C_b\cap U_i.
\end{align*}
We call the data $\fk X=(\pi:\mc C\rightarrow\mc B;\sgm_1,\dots,\sgm_N;\eta_1,\dots,\eta_N)$ a \textbf{family of $N$-pointed compact Riemann surfaces with local coordinates.} We define the fiber \index{Xb@$\fk X_b$}
\begin{align}
\fk X_b=(\mc C_b;\sgm_i(b),\dots,\sgm_N(b);\eta_1|_{\mc C_b},\dots,\eta_N|_{\mc C_b})
\end{align}
which is an $N$-pointed compact Riemann surface with local coordinates. \hfill\qedsymbol
\end{df}

\subsection{}

Since $\pi$ is a submersion, on a neighborhood of $p\in\sgm_i(\mc B)$, $\pi$ is equivalent to the projection $D\times V\rightarrow V$ where $D\subset\Cbb,V\in\Cbb^m$ are open. So $\sgm_i$ restricted to $V\subset\mc B$ is written as
 $\sgm_i(b)=(\sigma_i(b),b)$ where $\sigma_i:V\rightarrow D$ is holomorphic. Namely, $\sgm_i|_V$ is the graph of $\sigma_i$.
\begin{align}\label{eq220}
\vcenter{\hbox{{
\includegraphics[height=1.8cm]{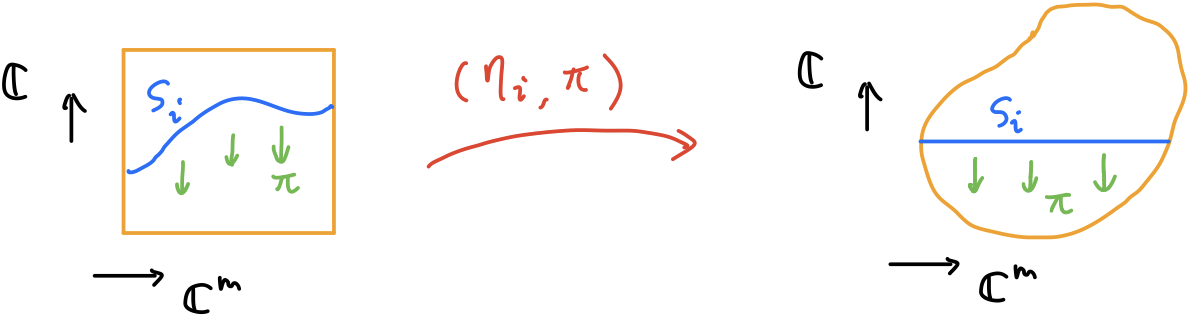}}}}
\end{align}
By the fact that $\eta_i$ is injective on each fiber, $\partial_{z_1}\eta_i$ is nowhere zero where $z_1$ is the coordinate for $D$. So the Jacobian of $(\eta_i,\pi)$ is nowhere zero. Thus, by the inverse mapping theorem, together with the easy fact that $(\eta_i,\pi)$ is injective on $U_i$, we see that \emph{$(\eta_i,\pi)$ is a biholomorphism from $U_i$ to a neighborhood of $\{0\}\times\mc B$ in $\Cbb\times\mc B$.} \eqref{eq220} shows a picture in the case that $V$ is identified with an open subset of $\Cbb^m$.

Thus, by identifying $U_i$ with its image $W$ (which is a neighborhood of $\{0\}\times\mc B$) under $(\eta_i,\pi)$, we may assume that $\pi$ is the projection of $W$  onto $\mc B$, $\sgm_i$ is the canonical map $\mc B\rightarrow\{0\}\times\mc B$, and $\eta_i$ is the projection of $W\subset\Cbb\times\mc B$ onto the $\Cbb$-axis.

\subsection{}

\begin{eg}
Let $C$ be a connected compact Riemann surface. Then
\begin{align*}
\fk X=(\pi:C\times\Conf^N(C)\rightarrow \Conf^N(C);\sgm_1,\dots,\sgm_N)
\end{align*}
is a family of $N$-pointed compact Riemann surface, where $\pi$ is the projection onto the second component, and $\sgm_i:\Conf^N(C)\rightarrow C\times \Conf^N(C)$ sends each $(x_1,\dots,x_N)$ to $(x_i,x_1,\dots,x_N)$. The fibers are $\fk X_{x_\blt}=(C;x_1,\dots,x_N)$.
\end{eg}

\begin{eg}\label{lb118}
Let $\fk P^N=(\pi:\Pbb^1\times\Conf^N(\Cbb^\times)\rightarrow\Conf^N(\Cbb^\times);0,\sgm_1,\dots,\sgm_N,\infty)$ where $0,\infty$ as sections sending $x_\blt$ to  $(0,x_\blt)$ and $(\infty,x_\blt)$ respectively, and $\sgm_i$ is as in the previous example. Then $\fk P^N$ is $(N+2)$-pointed. Moreover, $\fk P^N$ can be equipped with local coordinates $\zeta,\eta_1,\dots,\eta_N,1/\zeta$ at $0,\sgm_1,\dots,\sgm_N,\infty$ respectively, where $\zeta$ sends $(z,z_\blt)$ to $z$, $1/\zeta$ sends $(z,z_\blt)$ to $1/z$, and each $\eta_i$ sends $(z,z_\blt)$ to $z-z_i$. The fibers are
\begin{align*}
\fk P^N_{z_\blt}=(\Pbb^1;0,z_1,\dots,z_N,\infty;\zeta,\zeta-z_1,\dots,\zeta-z_N,1/\zeta)
\end{align*}
where $\zeta$ is now the standard coordinate of $\Cbb$.
\end{eg}

\subsection{}\label{lb170}

\begin{eg}\label{lb154}
Let
\begin{align*}
\wtd{\fk X}=(\wtd C;x_1,\dots,x_N,x',x'';\eta_1,\dots,\eta_N;\xi,\varpi)
\end{align*}
be an $(N+2)$-pointed compact Riemann surface with local coordinates such that each connected component contains one of $x_1,\dots,x_N$. Let $U',U''$ be respectively open  disks centered at $x',x''$ with radii $r,\rho$. More precisely, we assume $\xi,\varpi$ are defined on $U',U''$, and we have biholomorphisms
\begin{align*}
\xi:U'\xrightarrow{\simeq}\Dbb_r,\qquad \varpi:U''\xrightarrow{\simeq} \Dbb_\rho.
\end{align*}
We assume moreover that $U',U'',x_1,\dots,x_N$ are mutually disjoint.

For each $q\in\Dbb^\times_{r\rho}$ we can define an $N$-pointed $\fk X_q$ by the sewing operation as follows. We glue the following annuli
\begin{gather}
	\begin{gathered}
\xi^{-1}(A_{|q|/\rho,r})=\{x\in U': |q|/\rho<|\xi(x)|<r\}\\
\Big\updownarrow\text{identify}\\	
\varpi^{-1}(A_{|q|/r,\rho})=\{y\in U'':|q|/r<|\varpi(y)|<\rho\}
	\end{gathered}	
\end{gather}
where the rule for identification is
\begin{align}
x=y\qquad\text{iff}\qquad\xi(x)\varpi(y)=q.
\end{align}
The parts $Z'_q=\{x\in U':|\xi(x)|\leq |q|/\rho\}$ and $Z''_q=\{y\in U'':|\varpi(y)|\leq |q|/r\}$ are discarded. By this gluing procedure we obtain the sewn Riemann surface $\mc C_q$ with marked points $x_1,\dots,x_N$ (the same as the first $N$ marked points of $\wtd{\fk X}$). The local coordinate at $x_i$ is also chosen to be $\eta_i$. This defines $\fk X_q=(\mc C_q;x_1,\dots,x_N;\eta_1,\dots,\eta_N)$.

One can assemble all $\fk X_q$ to form a family
\begin{align*}
\fk X=(\pi:\mc C\rightarrow\Dbb_{r\rho}^\times;x_1,\dots,x_N;\eta_1,\dots,\eta_N)
\end{align*}
whose fiber at each $q\in\Dbb_{r\rho}^\times$ is $\fk X_q$. (We have abused notations here to let $x_i$ denote a section and $\eta_i$ a local coordinate at the section $x_i$.) It could be obtained in the following way:
\begin{itemize}
\item We have closed subsets $E'=\bigcup_{q\in\Dbb_{r\rho}^\times} Z_q'\times \{q\}$ and $E''=\bigcup_{q\in\Dbb_{r\rho}^\times} Z_q''\times\{q\}$ of $\wtd C\times\Dbb_{r\rho}^\times$. Consider the projection
\begin{align*}
\pi: (\wtd C\times\Dbb_{r\rho}^\times)\setminus (E'\cup E'')\rightarrow \Dbb_{r\rho}^\times.
\end{align*}
Each $x_i$ is the section sending $q\in\Dbb_{\rho}^\times$ to $(x_i,q)$, and $\eta_i$ sends $(x,q)$ to $\eta_i(x)$ when $x$ is close to $x_i$. Modding this data by a suitable holomorphic relation gives the family $\fk X$.
\end{itemize}
\hfill \qedsymbol
\end{eg}

In the above example, we can in fact extend $\fk X$ to a family over $\Dbb_{r\rho}$ where $\fk X_0=(\mc C_0;x_\blt;\eta_\blt)$ is the ``limit" of $\fk X_q$ as $q\rightarrow0$. As a topological space, $\mc C_0$ is obtained by gluing $x'$ and $x''$ of $\wtd C$. $\mc C_0$ is not a smooth manifold, and hence cannot be a Riemann surface. However, one can make $\mc C_0$ a singular complex manifold (more precisely: a complex space) by defining a suitable structure sheaf $\scr O_{\mc C_0}$. $\mc C_0$ is called a \textbf{nodal curve}. Nodal curves are crucial to the proof of sewing and factorization of conformal blocks. However, this topic is out of the scope of our notes. We refer the readers to \cite{Gui} for a detailed discussion of this topic.

\subsection{}

\begin{eg}\label{lb109}
Let $\fk X_0=(C;x_1,\dots,x_N;\eta_1,\dots,\eta_N)$ be an $N$-pointed compact Riemann surface with local coordinates. Write $x_1=x$ and $\eta_1=\eta$ for simplicity. Let $\eta$ be defined on a neighborhood $U=U_1\ni x_1$ disjoint from $x_2,\dots,x_N$. Assume that $\eta(U)$ is an open  disk centered at $0$ with radius $>1$.

Let $h$ be a holomorphic function on a neighborhood of $\Sbb^1$. Then $\xk=h\partial_z$ is a holomorphic tangent field near $\Sbb^1$. We choose $0<r<1<R$ such that $h$ is defined on an open set containing the closure of $A_{r,R}=\{z\in\Cbb:r<|z|<R\}$.  Moreover, we choose a connected neighborhood $\Delta\subset\Cbb$ of $0$ such that the following hold.
\begin{enumerate}
\item There is a neighborhood $\Delta\subset\Cbb$ of $0$ such that the holomorphic flow  $\tau\in\Delta\mapsto\exp(\tau \xk)=\exp_{\tau\xk}$ is defined on $(z,\tau)\in A_{r,R}\times\Delta$ and is injective on $z\in A_{r,R}$ for any fixed $\tau$.  (Cf. Subsec. \ref{lb12}.)
\item For each $\tau\in\Delta$, we have $0\notin\exp_{\tau\xk}(\Sbb^1)$.
\end{enumerate}

Let $\Gamma_\tau$ be the simple closed curve $\exp_{\tau\xk}:\Sbb^1\rightarrow\Cbb$. Then by the Jordan curve theorem, for each $\tau\in\Delta$, $\Pbb^1\setminus\Gamma_\tau$ has two connected components
\begin{align*}
\Pbb^1\setminus\Gamma_\tau=\Omega_\tau\sqcup\Omega_\tau' 
\end{align*}
where $\Omega_\tau'$ is the one containing $\infty$. In the following, we give some technical remarks which can be skipped on first reading:
\begin{itemize}
\item By Stokes' theorem, for each $z\in\Pbb^1$, $z\in \Omega_\tau$ (resp. $z\in\Omega_\tau'$) iff $\oint_{\Gamma_\tau}\frac {d\zeta}{\zeta-z}$ equals $2\im\pi$ (resp. 0). This implies that
\begin{align*}
O=\{(z,\tau)\in\Pbb^1\times\Delta:z\in\Omega_\tau\}\qquad O'=\{(z,\tau)\in\Pbb^1\times\Delta:z\in\Omega_\tau'\}
\end{align*}
are both closed and open inside $\Pbb^1\times\Delta$. In summary: the property that $z$ is inside (resp. outside) $\Gamma_\tau$ is continuous with respect to the variation of $\tau$ and $z$.
\item Consequently, for each $z\in A_{r,R}\setminus\Sbb^1$, the subset of all $\tau\in\Delta$ such that  $\exp_{\tau\xk}(z)$ belongs to $\Omega_\tau$ (resp. $\Omega_\tau'$) is an open subset of $\Delta$, and hence also closed, and hence must be $\emptyset$ or $\Delta$. This shows that for each $z\in A_{r,R}$,
\begin{gather}\label{eq208}
\begin{gathered}
|z|<1\qquad\Longleftrightarrow\qquad\exp_{\tau\xk}(z)\in\Omega_\tau\text{ for all }\tau\in \Delta\\
|z|>1\qquad\Longleftrightarrow\qquad\exp_{\tau\xk}(z)\in\Omega_\tau'\text{ for all }\tau\in \Delta
\end{gathered}
\end{gather}
A similar argument shows that if $z\in\Pbb^1$ and $z\notin\exp_{\tau\xk}(\Sbb^1)$ for all $\tau\in\Delta$, then
\begin{gather}\label{eq209}
\begin{gathered}
|z|<1\qquad\Longleftrightarrow\qquad z\in\Omega_\tau\text{ for all }\tau\in \Delta\\
|z|>1\qquad\Longleftrightarrow\qquad z\in\Omega_\tau'\text{ for all }\tau\in \Delta
\end{gathered}
\end{gather}
In particular, $0\in\Omega_\tau$ for all $\tau\in\Delta$.
\end{itemize}

The  family $\fk X$ we shall construct has base manifold $\Delta$. For each $\tau\in\Delta$, let
\begin{align*}
\mc R_\tau=\exp_{\tau\xk}(A_{r,R})\cup \Omega_\tau.
\end{align*}
Then the fiber $\mc C_\tau$ is obtained by gluing $C\setminus\eta^{-1}(\Dbb_r^\cl)$ with $\mc R_\tau$ by identifying the subsets $\eta^{-1}(A_{r,R})$ and $\exp_{\tau\xk}(A_{r,R})$ via the biholomorphism $\exp_{\tau\xk}\circ\eta$. (We leave it to the readers to check that $\mc C_\tau$ is a compact Riemann surface. \eqref{eq208} is needed when checking the sequential compactness.) 

\begin{align}
\vcenter{\hbox{{
\includegraphics[height=2cm]{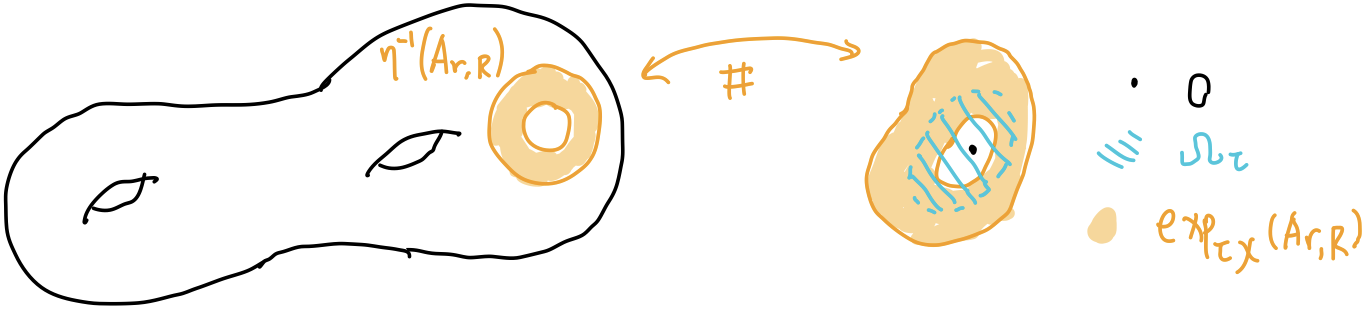}}}}
\end{align}

The marked points of $\mc C_\tau$, together with local coordinates, are chosen to be $0\in\mc R_\tau$ with the standard coordinate $\zeta$ of $\mc R_\tau\subset\Cbb$, and $x_2,\dots,x_N\in C\setminus\eta^{-1}(\Dbb_r^\cl)$ together with $\eta_2,\dots,\eta_N$. This gives an $N$-pointed compact Riemann surface with local coordinates $\fk X_\tau$. We leave it to the readers to construct a family $\fk X$ over $\Delta$ whose fibers are $\fk X_\tau$. \hfill\qedsymbol
\end{eg}

\subsection{}\label{lb110}
In the previous example, suppose we associate $\Vbb$-modules $\Wbb_1,\dots,\Wbb_N$ to $0\in\mc R_\tau,x_2,\dots,x_N$ respectively, and let $\upphi_\tau$ denote a conformal block associated to $\fk X_\tau$. Let $\xk=\sum_{n\in\Zbb} c_nz^{n+1}\partial_z$.  $\fk X_0$ is changed to $\fk X_\tau$ by changing the local coordinate $\eta$ of $C$ at $x_1$ to the first one of $\fk X_\tau$, which is $\exp_{\tau\xk}\circ\eta$ when restricted to $C\setminus\eta^{-1}(\Dbb_r^\cl)$. Thus, intuitively,  for each given $\upphi_0$, one can construct $\upphi_\tau$ using the formal expression
\begin{align}
\upphi_\tau(w_\blt)=\upphi_0(e^{-\tau\sum_n c_nL_n}w_1\otimes w_2\otimes\cdots\otimes w_N)\label{eq245}
\end{align}
thanks to the change of boundary parametrization formula. This expression actually converges in certain good cases, e.g. when $c_n=0$ for sufficiently negative $n$. (See Example \ref{lb149}.) In the case where the expression converges, the map $\upphi_0\mapsto\upphi_\tau$ defines a linear map between the spaces of conformal blocks $\scr T^*_{\fk X_0}(\Wbb_\blt)\rightarrow\scr T^*_{\fk X_\tau}(\Wbb_\blt)$, which is an isomorphism since the operator $e^{-\tau\sum_n c_nL_n}$ is invertible. In particular, the dimensions of $\scr T^*_{\fk X_0}(\Wbb_\blt)$ and $\scr T^*_{\fk X_\tau}(\Wbb_\blt)$ are equal. As we will see in Sec. \ref{lb153}, for an arbitrary family, we will prove the equidimensionality of spaces of conformal blocks for fibers as well as the local freeness of sheaves of conformal blocks by this method, and a crucial step is to prove the convergence of $\upphi_\tau$.

$\upphi_\tau$ satisfies the differential equation $\partial_\tau\upphi_\tau+\sum c_n\upphi_\tau\circ(L_n\otimes\id_{\Wbb_2}\otimes\cdots\otimes \id_{\Wbb_N})=0$. This fact can be rephrased by saying that
\begin{align}
\nabla_{\partial_\tau}:=\partial_\tau+\sum_n c_n (L_n\otimes\id_{\Wbb_2}\otimes\cdots\otimes \id_{\Wbb_N})^\tr
\end{align}
defines a natural connection $\nabla$  on the sheaf of conformal blocks (associated to $\fk X$) over $\Delta$, and $\tau\mapsto\upphi_\tau$ is a parallel section under this connection.

\subsection{}

Example \ref{lb109} can be easily generalized to the case that on each neighborhood of $\Sbb^1$ around $x_i$ a holomorphic vector field $\xk_i$ is associated. The flows generated by these fields define a family.

We now consider another important generalization of Example \ref{lb109}:

\begin{eg}\label{lb111}
Let $\fk X_0,U$ be as in Example \ref{lb109}. Let $\Delta$ be a connected neighborhood of $0\in\Cbb$. We choose a neighborhood $\Delta\subset\Cbb$ of $0$ and an annulus $A_{r,R}$ where $0<r<1<R$, and choose a holomophic function $\beta=\beta_\zeta(z)$ on $(z,\zeta)\in A_{r,R}\times\Delta$ such that the following hold:
\begin{enumerate}
\item $\beta_0(z)=z$.
\item For each $\zeta\in\Delta$, $\beta_\zeta$ is injective on $A_{r,R}$.
\item For each $\zeta\in\Delta$, we have $0\notin\beta_\zeta(\Sbb^1)$.
\end{enumerate}

As in Example \ref{lb109}, we let $\Gamma_\zeta=\beta_\zeta(\Sbb^1)$, which divides $\Pbb^1$ into two connected components $\Omega_\zeta\ni 0$ and $\Omega_\zeta'\ni\infty$. Let
\begin{align*}
\mc R_\zeta=\beta_\zeta(A_{r,R})\cup\Omega_\zeta.
\end{align*}
Then one can construct a family $\fk X$ with base manifold $\Delta$ such that each fiber $\mc C_\zeta$ is obtained by gluing $C\setminus\eta^{-1}(\Dbb_r^\cl)$ with $\mc R_\zeta$ by identifying $\eta^{-1}(A_{r,R})$ and $\beta_\zeta(A_{r,R})$ via the biholomorphism $\beta_\zeta\circ\eta$. The marked points and the local coordinates of $\mc C_\zeta$ are chosen in the same way as at the end of Example \ref{lb109}. 

As in Example \ref{lb109}, $O=\{(z,\zeta)\in\Pbb^1\times\Delta:z\in\Omega_\zeta\}$ is an open set. Let
\begin{align*}
\mc A=\{(\beta_\zeta(z),\zeta)\in \Cbb\times\Delta:z\in A_{r,R}\}.
\end{align*}
Then, as a family, $\mc C$ is obtained by gluing $\Delta\times(C\setminus\eta^{-1}(\Dbb_r^\cl))$ and $\mc A\cup O$ such that on each fiber the gluing is as in the previous paragraph. \hfill\qedsymbol
\end{eg}

In the above example, the standard local coordinate at $0\in\mc R_\zeta$ is the boundary parametrization $\beta_\zeta\circ\eta$ on $C$. So $\fk X_0$ is changed to $\fk X_\zeta$ by changing $\eta$ to $\beta_\zeta\circ\eta$. Thus, we make the following definition:
\begin{df}\label{lb115}
We say that $\fk X_\zeta$ is the $N$-pointed compact Riemann surface with local coordinates obtained by changing the local coordinate $\eta=\eta_1$ of $\fk X_0$ at $x=x_1$ to the boundary parametrization $\beta_\zeta\circ\eta$.
\end{df}

\subsection{}\label{lb116}

Let $\upphi_\zeta$ be a conformal block associated to $\fk X_\zeta$ and $\Wbb_\blt$ where $\fk X$ is constructed in Example \ref{lb111}. As in Subsec. \ref{lb110}, let us find the differential equation that $\upphi_\zeta$ satisfies.

Let $\mc U(\beta_\zeta)$ be the (not yet rigorously defined) operator associated to the change of parametrization $\beta_\zeta$. Then according to the change of boundary parametrization formula in Sec. \ref{lb41}, 
\begin{align}
\upphi_\zeta\big(\mc U(\beta_\zeta)w_1\otimes w_2\otimes\cdots\otimes w_N\big)=\upphi_0(w_\blt).\label{eq211}
\end{align}

Let us find the formula for $\partial_\zeta\mc U(\beta_\zeta)$. Choose a holomorphic function $h=h(z,\zeta)$ on a neighborhood of $\Sbb^1\times\Delta$ in $A_{r,R}\times\Delta$ such that 
\begin{align}
\partial_\zeta\beta_\zeta(z)=h\big(\beta_\zeta(z),\zeta\big).\label{eq210}
\end{align}
Since $\beta$ is not necessarily the flow generated by a vector field, $h$ depends on $\zeta$. One may view $\beta_\zeta$ as the path generated by the time-dependent vector field $h\partial_z$.

We first consider $\partial_\zeta\mc U(\beta_\zeta)$ at $\zeta=0$. Recall $\beta_0(z)=z$. Similar to the explanation in Rem. \ref{lb112}, the velocity of $\beta_\zeta$ at $\zeta=0$ is the vector field $\partial_\zeta\beta_\zeta(z)\partial_z\big|_{\zeta=0}$, which according to \eqref{eq210} is $h(z,0)\partial_z$. Writing $h$ as
\begin{align}
h(z,\zeta)=\sum_{n\in\Zbb}h_n(\zeta)z^n\label{eq216}
\end{align}
(where $h_n\in\scr O(\Delta)$). Then $h(z,0)\partial_z=\sum_{n\in\Zbb}h_n(0)z^n\partial_z$, which corresponds to $\sum_n h_n(0)L_{n-1}$. This should be the formula for $\partial_\zeta \mc U(\beta_\zeta)$ at $\zeta=0$.

For an arbitrary $\zeta\in\Delta$, we find the formula of $\partial_\zeta\mc U(\beta_\zeta)=\partial_\lambda\mc U(\beta_{\lambda+\zeta})\big|_{\lambda=0}$ using the same method. Write $\mc U(\beta_{\lambda+\zeta})=\mc U(\beta_{\lambda+\zeta}\circ\beta_\zeta^{-1})\circ\mc U(\beta_\zeta)$. Then the $\partial_\lambda$ of $\beta_{\lambda+\zeta}\circ\beta_\zeta^{-1}$ at $\lambda=0$ is $(\partial_\zeta\beta)\circ\beta_\zeta^{-1}$, which by \eqref{eq210} equals $h(z,\zeta)$. Thus
\begin{align}
\partial_\zeta\mc U(\beta_\zeta)=\sum_{n\in\Zbb} h_n(\zeta)L_{n-1}\mc U(\beta_\zeta).
\end{align}
Thus, since the derivative of the LHS of \eqref{eq211} is zero, we conclude that $\upphi_\zeta$ is killed by $\partial_\zeta+\sum_n h_n(\zeta)(L_{n-1}\otimes\id_{\Wbb_2}\otimes\cdots\otimes \id_{\Wbb_N})^\tr$. Equivalently, $\upphi_\zeta$ is parallel under the connection $\nabla$ defined by
\begin{align}
\nabla_{\partial_\zeta}=\partial_\zeta+\sum_{n\in\Zbb} h_n(\zeta)\big(L_{n-1}\otimes\id_{\Wbb_2}\otimes\cdots\otimes \id_{\Wbb_N}\big)^\tr.
\end{align}

\subsection{}\label{lb144}

The importance of Example \ref{lb111} (or its generalization to the case that around each $x_i$ there is a $\beta$) is that any family with $1$-dimensional base manifold is locally of this form. Let us explain this fact in more details.

Let $\fk X=(\pi:\mc C\rightarrow\mc B;\sgm_\blt;\eta_\blt)$ be a family of $N$-pointed compact Riemann surfaces with local coordinates. Recall that by our convention, $\Theta_{\mc C}$ and $\Theta_{\mc B}$ are respectively holomorphic tangent bundle of $\mc C$ and $\mc B$. Let $U\subset \mc C$ be open, and $\xk\in\Theta_{\mc C}(U)$. Note that $W=\pi(U)$ is open. Choose $\yk\in\Theta_{\mc B}(W)$. We say that $\xk$ is a \textbf{lift} \index{00@A lift $\xk$ of $\yk\in H^0(\mc B,\Theta_{\mc B})$} of $\yk$ if for each $x\in\mc C$, the differential map $d\pi:\Theta_{\mc C}|_x\rightarrow\Theta_{\mc B}|_{\pi(x)}$ between tangent spaces sends $\xk(x)$ to $\yk(\pi(x))$. 

If we have $\eta\in\scr O(U)$ univalent on each fiber $U_b=U\cap\mc C_b$ (where $b\in\mc B$) \index{Ub@$U_b=\mc C_b\cap U$} of $U$, then the relationship between $\xk$ and $\yk$ can be written in an explicit way. 

\begin{ass}\label{lb113}
Assume $W$ is biholomorphic to an open subset of $\Cbb^m$ via a map $\tau_\blt=(\tau_1,\dots,\tau_m):W\rightarrow\Cbb^m$. Identify $W$ with $\tau_\blt(W)$ so that $\tau_\blt$ are identified with the standard coordinates of $\Cbb^m$. Note that $(\eta,\pi)$ is a biholomorphism between $U$ and an open subset of $\Cbb\times\Cbb^{m+1}$. We identify $U$ with $(\eta,\pi)(U)$ so that $\eta$ becomes the standard coordinate $z$ of $\Cbb$, and $\pi$ becomes the standard coordinates $\tau_\blt$ of $\Cbb^m$.
\end{ass}

Then we can write $\xk$ and $\yk$ as
\begin{subequations}
\begin{gather}
\yk=\sum_{j=1}^m g_j(\tau_\blt)\partial_{\tau_j}\qquad \xk=h(z,\tau_\blt)\partial_z+\sum_{j=1}^m g_j(\tau_\blt)\partial_{\tau_j}
\end{gather}
\end{subequations}
where $g_j\in\scr O(W),h\in\scr O(U)$. From this formula, it is clear that if $\exp_{\zeta\yk}$ sends $b$ to $b'$, then $\exp_{\zeta\xk}$ sends points of $W_b=W\cap\mc C_b$ to those of $W_{b'}$ provided that the flows can be defined on the points. Namely, $\exp_{\zeta\xk}$ preserves fibers.

Recall $\SX=\bigcup_i\sgm_i(\mc B)$. For each vector bundle $\scr E$ on $\mc C$ and each $k\in\Zbb$, we let $\scr E(k\SX)$ \index{ESX@$\scr E(k\SX),\scr E(\star\SX)$} be the sheaf whose sections on any open $U\subset\mc C$ are all $s\in\mc E(U\setminus\SX)$ such that for each $i$, $\eta_i^k\cdot s$ can be extended to a section of $\scr E$ on a neighborhood of $\sgm_i(\mc B)$. Then $\scr E(k\SX)$ is a locally free $\scr O_{\mc C}$-module. We let
\begin{align*}
\scr E(\star\SX)=\varinjlim_{k\in\Nbb}\scr E(k\SX).
\end{align*}
So for $k\geq0$, $\scr E(k\SX)$ is the sheaf of sections of $\scr E$ with poles of order at most $k$ at $\SX$, and  $\scr E(\star\SX)$ is the sheaf of sections of $\scr E$ with finite poles at $\SX$.

\begin{pp}\label{lb114}
Assume that $\mc B$ is a Stein manifold. Then each $\yk\in H^0(\mc B,\Theta_{\mc B})$ has a lift $\xk$ in $H^0(\mc C,\Theta_{\mc C}(\star\SX))$. 
\end{pp}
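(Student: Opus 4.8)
The plan is to reduce the statement to a Čech cohomology computation via the short exact sequence relating $\Theta_{\mc C}$, $\Theta_{\mc B}$ (pulled back), and the relative tangent sheaf, and then use the Stein hypothesis on $\mc B$ together with the pole freedom along $\SX$ to kill the obstruction. First I would set up the relative tangent sheaf $\Theta_{\mc C/\mc B} = \ker(d\pi\colon \Theta_{\mc C}\to\pi^*\Theta_{\mc B})$, which (since $\pi$ is a submersion) is a line bundle on $\mc C$ fitting into the short exact sequence of $\scr O_{\mc C}$-modules
\begin{align*}
0\longrightarrow \Theta_{\mc C/\mc B}\longrightarrow \Theta_{\mc C}\xlongrightarrow{d\pi} \pi^*\Theta_{\mc B}\longrightarrow 0.
\end{align*}
Given $\yk\in H^0(\mc B,\Theta_{\mc B})$, the pullback $\pi^*\yk$ is a global section of $\pi^*\Theta_{\mc B}$. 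A lift $\xk$ of $\yk$ in $H^0(\mc C,\Theta_{\mc C})$ exists exactly when $\pi^*\yk$ lies in the image of $H^0(\mc C,\Theta_{\mc C})\to H^0(\mc C,\pi^*\Theta_{\mc B})$, and the obstruction to this lives in $H^1(\mc C,\Theta_{\mc C/\mc B})$. Since $\mc C$ is (typically) a compact surface this $H^1$ need not vanish, which is precisely why we must allow poles along $\SX$: I would instead tensor the sequence by $\scr O_{\mc C}(\star\SX)$, giving
\begin{align*}
0\longrightarrow \Theta_{\mc C/\mc B}(\star\SX)\longrightarrow \Theta_{\mc C}(\star\SX)\xlongrightarrow{d\pi} \pi^*\Theta_{\mc B}(\star\SX)\longrightarrow 0,
\end{align*}
and observe that $\pi^*\yk$ is still a global section of the right-hand sheaf (it has no poles at all). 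The obstruction now lives in $H^1(\mc C,\Theta_{\mc C/\mc B}(\star\SX))$, and the key claim is that this group vanishes.

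The main step, then, is to prove $H^1(\mc C,\Theta_{\mc C/\mc B}(\star\SX))=0$. I would argue as follows. Because $\mc B$ is Stein, I can work over a Stein open cover of $\mc B$, or better, reduce to the case $\mc B$ itself Stein and use a Leray-type argument: push forward along $\pi$. The sheaf $\Theta_{\mc C/\mc B}(\star\SX) = \varinjlim_k \Theta_{\mc C/\mc B}(k\SX)$, and for $k$ large each fiberwise restriction $\Theta_{\mc C/\mc B}(k\SX)|_{\mc C_b} = \Theta_{\mc C_b}(k\cdot(\sgm_1(b)+\cdots+\sgm_N(b)))$ has $H^1$ of the fiber vanishing by Serre duality on the compact Riemann surface $\mc C_b$ once $k$ exceeds a bound depending only on the genus and $N$ (here Assumption \ref{lb102}/the nonempty-marked-points condition guarantees $k\SX$ meets every component, so the degree is positive on each component for $k\gg 0$). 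By Grauert's direct image and base change theorems (valid because $\mc B$ is a complex manifold and the fiberwise $H^1$ vanishes for $k$ large), $R^1\pi_*(\Theta_{\mc C/\mc B}(k\SX))=0$ for $k$ large, and $\pi_*(\Theta_{\mc C/\mc B}(k\SX))$ is a coherent sheaf on $\mc B$. Then the Leray spectral sequence gives
\begin{align*}
H^1(\mc C,\Theta_{\mc C/\mc B}(k\SX)) \cong H^1(\mc B,\pi_*\Theta_{\mc C/\mc B}(k\SX)) = 0,
\end{align*}
the last equality by Cartan's Theorem B since $\mc B$ is Stein and $\pi_*\Theta_{\mc C/\mc B}(k\SX)$ is coherent. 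Taking the direct limit over $k$ yields $H^1(\mc C,\Theta_{\mc C/\mc B}(\star\SX))=0$.

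With the obstruction group vanishing, the long exact sequence in cohomology shows $H^0(\mc C,\Theta_{\mc C}(\star\SX))\to H^0(\mc C,\pi^*\Theta_{\mc B}(\star\SX))$ is surjective, so $\pi^*\yk$ lifts to some $\xk\in H^0(\mc C,\Theta_{\mc C}(\star\SX))$; by construction $d\pi(\xk)=\pi^*\yk$, which is exactly the assertion that $\xk$ is a lift of $\yk$. Finally I would note, as a sanity check consistent with Examples \ref{lb109} and \ref{lb111}, that near each section $\sgm_i$ the lift can be written (after the identifications of Assumption \ref{lb113}) as $\xk = h(z,\tau_\blt)\partial_z + \sum_j g_j(\tau_\blt)\partial_{\tau_j}$ with $h$ allowed a pole in $z$ at $z=0$, matching the "time-dependent vector field with poles" picture. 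The hard part is the cohomology vanishing $H^1(\mc C,\Theta_{\mc C/\mc B}(\star\SX))=0$; everything depends on correctly invoking Grauert base change for the direct image (which needs the fiberwise $H^1$ vanishing, hence large $k$) and Theorem B on the Stein base, and on checking that the positivity of $k\SX$ on every connected component of every fiber really does follow from the standing assumption that each component carries a marked point.
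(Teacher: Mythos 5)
Your proof is correct and is essentially the argument the paper delegates to \cite[Sec. 3.6]{Gui}: the short exact sequence $0\to\Theta_{\mc C/\mc B}(\star\SX)\to\Theta_{\mc C}(\star\SX)\xrightarrow{d\pi}\pi^*\Theta_{\mc B}(\star\SX)\to 0$, fiberwise vanishing of $H^1$ for large $k$ combined with Grauert's direct image/base change theorems to get $R^1\pi_*=0$, and Cartan's Theorem B on the Stein base. The only point to watch is that your threshold for $k$ depends on the fiber genus, so over a disconnected $\mc B$ you should run the argument on each connected component (where the genus and the degree bound are uniform) before assembling the lift.
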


We do not explain the meaning of Stein manifolds in our notes, but refer the interested readers to \cite[Sec. I.4]{GR-a} or \cite[Sec. III.3]{GR-b}  for details. Here, we only give some examples, which are sufficient for applications. Stein manifolds are complex manifolds including the following examples: \index{00@Stein manifolds }
\begin{itemize}
\item Every non-compact connected Riemann surface.
\item A finite product of Stein manifolds.
\item A finite intersection of Stein open subsets of a complex manifold.
\item A closed complex submanifold of a Stein manifold.
\item If $X$ is Stein and $f\in\scr O(X)$ then $X\setminus\{x\in X:f(x)=0\}$ is Stein.
\end{itemize}
From these examples, it is clear that the Stein open subsets of a  complex manifold $X$ form a basis of the topology of $X$.

Stein manifolds are those that many local problems related to vector bundles have global solutions. In Prop. \ref{lb114}, if $\mc B$ is not necessarily Stein, then a lift of $\xk$ always exists locally (i.e., after shrinking $\mc B$). The global existence is due to the Stein property. We refer the readers to \cite{Gui}[Sec. 3.6] for a detailed explanation of Prop. \ref{lb114}.

\subsection{}\label{lb126}

We now assume that $\mc B$ is a connected Stein open subset of $\Cbb^m$ containing $0$, and let $\tau_\blt$ be the standard coordinates of $\Cbb^m$. Choose
\begin{align}
\yk=\sum_j g_j(\tau_\blt)\partial_{\tau_j}\qquad \in H^0(\mc B,\Theta_{\mc B})\label{eq215}
\end{align}
where $g_j\in\scr O(\mc B)$, and let $\xk\in H^0(\mc C,\Theta_{\mc C}(\star\SX))$ be a lift. 

For each $1\leq i\leq N$, let $U_i\subset\mc C$ be a neighborhood of $\sgm_i(\mc B)$ on which $\eta_i$ is defined, and assume $U_i$ intersects only $\sgm_i(\mc B)$ among $\sgm_1(\mc B),\dots\sgm_N(\mc B)$. Then, after identifying $U_i$ with its image under $(\eta_i,\pi)$ as in Assumption \ref{lb113} (which is a neighborhood of $\{0\}\times\mc B$), we can write
\begin{align}
\xk|_{U_i}=h_i(z,\tau_\blt)\partial_z+\sum_j g_j(\tau_\blt)\partial_{\tau_j}\label{eq213}
\end{align}
where $h_i$ is holomorphic on $U_i\setminus\sgm_i(\mc B)=(\eta_i,\pi)(U_i)\setminus(\{0\}\times\mc B)$ and has finite poles on  $\{0\}\times\mc B$, i.e., $z^nh_i(z,\tau_\blt)$ is holomorphic on $U_i$ for some $n\in\Nbb$. We then have Laurent series expansion
\begin{align}
h_i(z,\tau_\blt)=\sum_{n\in\Zbb}h_{i,n}(\tau_\blt)z^n\label{eq214}
\end{align}
converging a.l.u. on $U_i\setminus\sgm_i(\mc B)$, where $h_{i,n}$ is a zero function for sufficiently negative $n$.

\subsection{}

We continue our discussion from the previous subsection. We claim that if we restrict $\mc B$ to the complex curve $\zeta\mapsto \exp_{\zeta\yk}(0)$ so that the base manifold of $X$ is $1$-dimensional, then $\fk X$ can be described by Example \ref{lb111}. 

To see this, let us assume for simplicity that $\eta_i(U_i\cap\mc C_0)\supset \Dbb_1^\cl$ for all $i$, and choose $0<r<1$. Let
\begin{align*}
\mc C_0^+=\mc C_0\setminus\bigcup_i \eta_i^{-1}(\Dbb_r^\cl),
\end{align*}
which plays the same role as $C\setminus\eta^{-1}(\Dbb_r^\cl)$ in Example \ref{lb111}. Consider the flow $\exp_{\zeta\xk}$ on $\mc C\setminus\SX$ generated by $\xk$. We choose a neighborhood $\Delta$ of $0\in\Cbb$ such that  $\exp_{\zeta\xk}$ is defined and injective on $\mc C_0^+$ for all $\zeta\in\Delta$. Let 
\begin{align*}
b_\zeta=\exp_{\zeta\yk}(0)\qquad\in\Cbb^m.
\end{align*}
Then $\exp_{\zeta\xk}(\mc C_0^+)$ is inside $\mc C_{b_\zeta}$. So $\mc C_{b_\zeta}$ has an open submanifold $\exp_{\zeta\xk}(\mc C_0^+)$ equivalent to $\mc C_0^+$. Note that $b_0=0$.
\begin{align}\label{eq237}
\vcenter{\hbox{{
\includegraphics[height=3cm]{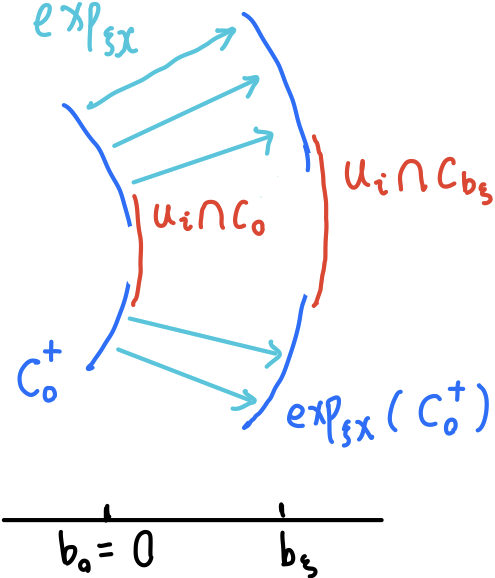}}}}
\end{align}


$\mc C_{b_\zeta}$ can be viewed as gluing $\exp_{\zeta\xk}(\mc C_0^+)$ with $U_1\cap \mc C_{b_\zeta},\dots,U_N\cap\mc C_{b_\zeta}$, and clearly the function $\eta_i$ on $U_i\cap\mc C_{b_\zeta}$ becomes $\eta_i$ on an annulus in $\exp_{\zeta\xk}(\mc C_0^+)$. Equivalently, $\mc C_{b_\zeta}$ is the gluing of $\mc C_{b_\zeta}$ with all $U_i\cap\mc C_{b_\zeta}$ such that the $\eta_i$ on $U_i\cap\mc C_{b_\zeta}$ becomes the function $\eta_i\big|_{\mc C_{b_\zeta}}\circ\exp_{\zeta\xk}$ on an annulus inside $\mc C_0^+$. It is not hard to see that on that annulus,
\begin{align}
\eta_i\big|_{\mc C_{b_\zeta}}\circ\exp_{\zeta\xk}=\beta^i_\zeta\circ\eta_i\big|_{\mc C_0}
\end{align}
where $\beta^i_\zeta(z)=\alpha^i_\zeta(z,0)$ and $(\eta_i,\pi)\circ\exp_{\zeta\xk}\circ (\eta_i,\pi)^{-1}(z,\tau_\blt)$ equals $(\alpha^i_\zeta(z,\tau_\blt),\exp_{\zeta\yk}(\tau_\blt))$. Namely, $\alpha^i$ is determined by the fact that under the identification of $U_i$ with $(\eta_i,\pi)(U_i)$ via $(\eta_i,\pi)$, 
\begin{align}
\exp_{\zeta\xk}(z,\tau_\blt)=(\alpha^i_\zeta(z,\tau_\blt),\exp_{\zeta\yk}(\tau_\blt)).\label{eq236}
\end{align}

\begin{ccl}
$\fk X_{b_\zeta}$ is obtained by changing the local coordinates $\eta_1\big|_{\mc C_0},\dots,\eta_N\big|_{\mc C_0}$ of $\fk X_{b_0}=\fk X_0$ to the boundary parametrizations $\beta^1_\zeta\circ\eta_1\big|_{\mc C_0},\dots,\beta^N_\zeta\circ\eta_N\big|_{\mc C_0}$. (Cf. Def. \ref{lb115}.)
\end{ccl}

\subsection{}\label{lb117}

That $\exp_{\zeta\xk}$ is the flow generated by $\xk$ means that $\partial_\zeta \big(f\circ\exp_{\zeta\xk}\big)$ equals  $(\xk f)\circ\exp_{\zeta\xk}$. Take $f=\eta_i$, and identify $U$ with its image under $(\eta_i,\pi)$ to simplify the situation. Then by \eqref{eq213},
\begin{align}
\partial_\zeta\alpha^i_\zeta(z,\tau_\blt)=h_i\big(\alpha^i_\zeta(z,\tau_\blt),\exp_{\zeta\yk}(\tau_\blt)\big),
\end{align}
and hence
\begin{align}
\partial_\zeta\beta^i_\zeta(z)=h_i\big(\beta^i_\zeta(z),b_\zeta\big).
\end{align}

Let $\upphi_{\tau_\blt}$ be a conformal block associated to $\fk X_{\tau_\blt}$ for each $\tau_\blt\in\mc B$. Recall the Laurent series expansion \eqref{eq214}. Similar to the reasoning in Subsec. \ref{lb116}, we have
\begin{align}
\partial_\zeta\upphi_{b_\zeta}(w_\blt)+\sum_{i=1}^N\sum_{n\in\Zbb}h_{i,n}(b_\zeta)(w_1\otimes\cdots\otimes L_{n-1}w_i\otimes\cdots\otimes w_N)=0.
\end{align}
By \eqref{eq215}, we have $\partial_\zeta b_\zeta=(g_1(b_\zeta),\dots,g_m(b_\zeta))$. Thus
\begin{align*}
\partial_\zeta\upphi_{b_\zeta}(w_\blt)=\sum_{j=1}^m g_j(\tau_\blt)\partial_{\tau_j}\upphi_{\tau_\blt}(w_\blt)\big|_{\tau_\blt=b_\zeta}=\yk\upphi_{\tau_\blt}(w_\blt)\big|_{\tau_\blt=b_\zeta}.
\end{align*}
We conclude that on the complex path $\zeta\in\Delta\mapsto b_\zeta=\exp_{\zeta\yk}(0)$,
\begin{align}
\sum_{j=1}^m g_j(\tau_\blt)\partial_{\tau_j}\upphi_{\tau_\blt}(w_\blt)+ \sum_{i=1}^N\sum_{n\in\Zbb}h_{i,n}(\tau_\blt)\upphi_{\tau_\blt}(w_1\otimes\cdots\otimes L_{n-1}w_i\otimes\cdots\otimes w_N)=0.
\end{align}
This fact can be rephrased as follows: on the complex path $\zeta\mapsto b_\zeta$, $\upphi_{\tau_\blt}$ is parallel under the connection $\nabla_\yk$ defined by
\begin{align}
\nabla_{\yk}=\underbrace{\sum_{j=1}^m g_j\partial_{\tau_j}}_{\yk}+\sum_{i=1}^N\sum_{n\in\Zbb} h_{i,n}\cdot\big(\id_{\Wbb_1}\otimes\cdots\otimes L_{n-1}\big|_{\Wbb_i}\otimes\cdots\otimes\id_{\Wbb_N}\big)^\tr.\label{eq217}
\end{align}

\subsection{}

We close this section by giving some examples of lifts. 

\begin{eg}\label{lb148}
Let $\fk X$ the family in Example \ref{lb111}. Let $h(z,\tau)$ be defined by \eqref{eq210} whose Laurent series expansion with respect to $z$ (cf. \eqref{eq216}) has only finitely many negative powers of $z$.  

Let $\yk\in H^0(\Delta,\Theta_\Delta)$ be $\partial_\tau$ where $\tau$ is the standard coordinate of $\Cbb$. Recall (cf. the end of Example \ref{lb111}) that $\mc C$ is the gluing of $\Delta\times(C\setminus\eta^{-1}(\Dbb_r^\cl))$ and $\mc A\cup O$, where the latter is an open subset of $\Cbb\times\Cbb$. We define the lift $\xk$ to be the canonical one $\partial_\tau$ on $\Delta\times(C\setminus\eta^{-1}(\Dbb_r^\cl))$, i.e., the one parallel to the $\Delta$-component  and hence orthogonal to the $(C\setminus\eta^{-1}(\Dbb_r^\cl))$-component. Then on $\mc A\cup O$, using the standard coordinates $(z,\tau)$ of $\Cbb\times\Cbb$, $\xk$ is $h(z,\tau)\partial_z+\partial_\tau$, which has finite poles at $z=0$. This shows that $\xk\in H^0(\mc C,\Theta_{\mc C}(\star\SX))$, and that (not surprisingly) the  $\nabla_\yk$ defined as in Subsec. \ref{lb117} agrees with that in Subsec. \ref{lb116}.  \hfill\qedsymbol
\end{eg}

\begin{eg}\label{lb180}
In Example \ref{lb118}, let $(\tau_1,\dots,\tau_N)$ be the standard coordinates of the base manifold $\Conf^N(\Cbb^\times)$ inherited from $\Cbb^N$. Let $\yk=\partial_{\tau_k}$ where $1\leq k\leq N$. Then the lift $\xk$ can be chosen to the standard one $\partial_{\tau_k}$, i.e., the one orthogonal to the $\Pbb^1$-component in the Cartesian product $\mc C=\Pbb^1\times\Conf^N(\Cbb^\times)$. Then $\xk\in H^0(\mc C,\Theta_{\mc C})$.

Using the notations and the identification in \eqref{eq213}, we have $\xk|_{U_i}=\partial_{\tau_k}$ if $i\neq k$, and
\begin{align}
\xk|_{U_k}=-\partial_z+\partial_{\tau_k}.
\end{align}
Associate $\Wbb_0,\Wbb_1,\dots,\Wbb_N,\Wbb_\infty$ to the marked points $0,\sgm_1,\dots,\sgm_N,\infty$ of $\fk P^N$. Then by \eqref{eq217}, the conformal blocks are parallel under
\begin{align}
\nabla_{\partial_{\tau_k}}=\partial_{\tau_k}-\big(\id_{\Wbb_0}\otimes\id_{\Wbb_1}\otimes\cdots\otimes L_{-1}\big|_{\Wbb_k}\otimes\cdots\otimes\id_{\Wbb_N}\otimes\id_{\Wbb_\infty}\big)^\tr.
\end{align}
\end{eg}

\section{Sheaves of coinvariants and conformal blocks, and their connections}\label{lb136}

\subsection{}\label{lb127}

We study conformal blocks for families of compact Riemann surfaces in a rigorous way. In this section and the next one, we let
\begin{align*}
\fk X=(\pi:\mc C\rightarrow\mc B;\sgm_1,\dots,\sgm_N)
\end{align*}
be a family of $N$-pointed compact Riemann surface. Associate admissible $\Vbb$-modules $\Wbb_1\dots,\Wbb_N$ to $\sgm_1,\dots,\sgm_N$ respectively. Choose a  neighborhood $U_i\subset\mc C$ of $\sgm_i(\mc B)$ disjoint from $\sgm_j(\mc B)$ if $i\neq j$. If we choose local coordinate $\eta_i$ at $\sgm_i$, we assume $\eta_i$ is defined on $U_i$.

\subsection{}

For each $n\in\Nbb$, let us define a vector bundle $\scr V^{\leq n}_{\fk X}$ on $\mc C$ whose restriction to each fiber $\mc C_b$ is the bundle $\scr V^{\leq n}_{\mc C_b}$ defined in Subsec. \ref{lb119}. Let $U\subset\mc C$ be open, and choose $\eta,\mu\in\scr O(U)$ univalent on each fiber $U_b=U\cap\mc C_b$ of $U$. For each $p\in U$, we define $\varrho(\eta|\mu)_p\in\Gbb$ to be
\begin{align}
\varrho(\eta|\mu)_p=\varrho\Big(\eta|_{\mc C_{\pi(p)}}\Big|\mu|_{\mc C_{\pi(p)}}\Big).
\end{align}
Namely, for each $x\in U_{\pi(p)}$, \index{zz@$\varrho(\alpha\lvert\id),\varrho(\eta\lvert\mu)$}
\begin{align}
\eta(x)-\eta(p)=\varrho(\eta|\mu)_p\big(\mu(x)-\mu(p)\big).
\end{align}
The map $\varrho(\eta|\mu):p\in U\rightarrow\varrho(\eta|\mu)_p\in\Gbb$ is clearly a holomorphic family of transformations. Thus, by Rem. \ref{lb96}, we have an equivalence of $\scr O_U$-modules
\begin{align}
\mc U(\varrho(\eta|\mu)):\Vbb^{\leq n}\otimes\scr O_U\xrightarrow{\simeq}\Vbb^{\leq n}\otimes\scr O_U.
\end{align}

Similar to the case of a single compact Riemann surface, we define $\scr V^{\leq n}_{\fk X}$ \index{VX@$\scr V^{\leq n}_{\fk X},\scr V_{\fk X}$} to be the locally free $\scr O_{\mc C}$-module such that each open $U\subset\mc C$ with $\eta\in\scr O(U)$ univalent on each fiber is associated with a trivialization \index{U@$\mc U_\varrho(\eta)$}
\begin{align}
\mc U_\varrho(\eta):\scr V^{\leq n}_{\fk X}|_U\xrightarrow{\simeq}\Vbb^{\leq n}\otimes\scr O_U
\end{align}
such that $\mc U_\varrho(\eta|_V)=\mc U_\varrho(\eta)|_V$ for any open $V\subset U$, and that for any $\mu\in\scr O(U)$ univalent on each fiber of $U$, the transition function is given by
\begin{align}
\mc U_\varrho(\eta)\mc U_\varrho(\mu)^{-1}=\mc U(\varrho(\eta|\mu)).
\end{align}
We let $\scr V_{\fk X}=\varinjlim_{n\in\Nbb}\scr V_{\fk X}^{\leq n}$. Both $\scr V_{\fk X}$ and $\scr V_{\fk X}^{\leq n}$ are called \textbf{sheaves of VOAs associated to $\fk X$ and $\Vbb$}.

\subsection{}

$\Theta_{\mc C}$ and $\omega_{\mc C}$ have ranks $\dim\mc B+1$. So their restrictions to each fiber $\mc C_b$ are not $\Theta_{\mc C_b}$ and $\omega_{\mc C_b}$. We consider instead the line bundle $\Theta_{\mc C/\mc B}$ \index{zz@$\Theta_{\mc C/\mc B}$} of sections of $\Theta_{\mc C}$ killed by $d\pi$ (i.e., tangent to each fiber), called the \textbf{relative tangent sheaf}. It's dual bundle is denoted by $\omega_{\mc C/\mc B}$ \index{zz@$\omega_{\mc C/\mc B}$} and called the \textbf{relative dualizing sheaf}. Then we have natural equivalences
\begin{align}
\Theta_{\mc C/\mc B}|_{\mc C_b}\simeq\Theta_{\mc C_b},\qquad \omega_{\mc C/\mc B}|_{\mc C_b}\simeq\omega_{\mc C_b}.
\end{align}
Sections of $\omega_{\mc C/\mc B}(U)$ are of the form $fd\eta$ where $f\in\scr O(U)$ and $\eta\in\scr O(U)$ is univalent on each fiber. For another $\mu\in\scr O(U)$ univalent on each fiber, we have transformation rule
\begin{align}
fd\eta=f\cdot\frac{\partial\eta}{\partial\mu}d\mu
\end{align}
where the tangent field $\frac{\partial}{\partial\mu}$ of $\mc C$ is perpendicular to $d\pi$, i.e. tangent to the fibers. Similar to Prop. \ref{lb178}, we have a natural equivalence
\begin{align}
\scr V_{\fk X}^{\leq n}/\scr V_{\fk X}^{\leq n-1}\simeq \Vbb(n)\otimes_\Cbb\Theta_{\mc C/\mc B}^{\otimes n}.
\end{align}

For each $b\in\mc B$, let \index{SXb@$\SXb=\{\sgm_1(b),\dots,\sgm_N(b)\}$}
\begin{align}
\SXb=\{\sgm_\blt(b)\}=\{\sgm_1(b),\dots,\sgm_N(b)\}.
\end{align}
Then for each $k\in\Zbb$, we have an obvious equivalence of vector bundles.
\begin{align}
\scr V_{\fk X}^{\leq n}\otimes\omega_{\mc C/\mc B}(k\SX)\big|{\mc C_b}\simeq\scr V_{\fk X_b}^{\leq n}\otimes\omega_{\mc C_b}(k\SXb).
\end{align}
(If the readers know how to define the restrictions of sheaves that are not necessarily (finite rank) vector bundles, they can easily check that the above equation holds if the superscript $\leq n$ is removed and $k$ is replaced by $\star$.)

\subsection{}

Given $\eta\in\scr O(U)$ univalent on each fiber, we have an obvious equivalence
\begin{align}
(\eta,\pi)_*:\scr O_U\xrightarrow{\simeq}\scr O_{(\eta,\pi)(U)}
\end{align}
defined by pulling back functions using $(\eta,\pi)^{-1}$. We define the \textbf{pushforward}\footnote{A better notation would be $\mc V_\varrho(\eta,\pi)$. However, we use $\mc V_\varrho(\eta)$ to make the notation shorter.} \index{V@$\mc V_\varrho(\eta_i),\mc V_\varrho(\varphi)$}
\begin{gather}\label{eq233}
\begin{gathered}
\mc V_\varrho(\eta):\scr V_{\fk X}|_U\xrightarrow{\simeq}\Vbb\otimes\scr O_{(\eta,\pi)(U)}\\
\mc V_\varrho(\eta)=(\id_\Vbb\otimes(\eta,\pi)_*)\mc U_\varrho(\eta).
\end{gathered}
\end{gather}
Its restriction to each fiber $U_b=U\cap\mc C_b$ equals the pushforward $\mc V_\varrho(\eta|_{\mc C_b}):\scr V_{U_b}\xrightarrow{\simeq}\Vbb\otimes\scr O_{\eta(U_b)}$ defined by \eqref{eq218}.

We have an equivalence $(\eta,\pi)_*=((\eta,\pi)^{-1})^*:\omega_{\mc C/\mc B}|_U\rightarrow \omega_{(\eta,\pi)(U)/\pi(U)}$. Note that $(\eta,\pi)(U)\subset \Cbb\times\mc B$. $\omega_{(\eta,\pi)(U)/\pi(U)}$ is the relative dualizing sheaf associated to the family $(\eta,\pi)(U)\rightarrow\pi(U)$ inherited from the projection $\Cbb\times\mc B\rightarrow\mc B$. If we let $z$ be the standard coordinate of $\Cbb$, then for each section $fd\eta\in\omega_{\mc C/\mc B}|_U$ where $f\in\scr O_{\mc C}$,
\begin{align*}
(\eta,\pi)_*fd\eta=\big(f\circ(\eta,\pi)^{-1}\big)dz.
\end{align*}
We let $\mc V_\varrho(\eta)$ also denote
\begin{align}
\mc V_\varrho(\eta)\equiv\mc V_\varrho(\eta)\otimes (\eta,\pi)_*:\scr V_{\fk X}\otimes\omega_{\mc C/\mc B}\big|_U\xrightarrow{\simeq} \Vbb\otimes_\Cbb\omega_{(\eta,\pi)(U)/\pi(U)}.\label{eq219}
\end{align}

\subsection{}


To define sheaves of coinvariants and conformal blocks, we first consider the case that local coordinates $\eta_1,\dots,\eta_N$ at $\sgm_1,\dots,\sgm_N$ are chosen and defined on $U_1,\dots,U_N$.

For each open $V\subset\mc B$, let \index{CV@$\mc C_V=\pi^{-1}(V)$}
\begin{align}
\mc C_V=\pi^{-1}(V),
\end{align}
and we have an $\scr O(V)$-linear action of $H^0(\mc C_V,\scr V_{\fk X}\otimes\omega_{\mc C/\mc B}(\star\SX))$ on $\Wbb_\blt\otimes\scr O(V)$ whose restriction to each fiber is the residue action of $H^0(\mc C_b,\scr V_{\fk X_b}\otimes\omega_{\mc C_b}(\star\SXb))$ on $\Wbb_\blt$ defined by Def. \ref{lb120}. So this action is compatible with the restriction to open subsets of $V$.

Let us describe this action in more details. Suppose $\sigma\in H^0(U_i\cap\mc C_V,\scr V_{\fk X}\otimes\omega_{\mc C/\mc B}(\star\SX))$. Note that by \eqref{eq219} we have (noting $\pi(U_i)=\mc B$)
\begin{align}
\mc V_\varrho(\eta_i):\scr V_{\fk X}\otimes\omega_{\mc C/\mc B}(\star\SX)\big|_{U_i}\xrightarrow{\simeq} \Vbb\otimes_\Cbb\omega_{(\eta_i,\pi)(U_i)/\mc B}(\star \{0\}\times\mc B)
\end{align}
since $\sgm_i(\mc B)$ is the only one of $\sgm_\blt(\mc B)$ intersecting (and also inside) $U_i$, and $(\eta_i,\pi)$ sends $\sgm_i(\mc B)$ to $\{0\}\times\mc B$ (cf. \eqref{eq220}). Then for each $w_i\in\Wbb_i\otimes\scr O(V)$ (we regard $w_i=w_i(b)$ as a $\Wbb_i$-valued holomorphic functions on $V$), we define \textbf{residue action}
\begin{align}
\sigma\cdot w_i=\Res_{z=0}~Y\big(\mc V_\varrho(\eta_i)\sigma,z \big)w_i.\label{eq226}
\end{align}

More precisely, write
\begin{align}
\mc V_\varrho(\eta_i)\sigma=v(z,b)dz=\sum_{n\in\Zbb}v_n(b)z^ndz
\end{align}
where $v=v(z,b)$ is a $\Vbb$-valued holomorphic function on $(\eta_i,\pi)(U_i\cap \mc C_V)$ (which is a neighborhood of $\{0\}\times V$ in $\Cbb\times V$), $v_n=v_n(b)$ is in $\Vbb\otimes\scr O(V)$, and $v_n=0$ for sufficiently negative $n$. (So $\sigma$ equals $vdz$ if we identify $U_i\cap\mc C_V$ with its image under $(\eta_i,\pi)$, and identify $\scr V_{\fk X}|_{U_i\cap\mc C_V}$ with $\Vbb\otimes\scr O_{U_i\cap\mc C_V}$ via $\mc U_\varrho(\eta_i)$.) Then
\begin{align}
(\sigma\cdot w_i)(b)=\Res_{z=0}~Y(v(z,b),z)w_i(b)dz=\sum_{n\in\Zbb} Y\big(v_n(b)\big)_nw_i(b).
\end{align}

Now, any element of $\Wbb_\blt\otimes_\Cbb\scr O(V)$ is a ($\Cbb$-)linear combination of $\Wbb_\blt$-valued holomorphic functions $w_\blt$ where (for each $b\in V$)
\begin{align}
w_\blt(b)=w_1(b)\otimes_\Cbb\cdots\otimes_\Cbb w_N(b)\qquad\in\Wbb_\blt\label{eq228}
\end{align}
and each $w_i$ is an $\Wbb_i$-valued holomorphic function on $V$. Alternatively, \index{w@$w_\blt=w_1\otimes\cdots\otimes w_N$ as an element $\Wbb_\blt\otimes_\Cbb\scr O(V)$} 
\begin{align}
w_\blt=w_1\otimes_{\scr O(V)}\cdots\otimes_{\scr O(V)}w_N\label{eq229}
\end{align}
is in
\begin{align*}
(\Wbb_1\otimes_\Cbb\scr O(V))\otimes_{\scr O(V)}\cdots\otimes_{\scr O(V)}(\Wbb_N\otimes_\Cbb\scr O(V))\simeq \Wbb_\blt\otimes_\Cbb\scr O(V).
\end{align*}
So the expression $w_\blt=w_1\otimes\cdots\otimes w_N$ can be understood in an unambiguous way. The \textbf{residue action} of any $\sigma\in H^0(\mc C_V,\scr V_{\fk X}\otimes\omega_{\mc C/\mc B}(\star\SX))$ on $w_\blt$ is given by
\begin{align}
\sigma\cdot w_\blt=\sum_{i=1}^N w_1\otimes\cdots\otimes \sigma\cdot w_i\otimes\cdots\otimes w_N.
\end{align}
(It is sufficient to understand this action when $w_\blt$ is a constant function, i.e., $w_\blt\in\Wbb_\blt$.)

\subsection{}\label{lb125}
Define an infinite-rank vector bundle over $\mc B$: \index{W@$\scr W(\Wbb_i)$, $\scr W_{\fk X}(\Wbb_\blt)$}
\begin{align}
\scr W_{\fk X}(\Wbb_\blt)=\Wbb_\blt\otimes_\Cbb\scr O_{\mc B}.
\end{align}
Define an $\scr O(V)$-module \index{JX@$\scr J_{\fk X}(\Wbb_\blt),\scr J_{\fk X_b}(\Wbb_\blt)$}
\begin{align}
\scr J_{\fk X}(\Wbb_\blt)(V)=H^0\big(\mc C_V,\scr V_{\fk X}\otimes\omega_{\mc C/\mc B}(\star\SX)\big)\cdot H^0\big(V,\scr W_{\fk X}(\Wbb_\blt)\big).\label{eq221}
\end{align}
where we have suppressed $\Span_\Cbb$. Then we have a presheaf of $\scr O_{\mc B}$-modules whose space of sections on any open $V\subset\mc B$ is $\scr J_{\fk X}(\Wbb_\blt)(V)$. This is a sub-presheaf of $\scr W_{\fk X}(\Wbb_\blt)$.

\begin{df}
The $\scr O_{\mc B}$-module \index{T@$\scr T_{\fk X}(\Wbb_\blt),\scr T_{\fk X}^*(\Wbb_\blt)$}
\begin{align}
\scr T_{\fk X}(\Wbb_\blt)=\frac{\scr W_{\fk X}(\Wbb_\blt)}{\scr J_{\fk X}(\Wbb_\blt)}
\end{align}
(defined by sheafifying the presheaf $V\mapsto \frac{\scr W_{\fk X}(\Wbb_\blt)(V)}{\scr J_{\fk X}(\Wbb_\blt)(V)}$) and its dual $\scr O_{\mc B}$-module $\scr T_{\fk X}^*(\Wbb_\blt)$ are called respectively the \textbf{sheaf of coinvariants} and the \textbf{sheaf of conformal blocks} associated to $\fk X$ and $\Wbb_\blt$. Sections of $\scr T_{\fk X}^*(\Wbb_\blt)(\mc B)$ are called \textbf{conformal blocks associated to $\fk X$ and $\Wbb_\blt$}.
\end{df}

\subsection{}

Let us give an explicit description of $\scr T_{\fk X}^*(\Wbb_\blt)$. The following is easy to see:
\begin{rem}
Sections of $\scr T_{\fk X}^*(\Wbb_\blt)$ over $V$ are all morphisms $\upphi:\scr W_{\fk X}(\Wbb_\blt)|_V\rightarrow\scr O_V$ that vanish when evaluated with any section of $\scr J_{\fk X}(\Wbb_\blt)|_V$, i.e., $\upphi(s)=0$ for all $s\in\scr J_{\fk X}(\Wbb_\blt)(V_1)$ where $V_1\subset V$ is open.
\end{rem}

\begin{rem}
A morphism $\upphi:\scr W_{\fk X}(\Wbb_\blt)|_V=\Wbb_\blt\otimes_\Cbb\scr O_V\rightarrow\scr O_V$ is equivalently a linear map $\Phi:\Wbb_\blt\rightarrow\scr O(V)$. Indeed, given $\upphi$, we define $\Phi$ to be $\Phi(w)=\upphi(w)\in\scr O(V)$ where each $w\in\Wbb_\blt$ is identified with the constant section $w\otimes 1\in\Wbb_\blt\otimes_\Cbb\scr O(V)$. Conversely, given $\Phi$, we define $\upphi$ sending each $w\otimes f\in\Wbb_\blt\otimes\scr O(V_1)$ (where $V_1\subset V$ is open) to $f\cdot \Phi(w)|_{V_1}$.

Thus, for each $b\in V$, the fiber map $\upphi|_b:\scr W_{\fk X}(\Wbb_\blt)|_b\simeq\Wbb_\blt\rightarrow \scr O_V|_b\simeq\Cbb$ is given by $w\in\Wbb_\blt\simeq\Wbb_\blt\otimes 1\mapsto \upphi(w)(b)$ where $\upphi(w)(b)$ is the value of $\upphi(w)\in\scr O(V)$ at $b$.  \hfill\qedsymbol
\end{rem}

\subsection{}

We can now relate conformal blocks for families and for single complex Riemann surfaces. For simplicity, we assume $V=\mc B$; otherwise we just need to restrict $\fk X$ to the subfamily $\fk X_V$ with base manifold $V$.

\begin{pp}\label{lb121}
Choose an $\scr O_{\mc B}$-module morphism $\upphi:\scr W_{\fk X}(\Wbb_\blt)\rightarrow\scr O_{\mc B}$. If $\mc B$ is a Stein manifold, then $\upphi$ vanishes on $\scr J_{\fk X}(\Wbb_\blt)(\mc B)$ if and only if the restriction $\upphi|_b$ to the fiber $\mc C_b$ is a conformal block for each $b\in\mc B$, i.e., $\upphi|_b$ vanishes on \index{JX@$\scr J_{\fk X}(\Wbb_\blt),\scr J_{\fk X_b}(\Wbb_\blt)$}
\begin{align}
\scr J_{\fk X_b}(\Wbb_\blt)=H^0\big(\mc C_b,\scr V_{\mc C_b}\otimes\omega_{\mc C_b}(\star\SXb)\big)\cdot \Wbb_\blt
\end{align}
\end{pp}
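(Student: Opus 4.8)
The plan is to prove the two implications separately; only the forward direction will use the Stein hypothesis, and that is where essentially all the work lies. Throughout I use the identification, explained after Subsec.~\ref{lb125}, between $\scr O_{\mc B}$-module morphisms $\upphi\colon\scr W_{\fk X}(\Wbb_\blt)\to\scr O_{\mc B}$ and linear maps $\Wbb_\blt\to\scr O(\mc B)$, together with the resulting formula $\upphi(s)(b)=\upphi|_b(s(b))$ for $s\in\scr W_{\fk X}(\Wbb_\blt)(\mc B)$ and $b\in\mc B$: writing $s=\sum_\alpha w_\alpha\otimes f_\alpha$ one has $\upphi(s)(b)=\sum_\alpha f_\alpha(b)\,\upphi|_b(w_\alpha)=\upphi|_b(s(b))$.

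The ``if'' direction is immediate and needs nothing about $\mc B$. Assume each $\upphi|_b$ is a conformal block. Let $s\in\scr J_{\fk X}(\Wbb_\blt)(\mc B)$, say $s=\sum_j\sigma_j\cdot w_{\blt,j}$ with $\sigma_j\in H^0(\mc C,\scr V_{\fk X}\otimes\omega_{\mc C/\mc B}(\star\SX))$ and $w_{\blt,j}\in\Wbb_\blt\otimes\scr O(\mc B)$. Since the residue action \eqref{eq226} is defined fiber by fiber over $\mc B$, we have $s(b)=\sum_j(\sigma_j|_{\mc C_b})\cdot w_{\blt,j}(b)\in\scr J_{\fk X_b}(\Wbb_\blt)$, so $\upphi(s)(b)=\upphi|_b(s(b))=0$ for every $b$. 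A holomorphic function on $\mc B$ vanishing at every point is zero, hence $\upphi(s)=0$.

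For the ``only if'' direction, fix $b$ and a generator $\sigma_b\cdot w_\blt$ of $\scr J_{\fk X_b}(\Wbb_\blt)$, with $\sigma_b\in H^0(\mc C_b,\scr V_{\mc C_b}\otimes\omega_{\mc C_b}(\star\SXb))$ and $w_\blt\in\Wbb_\blt$. The claim reduces to: $\sigma_b$ is the fiber-restriction of some global $\sigma\in H^0(\mc C,\scr V_{\fk X}\otimes\omega_{\mc C/\mc B}(\star\SX))$, for then $\sigma\cdot(w_\blt\otimes1)\in\scr J_{\fk X}(\Wbb_\blt)(\mc B)$, whence $0=\upphi(\sigma\cdot(w_\blt\otimes1))(b)=\upphi|_b(\sigma_b\cdot w_\blt)$ by the formula above. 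To produce $\sigma$: since $\mc C_b$ is compact, $\sigma_b$ already lies in $H^0(\mc C_b,\scr V^{\leq n}_{\mc C_b}\otimes\omega_{\mc C_b}(k\SXb))$ for some $n$ and $k$; enlarging $k$ I may also arrange $H^1(\mc C_b,\scr V^{\leq n}_{\mc C_b}\otimes\omega_{\mc C_b}(k\SXb))=0$, which is possible because $\SXb$ meets every connected component of $\mc C_b$ (Assumption~\ref{lb102}), so $\scr O(k\SXb)$ has positive degree on each component and for $k\gg0$ the degree of the twisted bundle on each component exceeds $2g-2$. Put $\scr F=\scr V^{\leq n}_{\fk X}\otimes\omega_{\mc C/\mc B}(k\SX)$, a vector bundle on $\mc C$ of finite rank (finite by Assumption~\ref{lb101}) with $\scr F|_{\mc C_b}=\scr V^{\leq n}_{\mc C_b}\otimes\omega_{\mc C_b}(k\SXb)$, and flat over $\mc B$ because $\pi$ is a submersion, hence flat. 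Since $\pi$ is proper, $\pi_*\scr F$ is a coherent $\scr O_{\mc B}$-module (Grauert's direct image theorem); since $H^1$ vanishes on $\mc C_b$ and, by upper semicontinuity, on nearby fibers, the theorem on cohomology and base change shows that $\pi_*\scr F$ commutes with base change at $b$, i.e. the fiber $(\pi_*\scr F)|_b$ maps isomorphically onto $H^0(\mc C_b,\scr F|_{\mc C_b})$. Finally, since $\mc B$ is Stein, Cartan's Theorem A gives that $H^0(\mc C,\scr F)=H^0(\mc B,\pi_*\scr F)$ surjects onto the fiber $(\pi_*\scr F)|_b$, hence onto $H^0(\mc C_b,\scr F|_{\mc C_b})\ni\sigma_b$. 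Lifting $\sigma_b$ this way gives the desired $\sigma$.

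The main obstacle is this last step: combining Grauert's direct-image and base-change theorems with the Stein property to force the restriction $H^0(\mc C,\scr F)\to H^0(\mc C_b,\scr F|_{\mc C_b})$ to be surjective, and being careful that the finite truncations $\scr V^{\leq n}_{\fk X}$ and finite pole orders $k$ really do suffice, so that one never has to manipulate the infinite-rank sheaf $\scr V_{\fk X}\otimes\omega_{\mc C/\mc B}(\star\SX)$ directly. As stressed in the preface, $\scr T_{\fk X}(\Wbb_\blt)$ itself need not be coherent; but the proposition only needs surjectivity of restriction maps of honest coherent sheaves, and that is exactly what the Stein hypothesis on $\mc B$ supplies.
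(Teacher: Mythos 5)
Your proof is correct and follows essentially the same route as the paper: both directions reduce to the fact that every element of $\scr J_{\fk X_b}(\Wbb_\blt)$ is the restriction of an element of $\scr J_{\fk X}(\Wbb_\blt)(\mc B)$, i.e.\ to the extension statement of Prop.~\ref{lb122}, which the paper cites (deferring its proof, via Grauert's base change theorem, to \cite{Gui}) and which you prove directly by combining Grauert's direct image theorem, cohomology and base change, and Cartan's Theorem A on the Stein base. The only loose point is invoking the ``degree $>2g-2$'' criterion for $H^1$-vanishing on the higher-rank bundle $\scr V^{\leq n}_{\mc C_b}\otimes\omega_{\mc C_b}(k\SXb)$; this is harmless, since the filtration by the $\scr V^{\leq j}_{\mc C_b}$ has line-bundle quotients $\Vbb(j)\otimes\Theta_{\mc C_b}^{\otimes j}$ (Prop.~\ref{lb178}), so for fixed $n$ and $k\gg0$ each graded piece, and hence the whole bundle, has vanishing $H^1$.
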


\begin{proof}

This follows from the fact that any element of $\scr J_{\fk X_b}(\Wbb_\blt)$ is the restriction of an element of $\scr J_{\fk X}(\Wbb_\blt)(\mc B)$ due to the next proposition.
\end{proof}

\begin{pp}\label{lb122}
Let $V$ be a Stein open subset of $\mc B$. Then every element of $H^0\big(\mc C_b,\scr V_{\mc C_b}\otimes\omega_{\mc C_b}(\star\SXb)\big)$ is the restriction of some $\sigma\in H^0\big(\mc C_V,\scr V_{\fk X}\otimes\omega_{\mc C/\mc B}(\star\SX)\big)$ to the fiber $\mc C_b$.
\end{pp}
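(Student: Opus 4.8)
The plan is to deduce Proposition \ref{lb122} from the standard fact that, over a Stein base, coherent sheaf cohomology behaves well with respect to restriction to fibers. First I would set up the geometry: the hypersurface $\SX=\bigcup_i\sgm_i(\mc B)$ is a smooth divisor in $\mc C$, and for each $k\in\Nbb$ the sheaf $\scr V^{\leq n}_{\fk X}\otimes\omega_{\mc C/\mc B}(k\SX)$ is a genuine (finite-rank) locally free $\scr O_{\mc C}$-module whose restriction to each fiber $\mc C_b$ is $\scr V^{\leq n}_{\mc C_b}\otimes\omega_{\mc C_b}(k\SXb)$. Since $H^0\big(\mc C_b,\scr V_{\mc C_b}\otimes\omega_{\mc C_b}(\star\SXb)\big)=\varinjlim_{n,k}H^0\big(\mc C_b,\scr V^{\leq n}_{\mc C_b}\otimes\omega_{\mc C_b}(k\SXb)\big)$ and similarly for the family (using that the direct limit commutes with $H^0$ on the quasi-compact $\mc C_V$), it suffices to prove the statement for each fixed $n$ and $k$, i.e., that the restriction map
\begin{align*}
H^0\big(\mc C_V,\scr V^{\leq n}_{\fk X}\otimes\omega_{\mc C/\mc B}(k\SX)\big)\longrightarrow H^0\big(\mc C_b,\scr V^{\leq n}_{\mc C_b}\otimes\omega_{\mc C_b}(k\SXb)\big)
\end{align*}
is surjective.

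Next I would invoke the relevant coherence and base-change machinery. Write $\scr F=\scr V^{\leq n}_{\fk X}\otimes\omega_{\mc C/\mc B}(k\SX)$, a coherent $\scr O_{\mc C}$-module, flat over $\mc B$ (it is locally free on $\mc C$, and $\pi$ is a submersion with compact fibers). By Grauert's coherence theorem, the higher direct images $R^q\pi_*\scr F$ are coherent $\scr O_{\mc B}$-modules. Because each fiber $\mc C_b$ is a compact Riemann surface, $R^q\pi_*\scr F=0$ for $q\geq2$, so the only obstruction lives in $R^1\pi_*\scr F$. The Grauert base-change / semicontinuity package gives, for each $b$, an exact sequence relating the fiber of $\pi_*\scr F$ at $b$ to $H^0(\mc C_b,\scr F|_{\mc C_b})$ and the fiber of $R^1\pi_*\scr F$ at $b$. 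On a Stein base $V$ one then has $H^0\big(\mc C_V,\scr F\big)=H^0\big(V,\pi_*\scr F\big)$ (Leray, using that $\pi_*$ of a coherent sheaf is computed by global sections over Stein opens since $R^q\pi_*$ handles the fiber directions and $V$ is Stein), and $H^0(V,\pi_*\scr F)\to(\pi_*\scr F)_b\otimes_{\scr O_{\mc B,b}}\Cbb$ is surjective because $V$ is Stein (Cartan's Theorem A/B for the coherent sheaf $\pi_*\scr F$, after shrinking if necessary — but one may arrange $V$ connected Stein from the start). Composing, it remains to see that the base-change map $(\pi_*\scr F)_b\otimes\Cbb\to H^0(\mc C_b,\scr F|_{\mc C_b})$ is surjective.

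The key step, and the main obstacle, is precisely this last surjectivity of the base-change map for $H^0$. In general Grauert base change only guarantees this when the relevant $R^1$ is locally free at $b$, or more simply when $h^0(\mc C_b,\scr F|_{\mc C_b})$ is locally constant in $b$; without such input the map $(\pi_*\scr F)_b\otimes\Cbb\to H^0(\mc C_b,\scr F|_{\mc C_b})$ can fail to be onto (jumping of $h^0$). The way around it — and the natural way given how the excerpt phrases things — is to exploit the $(\star\SX)$: for $k$ large enough the twist $\omega_{\mc C/\mc B}(k\SX)$ is sufficiently positive on every fiber that $H^1(\mc C_b,\scr V^{\leq n}_{\mc C_b}\otimes\omega_{\mc C_b}(k\SXb))=0$ for all $b\in V$ (Serre vanishing, uniform over the compact-in-$\mc B$ situation, using that $\SXb$ meets every component of $\mc C_b$). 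Then $R^1\pi_*$ of that twisted sheaf vanishes near $b$, the base-change map for $H^0$ is automatically surjective (indeed an isomorphism) by Grauert, and $h^0$ is locally constant. Since any element of $H^0\big(\mc C_b,\scr V_{\mc C_b}\otimes\omega_{\mc C_b}(\star\SXb)\big)$ already lies in some $H^0\big(\mc C_b,\scr V^{\leq n}_{\mc C_b}\otimes\omega_{\mc C_b}(k\SXb)\big)$ with $k$ as large as we please, this disposes of the general case: pick $k$ large enough for the given section, lift it over $V$, and restrict. I would remark that the detailed cohomological arguments (Grauert coherence, base change, Serre vanishing on fibered surfaces) are carried out in \cite{Gui}[Sec. 3.6], to which the reader is referred, and that for the applications in Sec. \ref{lb153} only the Stein case $V\subset\mc B$ is needed, exactly as stated.
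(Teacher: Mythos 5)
Your proposal is correct and follows essentially the same route the paper takes: the paper itself only sketches the argument, citing Grauert's base change theorem and referring to \cite{Gui} for details, and the mechanism it has in mind is exactly yours — reduce to a fixed $\scr V^{\leq n}_{\fk X}\otimes\omega_{\mc C/\mc B}(k\SX)$, twist by $k\SX$ with $k$ large to kill $H^1$ on the fiber, and then combine base change with Cartan's Theorems A/B over the Stein base. One small simplification: you only need $H^1(\mc C_b,\scr F|_{\mc C_b})=0$ at the single fiber $\mc C_b$ (which forces the $H^0$ base-change map to be surjective at $b$ and lets Theorem A/B on the Stein $V$ do the rest), so no uniform Serre vanishing over $V$ — and hence no precompactness of $V$ — is required.
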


In this proposition, we do not assume that $\fk X$ has local coordinates $\eta_\blt$. To prove this proposition one needs the base change theorem of Grauert \cite[Sec. III.4.2]{GR-b}. See  \cite[Sec. 2.5]{Gui} for a detailed explanation. It is in general true that if $\scr E$ is a vector bundle on $\mc C$, then for any precompact Stein open subset $V\subset\mc B$, there exists $k_0\in\Nbb$ such that for all $k\geq k_0$, every element of $H^0(\mc C,\scr E(k\SX))$ is the restriction of some $\sigma \in H^0(\mc C_V,\scr E(k\SX)|_V)$ to the fiber $\mc C_b$.

From Prop. \ref{lb121} we immediately get:

\begin{thm}\label{lb143}
Choose an $\scr O_{\mc B}$-module morphism $\upphi:\scr W_{\fk X}(\Wbb_\blt)\rightarrow\scr O_{\mc B}$. Then $\upphi$ is a conformal block iff $\upphi|_b$ is a conformal block for each $b\in\mc B$. If $\mc B$ is Stein, then these two conditions are also equivalent to that $\upphi$ vanishes on $\scr J_{\fk X}(\Wbb_\blt)(\mc B)$.
\end{thm}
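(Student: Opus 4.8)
The plan is to obtain Theorem \ref{lb143} as a direct consequence of Propositions \ref{lb121} and \ref{lb122}, the only genuinely new ingredient being the observation that the conformal block condition is local on $\mc B$ and that the Stein open subsets form a basis for the topology of $\mc B$.

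First I would record the local nature of the condition. By the remark following the definition of $\scr T^*_{\fk X}(\Wbb_\blt)$, a morphism of $\scr O_{\mc B}$-modules $\upphi:\scr W_{\fk X}(\Wbb_\blt)\to\scr O_{\mc B}$ is a conformal block if and only if $\upphi(s)=0$ for every open $V_1\subset\mc B$ and every $s\in\scr J_{\fk X}(\Wbb_\blt)(V_1)$. I claim this may be checked on Stein opens only: given such an $s$ and a covering of $V_1$ by Stein opens $V_\alpha$, one has $s|_{V_\alpha}\in\scr J_{\fk X}(\Wbb_\blt)(V_\alpha)$, because the defining presheaf of $\scr J_{\fk X}(\Wbb_\blt)$ restricts on $V_\alpha$ to the analogous presheaf of the subfamily $\fk X_{V_\alpha}$; hence if $\upphi$ vanishes on each $\scr J_{\fk X}(\Wbb_\blt)(V_\alpha)$, then $\upphi(s)|_{V_\alpha}=\upphi(s|_{V_\alpha})=0$ for all $\alpha$, and $\upphi(s)=0$ since $\scr O_{\mc B}$ is a sheaf. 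So $\upphi$ is a conformal block if and only if, for every Stein open $V\subset\mc B$, the restriction $\upphi|_V$ vanishes on $\scr J_{\fk X}(\Wbb_\blt)(V)$, which coincides with $\scr J_{\fk X_V}(\Wbb_\blt)(V)$.

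Next I would apply Proposition \ref{lb121} to the subfamily $\fk X_V$ over the Stein manifold $V$: the vanishing of $\upphi|_V$ on $\scr J_{\fk X_V}(\Wbb_\blt)(V)$ is equivalent to $\upphi|_b$ being a conformal block for every $b\in V$. Here one uses that, by $\scr O_{\mc B}$-linearity of $\upphi$, the value $\upphi(s)(b)$ equals $\upphi|_b$ evaluated at the restriction of $s$ to the fiber $\mc C_b$, which is exactly the matching of pointwise and fiberwise statements underlying Propositions \ref{lb121} and \ref{lb122} (the latter being what produces the needed global sections of $\scr V_{\fk X}\otimes\omega_{\mc C/\mc B}(\star\SX)$ restricting to a prescribed fiberwise section). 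Letting $V$ range over a Stein cover of $\mc B$ yields the first assertion: $\upphi$ is a conformal block if and only if $\upphi|_b$ is a conformal block for each $b\in\mc B$. If $\mc B$ is itself Stein, Proposition \ref{lb121} applied with $V=\mc B$ gives directly that $\upphi$ vanishes on $\scr J_{\fk X}(\Wbb_\blt)(\mc B)$ if and only if $\upphi|_b$ is a conformal block for each $b$, so all three conditions coincide, which is the second assertion.

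I do not expect a serious obstacle: the substantive analytic and cohomological content (the strong residue theorem and Grauert's base change theorem) has already been absorbed into Propositions \ref{lb121} and \ref{lb122}. The only point demanding care is the reduction to Stein opens sketched above, where one must interpret "conformal block" as vanishing on the \emph{presheaf} sections $\scr J_{\fk X}(\Wbb_\blt)(V_1)$ and verify that this property glues along an open cover; everything else is a routine unwinding of definitions.
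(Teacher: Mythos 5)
Your proposal is correct and takes essentially the same route as the paper, which derives Theorem \ref{lb143} as an immediate consequence of Proposition \ref{lb121}; you have simply made explicit the (routine) reduction to a Stein cover and the fact that $\scr J_{\fk X}(\Wbb_\blt)$ restricts compatibly to open subsets. No gap.
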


We give an application of Thm. \ref{lb143}. We remark that Thm. \ref{lb143} and Cor. \ref{lb158} hold without assuming that $\fk X$ has local coordinates (after we define sheaves of conformal blocks in this general case, cf. Subsec. \ref{lb190}), since Prop. \ref{lb122} does.

\begin{co}\label{lb158}
Assume that $\mc B$ is connected. Let $\upphi:\scr W_{\fk X}(\Wbb_\blt)\rightarrow\scr O_{\mc B}$ be an $\scr O_{\mc B}$-module morphism. Assume that $\mc B$ contains a non-empty open subset $V$ such that the restriction $\upphi|_V$ is a conformal block associated to $\fk X_V$ (i.e., $\upphi|_V\in H^0(V,\scr T_{\fk X}^*(\Wbb_\blt))$). Then $\upphi$ is a conformal block associated to $\fk X$.
\end{co}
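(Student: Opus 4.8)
\textbf{Proof proposal for Corollary \ref{lb158}.}

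The plan is to reduce the statement to a connectedness argument at the level of the base manifold, using Theorem \ref{lb143} to localize the conformal block condition fiberwise. First I would recall from Theorem \ref{lb143} that $\upphi$ is a conformal block associated to $\fk X$ if and only if the fiber restriction $\upphi|_b$ is a conformal block associated to $\fk X_b$ for every $b\in\mc B$. So the whole problem becomes: show that the set
\begin{align*}
Z=\{b\in\mc B:\upphi|_b\text{ is a conformal block associated to }\fk X_b\}
\end{align*}
equals all of $\mc B$. By hypothesis $Z$ contains the non-empty open set $V$ (here I use that $\upphi|_V\in H^0(V,\scr T^*_{\fk X}(\Wbb_\blt))$ means, again by Theorem \ref{lb143} applied to the subfamily $\fk X_V$, that $\upphi|_b$ is fiberwise a conformal block for all $b\in V$). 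Since $\mc B$ is connected, it then suffices to prove that $Z$ is both open and closed in $\mc B$.

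For each fixed $w_\blt\in\Wbb_\blt$, the condition that $\upphi|_b$ annihilates $\scr J_{\fk X_b}(\Wbb_\blt)$ is cut out by equations of the form $\upphi(\sigma\cdot w_\blt)(b)=0$ for local sections $\sigma$ of $\scr V_{\fk X}\otimes\omega_{\mc C/\mc B}(\star\SX)$. Concretely, working near a point $b_0$ and choosing local coordinates $\eta_\blt$ and a Stein neighborhood $V_0\ni b_0$, any element of $\scr J_{\fk X_{b}}(\Wbb_\blt)$ is (by Proposition \ref{lb122}) the fiber restriction of an element of $\scr J_{\fk X}(\Wbb_\blt)(V_0)=H^0(\mc C_{V_0},\scr V_{\fk X}\otimes\omega_{\mc C/\mc B}(\star\SX))\cdot H^0(V_0,\scr W_{\fk X}(\Wbb_\blt))$, and $\upphi$ applied to such an element gives a holomorphic function on $V_0$ via the residue action \eqref{eq226}. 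Thus $Z\cap V_0$ is the common zero locus of a family of holomorphic functions on $V_0$, hence closed in $V_0$; covering $\mc B$ by such $V_0$ shows $Z$ is closed in $\mc B$. For openness: if $b_0\in Z$ and $V_0$ is a connected Stein neighborhood as above, then the holomorphic functions $b\mapsto\upphi(\sigma\cdot w_\blt)(b)$ all vanish at $b_0$; but since $Z\cap V\neq\emptyset$ and $V$ is open, these functions vanish on a non-empty open subset of each connected component of $\mc B$ meeting $V$ — so by the identity theorem (applied on the connected manifold $\mc B$, or more carefully on $V_0$ after noting $V_0$ meets the open set where they vanish) they vanish identically, giving $V_0\subset Z$. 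Actually a cleaner route to both open and closed: the functions $b\mapsto\upphi(\sigma\cdot w_\blt)(b)$ are globally holomorphic on each Stein $V_0$, they vanish on the non-empty open $V\cap V_0$ (when this is non-empty), hence vanish on all of $V_0$; this directly shows $Z$ contains a neighborhood of $V$, and by a chain/connectedness argument (propagating from $V$ through overlapping Stein opens) $Z=\mc B$.

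The main technical point — and the only place where any real content beyond bookkeeping enters — is the use of Proposition \ref{lb122} to guarantee that the fiberwise relation spaces $\scr J_{\fk X_b}(\Wbb_\blt)$ are uniformly the restrictions of global (over a Stein $V_0$) sections, so that the fiberwise vanishing conditions genuinely assemble into holomorphic equations on the base; combined with the identity theorem on the connected Stein neighborhoods, this yields that $Z$ is open and closed. I expect the bulk of the write-up to be just carefully citing Theorem \ref{lb143} and Proposition \ref{lb122} and organizing the open/closed dichotomy; the subtlety to watch is making sure the "family of holomorphic functions" is genuinely indexed in a way compatible with varying $b$, which is exactly what Proposition \ref{lb122} provides. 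One should also note, as the excerpt already remarks, that everything here is insensitive to whether $\fk X$ comes equipped with local coordinates, since Proposition \ref{lb122} is stated without that assumption.
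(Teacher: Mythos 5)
Your proposal is correct and follows essentially the same route as the paper: reduce to the Stein case, where $\upphi$ evaluated on global sections of $\scr J_{\fk X}(\Wbb_\blt)$ (supplied via Proposition \ref{lb122}) yields holomorphic functions vanishing on the non-empty open set $V$ and hence identically by the identity theorem, and then propagate over the connected base by a clopen argument. Only note that your first stab at openness of $Z$ (the functions vanishing at the single point $b_0$) proves nothing by itself --- it is your ``cleaner route,'' propagating from $V$ through overlapping connected Stein opens, that carries the argument, and that is exactly the paper's second paragraph.
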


\begin{proof}
First, assume $\mc B$ is Stein. Then the evaluation of $\upphi$ with any element of $H^0(\mc B,\scr J_{\fk X}(\Wbb_\blt))$ (which is an element of $\scr O(\mc B)$) vanishes on $V$, and hence vanishes on $\mc B$ by complex analysis. So, by Cor. \ref{lb158}, $\upphi$ is a conformal block.

Now, in the general case, we let $\mc B_0$ be the (obviously open) subset of $\mc B$ consisting all $b\in\mc B$ such that $\upphi$ restricts to a conformal block on a neighborhood of $b$. If $b\in\mc B\setminus\mc B_0$, then  every connected Stein neighborhood $W$ of $b$ is disjoint from $\mc B_0$: Otherwise, since $\upphi|_{W\cap\mc B_0}$ is a conformal block, by the first paragraph, $\upphi|_{\mc B_0}$ is a conformal block, which implies $b\in\mc B_0$ and gives a contradiction. So $\mc B_0$ is a non-empty open and closed subset of $\mc B$, which must be $\mc B$. 
\end{proof}

\subsection{}

There are two advantages of working with sheaves of coinvariants instead of sheaves of conformal blocks. First, it is easier to relate the fibers of $\scr T_{\fk X}(\Wbb_\blt)$ and spaces of coinvariants than to do so for sheaves and spaces of conformal blocks. Second, though our ultimate interest lies in  the local freeness of $\scr T_{\fk X}^*(\Wbb_\blt)$, it is easier to first study the local freeness of $\scr T_{\fk X}(\Wbb_\blt)$.

For each $b\in\mc B$, note that
\begin{align*}
\scr T_{\fk X}(\Wbb_\blt)_b=\frac{\scr W_{\fk X}(\Wbb_\blt)_b}{\scr J_{\fk X}(\Wbb_\blt)_b}.
\end{align*}
Let $\fk m_b=\fk m_{\mc B,b}=\{g\in\scr O_{\mc B,b}:g(b)=0\}$. Then we have an obvious equivalence
\begin{align}
\scr T_{\fk X}(\Wbb_\blt)\big|_b=\frac{\scr T_{\fk X}(\Wbb_\blt)_b}{\fk m_b\cdot \scr T_{\fk X}(\Wbb_\blt)_b}\simeq \frac{\scr W_{\fk X}(\Wbb_\blt)_b}{\fk m_b\cdot \scr W_{\fk X}(\Wbb_\blt)_b+ \scr J_{\fk X}(\Wbb_\blt)_b}.
\end{align}
Recall also that
\begin{align}
\scr T_{\fk X_b}(\Wbb_\blt)=\frac{\Wbb_\blt}{\scr J_{\fk X_b}(\Wbb_\blt)}.
\end{align}

\begin{pp}\label{lb134}
The linear map 
\begin{gather}\label{eq222}
\begin{gathered}
\scr W_{\fk X}(\Wbb_\blt)_b=\Wbb_\blt\otimes_\Cbb\scr O_{\mc B,b}\rightarrow \Wbb_\blt\\
w\mapsto w(b)
\end{gathered}
\end{gather}
descends to an isomorphism of vector spaces
\begin{align}\label{eq223}
\scr T_{\fk X}(\Wbb_\blt)\big|_b\xlongrightarrow{\simeq} \scr T_{\fk X_b}(\Wbb_\blt).
\end{align}
\end{pp}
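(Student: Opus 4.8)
The plan is to show that the map \eqref{eq222} is surjective with kernel exactly equal to $\fk m_b\cdot \scr W_{\fk X}(\Wbb_\blt)_b+\scr J_{\fk X}(\Wbb_\blt)_b$, and that it carries $\scr J_{\fk X}(\Wbb_\blt)_b$ onto $\scr J_{\fk X_b}(\Wbb_\blt)$. Surjectivity of \eqref{eq222} onto $\Wbb_\blt$ is immediate since every $w\in\Wbb_\blt$ is the value at $b$ of the constant section $w\otimes 1$. The kernel of \eqref{eq222} is clearly $\fk m_b\cdot\scr W_{\fk X}(\Wbb_\blt)_b$ (a germ of section vanishes at $b$ iff it lies in $\fk m_b$ times the stalk, because $\scr W_{\fk X}(\Wbb_\blt)=\Wbb_\blt\otimes_\Cbb\scr O_{\mc B}$ is a free $\scr O_{\mc B}$-module). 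So the induced map $\scr W_{\fk X}(\Wbb_\blt)_b/\fk m_b\scr W_{\fk X}(\Wbb_\blt)_b\to\Wbb_\blt$ is already an isomorphism, and it remains only to match the two subspaces by which we quotient.

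First I would show that the image of $\scr J_{\fk X}(\Wbb_\blt)_b$ under \eqref{eq222} is contained in $\scr J_{\fk X_b}(\Wbb_\blt)$. A germ in $\scr J_{\fk X}(\Wbb_\blt)_b$ is represented, on some Stein neighborhood $V\ni b$, by a finite sum $\sum_k \sigma_k\cdot w_\blt^{(k)}$ with $\sigma_k\in H^0(\mc C_V,\scr V_{\fk X}\otimes\omega_{\mc C/\mc B}(\star\SX))$ and $w_\blt^{(k)}\in H^0(V,\scr W_{\fk X}(\Wbb_\blt))$; since the residue action on $\scr W_{\fk X}(\Wbb_\blt)$ restricts fiberwise to the residue action on $\Wbb_\blt$ (this is how the action was defined in \eqref{eq226}, using $\mc V_\varrho(\eta_i)$ whose fiberwise restriction is $\mc V_\varrho(\eta_i|_{\mc C_b})$), evaluating at $b$ gives $\sum_k (\sigma_k|_{\mc C_b})\cdot w_\blt^{(k)}(b)\in\scr J_{\fk X_b}(\Wbb_\blt)$. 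The reverse containment — that every element of $\scr J_{\fk X_b}(\Wbb_\blt)$ arises this way — is precisely Prop.\ \ref{lb122}: any $\sigma_b\in H^0(\mc C_b,\scr V_{\mc C_b}\otimes\omega_{\mc C_b}(\star\SXb))$ extends to some $\sigma\in H^0(\mc C_V,\scr V_{\fk X}\otimes\omega_{\mc C/\mc B}(\star\SX))$ over a Stein neighborhood $V$, and then for $w_\blt\in\Wbb_\blt$ the section $\sigma\cdot(w_\blt\otimes 1)$ lies in $\scr J_{\fk X}(\Wbb_\blt)(V)$ and evaluates to $\sigma_b\cdot w_\blt$ at $b$. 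Hence \eqref{eq222} maps $\fk m_b\scr W_{\fk X}(\Wbb_\blt)_b+\scr J_{\fk X}(\Wbb_\blt)_b$ onto $\scr J_{\fk X_b}(\Wbb_\blt)$, so it descends to a well-defined surjection \eqref{eq223}.

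For injectivity of \eqref{eq223} I would argue: if $w\in\scr W_{\fk X}(\Wbb_\blt)_b$ has $w(b)\in\scr J_{\fk X_b}(\Wbb_\blt)$, then by the surjectivity just established there is a germ $s\in\scr J_{\fk X}(\Wbb_\blt)_b$ with $s(b)=w(b)$; then $w-s$ is a germ vanishing at $b$, hence lies in $\fk m_b\scr W_{\fk X}(\Wbb_\blt)_b$, so $w\in\fk m_b\scr W_{\fk X}(\Wbb_\blt)_b+\scr J_{\fk X}(\Wbb_\blt)_b$, which is exactly the subspace we quotiented by on the left. Therefore \eqref{eq223} is an isomorphism. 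The main obstacle is the surjectivity of the extension map $\scr J_{\fk X}(\Wbb_\blt)(V)\to\scr J_{\fk X_b}(\Wbb_\blt)$, i.e.\ extending a fiberwise section $\sigma_b$ of $\scr V_{\mc C_b}\otimes\omega_{\mc C_b}(\star\SXb)$ to a section over $\mc C_V$ of $\scr V_{\fk X}\otimes\omega_{\mc C/\mc B}(\star\SX)$; but this is supplied by Prop.\ \ref{lb122} (whose proof invokes Grauert's base change theorem and the Stein hypothesis on $V$), so I may cite it. Everything else is bookkeeping with free $\scr O_{\mc B}$-modules, the definition of fibers as quotients by $\fk m_b$, and the compatibility of the residue action with restriction to fibers.
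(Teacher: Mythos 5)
Your proof is correct and follows essentially the same route as the paper's: both reduce everything to Prop.~\ref{lb122} (restriction of $\scr J_{\fk X}(\Wbb_\blt)$ to the fiber is onto $\scr J_{\fk X_b}(\Wbb_\blt)$) and then use the observation that a germ of $\Wbb_\blt\otimes_\Cbb\scr O_{\mc B,b}$ vanishing at $b$ lies in $\Wbb_\blt\otimes\fk m_b$ to get injectivity. The extra detail you supply on the kernel of \eqref{eq222} and on the fiberwise compatibility of the residue action is consistent with, and only slightly more explicit than, what the paper leaves implicit.
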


\begin{proof}
The map \eqref{eq222} sends $\fk m_b\cdot \scr W_{\fk X}(\Wbb_\blt)_b=\Wbb_\blt\otimes \fk m_b$ to $0$ and sends $\scr J_{\fk X}(\Wbb_\blt)_b$ into $\scr J_{\fk X_b}(\Wbb_\blt)$ (indeed onto by Prop. \ref{lb122}). So \eqref{eq222} descends to a linear map \eqref{eq223} which is clearly surjective. If $w(b)\in\scr J_{\fk X_b}(\Wbb_\blt)$, then by Prop. \ref{lb122}, $w(b)$ equals $s(b)$ for some $s\in\scr J_{\fk X}(\Wbb_\blt)_b$. So $w-s\in\Wbb_\blt\otimes \scr O_{\mc B,b}$ vanishes at $b$. So clearly $w-s\in\Wbb_\blt\otimes \fk m_b$. Therefore \eqref{eq223} is injective.
\end{proof}

\subsection{}\label{lb190}

Now we do not assume that the local coordinates of $\fk X$ are chosen. We shall define sheaves of coinvariants and conformal blocks associated to $\fk X$ and $\Wbb_\blt$.

Let $\scr W_{\fk X}(\Wbb_\blt)$ be an infinite rank locally free $\scr O_{\mc B}$-module \index{W@$\scr W(\Wbb_i)$, $\scr W_{\fk X}(\Wbb_\blt)$} determined by the following conditions. For any open subset $V\subset\mc B$ together with local coordinates $\eta_1,\dots,\eta_N$ of the restricted family \index{XV@$\fk X_V$}
\begin{align}
\fk X_V=(\pi:\mc C_V=\pi^{-1}(V)\rightarrow V;\sgm_1|_V,\dots,\sgm_N|_V)
\end{align}
defined near $\sgm_1(V),\dots,\sgm_N(V)$ respectively, we have a trivialization
\begin{align}
\mc U(\eta_\blt)\equiv \mc U(\eta_1)\otimes\cdots\mc U(\eta_N):\scr W_{\fk X}(\Wbb_\blt)|_V\xrightarrow{\simeq}\Wbb_\blt\otimes_\Cbb\scr O_V
\end{align}
compatible with the restriction of $\eta_\blt$ and $\fk X_V$ to open subsets of $V$, such that if $\mu_\blt$ is another set of local coordinates, then \index{U@$\mc U(\alpha),\mc U(\eta),\mc U(\eta_\blt)$}
\begin{align*}
\mc U(\eta_\blt)\mc U(\mu_\blt)^{-1}:\Wbb_\blt\otimes\scr O_V\xrightarrow{\simeq}\Wbb_\blt\otimes\scr O_V
\end{align*}
is defined by the transition function
\begin{align}
\mc U(\eta_\blt)\mc U(\mu_\blt)^{-1}\equiv \mc U(\eta_\blt|\mu_\blt)=\mc U((\eta_1|\mu_1))\otimes\cdots\otimes\mc U((\eta_N|\mu_N)).
\end{align}
Here, each $(\eta_i|\mu_i):V\rightarrow\Gbb$ is a holomorphic family of transformations such that for each $b\in V$, $(\eta_i|\mu_i)_b$ changes $\mu_i|_{\mc C_b}$ to $\eta_i|_{\mc C_b}$, i.e., 
\begin{align*}
\eta_i|_{\mc C_b}=(\eta_i|\mu_i)_b\circ \mu_i|_{\mc C_b}
\end{align*}
holds on a neighborhood of $\sgm_i(b)$ in $\mc C_b$. So we have an isomorphism
\begin{align*}
\mc U((\eta_i|\mu_i)):\Wbb_\blt\otimes\scr O_V\xrightarrow{\simeq}\Wbb_\blt\otimes\scr O_V.
\end{align*}

The restriction of $\mc U((\eta_i|\mu_i))$ to each fiber at $b$ is clearly the transition function for $\scr W_{\fk X_b}(\Wbb_\blt)$ (cf. \eqref{eq225}). Thus, we have an obvious equivalence
\begin{align}
\scr W_{\fk X}(\Wbb_\blt)\big|_b\simeq \scr W_{\fk X_b}(\Wbb_\blt).
\end{align}

We define the (obviously $\scr O(V)$-linear) \textbf{residue action} of $\sigma\in H^0(\mc C_V,\scr V_{\fk X}\otimes\omega_{\mc C/\mc B}(\star\SX))$ on $\wbf=H^0(V,\scr W_{\fk X}(\Wbb_\blt))$ to be
\begin{align}
\sigma\cdot\wbf=\mc U(\eta_\blt)\cdot\sigma\cdot \mc U(\eta_\blt)^{-1}\wbf\label{eq227}
\end{align}
where the action of $\sigma$ on $\mc U(\eta_\blt)^{-1}\wbf$ is defined by \eqref{eq226}.
When restricted to each fiber,  \eqref{eq227} is equivalent to the residue action of $H^0(\mc C_b,\scr V_{\mc C_b}\otimes\omega_{\mc C_b}(\star\SXb))$ on $\scr W_{\fk X_b}(\Wbb_\blt)$ defined as in \eqref{eq194}. Since the later is coordinate-independent (cf. Prop. \ref{lb123}), so is \eqref{eq227}.

Thus, using the residue action, we can define the presheaf $\scr J_{\fk X}(\Wbb_\blt)$, the sheaf of coinvariants $\scr T_{\fk X}(\Wbb_\blt)$, and the sheaf of conformal blocks $\scr T_{\fk X}^*(\Wbb_\blt)$ \index{JX@$\scr J_{\fk X}(\Wbb_\blt),\scr J_{\fk X_b}(\Wbb_\blt)$} \index{T@$\scr T_{\fk X}(\Wbb_\blt),\scr T_{\fk X}^*(\Wbb_\blt)$} in the exact same way as in Subsec. \ref{lb125}.

\subsection{}

Our next goal is to define connections on $\scr T_{\fk X}(\Wbb_\blt)$ and $\scr T_{\fk X}^*(\Wbb_\blt)$. We begin with the following general definition:

\begin{df}
Let $\scr E$ be an $\scr O_X$-module where $X$ is a complex manifold with holomorphic tangent line bundle $\Theta_X$. A \textbf{connection} $\nabla$ on $\scr E$ associates to each open $U\subset X$ a bilinear map
\begin{align*}
\nabla:\Theta_X(U)\times\scr E(U)\rightarrow\scr E(U),\qquad (\yk,s)\mapsto\nabla_\yk s
\end{align*}
satisfying the following conditions.
\begin{enumerate}[label=(\alph*)]
\item If $V$ is an open subset of $U$ then $\nabla_{\yk|_V}s|_V=(\nabla_{\yk}s)|_V$.
\item If $f\in\scr O(U)$ then
\begin{subequations}
\begin{gather*}
\nabla_{f\yk}s=f\nabla_{\yk}s\\
\nabla_\yk(fs)=\yk(f)s+f\nabla_\yk s
\end{gather*}
\end{subequations}
\end{enumerate}
If a connection $\nabla$ on $\scr E$ is chosen, the corresponding \textbf{dual connection} $\nabla$ on the dual sheaf $\scr E^\vee$ is defined by
\begin{align}
\bk{\nabla_\yk\varphi,s}=\yk\bk{\varphi,s}-\bk{\varphi,\nabla_\yk s}\label{eq230}
\end{align}
for each $\varphi\in \scr E^\vee(U)=\Hom_{\scr O_U}(\scr E_U,\scr O_U)$, each $\yk\in\Theta(U)$, and each $s\in\scr E_U$.
\end{df}

Note that $\xk\bk{\varphi,s}$ is the action of the vector field $\xk$ on the holomorphic function $\bk{\varphi,s}$.

\subsection{}

We now suppose that the local coordinates $\eta_\blt$ are chosen for $\fk X$, and identify
\begin{align*}
\scr W_{\fk X}(\Wbb_\blt)=\Wbb_\blt\otimes_\Cbb\scr O_{\mc B}\qquad\text{via }\mc U(\eta_\blt).
\end{align*}
We assume that $\mc B$ is a Stein manifold. Choose $\yk\in\Theta_{\mc B}(\mc B)$, together with a lift $\xk\in H^0(\mc C,\Theta_{\mc C}(\star\SX))$. (Cf. Prop. \ref{lb114}).   We first define the \textbf{differential operator $\nabla_\yk$ on $\scr W_{\fk X}(\Wbb_\blt)$}. 

\emph{$\boxed{\text{Assume the setting of Subsec. \ref{lb126}.}}$} Then for each open $V\in\mc B$, $\nabla_\yk$ is the linear operator on $\Wbb_\blt\otimes_\Cbb\scr O(V)$ such that for each $w_i\in\Wbb_i\otimes_\Cbb\scr O(V)$ and $w_\blt=w_1\otimes\cdots\otimes w_N$ in $\Wbb_\blt\otimes_\Cbb\scr O(V)$ (cf. \eqref{eq228} or \eqref{eq229}),
\begin{align}
\nabla_\yk w_\blt=\sum_{j=1}^m  g_j(\tau_\blt)\partial_{\tau_j} w_\blt-\sum_{i=1}^N\sum_{n\in\Zbb}h_{i,n}(\tau_\blt)w_1\otimes\cdots\otimes L_{n-1}w_i\otimes\cdots\otimes w_N.\label{eq231}
\end{align}
Using this formula and \eqref{eq230}, we can define $\nabla_\yk$ on the dual sheaf of $\scr W_{\fk X}(\Wbb_\blt)$, i.e., define $\nabla_\yk\upphi$ for each $\scr O_V$-module morphism $\upphi:\scr W_{\fk X}(\Wbb_\blt)|_V\rightarrow\scr O_V$. This definition of $\nabla_\yk\upphi$ clearly agrees with \eqref{eq217} when $w_1,\dots,w_N$ are constant sections.

Warning: we are using $L_0$ instead of $\wtd L_0$ to define $\nabla_\yk$.

\begin{rem}
In Subsec. \ref{lb126} we assumed that $\mc B$ is inside $\Cbb^m$. In other words, when defining $\nabla_\yk$ using \eqref{eq231}, we have fixed an embedding of the abstract complex manifold $\mc B$ into $\Cbb^m$ as an open subset. However, it is easy to check that this definition is independent of the embedding. Thus, to define $\nabla_\yk$, we assume only that $\mc B$ is Stein, but not necessarily that $\mc B$ can be embedded into $\Cbb^m$.
\end{rem}

\subsection{}

\eqref{eq231} can be written in a more compact way. Recall the neighborhood $U_i$  of $\sgm_i(\mc B)$ on which $\eta_i$ is defined (cf. Subsec. \ref{lb127}). Define
\begin{gather}
\begin{gathered}
\upnu(\xk)\in H^0\big(U_1\cup\cdots\cup U_N,\scr V_{\fk X}\otimes\omega_{\mc C/\mc B}(\star\SX)\big)\\
\mc V_\varrho(\eta_i)\upnu(\xk)|_{U_i}=h_i(z,\tau_\blt)\cbf dz.
\end{gathered}
\end{gather}
Namely, under the given trivialization, $\upnu$ kills $\partial_{\tau_j}$ and sends $\partial_z$ to $\cbf dz$. (Note that $\cbf\in\Vbb(2)$ and $\scr V^{\leq 2}_{\fk X}/\scr V^{\leq 1}_{\fk X}\simeq\Theta_{\mc C/\mc B}^{\otimes 2}$.) Then it is easy to verify that
\begin{align}
\nabla_\yk w_\blt=\sum_{j=1}^m  g_j(\tau_\blt)\partial_{\tau_j} w_\blt-\upnu(\xk)\cdot w_\blt.\label{eq232}
\end{align}
where $\upnu(\xk)\cdot w_\blt$ is the residue action.

\subsection{}

\begin{thm}\label{lb132}
$\nabla_\yk$ preserves $\scr J_{\fk X}(\Wbb_\blt)(V)$ for each open $V\subset\mc B$. So $\nabla_\yk$ is a linear operator on $\scr T_{\fk X}(\Wbb_\blt)$ and (via the formula \eqref{eq230}) on $\scr T^*_{\fk X}(\Wbb_\blt)$.

More precisely, for each $\sigma\in H^0(\mc C_V,\scr V_{\fk X}\otimes\omega_{\mc C/\mc B}(\star\SX))$ and $\wbf=H^0(V,\scr W_{\fk X}(\Wbb_\blt))$, we have
\begin{align}
[\nabla_\yk,\sigma]\wbf=(\mc L_\xk \sigma)\cdot\wbf
\end{align}
where $\mc L_\xk\sigma\in H^0(\mc C_V,\scr V_{\fk X}\otimes\omega_{\mc C/\mc B}(\star\SX))$ is the Lie derivative of $\sigma$ under $\xk$.
\end{thm}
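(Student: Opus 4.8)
The goal is to prove the key commutator identity $[\nabla_\yk,\sigma]\wbf = (\mc L_\xk\sigma)\cdot\wbf$, from which the invariance of $\scr J_{\fk X}(\Wbb_\blt)(V)$ under $\nabla_\yk$ follows immediately: if $\wbf\in H^0(V,\scr W_{\fk X}(\Wbb_\blt))$ and $\sigma\in H^0(\mc C_V,\scr V_{\fk X}\otimes\omega_{\mc C/\mc B}(\star\SX))$, then
\begin{align*}
\nabla_\yk(\sigma\cdot\wbf) = \sigma\cdot(\nabla_\yk\wbf) + (\mc L_\xk\sigma)\cdot\wbf,
\end{align*}
and both terms on the right are again in $\scr J_{\fk X}(\Wbb_\blt)(V)$ because $\mc L_\xk\sigma$ is again a global section of $\scr V_{\fk X}\otimes\omega_{\mc C/\mc B}(\star\SX)$ over $\mc C_V$ (Lie derivative along $\xk$ preserves the order of poles at $\SX$ since $\xk$ has poles only at $\SX$; this needs a short check using Cor. \ref{lb130} and the fact that $\xk$ is tangent-to-fibers modulo the lift direction). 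The passage to $\scr T_{\fk X}(\Wbb_\blt)$ and $\scr T_{\fk X}^*(\Wbb_\blt)$ is then formal.

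\textbf{Localization and reduction.} Since the residue action of $\sigma$ on $\wbf=\wbf_1\otimes\cdots\otimes\wbf_N$ is a sum over $i$ of actions touching only the $i$-th tensor factor near $\sgm_i(\mc B)$, and since $\nabla_\yk$ acts by the Leibniz rule as in \eqref{eq231}, it suffices to fix one index $i$ and prove the identity for the $i$-th summand. Concretely, I would adopt the setting of Subsec. \ref{lb126}: identify $U_i$ with $(\eta_i,\pi)(U_i)\subset\Cbb\times\mc B$ so that $\eta_i$ is the coordinate $z$ and $\pi$ is $\tau_\blt$, write $\xk|_{U_i}=h_i(z,\tau_\blt)\partial_z+\sum_j g_j(\tau_\blt)\partial_{\tau_j}$ with Laurent expansion $h_i=\sum_n h_{i,n}(\tau_\blt)z^n$, and write $\mc V_\varrho(\eta_i)\sigma = v(z,\tau_\blt)dz$ with $v=\sum_n v_n(\tau_\blt)z^n\in\Vbb\otimes\scr O$. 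Then $\sigma\cdot\wbf_i = \sum_n Y(v_n)_n\wbf_i$ and I must compute $\nabla_\yk(\sigma\cdot\wbf_i) - \sigma\cdot(\nabla_\yk\wbf_i)$, where $\nabla_\yk\wbf_i$ denotes the $i$-th piece $\sum_j g_j\partial_{\tau_j}\wbf_i - \sum_n h_{i,n}L_{n-1}\wbf_i$ of \eqref{eq231}.

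\textbf{The computation.} The commutator splits into two parts. The "base-direction" part $\sum_j g_j\partial_{\tau_j}$ acts only on the $\tau_\blt$-dependence of $v(z,\tau_\blt)$, producing the term $\sum_j g_j\partial_{\tau_j}v$ inside the residue action — this is exactly the $\sum_j g_j\partial_{\tau_j}$ piece of $\mc L_\xk\sigma$ (the base-direction part of the Lie derivative only differentiates the coefficients). The "fiber-direction" part is the genuinely nontrivial one: I need to compute $[\,-\sum_n h_{i,n}L_{n-1}\,,\,\sum_m Y(v_m)_m\,]$ acting on $\wbf_i$ and match it against $\Res_{z=0}Y(\text{(fiber part of }\mc L_\xk\sigma),z)\wbf_i$, where by Cor. \ref{lb130} the fiber part of $\mc V_\varrho(\eta_i)\mc L_\xk\sigma$ equals $h_i\partial_z v\,dz - \sum_{k\geq 1}\frac{1}{k!}(\partial_z^k h_i) L_{k-1}v\,dz + (\partial_z h_i) v\,dz$. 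The mechanism that makes both sides agree is the commutation relation between Virasoro modes and vertex operators, namely \eqref{eq68}: $[L_m,Y(v,z)]=z^{m+1}Y(L_{-1}v,z)+\sum_{l\in\Nbb}\binom{m+1}{l+1}z^{m-l}Y(L_lv,z)$, equivalently \eqref{eq54} at the level of modes. Expanding $h_i=\sum_n h_{i,n}z^n$, using $\sum_n h_{i,n}L_{n-1}$ and applying \eqref{eq68} term by term, the $z^{m+1}Y(L_{-1}v,z)$ contribution reassembles (after an integration-by-parts / residue manipulation, i.e. $Y(L_{-1}v,z)=\partial_z Y(v,z)$ from \eqref{eq51}) into the $\partial_z h_i\cdot v$ and $h_i\partial_z v$ terms, while the $\sum_l\binom{m+1}{l+1}z^{m-l}Y(L_lv,z)$ contributions reassemble into the $-\sum_{k\geq 1}\frac{1}{k!}(\partial_z^k h_i)L_{k-1}v$ terms; the combinatorial identity matching $\sum_n h_{i,n}\binom{n}{l+1}z^{n-l-1}$ with $\frac{1}{(l+1)!}\partial_z^{l+1}h_i$ is elementary.

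\textbf{Main obstacle.} The crux — and the step I expect to require the most care — is the bookkeeping of the residue/sum interchanges and the a.l.u. convergence needed to legitimately move $\Res_{z=0}$ past the infinite sums $\sum_n h_{i,n}(\tau_\blt)z^n$ and $\sum_m v_m(\tau_\blt)z^m$, given that $h_i$ and $v$ genuinely have infinitely many positive powers of $z$ (only finitely many negative). Here I would invoke the a.l.u. convergence of the Laurent expansions of $h_i$ (Subsec. \ref{lb126}) and of $\mc V_\varrho(\eta_i)\sigma$ near $\sgm_i(\mc B)$, together with the lower-truncation property of $Y$ which makes $Y(v_m)_m\wbf_i$ vanish for $m$ large, so that for each fixed target weight only finitely many terms contribute; this is the same style of argument as in the proof of Prop. \ref{lb123} and in Thm. \ref{lb95}. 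Once the interchange is justified, the identity is purely formal via \eqref{eq68} and Cor. \ref{lb130}. I would also note that the whole statement is coordinate-independent — both $\nabla_\yk$ and $\mc L_\xk$ are (by the remark after \eqref{eq231} and by Prop. \ref{lb108}/\ref{lb123}) independent of the chosen $\eta_\blt$ — so it is legitimate to carry out the computation in the convenient chart of Subsec. \ref{lb126}.
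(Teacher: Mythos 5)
Your proposal is correct and follows exactly the route the paper intends: the paper itself defers the verification to \cite[Sec. 3.6]{Gui}, but it supplies precisely the ingredients you use (the localized chart of Subsec. \ref{lb126}, the Lie-derivative formula \eqref{eq235} of Prop. \ref{lb131}, and the Virasoro commutator \eqref{eq68}), and your term-by-term matching — base direction giving $\sum_j g_j\partial_{\tau_j}v$, the $\binom{n}{l+1}$ sums reassembling into $\frac{1}{(l+1)!}\partial_z^{l+1}h_i$ — is the right one. The only step worth spelling out is that $\partial_z\bigl(Y(v(z),z)\bigr)=Y(\partial_z v(z),z)+Y(L_{-1}v(z),z)$, so the integration by parts under $\Res_{z=0}$ produces both the $(\partial_z h_i)\,v$ and the $h_i\,\partial_z v$ terms of \eqref{eq235} simultaneously, exactly as you indicate.
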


Thus, when $\mc B$ is a Stein open subset of $\Cbb^m$, we may define a connection $\nabla$ on $\scr T_{\fk X}(\Wbb_\blt)$ and $\scr T_{\fk X}^*(\Wbb_\blt)$ by choosing lifts of $\partial_{\tau_1},\dots,\partial_{\tau_m}$, defining $\nabla_{\tau_1},\dots,\nabla_{\tau_m}$, and then extending $\nabla$ to a connection using $\scr O_{\mc B}$-linearity.

We refer the readers to \cite[Sec. 3.6]{Gui} for the proof of this theorem. Here, we explain the meaning of Lie derivative. 

\subsection{}

Let $U,W\subset\mc C$ be open, and let $\varphi:U\rightarrow W$ be a biholomorphism from $U$ onto $W$. We assume that $\varphi$ preserves fibers, i.e. $\varphi(U_{\pi(p)})=W_{\pi\circ\varphi(p)}$ for each $p\in U$. (recall our notation that $W_b=U\cap\mc C_b,W_b=W\cap\mc C_b$ for each $b\in\mc B$). For instance, if $U\subset\mc C\setminus\SX$ is open and precompact, then for sufficiently small $\zeta$,  $\exp_{\zeta\xk}$ from $U$ to its image preserves fibers. (See \eqref{eq237} for the figure.)

The \textbf{pushforward} \index{V@$\mc V_\varrho(\eta_i),\mc V_\varrho(\varphi)$}
\begin{align*}
\mc V_\varrho(\varphi):\scr V_{\fk X}|_U\xrightarrow{\simeq}\scr V_{\fk X}|_W
\end{align*}
is defined such that for each $\eta\in\scr O(W)$ univalent on fibers, noting the pushforward $\mc V_\varrho(\eta):\scr V_{\fk X}|_W\xrightarrow{\simeq}\Vbb\otimes_\Cbb\scr O_{(\eta,\pi)(W)}$ defined by \eqref{eq233}, we have
\begin{align}
\mc V_\varrho(\eta)\mc V_\varrho(\varphi)=\mc V_\varrho(\eta\circ\varphi).
\end{align}
Then for each $b\in\mc B$, the restriction of $\mc V_\varrho(\varphi)$ to  $\scr V_{\fk X}|_{U_b}\xrightarrow{\simeq}\scr V_{\fk X}|_{V_{\varphi(b)}}$ is equivalent to the pushforward $\mc V_\varrho(\varphi):\scr V_{U_b}\xrightarrow{\simeq}\scr V_{V_{\varphi(b)}}$ defined in Subsec. \ref{lb128}. 

By tensoring $\mc V_\varrho(\varphi)$ with $\varphi_*=(\varphi^{-1})^*:\omega_{\mc C/\mc B}|_U\xrightarrow{\simeq}\omega_{\mc C/\mc B}|_W$ sending $(f\circ\varphi)d(\eta\circ\varphi)$ to $fd\eta$ where $f\in\scr O_W$, we get a pushforward which we also denote by $\mc V_\varrho(\varphi)$:
\begin{align}
\mc V_\varrho(\varphi)\equiv \mc V_\varrho(\varphi)\otimes\varphi_*:\scr V_{\fk X}\otimes\omega_{\mc C/\mc B}\big|_U\xrightarrow{\simeq}\scr V_{\fk X}\otimes\omega_{\mc C/\mc B}\big|_W.
\end{align}

We can define the \textbf{Lie derivative} in the same way as Def. \ref{lb129}. Let $\xk$ be  as in Subsec. \ref{lb126}. Suppose $U\subset \mc C\setminus\SX$ is open and precompact, and $\sigma\in H^0(U,\scr V_{\fk X}\otimes\omega_{\mc C/\mc B})$. Define \index{L@$\mc L_\xk$, the Lie derivative}
\begin{align}\label{eq234}
\mc L_\xk\sigma\big|_U=\lim_{\zeta\rightarrow0} \frac{\mc V_\varrho(\exp_{\zeta\xk})^{-1}\big(\sigma\big|_{\exp_{\zeta\xk}(U)}\big)-\sigma\big|_U}{\zeta}
\end{align}
Of course, if we can show that the limit exists for all precompact $U$, then $\mc L_{\xk}\sigma$ exists for all open $U\subset\mc C\setminus\SX$.

The following Proposition can be proved in the same way as Cor. \ref{lb130}. (Or see \cite[Sec. 2.6]{Gui} for details.) Formula \eqref{eq235} is necessary for the proof of Theorem \ref{lb132}.
\begin{pp}\label{lb131}
Let $\eta\in\scr O(U)$ be univalent on fibers. Choose $u\in H^0(U,\Vbb\otimes_\Cbb\scr O_{\mc C})$ such that
\begin{align*}
u\cdot d\eta=\mc U_\varrho(\eta)\sigma\qquad\in H^0(U,\Vbb\otimes_\Cbb\omega_{\mc C/\mc B}).
\end{align*}
Choose $h\in\scr O(U)$ such that if $U$ is identified with $(\eta,\pi)(U)\subset\Cbb\times\Cbb^m$ via $(\eta,\pi)$, then 
\begin{align*}
\xk|_U=h\partial_z+\sum_{j=1}^mg_j(\tau_\blt)\partial_{\tau_j}.
\end{align*}
Then $\mc L_\xk\sigma$ exists as an element of $H^0(U,\scr V_{\fk X}\otimes\omega_{\mc C/\mc B})$, and
\begin{align}
\mc U_\varrho(\eta)\mc L_\xk\sigma=h\partial_\eta u\cdot d\eta+\sum_{j=1}^m g_j\partial_{\tau_j}u\cdot d\eta-\sum_{k\geq1} \frac 1{k!}\partial_\eta^k h\cdot L_{k-1}u\cdot d\eta+\partial_\eta h\cdot u\cdot d\eta.\label{eq235}
\end{align}
\end{pp}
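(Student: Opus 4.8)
The plan is to run the argument of Proposition \ref{lb108} and Corollary \ref{lb130} in the relative setting; the one new ingredient, the extra base directions $\partial_{\tau_j}$ carried by the lift $\xk$, enters in a controlled way. Since the limit \eqref{eq234} need only be checked on precompact $U\subset\mc C\setminus\SX$, I would first shrink $U$ and adopt the local picture of Assumption \ref{lb113}, identifying $U$ with $(\eta,\pi)(U)\subset\Cbb\times\Cbb^m$ so that $\eta$ is the standard coordinate $z$ and $\pi$ is $\tau_\blt$, and writing $\xk|_U=h\,\partial_z+\sum_{j}g_j(\tau_\blt)\,\partial_{\tau_j}$ as in \eqref{eq213}. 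On such $U$ all computations take place in a finite-rank bundle $\scr V_{\fk X}^{\leq n}$ (with $n$ a bound for $\sigma$), where the trivialized change-of-coordinate operators are holomorphic by \eqref{eq170}; consequently $\mc V_\varrho(\exp_{\zeta\xk})^{-1}(\sigma|_{\exp_{\zeta\xk}(U)})$ is jointly holomorphic in $(\zeta,p)$, which already yields the existence of the limit \eqref{eq234} as a holomorphic section and reduces the problem to a pointwise computation of its $\zeta$-derivative at $\zeta=0$.

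Next I would compute, at a point $p=(z_0,\tau_0)\in U$, the $\eta$-trivialization of the pulled-back section. Because $\exp_{\zeta\xk}$ preserves fibers for small $\zeta$, writing $\exp_{\zeta\xk}(z,\tau_\blt)=(\alpha_\zeta(z,\tau_\blt),\exp_{\zeta\yk}(\tau_\blt))$ as in \eqref{eq236}, the geometric description of the pushforward in Remark \ref{lb107} shows that the $(\eta\circ\exp_{\zeta\xk})$-trivialization of the pulled-back vector at $p$ is $u(\exp_{\zeta\xk}(p))$, hence its $\eta$-trivialization is $\mc U\big(\varrho(\eta\,|\,\eta\circ\exp_{\zeta\xk})_p\big)\,u(\exp_{\zeta\xk}(p))$, the change of coordinate being taken inside the fiber $\mc C_{\tau_0}$. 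Differentiating in $\zeta$ at $0$ (using $\varrho(\eta|\eta)_p=\mathrm{id}$, so the $\mc U$-factor is the identity at $\zeta=0$) splits into two pieces: the derivative of $u(\exp_{\zeta\xk}(p))$, which is the action of the vector field $\xk$ on the $\Vbb$-valued function $u$, equal to $h\,\partial_\eta u+\sum_j g_j\,\partial_{\tau_j}u$ at $p$ by the explicit form of $\xk$; and the derivative of $\mc U(\varrho(\eta|\eta\circ\exp_{\zeta\xk})_p)$, which I would evaluate exactly as in Proposition \ref{lb108}: with $\tau_0$ frozen one checks $\partial_\zeta\alpha_\zeta(z,\tau_0)|_{\zeta=0}=h(z,\tau_0)$ and hence $\partial_\zeta\varrho(\eta|\eta\circ\exp_{\zeta\xk})_p(s)|_{\zeta=0}=-h(s+z_0,\tau_0)+h(z_0,\tau_0)$, whose $k$-th $s$-derivative at $s=0$ is $-\partial_\eta^k h(p)$, so by Proposition \ref{lb98} this derivative is $-\sum_{k\geq1}\frac1{k!}\partial_\eta^k h\cdot L_{k-1}$. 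Adding the two pieces gives $\mc U_\varrho(\eta)\mc L_\xk\sigma$ up to the cotangent factor.

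Finally I would incorporate the $\omega_{\mc C/\mc B}$-twist exactly as in the proof of Corollary \ref{lb130}: the pushforward acts on $d\eta$ via $d(\eta\circ\exp_{\zeta\xk})=(\partial_\eta\alpha_\zeta)\,d\eta$, and differentiating $\partial_\eta\alpha_\zeta$ at $\zeta=0$ contributes the additional $\partial_\eta h\cdot u\cdot d\eta$; assembling all terms gives \eqref{eq235}. I expect the only real subtlety to be the bookkeeping in the new $\sum_j g_j\,\partial_{\tau_j}u\cdot d\eta$ term: one must be careful that, since $\exp_{\zeta\xk}$ simultaneously moves the base point $\tau_0$ to $\exp_{\zeta\yk}(\tau_0)$ while $\varrho(\eta|\eta\circ\exp_{\zeta\xk})_p$ is a \emph{fiberwise} change of coordinate on $\mc C_{\tau_0}$, the $\partial_{\tau_j}$-directions of $\xk$ feed in only through $u(\exp_{\zeta\xk}(p))$ and produce no cross terms with the $\mc U$-factor. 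This is the step where one should resist hand-waving and instead lean on the explicit coordinate description \eqref{eq236}, for which the identity $\partial_\zeta\alpha_\zeta(z,\tau_0)|_{\zeta=0}=h(z,\tau_0)$ (coming from $\xk\eta=h$ and $\eta\circ\exp_{\zeta\xk}$ being the flow applied to $\eta$) does all the work.
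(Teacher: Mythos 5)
Your argument is correct and is exactly the adaptation the paper has in mind: it runs the proofs of Proposition \ref{lb108} and Corollary \ref{lb130} in the relative setting, with the fiberwise change-of-coordinate factor contributing $-\sum_{k\geq1}\frac{1}{k!}\partial_\eta^k h\cdot L_{k-1}u$, the flow acting on $u$ through the full vector field $\xk$ contributing $h\partial_\eta u+\sum_j g_j\partial_{\tau_j}u$, and the $d\eta$-twist contributing $\partial_\eta h\cdot u$. Your explicit check via \eqref{eq236} that the base directions enter only through $u\circ\exp_{\zeta\xk}$ and not through the (purely fiberwise) $\mc U$-factor is precisely the point the paper leaves to the reader.
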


\begin{rem}
If we set $U=U_i\setminus\SX=U_i\setminus\sgm_i(\mc B)$, choose $\sigma\in H^0(\mc C_V,\scr V_{\fk X}\otimes\omega_{\mc C/\mc B}(\star\SX))$, and let $\eta$ be the local coordinate $\eta_i$, then the $u$ in Prop. \ref{lb131} has finite poles at $\SX$, i.e., $u\in H^0(U_i,\Vbb\otimes_\Cbb\omega_{\mc C/\mc B}(\star\SX))$. The $h$ in Prop. \ref{lb131} should be the $h_i$ in Subsec. \ref{lb126}, which has finite poles at $\SX$. Therefore, by \eqref{eq235},  the Lie derivative $\mc L_\xk\sigma$, as a section of $\scr V_{\fk X}\otimes\omega_{\mc C/\mc B}$ defined on $\mc C_V\setminus\SX$, has finite poles at $\SX$. So $\mc L_\xk\sigma\in H^0(\mc C_V,\scr V_{\fk X}\otimes\omega_{\mc C/\mc B}(\star\SX))$, as claimed at the end of Thm. \ref{lb132}.
\end{rem}

\subsection{}

Recall that we are assuming $\mc B$ is Stein (but not necessarily open inside $\Cbb^m$) and local coordinates $\eta_\blt$ are given to $\fk X$. As we have seen, the definition of $\nabla_\yk$ depends not only on $\eta_\blt$ but also on the lift $\xk$ of $\yk\in\Theta_{\mc B}(\mc B)$.

\begin{pp}\label{lb152}
Let $\nabla_\yk$ and $\nabla'_\yk$ be defined by $\eta_\blt$ and two lifts $\xk,\xk'\in H^0(\mc C,\Theta_{\mc C}(\star\SX))$ of $\yk$. Then there exists $f\in \scr O(\mc B)$ depending only on $\fk X$, the local coordinates $\eta_\blt$, $\xk$ and $\xk'$, and the central charge $c$ of $\Vbb$, such that
\begin{align}
\nabla_\yk'=\nabla_\yk+f\id\qquad\text{on }\scr T_{\fk X}(\Wbb_\blt).
\end{align}
\end{pp}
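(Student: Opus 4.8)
Since $\xk$ and $\xk'$ are both lifts of $\yk$, their difference $\zk:=\xk-\xk'$ is a global holomorphic vertical vector field with finite poles along $\SX$, i.e. $\zk\in H^0(\mc C,\Theta_{\mc C/\mc B}(\star\SX))$. In the compact form \eqref{eq232} of $\nabla_\yk$ the horizontal part $\sum_j g_j\partial_{\tau_j}=\yk$ is the same for $\xk$ and $\xk'$, so on $\scr W_{\fk X}(\Wbb_\blt)$ one gets $\nabla'_\yk w_\blt-\nabla_\yk w_\blt=\upnu(\zk)\cdot w_\blt$, where $\upnu(\zk)$ is the section defined on the neighbourhood $U_1\cup\cdots\cup U_N$ of $\SX$ by $\mc V_\varrho(\eta_i)\upnu(\zk)|_{U_i}=\tilde h_i(z,\tau_\blt)\,\cbf\,dz$, with $\tilde h_i$ the vertical component of $\zk$ in the $(\eta_i,\pi)$-coordinates. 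By Theorem \ref{lb132} both $\nabla_\yk$ and $\nabla'_\yk$ descend to $\scr T_{\fk X}(\Wbb_\blt)$, so $w_\blt\mapsto\upnu(\zk)\cdot w_\blt$ descends as well, and as the difference of two operators obeying the same Leibniz rule it is $\scr O_{\mc B}$-linear on $\scr T_{\fk X}(\Wbb_\blt)$. The task is to identify this endomorphism.

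\textbf{Step 2: complete $\upnu(\zk)$ to a global section modulo the vacuum line.} The key point is that although $\upnu(\zk)$ lives only near $\SX$, it extends over all of $\mc C$ up to a correction valued in the trivial line subbundle $\scr O_{\mc C}\cdot\id\hookrightarrow\scr V_{\fk X}$ spanned by the vacuum section. Cover $\mc C\setminus\SX$ by charts with fibrewise-univalent coordinates $\mu_\alpha$ and set $\upnu_\alpha:=\mc V_\varrho(\mu_\alpha)^{-1}(h_\alpha\cbf)\,d\mu_\alpha$, where $\zk=h_\alpha\partial_{\mu_\alpha}$ on that chart. From $L_0\cbf=2\cbf$, $L_1\cbf=0$, $L_2\cbf=\tfrac c2\id$, $L_{\geq3}\cbf=0$ and \eqref{eq192} one computes $\mc U(\alpha)\cbf=\alpha'(0)^2\cbf+\tfrac c2\,c_2(\alpha)\,\id$ for $\alpha\in\Gbb$, so on overlaps $\upnu_\alpha-\upnu_\beta$, and near $\SX$ also $\upnu_\alpha-\upnu(\zk)$, lies in $\scr O_{\mc C}\cdot\id\otimes\omega_{\mc C/\mc B}(\star\SX)\cong\omega_{\mc C/\mc B}(\star\SX)$ and is proportional to $c$ times a Schwarzian-type expression. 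Hence the images of all these local sections agree in $(\scr V_{\fk X}/\scr O_{\mc C}\cdot\id)\otimes\omega_{\mc C/\mc B}(\star\SX)$ and glue to a global section $\bar\sigma$ over $\mc C_V$, depending only on $\fk X$, $\eta_\blt$, $\zk$ and the $L_0,L_1,L_2$-action on $\cbf$. Taking $V$ a precompact Stein neighbourhood, the obstruction to lifting $\bar\sigma$ to an honest $\sigma\in H^0(\mc C_V,\scr V_{\fk X}\otimes\omega_{\mc C/\mc B}(\star\SX))$ lies in $H^1(\mc C_V,\omega_{\mc C/\mc B}(\star\SX))$, which vanishes: fibrewise $H^1(\mc C_b,\omega_{\mc C_b}(\star\SXb))=0$ by Serre duality (its dual is $H^0(\mc C_b,\scr O_{\mc C_b}(-D))=0$ for the nonzero effective divisor $D$), and then one concludes by Grauert base change and Leray over the Stein base. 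One may equally carry out Steps 2--3 fibrewise, on each compact Riemann surface $\mc C_b$ with the meromorphic vector field $\zk|_{\mc C_b}$, and assemble afterwards.

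\textbf{Step 3: extract the scalar.} Fix such a lift $\sigma$. Being a global section of $\scr V_{\fk X}\otimes\omega_{\mc C/\mc B}(\star\SX)$ over $\mc C_V$, its residue action sends $\scr W_{\fk X}(\Wbb_\blt)(V)$ into $\scr J_{\fk X}(\Wbb_\blt)(V)$, so $\sigma\cdot w_\blt=0$ in $\scr T_{\fk X}(\Wbb_\blt)$. Near each $\sgm_i$ the difference $\rho_i:=\upnu(\zk)|_{U_i}-\sigma|_{U_i}$ lies in $\scr O_{\mc C}\cdot\id\otimes\omega_{\mc C/\mc B}(\star\SX)$, i.e. $\mc V_\varrho(\eta_i)\rho_i=r_i(z,\tau_\blt)\,\id\,dz$; since $Y(\id,z)=\id_{\Wbb_i}$, its residue action on the $i$-th slot is multiplication by the scalar $f_i:=\Res_{z=0}r_i(z,\tau_\blt)\,dz\in\scr O(V)$. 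Summing, $\upnu(\zk)\cdot w_\blt\equiv f\,w_\blt$ modulo $\scr J_{\fk X}(\Wbb_\blt)(V)$ with $f:=\sum_{i=1}^N f_i$, that is $\nabla'_\yk-\nabla_\yk=f\,\id$ on $\scr T_{\fk X}(\Wbb_\blt)|_V$. Replacing $\sigma$ by $\sigma+\sigma_0$ with $\sigma_0\in H^0(\mc C_V,\omega_{\mc C/\mc B}(\star\SX))$ changes $f$ by $\sum_i\Res_{\sgm_i(b)}(\sigma_0|_{\mc C_b})$, which is $0$ for every $b$ by the residue theorem on $\mc C_b$; so $f$ is independent of the lift, the local $f$'s patch to a single $f\in\scr O(\mc B)$, and by construction $f$ depends only on $\fk X$, $\eta_\blt$, $\xk$, $\xk'$ and (through the anomaly $\tfrac c2 c_2(\alpha)$) on the central charge $c$.

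\textbf{Main obstacle.} The weight of the proof is entirely in Step 2: the cohomology vanishing that turns the locally-defined $\upnu(\zk)$ into a global section (Serre duality on fibres plus base change), together with the careful tracking of the change-of-coordinate anomaly of $\cbf$, which must be shown to be valued exactly in $\Cbb\cdot\id$ and proportional to $c$. Steps 1 and 3 are then formal manipulations with the residue action and Stokes' theorem.
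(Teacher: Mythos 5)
Your argument is correct, and it is exactly the mechanism the paper itself sketches in the remark following Prop.~\ref{lb151}: the difference of two lifts is a vertical field $\zk\in H^0(\mc C,\Theta_{\mc C/\mc B}(\star\SX))$, the map $\zk\mapsto\upnu(\zk)$ fails to globalize only by an $\id$-valued correction proportional to $c$ (coming from $\mc U(\alpha)\cbf=\alpha'(0)^2\cbf+\tfrac c2 c_2(\alpha)\id$, i.e.\ from $L_2\cbf=\tfrac c2\id$), and the resulting scalar is a sum of residues of an $\id$-valued relative form, well defined by the residue theorem on fibers. The paper defers the actual proof to \cite{Gui}, so you have correctly filled in the deferred argument; your cohomological packaging of Step 2 (obstruction in $H^1(\mc C_V,\omega_{\mc C/\mc B}(\star\SX))$, killed by Serre duality on fibers plus Grauert base change over a Stein base) is a clean and legitimate way to produce the global lift $\sigma$, consistent with the tools the paper already invokes in Prop.~\ref{lb122}.
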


See the end of Sec. 3.6 (and also Sec. 4.2) of \cite{Gui} for the formula of $f$.

In the next subsection, we shall discuss an important case where the projective term $f$ in Prop. \ref{lb152} equals $0$.

\begin{pp}[Projective flatness]
Suppose $\yk_1,\yk_2\in\Theta_{\mc B}(\mc B)$, and $\nabla_{\yk_1}$ and $\nabla_{\yk_2}$ are defined using a set of local coordinates $\eta_\blt$ and the lifts $\xk_1,\xk_2$ of $\yk_1,\yk_2$ respectively. Then  there exists $f\in\scr O(\mc B)$ depending only on $\fk X$, $\eta_\blt$, $\xk_1$ and $\xk_2$, and $c$, such that the curvature
\begin{align*}
[\nabla_{\yk_1},\nabla_{\yk_2}]-\nabla_{[\yk_1,\yk_2]}=f\id\qquad \text{on }\scr T_{\fk X}(\Wbb_\blt).
\end{align*}
\end{pp}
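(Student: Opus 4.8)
The plan is to reduce the curvature computation to a local Lie derivative identity on the sheaf $\scr V_{\fk X}\otimes\omega_{\mc C/\mc B}(\star\SX)$, using Theorem \ref{lb132} as the main bridge between connections upstairs on $\mc B$ and Lie derivatives downstairs on $\mc C$. First I would work locally, assuming (as permitted by the remark after Theorem \ref{lb132}) that $\mc B$ is a Stein open subset of $\Cbb^m$ with standard coordinates $\tau_\blt$, that local coordinates $\eta_\blt$ are chosen for $\fk X$, and that $\scr W_{\fk X}(\Wbb_\blt)$ is identified with $\Wbb_\blt\otimes_\Cbb\scr O_{\mc B}$ via $\mc U(\eta_\blt)$. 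I would then recall the compact formula \eqref{eq232}: $\nabla_{\yk_a} = \yk_a - \upnu(\xk_a)\cdot$, where $\upnu(\xk_a)\in H^0(U_1\cup\cdots\cup U_N,\scr V_{\fk X}\otimes\omega_{\mc C/\mc B}(\star\SX))$ is the section whose $\mc V_\varrho(\eta_i)$-trivialization is $h_{i}^{(a)}(z,\tau_\blt)\cbf\,dz$, and the residue action is the one of Definition \ref{lb120} extended fiberwise.

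The key computation is to expand $[\nabla_{\yk_1},\nabla_{\yk_2}] - \nabla_{[\yk_1,\yk_2]}$ acting on a constant section $w_\blt\in\Wbb_\blt$. Writing $\nabla_{\yk_a} = \yk_a - \upnu(\xk_a)$ and using that $[\yk_1,\yk_2]$ as a vector field on $\mc B$ has lift $[\xk_1,\xk_2]$ (since $d\pi$ is a Lie algebra morphism on lifts), the bracket becomes
\begin{align*}
[\nabla_{\yk_1},\nabla_{\yk_2}] - \nabla_{[\yk_1,\yk_2]}
= -\,\yk_1\!\cdot\!\upnu(\xk_2) + \yk_2\!\cdot\!\upnu(\xk_1) + [\upnu(\xk_1),\upnu(\xk_2)] + \upnu([\xk_1,\xk_2])
\end{align*}
where $\yk_a\!\cdot\!\upnu(\xk_b)$ denotes the section obtained by applying the vector field $\yk_a$ to the $\scr O_{\mc B}$-coefficients of $\upnu(\xk_b)$, and $[\upnu(\xk_1),\upnu(\xk_2)]$ is the commutator of the two residue actions on $\Wbb_\blt$. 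By Theorem \ref{lb132}, $[\nabla_{\yk_a},\sigma] = (\mc L_{\xk_a}\sigma)\cdot$ as operators, so I would instead directly invoke that theorem: applying it twice and using the Jacobi identity for Lie derivatives together with $\mc L_{[\xk_1,\xk_2]} = [\mc L_{\xk_1},\mc L_{\xk_2}]$, the whole curvature operator is realized as the residue action of the single section
\begin{align*}
\Xi := \mc L_{\xk_1}\upnu(\xk_2) - \mc L_{\xk_2}\upnu(\xk_1) - [\upnu(\xk_1),\upnu(\xk_2)]_{\text{sh}} + \upnu([\xk_1,\xk_2]) + (\text{scalar correction})
\end{align*}
where the bracket is the appropriate sheaf-theoretic commutator of sections of $\scr V_{\fk X}\otimes\omega_{\mc C/\mc B}$. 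The point is then to show that $\Xi$, as a section of $\scr V_{\fk X}\otimes\omega_{\mc C/\mc B}(\star\SX)$ over $U_1\cup\cdots\cup U_N$, actually extends holomorphically across each $\sgm_i(\mc B)$ — i.e. has no pole along $\SX$ — and moreover lies in the ``constant multiple of $\id$" part, namely $\Vbb(0)\otimes_\Cbb\omega_{\mc C/\mc B}$. This is where the central charge $c$ enters: the only obstruction to $\Xi$ being a genuine section of $\scr V_{\fk X}\otimes\omega_{\mc C/\mc B}(\star\SX)$ on all of $\mc C_V$ comes from the Schwarzian-type anomaly in the composition of change-of-coordinate operators, which is proportional to $c$, and its residue action on $\Wbb_\blt$ is multiplication by a scalar $f\in\scr O(\mc B)$ depending only on $\fk X,\eta_\blt,\xk_1,\xk_2,c$. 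Since $\Xi$ with its anomalous part removed is a global section of $\scr V_{\fk X}\otimes\omega_{\mc C/\mc B}(\star\SX)$ over $\mc C_V$, its residue action annihilates $\scr T_{\fk X}(\Wbb_\blt)$ by the very definition of $\scr J_{\fk X}(\Wbb_\blt)$, leaving exactly $f\id$.

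The main obstacle will be the careful bookkeeping of the scalar anomaly: verifying that the non-extendable part of $\Xi$ is precisely a central term, computing it via the cocycle defect of $\mc U(\varrho(\eta|\mu))$ (i.e. the failure of $\mc U$ to be a strict group action at the level of Virasoro, cf. the Virasoro relation \eqref{eq16}), and checking that the resulting scalar is holomorphic in $b$ and independent of $w_\blt$. A clean way to organize this, which I would follow, is to first establish the statement in the special case where the lifts $\xk_1,\xk_2$ and coordinates $\eta_\blt$ can be chosen so that all the $h_{i,n}$ with $n\leq 0$ vanish (so that $\upnu(\xk_a)$ has no pole along $\SX$ at all), reducing $\Xi$ to a genuine global section with no anomaly and hence curvature zero; then pass to the general case using Proposition \ref{lb152}, which already records that changing lifts alters $\nabla_\yk$ only by a scalar $f\id$, so that the curvature changes only by scalars and the general formula follows. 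For the detailed formula of $f$ I would refer to \cite[Sec. 3.6 and Sec. 4.2]{Gui}, as the excerpt does for the analogous Proposition \ref{lb152}.
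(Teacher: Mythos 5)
The paper itself does not prove this proposition; it defers to [Gui, Sec.~5.1--5.2]. Measured against what that argument must be, your skeleton is right in its first half: the expansion
\begin{align*}
[\nabla_{\yk_1},\nabla_{\yk_2}]-\nabla_{[\yk_1,\yk_2]}
= -\,\yk_1\!\cdot\!\upnu(\xk_2) + \yk_2\!\cdot\!\upnu(\xk_1) + [\upnu(\xk_1),\upnu(\xk_2)] + \upnu([\xk_1,\xk_2])
\end{align*}
(with $[\xk_1,\xk_2]$ as the lift of $[\yk_1,\yk_2]$) is correct, and the scalar $f$ does come from the central term of the Virasoro bracket. But you stop exactly where the content is. If you write $\xk_a|_{U_i}=h^{(a)}\partial_z+\sum_j g_j^{(a)}\partial_{\tau_j}$ and expand $[\upnu(\xk_1),\upnu(\xk_2)]=\sum_{m,n}h^{(1)}_mh^{(2)}_n[L_{m-1},L_{n-1}]$ using \eqref{eq16}, the non-central part is $\sum_{m,n}(m-n)h^{(1)}_mh^{(2)}_nL_{m+n-2}$, which cancels \emph{identically} against $\upnu\big((h^{(1)}\partial_zh^{(2)}-h^{(2)}\partial_zh^{(1)})\partial_z\big)$ coming from the vertical part of $[\xk_1,\xk_2]$, while the terms $\yk_1h^{(2)}-\yk_2h^{(1)}$ in that vertical part cancel against $-\yk_1\!\cdot\!\upnu(\xk_2)+\yk_2\!\cdot\!\upnu(\xk_1)$. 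What survives is the pure central term $\frac{c}{12}\sum_i\sum_m m(m-1)(m-2)\,h^{(1)}_{i,m}h^{(2)}_{i,2-m}\in\scr O(\mc B)$. So no section $\Xi$ remains to be ``extended globally,'' and no appeal to $\scr J_{\fk X}(\Wbb_\blt)$ or to Theorem \ref{lb132} is needed for this lift: the identity already holds on $\scr W_{\fk X}(\Wbb_\blt)$. Your proposal never performs this cancellation and instead posits a nontrivial extension problem that does not arise. Relatedly, your criterion that $\Xi$ must ``have no pole along $\SX$'' is the wrong condition: sections of $\scr V_{\fk X}\otimes\omega_{\mc C/\mc B}(\star\SX)$ are allowed finite poles along $\SX$ by definition, and the obstruction to lying in $\scr J_{\fk X}(\Wbb_\blt)$ would be extension from $U_1\cup\cdots\cup U_N$ to the rest of $\mc C_V$, not regularity at $\SX$.

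The ``clean organization'' you end with is a genuine gap. You propose first treating the case where all $h_{i,n}$ with $n\leq 0$ vanish and then passing to the general case via Proposition \ref{lb152}. But a lift of $\yk$ in $H^0(\mc C,\Theta_{\mc C})$ with no poles along $\SX$ generally does not exist -- that is precisely why Proposition \ref{lb114} must allow $\star\SX$ -- so the special case is typically unattainable globally; and Proposition \ref{lb152} only compares two lifts both lying in $H^0(\mc C,\Theta_{\mc C}(\star\SX))$, so it cannot mediate between a global lift and a merely local pole-free one (their difference is a local vertical field whose residue action is neither a scalar nor obviously in $\scr J_{\fk X}(\Wbb_\blt)$). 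Drop that reduction and carry out the commutator computation directly; that is the whole proof, with $f$ read off from the $\frac{c}{12}(m^3-m)$ term.
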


\begin{pp}
Suppose  that on each $\Wbb_i$, $L_0-\wtd L_0$ is a constant $\Delta_i$ (for instance, when $\Wbb_i$ is irreducible). Suppose also that $\nabla_\yk,\nabla_\yk'$ are defined by a lift $\xk$ and two sets of local coordinates $\eta_\blt,\eta'_\blt$. Then  there exists $f\in\scr O(\mc B)$ depending only on $\fk X$, $\eta_\blt$ and $\eta_\blt'$, $\xk$, $c$, and $\Delta_1,\dots,\Delta_N$ such that
\begin{align}
\nabla_\yk'=\nabla_\yk+f\id\qquad\text{on }\scr T_{\fk X}(\Wbb_\blt).
\end{align}
\end{pp}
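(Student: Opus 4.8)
The plan is to compute the difference $D:=\nabla'_\yk-\nabla_\yk$ explicitly and show it is multiplication by a holomorphic scalar. Work locally over a Stein open $V\subset\mc B$ (the lift $\xk\in H^0(\mc C,\Theta_{\mc C}(\star\SX))$ of $\yk$ exists by Prop.\ \ref{lb114}). Since both $\nabla_\yk$ and $\nabla'_\yk$ restrict to the vector field $\yk$ on $\scr O_{\mc B}$, their derivation parts cancel, so $D$ is an $\scr O_V$-linear endomorphism of $\scr W_{\fk X}(\Wbb_\blt)|_V$; it descends to $\scr T_{\fk X}(\Wbb_\blt)|_V$ by Thm.\ \ref{lb132}. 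Composing coordinate changes one marked point at a time, I may assume $\eta_j=\eta'_j$ for $j\neq i$, so that $D$ acts through a single operator $D_i$ on the $i$-th slot $\Wbb_i\otimes\scr O_V$; it then suffices to show $D_i=f_i\cdot\id$ with $f_i\in\scr O(\mc B)$, after which $f=\sum_i f_i$. In the $\eta_\blt$-trivialization, comparing the connection formula \eqref{eq231} written in the two coordinate systems and using coordinate-independence of the residue action (Prop.\ \ref{lb123}), one gets
\begin{align*}
D_i=A^{-1}\yk A+\bigl(\upnu_{\eta_i}(\xk)-\upnu_{\eta'_i}(\xk)\bigr)\cdot,
\end{align*}
where $A=\mc U((\eta'_i|\eta_i))$ is the transition function on the $i$-th slot and $\upnu_{\eta_i}(\xk)$, $\upnu_{\eta'_i}(\xk)$ are the $\cbf$-valued sections near $\sigma_i(V)$ encoding the $\partial_z$-component of $\xk$ in the two coordinates.

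For the second term I would express $\upnu_{\eta'_i}(\xk)$ in the $\eta_i$-trivialization using Huang's change-of-coordinate Thm.\ \ref{lb93} applied to the conformal vector, together with the transformation law $\mc U(\alpha)\cbf=\alpha'(0)^2\cbf+\tfrac{c}{12}\,(\text{Schwarzian of }\alpha)\,\id$ (which follows from $L_1\cbf=0$ and $L_2\cbf=\tfrac c2\id$, Rem.\ \ref{lb80}): the $\cbf$-parts of the two $\upnu$'s then match, and the discrepancy is a $c$-proportional $\id$-valued $1$-form with finite poles at $\sigma_i$, whose residue action is a scalar in $\scr O(\mc B)$. For the first term, write $A=\mc U(\beta_\bullet)$ for the holomorphic family $\beta_b=(\eta'_i|\eta_i)_b\in\Gbb$; the flow computation of Subsec.\ \ref{lb116} (equivalently Prop.\ \ref{lb98}) gives $\yk A=\bigl(\sum_n\lambda_n L_{n-1}\bigr)A$ where $\lambda_n\in\scr O(V)$ are Laurent coefficients of the time-dependent vector field determined by $\yk\beta_b=\lambda(\beta_b,b)$. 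Conjugating $\sum_n\lambda_n L_{n-1}=\Res Y(\lambda\cbf,\cdot)$ back through $A$ by Thm.\ \ref{lb93} applied to $\cbf$, and performing the change of variable relating the residue in the $\eta'_i$-variable to that in the $\eta_i$-variable, rewrites $A^{-1}\yk A$ as the $\eta_i$-coordinate residue action of a $\cbf$-valued $1$-form (with poles at $\sigma_i$), plus a $c$-proportional Schwarzian scalar, plus a term of the form $\mu_i(b)(\wtd L_0-L_0)=-\mu_i(b)\Delta_i$ — this last is a scalar function of $b$ \emph{exactly because} $L_0-\wtd L_0=\Delta_i$ is assumed constant on $\Wbb_i$, reconciling the $\wtd L_0$ in the transition function with the $L_0$ in \eqref{eq231}. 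Finally, the $\cbf$-valued residue-action parts of $A^{-1}\yk A$ and of $\upnu_{\eta_i}(\xk)-\upnu_{\eta'_i}(\xk)$ cancel: this is forced, since a simultaneously consistent change of trivialization of $\scr W_{\fk X}(\Wbb_\blt)$ and of $\scr V_{\fk X}$ would make $\nabla_\yk$ and $\nabla'_\yk$ literally equal were it not for the central and $\wtd L_0$–$L_0$ anomalies. What remains is a sum of $\id$-valued residue contributions at $\sigma_i$, i.e.\ a scalar $f_i\in\scr O(\mc B)$ built only from $\fk X$, $\eta_i$, $\eta'_i$, $\xk$, $c$, and $\Delta_i$, as asserted.

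The main obstacle I expect is the bookkeeping in the previous paragraph: tracking the three layers — Huang's change-of-coordinate theorem for $\cbf$ with its Schwarzian anomaly, the $\wtd L_0$-versus-$L_0$ mismatch, and the residue change of variable — and verifying that the operator-valued (non-scalar) contributions cancel identically rather than leaving a spurious $L_n$-term. As a consistency check, and as a quicker route when each $\Wbb_i$ is irreducible, one can observe that $[\nabla'_\yk-\nabla_\yk,\ \sigma\cdot\,]=0$ on $\scr W_{\fk X}(\Wbb_\blt)$ for every $\sigma\in H^0(\mc C_V,\scr V_{\fk X}\otimes\omega_{\mc C/\mc B}(\star\SX))$, since both connections satisfy the commutator formula of Thm.\ \ref{lb132} with the same, coordinate-independent $\mc L_\xk\sigma$; hence $D_i$ commutes with enough residue actions on $\Wbb_i$ that Schur's lemma \ref{lb82} forces it to be scalar. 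An alternative to the direct cancellation is to interpose an auxiliary lift $\xk''$ chosen so that $\nabla_\yk$ with respect to $(\eta'_\blt,\xk'')$ agrees with $\nabla_\yk$ with respect to $(\eta_\blt,\xk)$, thereby reducing the statement to Prop.\ \ref{lb152}; I would fall back on this, or on the computation in \cite[Sec.\ 4.2]{Gui}, if the direct approach becomes unwieldy.
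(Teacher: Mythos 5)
The paper does not actually prove this proposition; it defers to Sections 5.1 and 5.2 of \cite{Gui}. Your architecture is the right one and is essentially that of the cited reference: write $D=\nabla'_\yk-\nabla_\yk$ in a single trivialization, note that the derivation parts cancel so $D$ is $\scr O_{\mc B}$-linear and descends to $\scr T_{\fk X}(\Wbb_\blt)$, telescope to one marked point at a time, and show that after the $L_n$-valued contributions cancel, the leftover scalar comes from exactly two sources: the central-charge (Schwarzian) anomaly in the transformation of $\cbf$, and the $\wtd L_0$-versus-$L_0$ mismatch between the transition function \eqref{eq192}/\eqref{eq225} and the connection formula \eqref{eq231} --- which is precisely where the hypothesis that $L_0-\wtd L_0=\Delta_i$ is constant enters. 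You have correctly located both anomalies and the role of the hypothesis.

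That said, what you have is a sketch whose decisive step --- the identical cancellation of all $L_n$-valued terms between $A^{-1}\yk A$ and the difference of the two $\upnu$'s --- is asserted (``this is forced'') rather than computed, and that computation \emph{is} the content of the proposition. Two specific points need repair. First, your intermediate claim that ``the $\cbf$-parts of the two $\upnu$'s match'' is not correct as stated: $h_i\partial_z$ and $h'_i\partial_{z'}$ are the vertical components of the \emph{same} $\xk$ relative to two \emph{different} horizontal complements ($\partial_{\tau_j}$ at fixed $\eta_i$ versus at fixed $\eta'_i$), so they differ by a genuine vertical field whenever $\eta'_i$ depends on the base point; that discrepancy is exactly what must be cancelled against $A^{-1}\yk A$ (whose base-derivative of the transition function encodes the same data), and your bookkeeping should carry it there from the start rather than declaring the $\upnu$-difference to be $\id$-valued. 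Second, the Schur's-lemma fallback does not salvage the statement: it requires irreducibility (not assumed here), it requires knowing that the commutant of the \emph{diagonal} residue actions on $\Wbb_\blt$ is trivial (global sections $\sigma$ act by the Leibniz rule across all slots, not slotwise), and even granting scalarity it does not yield the asserted dependence of $f$ on only $\fk X$, $\eta_\blt$, $\eta'_\blt$, $\xk$, $c$, $\Delta_1,\dots,\Delta_N$. So the direct computation cannot be bypassed; once it is carried out, your outline does assemble into a proof.
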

Clearly, similar results hold on $\scr T_{\fk X}^*(\Wbb_\blt)$.

We refer the readers to Sections 5.1 and 5.2 of \cite{Gui} for details of these two propositions.

\subsection{}

\begin{df}
Let $C$ be a Riemann surface, and let $(U_{\alpha},\eta_\alpha)_{\alpha\in\fk A}$ be a chart, i.e., $(U_\alpha)_{\alpha\in\fk A}$ is an open covering of $C$ and $\eta_\alpha\in\scr O(U_\alpha)$ is univalent. We say that $(U_{\alpha},\eta_\alpha)_{\alpha\in\fk A}$ is a \textbf{projective chart} \index{00@Projective charts/structures} if for each $\alpha,\beta\in\fk A$, the function $\eta_\alpha\circ\eta_\beta^{-1}$ on $\eta_\beta(U_\alpha\cap U_\beta)\subset\Cbb$ is a M\"obius transformation $z\mapsto \frac{az+b}{cz+d}$.
\end{df}

\begin{df}
For the family $\fk X$, let $(U_{\alpha},\eta_\alpha)_{\alpha\in\fk A}$ where $(U_\alpha)_{\alpha\in\fk A}$ is an open cover of $\mc C$ and each $\eta_\alpha\in\scr O(U_\alpha)$ is univalent. We say that  $(U_{\alpha},\eta_\alpha)_{\alpha\in\fk A}$ is a \textbf{projective chart} of $\fk X$ if its restriction to each fiber $\mc C_b$ is a projective chart. A maximal projective chart is called a \textbf{projective structure}.
\end{df}

\begin{eg}
$\Pbb^1$ has an obvious projective structure consisting of all M\"obius transformations. It is the unique projective structure containing the standard coordinate $\zeta$ of $\Cbb$. Indeed, it is the unique projective structure of $\Pbb^1$ \cite[8.2.12]{FB04}.
\end{eg}

\begin{thm}
For any family $\fk X$ of $N$-pointed compact Riemann surfaces, if $\mc B$ is Stein then $\fk X$ has a projective structure.
\end{thm}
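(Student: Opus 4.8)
The plan is to realize a projective structure on $\fk X$ as a global section of a sheaf of affine spaces over the Stein base $\mc B$, whose obstruction class vanishes by Cartan's Theorem~B. First I would reduce to the case that $\mc B$ is connected (treating connected components separately), so that by Ehresmann's theorem all fibers $\mc C_b$ share a common genus $g$. Then I would recall the classical theory of projective structures on a single compact Riemann surface (see e.g.\ \cite[Sec.~8.3]{FB04}): they always exist, and the set of them is a torsor over the space $H^0(C,\omega_C^{\otimes 2})$ of holomorphic quadratic differentials, the torsor structure coming from the Schwarzian derivative via the cocycle identity $S(f\circ g)=(S(f)\circ g)(\partial g)^2+S(g)$ and the vanishing $S(g)=0$ for $g$ M\"obius: the ``difference'' of two projective structures, computed in compatible charts over a common cover, glues to a global quadratic differential, and conversely a quadratic differential is added to a projective structure by modifying its charts through solutions of the associated Schwarzian ODE (which exist locally on the surface, with holomorphic dependence on parameters).

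Carrying out these constructions fibrewise and relative to $\mc B$, the assignment
\[
V\ \longmapsto\ \{\text{maximal projective charts of }\fk X_V\}\qquad(V\subset\mc B\ \text{open})
\]
is a sheaf $\mc P$ of sets on $\mc B$ which, wherever it is nonempty, is a torsor over the $\scr O_{\mc B}$-module $\pi_*\bigl(\omega_{\mc C/\mc B}^{\otimes 2}\bigr)$: the relative Schwarzian of two relative projective structures glues to a section of $\omega_{\mc C/\mc B}^{\otimes 2}$ over all of $\mc C_V$, i.e.\ an element of $H^0\!\bigl(V,\pi_*(\omega_{\mc C/\mc B}^{\otimes 2})\bigr)$. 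By Grauert's direct image theorem (\cite[Sec.~III.4.2]{GR-b}) applied to the proper map $\pi$, the sheaf $\pi_*\bigl(\omega_{\mc C/\mc B}^{\otimes 2}\bigr)$ is coherent, hence $H^1\!\bigl(\mc B,\pi_*(\omega_{\mc C/\mc B}^{\otimes 2})\bigr)=0$ since $\mc B$ is Stein. Therefore, \emph{provided $\mc P$ is locally nonempty}, the standard torsor argument produces a global section: cover $\mc B$ by small opens $V_i$ carrying relative projective structures $\mathfrak p_i$, form the cocycle $\theta_{ij}=\mathfrak p_i-\mathfrak p_j\in H^0\!\bigl(\mc C_{V_i\cap V_j},\omega_{\mc C/\mc B}^{\otimes 2}\bigr)$, write $\theta_{ij}=\theta_i-\theta_j$ by the vanishing of $H^1$, and replace each $\mathfrak p_i$ by $\mathfrak p_i-\theta_i$; the new relative projective structures agree on overlaps and glue. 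Enlarging the resulting projective chart to the union of all charts fibrewise M\"obius-compatible with it (an equivalence relation, composition of M\"obius maps being M\"obius) yields a maximal one, i.e.\ a projective structure on $\fk X$.

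It remains to prove local nonemptiness, which I would do by cases on $g$. For $g\ge 2$: over a small Stein $V$, any atlas $(U_\alpha,\eta_\alpha)$ of $\mc C_V$ with the $\eta_\alpha$ univalent on fibers has relative-Schwarzian transition cocycle $\{S_{\alpha\beta}\}$ with values in $\omega_{\mc C/\mc B}^{\otimes 2}$; by the Leray sequence and relative Serre duality $R^1\pi_*\bigl(\omega_{\mc C/\mc B}^{\otimes 2}\bigr)\cong\bigl(\pi_*\Theta_{\mc C/\mc B}\bigr)^{\vee}$, whose fiber at $b$ is $H^0(\mc C_b,\Theta_{\mc C_b})^{*}=0$, so $H^1\!\bigl(\mc C_V,\omega_{\mc C/\mc B}^{\otimes 2}\bigr)\cong H^1\!\bigl(V,\pi_*(\omega_{\mc C/\mc B}^{\otimes 2})\bigr)=0$ and the cocycle is a coboundary; subtracting it off (solving the fibrewise Schwarzian ODEs) makes the atlas a projective chart. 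For $g=0$: since $\fk X$ carries the section $\sgm_1$, the genus-$0$ family $\pi$ is, over small $V$, the projectivization $\Pbb(\mc E)$ of a rank-$2$ holomorphic bundle, and the $\Pbb^1$-fibre trivializations have fibrewise-M\"obius transitions, so the standard fibre coordinates already form a relative projective structure. For $g=1$: over a simply connected polydisk $V$, the genus-$1$ family with its section is a quotient $(\Cbb\times V)/\Zbb^2$ (lift the period map to $\Hbb$), and the $\Cbb$-coordinate is a relative affine coordinate whose lattice translations are affine, hence M\"obius.

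The main obstacle is precisely this last step in low genus. For $g\ge 2$ the relative cohomology obstruction vanishes for free, but for $g=0,1$ the groups $H^1(\mc C_V,\omega_{\mc C/\mc B}^{\otimes 2})$ need not vanish, and one genuinely needs the structural description of such families—as $\Pbb^1$-bundles, respectively as lattice quotients, using the given section $\sgm_1$—to produce even a local relative projective structure; one must also take care to invoke the complex-analytic, rather than algebraic, versions of Grauert coherence and Serre duality.
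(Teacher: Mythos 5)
The paper does not actually prove this theorem; it only cites \cite{Hub80} and the appendix of \cite{Gui}, and your argument is essentially the proof those references give: the sheaf of relative projective structures is an affine bundle (torsor) over the coherent direct image $\pi_*\bigl(\omega_{\mc C/\mc B}^{\otimes 2}\bigr)$ via the relative Schwarzian, local sections exist by a genus case analysis, and Cartan's Theorem B over the Stein base kills the gluing obstruction. Your outline is correct, including the correct identification of the genuinely delicate step (local nonemptiness in genus $0$ and $1$, where $H^1(\mc C_b,\omega_{\mc C_b}^{\otimes 2})\neq 0$); two small remarks are that in genus $0$ you do not need the section at all, since Fischer--Grauert local triviality already gives fibrewise M\"obius transition functions, and that for possibly disconnected fibers one should first shrink the base so that the connected components of $\mc C_V$ have connected fibers before running the genus-by-genus argument.
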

This theorem is due to \cite{Hub80}. See also \cite[Sec. 4.1]{Gui} or \cite[Sec. B]{Gui}. According to this theorem, for any $N$-pointed family $\fk X$, by shrinking the base manifold $\mc B$, we may find local coordinates $\eta_\blt$ of $\fk X$ that are contained in a projective structure of $\fk X$.

\begin{pp}\label{lb151}
Suppose that $(U_1,\eta_1),\dots,(U_N,\eta_N)$ belong to a projective structure of $\fk X$. Then the operator $\nabla_\yk$ on $\scr T_{\fk X}(\Wbb_\blt)$ defined by $\eta_\blt$ is independent of the lift $\xk$ of $\yk$.
\end{pp}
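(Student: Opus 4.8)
The plan is to reduce, via Proposition~\ref{lb152}, to showing that the difference of the two induced operators vanishes, and to prove this by exhibiting that difference as the residue action of a \emph{global} section of $\scr V_{\fk X}\otimes\omega_{\mc C/\mc B}(\star\SX)$, which acts as zero on $\scr T_{\fk X}(\Wbb_\blt)$ by the definition of $\scr J_{\fk X}(\Wbb_\blt)$. Concretely, let $\xk,\xk'\in H^0(\mc C,\Theta_{\mc C}(\star\SX))$ be two lifts of $\yk$ and put $\zk=\xk-\xk'$. Since $d\pi$ sends both $\xk$ and $\xk'$ to $\yk$, the field $\zk$ is killed by $d\pi$, so $\zk\in H^0(\mc C,\Theta_{\mc C/\mc B}(\star\SX))$ is a relative meromorphic vector field with poles only along $\SX$. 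Writing $\xk|_{U_i}=h_i\partial_z+\sum_j g_j\partial_{\tau_j}$ and $\xk'|_{U_i}=h'_i\partial_z+\sum_j g_j\partial_{\tau_j}$ in the coordinates $(\eta_i,\pi)$ (the $\partial_{\tau_j}$-parts coincide because both lift $\yk$), formulas \eqref{eq231}--\eqref{eq232} show that on $\Wbb_\blt\otimes_\Cbb\scr O(V)$ the operators $\nabla_\yk$ and $\nabla'_\yk$ have the same first-order part and
\begin{align}
(\nabla_\yk-\nabla'_\yk)w_\blt=-\upnu(\zk)\cdot w_\blt,
\end{align}
where $\upnu(\zk)$ is the section of $\scr V_{\fk X}\otimes\omega_{\mc C/\mc B}(\star\SX)$ over $U_1\cup\cdots\cup U_N$ whose $\eta_i$-trivialization is $(h_i-h'_i)\cbf\,dz$ and $\cdot$ is the residue action \eqref{eq226}. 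Thus it suffices to show $\upnu(\zk)\cdot w_\blt\in\scr J_{\fk X}(\Wbb_\blt)(V)$.

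The key step is to extend $\upnu(\zk)$ to a global section of $\scr V_{\fk X}\otimes\omega_{\mc C/\mc B}(\star\SX)$ over all of $\mc C$. Fix a projective structure of $\fk X$ containing $(U_1,\eta_1),\dots,(U_N,\eta_N)$; for each chart $(U_\alpha,\eta_\alpha)$ of it, write $\zk|_{U_\alpha}=k_\alpha\partial_z$ in the $(\eta_\alpha,\pi)$-coordinates and let $\upnu_\alpha$ be the local section of $\scr V_{\fk X}\otimes\omega_{\mc C/\mc B}(\star\SX)$ with $\eta_\alpha$-trivialization $k_\alpha\cbf\,dz$. On an overlap $U_\alpha\cap U_\beta$ the transition $\mc U_\varrho(\eta_\alpha)\mc U_\varrho(\eta_\beta)^{-1}=\mc U(\varrho(\eta_\alpha|\eta_\beta))$ is, on each fiber and at each point, $\mc U$ of a M\"obius element of $\Gbb$, since $\eta_\alpha\circ\eta_\beta^{-1}$ restricts to a M\"obius transformation on each fiber. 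By the change-of-coordinate formula \eqref{eq192} --- equivalently the No-Ambiguity Principle~\ref{lb19}: the central term $\tfrac{c}{12}(m^3-m)\delta_{m,-n}$ is the only obstruction and it drops out for $L_0,L_{\pm1}$ --- such an operator carries $\cbf$ to a scalar multiple of $\cbf$ with \emph{no} $\id$-term, so $\cbf$ transforms as a genuine weight-$2$ tensor under these transition maps. Together with the usual transition rule for $\omega_{\mc C/\mc B}$ and the coordinate-independence of $\zk$ as a vector field, this yields $\upnu_\alpha=\upnu_\beta$ on the overlap, so the $\upnu_\alpha$ glue to a global section $\wht\upnu(\zk)\in H^0(\mc C,\scr V_{\fk X}\otimes\omega_{\mc C/\mc B}(\star\SX))$ agreeing with $\upnu(\zk)$ near $\SX$.

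Because the residue action \eqref{eq226}--\eqref{eq227} depends only on the germ of a section along $\SX$, we obtain $\upnu(\zk)\cdot w_\blt=\wht\upnu(\zk)\cdot w_\blt$ for every $w_\blt$; and since $\wht\upnu(\zk)\in H^0(\mc C_V,\scr V_{\fk X}\otimes\omega_{\mc C/\mc B}(\star\SX))$ is global over $\mc C_V$, the right-hand side lies in $\scr J_{\fk X}(\Wbb_\blt)(V)$ by \eqref{eq221}. Hence $(\nabla_\yk-\nabla'_\yk)w_\blt\in\scr J_{\fk X}(\Wbb_\blt)(V)$, so $\nabla_\yk$ and $\nabla'_\yk$ induce the same operator on $\scr T_{\fk X}(\Wbb_\blt)$, and dually on $\scr T_{\fk X}^*(\Wbb_\blt)$. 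The main obstacle is the gluing in the preceding paragraph: one must check carefully that under fiberwise M\"obius changes of local coordinate the conformal vector $\cbf$ transforms tensorially --- that is, its Schwarzian/central-charge anomaly vanishes --- so that each $\upnu_\alpha$ is genuinely coordinate-free; the rest is bookkeeping with trivializations. Alternatively one could sidestep the explicit gluing by invoking the formula for the projective term in Proposition~\ref{lb152} from \cite[Sec.~3.6, 4.2]{Gui} and verifying that it vanishes when the $\eta_\blt$ lie in a projective structure.
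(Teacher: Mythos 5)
Your argument is correct and is essentially the same as the paper's: the paper's remark following Prop.~\ref{lb151} likewise sets $\zk=\xk-\xk'\in H^0(\mc C,\Theta_{\mc C/\mc B}(\star\SX))$, observes that under M\"obius coordinate changes only $L_0,L_{\pm1}$ appear so the $L_2\cbf=\tfrac c2\id$ anomaly never enters and $\cbf\,dz$ transforms exactly like $\partial_z$, and concludes that $\upnu(\zk)$ extends to a global section whose residue action lands in $\scr J_{\fk X}(\Wbb_\blt)$. Your write-up merely fills in the gluing over the charts of the projective structure more explicitly than the paper's ``rough reason.''
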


\begin{rem}
The rough reason for this Proposition is the following: Let $\xk$ and $\xk'$ be two lifts of $\yk$. Then $d\pi$ sends $\zk=\xk-\xk'$ to $0$. This means that $\zk\in H^0(\mc C,\Theta_{\mc C/\mc B}(\star\SX))$. In view of \eqref{eq232}, we need to show that the residue action of $\upnu(\zk)=\upnu(\xk)-\upnu(\xk')$ is $0$ on $\scr T_{\fk X}(\Wbb_\blt)$, or equivalently, $\upnu(\zk)w_\blt\in\scr J_{\fk X}(\Wbb_\blt)(\mc B)$ for each $w_\blt\in\Wbb_\blt$. The map $\zk\mapsto \upnu(\zk)$ sends a section of $\Theta_{\mc C/\mc B}(\star\SX)$ on $U_1\cup\cdots\cup U_N$ to one of $\scr V_{\fk X}^{\leq 2}\otimes\omega_{\mc C/\mc B}(\star\SX)$ whose trivialization is described by  $\cbf dz$. Locally and under reasonable trivializations, this map sends $h(z,\tau_\blt)\partial_z$ to $h(z,\tau_\blt)\cbf dz$. Since $\cbf$ has weight $2$, the coordinate transformation formula for $\partial_z$ in $\Theta_{\mc C/\mc B}$ equals that of $\cbf dz$ mod a section of $\scr V_{\fk X}^{\leq 1}\otimes\omega_{\mc C/\mc B}(\star\SX)$ (cf. Subsec. \ref{lb150}).  The expression of section is determined by $L_2\cbf=\frac c{2}\id$.

Here comes the crucial point: Since all $(U_i,\eta_i)$ are contained in a projective structure $(U_\alpha,\eta_\alpha)_{\alpha\in\fk A}$, and since in the change of coordinate formula for M\"obius transformations only $L_0,L_{\pm1}$ are involved but $L_2$ is not, the change of coordinate formulas for $\cbf dz$ and for $\partial_z$ are equal. Therefore, as $\zk$ is a global section of $\Theta_{\mc C/\mc B}(\star\SX)$, $\upnu(\zk)$ can be extended to a global section of $\scr V_{\fk X}^{\leq 2}\otimes\omega_{\mc C/\mc B}(\star\SX)$. So $\upnu(\zk)w_\blt\in\scr J_{\fk X}(\Wbb_\blt)(\mc B)$. \hfill\qedsymbol
\end{rem}

Due to Prop. \ref{lb151}, if $\mc B$ is Stein and $\eta_\blt$ belong to a projective structure of $\fk X$, then we can define a connection $\nabla$ on $\scr T_{\fk X}(\Wbb_\blt)$ and hence on $\scr T_{\fk X}^*(\Wbb_\blt)$ such that for each $\yk\in\Theta_{\mc B}$, $\nabla_\yk$ is the one defined by \eqref{eq232} using any lift $\xk$ of $\yk$.

\begin{eg}
For each $\tau\in\Hbb=\{z\in\Cbb:\Imag z>0\}$, the torus $\mbb T_\tau$ defined by $\Cbb$ mod the rank $2$ lattice $\Zbb+\tau\Zbb$ has a standard projective structure: the one inherited from the standard projective structure of $\Cbb$. This projective structure is \textbf{modular invariant}: Let $g\in\mathrm{PSL}(2,\Zbb)$ be $g(\tau)=\frac{a\tau+b}{c\tau+d}$ where $a,b,c,d\in\Zbb$ and $ad-bc=1$. Then the biholomorphism
\begin{align*}
\mbb T_\tau\rightarrow\mbb T_{g(\tau)},\qquad z\mapsto\frac{z}{c\tau+d}
\end{align*}
sends the standard projective structure of $\mbb T_\tau$ to that of $\mbb T_{g(\tau)}$.

Thus, for sheaves of conformal blocks associated to a family of $N$-pointed tori, standard connections are those defined by the local coordinates inside this modular invariant projective structure.  \hfill  \qedsymbol
\end{eg}

\section{Local freeness of sheaves of coinvariants and conformal blocks}\label{lb153}

\subsection{}

As in Subsec. \ref{lb127}, we associate admissible $\Vbb$-modules $\Wbb_1,\dots,\Wbb_N$ to the marked points $\sgm_1,\dots,\sgm_N$ of $\fk X=(\pi:\mc C\rightarrow\mc B;\sgm_\blt)$. We do not assume that $\mc B$ can be embedded as an open subset of $\Cbb^m$ or the local coordinates are chosen.
 
\begin{df}
We say that $\Vbb$ is \textbf{$C_2$-cofinite} \index{00@$C_2$-cofinite VOAs} if $\Vbb/C_2(\Vbb)$ is finite-dimensional where  $C_2(\Vbb)=\Span_\Cbb\{Y(u)_{-2}v:u,v\in\Vbb\}$.
\end{df}

The $C_2$-cofinite condition was introduced by Zhu \cite{Zhu96} in the study of genus-$1$ conformal blocks.

\begin{df}\label{lb165}
We say that the weak $\Vbb$-module $\Wbb$ is \textbf{generated by} \index{00@Generating subsets of $\Wbb$} a subset $\fk S$ if the smallest $\Vbb$-invariant subspace of $\Wbb$ containing $\fk S$ is $\Wbb$.  We say that $\Wbb$ is \textbf{finitely generated} \index{00@Finitely generated weak $\Vbb$-modules} if it is generated by finitely many vectors. When $\Wbb$ is admissible, this is clearly equivalent to saying that $\Wbb$ is generated by finitely many homogeneous vectors.
\end{df}

\begin{rem}
Note that in the case $\fk S\subset \Vbb$, that $\fk S$ generates the vacuum module $\Vbb$ is not the same as that $\fk S$ generates the VOA $\Vbb$ (cf. Def. \ref{lb163}). For instance, the vacuum vector $\id$ generates the $\Vbb$-module $\Vbb$, but not the VOA $\Vbb$.
\end{rem}

The following important result is due to \cite[Lemma 2.4]{Miy04}. Some weaker versions of this result are due to \cite{GN03,Buhl02}.

\begin{thm}\label{lb133}
Assume that $\Vbb$ is $C_2$-cofinite. Let $E\subset \Vbb$ be a finite subset such that $\Vbb=\Span(E)+C_2(\Vbb)$. If $\Wbb$ is a weak $\Vbb$-module generated by a finite set $\fk S$ of vectors, then $\Wbb$ is spanned by vectors of the form
\begin{align}
Y(v_k)_{-n_k}\cdots Y(v_1)_{-n_1}w
\end{align}
where $k\in\Nbb$, $w\in\fk S$, $v_1,\dots,v_k\in E$, and $n_1,\dots,n_k\in\Zbb$ satisfy $n_1<n_2<\cdots<n_k$.
\end{thm}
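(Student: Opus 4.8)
<br>

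The proof of Theorem \ref{lb133} follows the strategy of \cite{Miy04}, which is a spanning-set argument. The plan is to introduce an appropriate filtration on $\Wbb$ and use the $C_2$-cofiniteness to control how the modes $Y(v)_{-n}$ move vectors through the filtration, reducing everything to a bookkeeping argument about exponents.

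First I would fix a finite set $E\subset\Vbb$ with $\Vbb=\Span(E)+C_2(\Vbb)$, and without loss of generality I would take $E$ to consist of homogeneous vectors and to contain the vacuum $\id$. I would then filter $\Wbb$ by the subspaces $\Wbb=\bigcup_p F_p\Wbb$, where $F_p\Wbb$ is spanned by vectors $Y(u_1)_{-m_1}\cdots Y(u_r)_{-m_r}w$ with $w\in\fk S$, $u_i\in\Vbb$, $\sum_i m_i \le p$ (or some similar ``length'' weighting — the precise choice of weighting is part of the setup, e.g. counting $\sum(m_i)$ or $\sum(m_i+\text{const})$). The key computational input is that for $v\in C_2(\Vbb)$, i.e. $v$ a sum of terms $Y(a)_{-2}b$, the mode $Y(v)_{-n}$ can be rewritten, via the Jacobi identity \eqref{eq49}/\eqref{eq63} and the lower-truncation property, as a combination of products of modes $Y(a)_{-j}$, $Y(b)_{-k}$ with $j+k = n+1$ and both $j,k\ge 1$ — in particular terms that are ``longer but spread out.'' Combined with $\Vbb=\Span(E)+C_2(\Vbb)$, this lets me replace any $Y(v)_{-n}w'$ (with $v$ arbitrary) by terms in which the leftmost applied modes come from $E$, at the cost of increasing length — but the increase is controlled so that within each $F_p\Wbb$ the reduction terminates. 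This is an induction: on the filtration degree $p$, and within a fixed $p$, a secondary induction to push $E$-modes to the correct position and to sort the exponents.

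The ordering condition $n_1<n_2<\cdots<n_k$ (strict increase reading outward) is handled by a commutator argument: given two adjacent modes $Y(v_i)_{-n_i}Y(v_{i+1})_{-n_{i+1}}$ with $v_i,v_{i+1}\in E$ violating the order (i.e. $-n_i \le -n_{i+1}$ in the wrong sense, or equal), I would use \eqref{eq49} to write $[Y(v_i)_{-n_i},Y(v_{i+1})_{-n_{i+1}}]$ as a sum of $Y(Y(v_i)_\ell v_{i+1})_{-n_i-n_{i+1}-\ell}$ over $\ell\ge 0$; since $Y(v_i)_\ell v_{i+1}$ has strictly smaller ``$C_2$-length'' or feeds back into a lower filtration step, this straightening terminates. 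One must be careful that $Y(v_i)_\ell v_{i+1}$ need not lie in $\Span(E)$, so one re-expands it modulo $C_2(\Vbb)$ and re-runs the reduction — here finiteness of $E$ and $C_2$-cofiniteness guarantee only finitely many vectors ever appear. The equal-exponent case $n_i=n_{i+1}$ is dealt with similarly, noting the commutator term lowers complexity, so repeated exponents can always be eliminated.

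The main obstacle is setting up the filtration and the ``complexity'' measure so that all three reduction moves — (i) replacing a general $\Vbb$-mode by $E$-modes using $C_2$, (ii) moving $E$-modes to act first (outermost), and (iii) sorting exponents into strictly increasing order — simultaneously decrease a well-founded quantity, so that the combined induction actually terminates. In \cite{Miy04} this is achieved by a carefully chosen weighting of the exponents $n_i$; I would follow that choice, verifying that each of (i)–(iii) is non-increasing in the filtration degree and strictly decreasing in the secondary complexity, and that only finitely many module elements are generated along the way. Once termination is established, the spanning statement is immediate, and the bound on which $v_i$ and exponents appear is read off from the construction. The remaining routine points — that $F_p\Wbb$ is exhausting, that $\id\in E$ lets us pad or drop trivial modes, and that negative $n_i$ are allowed so no positivity issue arises — I would dispatch briefly.
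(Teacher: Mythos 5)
The paper does not actually prove Theorem \ref{lb133}; it cites \cite[Lemma 2.4]{Miy04} (with weaker precursors in \cite{GN03,Buhl02}) and never uses the statement directly, only a consequence proved elsewhere. So there is no in-paper argument to compare against, and your proposal has to stand on its own as a reconstruction of Miyamoto's proof. Judged that way, you have correctly identified the standard strategy (a filtration on $\Wbb$, reduction of arbitrary modes to $E$-modes via $C_2$-cofiniteness, and commutator straightening of the exponents), but the proposal has a genuine gap: the well-founded complexity measure on which the whole induction rests is never defined, and the verification that your three reduction moves simultaneously decrease it is never carried out. You acknowledge this yourself ("I would follow that choice, verifying that each of (i)--(iii) is non-increasing\dots"), but that verification \emph{is} the content of the lemma -- everything else is routine. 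Deferring it to \cite{Miy04} means the proposal is an outline of where a proof would live, not a proof.

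There is also a concrete error in the one computational input you do state. For $v=Y(a)_{-2}b$, the iterate formula \eqref{eq65} (equivalently the $m=0$, $n=-2$ case of \eqref{eq47}) expresses $Y(v)_{-n}$ as a normal-ordered sum of \emph{two} families of products: terms $Y(a)_{-2-l}Y(b)_{-n+l}$ and terms $Y(b)_{-n-2-l}Y(a)_{l}$ with $l\ge 0$. Your claim that both factors appear as $Y(a)_{-j},Y(b)_{-k}$ with $j,k\ge 1$ fails for the second family, where $Y(a)_l$ with $l\ge 0$ acts first; and the index sum is $n+2$, not $n+1$. These annihilation-type modes are exactly what threatens termination, and handling them (by lower truncation and by showing they drop into a strictly lower filtration level) is one of the delicate points of Miyamoto's argument that your sketch glosses over. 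Similarly, in the straightening step the vectors $Y(v_i)_\ell v_{i+1}$ must be re-expanded modulo $C_2(\Vbb)$ and the resulting terms fed back into the reduction; that this loop terminates is not a consequence of "finiteness of $E$" alone but of a weight bound that you would need to make explicit. Until the filtration, the complexity measure, and the treatment of the nonnegative modes are written down and checked, the argument is not complete.
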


\begin{exe}\label{lb139}
Use Thm. \ref{lb133} to show that if $\Vbb$ is $C_2$-cofinite, then every finitely-generated admissible $\Vbb$-module is finitely-admissible.
\end{exe}

\subsection{}

\begin{ass}
In this section, we assume that $\Vbb$ is $C_2$-cofinite and $\Wbb_1,\dots,\Wbb_N$ are finitely-generated (finitely-)admissible modules.
\end{ass}

In our notes, we do not use Thm. \ref{lb133} directly. Instead, we use the following consequence of Thm. \ref{lb133}. See \cite[Sec. 7]{Gui20} or \cite[Sec. 3.7]{Gui} for the proof.

\begin{thm}\label{lb138}
For each Stein open subset $V\subset\mc B$, the $\scr O(V)$-module
\begin{align}
\frac{\scr W_{\fk X}(\Wbb_\blt)(V)}{\scr J_{\fk X}(\Wbb_\blt)(V)}\label{eq244}
\end{align}
is generated by finitely many elements.
\end{thm}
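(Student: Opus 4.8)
The goal is to show that $\scr W_{\fk X}(\Wbb_\blt)(V)/\scr J_{\fk X}(\Wbb_\blt)(V)$ is finitely generated as an $\scr O(V)$-module, for any Stein open $V\subset\mc B$. After shrinking $V$ if necessary, I would first reduce to the case where $\fk X$ admits local coordinates $\eta_\blt$ at $\sgm_\blt$ and identify $\scr W_{\fk X}(\Wbb_\blt)(V)$ with $\Wbb_\blt\otimes_\Cbb\scr O(V)$ via the trivialization $\mc U(\eta_\blt)$. The plan is to produce, using the $C_2$-cofiniteness and finite generation hypotheses, a finite list of vectors $w^{(1)},\dots,w^{(r)}\in\Wbb_\blt$ whose images in the quotient $\scr O(V)$-module generate it. The guiding principle is the one spelled out in Subsec.~\ref{lb110} and Sec.~\ref{lb153}: the residue action of global sections of $\scr V_{\fk X}\otimes\omega_{\mc C/\mc B}(\star\SX)$ on $\scr W_{\fk X}(\Wbb_\blt)(V)$ lets one trade a ``high-energy'' tensor factor for lower-energy contributions plus a coboundary, exactly as Thm.~\ref{lb133} does fiberwise.

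The key steps I would carry out, in order: (1) By Thm.~\ref{lb133} applied to each $\Wbb_i$ (a finitely generated admissible module over the $C_2$-cofinite $\Vbb$, hence finitely admissible by Exercise~\ref{lb139}), fix a finite $E\subset\Vbb$ with $\Vbb=\Span(E)+C_2(\Vbb)$ and a finite generating set $\fk S_i\subset\Wbb_i$; then $\Wbb_\blt$ is spanned over $\Cbb$ by iterated actions $Y(v_k)_{-n_k}\cdots Y(v_1)_{-n_1}(w_1\otimes\cdots\otimes w_N)$ with $v_j\in E$, $w_i\in\fk S_i$, $n_1<\cdots<n_k$. (2) Translate each such ``raising'' operator $Y(v)_{-n}$ acting on the $i$-th factor into the residue action of a suitable meromorphic section of $\scr V_{\fk X}\otimes\omega_{\mc C/\mc B}(\star\SX)$ over $\mc C_V$: for a section $\sigma$ with $\mc V_\varrho(\eta_i)\sigma|_{U_i}=v\,z^{k}\,dz$ we have by \eqref{eq226} that $\sigma\cdot w_\blt = \cdots\otimes Y(v)_k w_i\otimes\cdots$ plus, crucially, contributions $\pm Y(\mc V_\varrho(\eta_j)\sigma,\,\text{-})w_j$ at every other marked point $x_j$ coming from the poles of the globally-defined $\sigma$ there. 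Since $\sigma\cdot w_\blt\in\scr J_{\fk X}(\Wbb_\blt)(V)$, this expresses $Y(v)_{-n}w_i\otimes(\text{rest})$ modulo $\scr J$ in terms of vectors supported at the other points with strictly smaller total $\widetilde L_0$-weight on the $i$-th factor --- precisely the finiteness-producing exchange. Existence of enough such global $\sigma$ with prescribed principal parts at $\sgm_i$ is guaranteed because $V$ is Stein (Prop.~\ref{lb122} and the Grauert base-change remark following it, applied to the vector bundle $\scr V^{\le n}_{\fk X}\otimes\omega_{\mc C/\mc B}$). (3) Run an induction on a multi-weight invariant (e.g.\ the $N$-tuple of $\widetilde L_0$-weights, ordered lexicographically or by total sum): at each stage, the exchange in step (2) rewrites a would-be generator of weight $\bsb w$ as an $\scr O(V)$-combination of generators of strictly smaller weight plus an element of $\scr J_{\fk X}(\Wbb_\blt)(V)$. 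Because $\Wbb_\blt^{\le n}$ is finite-dimensional for every $n$ (each $\Wbb_i$ being finitely admissible), the induction terminates with a finite set of surviving generators; this is the finite list $w^{(1)},\dots,w^{(r)}$.

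The main obstacle is step (2): controlling the ``spill-over'' terms of the residue action at the marked points other than $x_i$. When one promotes a local mode $Y(v)_{-n}$ on the $i$-th factor to the residue action of a genuine global section $\sigma\in H^0(\mc C_V,\scr V_{\fk X}\otimes\omega_{\mc C/\mc B}(\star\SX))$, one has no freedom to make $\sigma$ vanish identically near the $x_j$ ($j\ne i$) --- a meromorphic $1$-form on a fiber with a pole at one point must have poles elsewhere --- so the identity $\sigma\cdot w_\blt=0$ in the quotient genuinely mixes the factors, and one must check that the mixing only ever \emph{lowers} the relevant weight bookkeeping rather than reintroducing high-weight terms. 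This is where the grading property \eqref{eq127} (that $Y(v)_k$ raises $\widetilde L_0$-weight by $\wt v-k-1$, so that a negative $k$ raises and a nonnegative $k$ lowers, combined with the fact that the expansion of $\sigma$ at $x_j$ for $j\ne i$ contributes only finitely many and appropriately-bounded modes) must be combined with the $C_2$-cofinite reduction of Thm.~\ref{lb133} to guarantee that the exchange fits the inductive scheme. A careful choice of the weight invariant that is simultaneously decreased by both the $E$-reduction inside one factor and the pole-induced transfer between factors is the technical heart of the argument; once that invariant is identified, termination follows from finite-dimensionality of each $\Wbb_\blt^{\le n}$. I would refer to \cite[Sec.~7]{Gui20} or \cite[Sec.~3.7]{Gui} for the detailed bookkeeping, as indicated after the statement of Thm.~\ref{lb138}.
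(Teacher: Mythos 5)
The paper does not actually prove Thm.~\ref{lb138}: it is presented as a consequence of Thm.~\ref{lb133} with the proof deferred to \cite[Sec.~7]{Gui20} and \cite[Sec.~3.7]{Gui}, and your outline reproduces exactly the strategy of those references --- combine Miyamoto's spanning set with global sections of $\scr V_{\fk X}\otimes\omega_{\mc C/\mc B}(\star\SX)$ having a prescribed order-$k$ principal part at one $\sgm_i$ and pole order $\le k-1$ at the other marked points (available for $k\ge k_0$ by the Grauert base-change surjectivity behind Prop.~\ref{lb122}), note that all spill-over modes raise the $\wtd L_0$-weight strictly less than the mode $Y(v)_{-k}$ being eliminated, and run a weight induction terminating in $\Wbb_\blt^{\le n_0}\otimes\scr O(V)$. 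You also correctly isolate the genuinely delicate point, namely the choice of the weight invariant that is decreased both by the $E$-reduction within one tensor factor and by the pole-induced transfer between factors. One caveat: the theorem concerns the module of sections over the \emph{given} Stein $V$ (a presheaf-level statement, not a statement about the sheafification $\scr T_{\fk X}(\Wbb_\blt)$), so your opening move ``after shrinking $V$'' would only yield finite-typeness of the sheaf; since local coordinates at the $\sgm_i$ and a uniform threshold $k_0$ can be arranged over all of a Stein $V$, the shrinking is unnecessary and should simply be dropped.
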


\begin{co}
$\scr T_{\fk X}(\Wbb_\blt)$ is a finite type $\scr O_{\mc B}$-module.
\end{co}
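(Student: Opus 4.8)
The statement to prove is that $\scr T_{\fk X}(\Wbb_\blt)$ is a finite type $\scr O_{\mc B}$-module, i.e., that every point of $\mc B$ has an open neighborhood over which the sheaf is generated by finitely many sections. The strategy is simply to deduce this from Theorem \ref{lb138}, which is the substantive finiteness input (itself resting on Miyamoto's Theorem \ref{lb133} via the $C_2$-cofiniteness assumption). The key point is that being of finite type is a local property, and the Stein open subsets of $\mc B$ form a basis of its topology (indeed, for any $b\in\mc B$ one can find a coordinate ball around $b$ inside $\Cbb^m$ under a local chart, and balls are Stein; this is among the examples of Stein manifolds listed after Prop. \ref{lb114}).

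First I would fix an arbitrary point $b_0\in\mc B$ and choose a Stein open neighborhood $V\subset\mc B$ of $b_0$. By Theorem \ref{lb138}, the $\scr O(V)$-module $\scr W_{\fk X}(\Wbb_\blt)(V)/\scr J_{\fk X}(\Wbb_\blt)(V)$ is generated by finitely many elements; pick representatives $w^{(1)},\dots,w^{(m)}\in\scr W_{\fk X}(\Wbb_\blt)(V)=H^0(V,\scr W_{\fk X}(\Wbb_\blt))$ of such generators. I claim the images of $w^{(1)},\dots,w^{(m)}$ in $\scr T_{\fk X}(\Wbb_\blt)(V)$ generate $\scr T_{\fk X}(\Wbb_\blt)|_V$ as an $\scr O_V$-module. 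Here one must be a little careful, because $\scr T_{\fk X}(\Wbb_\blt)$ is defined as the sheafification of the presheaf $V'\mapsto \scr W_{\fk X}(\Wbb_\blt)(V')/\scr J_{\fk X}(\Wbb_\blt)(V')$, so its sections over a smaller open set $V'\subset V$ need not literally be $\scr W_{\fk X}(\Wbb_\blt)(V')/\scr J_{\fk X}(\Wbb_\blt)(V')$. However, for a sheaf morphism $\scr O_V^{\oplus m}\to\scr T_{\fk X}(\Wbb_\blt)|_V$ (sending the standard basis to the classes of the $w^{(i)}$) to be surjective, it suffices to check surjectivity at the level of stalks. At each stalk $b\in V$, a Stein basis of neighborhoods is cofinal, and for a Stein open $V'\ni b$ with $V'\subset V$ one has, again by Theorem \ref{lb138} applied to $V'$ (or more simply by $\scr O(V)$-linearity of the residue action and the fact that restriction $\scr O(V)\to\scr O(V')$ makes the generators still generate after base change), that the classes of the $w^{(i)}$ restricted to $V'$ generate the presheaf quotient over $V'$; passing to the stalk shows the map on stalks is surjective. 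Hence the sheaf morphism is surjective, exhibiting $\scr T_{\fk X}(\Wbb_\blt)|_V$ as a quotient of $\scr O_V^{\oplus m}$, i.e., finitely generated.

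Since $b_0$ was arbitrary and the Stein neighborhoods cover $\mc B$, this shows $\scr T_{\fk X}(\Wbb_\blt)$ is of finite type. The only mildly delicate point — and the one I would spell out with care rather than wave through — is the passage from "finitely generated presheaf quotient over every Stein open" to "finite type sheaf," i.e., confirming that the finitely many global-over-$V$ generators remain generators after restriction to arbitrarily small Stein opens and hence at stalks; this is where one uses that restriction $\scr O(V)\to\scr O(V')$ is a ring map compatible with the residue actions and the $\scr O$-linear structure, so a surjection $\scr O(V)^{\oplus m}\twoheadrightarrow \scr W(V)/\scr J(V)$ of $\scr O(V)$-modules base-changes to a surjection over $\scr O(V')$, matched with the genuine restriction maps of the presheaf. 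No further hard analysis is needed beyond Theorem \ref{lb138}; everything else is sheaf-theoretic bookkeeping. I would also remark that finite type plus the existence of the projective connection from the previous section is the setup that will later (Sec. \ref{lb153}, continued) be upgraded to local freeness, but that is beyond the scope of this corollary.
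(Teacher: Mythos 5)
Your overall route is the same as the paper's: everything rests on Theorem \ref{lb138}, and the rest is sheaf-theoretic bookkeeping about passing from a finitely generated presheaf quotient over a Stein open $V$ to generation of the sheafified quotient at stalks. However, the one step you yourself flag as delicate is not actually closed by either of the two justifications you offer. Applying Theorem \ref{lb138} to a smaller Stein open $V'\subset V$ only produces \emph{some} finite generating set of $\scr W_{\fk X}(\Wbb_\blt)(V')/\scr J_{\fk X}(\Wbb_\blt)(V')$, with no reason for it to be related to the restrictions $w^{(i)}|_{V'}$. And the base-change argument, as stated, only gives a surjection onto $\bigl(\scr W(V)/\scr J(V)\bigr)\otimes_{\scr O(V)}\scr O(V')$, whose natural map to $\scr W(V')/\scr J(V')$ need not a priori be surjective: an arbitrary section over $V'$ is not automatically an $\scr O(V')$-combination of restrictions of sections over $V$ plus an element of $\scr J(V')$. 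The missing (easy) observation, which is exactly how the paper argues, is that $\scr W_{\fk X}(\Wbb_\blt)=\Wbb_\blt\otimes_\Cbb\scr O_{\mc B}$ is generated \emph{as a sheaf} by the constant sections $\Wbb_\blt\otimes 1$: any section over a connected $V'$ takes values in a finite-dimensional subspace of $\Wbb_\blt$ and is hence an $\scr O(V')$-linear combination of constant sections, which are restrictions of global sections over $V$. Granting this, each constant section $w\in\Wbb_\blt$ lies in $\scr W(V)$ and is, by Theorem \ref{lb138}, an $\scr O(V)$-combination of $w^{(1)},\dots,w^{(m)}$ modulo $\scr J(V)$; so the $w^{(i)}$ generate every stalk of $\scr T_{\fk X}(\Wbb_\blt)|_V$. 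With that sentence inserted, your proof is correct and coincides with the paper's.
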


\begin{proof}
Assume without loss of generality that $\mc B$ is a Stein open subset of $\Cbb^m$. Then $\scr W_{\fk X}(\Wbb_\blt)=\Wbb_\blt\otimes\scr O_{\mc B}$ is generated by constant sections, i.e., elements of $\Wbb_\blt\simeq\Wbb_\blt\otimes 1$. So $\scr T_{\fk X}(\Wbb_\blt)$ is generated by $\Wbb_\blt$. Choose $w_1,\dots,w_n\in \scr W_{\fk X}(\Wbb_\blt)(\mc B)$ generating the $\scr O({\mc B})$-module \eqref{eq244} (setting $V=\mc B$). So each element of $\Wbb_\blt$ is an $\scr O(\mc B)$-linear combination of $w_1,\dots,w_N$ in the quotient \eqref{eq244}. So $\scr T_{\fk X}(\Wbb_\blt)$ is generated by $w_1,\dots,w_N$.
\end{proof}

By the basic properties of finite type sheaves (cf. Thm. \ref{lba1}), each fiber $\scr T_{\fk X}(\Wbb_\blt)|_b$ (which is equivalent to $\scr T_{\fk X_b}(\Wbb_\blt)\simeq\scr T_{\fk X_b}^*(\Wbb_\blt)$ by Prop. \ref{lb134}) is finite-dimensional; the following rank function $\Rbf:\mc B\rightarrow\Nbb$,
\begin{align}
\Rbf(b)=\dim \scr T_{\fk X}(\Wbb_\blt)|_b=\dim\scr T_{\fk X_b}(\Wbb_\blt)=\dim\scr T_{\fk X_b}^*(\Wbb_\blt) \label{eq246}
\end{align}
is upper semicontinuous; if $\Rbf$ is also lower semicontinuous and hence locally constant, then $\scr T_{\fk X}(\Wbb_\blt)$ is locally free and so is its dual sheaf $\scr T_{\fk X}^*(\Wbb_\blt)$. Then we will have a natural equivalence $\scr T_{\fk X}^*(\Wbb_\blt)|_b\simeq\scr T_{\fk X_b}^*(\Wbb_\blt)$. Namely, if we can show that $\Rbf$ is locally constant, then \emph{the spaces of conformal blocks for all fibers $\fk X_b$ of $\fk X$ form a vector bundle over $\mc B$}.

\subsection{}

\begin{thm}\label{lb135}
$\scr T_{\fk X}(\Wbb_\blt)$ and hence $\scr T_{\fk X}^*(\Wbb_\blt)$ are locally free $\scr O_{\mc B}$-modules. In particular, the rank function $\Rbf$ defined by \eqref{eq246} is locally constant.
\end{thm}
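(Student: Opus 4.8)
The plan is to deduce local freeness from the already-established finite-type property of $\scr T_{\fk X}(\Wbb_\blt)$ together with local constancy of the rank function $\Rbf$. By Thm.~\ref{lba1}, a finite-type $\scr O_{\mc B}$-module whose rank function is locally constant is locally free, and the dual of a locally free sheaf is locally free; so it suffices to prove that $\Rbf$ is locally constant, hence $\scr T_{\fk X}^*(\Wbb_\blt)$ will follow as well. Since the finite-type property already forces $\Rbf$ to be upper semicontinuous, everything reduces to showing $\Rbf$ is \emph{lower} semicontinuous: for each $b_0\in\mc B$ and each $b$ close to $b_0$, one has $\dim\scr T^*_{\fk X_b}(\Wbb_\blt)\ge\dim\scr T^*_{\fk X_{b_0}}(\Wbb_\blt)$ (recall $\scr T^*_{\fk X_b}(\Wbb_\blt)\simeq\scr T_{\fk X_b}(\Wbb_\blt)^*$ and $\scr T_{\fk X}(\Wbb_\blt)|_b\simeq\scr T_{\fk X_b}(\Wbb_\blt)$ by Prop.~\ref{lb134}). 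Since through $b_0$ and any nearby $b$ there passes a small holomorphic disc, I may restrict $\fk X$ to such a disc and assume $\mc B=\Delta$ is a disc with central fiber $\fk X_0$; moreover, by Subsec.~\ref{lb144}--\ref{lb117} I may assume $\fk X$ is of the type in Example~\ref{lb111}, so that the fibers $\fk X_\tau$ arise from $\fk X_0$ by changing the local coordinates to boundary parametrizations $\beta^i_\tau\circ\eta_i$, and conformal blocks for the fibers are precisely the parallel sections of $\scr T^*_{\fk X}(\Wbb_\blt)$ under the connection $\nabla$ of Thm.~\ref{lb132}.

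Given a conformal block $\upphi_0\in\scr T^*_{\fk X_0}(\Wbb_\blt)$, I would form the $\Wbb_\blt^*$-valued formal power series $\upphi_\tau$ in $\tau$ characterized by $\nabla_{\partial_\tau}\upphi_\tau=0$ and $\upphi_\tau|_{\tau=0}=\upphi_0$; following Subsec.~\ref{lb110} and~\ref{lb116}, this is the formal transport of $\upphi_0$ through the change-of-boundary-parametrization operators, obtained by solving term by term the linear differential equation
\begin{align*}
\partial_\tau\upphi_\tau+\sum_{n\in\Zbb}h_n(\tau)\,\upphi_\tau\circ\big(L_{n-1}\otimes\id_{\Wbb_2}\otimes\cdots\otimes\id_{\Wbb_N}\big)=0 .
\end{align*}
A priori this only makes sense as a formal series in $\tau$, since the sum over $n$ involves all Virasoro modes — in particular the weight-raising ones $L_{-2},L_{-3},\dots$ — so the ``coefficient operator'' is not finite-dimensional. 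The heart of the argument is therefore to prove that for every $w_\blt\in\Wbb_\blt$ the series $\upphi_\tau(w_\blt)$ converges absolutely and locally uniformly near $\tau=0$ to a holomorphic function of $\tau$, and that the resulting $\upphi_\tau$ is a genuine conformal block associated to $\fk X_\tau$.

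The convergence is where $C_2$-cofiniteness is indispensable, and I expect it to be the main obstacle. Using Thm.~\ref{lb138} (a consequence of Miyamoto's spanning theorem~\ref{lb133}), the coinvariant module $\scr W_{\fk X}(\Wbb_\blt)(V)/\scr J_{\fk X}(\Wbb_\blt)(V)$ is finitely generated over $\scr O(V)$; fixing generating vectors $w_\blt^{(1)},\dots,w_\blt^{(r)}$ and expressing each $L_{n-1}w_\blt^{(j)}$ modulo $\scr J_{\fk X}(\Wbb_\blt)$ in terms of them turns the infinite system above into a genuine finite linear ODE system $\partial_\tau\Phi(\tau)=A(\tau)\Phi(\tau)$ with a holomorphic coefficient matrix $A(\tau)$, so that $\upphi_\tau(w_\blt^{(j)})$ — and hence $\upphi_\tau(w_\blt)$ for all $w_\blt$ — converges by the standard existence theory for holomorphic linear ODEs (or a majorant-series estimate). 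Once $\upphi_\tau$ is an honest holomorphic $\Wbb_\blt^*$-valued section with $\nabla_{\partial_\tau}\upphi_\tau=0$, it is a conformal block for each fiber $\fk X_\tau$: since $\nabla_{\partial_\tau}$ preserves $\scr J_{\fk X}(\Wbb_\blt)$ (Thm.~\ref{lb132}), the vanishing condition defining conformal blocks propagates from $\tau=0$ along the flow. This step is a direct analogue of the convergence proof for sewing of conformal blocks.

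Finally, the assignment $\upphi_0\mapsto\upphi_\tau$ is linear and injective — its inverse is parallel transport back to $\tau=0$, equivalently it is built from the invertible operators $e^{\tau\sum_n c_nL_n}$ acting on the relevant tensor factor — so $\dim\scr T^*_{\fk X_\tau}(\Wbb_\blt)\ge\dim\scr T^*_{\fk X_0}(\Wbb_\blt)$, which is the desired lower semicontinuity of $\Rbf$ along the disc, hence near $b_0$. Combined with the upper semicontinuity coming from finite type, $\Rbf$ is locally constant, and by Thm.~\ref{lba1} both $\scr T_{\fk X}(\Wbb_\blt)$ and its dual $\scr T_{\fk X}^*(\Wbb_\blt)$ are locally free. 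To summarize, the plan rests on one nontrivial analytic input (convergence of $\upphi_\tau$, for which $C_2$-cofiniteness, via the finiteness of coinvariants, is essential) together with the already-available structural facts: the finite-type property, the connection and its compatibility with $\scr J_{\fk X}(\Wbb_\blt)$, and the standard semicontinuity/local-freeness dictionary for coherent-type sheaves.
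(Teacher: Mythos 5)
Your proposal follows the paper's proof essentially step for step: reduce to lower semicontinuity of the rank function along a one-dimensional disc, formally parallel-transport $\upphi_0$ via $\nabla_{\partial_\tau}$, use Thm.~\ref{lb138} to close the system into a finite holomorphic linear ODE whose formal solution must converge (Lemma~\ref{lb141}), and deduce injectivity of $\upphi_0\mapsto\upphi_\tau$ from uniqueness of solutions. The one (easily repaired) ordering point is that your reduction modulo $\scr J_{\fk X}(\Wbb_\blt)$ to a finite ODE already requires knowing that the \emph{formal} series $\upphi_\tau$ annihilates $\scr J_{\fk X}(\Wbb_\blt)$; the paper establishes this first (Lemma~\ref{lb140}), by Taylor-expanding $\upphi_\tau(\nabla^n_{\partial_\tau}w)$ at $\tau=0$ and invoking Thm.~\ref{lb132}, rather than after convergence as you propose.
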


As discussed above, to prove Thm. \ref{lb135}, it suffices to prove that $\Rbf$ is locally constant. Suppose we can show that $\Rbf|_{\mc B_0}$ is lower semicontinuous for any one-dimensional complex submanifold $\mc B_0$ of $\mc B$ biholomorphic to an open disc, then $\Rbf|_{\mc B_0}$ is constant since it is also upper semicontinuous. It then follows that $\Rbf$ is locally constant

Therefore, we may just assume that $\mc B$ is a simply-connected open subset of $\Cbb$ containing $0$, and $\fk X$ admits a set of local coordinates $\eta_\blt$. Then either $\mc B=\Cbb$ or $\mc B$ is not closed. So, as any connected non-compact Riemann surface is Stein, $\mc B$ is Stein.  Identify
\begin{align*}
\scr W_{\fk X}(\Wbb_\blt)=\Wbb_\blt\otimes_\Cbb\scr O_{\mc B}\qquad\text{via }\mc U(\eta_\blt).
\end{align*}
It suffices to show: 

\begin{lm}\label{lb137}
$\Rbf$ is lower semicontinuous at $0$.
\end{lm}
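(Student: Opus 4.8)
The plan is to prove that $\Rbf$ is lower semicontinuous at $0$ by constructing, from a basis of conformal blocks $\upphi_0^1,\dots,\upphi_0^R$ for the central fiber $\fk X_0$ (where $R=\Rbf(0)$), a family of conformal blocks $\upphi_\zeta^1,\dots,\upphi_\zeta^R$ for the nearby fibers $\fk X_\zeta$ that remains linearly independent for $\zeta$ near $0$; this immediately gives $\Rbf(\zeta)\geq R$ near $0$. The key analytic ingredient, as anticipated in the preface and in Subsec.~\ref{lb116}--\ref{lb117}, is that such a family should be constructed as a \emph{parallel section} of the sheaf of conformal blocks under the connection $\nabla$ (defined in Thm.~\ref{lb132}), and the main work is to show that the formal power series $\upphi_\zeta$ solving the corresponding differential equation $\nabla_{\partial_\zeta}\upphi_\zeta=0$ actually \emph{converges}.

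First I would set up the situation: reduce (as the excerpt already does) to $\mc B$ a simply-connected Stein open subset of $\Cbb$ containing $0$, with local coordinates $\eta_\blt$ belonging to a projective structure of $\fk X$ (using the theorem of Hubbard that such a structure exists after shrinking $\mc B$), so that by Prop.~\ref{lb151} the connection $\nabla$ is well-defined and independent of the choice of lift. Write $\nabla_{\partial_\zeta}=\partial_\zeta+\sum_{i,n}h_{i,n}(\zeta)(\id\otimes\cdots\otimes L_{n-1}|_{\Wbb_i}\otimes\cdots\otimes\id)^\tr$ as in \eqref{eq217}, where the $h_{i,n}$ have only finitely many negative powers. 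Given a conformal block $\upphi_0$ for $\fk X_0$, define $\upphi_\zeta$ formally as the unique $\scr T^*$-valued formal power series in $\zeta$ with $\upphi_\zeta|_{\zeta=0}=\upphi_0$ solving $\nabla_{\partial_\zeta}\upphi_\zeta=0$; equivalently, $\upphi_\zeta(w_\blt)$ is determined coefficient-by-coefficient. The point is that, because Thm.~\ref{lb132} tells us $\nabla_{\partial_\zeta}$ preserves $\scr J_{\fk X}(\Wbb_\blt)$, if this formal series converges (a.l.u. on $\mc B$, after possibly shrinking) to an $\scr O_{\mc B}$-module morphism $\scr W_{\fk X}(\Wbb_\blt)\to\scr O_{\mc B}$, then its restriction to each fiber $\fk X_\zeta$ is automatically a conformal block by Thm.~\ref{lb143} (or Prop.~\ref{lb121}), since it annihilates $\scr J_{\fk X}(\Wbb_\blt)(\mc B)$.

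The hard part is the convergence of $\upphi_\zeta$. This is the step I expect to be the main obstacle, and it is precisely the analogue of the convergence of sewing conformal blocks alluded to in the preface. The strategy is the one standard for such VOA convergence results: use the $C_2$-cofiniteness. By Thm.~\ref{lb138}, the $\scr O(\mc B)$-module $\scr W_{\fk X}(\Wbb_\blt)(\mc B)/\scr J_{\fk X}(\Wbb_\blt)(\mc B)$ is finitely generated, say by $\overline{w}^{(1)},\dots,\overline{w}^{(d)}$; this lets one express, for any fixed $w_\blt\in\Wbb_\blt$, the residue-action derivatives $L_{n-1}$ appearing in $\nabla_{\partial_\zeta}$ modulo $\scr J_{\fk X}$ in terms of the finitely many generators with $\scr O(\mc B)$-coefficients. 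Concretely, one shows the vector-valued function $\zeta\mapsto(\upphi_\zeta(w^{(1)}_\blt),\dots,\upphi_\zeta(w^{(d)}_\blt))$ satisfies a finite-rank linear ODE system $\partial_\zeta \mathbf{F}(\zeta)=A(\zeta)\mathbf{F}(\zeta)$ with $A$ holomorphic in $\zeta$ (the entries of $A$ come from the structure constants expressing $L_{n-1}\cdot(\text{generator})$ in terms of the generators, which live in $\scr O(\mc B)$). A holomorphic linear ODE with holomorphic coefficients has a holomorphic solution, hence the $\upphi_\zeta(w^{(j)}_\blt)$ converge; and then every $\upphi_\zeta(w_\blt)$ converges because $w_\blt$ is, modulo $\scr J_{\fk X}(\Wbb_\blt)(\mc B)$ and after evaluating at each $\zeta$, an $\scr O(\mc B)$-combination of the generators (here one uses that the residue action and $\nabla_{\partial_\zeta}$ descend to $\scr T_{\fk X}$). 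The $\scr O_{\mc B}$-linearity of the limit is inherited from that of each partial sum, and the a.l.u.\ convergence follows from Lemma~\ref{lb76} applied to the ODE solution. I would handle the a.l.u.\ uniformity by working over a precompact subdisc and invoking the standard estimate that the formal solution of $\partial_\zeta\mathbf F=A\mathbf F$ with $A\in\scr O$ has radius of convergence bounded below by the domain of $A$.

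Finally, to conclude lower semicontinuity: apply the construction to a basis $\upphi_0^1,\dots,\upphi_0^R$ of $\scr T^*_{\fk X_0}(\Wbb_\blt)$, obtaining conformal blocks $\upphi_\zeta^1,\dots,\upphi_\zeta^R$ for $\fk X_\zeta$ depending holomorphically on $\zeta$ (after shrinking $\mc B$). Since $\upphi_0^1,\dots,\upphi_0^R$ are linearly independent, there exist $w_\blt^{(1)},\dots,w_\blt^{(R)}\in\Wbb_\blt$ so that the $R\times R$ matrix $\big(\upphi_0^a(w_\blt^{(b)})\big)_{a,b}$ is invertible; by continuity the matrix $\big(\upphi_\zeta^a(w_\blt^{(b)})\big)_{a,b}$ remains invertible for $\zeta$ in a neighborhood of $0$, so $\upphi_\zeta^1,\dots,\upphi_\zeta^R$ are linearly independent in $\scr T^*_{\fk X_\zeta}(\Wbb_\blt)$, giving $\Rbf(\zeta)\geq R=\Rbf(0)$. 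Combined with the upper semicontinuity already recorded (from $\scr T_{\fk X}(\Wbb_\blt)$ being of finite type, via Thm.~\ref{lba1}), this shows $\Rbf$ is locally constant on the disc, and hence — reversing the reduction — locally constant on $\mc B$, which by the finite-type criterion forces $\scr T_{\fk X}(\Wbb_\blt)$ and its dual to be locally free, completing the proof of Thm.~\ref{lb135}.
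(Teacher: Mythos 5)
Your proposal is correct and follows essentially the same route as the paper: solve $\nabla_{\partial_\tau}\upphi_\tau=0$ formally from the initial datum $\upphi_0$, use the fact that $\nabla_{\partial_\tau}$ preserves $\scr J_{\fk X}(\Wbb_\blt)(\mc B)$ (Thm.~\ref{lb132}) to see the formal solution annihilates it, invoke the finite generation from Thm.~\ref{lb138} to reduce to a finite-rank holomorphic linear ODE and hence obtain convergence, and conclude via Thm.~\ref{lb143}. The only cosmetic differences are that the paper does not need the projective-structure refinement (any lift of $\partial_\tau$ suffices), and it deduces linear independence of the transported blocks from the injectivity of $\upphi_0\mapsto\upphi_{\tau_0}$ given by uniqueness of ODE solutions rather than your matrix-continuity argument; both are valid.
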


\subsection{}

Let $\tau$ be the standard coordinate of $\mc B\subset\Cbb$. By Sec. \ref{lb136}, we can define a differential operator $\nabla_{\partial_\tau}$ on $\Wbb_\blt\otimes_\Cbb\scr O_{\mc B}$ which preserves $\scr J_{\fk X}(\Wbb_\blt)(\mc B)$ due to Thm. \ref{lb132}. We shall prove Lemma \ref{lb137} using this fact and Thm. \ref{lb138}.

We fix an element $\upphi_0\in\scr W_{\fk X_0}(\Wbb_\blt)$, i.e.  a linear functional on $\Wbb_\blt$ vanishing on $\scr J_{\fk X_0}(\Wbb_\blt)$. Let us prove Lemma \ref{lb137} by constructing a conformal block $\upphi_\tau\in\scr W_{\fk X_\tau}(\Wbb_\blt)$  for each $\tau\in\mc B$ such that  the map $\upphi_0\mapsto \upphi_\tau$ is linear and injective.

\begin{cv}
Let $\Wbb_\blt^{\leq k}$ \index{W@$\Wbb_\blt^{\leq k}$} be the subspace of $\Wbb_\blt$ spanned by homogeneous $w_1\otimes\cdots \otimes w_N$ satisfying $\wtd\wt(w_1)+\cdots+\wtd\wt(w_N)\leq k$. Note that $\Wbb_\blt^{\leq k}$ is finite-dimensional since each $\Wbb_i$ is finitely-admissible.
\end{cv}

In view of \eqref{eq231}, for each $w_\blt\in \Wbb_\blt\otimes\scr O_{\mc B}$, we can write
\begin{align}
\nabla_{\partial_\tau}w_\blt=\partial_\tau w_\blt+A(\tau)w_\blt\label{eq238}
\end{align}
where 
\begin{align}
A(\tau)w_\blt=-\sum_{i=1}^N\sum_{k\in\Zbb}h_{i,k}(\tau)w_1\cdots\otimes L_{k-1}w_i\otimes\cdots\otimes w_N
\end{align}
We take power series expansion
\begin{align*}
A(\tau)=\sum_{n\in\Nbb}A_n\tau^n
\end{align*}
where $A_n\in\End(\Wbb_\blt)$ is given by
\begin{align*}
A_n=-\sum_{i=1}^N\sum_{k\in\Zbb}h_{i,k,n}\cdot\id_{\Wbb_1}\otimes\cdots\otimes L_{k-1}|_{\Wbb_k}\otimes\cdots\otimes\id_{\Wbb_N}
\end{align*}
where  $h_{i,k,n}\in\Cbb$ is determined by $h_{i,k}(\tau)=\sum_{n\in\Nbb}h_{i,k,n}\tau^n$ and vanishes for all $i,n$ and $k\leq K$ for some $K\in\Zbb$. So $A(\tau)\in\End(\Wbb_\blt)[[\tau]]$.

\begin{df}\label{lb147}
Define a linear map
\begin{align*}
\upphi:\Wbb_\blt\rightarrow \Cbb[[\tau]],\qquad w\mapsto \upphi_\tau(w)
\end{align*}
such that for each $w\in\Wbb_\blt$, $\upphi_\tau(w)$ is determined by the formal differential equation
\begin{align}
\partial_\tau\upphi_\tau(w)=\upphi_\tau (A(\tau)w)\label{eq242}
\end{align}
whose initial value $\upphi_\tau|_{\tau=0}$ is the conformal block $\upphi_0$ chosen at the beginning.
\end{df}

More precisely, if we write $\upphi_\tau(w)=\sum_{n\in\Nbb}\upphi_n(w)\tau^n$ where each $\upphi_n:\Wbb_\blt\rightarrow\Cbb$ is linear and $\upphi_0$ is just the previously chosen conformal block, then
\begin{align*}
\sum_{n\in\Nbb}n\upphi_n(w)\tau^{n-1}=\sum_{m,n\in\Nbb}\upphi_n(A_m w)\tau^{m+n}.
\end{align*}
So for each $n\in\Zbb_+$,
\begin{align}
n\upphi_n=\sum_{l=0}^{n-1}\upphi_l\circ A_{n-l-1}.\label{eq241}
\end{align}
This determines all $\upphi_n$ inductively. Our goal is to show that $\upphi_\tau(w)$ is the series expansion of an analytic function on $\mc B$.

\subsection{}

By $\Cbb[[\tau]]$-linearity, we can extend $\upphi$ to a linear map from $\Wbb_\blt\otimes\Cbb[[\tau]]$ to $\Cbb[[\tau]]$. (Note that the RHS of \eqref{eq242} is understood in this way.) Then $\Wbb\otimes\scr O(\mc B)$ is an $\scr O(\mc B)$-submodule of $\Wbb_\blt\otimes[[\tau]]$ by taking power series expansions. We are interested in the restriction
\begin{align*}
\upphi:\Wbb_\blt\otimes\scr O(\mc B)\rightarrow\Cbb[[\tau]],\qquad w\mapsto \upphi_\tau(w).
\end{align*}
It clearly satisfies the differential equation $\partial_\tau\upphi_\tau(w)=\upphi_\tau (\partial_\tau w+A(\tau)w)$, namely (cf. \eqref{eq238})
\begin{align}
\partial_\tau\upphi_\tau(w)=\upphi_\tau(\nabla_{\partial_\tau} w).\label{eq239}
\end{align}

\begin{lm}\label{lb140}
$\upphi_\tau$ vanishes on $\scr J_{\fk X}(\Wbb_\blt)(\mc B)$.
\end{lm}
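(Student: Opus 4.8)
The plan is to show that for every $s\in\scr J_{\fk X}(\Wbb_\blt)(\mc B)$ the formal power series $\upphi_\tau(s)\in\Cbb[[\tau]]$ vanishes, by checking that all its Taylor coefficients at $\tau=0$ are zero. Three ingredients feed into this. First, by Thm. \ref{lb132} applied with $V=\mc B$ and the lift $\xk$ used to build $A(\tau)$, the $\scr O(\mc B)$-module $\scr J:=\scr J_{\fk X}(\Wbb_\blt)(\mc B)$ is preserved by $\nabla_{\partial_\tau}$; in particular $(\nabla_{\partial_\tau})^n s\in\scr J\subset\Wbb_\blt\otimes\scr O(\mc B)$ for all $n\in\Nbb$. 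Second, by \eqref{eq239} the linear map $\upphi$ on $\Wbb_\blt\otimes\scr O(\mc B)$ intertwines $\partial_\tau$ with $\nabla_{\partial_\tau}$, i.e. $\partial_\tau\upphi_\tau(w)=\upphi_\tau(\nabla_{\partial_\tau}w)$. Third, the residue action of $H^0(\mc C,\scr V_{\fk X}\otimes\omega_{\mc C/\mc B}(\star\SX))$ on $\Wbb_\blt\otimes\scr O(\mc B)$ is compatible with restriction to the central fiber (this is built into the definition in Subsec. \ref{lb125}), so the value $s(0)\in\Wbb_\blt$ of any $s\in\scr J$ lies in $\scr J_{\fk X_0}(\Wbb_\blt)$; and $\upphi_0$, being a conformal block for $\fk X_0$, annihilates $\scr J_{\fk X_0}(\Wbb_\blt)$.

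Granting these, fix $s\in\scr J$ and set $f(\tau)=\upphi_\tau(s)\in\Cbb[[\tau]]$. From the second ingredient and a straightforward induction, $\partial_\tau^n f=\upphi_\tau\big((\nabla_{\partial_\tau})^n s\big)$ for every $n$, using at each stage that $(\nabla_{\partial_\tau})^n s$ again belongs to $\Wbb_\blt\otimes\scr O(\mc B)$ so that \eqref{eq239} applies to it. Unwinding the $\Cbb[[\tau]]$-linear extension of $\upphi$ gives $\upphi_\tau(w)\big|_{\tau=0}=\upphi_0(w(0))$ for $w\in\Wbb_\blt\otimes\scr O(\mc B)$, hence $(\partial_\tau^n f)(0)=\upphi_0\big(((\nabla_{\partial_\tau})^n s)(0)\big)$. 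By the first and third ingredients $((\nabla_{\partial_\tau})^n s)(0)\in\scr J_{\fk X_0}(\Wbb_\blt)$, so this scalar is $0$. Thus all Taylor coefficients of $f$ at $0$ vanish, i.e. $\upphi_\tau(s)=0$; since $\scr J$ is spanned over $\Cbb$ by such $s$, $\upphi_\tau$ vanishes on $\scr J_{\fk X}(\Wbb_\blt)(\mc B)$.

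The substantive input is entirely packaged in Thm. \ref{lb132} — that $\nabla_{\partial_\tau}$ descends to the sheaf of coinvariants, equivalently preserves $\scr J$ — whose proof, through the Lie-derivative identity $[\nabla_{\partial_\tau},\sigma]\wbf=(\mc L_\xk\sigma)\cdot\wbf$ together with the fact that $\mc L_\xk\sigma$ remains a global section of $\scr V_{\fk X}\otimes\omega_{\mc C/\mc B}(\star\SX)$, is cited from \cite[Sec. 3.6]{Gui}. Given that, the only things to verify in the present lemma are the bookkeeping items listed above — compatibility of the residue action with restriction to $\mc C_0$, the identity $\upphi_\tau(w)\big|_{\tau=0}=\upphi_0(w(0))$, and the recursion $\partial_\tau^n f=\upphi_\tau((\nabla_{\partial_\tau})^n s)$ — none of which presents a genuine obstacle. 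The real difficulty of the section lies not here but in the next step: showing that the a priori merely formal series $\upphi_\tau(w)$ is the Taylor expansion of a holomorphic function on $\mc B$, for which the finiteness Thm. \ref{lb138} will be essential.
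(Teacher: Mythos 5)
Your proof is correct and follows exactly the paper's argument: Taylor-expand $\upphi_\tau(s)$ at $\tau=0$, use \eqref{eq239} to turn $\partial_\tau^n$ into $\nabla_{\partial_\tau}^n$, invoke Thm.~\ref{lb132} to keep $\nabla_{\partial_\tau}^n s$ inside $\scr J_{\fk X}(\Wbb_\blt)(\mc B)$, and evaluate at $\tau=0$ where $\upphi_0$ kills $\scr J_{\fk X_0}(\Wbb_\blt)$. Nothing differs in substance from the paper's proof.
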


\begin{proof}
Choose any $w\in \scr J_{\fk X}(\Wbb_\blt)(\mc B)$, which by power series expansion is an element of $\Wbb_\blt\otimes\Cbb[[\tau]]$. Then by \eqref{eq239}, $\upphi_\tau(w)$ has series expansion
\begin{align*}
\upphi_\tau(w)=\sum_{n\in\Nbb}\frac {\tau^n}{n!}\partial^n_\tau\upphi_\tau(w)\big|_{\tau=0}=\sum_{n\in\Nbb}\frac {\tau^n}{n!}\upphi_\tau(\nabla^n_{\partial_\tau} w)\big|_{\tau=0}
\end{align*}
where $\upphi_\tau(\nabla^n_{\partial_\tau} w)\big|_{\tau=0}$ denotes the constant term of the series $\upphi_\tau(\nabla^n_{\partial_\tau} w)\in\Cbb[[\tau]]$. By Thm. \ref{lb132},  $s_n=\nabla^n_{\partial_\tau}w$ belongs to $\scr J_{\fk X}(\Wbb_\blt)(\mc B)$. In particular, $s_n(\tau)|_{\tau=0}\in\scr J_{\fk X_0}(\Wbb_\blt)$. Clearly $\upphi_\tau(s_n)|_{\tau=0}=\upphi_0(s_n(0))$, which equals $0$ because $\upphi_0$ is a conformal block associated to $\fk X_0$. This proves the lemma.
\end{proof}

\subsection{}

To prove that $\upphi_\tau$ is analytic, we need a basic fact about differential equations:

\begin{lm}\label{lb141}
Let $W$ be a finite dimensional vector space. Suppose $f(\tau)=\sum_{n\in\Nbb}f_n\tau^n\in W[[\tau]]$ satisfies a formal differential equation
\begin{align}
\partial_\tau f(\tau)=A(\tau)f(\tau)\label{eq240}
\end{align}
for some $A\in\End(W)\otimes_\Cbb\scr O(\mc B)$, then $f(\tau)$ is the power series expansion of an element of $W\otimes\scr O(\mc B)$ which we also denote by $f(\tau)$.
\end{lm}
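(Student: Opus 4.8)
The plan is to reduce the claim to the standard existence-and-uniqueness theorem for linear ODEs with holomorphic coefficients. Since $\mc B$ is a simply-connected open subset of $\Cbb$ containing $0$ (in particular, it is a domain on which holomorphic linear ODEs with holomorphic coefficients have global holomorphic solutions), it suffices to solve \eqref{eq240} analytically with initial value $f(0)=f_0$ and then match the solution with the formal series $f(\tau)$ by uniqueness. Concretely, I would first invoke the classical theorem: given $A\in\End(W)\otimes_\Cbb\scr O(\mc B)$ and any $w\in W$, there is a unique $g\in W\otimes\scr O(\mc B)$ with $\partial_\tau g=Ag$ and $g(0)=w$; this is proved by Picard iteration on a disc around $0$ and then analytic continuation along paths, using simple-connectedness of $\mc B$ to see the continuation is single-valued (monodromy-free, since the coefficient matrix is single-valued and holomorphic). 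Apply this with $w=f_0$ to obtain $g\in W\otimes\scr O(\mc B)$.

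Next I would compare the power series expansion of $g$ at $0$ with the formal series $f$. Write $g(\tau)=\sum_{n\in\Nbb}g_n\tau^n$ as its Taylor expansion at $0$; then $g$ satisfies \eqref{eq240} as an identity of $W\otimes\scr O(\mc B)$, hence also formally in $W[[\tau]]$, and $g_0=f_0$. But the recursion that \eqref{eq240} imposes on Taylor coefficients — namely $(n+1)g_{n+1}=\sum_{l=0}^{n}A_{n-l}g_l$ where $A(\tau)=\sum_m A_m\tau^m$ — determines all coefficients uniquely from $g_0$. The formal series $f$ satisfies the same recursion with $f_0=g_0$, so $f_n=g_n$ for all $n$ by induction on $n$. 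Therefore $f$ is the Taylor expansion of $g$, which is the asserted element of $W\otimes\scr O(\mc B)$.

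There is essentially no serious obstacle here: the only points requiring care are (a) citing or sketching the existence of global holomorphic solutions of linear ODEs on a simply-connected domain, which is standard, and (b) the bookkeeping that a holomorphic function satisfying a differential equation has Taylor coefficients obeying the induced recursion, which is routine. If one wants to be fully self-contained and avoid appealing to analytic continuation machinery, one can alternatively argue directly that the formal solution $f$ converges a.l.u. on $\mc B$: on a closed disc $\ovl{\Dbb_r}\subset\mc B$ one gets from the recursion a bound $\|f_n\|\le C M^n$ for suitable constants (using $\sup_{\ovl{\Dbb_r}}\|A\|<\infty$ and Cauchy estimates on the coefficients $A_m$), so $\sum f_n\tau^n$ converges on $\Dbb_{1/M}$; covering $\mc B$ by such discs and using that the limit is forced to solve the ODE again (hence extends consistently) gives the global holomorphic solution. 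Either route works; I would present the uniqueness-based reduction as the main argument since it is cleanest, and remark that the direct convergence estimate is available as an alternative.
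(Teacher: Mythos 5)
Your proposal is correct and follows essentially the same route as the paper: the paper also notes that a formal solution of \eqref{eq240} is determined by its constant term and then invokes the standard existence theorem for linear holomorphic ODEs (citing \cite[Thm. B.1]{Kna}) to produce an analytic solution with initial value $f_0$, which must therefore coincide with $f$. Your extra remarks on Picard iteration, monodromy on the simply connected base, and the alternative direct convergence estimate are fine but not needed.
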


\begin{proof}
It is clear that any formal solution $f(\tau)$ of \eqref{eq240} is uniquely determined by its constant term $f_0\in W$. (Cf. the argument for \eqref{eq241}.) By the basic theory of differential equations (e.g. \cite[Thm. B.1]{Kna}), \eqref{eq240} must have a solution in $W\otimes\scr O(\mc B)$ with initial value $f_0$. So this solution must equal $f$ because their constant terms are equal.
\end{proof}

\begin{lm}\label{lb142}
$\upphi$ is an $\scr O(\mc B)$-module morphism from $\Wbb_\blt\otimes\scr O(\mc B)$ to $\scr O(\mc B)$. Thus, it is automatically an $\scr O_{\mc B}$-module morphism $\Wbb_\blt\otimes\scr O_{\mc B}\rightarrow\scr O_{\mc B}$.
\end{lm}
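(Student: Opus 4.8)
The plan is to reduce the statement to Lemma \ref{lb141} by a suitable choice of finite-dimensional vector space $W$, combined with Theorem \ref{lb138} which supplies finitely many generators over $\scr O(\mc B)$ (recall $\mc B$ is Stein by our reduction). First I would observe that to prove $\upphi:\Wbb_\blt\otimes\scr O(\mc B)\to\Cbb[[\tau]]$ actually lands in $\scr O(\mc B)\subset\Cbb[[\tau]]$ and is $\scr O(\mc B)$-linear, it suffices by Lemma \ref{lb140} to check this on a set of generators of the quotient $\scr O(\mc B)$-module in \eqref{eq244}: indeed $\upphi$ vanishes on $\scr J_{\fk X}(\Wbb_\blt)(\mc B)$, so it descends to a map on $\scr W_{\fk X}(\Wbb_\blt)(\mc B)/\scr J_{\fk X}(\Wbb_\blt)(\mc B)$, and we already know $\upphi$ is $\Cbb[[\tau]]$-linear on $\Wbb_\blt\otimes\Cbb[[\tau]]$, hence $\scr O(\mc B)$-linear; the only remaining point is analyticity of the values.

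The key step is then to apply Lemma \ref{lb141} to the right object. Let $w^{(1)},\dots,w^{(r)}\in\scr W_{\fk X}(\Wbb_\blt)(\mc B)=\Wbb_\blt\otimes\scr O(\mc B)$ be finitely many sections whose images generate the quotient \eqref{eq244} over $\scr O(\mc B)$ (Theorem \ref{lb138}). For the purposes of the differential equation I would want these generators to be constant sections, i.e. elements of $\Wbb_\blt$; this is legitimate because (after the reduction $\mc B$ Stein, hence WLOG $\mc B\subset\Cbb$ open) $\scr W_{\fk X}(\Wbb_\blt)$ is generated over $\scr O_{\mc B}$ by constant sections, so we may take $w^{(1)},\dots,w^{(r)}\in\Wbb_\blt$. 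Now consider the $\Cbb[[\tau]]$-valued functions $\upphi_\tau(w^{(j)})$, $j=1,\dots,r$. By the construction in Definition \ref{lb147} together with Lemma \ref{lb140} and Theorem \ref{lb132}, the derivatives $\partial_\tau\upphi_\tau(w^{(j)})=\upphi_\tau(A(\tau)w^{(j)})$ must, modulo $\scr J_{\fk X}(\Wbb_\blt)(\mc B)$, be $\scr O(\mc B)$-linear combinations of the $w^{(j)}$; that is, there exist $M_{jk}(\tau)\in\scr O(\mc B)$ with $A(\tau)w^{(j)}\equiv\sum_k M_{jk}(\tau)w^{(k)}$ in the quotient, whence $\partial_\tau\upphi_\tau(w^{(j)})=\sum_k M_{jk}(\tau)\upphi_\tau(w^{(k)})$. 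This is precisely a linear ODE of the shape \eqref{eq240} for the vector $f(\tau)=\big(\upphi_\tau(w^{(1)}),\dots,\upphi_\tau(w^{(r)})\big)\in\Cbb^r[[\tau]]$ with matrix $A(\tau)=M(\tau)\in\End(\Cbb^r)\otimes\scr O(\mc B)$, so Lemma \ref{lb141} gives that each $\upphi_\tau(w^{(j)})$ is the expansion of a function in $\scr O(\mc B)$. Finally, since every $w\in\Wbb_\blt$ is, modulo $\scr J_{\fk X}(\Wbb_\blt)(\mc B)$, an $\scr O(\mc B)$-combination $\sum_j c_j(\tau)w^{(j)}$ and $\upphi$ kills $\scr J_{\fk X}(\Wbb_\blt)(\mc B)$, we get $\upphi_\tau(w)=\sum_j c_j(\tau)\upphi_\tau(w^{(j)})\in\scr O(\mc B)$, and $\scr O(\mc B)$-linearity follows from the $\Cbb[[\tau]]$-linearity already available.

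The main obstacle I anticipate is justifying the passage "$A(\tau)w^{(j)}\equiv\sum_k M_{jk}(\tau)w^{(k)}$ in the quotient \eqref{eq244}" with $M_{jk}\in\scr O(\mc B)$ rather than merely in $\Cbb[[\tau]]$ — i.e. that the coefficients expressing $\nabla_{\partial_\tau}w^{(j)}$ in terms of the generators can be chosen holomorphic. This should follow because the quotient sheaf $\scr T_{\fk X}(\Wbb_\blt)$ is a finite-type $\scr O_{\mc B}$-module and $\nabla_{\partial_\tau}$ is a genuine operator on it (Theorem \ref{lb132}); evaluating $\nabla_{\partial_\tau}$ on the generating sections and expressing the result over $\scr O(\mc B)$ uses that $\mc B$ is Stein (so that surjections of coherent/finite-type sheaves are surjective on global sections, or more elementarily that one can solve for the coefficients locally and patch). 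One must be slightly careful that the $A(\tau)$ appearing in Definition \ref{lb147} is built from the $h_{i,k}(\tau)$, which are holomorphic on $\mc B$ by Subsec. \ref{lb126}, so no genuinely new input is needed beyond bookkeeping; the content is entirely in Theorems \ref{lb132}, \ref{lb138} and Lemma \ref{lb141}.
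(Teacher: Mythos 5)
Your proposal is correct and follows essentially the same route as the paper: use Theorem \ref{lb138} to extract finitely many generators, derive a finite-dimensional linear holomorphic ODE for $\upphi_\tau$ evaluated on them, and conclude analyticity from Lemma \ref{lb141}. The only (cosmetic) difference is that the paper runs the ODE on the restriction $\upphi^{\leq k}$ to the finite-dimensional truncation $\Wbb_\blt^{\leq k}$ (whose basis contains the generators after fixing $k_0$), whereas you run it directly on the vector of values at the generators and then extend to all of $\Wbb_\blt$ via the generating relation; also, the step you flag as the main obstacle is immediate from the definition of "generating the $\scr O(\mc B)$-module \eqref{eq244}", since $\nabla_{\partial_\tau}w^{(j)}=A(\tau)w^{(j)}$ already lies in $\Wbb_\blt\otimes\scr O(\mc B)$ because the $h_{i,k}$ are holomorphic.
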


\begin{proof}
By $\scr O(\mc B)$-linearity, it suffices to prove that $\upphi$ sends each constant section $w\in\Wbb_\blt$ to $\upphi_\tau(w)\in\Wbb_\blt\otimes\scr O(\mc B)$.

By Thm. \ref{lb138}, we can find finitely many elements $s_1,s_2,\dots\in\Wbb_\blt\otimes\scr O(\mc B)$ generating $\Wbb_\blt\otimes\scr O(\mc B)$ mod $\scr J_{\fk X}(\Wbb_\blt)(\mc B)$. We fix $k_0\in\Nbb$ such that $s_1,s_2,\dots\in\Wbb_\blt^{\leq k_0}\otimes\scr O(\mc B)$. Consider the restriction of $\upphi$ to $\Wbb_\blt^{\leq k}\rightarrow \Cbb[[\tau]]$ for all $k\geq k_0$, which we denote by $\upphi^{\leq k}$. Recall that $\Wbb_\blt^{\leq k}$ is finite-dimensional.  So $\upphi^{\leq k}$ is an element of $(\Wbb_\blt^{\leq k})^*\otimes\Cbb[[\tau]]$.

Let $\{e_j\}_{j\in J}$ be a basis of $\Wbb_\blt^{\leq k}$. By \eqref{eq242} or \eqref{eq239}, $\partial_\tau\upphi_\tau(e_j)=\upphi_\tau(\nabla_{\partial_\tau} e_j)$ where $\nabla_{\partial_\tau} e_j\in\Wbb_\blt\otimes\scr O(\mc B)$. Since $\nabla_{\partial_\tau} e_j$ is an $\scr O(\mc B)$-linear combination of $s_1,s_2,\dots$ mod $\scr J_{\fk X}(\Wbb_\blt)(\mc B)$, we can find $\Upomega_{i,j}(\tau)\in\scr O(\mc B)$ for all $i,j\in J$ such that
\begin{align*}
\nabla_{\partial_\tau} e_i=\sum_{j\in J}\Upomega_{i,j}(\tau)e_j\qquad\text{mod }\scr J_{\fk X}(\Wbb_\blt)(\mc B).
\end{align*}
Thus, by \eqref{eq239} and Lemma \ref{lb140}, we have
\begin{align}
\partial_\tau\upphi_\tau^{\leq k}(e_i)=\sum_{j\in J}\Upomega_{i,j}(\tau)\upphi_\tau^{\leq k}(e_j).\label{eq243}
\end{align}
Therefore,  $\upphi_\tau^{\leq k}$ as an element of $((\Wbb_\blt)^{\leq k})^*\otimes\Cbb[[\tau]]$ satisfies a linear holomorphic differential equation similar to \eqref{eq240}. So by Lemma \ref{lb141}, this series is an element of $(\Wbb_\blt^{\leq k})^*\otimes\scr O(\mc B)$. This finishes the proof.
\end{proof}

\begin{rem}
The differential equation \eqref{eq243} has a significant role in conformal field theory. Take $\Vbb$ to be a WZW model $L_l(\gk,0)$ and let $\Wbb_1,\dots,\Wbb_N$ be irreducible, and assume that the lowest $\wtd L_0$-eigenvalue for each $\Wbb_i$ is $0$. Take $\fk X$ to be the genus-$0$ family in Example \ref{lb117}. Then we can choose the $k_0$ in the proof of Lemma \ref{lb142} to be $0$. By restricting the base manifold $\Conf^N(\Cbb^\times)$ of $\fk X$ to any complex line parallel to the $z_j$-axis, then \eqref{eq243} shows that $\upphi^{\leq 0}$ satisfies a linear holomorphic $\partial_{z_j}$-differential equation. This is the celebrated \textbf{Knizhnik–Zamolodchikov (KZ) equation}.
\end{rem}

\subsection{}

To summarize the results proved so far, we have:
\begin{thm}\label{lb145}
Let $\mc B$ be a simply-connected open subset of $\Cbb$ containing $0$, and choose local coordinates $\eta_\blt$ for $\fk X$. Define $\nabla_{\partial_\tau}$ using a lift of $\partial_\tau$. Then for each $\upphi_0\in\scr T_{\fk X_0}^*(\Wbb_\blt)$, the $\upphi_\tau$ defined by Def. \ref{lb147} is an element of $\scr T^*_{\fk X}(\Wbb_\blt)(\mc B)$ whose value at $\tau=0$ is $\upphi_0$, and which is annihilated by $\nabla_{\partial_\tau}$.
\end{thm}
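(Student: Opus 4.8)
The plan is to read Theorem \ref{lb145} as the bookkeeping that packages Definition \ref{lb147} together with Lemmas \ref{lb140} and \ref{lb142}, so the proof is mostly an assembly of results already in hand. First I would fix the identification $\scr W_{\fk X}(\Wbb_\blt)=\Wbb_\blt\otimes_\Cbb\scr O_{\mc B}$ via $\mc U(\eta_\blt)$ (so that $A(\tau)$ in Definition \ref{lb147} is exactly the operator appearing in $\nabla_{\partial_\tau}w_\blt=\partial_\tau w_\blt+A(\tau)w_\blt$, cf. \eqref{eq238}), and record that $\mc B$, being a simply-connected open subset of $\Cbb$, is either $\Cbb$ or non-compact, hence a Stein manifold; this is what lets us invoke Theorem \ref{lb143} in its intrinsic form. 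Since the whole construction of $\upphi_\tau$ takes place globally on $\mc B$, no sheafification subtleties arise.

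For the assertion $\upphi_\tau\in\scr T^*_{\fk X}(\Wbb_\blt)(\mc B)$: Lemma \ref{lb142} tells us that $w\mapsto\upphi_\tau(w)$, a priori valued in $\Cbb[[\tau]]$, is in fact an $\scr O_{\mc B}$-module morphism $\scr W_{\fk X}(\Wbb_\blt)\to\scr O_{\mc B}$; this is the one genuinely analytic input, and it rests on the finiteness Theorem \ref{lb138} (hence on $C_2$-cofiniteness). Lemma \ref{lb140} then shows this morphism annihilates $\scr J_{\fk X}(\Wbb_\blt)(\mc B)$. Because $\mc B$ is Stein, Theorem \ref{lb143} yields that $\upphi$ is a conformal block associated to $\fk X$, i.e. descends to a section of $\scr T^*_{\fk X}(\Wbb_\blt)(\mc B)$. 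For the value at $\tau=0$: by the initial condition in Definition \ref{lb147} the constant term of the Taylor series $\upphi_\tau(w)\in\scr O(\mc B)$ is $\upphi_0(w)$, so the fiber map $\upphi|_0$, read through the canonical identification $\scr W_{\fk X}(\Wbb_\blt)|_0\simeq\Wbb_\blt$ induced by $\mc U(\eta_\blt|_{\mc C_0})$, is exactly $\upphi_0$; and $\upphi|_0$ is a conformal block for $\fk X_0$ since it is the fiber of a conformal block (Theorem \ref{lb143}), so $\upphi|_0=\upphi_0$ in $\scr T^*_{\fk X_0}(\Wbb_\blt)$.

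For the parallelism, I would differentiate and use the dual-connection convention: equation \eqref{eq239} gives $\partial_\tau\upphi_\tau(w)=\upphi_\tau(\nabla_{\partial_\tau}w)$, and combining this with the defining relation \eqref{eq230} of the dual connection yields $\bk{\nabla_{\partial_\tau}\upphi,w}=\partial_\tau\bk{\upphi,w}-\bk{\upphi,\nabla_{\partial_\tau}w}=0$ for every $w\in\Wbb_\blt$, hence $\nabla_{\partial_\tau}\upphi=0$ on $\scr T^*_{\fk X}(\Wbb_\blt)$. Linearity of $\upphi_0\mapsto\upphi_\tau$ is immediate since \eqref{eq242} is linear in the initial value, and injectivity follows from the previous paragraph (evaluation at $\tau=0$ is a one-sided inverse); together these give the inequality $\dim\scr T^*_{\fk X_0}(\Wbb_\blt)\leq\dim\scr T^*_{\fk X_\tau}(\Wbb_\blt)$ that Lemma \ref{lb137} requires.

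I do not expect a serious obstacle in this theorem itself: the hard work — the convergence of the formal series $\upphi_\tau$, equivalently the $\scr O(V)$-finite-generation of $\scr W_{\fk X}(\Wbb_\blt)(V)/\scr J_{\fk X}(\Wbb_\blt)(V)$ — is already quarantined inside Lemma \ref{lb142} and Theorem \ref{lb138}. If I had to name the one point demanding care, it is verifying that all the identifications involved ($\mc U(\eta_\blt)$ and its restriction to $\mc C_0$, the passage from the $\Cbb[[\tau]]$-valued construction to holomorphic sections, and the conventions for the dual connection on $\scr T^*_{\fk X}(\Wbb_\blt)$) are mutually compatible, so that "value at $\tau=0$" and "annihilated by $\nabla_{\partial_\tau}$" genuinely refer to the structures on $\scr T^*_{\fk X}(\Wbb_\blt)$ and $\scr T^*_{\fk X_0}(\Wbb_\blt)$ as defined earlier.
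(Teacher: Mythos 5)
Your proposal is correct and follows essentially the same route as the paper: Lemma \ref{lb142} for analyticity of $\upphi$ as an $\scr O_{\mc B}$-module morphism, Lemma \ref{lb140} plus Theorem \ref{lb143} (using that $\mc B$ is Stein) to conclude it is a conformal block, and \eqref{eq239} combined with \eqref{eq230} for $\nabla_{\partial_\tau}\upphi=0$. The additional remarks on linearity and injectivity of $\upphi_0\mapsto\upphi_\tau$ belong to the subsequent proof of Lemma \ref{lb137} rather than to this theorem, but they are consistent with the paper.
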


\begin{proof}
By Lemma \ref{lb142}, we can define $\upphi$ to be an $\scr O_{\mc B}$-module morphism $\Wbb_\blt\otimes\scr O_{\mc B}\rightarrow\scr O_{\mc B}$. It is a conformal block by Lemma \ref{lb140} and Thm. \ref{lb143}. It is annihilated by $\nabla_{\partial_\tau}$ due to \eqref{eq239} and \eqref{eq230}.
\end{proof}

\begin{proof}[Proof of Lemma \ref{lb137}]
For each $\tau_0\in\mc B$, the map $\upphi_0\mapsto\upphi_{\tau_0}$ is linear. Moreover, for sufficiently large $k$, $\upphi_\tau^{\leq k}$ satisfies a linear holomorphic differential equation \eqref{eq243} whose solutions are determined by their values at any fixed point of $\mc B$, say $\tau_0$. So the function $\upphi^{\leq k}_\tau$ of $\tau$ is uniquely determined by $\upphi^{\leq k}_{\tau_0}$. So $\upphi_0^{\leq k}$ is determined by $\upphi_{\tau_0}^{\leq k}$ for all large $k$. So the linear map $\upphi_0\mapsto\upphi_{\tau_0}$ is injective.
\end{proof}

The proof of Thm. \ref{lb135} is complete.

\begin{eg}\label{lb149}
Assume the setting of Example \ref{lb109}. Assume moreover that $\Delta\subset\Cbb$ is an open  disk centered at $0$, and that  the holomorphic function $h$ defined near $\Sbb^1$ is holomorphic on $\Dbb_r^\times$ for some $r>1$ with finite poles at $0$. So $h(z)=\sum_{n\in\Zbb}c_nz^{n+1}$ where $c_n=0$ for sufficiently negative $n$. Using Example \ref{lb148}, it is easy to see that for each $\upphi_0\in\scr T_{\fk X_0}^*(\Wbb_\blt)$, the $\upphi_\tau$ defined by \eqref{eq245} as a formal power series of $\tau$ satisfies Def. \ref{lb147}. So $\upphi_\tau\in\scr T_{\fk X}^*(\Wbb_\blt)(\Delta)$. In particular, for each $w\in\Wbb_\blt$, $\upphi_\tau(w)$ converges a.l.u. on $\tau\in\Delta$.
\end{eg}

\subsection{}

\begin{co}\label{lb146}
Assume the setting of Thm. \ref{lb145}. Then for each $\tau\in\mc B$, the linear map
\begin{align*}
\scr T_{\fk X_0}^*(\Wbb_\blt)\rightarrow\scr T_{\fk X_\tau}^*(\Wbb_\blt),\qquad \upphi_0\mapsto\upphi_\tau
\end{align*}
is bijective.
\end{co}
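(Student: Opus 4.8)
The plan is to deduce Corollary \ref{lb146} directly from Theorem \ref{lb145} together with the local freeness result of Theorem \ref{lb135}. Recall that Theorem \ref{lb145} already gives us, for each $\upphi_0\in\scr T_{\fk X_0}^*(\Wbb_\blt)$, a conformal block $\upphi_\tau\in\scr T_{\fk X}^*(\Wbb_\blt)(\mc B)$, annihilated by $\nabla_{\partial_\tau}$, whose value at $\tau=0$ is $\upphi_0$. Its restriction to the fiber $\fk X_\tau$ lands in $\scr T_{\fk X_\tau}^*(\Wbb_\blt)$ by Theorem \ref{lb143}. So the map $\upphi_0\mapsto\upphi_\tau$ in the statement is well-defined and linear; what remains is bijectivity for each fixed $\tau\in\mc B$.

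First I would argue injectivity, which is already essentially contained in the proof of Lemma \ref{lb137}: if $\upphi_\tau=0$ for some $\tau$, then choosing $k$ large enough that $\upphi^{\leq k}_\bullet$ satisfies the linear holomorphic differential equation \eqref{eq243}, the function $\tau'\mapsto\upphi^{\leq k}_{\tau'}$ is the unique solution with prescribed value at $\tau$, hence is identically zero; since this holds for all large $k$ and $\Wbb_\blt=\bigcup_k\Wbb_\blt^{\leq k}$, we get $\upphi_0=0$. For surjectivity, I would run the same construction with roles of $0$ and $\tau$ interchanged. Fix $\tau_1\in\mc B$ and let $\psi_1\in\scr T_{\fk X_{\tau_1}}^*(\Wbb_\blt)$ be arbitrary. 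Since $\mc B$ is a simply-connected open subset of $\Cbb$, it is biholomorphic to an open disc centered at $\tau_1$ (or equals $\Cbb$), so the whole machinery of Section \ref{lb153} applies with base point $\tau_1$ in place of $0$: running Definition \ref{lb147} with initial value $\psi_1$ at $\tau_1$ produces a global $\nabla_{\partial_\tau}$-horizontal conformal block $\upphi\in\scr T_{\fk X}^*(\Wbb_\blt)(\mc B)$. Setting $\upphi_0:=\upphi|_0\in\scr T_{\fk X_0}^*(\Wbb_\blt)$, I claim the block built from $\upphi_0$ by Theorem \ref{lb145} is exactly $\upphi$, so that its value at $\tau_1$ is $\psi_1$, proving surjectivity.

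The one genuine point to check is this last claim — uniqueness of the $\nabla_{\partial_\tau}$-horizontal global conformal block with a given value at a point. I would establish it as follows: if $\upphi,\upphi'\in\scr T_{\fk X}^*(\Wbb_\blt)(\mc B)$ are both annihilated by $\nabla_{\partial_\tau}$ and agree on the fiber over $0$, then for each fixed $k\geq k_0$ (with $k_0$ as in the proof of Lemma \ref{lb142}) both $\upphi^{\leq k}_\bullet$ and $(\upphi')^{\leq k}_\bullet$ are elements of $(\Wbb_\blt^{\leq k})^*\otimes\scr O(\mc B)$ satisfying the same linear holomorphic ODE \eqref{eq243} with the same initial data at $0$; by the uniqueness part of the basic ODE theory quoted in Lemma \ref{lb141}, they coincide. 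Letting $k\to\infty$ gives $\upphi=\upphi'$. This uniqueness is the main (and only real) obstacle; everything else is a direct reassembly of results already proved. An alternative, shorter route would be to invoke Theorem \ref{lb135}: $\scr T_{\fk X}^*(\Wbb_\blt)$ is locally free with the flat-up-to-scalar connection $\nabla$, so horizontal sections form a local system whose fibers all have dimension $\dim\scr T_{\fk X_0}^*(\Wbb_\blt)$, and the evaluation-at-$\tau$ maps are all isomorphisms onto $\scr T_{\fk X_\tau}^*(\Wbb_\blt)$ because the ranks agree by \eqref{eq246}; I would mention this as a remark but carry out the ODE argument as the primary proof since it is self-contained and does not require unwinding the projective flatness.
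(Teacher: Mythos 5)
Your proof is correct, and it essentially fleshes out the second, ``alternative'' route that the paper only sketches: the paper's primary argument for surjectivity is a one-line dimension count ($\dim\scr T_{\fk X_0}^*(\Wbb_\blt)=\dim\scr T_{\fk X_\tau}^*(\Wbb_\blt)$ by the local freeness Theorem \ref{lb135}), while its alternative is to note that switching the roles of $0$ and $\tau$ gives an injection the other way. You take that alternative as your main argument and supply the detail the paper leaves implicit, namely that the two maps are actually mutually inverse. Your key step --- uniqueness of a $\nabla_{\partial_\tau}$-horizontal conformal block with prescribed value at one point --- is sound: any horizontal conformal block satisfies \eqref{eq243}, since $\partial_\tau\upphi(e_i)=\upphi(\nabla_{\partial_\tau}e_i)$ by \eqref{eq230} and $\upphi$ kills $\scr J_{\fk X}(\Wbb_\blt)(\mc B)$, so the restriction $\upphi^{\leq k}$ is pinned down by ODE uniqueness for every large $k$. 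What this buys you over the paper's versions is self-containedness: the dimension count needs Theorem \ref{lb135} (whose proof is exactly what this corollary is part of establishing), and the bare ``two injections'' argument still needs finite-dimensionality of the fibers to conclude bijectivity, whereas your argument needs neither. The only cosmetic caveat is that Definition \ref{lb147} and Lemma \ref{lb141} are stated with base point $0$, so you should say explicitly (as you do) that the power-series expansion is taken around $\tau_1$ and that global solvability of the linear ODE on the simply-connected $\mc B$ is what lets the horizontal section extend back to $0$.
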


\begin{proof}
The injectivity follows from the proof of Lemma \ref{lb135}. The bijectivity follows from the fact that the two vector spaces have the same dimension (due to Thm. \ref{lb135}). Alternatively, it follows from that by switching the role of $\tau$ and $0$, we have a similar injective linear map $\scr T^*_{\fk X_\tau}(\Wbb_\blt)\rightarrow\scr T^*_{\fk X_0}(\Wbb_\blt)$.
\end{proof}

\begin{co}
Assume the setting of Thm. \ref{lb145}. Then $\scr T_{\fk X}^*(\Wbb_\blt)$ and hence $\scr T_{\fk X}(\Wbb_\blt)$ are  trivial vector bundles on $\mc B$.
\end{co}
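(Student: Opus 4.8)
The plan is to deduce this corollary from the two pillars already established in this section: Theorem \ref{lb135}, which says that $\scr T_{\fk X}(\Wbb_\blt)$ and $\scr T_{\fk X}^*(\Wbb_\blt)$ are locally free $\scr O_{\mc B}$-modules of some locally constant rank $\Rbf$, and Theorem \ref{lb145} together with Corollary \ref{lb146}, which produces, from any given $\upphi_0\in\scr T_{\fk X_0}^*(\Wbb_\blt)$, a global conformal block $\upphi_\tau\in\scr T_{\fk X}^*(\Wbb_\blt)(\mc B)$ restricting to $\upphi_0$ at $\tau=0$. The geometric setting is exactly that of Theorem \ref{lb145}: $\mc B$ is a simply-connected open subset of $\Cbb$ containing $0$ (so in particular $\mc B$ is connected and Stein), and $\fk X$ has been given local coordinates $\eta_\blt$, with $\nabla_{\partial_\tau}$ defined via a chosen lift of $\partial_\tau$.

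First I would record that, by Theorem \ref{lb135}, $\Rbf$ is constant on the connected set $\mc B$; call this common value $R$. By Proposition \ref{lb134} we have $\scr T_{\fk X}^*(\Wbb_\blt)|_0\simeq\scr T_{\fk X_0}^*(\Wbb_\blt)$, so $\dim\scr T_{\fk X_0}^*(\Wbb_\blt)=R$; fix a basis $\upphi_0^{(1)},\dots,\upphi_0^{(R)}$ of this space. For each $j$, apply Theorem \ref{lb145} to obtain a global section $\upphi^{(j)}\in\scr T_{\fk X}^*(\Wbb_\blt)(\mc B)$ with $\upphi^{(j)}|_0=\upphi_0^{(j)}$ (here $\upphi^{(j)}$ is the assignment $\tau\mapsto\upphi^{(j)}_\tau$ of Definition \ref{lb147}). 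I claim these $R$ sections form a frame for the locally free sheaf $\scr T_{\fk X}^*(\Wbb_\blt)$ over all of $\mc B$. The key step is pointwise linear independence: for each fixed $\tau_0\in\mc B$, the restrictions $\upphi^{(1)}|_{\tau_0},\dots,\upphi^{(R)}|_{\tau_0}$ are linearly independent in $\scr T_{\fk X}^*(\Wbb_\blt)|_{\tau_0}\simeq\scr T_{\fk X_{\tau_0}}^*(\Wbb_\blt)$. This follows because, by Corollary \ref{lb146}, the linear map $\scr T_{\fk X_0}^*(\Wbb_\blt)\rightarrow\scr T_{\fk X_{\tau_0}}^*(\Wbb_\blt)$ sending $\upphi_0$ to $\upphi_{\tau_0}$ is a bijection; it sends the basis $(\upphi_0^{(j)})_j$ to $(\upphi^{(j)}|_{\tau_0})_j$, which is therefore a basis of $\scr T_{\fk X_{\tau_0}}^*(\Wbb_\blt)$, in particular linearly independent. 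Since $\dim\scr T_{\fk X}^*(\Wbb_\blt)|_{\tau_0}=R$, these restrictions in fact form a basis of each fiber.

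Next I would upgrade "basis of every fiber" to "global frame of the sheaf." A standard lemma on locally free sheaves (Nakayama's lemma fiberwise, together with the fact that the $\upphi^{(j)}$ are global sections) gives: if $\scr E$ is a locally free $\scr O_{\mc B}$-module of rank $R$ and $s_1,\dots,s_R\in\scr E(\mc B)$ have the property that their images in $\scr E|_b$ are linearly independent for every $b\in\mc B$, then the $\scr O_{\mc B}$-module morphism $\scr O_{\mc B}^{\oplus R}\rightarrow\scr E$ sending the standard basis to $(s_j)_j$ is an isomorphism. Indeed, this morphism is surjective at every fiber by dimension count, hence surjective as a sheaf map (its cokernel is a finite-type sheaf with all fibers zero, so is zero by the Nakayama-type argument — cf. the finite-type sheaf theory invoked for Theorem \ref{lb135}); a surjection between locally free sheaves of the same finite rank is an isomorphism since its kernel is locally a direct summand and has zero rank everywhere. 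Applying this with $\scr E=\scr T_{\fk X}^*(\Wbb_\blt)$ and $s_j=\upphi^{(j)}$ shows $\scr T_{\fk X}^*(\Wbb_\blt)\simeq\scr O_{\mc B}^{\oplus R}$ is a trivial vector bundle. Then, since $\mc B$ is Stein (or simply because the dual of a trivial bundle is trivial), $\scr T_{\fk X}(\Wbb_\blt)\simeq(\scr T_{\fk X}^*(\Wbb_\blt))^\vee\simeq\scr O_{\mc B}^{\oplus R}$ is trivial as well; here one uses that for a locally free sheaf of finite rank the double dual is canonically the sheaf itself, which applies to $\scr T_{\fk X}(\Wbb_\blt)$ by Theorem \ref{lb135}.

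The only genuinely delicate point is whether $\mc B$ in the statement of this corollary is really allowed to be an arbitrary base or must inherit the hypotheses of Theorem \ref{lb145}; the phrasing "Assume the setting of Thm. \ref{lb145}" settles this — $\mc B$ is a simply-connected (hence connected, hence Stein once we note it is either $\Cbb$ or noncompact) open subset of $\Cbb$ — so no extra work is needed to guarantee the existence of the local coordinates $\eta_\blt$, the lift of $\partial_\tau$, or the global sections $\upphi^{(j)}$. I expect the main obstacle, such as it is, to be purely bookkeeping: making sure the fiberwise bijectivity of Corollary \ref{lb146} is correctly matched up with the fiber identification $\scr T_{\fk X}^*(\Wbb_\blt)|_{\tau_0}\simeq\scr T_{\fk X_{\tau_0}}^*(\Wbb_\blt)$ coming from local freeness (Theorem \ref{lb135}) and Proposition \ref{lb134}, so that "linearly independent restrictions of global sections" is literally the hypothesis of the locally-free-frame lemma. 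Once that identification is in place, the argument is immediate.
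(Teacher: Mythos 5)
Your proof is correct and is essentially the paper's argument: the paper simply writes down the $\scr O_{\mc B}$-module morphism $\scr T_{\fk X_0}^*(\Wbb_\blt)\otimes_\Cbb\scr O_{\mc B}\rightarrow\scr T_{\fk X}^*(\Wbb_\blt)$ sending each constant section $\upphi_0$ to the parallel section $\upphi_\tau$ and invokes Cor.~\ref{lb146} to conclude it is an isomorphism, which is exactly your frame $(\upphi^{(j)})_j$ after choosing a basis. Your extra paragraph spelling out the Nakayama-type upgrade from fiberwise bijectivity to a sheaf isomorphism is a correct filling-in of a step the paper leaves implicit, not a different route.
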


\begin{proof}
The $\scr O_{\mc B}$-module morphism
\begin{align*}
\scr T_{\fk X_0}^*(\Wbb_\blt)\otimes_\Cbb\scr O_{\mc B}\rightarrow\scr T_{\fk X}^*(\Wbb_\blt)
\end{align*}
sending each constant section $\upphi_0$ to $\upphi_\tau$ (and hence each $\upphi_0\otimes f$ to $f\upphi_\tau$ where $f\in\scr O_{\mc B}$) is an isomorphism due to Cor. \ref{lb146}.
\end{proof}

\begin{co}
Let $\fk Y=(C;x_1,\dots,x_N)$ be an $N$-pointed compact Riemann surface where $C$ is connected with genus $g$, and associate $\Wbb_i$ to $x_i$. Then the dimension of space of conformal blocks $\dim\scr T_{\fk Y}^*(\Wbb_\blt)$ depends only on $g$, $N$, and the (finitely-)admissible $\Vbb$-modules $\Wbb_1,\dots,\Wbb_N$.
\end{co}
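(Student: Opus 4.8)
The plan is to deduce this corollary from the local freeness Theorem \ref{lb135} together with the connectedness of the moduli of $N$-pointed genus-$g$ curves. The first step is to record that the space of conformal blocks is an isomorphism invariant of the pointed surface: if $\varphi\colon C_1\to C_2$ is a biholomorphism with $\varphi(x_i^{(1)})=x_i^{(2)}$ for all $i$, then the pushforward $\mc V_\varrho(\varphi)$ of Subsec. \ref{lb128} induces an isomorphism $H^0(C_1,\scr V_{C_1}\otimes\omega_{C_1}(\star x_\blt^{(1)}))\xrightarrow{\simeq}H^0(C_2,\scr V_{C_2}\otimes\omega_{C_2}(\star x_\blt^{(2)}))$ intertwining the residue actions on $\scr W_{\fk Y_1}(\Wbb_\blt)$ and $\scr W_{\fk Y_2}(\Wbb_\blt)$ (here one uses the coordinate-free description of the residue action in Subsec. \ref{lb124}); hence it descends to an isomorphism $\scr T_{\fk Y_1}(\Wbb_\blt)\simeq\scr T_{\fk Y_2}(\Wbb_\blt)$, and dually for spaces of conformal blocks. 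So it suffices to show that $\dim\scr T_{\fk Y}^*(\Wbb_\blt)$ is independent of the choice of $\fk Y=(C;x_\blt)$ with $C$ connected of genus $g$, where $\Wbb_i$ is placed at $x_i$.

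Next, I would invoke the classical fact from Teichm\"uller theory that the Teichm\"uller space $\mathcal T_{g,N}$ of $N$-pointed compact connected genus-$g$ Riemann surfaces is a connected complex manifold carrying a holomorphic family $\pi\colon\mc C_{g,N}\to\mathcal T_{g,N}$ with $N$ disjoint holomorphic sections such that every $N$-pointed $\fk Y$ as above is isomorphic to some fiber. (When a fine moduli space is not directly available, it is enough to connect any two such $\fk Y$ by a \emph{finite chain} of holomorphic families of $N$-pointed compact Riemann surfaces with connected bases, assembled from local Kuranishi deformation families together with the connectedness of the coarse moduli space.) Assigning $\Wbb_1,\dots,\Wbb_N$ to the sections makes $\fk X=(\mc C_{g,N}\to\mathcal T_{g,N};\sgm_\blt)$ a family of the kind treated in Subsec. \ref{lb127}, and since $\Vbb$ is $C_2$-cofinite and the $\Wbb_i$ are finitely generated admissible, Theorem \ref{lb135} applies.

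Then Theorem \ref{lb135} says $\scr T_{\fk X}(\Wbb_\blt)$ is a locally free $\scr O_{\mathcal T_{g,N}}$-module, so its rank function $\Rbf(b)=\dim\scr T_{\fk X_b}(\Wbb_\blt)=\dim\scr T_{\fk X_b}^*(\Wbb_\blt)$ (using Prop. \ref{lb134} and the duality in \eqref{eq246}) is locally constant; as $\mathcal T_{g,N}$ is connected, it is constant. Combined with the isomorphism invariance of the first step and the fact that every $\fk Y$ occurs as a fiber, this gives that $\dim\scr T_{\fk Y}^*(\Wbb_\blt)$ depends only on $g$, $N$ and $\Wbb_\blt$. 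In the few degenerate small cases (e.g. $g=0$ with $N\le 3$, where the moduli is a single point, or $g=1$ with $N=1$, where $\mathcal T_{1,1}=\Hbb$ carries the universal elliptic family) the same reasoning applies verbatim, the base being either a point or an explicit connected manifold.

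The substance of the statement is already contained in Theorem \ref{lb135}, which has been proved; what remains is purely geometric bookkeeping. The main (and essentially only) obstacle is producing the connecting family: a single family, or a connected chain of families, realizing all $N$-pointed genus-$g$ surfaces as fibers, with appropriate care about automorphisms. This is why it is cleaner to work over Teichm\"uller space, where an honest universal family exists, rather than over the coarse moduli orbifold; once such a family is in hand the conclusion is immediate from local freeness and connectedness.
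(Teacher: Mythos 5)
Your proposal is correct and follows essentially the same route as the paper: realize every $N$-pointed connected genus-$g$ surface as a fiber of the universal family over the connected Teichm\"uller space $\mc T_{g,N}$ and apply the local freeness Theorem \ref{lb135} to conclude the rank function is constant. The extra care you take with isomorphism invariance via the pushforward $\mc V_\varrho(\varphi)$ is a detail the paper leaves implicit but does not change the argument.
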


So, $\dim\scr T_{\fk Y}^*(\Wbb_\blt)$ does not depend on the complex structure of $C$, the position of $x_\blt$, or the choice of local coordinates. 

\begin{proof}
There is a family $\fk T_{g,N}$ of $N$-pointed compact connected genus-$g$ Riemann surfaces whose base manifold is the Teichm\"uller space $\mc T_{g,N}$ (which is connected), and any $\fk Y$ is equivalent to some fiber of $\fk T_{g,N}$. (See for instance \cite[Chapter XV]{ACG}.) Thus, the corollary follows immediately from Thm. \ref{lb135}.
\end{proof}

\section{Sewing, propagation, and factorization of conformal blocks}

\subsection{}\label{lb169}

Let $\fk X=(\pi:\mc C\rightarrow\Dbb_{r\rho}^\times;x_\blt;\eta_\blt)$ be the family obtained by sewing an $N$-pointed compact Riemann surface with local coordinates $\wtd{\fk X}=(\wtd C;x_\blt,x',x'';\eta_\blt,\xi,\varpi)$ as in Example \ref{lb154}. Recall that we assume, unless otherwise stated, that:
\begin{ass}\label{lb157}
Each connected component of $\wtd C$ contains one of $x_1,\dots,x_N$.
\end{ass}
It follows that each connected component of $\mc C_b$ also contains one of $x_1,\dots,x_N$. 

\begin{cv}\label{lb175}
In this section, by ``$\Vbb$-modules" we mean finitely admissible $\Vbb$-modules. 
\end{cv}
Let $\Wbb_1,\dots,\Wbb_N,\Mbb$ be $\Vbb$-modules. We associate $\Wbb_1,\dots,\Wbb_N,\Mbb,\Mbb'$ to the marked points $x_\blt,x',x''$ of $\wtd{\fk X}$ and $\Wbb_1,\dots,\Wbb_N$ to $x_1,\dots,x_N$ of $\fk X$. Recall that $\Mbb'$ is the contragredient of $\Mbb$. Identify
\begin{gather*}
\scr W_{\wtd{\fk X}}(\Wbb_\blt\otimes\Mbb\otimes\Mbb')=\Wbb_\blt\otimes\Mbb\otimes\Mbb'\qquad\text{via }\mc U(\eta_\blt,\xi,\varpi),\\
\scr W_{\fk X}(\Wbb_\blt)=\Wbb_\blt\qquad\text{via }\mc U(\eta_\blt).
\end{gather*}

\subsection{}

Let $\upphi:\Wbb_\blt\otimes\Mbb\otimes\Mbb'\rightarrow\Cbb$ be a conformal block associated to $\wtd{\fk X}$ and $\Wbb_\blt,\Mbb,\Mbb$. Let \index{zz@$\bowtie$}
\begin{align}
\bowtie_n=\sum_a m(n,a)\otimes \wch m(n,a)\in \Mbb(n)\otimes\Mbb(n)^*
\end{align}
be the contraction where $\{m(n,a):a\in\fk A_n\}$ is a basis of $\Mbb(n)$ with dual basis $\{\wch m(n,a):a\in\fk A_n\}$. Equivalently, $\bowtie_n$ is the identity operator when viewed as an element of $\End(\Wbb(n))$. Recall that $\Mbb(n)$ and $\Mbb(n)^*$ are respectively the $\wtd L_0$-weight $n$ subspaces of $\Mbb$ and $\Mbb'$ respectively. We define a linear map \index{S@$\wtd{\mc S}\upphi$, the normalized sewing}
\begin{gather}
\begin{gathered}
\wtd{\mc S}\upphi:\Wbb_\blt\rightarrow\Cbb[[q]]\\
\wtd{\mc S}\upphi(w_\blt)=\wtd{\mc S}_q\upphi(w_\blt)=\sum_{n\in\Nbb}\upphi(w_\blt\otimes \bowtie_n)q^n=\sum_{n\in\Nbb}\upphi\big(w_\blt\otimes \Wbb\underbrace{(n)\otimes\Wbb}_{\text{contraction}}(n)^*\big)q^n
\end{gathered}
\end{gather}
called the \textbf{(normalized) sewing} of $\upphi$. 

The meaning of $\wtd{\mc S}\upphi(w_\blt)$ is easy to understand: Informally, 
\begin{align}
\wtd{\mc S}_q\upphi(w_\blt)=\upphi\big(w_\blt\otimes q^{\wtd L_0}\underbrace{\cdot~~\otimes~~\cdot}_{\text{contraction}}\big)=\upphi\big(w_\blt\otimes \underbrace{\cdot~~\otimes~~q^{\wtd L_0}\cdot}_{\text{contraction}}\big)
\end{align}
since we can place the projection $P_n$ on the right of $q^{\wtd L_0}$ and take the sum over all $n$, noting that $q^{\wtd L_0}P_n=q^nP_n$. Suppose that the series $\wtd{\mc S}_q\upphi(w_\blt)$ of $q$ converges a.l.u. on $\Dbb_{r\rho}^\times$. Note that for each  $q$, $\fk X_q$ is obtained by scaling either $\xi$ or $\varpi$ by $q^{-1}$ (or more generally, scaling $\xi$ and $\varpi$ by $q_1^{-1},q_2^{-1}$ such that $q_1q_2=q$) and then perform the sewing as in Subsec. \ref{lb156} along $x'$ and $x''$ using their local coordinates. Then $\wtd{\mc S}_q\upphi(w_\blt)$ is the contraction with respect to this sewing.

We can also use $L_0$ instead of $\wtd L_0$ for scaling. For simplicity, we assume that $\Mbb$ is irreducible (or more generally, that $L_0-\wtd L_0$ is a constant on $\Mbb$), then we define the \textbf{(standard) sewing} of $\upphi$ to be \index{S@$\mc S\upphi$, the standard sewing}
\begin{align}
\mc S\upphi=q^d\cdot\wtd{\mc S}\upphi:\Wbb_\blt\rightarrow\Cbb\{q\} 
\end{align} 
where $d\cdot \id_\Mbb=L_0|_\Mbb-\wtd L_0|_\Mbb$. Here, we have used the notation that for any vector space $W$, \index{Wz@$W\{z\}$}
\begin{align*}
W\{q\}=\Big\{\sum_{n\in\Cbb} w_nq^n:w_n\in W\Big\}.
\end{align*}
By linearity, we can extend the definition of $\mc S\upphi$ to the case that $\Mbb$ is a \textbf{semi-simple $\Vbb$-module}, \index{00@Semisimple VOA modules} i.e. a direct sum of irreducible $\Vbb$-modules.

\subsection{}

A proof of the following theorem can be found in \cite[Sec. 3.3]{Gui} or \cite[Sec. 10, 11]{Gui20}.

\begin{thm}\label{lb159}
Instead of Assumption \ref{lb157}, we assume a weaker condition that for each $q\in\Dbb_{r\rho}^\times$, each connected component of $\mc C_q$ contains one of $x_1,\dots,x_N$. Let $\upphi\in\scr T^*_{\wtd{\fk X}}(\Wbb_\blt\otimes\Mbb\otimes\Mbb')$. Then $\wtd{\mc S}\upphi$ is a conformal block associated to $\fk X$, provided that  $\wtd{\mc S}\upphi(w_\blt)$ converges a.l.u. on $q\in\Dbb_{r\rho}^\times$ (equivalently, converges absolutely on $\Dbb_{r\rho}$ or on $\Dbb_{r\rho}^\times$) for each $w_\blt\in\Wbb_\blt$.
\end{thm}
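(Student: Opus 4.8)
The plan is to reduce to a fiberwise statement and then to a single residue identity at the node. First, since by hypothesis $\wtd{\mc S}\upphi(w_\blt)$ converges a.l.u.\ in $q$ on $\Dbb_{r\rho}^\times$ for every $w_\blt\in\Wbb_\blt$, and since the trivialization $\mc U(\eta_\blt)$ used to identify $\scr W_{\fk X}(\Wbb_\blt)$ with $\Wbb_\blt\otimes\scr O_{\Dbb_{r\rho}^\times}$ does not depend on $q$, the assignment $w_\blt\mapsto\wtd{\mc S}_q\upphi(w_\blt)$ extends $\scr O(\Dbb_{r\rho}^\times)$-linearly to an $\scr O_{\Dbb_{r\rho}^\times}$-module morphism $\wtd{\mc S}\upphi\colon\scr W_{\fk X}(\Wbb_\blt)\to\scr O_{\Dbb_{r\rho}^\times}$. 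By Theorem \ref{lb143}, $\wtd{\mc S}\upphi$ is then a conformal block associated to $\fk X$ if and only if its restriction $\wtd{\mc S}_q\upphi\colon\Wbb_\blt\to\Cbb$ to every fiber $\fk X_q=(\mc C_q;x_\blt;\eta_\blt)$ is a conformal block; by the algebraic definition this amounts to the identity
\begin{align}
\wtd{\mc S}_q\upphi(\sigma\cdot w_\blt)=0\qquad\big(\sigma\in H^0(\mc C_q,\scr V_{\mc C_q}\otimes\omega_{\mc C_q}(\star x_\blt)),\ w_\blt\in\Wbb_\blt,\ q\in\Dbb_{r\rho}^\times\big).
\end{align}

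Next I would transport this identity to $\wtd C$. Recall that $\mc C_q$ is built from $\wtd C$ by deleting the closed disks $Z'_q,Z''_q$ around $x',x''$ and gluing the annuli $\xi^{-1}(A_{|q|/\rho,r})$, $\varpi^{-1}(A_{|q|/r,\rho})$ via $\xi\varpi=q$. Restrict $\sigma$ to this node annulus and expand it in the $\xi$-coordinate; by Lemma \ref{lb76} the Laurent series converges a.l.u.\ there with coefficients in $\Vbb^{\leq n}$ for some $n$. Split it into the part $\sigma^+$ with non-negative powers of $\xi$ (which extends holomorphically across $x'$) and the part $\sigma^-$ with negative powers; using the gluing relation $\varpi=q/\xi$ together with the change-of-coordinate and pushforward formulas (Theorems \ref{lb93}, \ref{lb95}, \ref{lb100} and eq.\ \eqref{eq197}), $\sigma^-$ transports to a section near $x''$ with at worst a simple pole. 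Feeding the resulting data into the conformal-block property of $\upphi$ on $\wtd C$ (residue theorem on $\wtd C$; note $\sigma\cdot w_i$ depends only on the germ of $\sigma$ at $x_i$, which is untouched) yields, for each $m\otimes m'\in\Mbb\otimes\Mbb'$,
\begin{align}
\sum_{i=1}^N\upphi\big(w_1\otimes\cdots\otimes(\sigma\cdot w_i)\otimes\cdots\otimes w_N\otimes m\otimes m'\big)=-\upphi\big(w_\blt\otimes(\sigma^+\cdot m)\otimes m'\big)-\upphi\big(w_\blt\otimes m\otimes(\sigma^-\cdot m')\big),
\end{align}
where on the right $\sigma^+\cdot m$ and $\sigma^-\cdot m'$ denote the residue actions at $x',x''$ on $\Mbb$ and $\Mbb'$.

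Finally — and this is where I expect the real work to lie — I would substitute $m\otimes m'=\bowtie_n$, multiply by $q^n$, and sum over $n\in\Nbb$, reducing the desired identity to the vanishing of $\sum_n q^n\big[\upphi(w_\blt\otimes(\sigma^+\cdot\bowtie_n))+\upphi(w_\blt\otimes(\sigma^-\cdot\bowtie_n))\big]$. This vanishing should be a telescoping cancellation: the factor $q^{\wtd L_0}$ inserted by the gluing $\xi\varpi=q$ (with the scalar $L_0-\wtd L_0$ absorbed into the standard sewing) combined with the very definition of the contragredient module $\Mbb'$ (eq.\ \eqref{eq141}, built from $e^{zL_1}(-z^{-2})^{L_0}$) makes the residue action of $\sigma^-$ on $\Mbb'$ the transpose, mode by mode, of the residue action of $\sigma^+$ on $\Mbb$ under $z\mapsto q/z$, so that consecutive terms in the sum over $n$ cancel. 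Making this rigorous requires three things that I anticipate as the main obstacles: (i) justifying the interchanges of $\sum_n$ with the contour integrals defining the residue actions and with the Laurent expansion at the node — this is exactly what the a.l.u.\ convergence hypothesis on $\wtd{\mc S}_q\upphi$ is for; (ii) a lower-truncation argument ensuring only finitely many modes of $\sigma$ act nontrivially on each weight space $\Mbb(n)$, so the telescoping is legitimate term by term; and (iii) setting up the transport in the second step carefully enough that the germs of the transported data at $x_\blt$ genuinely agree with those of $\sigma$ while the germs at $x',x''$ are $\sigma^+$ and the $\varpi$-image of $\sigma^-$. An alternative that avoids the explicit transport is to establish the complex-analytic axiom of Definition \ref{lb103} for $\wtd{\mc S}_q\upphi$ directly by propagating $\upphi$ at an auxiliary point of $\wtd C$ and sewing; I would switch to this route if the residue bookkeeping above becomes unwieldy.
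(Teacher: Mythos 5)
The paper itself does not prove Theorem \ref{lb159}; it defers to \cite[Sec. 3.3]{Gui} and \cite[Sec. 10, 11]{Gui20}. So I am judging your proposal on its own merits. Your overall architecture is the standard and correct one: reduce via Theorem \ref{lb143} to the fiberwise statement that $\wtd{\mc S}_q\upphi$ kills $\scr J_{\fk X_q}(\Wbb_\blt)$, transport a section $\sigma\in H^0(\mc C_q,\scr V_{\mc C_q}\otimes\omega_{\mc C_q}(\star x_\blt))$ back to $\wtd C$ minus the two discarded disks, apply the residue theorem to the pairing of $\sigma$ with the propagation $\wr\upphi(\cdot,w_\blt\otimes m\otimes m')$, and then kill the boundary contributions at the node by contracting with $\sum_nq^n\bowtie_n$ and invoking the contragredient structure. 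Your step 1 is fine, and your list of technical obstacles (interchange of sums with contour integrals via a.l.u.\ convergence, lower truncation, careful transport) is the right list.

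The gap is in steps 3 and 4: the displayed identity is false for individual $m\otimes m'$, and the cancellation you describe pairs the wrong pieces. What the residue theorem actually gives, applied to the meromorphic $1$-form $\bk{\wr\upphi(\cdot,w_\blt\otimes m\otimes m'),\sigma}$ on $\wtd C$ minus the two disks, is
\begin{align*}
\sum_{i=1}^N\upphi\big(\cdots\otimes(\sigma\cdot w_i)\otimes\cdots\big)
=-\frac 1{2\pi\im}\oint_{C'}\bk{\wr\upphi,\sigma}-\frac 1{2\pi\im}\oint_{C''}\bk{\wr\upphi,\sigma},
\end{align*}
where $C'\subset\xi^{-1}(A_{|q|/\rho,r})$ and $C''\subset\varpi^{-1}(A_{|q|/r,\rho})$, and each boundary integral involves \emph{all} Laurent modes $v_k\xi^kd\xi$ ($k\in\Zbb$) of $\sigma$ on the node annulus — an infinite but convergent sum — not just the modes of $\sigma^+$ at $x'$ and of $\sigma^-$ at $x''$. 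Your identity omits the two infinite tails $\sum_{k<0}\upphi(w_\blt\otimes Y(v_k)_km\otimes m')$ at $x'$ and the transported $\sigma^+$-modes at $x''$, which do not vanish for fixed $m,m'$. You cannot repair this by "feeding the data into the conformal-block property of $\upphi$," because there is no global section of $\scr V_{\wtd C}\otimes\omega_{\wtd C}(\star x_\blt+\star x'+\star x'')$ whose germs at $x_\blt$ agree with $\sigma$ and whose germs at $x',x''$ are $\sigma^+$ and the transport of $\sigma^-$: these pieces live only on the annulus, and $\sigma$ itself restricts there to $\sigma^++\sigma^-$. Moreover, the cancellation after contraction is not between $\sigma^+$ at $x'$ and $\sigma^-$ at $x''$. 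Under $\xi\mapsto q/\xi$ a monomial $\xi^kd\xi$ with $k\geq0$ becomes $-q^{k+1}\varpi^{-k-2}d\varpi$, i.e.\ the transpose of the $\sigma^+$-action on $\Mbb$ corresponds to the action on $\Mbb'$ of the \emph{transport of $\sigma^+$} (powers $\leq-2$ of $\varpi$), not of $\sigma^-$ (powers $\geq-1$ of $\varpi$). The correct mechanism is that for each single $k\in\Zbb$, the contraction of $Y(v_k)_k$ acting at $x'$ against $\sum_nq^n\bowtie_n$ cancels the contribution of the same monomial $v_k\xi^kd\xi$, rewritten in the $\varpi$-coordinate via \eqref{eq133} and \eqref{eq197}, acting at $x''$; summing over $k$ kills the two full boundary integrals simultaneously. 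So the decomposition $\sigma=\sigma^++\sigma^-$ should be dropped altogether, and the argument rebuilt around the two full contour integrals and the mode-by-mode transpose identity coming from the definition of $\Mbb'$ and the gluing $\xi\varpi=q$.
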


For instance, suppose that $N>0$, and $\wtd C$ is a disjoint union of two connected Riemann surfaces $\wtd C_1,\wtd C_2$ such that  $x'\in\wtd C_1$ and $x_1,\dots,x_N,x''\in\wtd C_2$. Then the condition in this theorem is satisfied but Assumption \ref{lb157} is not.

By Thm. \ref{lb143}, that $\wtd{\mc S}\upphi$ is a conformal block means the following equivalent conditions:
\begin{itemize}
\item $\wtd{\mc S_q}\upphi\in \scr T_{\fk X_q}^*(\Wbb_\blt)$ for each $q\in\Dbb_{r\rho}^\times$.
\item By extending $\wtd{\mc S}\upphi$  to an $\scr O_{\Dbb_{r\rho}^\times}$-module morphism
\begin{align}
\wtd{\mc S}\upphi:\Wbb_\blt\otimes_\Cbb\scr O_{\Dbb_{r\rho}^\times}\rightarrow \scr O_{\Dbb_{r\rho}^\times},\label{eq248}
\end{align}
$\wtd{\mc S}\upphi$ vanishes on $\scr J_{\fk X}(\Wbb_\blt)(\Dbb_{r\rho}^\times)$.
\item As an $\scr O_{\Dbb_{r\rho}}^\times$-module morphism, $\wtd{\mc S}\upphi$ is an element of $H^0(\Dbb_{r\rho}^\times,\scr T_{\fk X}^*(\Wbb_\blt))$.
\end{itemize}

\subsection{}

We give an application of Thm. \ref{lb159}. Assume only in this subsection and the next one that $\fk X=(C;x_\blt;\eta_\blt)$ is an $N$-pointed compact Riemann surface. Recall that by Assumption \ref{lb102}, each connected component of $C$ contains one of $x_1,\dots,x_N$. Identify
\begin{align}
\scr W_{\fk X}(\Wbb_\blt)=\Wbb_\blt\qquad\text{via }\mc U(\eta_\blt).\label{eq250}
\end{align}
Let $\upphi:\Wbb_\blt\rightarrow\Cbb$ be a conformal block associated to $\fk X$.  We use the notations in Subsec. \ref{lb160}. Recall that \eqref{eq175}  gives an explicit formula for $\wr\upphi_x$ when $x$ is close to $x_j$, and the RHS of \eqref{eq175} converges a.l.u.. for such $x$. It is clear that the RHS of \eqref{eq175} is the sewing of a conformal block associated to $\fk P_{\eta_j(x)}\sqcup \fk X$. Therefore, by Thm. \ref{lb159}, $\wr\upphi_x$ is a conformal block associated to $\wr\fk X_x\simeq \fk P_{\eta_j(x)}\#\fk X$ and $\Vbb,\Wbb_\blt$ when $x$ is close to $x_j$.

Let us be more precise. Recall
\begin{align}
\wr\fk X_x=(C;x,x_\blt)
\end{align}
where $x\neq x_1,\dots,x_N$. By Def. \ref{lb103} and Rem. \ref{lb191}, we have an $\scr O_{C\setminus x_\blt}$-module morphism
\begin{align}
\wr\upphi:\scr V_{C\setminus x_\blt}\otimes_\Cbb\Wbb_\blt\rightarrow\scr O_{C\setminus x_\blt},\qquad v\otimes w_\blt\mapsto\wr\upphi(v,w_\blt).
\end{align}
For each $x\in C\setminus x_\blt$, we have a linear map
\begin{align}
\wr\upphi|_x:\scr V_C|x\otimes_\Cbb\Wbb_\blt\rightarrow\Cbb.
\end{align}
For every neighborhood $U$ of $x$ and a univalent $\mu\in\scr O(U)$, the equivalence $\mc U_\varrho(\mu):\scr V_C|_U\xrightarrow{\simeq}\Vbb\otimes_\Cbb\scr O_U$ restricts to $\mc U_\varrho(\mu):\scr V_C|x\xrightarrow{\simeq}\Vbb$. Note that $\mu-\mu(x),\eta_\blt$ are local coordinates of $\wr\fk X_x$ at $x,x_\blt$. We then have an equivalence
\begin{align}
\scr W_{\wr\fk X_x}(\Vbb\otimes\Wbb_\blt)\xrightarrow[\simeq]{\mc U(\mu-\mu(x),\eta_\blt)} \Vbb\otimes\Wbb_\blt\xrightarrow[\simeq]{\mc U_\varrho(\mu)^{-1}\otimes\id}\scr V_C|x\otimes\Wbb_\blt.\label{eq249}
\end{align}
\begin{exe}
Show that the equivalence \eqref{eq249} is independent of the choice of $\mu$.
\end{exe}
Thus, by identifying $\scr W_{\wr\fk X_x}(\Vbb\otimes\Wbb_\blt)$ with $\scr V_C|x\otimes\Wbb_\blt$ via \eqref{eq249}, we see that $\wr\upphi|_x$ is a linear functional
\begin{align}
\wr\upphi|_x:\scr W_{\wr\fk X_x}(\Vbb\otimes\Wbb_\blt)\rightarrow\Cbb.
\end{align}
(Indeed, one can check that this definition is also independent of the local coordinates $\eta_\blt$ of $\fk X$).

By the discussion at the beginning of this subsection, $\wr\upphi|_x$ is a conformal block when $x$ is near any marked point $x_i$. Thus,  by Cor. \ref{lb158} and the fact that each connected component of $C\setminus x_\blt$ intersects a neighborhood of $x_j$ for some $j$, we conclude that $\wr\upphi|_x$ is a conformal block for every $x\in C\setminus x_\blt$. Note that in order to apply Cor. \ref{lb158}, we should organize all $\wr\fk X_x$ to a family
\begin{align}
\wr\fk X=(C\times (C\setminus x_\blt)\rightarrow C\setminus x_\blt;\sgm,x_1,\dots,x_N)
\end{align}
where $\sgm$ sends each $x\in C\setminus x_\blt$ to $(x,x)$ and $x_j$ sends $x$ to $(x_j,x)$. Clearly the fiber of $\wr\fk X$ at each $x\in C\setminus x_\blt$ is $\wr\fk X_x$. Thus, we can view $\wr\upphi$ as an $\scr O_{C\setminus x_\blt}$-morphism $\scr W_{\wr\fk X}(\Vbb\otimes\Wbb_\blt)\rightarrow\scr O_{C\setminus x_\blt}$. It is a global conformal block since it is so near $x_1,\dots,x_N$. We conclude:

\begin{thm}\label{lb161}
Let $\upphi\in\scr T_{\fk X}^*(\Wbb_\blt)$. Then the $\scr O_{C\setminus x_\blt}$-module morphism $\wr\upphi:\scr W_{\wr\fk X}(\Vbb\otimes\Wbb_\blt)\rightarrow\scr O_{C\setminus x_\blt}$ is a conformal block associated to $\wr\fk X$ and $\Vbb,\Wbb_\blt$, called the \textbf{propagation of $\upphi$}. \index{zz@$\wr\upphi$, propagation of conformal blocs}
\end{thm}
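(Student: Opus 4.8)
The plan is to show that $\wr\upphi$, regarded fibrewise over the base $C\setminus\{x_\blt\}$, is a conformal block by first exhibiting it near each original marked point as a sewing of known conformal blocks, invoking the sewing theorem there, and then spreading the conclusion over the whole base by the connectedness argument of Cor.\ \ref{lb158}.

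First I would recall the data already in place: by Def.\ \ref{lb103} and Rem.\ \ref{lb191}, $w_\blt\mapsto\wr\upphi(\cdot,w_\blt)$ is a linear family of $\scr O_{C\setminus\{x_\blt\}}$-module morphisms $\scr V_{C\setminus\{x_\blt\}}\to\scr O_{C\setminus\{x_\blt\}}$; after forming the family $\wr\fk X$ with fibre $\wr\fk X_x=(C;x,x_\blt)$ over $x$, and identifying $\scr W_{\wr\fk X_x}(\Vbb\otimes\Wbb_\blt)$ with $\scr V_C|_x\otimes_\Cbb\Wbb_\blt$ via \eqref{eq249} (which is independent of the auxiliary univalent map and of $\eta_\blt$), the propagation becomes a single $\scr O_{C\setminus\{x_\blt\}}$-module morphism $\wr\upphi\colon\scr W_{\wr\fk X}(\Vbb\otimes\Wbb_\blt)\to\scr O_{C\setminus\{x_\blt\}}$. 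By Thm.\ \ref{lb143} it then suffices to prove that the fibre functional $\wr\upphi|_x$ is a conformal block associated to $\wr\fk X_x$ for every $x\in C\setminus\{x_\blt\}$.

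Second, I would verify this for $x$ inside a coordinate neighbourhood $U_j$ of some $x_j$. There \eqref{eq175} together with its a.l.u.\ convergence \eqref{eq191} gives the closed formula $\wr\upphi_x(v\otimes w_\blt)=\sum_{n\in\Nbb}\upphi\big(w_1\otimes\cdots\otimes P_nY(v,\eta_j(x))w_j\otimes\cdots\otimes w_N\big)$. The right-hand side is precisely the normalized sewing $\wtd{\mc S}(\upomega_{\eta_j(x)}\otimes\upphi)$ (in the sense of Subsec.\ \ref{lb169}), where $\upomega_{\eta_j(x)}$ is the vertex-operator conformal block of Example \ref{lb173} on $\fk P_{\eta_j(x)}=(\Pbb^1;0,\eta_j(x),\infty;\zeta,\zeta-\eta_j(x),1/\zeta)$ carrying $\Wbb_j,\Vbb,\Wbb_j'$ at $0,\eta_j(x),\infty$, and $\upomega_{\eta_j(x)}\otimes\upphi$ is the (evidently still conformal) tensor-product block on $\fk P_{\eta_j(x)}\sqcup\fk X$; the contraction is over the pair $\Wbb_j'$ at $\infty$ of $\fk P_{\eta_j(x)}$ and $\Wbb_j$ at $x_j$ of $\fk X$, which are mutually dual by Thm.\ \ref{lb94}. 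Since $\fk P_{\eta_j(x)}\#\fk X\simeq\wr\fk X_x$ with $0,\eta_j(x)$ becoming $x_j,x$, and since the sewing series converges by \eqref{eq191}, Thm.\ \ref{lb159} (after the harmless rescaling of local coordinates of Rem.\ \ref{lb34} so that $\wr\fk X_x$ is realized as an honest fibre of a sewing family) gives $\wr\upphi|_x\in\scr T^*_{\wr\fk X_x}(\Vbb\otimes\Wbb_\blt)$ for all $x\in U_j\setminus\{x_j\}$.

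Finally, I would globalize. Each connected component $\mc B_0$ of $C\setminus\{x_\blt\}$ is a connected component of $C$ with finitely many points deleted, hence connected, and by Assumption \ref{lb102} it meets $U_j\setminus\{x_j\}$ for at least one $j$. Applying Cor.\ \ref{lb158} to the restricted family $\wr\fk X|_{\mc B_0}$, to the morphism $\wr\upphi|_{\mc B_0}$, and to the non-empty open set $V=(U_j\setminus\{x_j\})\cap\mc B_0$ on which it is a conformal block by the previous step, we conclude that $\wr\upphi|_{\mc B_0}$ is a conformal block; ranging over all components yields the theorem. I expect the main obstacle to be the second step, namely the bookkeeping needed to check that the coordinate-free fibre functional $\wr\upphi|_x$ defined through \eqref{eq249} really matches the normalized sewing $\wtd{\mc S}(\upomega_{\eta_j(x)}\otimes\upphi)$ and that the geometric isomorphism $\fk P_{\eta_j(x)}\#\fk X\simeq\wr\fk X_x$ transports one functional to the other with $\wr\fk X_x$ appearing as a fibre of a sewing family; the convergence required for Thm.\ \ref{lb159} is already supplied by \eqref{eq191}, and the stability of conformal blocks under tensor product is immediate from the algebraic definition, so these cost nothing.
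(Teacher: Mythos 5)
Your proposal is correct and follows essentially the same route as the paper: identify $\wr\upphi_x$ for $x$ near a marked point $x_j$ with the (convergent, by \eqref{eq191}) sewing of the vertex-operator conformal block of Example \ref{lb173} on $\fk P_{\eta_j(x)}$ with $\upphi$ on $\fk X$, apply Thm.\ \ref{lb159}, and then spread the conclusion over each connected component of $C\setminus\{x_\blt\}$ via Cor.\ \ref{lb158}. The only difference is that you spell out the sewing identification more explicitly than the paper, which simply asserts it; this is a faithful filling-in rather than a new argument.
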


We can consider \textbf{multi-propagations of conformal blocks}. Namely, we let several distinct points $y_1,\dots,y_n$ (instead of a single point $x$) vary on $C\setminus x_\blt$, which gives a family $\wr^n\fk X$ with base manifold $\Conf^n(C\setminus x_\blt)$ and fibers $(C;y_1,\dots,y_n,x_1,\dots,x_N)$. Then one has the  $n$-propagation $\wr^n\upphi$ defined inductively by $\wr(\wr^{n-1}\upphi)$, which is a conformal block associated to $\wr^n\fk X$ and $\Vbb,\dots,\Vbb,\Wbb_1,\dots,\Wbb_N$. For instance, the $(N+2)$-point function $\bk{w',Y(v_1,z_1)\cdots Y(v_N,z_N)w}$ is the $n$-propagation of the conformal block $w\otimes w'\in\Wbb\otimes\Wbb'\mapsto \bk{w,w'}$ associated to $(\Pbb^1;0,\infty;\zeta,1/\zeta)$. See \cite[Sec. 3.4]{Gui} or \cite{Gui21} for details.

\subsection{}

As in the previous subsection, let $\fk X=(C;x_\blt;\eta_\blt)$ be $N$-pointed, and assume the identification \eqref{eq250}. We give two applications of propagation of conformal blocks. The first one uses only the fact that $\wr\upphi(\cdot,w_\blt)$ is an $\scr O_{C\setminus x_\blt}$-module, but not really Thm. \ref{lb161}. Recall the meaning of generating subsets of $\Vbb$-modules in Def. \ref{lb165}.

\begin{pp}\label{lb167}
Assume that $C$ is connected and $N\geq 2$. For each $j=2,\dots,N$, chose a subset $E_j\subset \Wbb_j$ generating $\Wbb_j$. Let $\upphi:\Wbb_\blt\rightarrow\Cbb$ be a conformal block associated to $\fk X$ and $\Wbb_\blt$. Then $\upphi=0$ if $\upphi(w_1\otimes w_2\otimes\cdots\otimes w_N)=0$ for all $w_1\in\Wbb_1$ and $w_2\in E_2,\dots,w_N\in E_N$.
\end{pp}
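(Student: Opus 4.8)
The plan is to use the propagation theorem (Thm.~\ref{lb161}) to reduce a vanishing statement about $\upphi$ on $\Wbb_1\otimes E_2\otimes\cdots\otimes E_N$ to a vanishing statement on all of $\Wbb_\blt$, by ``growing'' the generating sets $E_j$ into the full modules $\Wbb_j$ one Fourier mode at a time. The key observation is that each $\Wbb_j$ is spanned by vectors of the form $Y_{\Wbb_j}(v)_nw_j$ with $v\in\Vbb$, $n\in\Zbb$, $w_j$ ranging over $E_j$ together with iterated such expressions; so it suffices to show that if $\upphi$ vanishes whenever the $j$-th slot is filled by some fixed $w_j$, then it also vanishes when that slot is filled by $Y_{\Wbb_j}(v)_nw_j$ for any $v\in\Vbb$ and $n\in\Zbb$. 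This is where propagation enters: the $(N+1)$-pointed conformal block $\wr\upphi$ obtained by inserting a $\Vbb$-vector at a new moving point $x$ is, for $x$ near $x_j$ and in the local coordinate $\eta_j-\eta_j(x)$, given by \eqref{eq175}, i.e. by $\upphi$ with $Y(v,\eta_j(x))w_j$ in the $j$-th slot; extracting the coefficient of $\eta_j(x)^{-n-1}$ recovers $\upphi$ with $Y(v)_nw_j$ in that slot.

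\textbf{Main steps.} First I would set up the propagation $\wr\upphi$ as in Thm.~\ref{lb161}, an $\scr O_{C\setminus x_\blt}$-module morphism $\scr W_{\wr\fk X}(\Vbb\otimes\Wbb_\blt)\to\scr O_{C\setminus x_\blt}$ which is a conformal block for $\wr\fk X$. Second, fix $w_1\in\Wbb_1$ and $w_j\in E_j$ for $j\geq 2$, and consider the holomorphic function $x\mapsto\wr\upphi|_x(\vbf\otimes w_1\otimes\cdots\otimes w_N)$ on $C\setminus x_\blt$, where $\vbf$ is a local section of $\scr V_C$. For $x$ in a punctured neighborhood of $x_j$ ($j\geq 2$), formula \eqref{eq175}/\eqref{eq191} expresses this function as $\sum_{m}\upphi(w_1\otimes\cdots\otimes Y(v)_m w_j\otimes\cdots)\eta_j(x)^{-m-1}$ (with $v$ the constant trivialization of $\vbf$). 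The hypothesis says $\upphi$ vanishes on all such slot-fillings with $w_j\in E_j$ — but that is not quite enough at this point, since we need to kill these functions near $x_j$ for \emph{all} $w_j\in\Wbb_j$, not just $E_j$. The resolution: argue by a dimension/weight induction. Order the spanning vectors of $\Wbb_j$ by $\wtd L_0$-weight; the lowest-weight ones lie in (the span of) $E_j$ by the generating property combined with a lower-truncation argument, and $\upphi$ vanishes on those; then if $\upphi$ vanishes on all slot-fillings of weight $<k$, formula \eqref{eq175} applied to $\wr\upphi$ near $x_j$ shows $\wr\upphi|_x(\vbf\otimes\cdots)$, as a function of $x$ near $x_j$, has its principal part determined by weight-$<k$ fillings hence vanishes to appropriate order; since $\wr\upphi|_x$ is a conformal block for every $x$ (Thm.~\ref{lb161}), and more importantly $x\mapsto\wr\upphi|_x(\cdots)$ is holomorphic on the connected set $C\setminus x_\blt$ and vanishes on a nonempty open subset (a punctured neighborhood of $x_j$), it vanishes identically. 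Evaluating near $x_i$ for another $i$ and extracting coefficients then yields vanishing of $\upphi$ with weight-$k$ fillings in other slots too, closing the induction.

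\textbf{Alternative cleaner route.} Actually the slickest organization avoids the full induction: since $\wr\upphi(\cdot,w_\blt)$ is an $\scr O_{C\setminus x_\blt}$-module morphism $\scr V_{C\setminus x_\blt}\to\scr O_{C\setminus x_\blt}$, and $C$ is connected with $N\geq 2$, if this morphism vanishes on a neighborhood of one point it vanishes globally (identity theorem for the holomorphic functions $\wr\upphi(\vbf,w_\blt)$, exactly as in the uniqueness remark after Def.~\ref{lb103}). So: fix $w_1\in\Wbb_1$; I claim $\upphi(w_1\otimes\cdots)$ vanishes on $\Wbb_2\otimes\cdots\otimes\Wbb_N$ by downward induction on the number of slots $j$ with $w_j$ required to lie in $E_j$. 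For the inductive step at slot $j$, knowing $\upphi$ vanishes whenever slots $j+1,\dots,N$ are filled from $E_{\bullet}$ and slot $j$ from $E_j$: near $x_j$, \eqref{eq175} gives $\wr\upphi|_x$ in terms of $Y(v)_m w_j$ fillings, all of which with $w_j\in E_j$ vanish, so $\wr\upphi|_x(\cdots\otimes w_1\otimes\cdots\otimes\widehat{w_j}\otimes\cdots)$ — with the $j$-slot now absorbed into the propagated point — vanishes near $x_j$, hence globally on $C\setminus x_\blt$; evaluating near $x_i$ with $i\in\{2,\dots,N\}\setminus\{j\}$ or near $x_1$ and using \eqref{eq175} again expresses $\upphi$ with the $j$-slot replaced by a \emph{general} $Y_{\Wbb_j}(v)_m w_j$ — but this only reaches the submodule generated by $E_j$ in the $j$-slot, which is all of $\Wbb_j$ by hypothesis. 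Iterating over $j=N,N-1,\dots,2$ gives $\upphi=0$.

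\textbf{Main obstacle.} The delicate point is the bookkeeping at the inductive step: one must correctly match up ``inserting $\Vbb$ at a propagated point and taking a residue'' with ``applying a Fourier mode $Y_{\Wbb_j}(v)_m$ in the $j$-th slot,'' i.e. verifying that the local expansion \eqref{eq175} near $x_j$, together with the a.l.u. convergence from Lemma~\ref{lb76}, genuinely lets one recover $\upphi(\cdots\otimes Y(v)_m w_j\otimes\cdots)$ as a Laurent coefficient of the holomorphic function $x\mapsto\wr\upphi|_x$, and that the submodule of $\Wbb_j$ reachable this way from $E_j$ is indeed all of $\Wbb_j$ — which is exactly the statement that $E_j$ generates $\Wbb_j$. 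The connectedness of $C$ and the identity theorem do the rest, but one should be careful that the ``nonempty open subset on which things vanish'' assertion is applied to the right holomorphic function on $C\setminus x_\blt$ and that $N\geq 2$ is what guarantees there is a slot to propagate \emph{into} while keeping the $x_1$-slot free for arbitrary $w_1$.
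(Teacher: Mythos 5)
You have assembled the right ingredients (the morphism $\wr\upphi(\cdot,w_\blt)$, formula \eqref{eq175}, residues, the identity theorem on the connected set $C\setminus x_\blt$, and the fact that iterated modes applied to $E_j$ span $\Wbb_j$), and you correctly flag the central difficulty: the hypothesis only kills $\upphi$ when slot $j$ is filled from $E_j$, whereas the Laurent coefficients of $\wr\upphi(\cdot,w_\blt)$ near $x_j$ are $\upphi(\cdots\otimes Y(v)_m w_j\otimes\cdots)$ with $Y(v)_m w_j\notin E_j$ in general. But neither of your two resolutions closes this gap. In the ``alternative cleaner route,'' the inductive step asserts that near $x_j$ the fillings $Y(v)_m w_j$ ``all vanish'' because $w_j\in E_j$ --- this is exactly the unproved claim, so the step is circular: you are anchoring the local vanishing of $\wr\upphi(\cdot,w_\blt)$ at the one marked point where the hypothesis does \emph{not} control the mode insertions. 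The weight-induction in your ``main steps'' does not repair this either: its base case rests on the claim that the lowest-weight vectors of $\Wbb_j$ lie in $\Span(E_j)$, which is false for a general generating set (e.g.\ $E=\{\cbf\}$ generates the $\Vbb$-module $\Vbb$ when $c\neq 0$ via $L_2\cbf=\tfrac c2\id$, yet $\id\notin\Span(E)$), and the assertion that the ``principal part is determined by weight-$<k$ fillings'' does not hold since $Y(v)_m$ both raises and lowers weights across the whole Laurent expansion.

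The fix is to anchor at $x_1$ instead. Since $w_1$ ranges over all of $\Wbb_1$, the expansion of $\wr\upphi(\cdot,w_\blt)$ near $x_1$ has coefficients $\upphi(Y(u)_n w_1\otimes w_2\otimes\cdots\otimes w_N)$ with $Y(u)_nw_1\in\Wbb_1$ and $w_j\in E_j$ for $j\geq 2$; these vanish by hypothesis, so $\wr\upphi(\cdot,w_\blt)$ vanishes near $x_1$, hence (identity theorem) globally, hence near $x_2$. Reading off residues there gives $\upphi(w_1\otimes Y(u)_nw_2\otimes\cdots)=0$; repeating (each time re-anchoring at the already-saturated slot $1$) yields vanishing for all iterated modes $Y(u_1)_{n_1}\cdots Y(u_k)_{n_k}w_2$, i.e.\ for all of $\Wbb_2$ since $E_2$ generates it. Now slots $1$ and $2$ are both saturated and one proceeds to slot $3$, and so on. This is the paper's argument; the essential point you missed is that the anchor for the identity-theorem step must always be a slot already known to be closed under the action of $\Vbb$.
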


The proof of this Proposition is similar to that of Goddard uniqueness (Prop. \ref{lb162}).

\begin{proof}
Let $w_\blt\in\Wbb_\blt$ such that $w_j\in E_j$ for all $j\geq 2$. Clearly $\upphi(Y(u,z)w_1\otimes w_2\cdots\otimes \otimes w_N)$ (which converges a.l.u. when $z\neq 0$ is small) is $0$. So $\wr\upphi(\cdot,w_\blt)$, as a section of $(\scr V_C^{\leq k})^\vee$ on $C\setminus x_\blt$, vanishes near $x_1$ for all $k$. So it vanishes globally on $C\setminus x_\blt$, and  in particular vanishes near $x_2$. This shows that $\upphi(w_1\otimes Y(u,z)w_2\otimes \cdots\otimes w_N)$ vanishes when $z$ is small. By taking residue at $z=0$, we see that $\upphi(w_1\otimes Y(u)_nw_2\otimes\cdots\otimes w_N)=0$ for all $u\in\Vbb$ and $n\in\Zbb$. Repeating this argument, we see that $\upphi(w_1\otimes Y(u_1)_{n_1}\cdots Y(u_k)_{n_k}w_2\otimes\cdots\otimes w_N)=0$ for all $u_1,\dots,u_k\in\Vbb$ and $n_1,\dots,n_k\in\Zbb$. Therefore, as $E_2$ generates $\Wbb_2$, we conclude that $\upphi(w_\blt)=0$ for all $w_1\in\Wbb_1,w_2\in\Wbb_2$ and $w_j\in E_j$ (where $3\leq j\leq N$). Repeating this procedure shows $\upphi=0$.
\end{proof}

The second application is the following one. Recall $\wr\fk X_x=(C;x,x_\blt)$ if $x\in C\setminus x_\blt$. Recall that $\id\in H^0(C,\scr V_C)$ is the vacuum section which locally equals the constant vacuum vector under any trivialization.

\begin{thm}\label{lb166}
Choose any $x\in C\setminus x_\blt$ and identify 
\begin{gather*}
\scr W_{\wr\fk X_x}(\Vbb\otimes \Wbb_\blt)=\scr V_C|x\otimes\Wbb_\blt\qquad\text{via }\eqref{eq249}.
\end{gather*}
Then we have an isomorphism of vector spaces
\begin{gather}\label{eq251}
\begin{gathered}
\scr T_{\wr\fk X_x}^*(\Vbb\otimes\Wbb_\blt)\xrightarrow{\simeq}\scr T_{\fk X}^*(\Wbb_\blt)\\
\boxed{v\otimes w_\blt\in\scr V_C|x\otimes\Wbb_\blt\mapsto\uppsi(v\otimes w_\blt)}\qquad\mapsto \qquad \boxed{w_\blt\in \Wbb_\blt\mapsto \uppsi(\id\otimes w_\blt)}  
\end{gathered}
\end{gather}
\end{thm}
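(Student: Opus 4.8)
The plan is to exhibit the map in \eqref{eq251} as the composition of two isomorphisms: the propagation $\upphi \mapsto \wr\upphi$ constructed in Thm. \ref{lb161}, and an ``evaluation at $x$'' map $\wr\upphi \mapsto \wr\upphi|_x$ that restricts a propagated block over the family $\wr\fk X$ to the fiber $\wr\fk X_x$. More precisely, given $\uppsi \in \scr T^*_{\wr\fk X_x}(\Vbb\otimes\Wbb_\blt)$, the candidate preimage-target is $\upphi(w_\blt) := \uppsi(\id\otimes w_\blt)$, so the content of the theorem is that (i) this $\upphi$ is a conformal block associated to $\fk X$ and $\Wbb_\blt$, (ii) the assignment $\uppsi\mapsto\upphi$ is injective, and (iii) it is surjective, with the inverse given by propagation. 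Since propagation $\upphi_0 \mapsto \wr\upphi_0$ lands in $\scr T^*_{\wr\fk X}(\Vbb\otimes\Wbb_\blt)(C\setminus x_\blt)$ and hence restricts to $\wr\upphi_0|_x \in \scr T^*_{\wr\fk X_x}(\Vbb\otimes\Wbb_\blt)$ for each $x$, surjectivity will follow once I check $\wr\upphi_0|_x(\id\otimes w_\blt) = \upphi_0(w_\blt)$, i.e. that evaluating the propagation on the vacuum section recovers the original block.

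First I would verify the vacuum-recovery identity. Near a marked point $x_j$, formula \eqref{eq175} computes $\wr\upphi_{0}|_y(v\otimes w_\blt) = \upphi_0(w_1\otimes\cdots\otimes Y(v(\eta_j(y)),\eta_j(y))w_j\otimes\cdots\otimes w_N)$; but here the relevant point is an arbitrary $x\in C\setminus x_\blt$ and the vector is the vacuum section $\id$, whose trivialization under any univalent $\mu$ is the constant $\id\in\Vbb$. Plugging $v = \id$ and using $Y(\id,z) = \id_{\Wbb_j}$ (the vacuum axiom, carried over to modules) shows $\wr\upphi_0(\id, w_\blt)$ is the constant function $\upphi_0(w_\blt)$ on a neighborhood of each $x_j$; since $\wr\upphi_0(\id,w_\blt)$ is a section of $\scr O_{C\setminus x_\blt}$ (a holomorphic function) agreeing with the constant $\upphi_0(w_\blt)$ on open sets meeting each connected component (Assumption \ref{lb102}), it is globally that constant, so $\wr\upphi_0|_x(\id\otimes w_\blt) = \upphi_0(w_\blt)$ for all $x$. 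This proves the composition ``propagate, then evaluate on vacuum'' is the identity on $\scr T^*_{\fk X}(\Wbb_\blt)$, hence the map in \eqref{eq251} is surjective.

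Next I would show the map is well-defined and injective. Well-definedness — that $w_\blt\mapsto\uppsi(\id\otimes w_\blt)$ really is a conformal block associated to $\fk X$ — is the step I expect to be the main obstacle, and I would handle it as follows. The point is to show $\uppsi(\id\otimes(\cdot))$ annihilates $H^0(C,\scr V_C\otimes\omega_C(\star x_\blt))\cdot\Wbb_\blt$. Given $\sigma\in H^0(C,\scr V_C\otimes\omega_C(\star x_\blt))$, I want to promote it to a section over $\wr\fk X_x$, i.e. a section of $\scr V_C\otimes\omega_C$ with finite poles allowed also at the extra point $x$: since $\sigma$ is already holomorphic near $x$ and $x\notin x_\blt$, $\sigma$ itself qualifies. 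Then the residue action of $\sigma$ on $\scr W_{\wr\fk X_x}(\Vbb\otimes\Wbb_\blt) = \scr V_C|x\otimes\Wbb_\blt$ hits the $\Vbb$-slot as $\sigma\cdot \id$ (the residue action on the vacuum) plus the $\Wbb_i$-slots as $\sigma\cdot w_i$. The crucial computation is that $\sigma\cdot\id = 0$: locally $\sigma$ trivializes to $v(z)dz$ with $v(z)\in\Vbb$ holomorphic near $z=0$ (no pole at $x$), so $\sigma\cdot\id = \Res_{z=0} Y(v(z),z)\id\, dz = \Res_{z=0}(v(z) + O(z))dz = 0$ by the creation property, since $v(z)\id$ has no negative powers of $z$ when $v(z)$ is holomorphic at $0$. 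Hence $\sigma\cdot(\id\otimes w_\blt) = \id\otimes(\sigma\cdot w_\blt)$, and because $\uppsi$ is a conformal block for $\wr\fk X_x$ it kills $\sigma\cdot(\id\otimes w_\blt)$, so $\uppsi(\id\otimes\sigma\cdot w_\blt) = 0$; thus $\upphi = \uppsi(\id\otimes(\cdot))$ vanishes on the required subspace and is a conformal block. Finally, injectivity: if $\uppsi(\id\otimes w_\blt) = 0$ for all $w_\blt$, I must show $\uppsi = 0$, i.e. $\uppsi(v\otimes w_\blt) = 0$ for all $v\in\scr V_C|x$. This is exactly a propagation/uniqueness statement: the map $\wr\uppsi$ associated to the conformal block $\uppsi$ on $\wr\fk X_x$ (regarding $x$ as a marked point carrying $\Vbb$) is determined by its values near the other marked points, and near $x$ itself the vacuum gives $0$; by the argument of Prop. \ref{lb167} (or Goddard-type uniqueness), together with the assumption that each connected component of $C$ contains some $x_j$ so that $\uppsi(\id\otimes(\cdot))=0$ forces $\uppsi(Y(u_1)_{n_1}\cdots Y(u_k)_{n_k}\id\otimes w_\blt)=0$, and since such vectors span $\Vbb$ and the $\Vbb$-fiber $\scr V_C|x$ is a copy of $\Vbb$, we get $\uppsi = 0$. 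Counting dimensions via Thm. \ref{lb135}/Cor. \ref{lb146} would give an alternative closing of injectivity once surjectivity is known, but the direct uniqueness argument is cleaner.
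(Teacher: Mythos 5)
Your proposal is correct and follows essentially the same route as the paper: surjectivity via the vacuum-recovery identity $\wr\upphi(\id,w_\blt)=\upphi(w_\blt)$ (the paper's Lemma \ref{lb168}, proved by the same "constant near each $x_j$ by \eqref{eq175}, hence constant everywhere" argument), and injectivity via Prop. \ref{lb167} together with the fact that $\id$ generates the vacuum module $\Vbb$. The one place you go beyond the paper is the well-definedness check (that $w_\blt\mapsto\uppsi(\id\otimes w_\blt)$ lands in $\scr T_{\fk X}^*(\Wbb_\blt)$), which the paper leaves to the reader; your residue computation $\sigma\cdot\id=0$ for $\sigma$ holomorphic at $x$, via the creation property, is exactly the right way to fill that in.
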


Note first of all the easy fact:

\begin{lm}\label{lb168}
For each $\upphi\in\scr T_{\fk X}^*(\Wbb_\blt)$, the following holds in $\scr O(C\setminus x_\blt)$.
\begin{align}
\wr\upphi(\id,w_\blt)=\upphi(w_\blt)\label{eq252}
\end{align}
\end{lm}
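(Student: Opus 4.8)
\textbf{Proof plan for Lemma \ref{lb168}.}

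The claim \eqref{eq252} is essentially the statement that propagating $\upphi$ by a vacuum vector returns $\upphi$ itself, and the natural strategy is to verify it first near a marked point (where $\wr\upphi_x$ has the explicit series formula \eqref{eq175}) and then spread the equality over all of $C\setminus x_\blt$ by analytic continuation. First I would fix $j$ and take $x$ close to $x_j$, so that, after the identifications \eqref{eq186}--\eqref{eq187} turning $\eta_j$ into the standard coordinate $z$, the definition of propagation gives
\begin{align}
\wr\upphi(\id,w_\blt)_z=\upphi\big(w_1\otimes\cdots\otimes Y(\id,z)w_j\otimes\cdots\otimes w_N\big).\label{eq-plan-a}
\end{align}
Since $Y(\id,z)=\id_{\Wbb_j}$ (the vacuum axiom of a VOA, carried over to $\Vbb$-modules via $Y_{\Wbb_j}(\id,z)=\id_{\Wbb_j}$), the right-hand side of \eqref{eq-plan-a} equals $\upphi(w_\blt)$, a constant function of $z$. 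Hence \eqref{eq252} holds on a punctured neighborhood $U_j\setminus\{x_j\}$ of each $x_j$; but both sides extend holomorphically across $x_j$ (the left side because $\wr\upphi(\id,\cdot)$ is a section of $\scr V_C^\vee$ over all of $C\setminus x_\blt$ — here one uses that $\id$ is a genuine section of $\scr V_C$, not merely a section with poles — and the right side is literally constant), so in fact \eqref{eq252} holds on a full neighborhood of $x_j$.

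Next I would invoke the uniqueness/rigidity built into the definition of conformal blocks: the difference $\wr\upphi(\id,w_\blt) - \upphi(w_\blt)$ is a holomorphic function on $C\setminus x_\blt$ (for fixed $w_\blt$) which vanishes on a neighborhood of each $x_j$. By Assumption \ref{lb102} every connected component of $C$ contains at least one marked point, so this difference vanishes on a nonempty open subset of each component; being holomorphic on a connected Riemann surface, it vanishes identically. This is exactly the argument used in the uniqueness remark following Definition \ref{lb103} (the set where the difference vanishes locally is open and closed and meets every component). Therefore \eqref{eq252} holds as an identity in $\scr O(C\setminus x_\blt)$, and by \ref{lb191} it is linear in $w_\blt$, completing the proof.

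The only point requiring a little care — and the mild obstacle — is confirming that $\wr\upphi(\id,w_\blt)$ is genuinely a \emph{regular} holomorphic function near each $x_j$ rather than one with a pole, so that the comparison with the constant $\upphi(w_\blt)$ can be made on a full neighborhood of $x_j$ and not just a punctured one. This follows because $\id$, as the vacuum section of $\scr V_C$, lies in $\scr V_C(C)$ (it is fixed by all change-of-coordinate operators since $L_{\geq 0}\id=0$), so $\wr\upphi(\id,w_\blt)$ is the value of the $\scr O_C$-module morphism $\wr\upphi(\cdot,w_\blt)$ on a section without poles; equivalently, in \eqref{eq-plan-a} the series $\sum_{n\in\Nbb}\upphi(\cdots P_n Y(\id,z)w_j\cdots)$ collapses to the single constant term. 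Once this is noted, the argument is routine and no further computation is needed.
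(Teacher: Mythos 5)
Your proposal is correct and follows exactly the paper's own argument: verify \eqref{eq252} near each marked point via \eqref{eq175} and $Y(\id,z)=\id_{\Wbb_j}$, then propagate the identity to all of $C\setminus x_\blt$ by the identity theorem together with Assumption \ref{lb102}. The paper compresses this to two sentences; your additional care about regularity of $\wr\upphi(\id,w_\blt)$ across $x_j$ is a reasonable elaboration but not a different route.
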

\begin{proof}
\eqref{eq252} clearly holds near $x_1,\dots,x_N$ by \eqref{eq175}. So \eqref{eq252} holds on $C\setminus x_\blt$ by complex analysis.
\end{proof}

\begin{proof}[Proof of Thm. \ref{lb166}]
We leave it to the readers to check that for each conformal block $\uppsi$ associated to $\wr\fk X_x$, the linear functional $\upphi:\Wbb_\blt\rightarrow\Cbb$ defined by the RHS of \eqref{eq251} satisfies the definition of conformal blocks (Def. \ref{lb103}). The linear map \eqref{eq251} is injective by Prop. \ref{lb167} and the fact that $\id$ generates the vacuum module $\Vbb$. It is surjective due to Lemma \ref{lb168}, which says that $\wr\upphi$ is a preimage of $\upphi\in\scr T_{\fk X}^*(\Wbb_\blt)$ under the map \eqref{eq251}.
\end{proof}

\subsection{}\label{lb182}

We give some applications of propagation.
\begin{eg}\label{lb177}
Let (finitely-admissible) $\Wbb_1,\Wbb_2'$ be associated to the marked points $0,\infty$ of $\fk P=(\Pbb^1;0,\infty;\zeta,1/\zeta)$ where $\zeta$ is the standard coordinate of $\Cbb$. Then it is not hard to check (cf. Example \ref{lb179}) that there is an isomorphism
\begin{gather}
\begin{gathered}
\Hom_\Vbb(\Wbb_1,\Wbb_2^\cl)\xrightarrow{\simeq} \scr T_{\fk P}^*(\Wbb_1\otimes\Wbb_2')\\
T\qquad\mapsto\qquad \boxed{w_1\otimes w_2'\mapsto\bk{T w_1,w_2'}}
\end{gathered}
\end{gather}
(Note that each $Y(u)_n$ acts on $\Wbb_2^\cl$ in an obvious way.) Therefore, by Thm. \ref{lb166}, for any $N$-pointed ($N\geq 2$) sphere such that $\Wbb_1,\Wbb_2'$ are associated to two marked points and $\Vbb$ is associated to the remaining one, the corresponding space of conformal blocks is isomorphic to $\Hom_\Vbb(\Wbb_1,\Wbb_2^\cl)$.
\end{eg}

\begin{rem}
In many important cases, we have
\begin{align}
\Hom_\Vbb(\Wbb_1,\Wbb_2)=\Hom_\Vbb(\Wbb_1,\Wbb_2^\cl).
\end{align}
For instance, this is true when $L_0$ is diagonalizable on $\Wbb_1,\Wbb_2$, and each $L_0$-weight space of $\Wbb_2$ is finite-dimensional. (E.g. when $\Wbb_1,\Wbb_2$ are semisimple.) 

To see this, choose any $L_0$-eigenvector $w_1\in\Wbb_1$ with $L_0w_1=\lambda w_1$. Choose linear $T:\Wbb_1\rightarrow\Wbb_2^\cl$ intertwining the actions of $\Vbb$. Then as $L_0T=TL_0$, we see that $Tw_1\in\Wbb_2^\cl$ is an $L_0$-eigenvector with eigenvalue $\lambda$. Recall that $[\wtd L_0,L_0]=0$ (cf. Rem. \ref{lb81}). So $L_0$ preserves each ($\wtd L_0$-)weight space $\Wbb_2(n)$ of $\Wbb_2$. But $L_0|_{\Wbb_2(n)}$ has eigenvalue $\lambda$ for only finitely many different $n$, otherwise the $\lambda$-eigenspace of $L_0$ on $\Wbb_2$ would be infinite dimensional. This proves  $Tw_1\in\Wbb_2$.  \hfill\qedsymbol
\end{rem}

\begin{eg}\label{lb189}
In Example \ref{lb177}, we let $\Wbb_1=\Vbb$ and $\Wbb=\Wbb_2$. Then we have
\begin{align}
\Hom_\Vbb(\Vbb,\Wbb^\cl)\simeq\scr T_{(\Pbb^1;0,\infty;\zeta,1/\zeta)}^*(\Vbb\otimes\Wbb')\simeq\scr T_{(\Pbb^1;\infty;1/\zeta)}^*(\Wbb')
\end{align}
where the corresponding element of $T\in\Hom_\Vbb(\Vbb,\Wbb^\cl)$ in $\scr T_{(\Pbb^1;\infty;1/\zeta)}^*(\Wbb')$ is  $T\id\in\Wbb^\cl$ as a linear functional on $\Wbb'$. In particular, taking $\Wbb=\Vbb'$, we have
\begin{align}\label{eq256}
\Hom_\Vbb(\Vbb,\Vbb')=\Hom_\Vbb(\Vbb,(\Vbb')^\cl)\simeq \scr T_{(\Pbb^1;\infty;1/\zeta)}^*(\Vbb).
\end{align}
So $\scr T_{(\Pbb^1;\infty;1/\zeta)}^*(\Vbb)$ is trivial if $\Vbb$ is not self-dual.
\end{eg}

\subsection{}

We return to the setting of Subsec. \ref{lb169}.
\begin{thm}\label{lb171}
Assume that $\Vbb$ is $C_2$-cofinite,  $\Wbb_1,\dots,\Wbb_N,\Mbb$ are finitely-generated, and $\wtd L_0|_\Mbb-L_0|_\Mbb$ is a constant (e.g. when $\Mbb$ is irreducible). Let $\upphi\in\scr T^*_{\wtd{\fk X}}(\Wbb_\blt\otimes\Mbb\otimes\Mbb')$. Then $\wtd{\mc S}\upphi$ and $\mc S\upphi$ converge a.l.u. on $q\in\Dbb_{r\rho}^\times$. Moreover, if we define the connection $\nabla$ on $\scr T_{\fk X}^*(\Wbb_\blt)$ using $\eta_\blt$ and a lift of $\partial_q$, then
\begin{align}
\nabla_{\partial_q}\mc S_q\upphi=f\cdot\mc S_q\upphi
\end{align}
for some $f\in\scr O(\Dbb_{r\rho}^\times)$ depending only on $\wtd{\fk X}$ (including the local coordinates $\eta_\blt,\xi,\varpi$) and $r,\rho$, the lift of $\partial_q$, and the central charge $c$. Moreover, $f=0$  if $\eta_\blt,\xi,\varpi$ belong to a projective structure of $\wtd{\fk X}$.
\end{thm}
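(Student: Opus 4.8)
The plan is to deduce Theorem \ref{lb171} from the convergence result Theorem \ref{lb159}, the finiteness input Theorem \ref{lb138}, and the differential-equation technique of Section \ref{lb153}, in a way that exactly parallels the proof of local freeness (Theorem \ref{lb135}). First I would set up the algebraic sewing $\wtd{\mc S}\upphi$ as a formal power series in $q$ with coefficients in $\Wbb_\blt^*$ and show it satisfies a first-order linear ODE in $q$. Concretely: by hypothesis $\mc B=\Dbb_{r\rho}^\times$ is one-dimensional and Stein, so after choosing local coordinates $\eta_\blt$ (which exist near the $x_i$) and a lift $\xk\in H^0(\mc C,\Theta_{\mc C}(\star\SX))$ of $\partial_q$, Theorem \ref{lb132} gives a differential operator $\nabla_{\partial_q}$ on $\scr W_{\fk X}(\Wbb_\blt)$ preserving $\scr J_{\fk X}(\Wbb_\blt)$, of the form $\nabla_{\partial_q}=\partial_q+A(q)$ with $A(q)\in\End(\Wbb_\blt)[[q]]$ (after possibly shrinking; note $A(q)$ may have finite poles at $q=0$, but we only need a.l.u.\ convergence on $\Dbb_{r\rho}^\times$, and near a generic $q_0$ everything is holomorphic). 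The key claim is that $\upphi_q:=\wtd{\mc S}_q\upphi$, viewed as a $\Wbb_\blt^*$-valued formal series, satisfies $\partial_q\upphi_q(w_\blt)=\upphi_q(A(q)w_\blt)$, i.e.\ $\partial_q\upphi_q=\upphi_q\circ\nabla_{\partial_q}$ on constant sections. This is a purely formal computation: differentiating $\sum_n\upphi(w_\blt\otimes\bowtie_n)q^n$ in $q$ brings down a factor $n$, which equals the action of $\wtd L_0$ on the contracted slot; then one uses the conformal Ward identity for $\upphi$ at the sewn points $x',x''$ (the fact that $\upphi$ is a conformal block for $\wtd{\fk X}$) together with the explicit shape of the lift $\xk$ near $x',x''$ to rewrite this as the residue action of $\upnu(\xk)$, which is precisely $A(q)$. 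This step is essentially the computation behind Subsec.\ \ref{lb116}–\ref{lb117} adapted to the sewing family of Example \ref{lb154}; I expect it to be the technical heart and the main obstacle, since one must track carefully how $\xk|_{U_{x'}}$ and $\xk|_{U_{x''}}$ interact with the gluing rule $\xi\varpi=q$ and produce exactly the Virasoro-mode shifts that match $\partial_q$ applied to $q^{\wtd L_0}$.

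Granting the ODE, convergence follows by the argument of Lemma \ref{lb141}–\ref{lb142}. By Theorem \ref{lb138}, the $\scr O(V)$-module $\scr W_{\fk X}(\Wbb_\blt)(V)/\scr J_{\fk X}(\Wbb_\blt)(V)$ is finitely generated for Stein $V\subset\Dbb_{r\rho}^\times$; choosing generators $s_1,\dots,s_k$ lying in $\Wbb_\blt^{\leq k_0}\otimes\scr O(V)$ and restricting $\upphi_q$ to each finite-dimensional space $\Wbb_\blt^{\leq k}$, the ODE $\partial_q\upphi_q=\upphi_q\circ\nabla_{\partial_q}$ becomes, modulo $\scr J_{\fk X}$, a finite linear holomorphic system $\partial_q\upphi_q^{\leq k}(e_i)=\sum_j\Upomega_{ij}(q)\upphi_q^{\leq k}(e_j)$ with $\Upomega_{ij}\in\scr O(V)$ — here Theorem \ref{lb159} or directly Lemma \ref{lb140}'s argument (that $\upphi_q$ kills $\scr J_{\fk X}(\Wbb_\blt)(V)$, using that each $\nabla_{\partial_q}^n w$ lies in $\scr J_{\fk X}$ and restricts fiberwise into $\scr J_{\fk X_q}$) is what lets us pass to the quotient. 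Lemma \ref{lb141} then forces $\upphi_q^{\leq k}$ to be the expansion of an element of $(\Wbb_\blt^{\leq k})^*\otimes\scr O(V)$; since $V$ was an arbitrary Stein open subset of $\Dbb_{r\rho}^\times$ and such sets cover it, $\wtd{\mc S}_q\upphi(w_\blt)$ converges a.l.u.\ on $\Dbb_{r\rho}^\times$ for every $w_\blt$. Multiplying by $q^d$ gives a.l.u.\ convergence of $\mc S\upphi$ on the same punctured disk.

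Finally, the projective-connection statement. Once convergence is established, Theorem \ref{lb159} tells us $\wtd{\mc S}\upphi$ (hence $\mc S\upphi$, up to the factor $q^d$ which contributes only $\partial_q q^d=dq^{-1}q^d$, a scalar times $\mc S_q\upphi$) is a genuine conformal block for the family $\fk X$, i.e.\ an element of $H^0(\Dbb_{r\rho}^\times,\scr T^*_{\fk X}(\Wbb_\blt))$. The ODE $\partial_q\upphi_q=\upphi_q\circ\nabla_{\partial_q}$ read in $\scr T^*_{\fk X}(\Wbb_\blt)$ via \eqref{eq230} says precisely $\nabla_{\partial_q}(\wtd{\mc S}_q\upphi)=0$; re-incorporating the $q^d$-twist yields $\nabla_{\partial_q}(\mc S_q\upphi)=f\cdot\mc S_q\upphi$ with $f$ accounting for (i) the $d\,q^{-1}$ term from differentiating $q^d$ and (ii) the projective ambiguity in the definition of $\nabla_{\partial_q}$ (Prop.\ \ref{lb152}), so $f$ depends only on $\wtd{\fk X}$, the local coordinates and $r,\rho$, the chosen lift, and the central charge $c$. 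For the last assertion, when $\eta_\blt,\xi,\varpi$ belong to a projective structure of $\wtd{\fk X}$, Prop.\ \ref{lb151} says $\nabla_{\partial_q}$ is independent of the lift and, more to the point, the computation of the anomaly reduces (as in the Remark after Prop.\ \ref{lb151}) to the change-of-coordinate formula for $\cbf\,dz$ versus $\partial_z$ under Möbius transformations, where $L_2$ does not appear and the $\frac c2\id$ term from $L_2\cbf$ drops out; combined with the fact that sewing along $x',x''$ with $\xi\varpi=q$ is itself a Möbius-type identification when $\xi,\varpi$ are in the projective structure, one gets that the residue action of the relevant section of $\scr V_{\fk X}^{\leq 2}\otimes\omega_{\mc C/\mc B}(\star\SX)$ lands in $\scr J_{\fk X}(\Wbb_\blt)$, forcing $f=0$. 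I would cite \cite[Sec.\ 5.1–5.2]{Gui} for the precise bookkeeping of these anomaly terms rather than reproduce it here.
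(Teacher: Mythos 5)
Your overall architecture (ODE for the formal $q$-series, finiteness via Thm.\ \ref{lb138}, passage to a finite linear system, then an analyticity lemma) matches the paper's, but there is a genuine gap at the decisive step, and the way you propose to close it would fail. The series $\wtd{\mc S}_q\upphi(w_\blt)=\sum_n\upphi(w_\blt\otimes\bowtie_n)q^n$ is a formal power series \emph{centered at $q=0$}, a point not in the base $\Dbb_{r\rho}^\times$ of the family. To conclude convergence from a differential equation you must control that equation \emph{at} $q=0$; your remark that ``near a generic $q_0$ everything is holomorphic'' buys you nothing, because Lemma \ref{lb141} applies to a formal solution expanded at a point where the coefficient matrix is holomorphic, and you cannot re-center a formal series at $q_0\neq 0$ without already knowing it converges. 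Moreover your assertion that $\nabla_{\partial_q}=\partial_q+A(q)$ with $A(q)\in\End(\Wbb_\blt)[[q]]$ is unjustified and in general false: near the node of the limiting fiber, $\pi$ looks like $(\xi,\varpi)\mapsto\xi\varpi$, so $d\pi$ is not surjective there and $\partial_q$ admits no lift extending over $q=0$; what does lift is $q\partial_q$, to an element of $H^0(\mc C,\Theta_{\mc C}(-\log\mc C_0+\star\SX))$ after one adjoins the nodal curve $\mc C_0$ to the family. This is exactly the point of the paper's argument: the resulting connection is logarithmic, the finite system \eqref{eq253} has a \emph{simple pole} at $q=0$, and one invokes not Lemma \ref{lb141} but its regular-singular-point generalization (formal solutions of a first-order system with simple pole converge on the punctured disk). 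Without the nodal-curve extension and the $q\partial_q$ normalization, you have no a priori bound on the order of the pole of $A(q)$ at $q=0$, and formal solutions of an irregular-singular ODE need not converge, so the argument collapses precisely where it matters.

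A second, smaller gap: you appeal to ``Lemma \ref{lb140}'s argument'' to show $\wtd{\mc S}_q\upphi$ kills $\scr J_{\fk X}(\Wbb_\blt)$, but that argument is not available here. In Sec.\ \ref{lb153} the series $\upphi_\tau$ is \emph{defined} as the formal solution of the ODE with a conformal-block initial condition, and annihilation of $\scr J_{\fk X}$ is deduced from the Taylor expansion $\sum_n\frac{\tau^n}{n!}\upphi_\tau(\nabla^n w)|_{\tau=0}$. Here $\wtd{\mc S}_q\upphi$ is defined by contraction, and one must prove directly, at the level of formal series, both that it satisfies the ODE and that it annihilates $\scr J_{\fk X}(\Wbb_\blt)$ (the latter is Thm.\ \ref{lb159}, whose hypothesis is the very convergence you are trying to establish, so it cannot be used as you use it in your second paragraph). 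These verifications are a substantive part of the proof in \cite[Sec.\ 10--11]{Gui20} and cannot be waved through. Your treatment of the projective-structure statement and of the twist by $q^d$ is essentially right once the above is repaired.
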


The proof of Thm. \ref{lb171} has some similarities to the proof in Sec. \ref{lb153} that  $\upphi_\tau$ is analytic and the sheaves of coinvariants/conformal blocks are locally free. We refer the readers to \cite[Sec. 4.3]{Gui} or \cite[Sec. 11]{Gui20} for details of the proof. In the following, we explain some key ideas.

Suppose we add the nodal curve $\mc C_0=\lim_{q\rightarrow 0}\mc C_q$ to the family $\fk X$ (see the end of Subsec. \ref{lb170}). One can also define sheaves of coinvariants and conformal blocks for $\fk X$. Due to the fact that $d\pi:\Theta_{\mc C}|_p\rightarrow \Theta_{\Dbb_{r\rho}}|_{\pi(p)}$ is not surjective if $p\in\mc C$ is the node of $\mc C_0$, we cannot lift $\partial_q$ to a section of $\Theta_{\mc C}$ near $p$, let alone to  $H^0(\mc C,\Theta_{\mc C}(\star\SX))$. (Near the node $p$, $\pi$ is equivalent to $(\xi,\varpi)\in\Cbb^2\mapsto \xi\varpi$ near $\xi=\varpi=0$.) But we can lift $q\partial_q$ to an element $\xk\in H^0(\mc C,\Theta_{\mc C}(\star\SX))$, and one can check that $\xk$ is actually in $ H^0(\mc C,\Theta_{\mc C}(-\log\mc C_0+\star\SX))$, which means that $\xk$ has finite poles at $\SX$ and that $\xk|_{\mc C_0}$ is tangent to $\mc C_0$ and vanishes at the node. (See \cite[Sec. 3.6]{Gui} or \cite[Sec. 11]{Gui20}). 
\begin{align}
\vcenter{\hbox{{
\includegraphics[height=2.3cm]{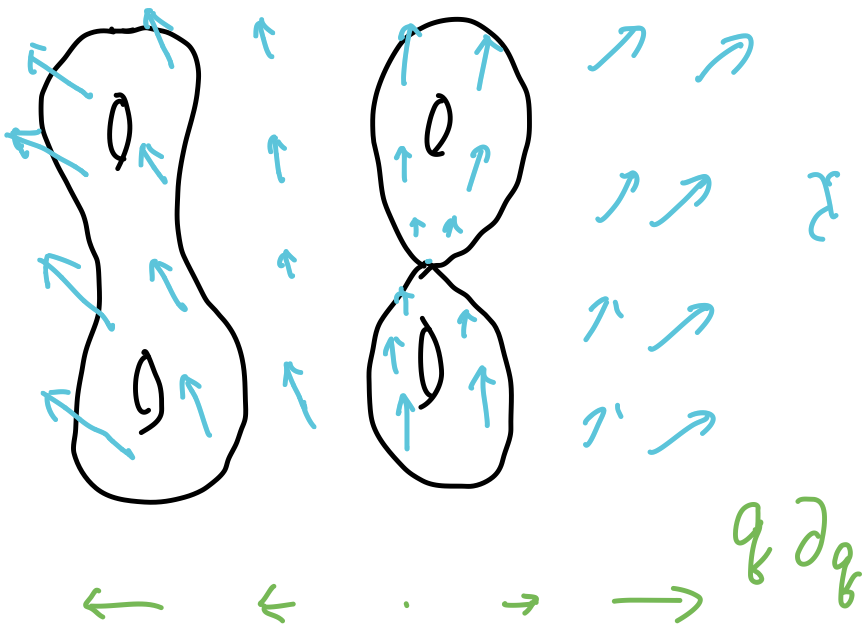}}}}
\end{align}

Using the lift $\xk$, one can define a differential operator $\nabla_{q\partial_q}$ (or more generally, $\nabla_{g\partial_q}$ where $g\in\Theta_{\Dbb_{r\rho}}$ vanishes on $0$) on $\scr T_{\fk X}(\Wbb_\blt)$ and $\scr T_{\fk X}^*(\Wbb_\blt)$. We say that the connection $\nabla$ has \textbf{logarithmic singularity} (or is a \textbf{logarithmic connection} with singularity) at $0$. Then one can show  that
\begin{align}
\nabla_{q\partial_q}\mc S_q\upphi=f\cdot\mc S_q\upphi
\end{align}
where the dependence of $f\in\scr O(\Dbb_{r\rho})$ on the given data is as in Thm. \ref{lb171}. (Note that unlike in Thm. \ref{lb171},  here $f$ is also holomorphic at $0$.) Thm. \ref{lb138} indeed holds in the present case as well. So, similar to the proof of Lemma \ref{lb142}, one shows that for sufficiently large $k$,
\begin{align}
q\partial_q\wtd{\mc S}_q\upphi^{\leq k}(e_i)=\sum_{j\in J}\Upomega_{i,j}(q)\wtd{\mc S}_q\upphi^{\leq k}(e_j).\label{eq253}
\end{align}
where $\Omega_{i,j}\in\scr O(\Dbb_{r\rho})$. Namely, as an $\End(\Wbb_\blt^{\leq k})$-valued formal power series of $q$, $\wtd{\mc S}_q\upphi^{\leq k}$ satisfies a \textbf{linear holomorphic differential equation with simple pole at $q=0$}. It is well known that Lemma \ref{lb141} can be generalized to this case, which asserts that a formal power series satisfying a linear holomorphic differential equation with simple pole must converge a.l.u. on $\Dbb_{r\rho}^\times$ (cf. e.g. \cite[Sec. 1.7]{Gui}). The a.l.u. convergences of $\wtd{\mc S}\upphi$ and $\mc S\upphi$ follow. 

Finally, we remark that Thm. \ref{lb171} can be generalized to the simultaneous sewing along several pairs of points $(y_1',y_1''),\dots,(y_M',y_M'')$ of $\wtd C$, or even more generally, the case that $\wtd {\fk X}$ is a family with base manifold $\wtd{\mc B}$. In this most general case, $\fk X$ is a family over $\wtd{\mc B}\times\Dbb^\times_{r_1\rho_1}\times\cdots\times\Dbb^\times_{r_M\rho_M}$, and the sewing is a.l.u. convergent on this domain. See the references mentioned above.

\subsection{}

Let $\mc E$ be a finite set of mutually inequivalent irreducible $\Vbb$-modules. Then for each $q\in\Dbb_{r\rho}^\times$ with a choice of argument $\arg q$, we have a linear map
\begin{gather}\label{eq254}
\begin{gathered}
\fk S_q:\bigoplus_{\Mbb\in\mc E}\scr T_{\wtd{\fk X}}^*(\Wbb_\blt\otimes\Mbb\otimes\Mbb')\rightarrow\scr T_{\fk X_q}^*(\Wbb_\blt)\\
\bigoplus_\Mbb\upphi_\Mbb\mapsto \sum_\Mbb\mc S_q\upphi_\Mbb
\end{gathered}
\end{gather} 
Note that $\sum_\Mbb\mc S\upphi_\Mbb(w_\blt)$ is a multivalued holomorphic function on $\Dbb_{r\rho}^\times$ (i.e.,  a single-valued holomorphic function of $\log q$ on the universal cover of $\Dbb_{r\rho}^\times$)

\begin{thm}\label{lb172}
Assume that $\Vbb$ is $C_2$-cofinite and $\Wbb_1,\dots,\Wbb_N$ are finitely generated. Then for each $q\in\Dbb_{r\rho}^\times$ with chosen $\arg q$, the linear map $\fk S_q$ is injective.
\end{thm}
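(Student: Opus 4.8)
### Proof Proposal for Theorem \ref{lb172}

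The plan is to reduce the injectivity of the total sewing map $\fk S_q$ to a separation-of-$\Mbb$-components argument, using the asymptotic behavior of $\mc S_q\upphi_\Mbb(w_\blt)$ as $q\to 0$. The key structural fact is that each irreducible module $\Mbb\in\mc E$ contributes a series $\mc S_q\upphi_\Mbb(w_\blt) = q^{d_\Mbb}\sum_{n\in\Nbb}\upphi_\Mbb(w_\blt\otimes\bowtie_n^\Mbb)q^n$ where $d_\Mbb\cdot\id_\Mbb = L_0|_\Mbb - \wtd L_0|_\Mbb$ (recall $\Mbb$ irreducible, so $L_0 - \wtd L_0$ is a constant by Cor.~\ref{lb92}), and by Thm.~\ref{lb171} this series converges a.l.u. on $\Dbb_{r\rho}^\times$. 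Suppose $\fk S_q(\bigoplus_\Mbb\upphi_\Mbb) = 0$, i.e.\ $\sum_{\Mbb\in\mc E}\mc S_q\upphi_\Mbb(w_\blt) = 0$ as a (multivalued) holomorphic function of $q$ on $\Dbb_{r\rho}^\times$, for every $w_\blt\in\Wbb_\blt$. I want to conclude each $\upphi_\Mbb = 0$.

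First I would handle the case where the exponents $d_\Mbb + \Delta_\Mbb$ (here $\Delta_\Mbb$ denotes the lowest $\wtd L_0$-weight appearing with a nonzero $\upphi_\Mbb$-contribution) are pairwise incongruent mod $\Zbb$, or more precisely, group the modules in $\mc E$ according to the value of $d_\Mbb \bmod \Zbb$. Within a group sharing the same $d_\Mbb \bmod \Zbb$, the combined series $\sum_\Mbb q^{d_\Mbb}(\text{power series in }q)$ is, up to an overall factor $q^{d_0}$ for one representative exponent $d_0$, an honest single-valued power series in $q$; across distinct groups the functions $q^{d_\Mbb}(\cdots)$ have distinct monodromy around $q=0$, hence are linearly independent over the ring of single-valued holomorphic functions, so the vanishing of the total sum forces the vanishing of each group's contribution separately. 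Thus it suffices to treat a single group, i.e.\ I may assume all $d_\Mbb$ are equal, say to $0$ after dividing through, so that $\sum_\Mbb\wtd{\mc S}_q\upphi_\Mbb(w_\blt) = 0$ as a genuine element of $\Cbb[[q]]$ (convergent on $\Dbb_{r\rho}^\times$) for all $w_\blt$.

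Now comes the extraction step, which I expect to be the main obstacle. Within this group I must separate the individual $\Mbb$-summands, which differ not by monodromy but by the internal grading structure of $\Mbb$. The idea: the coefficient of $q^n$ in $\wtd{\mc S}_q\upphi_\Mbb(w_\blt)$ is $\upphi_\Mbb(w_\blt\otimes\bowtie_n^\Mbb)$, i.e.\ the contraction of $\upphi_\Mbb$ against the identity on the weight-$n$ space $\Mbb(n)$; to pick out a single $\Mbb_0$ I want to insert, at the sewn pair of points, an operator that is the identity on $\Mbb_0$ and zero on every other $\Mbb\in\mc E$. This is exactly where the mutual inequivalence and irreducibility of the modules in $\mc E$ is used together with the $C_2$-cofiniteness (ensuring the relevant Zhu-type algebra or the relevant spaces are finite-dimensional so that such separating idempotents exist): one augments $\wtd{\fk X}$ by propagating $\upphi_\Mbb$ with additional $\Vbb$-insertions near the sewing points (Thm.~\ref{lb161}) and applies a product of vertex operator modes $Y(v_1)_{k_1}\cdots Y(v_s)_{k_s}$ whose action on $\bigoplus_\Mbb \Mbb$ annihilates all but $\Mbb_0$ on a chosen weight space. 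Concretely, because the lowest-weight spaces of distinct irreducibles are distinguished by the action of Zhu's algebra $A(\Vbb)$ (finite-dimensional by $C_2$-cofiniteness), and higher weight spaces are generated from them, for each $\Mbb_0$ and each degree there is a $\Vbb$-module endomorphism-valued element of the mode algebra that is a projection onto $\Mbb_0$. Inserting this and reading off the leading $q$-coefficient isolates $\wtd{\mc S}_q\upphi_{\Mbb_0}$, forcing $\upphi_{\Mbb_0}(w_\blt\otimes m\otimes m') = 0$ for all $m,m'$ in the lowest weight space, hence (by propagation and the generation of $\Mbb_0$ from its lowest weight space, plus Prop.~\ref{lb167}) $\upphi_{\Mbb_0} = 0$.

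Finally I would assemble these pieces: the monodromy argument reduces to a single $d \bmod \Zbb$ group; within the group the separating-idempotent argument, justified by $C_2$-cofiniteness and the classification of irreducibles by Zhu's algebra, isolates each $\Mbb$-component; and vanishing of the isolated component $\wtd{\mc S}_q\upphi_\Mbb$ for all $w_\blt$ gives $\upphi_\Mbb(w_\blt\otimes m\otimes m')=0$ first on lowest-weight vectors and then, via the residue action / propagation machinery of Sec.~\ref{lb155} and Thm.~\ref{lb161}, on all of $\Mbb\otimes\Mbb'$, so $\upphi_\Mbb=0$. The subtle point to be careful about is uniformity in $w_\blt$ and the interchange of the infinite $n$-sum with the mode insertions — this is handled exactly as in the a.l.u.-convergence arguments of Thm.~\ref{lb171}, using that $\Wbb_\blt$ is finitely generated so that only finitely many weight spaces $\Wbb_\blt^{\le k}$ enter at each stage and the relevant series satisfy linear ODEs with regular (simple-pole) singularity at $q=0$.
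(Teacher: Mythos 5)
Your proposal has the right skeleton --- a.l.u.\ convergence and the simple-pole ODE to control the $q$-dependence, extraction of coefficients, and then a separation of the $\Mbb$-components using irreducibility, mutual inequivalence, and propagation --- but the central step is left as an assertion rather than an argument, and this is where the real work lies. You propose to build, from products of vertex operator modes, a ``separating idempotent'' that acts as the identity on $\Mbb_0$ and as zero on the other $\Mbb\in\mc E$, invoking Zhu's algebra and $C_2$-cofiniteness. Two problems: (i) the existence of such an element of the mode algebra is a Jacobson-density-type statement that is not established anywhere in the paper and would need a separate proof; (ii) even granting it exists, you cannot simply ``insert'' it into the vanishing identity $\sum_\Mbb\upphi_\Mbb(w_\blt\otimes\chi_{\Mbb,n})=0$ --- you must first show that the set of vectors $\nu\in\bigoplus_\Mbb\Mbb\otimes\Mbb'$ annihilated by $\bigoplus_\Mbb\upphi_\Mbb(w_\blt\otimes\cdot)$ is stable under the action of the modes, and that stability is exactly the propagation argument (the one used for Prop.~\ref{lb167}). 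Once you have that stability, the idempotents are superfluous: the annihilator $\Abb$ is then a $\Vbb\otimes\Vbb$-submodule of the semisimple module $\Xbb=\bigoplus_{\Mbb}\Mbb\otimes\Mbb'$ whose irreducible summands are mutually non-isomorphic, so $\Abb$ is a direct sum of some of the $\Mbb\otimes\Mbb'$; since $\Abb$ contains the nonzero contraction vectors $\chi_{\Mbb,n}$ it is nonzero, hence contains a full summand $\Mbb\otimes\Mbb'$, forcing $\upphi_\Mbb=0$ --- a contradiction. This is the paper's route, and your construction, once made rigorous, collapses into it with an unnecessary extra layer.

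Two smaller points. First, the theorem asserts injectivity of $\fk S_q$ at a \emph{single} $q$, so you must argue that $\sum_\Mbb\mc S_q\upphi_\Mbb(w_\blt)=0$ at one point forces vanishing as a function of $q$; this follows from the first-order linear ODE \eqref{eq253} satisfied by $\wtd{\mc S}_q\upphi^{\leq k}$ (solutions are determined by their value at one point), but your write-up silently assumes vanishing as a function from the outset. Second, your grouping of the $\Mbb$ by $d_\Mbb\bmod\Zbb$ and the monodromy/linear-independence argument is a legitimate way to justify extracting each coefficient $\uppsi_n$, $n\in\Cbb$, from the convergent $\Cbb\{q\}$-series --- the paper acknowledges in a footnote that this extraction requires care --- so that part of your plan is sound, though once the coefficients are extracted the grouping plays no further role.
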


See \cite[Sec. 4.4]{Gui} or \cite[Sec. 12]{Gui20} for a proof. The last part of that proof can be simplified thanks to the propagation of conformal blocks. In the following, we present this simplified proof.

\begin{proof}
Suppose $\uppsi_q=\sum_\Mbb\mc S_q\upphi_\Mbb$ is $0$ for one $q$. Then it vanishes for all $q\in\Dbb_{r\rho}^\times$ (and all choices of $\arg q$) since the restriction of $\uppsi$ to $\Wbb_\blt^{\leq k}$ satisfies a linear holomorphic differential equation \eqref{eq253} whose solutions are determined by their (initial) values at any fixed $q$ and $\arg q$. Write $\uppsi_q(w_\blt)=\sum_{n\in\Cbb}\uppsi_n(w_\blt)q^n$, then $\uppsi_n(w_\blt)=0$ for all $n\in\Cbb$.\footnote{Suppose $f(q)=\sum_{n\in\Cbb}a_nq^n\in\Cbb\{q\}$ converges absolutely and equals $0$ when $q\neq 0$ is small. If $f(q)\in\Cbb[[q^{\pm1}]]$, then by taking contour integrals one concludes $a_n=0$ for all $n$. In the general case, one has to be more careful. See the discussions in \cite[Sec. 4.4]{Gui}.}

Let $\mc F$ be the set of all $\Mbb\in\mc E$ such that $\upphi_\Mbb\neq0$. Let us prove that $\mc F=\emptyset$. Note that
\begin{align*}
\uppsi_q(w_\blt)=\sum_{n\in\Cbb}\sum_{\Mbb\in\mc F}q^n\cdot \upphi_\Mbb(w_\blt\otimes \chi_{\Mbb,n})
\end{align*}
where $\chi_{\Mbb,n}\in\Mbb_{(n)}\otimes\Mbb_{(n)}^*$ (where $\Mbb_{(n)}$ is the $L_0$-weight $n$ subspace of $\Mbb$) is the vector for contraction, i.e., $\chi_{\Mbb,n}=\id_{\Mbb_{(n)}}\in\End(\Mbb_{(n)})$. Let $\Xbb=\bigoplus_{\Mbb\in\mc F}\Mbb\otimes\Mbb'$ as a (semisimple) $\Vbb\otimes\Vbb$-module. Then for each $n\in\Cbb$, $\uppsi_n(w_\blt)=0$ means 
\begin{align*}
\Bigbk{\bigoplus_{\Mbb\in\mc F}\upphi_\Mbb,~w_\blt\otimes \big(\bigoplus_{\Mbb\in\mc F} \chi_{\Mbb,n}\big)}=0.
\end{align*}
where $\bigoplus_{\Mbb\in\mc F}\upphi_\Mbb:\Wbb_\blt\otimes\Xbb\rightarrow\Cbb$ is a linear map defined by sending each $w_\blt\otimes m\otimes m'$ (where $m\in\Mbb,m'\in\Mbb'$) to $\upphi_\Mbb(w_\blt\otimes m\otimes m')$.

Let
\begin{align*}
\Abb=\big\{\nu\in\Xbb:\bigbk{\bigoplus_{\Mbb\in\mc F}\upphi_\Mbb,w_\blt\otimes\nu}=0\text{ for all }w_\blt\in\Wbb_\blt\big\}.
\end{align*}
We claim that $\Abb$ is a $\Vbb\otimes\Vbb$-invariant subspace of $\Xbb$. It follows that if $\mc F\neq\emptyset$, then $\Abb$ (which contains all $\chi_{\Mbb,n}$) must be a non-trivial $\Vbb\otimes\Vbb$-submodule of the semisimple module $\Xbb$. Since the irreducible summands $\Mbb\otimes\Mbb'$ of $\Xbb$ are mutually non-isomorphic,  $\Abb$ must contain some $\Mbb\otimes\Mbb'$ where $\Mbb\in\mc F$. Then $\upphi_\Mbb=0$, contradicting the definition of $\mc F$.

That $\Abb$ is $\Vbb\otimes\Vbb$-invariant can be argued in the same way as Prop. \ref{lb167}. Choose any $\nu\in\Abb$. Then when $z\neq 0$ is small, for each $u\in\Vbb$,
\begin{align}
\bigbk{\bigoplus_{\Mbb\in\mc F}\upphi_\Mbb,Y(u,z)w_1\otimes w_2\otimes\cdots\otimes w_N\otimes\nu}
\end{align}
converges a.l.u. and equals $0$. This shows that $\bigoplus_{\Mbb\in\mc F}\wr\upphi_\Mbb(\cdot,w_\blt\otimes\nu)$ vanishes near $x_1$, and hence near $x'$. Therefore, for all $u\in\Vbb$,
\begin{align*}
\bigbk{\bigoplus_{\Mbb\in\mc F}\upphi_\Mbb,w_\blt\otimes(Y(u,z)\otimes \id)\nu}
\end{align*}
equals $0$. So $(Y(u)_n\otimes\id)v\in\Abb$ for all $u\in\Vbb,n\in\Zbb$, which proves that $\Abb$ is $\Vbb\otimes\id$-invariant. Similarly, $\Abb$ is $\id\otimes\Vbb$-invariant.
\end{proof}

\begin{rem}
If we define $\wtd{\fk S}_q$ using the normalized sewing $\wtd{\mc S}_q$,  then using the fact that for each $\Mbb$, $\mc S_q\upphi_\Mbb=q^d\wtd{\mc S}_q\upphi_\Mbb$ for some $d\in\Cbb$, one shows easily that $\wtd{\fk S}_q$ is also injective.
\end{rem}

\subsection{}

\begin{co}\label{lb174}
Assume that $\Vbb$ is $C_2$-cofinite. Then there are only finitely many equivalence classes of irreducible $\Vbb$-modules.
\end{co}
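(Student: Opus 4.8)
The plan is to deduce the corollary from two imported facts: the injectivity of the sewing map (Thm. \ref{lb172}) and the finiteness of spaces of conformal blocks for $C_2$-cofinite $\Vbb$ (Thm. \ref{lb135}). Concretely, I would fix once and for all a single $1$-pointed family of genus-one curves obtained by self-sewing a fixed $3$-pointed sphere, and then show that any finite set $\mc E$ of pairwise inequivalent irreducible $\Vbb$-modules injects, via sewing, into the (fixed, finite-dimensional) space of conformal blocks of a fiber of this family. This forces $|\mc E|$ to be bounded by that dimension \emph{uniformly in} $\mc E$, whence the set of equivalence classes of irreducibles is finite.

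In detail: fix $z_0\in\Cbb^\times$ and choose the radii and neighborhoods so that the sewing construction of Example \ref{lb154} applies, and set
\[
\wtd{\fk X}=(\Pbb^1;z_0,0,\infty;\,\zeta-z_0,\,\zeta,\,1/\zeta),
\]
a $3$-pointed sphere with local coordinates, whose first marked point $x_1=z_0$ we keep and whose pair $x'=0$, $x''=\infty$ we sew. Sewing $0$ with $\infty$ produces, as in Example \ref{lb154}, a family $\fk X=(\pi:\mc C\to\Dbb_{r\rho}^\times;\sgm_1;\eta_1)$ of $1$-pointed genus-one compact Riemann surfaces, the surviving point $\sgm_1$ coming from $z_0$. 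Assumption \ref{lb157} is met since $\Pbb^1$ is connected and contains $x_1$, and each fiber $\mc C_q$ is connected and contains $\sgm_1(q)$. I would associate the vacuum module $\Vbb$ to $x_1$, and, for each irreducible $\Vbb$-module $\Mbb$, the module $\Mbb$ and its contragredient $\Mbb'$ to $x'$ and $x''$.

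Then I would assemble two observations. First, $\Vbb$ is finitely generated as a $\Vbb$-module (by $\id$) and is finitely-admissible, and $\Vbb$ is $C_2$-cofinite by hypothesis; hence Thm. \ref{lb135} applies to $\fk X$ and $\Vbb$, so $\scr T_{\fk X}(\Vbb)$ is a locally free $\scr O_{\Dbb_{r\rho}^\times}$-module of finite rank, and (using Prop. \ref{lb134} together with finite-dimensionality) the number
\[
R:=\dim\scr T_{\fk X_q}^*(\Vbb)=\dim\scr T_{\fk X_q}(\Vbb)
\]
is finite and independent of $q\in\Dbb_{r\rho}^\times$. Second, any irreducible $\Vbb$-module $\Mbb$ is finitely-admissible by definition and is finitely generated, being generated by any nonzero homogeneous vector; and Example \ref{lb173} supplies a conformal block associated to $\wtd{\fk X}$ and $\Vbb,\Mbb,\Mbb'$, namely the one built from the vertex operation $Y_\Mbb$, sending $v\otimes w\otimes w'$ to $\bigbk{w',Y_\Mbb(v,z_0)w}$ (with $v\in\Vbb$ at $z_0$, $w\in\Mbb$ at $0$, $w'\in\Mbb'$ at $\infty$). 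Putting $v=\id$ gives the value $\bk{w',w}$, a nonzero bilinear form since $\Mbb'$ is the graded dual of $\Mbb$; hence $\dim\scr T_{\wtd{\fk X}}^*(\Vbb\otimes\Mbb\otimes\Mbb')\geq 1$ for every irreducible $\Mbb$.

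Finally, given any finite set $\mc E$ of pairwise inequivalent irreducible $\Vbb$-modules and any $q\in\Dbb_{r\rho}^\times$ with a fixed $\arg q$, Thm. \ref{lb172} (applied with $N=1$, $\Wbb_1=\Vbb$) says that the sewing map
\[
\fk S_q:\bigoplus_{\Mbb\in\mc E}\scr T_{\wtd{\fk X}}^*(\Vbb\otimes\Mbb\otimes\Mbb')\longrightarrow\scr T_{\fk X_q}^*(\Vbb)
\]
is injective, so
\[
|\mc E|\;\leq\;\sum_{\Mbb\in\mc E}\dim\scr T_{\wtd{\fk X}}^*(\Vbb\otimes\Mbb\otimes\Mbb')\;\leq\;\dim\scr T_{\fk X_q}^*(\Vbb)\;=\;R.
\]
Since $R$ is fixed and finite, the equivalence classes of irreducible $\Vbb$-modules number at most $R$, which is the claim. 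As for difficulty: essentially all the real content is borrowed — the $C_2$-cofinite finiteness feeding Thm. \ref{lb135} (ultimately Thm. \ref{lb133}) and the injectivity of sewing in Thm. \ref{lb172}. What remains is bookkeeping: choosing $\wtd{\fk X}$ so that (i) its self-sewing falls under the hypotheses of Thm. \ref{lb135} and (ii) the inserted pair $(\Mbb,\Mbb')$ always contributes a nonzero block; the $3$-pointed sphere above does both. The only mild checks are that the standing assumptions (a marked point on every connected component of every curve involved) hold and that irreducible modules are finitely generated finitely-admissible — both immediate.
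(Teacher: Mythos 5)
Your proof is correct and follows essentially the same route as the paper: attach the vacuum module to the surviving point of a $3$-pointed sphere, self-sew the other two points (carrying $\Mbb$ and $\Mbb'$) to get a $1$-pointed torus, note that the vertex-operator conformal block of Example \ref{lb173} is nonzero for each irreducible $\Mbb$, and bound $|\mc E|$ by $\dim\scr T_{\fk X_q}^*(\Vbb)$ via the injectivity of $\fk S_q$ from Thm.~\ref{lb172}. The only cosmetic difference is that you make explicit the finite-dimensionality of the target (via Thm.~\ref{lb135}) and the nonvanishing of each summand, both of which the paper leaves implicit.
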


\begin{proof}
For each finite set $\mc E$  as in the previous subsection, we give an upper bound for its cardinality $|\mc E|$. For each $\Mbb\in\mc E$, the vertex operator $Y_\Mbb$ defines a conformal block $\upomega_\Mbb:\Mbb\otimes\Vbb\otimes\Mbb'\rightarrow\Cbb$ for $\fk P=(\Pbb^1;0,1,\infty;\zeta,\zeta-1,1/\zeta)$ as in Example \ref{lb173}. Sewing $\fk P$ along $0,\infty$ with a fixed parameter $q\in\Dbb_1^\times$ gives a $1$-pointed torus $\fk T$. By Thm. \ref{lb172}, $\{\wtd{\mc S}_q\upomega_\Mbb:\Mbb\in\mc E\}$ is a linearly independent subset of $\scr T_{\fk T}^*(\Vbb)$. Therefore $|\mc E|\leq \dim\scr T_{\fk T}^*(\Vbb)$.
\end{proof}

Recall that $\Vbb$ is called rational if every admissible $\Vbb$-module is a direct sum of irreducible $\Vbb$-module.

\begin{thm}[Factorization]\label{lb181}
Assume that $\Vbb$ is $C_2$-cofinite and rational. Assume that $\mc E$ is a maximal set of mutually inequivalent irreducible $\Vbb$-modules. (``Maximal" means that every irreducible $\Vbb$-module is isomorphic to one element of $\mc E$.) Then the linear map $\fk S_q$ defined by \eqref{eq254} is an isomorphism.
\end{thm}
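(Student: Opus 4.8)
The plan is to use that $\fk S_q$ is already known to be injective (Theorem~\ref{lb172}, which needs only $C_2$-cofiniteness), reduce the theorem to an equality of dimensions, and obtain that equality by degenerating to the nodal curve. First observe that both sides of \eqref{eq254} are finite-dimensional: $\Vbb$ is $C_2$-cofinite, the $\Wbb_1,\dots,\Wbb_N$ are finitely generated (as throughout this subsection), each $\Mbb\in\mc E$ is irreducible — hence finitely generated, and so is its contragredient $\Mbb'$ — and $\mc E$ is finite by Corollary~\ref{lb174}; therefore $\scr T^*_{\fk X_q}(\Wbb_\blt)$ and each $\scr T^*_{\wtd{\fk X}}(\Wbb_\blt\otimes\Mbb\otimes\Mbb')$ are finite-dimensional by Theorem~\ref{lb135}. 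Since $\fk S_q$ is an injective linear map between finite-dimensional spaces, it is an isomorphism as soon as
\begin{align}
\dim\scr T^*_{\fk X_q}(\Wbb_\blt)=\sum_{\Mbb\in\mc E}\dim\scr T^*_{\wtd{\fk X}}(\Wbb_\blt\otimes\Mbb\otimes\Mbb').\label{eqdc}
\end{align}
Rationality, not yet used, will be needed precisely for \eqref{eqdc}.

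To prove \eqref{eqdc} I would extend the sewing family $\fk X$ over $\Dbb^\times_{r\rho}$ to a family over the whole disc $\Dbb_{r\rho}$ whose central fibre $\fk X_0$ is the nodal curve $\mc C_0$ obtained by gluing the points $x',x''$ of $\wtd{\fk X}$ (cf.\ the discussion at the end of Subsec.~\ref{lb170}). Sheaves of coinvariants and conformal blocks extend to this family, and the first key point is that $\scr T_{\fk X}(\Wbb_\blt)$ remains a \emph{locally free} $\scr O_{\Dbb_{r\rho}}$-module, including at $0$: the finiteness input Theorem~\ref{lb138} persists for the nodal degeneration, and one has a logarithmic connection $\nabla_{q\partial_q}$ near $q=0$, built from a lift of $q\partial_q$ to $H^0(\mc C,\Theta_{\mc C}(-\log\mc C_0+\star\SX))$ as in the sketch following Theorem~\ref{lb171}; running the argument of Sec.~\ref{lb153} with this logarithmic connection shows the rank function is locally constant on all of $\Dbb_{r\rho}$. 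As $\Dbb^\times_{r\rho}$ is connected, this gives $\dim\scr T^*_{\fk X_q}(\Wbb_\blt)=\dim\scr T^*_{\fk X_0}(\Wbb_\blt)$ for every $q$.

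The second key point — and the main obstacle — is the ``node-splitting'' identification
\begin{align}
\scr T^*_{\fk X_0}(\Wbb_\blt)\;\cong\;\bigoplus_{\Mbb\in\mc E}\scr T^*_{\wtd{\fk X}}(\Wbb_\blt\otimes\Mbb\otimes\Mbb').\label{eqns}
\end{align}
A conformal block on $\mc C_0$ is, via the normalization $\wtd C\to\mc C_0$ separating the node into $x'$ and $x''$, determined by its behaviour on $\wtd{\fk X}$ together with the datum of the $\Vbb$-states ``passing through the node''. Near the node $\mc C_0$ looks like $\{\xi\varpi=0\}$, and $C_2$-cofiniteness forces this space of through-states to be a finitely generated weak $\Vbb$-module; by rationality it is semisimple, hence a finite direct sum of irreducibles, and since every irreducible $\Vbb$-module is isomorphic to some member of $\mc E$ — and assigning $\Mbb$ at $x'$ forces its contragredient $\Mbb'$ at $x''$, since only the module--contragredient pairing survives the contraction (cf.\ Subsec.~\ref{lb27}) — one obtains \eqref{eqns}. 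Combining \eqref{eqns} with the local-freeness equality of the previous paragraph yields \eqref{eqdc}, and hence the theorem.

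I expect the genuinely hard work to be concentrated in \eqref{eqns} and in the nodal-curve formalism it rests on: defining $\scr O_{\mc C_0}$, extending the sheaf of VOAs $\scr V$ across the node, and proving local freeness of $\scr T_{\fk X}(\Wbb_\blt)$ at $q=0$ together with its logarithmic connection. These are developed in \cite{Gui}; here I would quote them, and only verify that the resulting identification \eqref{eqns} is dual to the $q\to 0$ limit of $\fk S_q$ (although, since we already have injectivity between spaces of equal dimension, even this last compatibility is not logically required for the statement). One should keep in mind that rationality is essential exactly at the step where the through-states module is declared semisimple: for non-rational $C_2$-cofinite $\Vbb$ the node may carry a non-semisimple module and $\fk S_q$ need not be surjective.
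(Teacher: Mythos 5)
Your reduction is sound and, in fact, is exactly the equivalence the paper records immediately after the theorem: given the injectivity of $\fk S_q$ (Thm.\ \ref{lb172}, which only needs $C_2$-cofiniteness) and the finite-dimensionality of all the spaces involved, the theorem is equivalent to the dimension identity $\dim\scr T^*_{\fk X_q}(\Wbb_\blt)=\sum_{\Mbb\in\mc E}\dim\scr T^*_{\wtd{\fk X}}(\Wbb_\blt\otimes\Mbb\otimes\Mbb')$. The paper itself offers no proof of that identity; it cites two genuinely different arguments, [DGT19] (Zhu's algebras, algebraic category) and [Gui, Sec.\ 4.6--4.7] (a direct surjectivity proof via a generalization of double propagation). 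Your route --- extend the family over $q=0$, prove local freeness of the coinvariant sheaf across the nodal fibre, and identify $\scr T^*_{\fk X_0}$ with the direct sum over $\mc E$ --- is the [TUY89]/[DGT19] route, not the one developed in the monograph this text introduces. That is a legitimate choice, but be aware of what it buys and costs: the node-splitting identification is not an auxiliary lemma one can peel off cheaply; it \emph{is} factorization in its algebraic form, and it is precisely where rationality, the classification of irreducibles, and the pairing of $\Mbb$ with $\Mbb'$ across the node do all the work. Quoting it yields an honest reduction, not an independent proof --- which is acceptable here only because the paper also defers the proof to the literature.

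There is one overstatement worth flagging. You assert that ``running the argument of Sec.\ \ref{lb153} with this logarithmic connection shows the rank function is locally constant on all of $\Dbb_{r\rho}$.'' The argument of Sec.\ \ref{lb153} transports a conformal block from the central fibre by solving $\partial_\tau\upphi_\tau=\upphi_\tau\circ A(\tau)$, and the recursion $n\upphi_n=\sum_{l<n}\upphi_l\circ A_{n-l-1}$ is always solvable. With $\nabla_{q\partial_q}$ the equation becomes $q\partial_q\upphi_q=\upphi_q\circ A(q)$, a regular singular point: the recursion now reads $n\upphi_n-\upphi_n\circ A_0=\cdots$, and since $A_0$ involves $L_0$ acting through the node, $n$ can be an eigenvalue of $A_0$ and the step can fail (resonance). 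This is exactly why the text proves a.l.u.\ convergence of $\wtd{\mc S}_q\upphi$ and injectivity of $\fk S_q$ by the theory of simple-pole ODEs rather than by parallel transport \emph{from} $q=0$, and why [DGT19] instead invokes coherence plus a Deligne-type criterion --- a step whose analytic analogue is nontrivial, as the Preface of this paper emphasizes (finite type is not coherence in the analytic category). So if you take this route, the statement to quote is the local freeness of the coinvariant sheaf on the nodal family itself, and you should verify that the source you cite actually proves it in the analytic setting rather than only over a scheme.
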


Factorization is equivalent to that for maximal $\mc E$,
\begin{align}
\dim\scr T_{\fk X_q}^*(\Wbb_\blt)=\sum_{\Mbb\in\mc E}\dim\scr T_{\wtd{\fk X}}^*(\Wbb_\blt\otimes\Mbb\otimes\Mbb').
\end{align}
This formula gives an algorithm of calculating the dimensions of spaces of conformal blocks of higher genera or more marked points from those of lower genera or less marked points. Factorization in this form was first proved by \cite{DGT19} using Zhu's algebras. In \cite[Sec. 4.6, 4.7]{Gui}, a different but more analytic and geometric proof was given using (a slight generalization of) double-propagations. The proof of factorization has a long history. In particular, the factorization of WZW conformal blocks was first proved in the landmark paper \cite{TUY89}. See the Introduction of \cite{DGT19} for a discussion of the history.

\begin{co}
Assume that $\Vbb$ is $C_2$-cofinite and rational. Then the number of equivalence classes of irreducible $\Vbb$-modules equals the dimension of the space of conformal blocks associated to any $1$-pointed torus $\fk T$ and the vacuum module $\Vbb$. 
\end{co}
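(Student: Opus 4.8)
The plan is to deduce the statement from the factorization theorem (Thm. \ref{lb181}) applied to a single three-points-to-one-point sewing, combined with the observation that each ``genus-zero three-point'' space $\scr T^*_{\fk P}(\Vbb\otimes\Mbb\otimes\Mbb')$ is one-dimensional when $\Mbb$ is irreducible, and finally with the genus/marked-point independence of $\dim\scr T^*$.

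First I would fix a maximal set $\mc E$ of mutually inequivalent irreducible $\Vbb$-modules; it is finite by Cor. \ref{lb174}. Take $\fk P=(\Pbb^1;0,1,\infty;\zeta,\zeta-1,1/\zeta)$, viewed as a $1$-pointed datum (marked point $1$, with the vacuum module $\Vbb$ attached) together with the two extra points $x'=0$, $x''=\infty$, equipped with local coordinates $\zeta$ and $1/\zeta$, to be sewn. For small $r,\rho<1$ and a fixed $q\in\Dbb_{r\rho}^\times$, the sewing of $\fk P$ along $0$ and $\infty$ (Example \ref{lb154}) produces a $1$-pointed torus $\fk T_q$ carrying $\Vbb$ at its marked point; its connected fiber contains the marked point, so the hypotheses of the sewing theorems are met. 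Since $\Vbb$ is $C_2$-cofinite and rational and $\Wbb_\blt=\Vbb$ is finitely generated (by $\id$), the factorization Thm. \ref{lb181} gives
\begin{align*}
\dim\scr T_{\fk T_q}^*(\Vbb)=\sum_{\Mbb\in\mc E}\dim\scr T_{\fk P}^*(\Vbb\otimes\Mbb\otimes\Mbb').
\end{align*}

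Next I would prove $\dim\scr T_{\fk P}^*(\Vbb\otimes\Mbb\otimes\Mbb')=1$ for each irreducible $\Mbb$. The lower bound is the nonvanishing of the conformal block $\upomega_\Mbb$ attached to the vertex operator $Y_\Mbb$ (Example \ref{lb173}): evaluating at $v=\id$ gives the nondegenerate pairing $(m',m)\mapsto\bk{m',m}$. For the upper bound, propagation (Thm. \ref{lb166}) identifies $\scr T_{\fk P}^*(\Vbb\otimes\Mbb\otimes\Mbb')$ with the $2$-pointed space $\scr T_{(\Pbb^1;0,\infty;\zeta,1/\zeta)}^*(\Mbb\otimes\Mbb')$, which by Example \ref{lb177} is $\Hom_\Vbb(\Mbb,\Mbb^\cl)$ (using $\Mbb''=\Mbb$ from Thm. \ref{lb94}). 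Because $\Mbb$ is irreducible, $L_0=\wtd L_0+\lambda$ is diagonalizable (Cor. \ref{lb92}) with finite-dimensional weight spaces, so the remark after Example \ref{lb177} yields $\Hom_\Vbb(\Mbb,\Mbb^\cl)=\Hom_\Vbb(\Mbb,\Mbb)=\End_\Vbb(\Mbb)=\Cbb\,\id_\Mbb$ by Schur's lemma. Hence $\dim\scr T_{\fk T_q}^*(\Vbb)=|\mc E|$.

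Finally I would drop the dependence on the particular torus. By the last corollary of Sec. \ref{lb153}, for a $1$-pointed connected genus-$1$ surface the dimension of the space of conformal blocks attached to the vacuum module depends only on $g=1$, $N=1$ and $\Vbb$; hence it equals the value $|\mc E|$ computed for $\fk T_q$, for every $1$-pointed torus $\fk T$. I do not expect a genuine obstacle: the substance is already packaged in the cited theorems, and the only points requiring care are verifying the hypotheses of Thm. \ref{lb181} (done above) and confirming that the remark after Example \ref{lb177} applies, i.e.\ that $L_0$ acts diagonalizably with finite multiplicities on an irreducible module, which is precisely Cor. \ref{lb92} together with finite-admissibility.
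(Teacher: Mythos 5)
Your proof is correct and follows essentially the same route as the paper: sew the three-pointed sphere $\fk P$ along $0,\infty$ to get a torus, apply factorization (Thm. \ref{lb181}), and identify each summand $\scr T_{\fk P}^*(\Vbb\otimes\Mbb\otimes\Mbb')$ with $\End_\Vbb(\Mbb)=\Cbb\id$ via propagation and Example \ref{lb177}. The only difference is that you spell out the one-dimensionality of each summand and the independence of the answer from the choice of $1$-pointed torus (via the equidimensionality corollary of Sec. \ref{lb153}), both of which the paper's one-line proof leaves implicit; these additions are correct and appropriate.
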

\begin{proof}
This follows immediately from factorization and the proof of Cor. \ref{lb174}.
\end{proof}

\section{Genus 0 conformal blocks and tensor categories of VOA modules}

\subsection{}

In this section, we still follow Convention \ref{lb175}: $\Vbb$-modules mean finitely-admissible $\Vbb$-modules. For each $z_\blt\in\Conf^N(\Cbb)$, let
\begin{align*}
\fk X_{z_\blt}=(\Pbb^1;z_1,\dots,z_N,\infty;\zeta-z_1,\dots,\zeta-z_N,1/\zeta)
\end{align*}
where $\zeta$ is the standard coordiante of $\Cbb$. Choose $\Vbb$-modules $\Wbb_1,\dots,\Wbb_N,\Wbb_{N+1}'$ associated to $z_1,\dots,z_N,\infty$. Write
\begin{align*}
\Wbb_\blt=\Wbb_1\otimes\cdots\otimes\Wbb_N,
\end{align*}
namely, $N+1$ are not included in the $\blt$. Note that a linear functional on $\Wbb_\blt\otimes\Wbb_{N+1}'$ is equivalently a linear map $\Wbb_\blt\rightarrow\Wbb_{N+1}^\cl$.

\subsection{}

We give a criterion to decide whether a linear functional on $\Wbb_\blt\otimes\Wbb_{N+1}'$ is a conformal block.

\begin{pp}\label{lb176}
A linear map $\mc Y:\Wbb_\blt\rightarrow\Wbb_{N+1}^\cl$ belongs to $\scr T_{\fk X_{z_\blt}}^*(\Wbb_\blt\otimes\Wbb_{N+1}')$ if and only if the following condition holds: For each $w_\blt\in\Wbb_\blt,w_{N+1}'\in\Wbb_{N+1}'$ and $u\in\Vbb$, the following formal Laurent series
\begin{gather*}
\bigbk{w_{N+1}',\mc Y(w_1\otimes\cdots\otimes Y(u,z-z_i)w_i\otimes\cdots\otimes w_N)}\qquad \in\Cbb((z-z_i))
\end{gather*}
(for all $i=1,\dots,N$) and the series
\begin{align*}
\bigbk{Y(u,z)^\tr w_{N+1}',\mc Y(w_\blt)}\qquad\in\Cbb((z^{-1}))
\end{align*}
are expansions at $z_1,\dots,z_{N+1},\infty$ of the same function $f\in H^0(\Pbb^1,\scr O_{\Pbb^1}(\star z_\blt+\star\infty))$.
\end{pp}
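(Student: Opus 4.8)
The plan is to match this up with the algebraic (residue-action) definition of conformal blocks, Definition after \ref{lb120}. The global sections $H^0(\Pbb^1,\scr V_{\Pbb^1}\otimes\omega_{\Pbb^1}(\star z_\blt+\star\infty))$ act on $\Wbb_\blt\otimes\Wbb_{N+1}'$ via \eqref{eq190}, and $\mc Y$ (regarded as a linear functional $\upphi$ on $\Wbb_\blt\otimes\Wbb_{N+1}'$) is a conformal block iff $\upphi$ annihilates this action. First I would set up, for a fixed $\sigma\in H^0(\Pbb^1,\scr V_{\Pbb^1}\otimes\omega_{\Pbb^1}(\star z_\blt+\star\infty))$, the Laurent expansions $v_i(\zeta)d\zeta$ of $\sigma$ at $z_i$ (via $\mc V_\varrho(\zeta-z_i)$) and $v_\infty$ at $\infty$ (via $\mc V_\varrho(1/\zeta)$, using \eqref{eq197} to see the effect of $\vartheta_\zeta$ on the trivialization; this is where the transpose $Y(u,z)^\tr$ at $\infty$ enters, exactly as in Example \ref{lb173}). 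The residue action of $\sigma$ then contributes at $z_i$ the term $\upphi(\cdots\otimes\Res_{z=z_i}Y(v_i(z),z)w_i\,dz\otimes\cdots)$ and at $\infty$ a term involving $\Res Y(\cdot)^\tr w_{N+1}'$.

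The key observation is that the given condition is precisely the statement that for each fixed $u\in\Vbb$ (constant section), the three families of local expansions
\begin{align*}
\bigbk{w_{N+1}',\mc Y(\cdots\otimes Y(u,z-z_i)w_i\otimes\cdots)}\in\Cbb((z-z_i)),\qquad
\bigbk{Y(u,z)^\tr w_{N+1}',\mc Y(w_\blt)}\in\Cbb((z^{-1}))
\end{align*}
glue to a single rational function $f\in H^0(\Pbb^1,\scr O_{\Pbb^1}(\star z_\blt+\star\infty))$. By the strong residue theorem (Theorem \ref{lb104}), applied to the trivial bundle $\scr E=\scr O_{\Pbb^1}$ and the marked points $z_1,\dots,z_N,\infty$, this gluing statement is equivalent to: for every meromorphic $1$-form $\mu\in H^0(\Pbb^1,\omega_{\Pbb^1}(\star z_\blt+\star\infty))$, the sum of residues $\sum_i\Res_{z_i}(\text{local expansion}\cdot\mu)+\Res_\infty(\cdot)$ vanishes. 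Running this for all $u\in\Vbb$ and then for $u$ replaced by arbitrary local sections $v(\zeta)$ (by $\scr O$-linearity it suffices to do constant $u$, cf. the remark after Definition \ref{lb103}), together with the identification $\scr V_{\Pbb^1}\otimes\omega_{\Pbb^1}(\star z_\blt+\star\infty)\simeq$ the $\Vbb$-valued sections used in \eqref{eq190}, this residue-vanishing statement becomes exactly ``$\upphi$ annihilates $H^0(\Pbb^1,\scr V_{\Pbb^1}\otimes\omega_{\Pbb^1}(\star z_\blt+\star\infty))\cdot(\Wbb_\blt\otimes\Wbb_{N+1}')$,'' i.e.\ $\mc Y$ is a conformal block. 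Conversely, unravelling the complex-analytic Definition \ref{lb103} directly gives that a conformal block must have these three local expansions and that $\wr\upphi(\cdot,w_\blt)$ (its propagation, Theorem \ref{lb161}) is a global section of $(\scr V^{\leq n})^\vee(\star z_\blt+\star\infty)$; evaluating $\wr\upphi$ on the vacuum-like section and using Lemma \ref{lb168} recovers $f$. One subtlety to check carefully: the two directions of the strong residue theorem have different difficulty — $(a)\Rightarrow(b)$ is the elementary residue theorem, but $(b)\Rightarrow(a)$ (the analyticity/gluing conclusion from residue-vanishing) is the nontrivial input, and in our setting one must also confirm that ``finite poles at $z_\blt,\infty$'' on the $\Wbb_i$-side matches the lower-truncation property $Y(u,z)w_i\in\Wbb_i((z-z_i))$, which is automatic.

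The main obstacle I expect is the bookkeeping at $\infty$: translating the conformal-block axiom at the point $\infty$ (local coordinate $1/\zeta$) into the condition ``$\bigbk{Y(u,z)^\tr w_{N+1}',\mc Y(w_\blt)}\in\Cbb((z^{-1}))$ is the expansion of $f$ at $\infty$.'' This requires the change-of-coordinate computation $\mc U(\varrho(1/\zeta\mid\zeta)_\gamma)=e^{\gamma L_1}(-\gamma^{-2})^{L_0}$ together with the contragredient-module formula \eqref{eq141}, essentially reproducing the ``$z$ near $\infty$'' case in the proof of Example \ref{lb173}; getting the signs and the $z\mapsto z^{-1}$ substitution right in the residue-pairing $\sum\Res\,\mu$ is where a careless error would creep in. Everything else — the identification of $\scr V_{\Pbb^1}$ with $\Vbb\otimes\scr O$ under $\zeta-z_i$, and the reduction from arbitrary sections to constant $u$ — is routine given the earlier sections.
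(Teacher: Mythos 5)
Your proposal is correct, but it takes a genuinely different route from the paper's. You verify the \emph{algebraic} (residue-action) definition: for each fixed $u\in\Vbb$ you apply the strong residue Theorem \ref{lb104} to the trivial line bundle $\scr O_{\Pbb^1}$, so the gluing condition in the Proposition becomes residue-vanishing against all $\mu\in H^0(\Pbb^1,\omega_{\Pbb^1}(\star z_\blt+\star\infty))$, and you then identify these residues with the residue action of $\mc U_\varrho(\zeta)^{-1}u\otimes\mu$ on $\Wbb_\blt\otimes\Wbb_{N+1}'$. The paper instead verifies the \emph{complex-analytic} Definition \ref{lb103} directly: from $f=f_{u,w_\blt\otimes w_{N+1}'}$ it defines the $\scr O_{\Cbb\setminus z_\blt}$-module morphism $\wr\mc Y(\cdot,w_\blt\otimes w_{N+1}')$ sending $\mc U_\varrho(\zeta)^{-1}(u\otimes g)$ to $g\cdot f$ and checks the required local expansions exactly as in Example \ref{lb173}; for the converse it simply evaluates $\wr\mc Y$ on the constant section $\mc U_\varrho(\zeta)^{-1}u$. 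The two routes are two faces of the same coin, since the paper's two definitions are themselves linked by Theorem \ref{lb104}, and both hinge on the same bookkeeping at $\infty$ via $\mc U(\varrho(1/\zeta|\zeta)_\gamma)=e^{\gamma L_1}(-\gamma^{-2})^{L_0}$ and \eqref{eq141}, which you correctly flag as the delicate step. Your route gives the ``only if'' direction at the price of invoking the hard implication $(b)\Rightarrow(a)$ of Theorem \ref{lb104}, which the paper's direct construction avoids. Two small points to tighten: first, the reduction to constant $u$ tensored with scalar forms, i.e.\ that such sections span $H^0(\Pbb^1,\scr V_{\Pbb^1}^{\leq n}\otimes\omega_{\Pbb^1}(\star z_\blt+\star\infty))$, is genuinely genus-$0$-specific (it uses the global coordinate $\zeta$ on $\Cbb$, the finite-dimensionality of $\Vbb^{\leq n}$, and the fact that the transition function at $\infty$ is Laurent-polynomial in $\gamma$ on each $\Vbb^{\leq n}$) and deserves an explicit sentence; second, in your sketch of the converse, evaluating $\wr\upphi$ on the vacuum section via Lemma \ref{lb168} recovers the constant $\upphi(w_\blt)$, not $f$ --- you want the constant section $\mc U_\varrho(\zeta)^{-1}u$ there. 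Neither point affects the substance of your argument.
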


According to our notation \eqref{eq255}, $f\in H^0(\Pbb^1,\scr O_{\Pbb^1}(\star z_\blt+\star\infty))$ means simply that $f$ is a meromorphic  function on $\Pbb^1$ (i.e. a rational function) with possible (finite) poles only at $z_\blt,\infty$.

\begin{proof}
``If": Denote the function $f$ in the Proposition by $f_{u,w_\blt\otimes w_{N+1}'}$. Then we can define an $\scr O_{\Cbb\setminus x_\blt}$-module morphism
\begin{gather*}
\wr\mc Y(\cdot,w_\blt\otimes w_{N+1}'):\scr V_{\Cbb\setminus x_\blt}\rightarrow\scr O_{\Cbb\setminus x_\blt}\\
\mc U_\varrho(\zeta)^{-1}(u\otimes g)\mapsto g\cdot f_{u,w_\blt\otimes w_{N+1}'}
\end{gather*}
where $u\otimes g\in \Vbb\otimes_\Cbb\scr O_{\Cbb\setminus x_\blt}$. Using the same argument as in Example \ref{lb173}, one checks that $\wr\mc Y$ satisfies the conditions in the complex analytic definition of conformal blocks.

``Only if": Let $v\in H^0(\Pbb^1,\scr V_{\Pbb^1}(\star\infty))$ be $v=\mc U_\varrho(\zeta)^{-1}u$. Let $f=\wr\mc Y(v,w_\blt\otimes w_{N+1}')$ by viewing $\mc Y$ as a linear functional on $\Wbb_\blt\otimes\Wbb_{N+1}'$. One checks that $f$  satisfies the claim in Prop. \ref{lb176}.
\end{proof}

\subsection{}

\begin{eg}\label{lb179}
Consider the case that $N=1$ and $z_1=0$. Then $\mc Y:\Wbb_1\rightarrow\Wbb_2^\cl$ is a conformal block iff $f_1(z)=\bk{w_2,\mc YY(u,z)w_1}$ (which is in $\Cbb((z))$) and $f_2=\bk{Y(u,z)^\tr w_2,\mc Yw_1}$ (in $\Cbb((z^{-1}))$) are expansions at $0,\infty$ of some $f\in H^0(\Pbb^1,\scr O_{\Pbb^1}(\star0+\star\infty))=\Cbb[z^{\pm1}]$. This is equivalent to that  $\Res_{z=0} f_1(z)z^kdz+\Res_{z=\infty}f_2(z)z^kdz=0$, and hence equivalent to that $\bk{w_\blt,[Y(u)_k,\mc Y]w_1}=0$, namely $\mc Y\in\Hom_\Vbb(\Wbb_1,\Wbb_2^\cl)$. We conclude
\begin{align}
\Hom_\Vbb(\Wbb_1,\Wbb_2^\cl)\simeq\scr T_{(\Pbb^1;0,\infty;\zeta,1/\zeta)}^*(\Wbb_1,\Wbb_2').
\end{align} 
\hfill\qedsymbol
\end{eg}

\begin{eg}
Consider the case that $N=0$ and  $\Wbb_1=\Vbb$, which is associated to the only marked point $\infty$. In this case, $\xi:=\mc Y$ belongs to $\Vbb^\cl$. According to Prop. \ref{lb176}, $\mc Y$ is a conformal block iff for each $w'\in\Wbb',u\in\Vbb$, $\bk{w',Y(u,z) \xi}$ belongs to $H^0(\Pbb^1,\scr O_{\Pbb^1}(\star\infty))=\Cbb[z]$. Equivalently, $Y(u)_n\xi=0$ whenever $u\in\Vbb,n\in\Nbb$. In particular,  $L_0\xi=Y(\cbf)_1\xi=0$. We conclude
\begin{align}
\scr T_{(\Pbb^1;\infty;1/\zeta)}^*(\Vbb')=\{v\in\Vbb(0):Y(u)_nv=0\text{ for all }u\in\Vbb,n\in\Nbb\}.
\end{align}
Note also that by Subsec. \ref{lb188}, $Y(u)_nv=0$ for all $n\in\Nbb$ iff $[Y(u,z_1),Y(v,z_2)]=0$ in $\Cbb[[z_1^{\pm1},z_2^{\pm1}]]$. 

It follows that if $\Vbb\simeq\Vbb'$ (self-dual) and $\Vbb(0)=\Cbb\id$ (CFT-type), then
\begin{align}
\scr T_{(\Pbb^1;\infty;1/\zeta)}^*(\Vbb)\simeq\scr T_{(\Pbb^1;\infty;1/\zeta)}^*(\Vbb')=\Cbb\id.
\end{align}
Therefore, by Example \ref{lb189}, $\End_\Vbb(\Vbb)=\Cbb\id_\Vbb$. This implies that if $\Vbb$ is completely reducible, i.e. a sum of irreducible $\Vbb$-modules, then $\Vbb$ must be simple (i.e. an irreducible $\Vbb$-module). We remark that without assuming completely reducibility, one can also deduce that $\Vbb$ is simple from self-dualness and CFT-type. See for instance \cite[Prop. 4.6]{CKLW18}.\hfill\qedsymbol
\end{eg}

Now consider the case $N=2$ and $z_2=0$. Set $\xi=z_1$ which is non-zero. In this case, we write a conformal block $\mc Y(w_1\otimes w_2)$ as $\mc Y(w_1,\xi)w_2$.

\begin{pp}
A linear map $\mc Y(\cdot,\xi):\Wbb_1\otimes\Wbb_2\rightarrow\Wbb_3^\cl$ is an element of $\scr T_{(\Pbb^1;0,\xi,\infty;\zeta,\zeta-\xi,1/\zeta)}^*(\Wbb_1\otimes\Wbb_2\otimes\Wbb_3')$ if and only if for each $w_1\in\Wbb_1$ and $u\in\Vbb$
\begin{align}\label{eq257}
\begin{aligned}
&\sum_{l\in\Nbb}{m\choose l}\xi^{m-l}\cdot \mc Y(Y(u)_{n+l}w_1,\xi)\\
=&\sum_{l\in\Nbb}{n\choose l}(-\xi)^l\cdot Y(u)_{m+n-l}\mc Y(w_1,\xi)-\sum_{l\in\Nbb}{n\choose l}(-\xi)^{n-l}\cdot\mc Y(w_1,\xi)Y(u)_{m+l}
\end{aligned}
\end{align}
\end{pp}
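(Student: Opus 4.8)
The plan is to read off the claim from the criterion of Prop.\ \ref{lb176}, specialized to $N=2$ with $z_1=\xi$ and $z_2=0$, and then to convert the ``same rational function'' condition appearing there into residue conditions via the strong residue Thm.\ \ref{lb104}; these will reproduce \eqref{eq257} term by term. Thus $\mc Y(\cdot,\xi)$ being a conformal block is successively equivalent to: (i) the three local series of Prop.\ \ref{lb176} being Laurent expansions of a common $f\in H^0(\Pbb^1,\scr O_{\Pbb^1}(\star 0+\star\xi+\star\infty))$; (ii) a residue-vanishing condition for this $f$; (iii) the identity \eqref{eq257}.

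Concretely, Prop.\ \ref{lb176} with $N=2$ says that $\mc Y(\cdot,\xi)$ is a conformal block iff for all $w_1\in\Wbb_1$, $w_2\in\Wbb_2$, $w_3'\in\Wbb_3'$ and $u\in\Vbb$ the three formal series $F_\xi(z)=\bigbk{w_3',\mc Y(Y(u,z-\xi)w_1,\xi)w_2}\in\Cbb((z-\xi))$, $F_0(z)=\bigbk{w_3',\mc Y(w_1,\xi)Y(u,z)w_2}\in\Cbb((z))$, and $F_\infty(z)=\bigbk{Y(u,z)^\tr w_3',\mc Y(w_1,\xi)w_2}\in\Cbb((z^{-1}))$ are the expansions at $\xi$, $0$, $\infty$ of one $f\in H^0(\Pbb^1,\scr O_{\Pbb^1}(\star 0+\star\xi+\star\infty))$ (each series lies in the indicated Laurent field by the lower truncation of $Y_{\Wbb_1},Y_{\Wbb_2}$ and of the transpose action on the finite-energy vector $w_3'$; the relevant expansions near $\infty$ were worked out in Example \ref{lb173}). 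I would then apply the strong residue Thm.\ \ref{lb104} with $C=\Pbb^1$, $\scr E=\scr O_{\Pbb^1}$, and marked points $\xi,0,\infty$ carrying the coordinates $\zeta-\xi,\zeta,1/\zeta$: the existence of such an $f$ is equivalent to $\Res_{z=\xi}F_\xi\mu+\Res_{z=0}F_0\mu+\Res_{z=\infty}F_\infty\mu=0$ for every $\mu\in H^0(\Pbb^1,\omega_{\Pbb^1}(\star 0+\star\xi+\star\infty))$, and by partial fractions this last space is spanned by the $1$-forms $\mu_{m,n}=z^m(z-\xi)^n\,dz$, $m,n\in\Zbb$. (Since $z^m(z-\xi)^n$ is a $\Cbb[\xi^{\pm1}]$-combination of the $z^k$ and the $(z-\xi)^l$, one could reduce to $m=0$ or $n=0$, exactly as in the reduction used for the algebraic Jacobi identity after Def.\ \ref{lb36}; but this is not needed.)

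The substance is then three short residue computations for $\mu=\mu_{m,n}$. Expanding $z^m=\sum_{l\geq 0}{m\choose l}\xi^{m-l}(z-\xi)^l$ near $z=\xi$ gives $\Res_{z=\xi}F_\xi\mu_{m,n}=\sum_{l\in\Nbb}{m\choose l}\xi^{m-l}\bigbk{w_3',\mc Y(Y(u)_{n+l}w_1,\xi)w_2}$; expanding $(z-\xi)^n=\sum_{l\geq 0}{n\choose l}(-\xi)^{n-l}z^l$ near $z=0$ gives $\Res_{z=0}F_0\mu_{m,n}=\sum_{l\in\Nbb}{n\choose l}(-\xi)^{n-l}\bigbk{w_3',\mc Y(w_1,\xi)Y(u)_{m+l}w_2}$; and expanding $z^m(z-\xi)^n=\sum_{l\geq 0}{n\choose l}(-\xi)^l z^{m+n-l}$ near $z=\infty$, together with $\Res_{z=\infty}z^a\,dz=-\delta_{a,-1}$, gives $\Res_{z=\infty}F_\infty\mu_{m,n}=-\sum_{l\in\Nbb}{n\choose l}(-\xi)^l\bigbk{w_3',Y(u)_{m+n-l}\mc Y(w_1,\xi)w_2}$. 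Setting the sum of the three residues to zero and moving the last two terms to the right reproduces exactly \eqref{eq257} paired with $w_3'$; as $w_3'$ ranges over $\Wbb_3'$ (equivalently, both sides of \eqref{eq257} are computed in $\Wbb_3^\cl$), this is \eqref{eq257}. Running the chain of equivalences backwards gives the converse direction. All $l$-sums are finite: in the $\xi$-residue by lower truncation of $Y(u,z-\xi)w_1$, and in the $0$- and $\infty$-residues because $F_0\in\Cbb((z))$ and $F_\infty\in\Cbb((z^{-1}))$ force the relevant coefficients to vanish for large $l$.

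The only genuine difficulties are bookkeeping: getting the sign right in the residue at $\infty$ (the convention $\Res_{z=\infty}z^a\,dz=-\delta_{a,-1}$ must be used), and verifying that $H^0(\Pbb^1,\omega_{\Pbb^1}(\star 0+\star\xi+\star\infty))$ is indeed spanned by the $\mu_{m,n}$, so that the residue conditions over this family coincide with those in the strong residue theorem. Everything else — the a.l.u.\ convergence of the series, the passage from ``common rational function'' to residue vanishing, and the propagation picture underlying Prop.\ \ref{lb176} — is already supplied by the cited results, so no new analytic input is required.
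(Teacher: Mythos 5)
Your proposal is correct and follows essentially the same route as the paper, which proves the "only if" direction by taking residues against $z^m(z-\xi)^n\,dz$ exactly as in the derivation of the algebraic Jacobi identity, and the "if" direction via the strong residue theorem applied to the criterion of Prop.~\ref{lb176}. Your three residue computations (including the sign at $\infty$) check out and simply make explicit the details the paper leaves to the reader.
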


\begin{proof}
With the help of Prop. \ref{lb176}, we can prove the only if part by taking residues as in Prop. \ref{lb38}, and prove the if part using strong residue Thm. \ref{lb104}.
\end{proof}

\subsection{}

Assume that $\Vbb$ is $C_2$-cofinite and $\Wbb_1,\dots,\Wbb_2$ are finitely generated. We assemble $\fk X_{z_\blt}$ to a family
\begin{align}
\fk X=(\Pbb^1\times\Conf^N(\Cbb)\rightarrow\Conf^N(\Cbb);\sgm_1,\dots,\sgm_N,\infty;\eta_1,\dots,\eta_N,1/\zeta)
\end{align}
as in Example \ref{lb118}. Namely, $\sgm_i$ sends $z_\blt\in\Conf^N(\Cbb)$ to $(z_i,z_\blt)$, $\infty$ sends $z_\blt$ to $(\infty,z_\blt)$, $\eta_i$ sends $(z,z_\blt)$ to $z-z_i$, and $1/\zeta$ sends $(z,z_\blt)$ to $1/z$. Identify
\begin{align*}
\scr W_{\fk X}(\Wbb_\blt)=\Wbb_\blt\otimes\scr O_{\Conf^N(\Cbb)}\qquad\text{via }\mc U(\eta_\blt,1/\zeta).
\end{align*}
By Example \ref{lb180}, over the vector bundle $\scr T_{\fk X}^*(\Wbb_\blt)$ one has a (clearly flat) connection $\nabla$ defined by
\begin{align}
\nabla_{\partial_{\tau_k}}=\partial_{\tau_k}-\big(\id_{\Wbb_1}\otimes\cdots\otimes L_{-1}\big|_{\Wbb_k}\otimes\cdots\otimes\id_{\Wbb_N}\otimes\id_{\Wbb_\infty}\big)^\tr
\end{align}
for all $1\leq k\leq N$. 

Thus, if we fix $\gamma_\blt\in\Conf^N(\Cbb)$, then each element $\mc Y(\cdot,\gamma_\blt)\in\scr T_{\fk X}^*(\Wbb_\blt)|\gamma_\blt$ is extended to a parallel section $\mc Y(\cdot,z_\blt):w_\blt\in\Wbb_\blt\mapsto \mc Y(w_\blt,z_\blt)\in\Cbb$ on any simply-connected open subset of $\Conf^N(\Cbb)$ containing $\gamma_\blt$, and furthermore to a multivalued parallel section $\mc Y(\cdot,z_\blt)$ on $\Conf^N(\Cbb)$ (namely, single-valued on the universal cover of $\Conf^N(\Cbb)$).

\subsection{}

As a variant of the above family, we can consider the family
\begin{align*}
\fk P^N=(\pi:\Pbb^1\times\Conf^N(\Cbb^\times)\rightarrow\Conf^N(\Cbb^\times);0,\sgm_1,\dots,\sgm_N,\infty;\zeta,\eta_1,\dots,\eta_N,1/\zeta)
\end{align*}
in Example \ref{lb118}. Then similar properties hold for conformal blocks associated to $\fk P^N$.

\begin{df}
A  parallel section $\mc Y=\mc Y(w_1,\xi)w_2$ of $\scr T_{\fk P^1}^*(\Wbb_2\otimes\Wbb_1\otimes \Wbb_3')$ multivalued on $\xi\in\Cbb^\times$ (and hence single-valued on the universal cover of $\log\xi\in\Cbb^\times$) is called a \textbf{type $\Wbb_3\choose \Wbb_1\Wbb_2$ intertwining operator}. The space of these intertwining operators is denoted by $\mc I{\Wbb_3\choose \Wbb_1\Wbb_2}$. Its dimension is called the \textbf{fusion rule} between $\Wbb_1,\Wbb_2,\Wbb_3$.
\end{df}
Note that $\Wbb_2,\Wbb_1,\Wbb_\infty$ are associated to the sections $0,\sgm=\sgm_1,\infty$ respectively. Also, $\mc Y$ being parallel means that $\mc Y$ satisfies the \textbf{translation property}
\begin{align}
\partial_\xi\mc Y(w_1,\xi)=\mc Y(L_{-1}w_1,\xi).
\end{align}

\subsection{}

We now address a problem overlooked previously: is the vector space $\Wbb'$ independent of the operator $\wtd L_0$ that makes $\Wbb$ finitely-admissible?

Let us prove that this is true when $L_0$ is diagonalizable and each $L_0$-eigenspace is finite-dimensional (e.g. when $\Wbb$ is semi-simple).  Let $\Wbb=\bigoplus_{n\in\Cbb}\Wbb_{(n)}$ be the $L_0$-grading of $\Wbb$. We can define the graded dual $\Wbb^\vee=\bigoplus_{n\in\Cbb}\Wbb_{(n)}^*$ using the $L_0$-grading. Then the independence of $\Wbb'$ on $\wtd L_0$ follows from:

\begin{pp}
Suppose that $\Wbb$ has $L_0$-grading $\Wbb=\bigoplus_{n\in\Cbb}\Wbb_{(n)}$ where each $\Wbb_{(n)}$ is finite-dimensional. Suppose also that $\Wbb$ has an $\wtd L_0$-grading $\Wbb=\bigoplus_{n\in\Nbb}\Wbb(n)$ making $\Wbb$ finitely-admissible. Then $\Wbb'=\Wbb^\vee$.
\end{pp}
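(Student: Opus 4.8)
The plan is to show that the $L_0$-grading and the $\wtd L_0$-grading on $\Wbb$ are mutually finite refinements of one another, and then to deduce that the two graded duals, viewed as subspaces of the full dual $\Wbb^*$, coincide.

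First I would recall from Rem. \ref{lb81} that $A:=\wtd L_0-L_0$ lies in $\End_\Vbb(\Wbb)$ and commutes with $L_0=Y_\Wbb(\cbf)_1$ and with $\wtd L_0$; in particular $[L_0,\wtd L_0]=0$. Since $\wtd L_0$ is diagonalizable ($\Wbb$ being admissible) and $L_0$ is diagonalizable by hypothesis, these two commuting operators admit a simultaneous eigenspace decomposition
\[
\Wbb=\bigoplus_{n\in\Nbb,\ m\in\Cbb}\big(\Wbb(n)\cap\Wbb_{(m)}\big),
\]
so that $\Wbb(n)=\bigoplus_m\big(\Wbb(n)\cap\Wbb_{(m)}\big)$ for each $n$ and $\Wbb_{(m)}=\bigoplus_n\big(\Wbb(n)\cap\Wbb_{(m)}\big)$ for each $m$. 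Concretely, $A$ preserves the finite-dimensional space $\Wbb(n)$, on which $A=n\cdot\id-L_0$ is diagonalizable, and its $(n-m)$-eigenspace is exactly $\Wbb(n)\cap\Wbb_{(m)}$.

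Next I would record the finiteness that makes the argument work: since $\dim\Wbb(n)<\infty$, only finitely many $m$ have $\Wbb(n)\cap\Wbb_{(m)}\neq0$; symmetrically, since $\dim\Wbb_{(m)}<\infty$, only finitely many $n$ have $\Wbb(n)\cap\Wbb_{(m)}\neq0$. With this in hand I would prove $\Wbb'=\Wbb^\vee$ inside $\Wbb^*$. For $\Wbb'\subseteq\Wbb^\vee$: any $\phi\in\Wbb(n)^*$ vanishes on $\Wbb(n')$ for $n'\neq n$, hence is supported on the finitely many subspaces $\Wbb(n)\cap\Wbb_{(m)}$; defining $\phi_m$ to agree with $\phi$ on $\Wbb(n)\cap\Wbb_{(m)}$ and to vanish on every other joint eigenspace, each $\phi_m$ lies in $\Wbb_{(m)}^*$, the sum $\phi=\sum_m\phi_m$ is finite, and therefore $\phi\in\Wbb^\vee$. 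The inclusion $\Wbb^\vee\subseteq\Wbb'$ is the mirror image, using the other finiteness statement. Hence $\Wbb'=\Wbb^\vee$.

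This is essentially bookkeeping and I do not anticipate a genuine obstacle; the only step requiring care is the simultaneous diagonalization together with the finiteness of the cross-intersections $\Wbb(n)\cap\Wbb_{(m)}$ — that is, that the two gradings locally refine each other — which is precisely where both finite-dimensionality hypotheses and the commutativity $[L_0,\wtd L_0]=0$ of Rem. \ref{lb81} are used in an essential way.
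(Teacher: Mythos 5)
Your proof is correct and rests on exactly the same ingredients as the paper's: the commutativity $[L_0,\wtd L_0]=0$ from Rem.~\ref{lb81}, the finite-dimensionality of both families of graded pieces, and the fact that a diagonalizable operator restricts diagonalizably to a finite-dimensional invariant subspace. The only difference is presentational — you build the simultaneous bigrading $\Wbb=\bigoplus_{n,m}\big(\Wbb(n)\cap\Wbb_{(m)}\big)$ on $\Wbb$ itself, whereas the paper runs the dual version of the same argument with the transposed operators $L_0^\tr,\wtd L_0^\tr$ acting on $\Wbb^*$.
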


\begin{proof}
Consider the linear operators $\wtd L_0^\tr,L_0^\tr$ defined on $\Wbb^*$, namely,
\begin{align*}
\bk{\wtd L_0^\tr w',w}=\bk{w',\wtd L_0w},\qquad \bk{L_0^\tr w',w}=\bk{w',L_0w}
\end{align*}
for all $w\in \Wbb,w'\in \Wbb^*$. Notice the following facts which are stated for $L_0,W^\vee$ and also hold for $\wtd L_0,W'$ in a similar way:
\begin{itemize}
\item From $\Wbb^*=\prod_{n\in\Cbb}\Wbb_{(n)}^*$, we see that a vector $w'\in \Wbb^*$ belongs to $\Wbb^\vee$ iff $w'$ is a finite sum of eigenvectors of $L_0^\tr$.
\item Any generalized eigenvector $w'\in\Wbb^*$ of $L_0^\tr$ is an eigenvector of $L_0^\tr$. Namely, if $(L_0^\tr-\lambda)^kw'=0$ for some $k\in\Nbb$, then  $(L_0^\tr-\lambda)w'=0$. In particular, $L_0^\tr$ is diagonalizable  on each finite-dimensional $L_0^\tr$-invariant subspace of $\Wbb^*$. 
\end{itemize}
By Rem. \ref{lb81}, $L_0$ and $\wtd L_0$ commute on $\Wbb$. So $L_0^\tr$ and $\wtd L_0^\tr$ commute on $\Wbb^*$. Therefore, since $\Wbb(n)^*$ is the $n$-eigenspace of $\wtd L_0^\tr$ on $\Wbb^*$, we see that $\Wbb(n)^*$ is $L_0^\tr$-invariant, and hence $L_0^\tr|_{\Wbb(n)^*}$ is diagonalizable. This proves that any $\wtd L_0^\tr$-eigenvector is a finite sum of $L_0^\tr$-eigenvectors. Therefore $\Wbb'\subset\Wbb^\vee$. A similar argument shows $\Wbb^\vee\subset\Wbb'$.
\end{proof}

\subsection{}

We now assume for simplicity that $\Wbb_1,\Wbb_2,\Wbb_3$ are semisimple, and show that our definition of intertwining operators agrees with the usual ones in the literature (for instance \cite{FHL93}). 

In \eqref{eq257}, set $n=0,u=\cbf,m=1$, we get
\begin{align*}
&[L_0,\mc Y(w_1,\xi)]=\xi\mc Y(L_{-1}w_1,\xi)+\mc Y(L_0w_1,\xi)\\
=&\xi\partial_\xi \mc Y(w_1,\xi)+\mc Y(L_0w_1,\xi).
\end{align*}
It follows that we have \textbf{scale covariance} (assuming $ z\neq 0$)
\begin{align}
 z^{L_0}\mc Y(w_1,\xi) z^{-L_0}=\mc Y( z^{L_0}w_1, z\xi).\label{eq258}
\end{align}
Here, and in the rest of this section, we adhere to the following convention, which is necessary since both $ z^{L_0}$ and $\mc Y(\cdot,\xi)$ depends on the arguments of the variables $ z,\xi$.
\begin{cv}\label{lb183}
We assume $\arg( z\xi)=\arg z+\arg\xi$ and $\arg z^{-1}=-\arg z$. If $a\in\Rbb$, we assume $\arg z^a=a\arg z$. By a positive variable $r>0$, we assume unless otherwise stated that $\arg r=0$. We assume $\arg 1=0$, $\arg e^{\im\theta}=\theta$ (where $\theta\in\Rbb$).
\end{cv}
Set $\xi=1$. Then \eqref{eq258} shows
\begin{align}
\bk{\mc Y(w_1,z)w_2,w_3'}=\bk{\mc Y(z^{-L_0}w_1,1)z^{-L_0}w_2,z^{L_0}w_3}
\end{align}
which must be a (finite) linear combination of (non-necessarily integral) powers of $z$ since it is so when $w_1,w_2,w_3'$ are $L_0$-homogeneous. Thus we can write
\begin{align*}
\mc Y(w_1,z)=\sum_{n\in\Cbb}\mc Y(w_1)_nz^{-n-1}
\end{align*}
where each $\mc Y(w_1)_n:\Wbb_2\rightarrow\Wbb_3^\cl$ satisfies
\begin{align}
[L_0,\mc Y(w_1)_n]=\mc Y(L_0w_1)_n-(n+1)\mc Y(w_1)_n.
\end{align}
This shows that if $w_1$ is $L_0$-homogeneous with weight $\wt w_1$, then $\mc Y(w_1)_n$ raises the $L_0$-weights by $\wt w_1-n-1$. In particular, $\mc Y(w_1)_n$ is a linear map
\begin{align*}
\mc Y(w_1)_n:\Wbb_2\rightarrow\Wbb_3.
\end{align*}
Thus, by checking the coefficients before each powers of $\xi$ in \eqref{eq257}, we obtain:
\begin{pp}
Let $\Vbb$ be $C_2$-cofinite and $\Wbb_1,\Wbb_2,\Wbb_3$ be semisimple. Then a type $\Wbb_3\choose \Wbb_1\Wbb_2$ intertwining operator is equivalently a linear map
\begin{gather*}
\mc Y:\Wbb_1\rightarrow\Hom(\Wbb_2,\Wbb_3)\{z\}\\
w_1\mapsto \mc Y(w_1,z)=\sum_{n\in\Cbb}\mc Y(w_1)_nz^{-n-1}
\end{gather*}
satisfying the following conditions
\begin{itemize}
\item \textbf{Jacobi identity}: For each $u\in\Vbb$, $w_1\in\Wbb_1$, $m,n\in\Zbb$, and $k\in\Cbb$,
\begin{align}
	\begin{aligned}
&\sum_{l\in\Nbb}{m\choose l}\mc Y\big(Y(u)_{n+l}w_1\big)_{m+k-l}\\
=&\sum_{l\in\Nbb}(-1)^l{n\choose l}Y(u)_{m+n-l}\mc Y(w_1)_{k+l}-\sum_{l\in\Nbb}(-1)^{n+l}{n\choose l}\mc Y(w_1)_{n+k-l} Y(u)_{m+l}	.
	\end{aligned}
\end{align}
\item \textbf{Translation property}: For each $w_1\in\Wbb_1$, we have $[L_{-1},\mc Y(w_1,z)]=\mc Y(L_{-1}w_1,z)$.
\end{itemize}
\end{pp}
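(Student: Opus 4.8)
The plan is to unwind the definitions and show that the two conditions in the last proposition — the Jacobi identity and the translation property for the mode expansion $\mc Y(w_1,z)=\sum_{n\in\Cbb}\mc Y(w_1)_nz^{-n-1}$ — are precisely a repackaging, via residues, of the characterization of conformal blocks on $\fk P^1$ given in the formula \eqref{eq257}, together with the fact that being parallel under $\nabla$ is the translation property $\partial_\xi\mc Y(w_1,\xi)=\mc Y(L_{-1}w_1,\xi)$. Since the discussion just before the statement already establishes scale covariance \eqref{eq258} and hence the mode decomposition into powers of $z$ (with $\mc Y(w_1)_n:\Wbb_2\to\Wbb_3$), the remaining work is to check the passage between the ``$\xi$-evaluated'' identity \eqref{eq257} and the ``all modes'' Jacobi identity.

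First I would set up the dictionary: an intertwining operator of type $\binom{\Wbb_3}{\Wbb_1\Wbb_2}$ is, by definition, a parallel section of $\scr T^*_{\fk P^1}(\Wbb_2\otimes\Wbb_1\otimes\Wbb_3')$, multivalued in $\xi\in\Cbb^\times$. Parallelness under the connection $\nabla_{\partial_\xi}=\partial_\xi-(\id\otimes L_{-1}\otimes\id)^\tr$ (Example \ref{lb180}, specialized to $N=1$) is exactly the translation property, so that half is immediate once the connection formula is in hand. For the other half I would invoke the Proposition preceding the statement — that a section of $\scr T^*_{\fk P^1}(\Wbb_2\otimes\Wbb_1\otimes\Wbb_3')$ is the same as a linear map $\mc Y(\cdot,\xi):\Wbb_1\otimes\Wbb_2\to\Wbb_3^\cl$ satisfying \eqref{eq257} for all $u\in\Vbb$, $m,n\in\Zbb$, $w_1\in\Wbb_1$. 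Expanding both sides of \eqref{eq257} as formal series in $\xi$ and reading off the coefficient of $\xi^{-k-1}$ (using $Y(u)_{-n-1}w_1$ has the right weight shifts, and $\mc Y(w_1)_k$ raises $L_0$-weight by $\wt w_1-k-1$) converts \eqref{eq257} into the stated Jacobi identity with the extra index $k\in\Cbb$ — this is the same bookkeeping as in Def. \ref{lb36}, only with the $z_1$-variable carrying possibly non-integral powers. Conversely, multiplying the Jacobi identity by suitable powers of $\xi$ and resumming recovers \eqref{eq257}.

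The main obstacle I expect is the care needed in the resummation step: \eqref{eq257} is an identity in $\Cbb[[\xi^{\pm1}]]$-valued (or rather $\xi^{\mbb C}$-valued) operators, and one must check that the passage ``coefficient-wise identity $\iff$ $\xi$-evaluated identity'' is valid — i.e. that the sums over $l\in\Nbb$ appearing on both sides are finite when applied to any fixed vector (this follows from the lower-truncation property for $Y_\Vbb$ and for $\mc Y$, the latter being a consequence of $\mc Y(w_1,z)w_2\in\Wbb_3\{z\}$ with the weight-shift constraint), and that no convergence subtlety is hidden because $\mc Y(w_1,z)w_2$ need not lie in $\Wbb_3((z))$ but only in $\Wbb_3\{z\}$. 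I would handle this exactly as in the derivation of the algebraic Jacobi identity from its complex-analytic form: fix $u,w_1,w_2,w_3'$, work with the genuine meromorphic function $f\in H^0(\Pbb^1,\scr O_{\Pbb^1}(\star 0+\star\xi+\star\infty))$ supplied by Prop. \ref{lb176}, and extract \eqref{eq257} by contour integration around $0$, then extract the all-modes Jacobi identity by a further contour integral in $\xi$. Finally I would note that, conversely, given a mode map satisfying the Jacobi identity and translation property, the three expansions in Prop. \ref{lb176} agree on overlaps by the strong residue theorem (Thm. \ref{lb104} applied to $\scr E=\scr O_{\Pbb^1}$), so $\mc Y$ is indeed a conformal block, and parallelness gives multivaluedness on $\Cbb^\times$ rather than on a simply-connected chart; this closes the equivalence.
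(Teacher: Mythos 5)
Your proposal is correct and takes essentially the same route as the paper: the paper likewise derives the mode expansion from scale covariance (itself obtained from \eqref{eq257} together with parallelness) and then gets the Jacobi identity ``by checking the coefficients before each power of $\xi$ in \eqref{eq257}'', with parallelness supplying the translation property and the resummation giving the converse. The one point worth flagging is that you rightly read the translation property as the $\partial_z$-derivative form coming from parallelness; the commutator form as literally printed in the statement is already a consequence of the Jacobi identity (cf.\ \eqref{eq262}), so your reading is the one under which the stated equivalence actually holds.
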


Note also that by  setting $n=0,u=\cbf,m=0$ in \eqref{eq257}, we get
\begin{align}
[L_{-1},\mc Y(w_1,z)]=\mc Y(L_{-1}w_1,z).\label{eq262}
\end{align}

\subsection{}

\begin{ass}
In the remaning part of this section, we assume $\Vbb$ is $C_2$-cofinite and rational. 
\end{ass}


We shall construct the braided tensor category $\Rep(\Vbb)$ of semisimple $\Vbb$-modules, due to Huang and Lepowsky. See \cite{BK,EGNO} for the definition of braided tensor categories.

The objects of $\Rep(\Vbb)$ are semisimple $\Vbb$-modules, and the morphism space between two objects $\Wbb_1,\Wbb_2$ is $\Hom_\Vbb(\Wbb_1,\Wbb_2)$. This makes $\Rep(\Vbb)$ a semisimple abelian category.

Fix $\mc E$ to be a maximal set of mutually-inequivalent irreducible $\Vbb$-modules as in the factorization Thm. \ref{lb181}. Recall that $\mc E$ is finite by Cor. \ref{lb174}. For each semisimple $\Wbb_1,\Wbb_2$, define the tensor product (more precisely, \textbf{fusion product})
\begin{align}
\Wbb_1\boxtimes\Wbb_2=\bigoplus_{\Wbb_s\in\mc E}\Wbb_s\otimes\mc I{\Wbb_s\choose \Wbb_1\Wbb_2}^*
\end{align}
where $\mc I{\Wbb_s\choose \Wbb_1\Wbb_2}^*$ is the dual space of the (finite-dimensional) space $\mc I{\Wbb_s\choose \Wbb_1\Wbb_2}$. If $F\in\Hom_\Vbb(\Wbb_1,\Wbb_3)$ and $G\in\Hom_\Vbb(\Wbb_2,\Wbb_4)$, then the transpose of the linear map
\begin{gather}
\begin{gathered}
(F\otimes G)^\tr:\mc I{\Wbb_s\choose \Wbb_3\Wbb_4}\rightarrow \mc I{\Wbb_s\choose \Wbb_1\Wbb_2}\\
\mc Y(\cdot,z)\mapsto\mc Y(F\cdot,z)G 
\end{gathered}
\end{gather}
gives a linear map $\mc I{\Wbb_s\choose \Wbb_1\Wbb_2}^*\rightarrow \mc I{\Wbb_s\choose \Wbb_3\Wbb_4}^*$, whose tensor product with $\id_{\Wbb_s}$, added up over all $\Wbb_s\in\mc E$, gives the definition of fusion product of morphisms
\begin{align}
F\boxtimes G:\Wbb_1\boxtimes\Wbb_2\rightarrow\Wbb_3\boxtimes\Wbb_4.
\end{align}

\subsection{}

We have an obvious equivalence
\begin{align*}
\Hom_\Vbb(\Wbb_1\boxtimes\Wbb_2,\Wbb_3)\simeq \bigoplus_{\Wbb_s\in\mc E}\Hom_\Vbb(\Wbb_s,\Wbb_3)\otimes \mc I{\Wbb_s\choose \Wbb_1\Wbb_2}.
\end{align*}
Through the isomorphism
\begin{gather}
\begin{gathered}
\bigoplus_{\Wbb_s\in\mc E}\Hom_\Vbb(\Wbb_s,\Wbb_3)\otimes \mc I{\Wbb_s\choose \Wbb_1\Wbb_2}\xrightarrow{\simeq} \mc I{\Wbb_3\choose \Wbb_1\Wbb_2}\\
T\otimes\mc Y(\cdot,z)\mapsto T\circ\mc Y(\cdot,z) 
\end{gathered}
\end{gather}
we get an isomorphism
\begin{align}
\Hom_\Vbb(\Wbb_1\boxtimes\Wbb_2,\Wbb_3)\simeq\mc I{\Wbb_3\choose \Wbb_1\Wbb_2}\label{eq266}
\end{align}
which is functorial in the sense that if $F:\Mbb_1\rightarrow\Wbb_1$, $G:\Mbb_2\rightarrow\Wbb_2$, $H:\Wbb_3\rightarrow\Mbb_3$ are morphisms, and if $\mc Y\in\mc I{\Wbb3\choose \Wbb_1\Wbb_2}$ corresponds to $T\in\Hom_\Vbb(\Wbb_1\boxtimes\Wbb_2,\Wbb_3)$, then $H\circ\mc Y(F\cdot,z)G$ corresponds to $HT(F\boxtimes G)$ in $\Hom_\Vbb(\Mbb_1\boxtimes\Mbb_2,\Mbb_3)$.

As a special case, we have
\begin{align}
\Hom_\Vbb(\Vbb\boxtimes\Wbb,\Wbb)\simeq\mc I{\Wbb\choose\Vbb~\Wbb}.
\end{align}
By Thm. \ref{lb166} (propagation of conformal blocks) and Subsec. \ref{lb182}, we have an isomorphism
\begin{gather*}
\mc I{\Wbb\choose\Vbb~\Wbb}\xrightarrow{\simeq}\End_\Vbb(\Wbb)\qquad\mc Y\mapsto \mc Y(\id,z)
\end{gather*}
This shows that $\dim\Hom_\Vbb(\Vbb\boxtimes\Wbb,\Wbb)=1$ of $\Wbb$ is irreducible. Therefore, we have a standard homomorphism (the left unitor)
\begin{align}
\mu_L:\Vbb\boxtimes\Wbb\xrightarrow{\simeq}\Wbb
\end{align}
which corresponds to $\id_\Wbb$ in $\End_\Vbb(\Wbb)$. Clearly, $\mu_L$ corresponds to the vertex operator $Y_\Wbb$ in $\mc I{\Wbb\choose\Vbb~\Wbb}$.

$\mu_L$ is indeed an isomorphism. This is easy to see when $\Wbb\in\mc E$. The general case follows by taking direct sums and applying the functoriality of the isomorphism \eqref{eq266}.

The right unitor $\mu_R:\Wbb\boxtimes\Vbb\xrightarrow{\simeq}\Wbb$ is defined using the braiding $\ss Y_\Wbb\in\mc I{\Wbb\choose\Wbb\Vbb}$ of the vertex operator $Y_\Wbb$, where $\ss$ is defined in Subsec. \ref{lb187}.

\subsection{}

To define the associativity isomorphism
\begin{align*}
\mc A:(\Wbb_1\boxtimes\Wbb_2)\boxtimes\Wbb_3\xrightarrow{\simeq}\Wbb_1\boxtimes(\Wbb_2\boxtimes\Wbb_3),
\end{align*}
we write
\begin{align*}
&\Wbb_1\boxtimes(\Wbb_2\boxtimes\Wbb_3)=\bigoplus_{\Wbb_t\in\mc E}\Wbb_t\otimes\mc I{\Wbb_t\choose \Wbb_1,\Wbb_2\boxtimes\Wbb_3}^*\\
=&\bigoplus_{\Wbb_t\in\mc E}\Wbb_t\otimes\mc I{\Wbb_t\choose \Wbb_1,\bigoplus_{\Wbb_p\in\mc E}\Wbb_p\otimes\mc I{\Wbb_p\choose\Wbb_2\Wbb_3}^*}^*.
\end{align*}
Note that in general, for any finite-dimensional vector space $J$ we have an equivalence
\begin{gather*}
\mc I{\Wbb_t\choose \Wbb_1,\Wbb_p}\otimes J^*\xrightarrow{\simeq}\mc I{\Wbb_t\choose \Wbb_1,\Wbb_p\otimes J}\\
\mc Y\otimes\omega\mapsto \Psi_{\mc Y\otimes\omega}
\end{gather*}
where $\Psi_{\mc Y\otimes\omega}(w_1,z)(w_p\otimes \xi)$ (where $w_p\in\Wbb_p$ and $\xi\in J$) equals $\mc Y(w_1,z)w_p\cdot \bk{\omega,\xi}$. So we have a canonical equivalence
\begin{align}
\Wbb_1\boxtimes(\Wbb_2\boxtimes\Wbb_3)\simeq \bigoplus_{\Wbb_t\in\mc E}\bigoplus_{\Wbb_p\in\mc E}\Wbb_t\otimes \mc I{\Wbb_t\choose \Wbb_1\Wbb_p}^*\otimes \mc I{\Wbb_p\choose\Wbb_2\Wbb_3}^*
\end{align}
Similarly, we have a canonical equivalence
\begin{align}
(\Wbb_1\boxtimes\Wbb_2)\boxtimes\Wbb_3\simeq\bigoplus_{\Wbb_t\in\mc E}\bigoplus_{\Wbb_s\in\mc E}\Wbb_t\otimes\mc I{\Wbb_t\choose \Wbb_s\Wbb_3}^*\otimes\mc I{\Wbb_s\choose \Wbb_1\Wbb_2}^*.
\end{align}
Thus, the associativity map can be defined such that on each component it is  $\id_{\Wbb_t}$ tensoring the transpose of an  isomorphism
\begin{align}
\mc F:\bigoplus_{\Wbb_p\in\mc E}\mc I{\Wbb_t\choose \Wbb_1\Wbb_p}\otimes \mc I{\Wbb_p\choose\Wbb_2\Wbb_3}\xlongrightarrow{\simeq}\bigoplus_{\Wbb_s\in\mc E}\mc I{\Wbb_t\choose \Wbb_s\Wbb_3}\otimes\mc I{\Wbb_s\choose \Wbb_1\Wbb_2}.\label{eq260}
\end{align}
Let us define $\mc F$.

\subsection{}

Choose any $0<r<\rho$. Recall that by Convention \ref{lb183}, $\arg r=\arg\rho=0$. Recall that by the notations in Example \ref{lb118}, $\fk P^2_{r,\rho}=(\Pbb^1;0,r,\rho,\infty;\zeta,\zeta-r,\zeta-\rho,1/\zeta)$. By Thm. \ref{lb181}, we have an isomorphism
\begin{gather*}
\mc F_{1,23}:\bigoplus_{\Wbb_p\in\mc E}\mc I{\Wbb_t\choose \Wbb_1\Wbb_p}\otimes \mc I{\Wbb_p\choose\Wbb_2\Wbb_3}\xrightarrow{\simeq}\scr T_{\fk P^2_{r,R}}^*(\Wbb_3\otimes\Wbb_2\otimes\Wbb_1\otimes\Wbb_t')
\end{gather*}
sending each $\mc Y_\alpha\otimes\mc Y_\beta$ to the linear functional on $\Wbb_3\otimes\Wbb_2\otimes\Wbb_1\otimes\Wbb_t'$ defined by
\begin{align}\label{eq259}
\bigbk{w_t',\mc Y_\alpha(w_1,\rho)\mc Y_\beta(w_2,r)w_3}:=\sum_{n\in\Nbb}\bigbk{w_t',\mc Y_\alpha(w_1,\rho)P_n\mc Y_\beta(w_2,r)w_3}.
\end{align}
\eqref{eq259} corresponds to the sewing as in Subsec. \ref{lb185}. Therefore, the RHS of \eqref{eq259} converges absolutely on the region $0<r<\rho$ thanks to Thm. \ref{lb171}. Now assume without loss of generalities that all the vectors are $L_0$-homogeneous. Then by scale covariance \eqref{eq258} and that $\wtd L_0-L_0$ is a scalar on the irreducible $\Wbb_p$, the RHS of \eqref{eq259} can be written as $\rho^ar^b$ (where $a,b\in\Cbb$) times
\begin{align*}
\sum_{n\in\Nbb}\bigbk{w_t',\mc Y_\alpha(w_1,1)P_n\mc Y_\beta(w_2,1)w_3}\cdot \left(\frac {~r~}{~\rho~}\right)^n.
\end{align*}
(Cf. the argument in Subsec. \ref{lb184}.) This shows that the absolute convergence of \eqref{eq259} implies the a.l.u. convergence on $0<r<\rho$.

Similarly, when $0<\rho-r<r$ we have an isomorphism
\begin{align*}
\mc F_{12,3}:\bigoplus_{\Wbb_s\in\mc E}\mc I{\Wbb_t\choose \Wbb_s\Wbb_3}\otimes\mc I{\Wbb_s\choose \Wbb_1\Wbb_2}\xrightarrow{\simeq}\scr T_{\fk P^2_{r,R}}^*(\Wbb_3\otimes\Wbb_2\otimes\Wbb_1\otimes\Wbb_t')
\end{align*}
sending each $\mc Y_\gamma\otimes\mc Y_\delta$ to the linear functional defined by
\begin{align}
\bigbk{w_t',\mc Y_\gamma\big(\mc Y_\delta(w_1,\rho-r)w_2,r\big)w_3}:=\sum_{n\in\Nbb}\bigbk{w_t',\mc Y_\gamma\big(P_n\mc Y_\delta(w_1,\rho-r)w_2,r\big)w_3}\label{eq261}
\end{align}
which converges a.l.u. on $0<\rho-r<r$ and correspond to the sewing in Subsec. \ref{lb186}.

We define \eqref{eq260} to be
\begin{align}
\mc F=\mc F_{12,3}^{-1}\circ\mc F_{1,23}
\end{align}
for any $r,\rho$ satisfying $0<\rho-r<r<\rho$. Using Thm. \ref{lb171}, one checks easily that both \eqref{eq259} and \eqref{eq261} are parallel sections. Therefore $\mc F$ is independent of the particular choice of $r,\rho$. 

\subsection{}\label{lb187}

It remains to define the braiding isomorphisms. We define an isomorphism
\begin{align}
\ss:\mc I{\Wbb_3\choose \Wbb_1\Wbb_2}\xrightarrow{\simeq}\mc I{\Wbb_3\choose \Wbb_2\Wbb_1}
\end{align}
as follows. For each $\mc Y\in\mc I{\Wbb_3\choose \Wbb_1\Wbb_2}$, note that $\mc Y(\cdot,1)\in\scr T_{(\Pbb^1;0,1,\infty;\zeta,\zeta-1,1/\zeta)}^*(\Wbb_2\otimes\Wbb_1\otimes\Wbb_3')$. Let  $\upgamma:[0,1]\rightarrow\Conf^2(\Cbb)$ be the path which is the anticlockwise rotation around $0.5$ from $(0,1)$ to $(1,0)$ by $\pi$, namely $\upgamma(t)=(0.5-0.5e^{\im\pi t},0.5+0.5e^{\im\pi t})$. Along this path we parallel-transport $\mc Y(\cdot,1)$ to an element of $\scr T_{(\Pbb^1;1,0,\infty;\zeta,\zeta-1,1/\zeta)}^*(\Wbb_2\otimes\Wbb_1\otimes\Wbb_3')$. We then define $\ss\mc Y$ such that $\bk{w_3',\ss\mc Y(w_2,1)w_1}$ is this element.

Now we can define the braiding 
\begin{gather}
\begin{gathered}
\upsigma:\Wbb_1\boxtimes\Wbb_2=\bigoplus_{\Wbb_s\in\mc E}\Wbb_s\otimes\mc I{\Wbb_s\choose \Wbb_1\Wbb_2}^*\xrightarrow{\simeq}\Wbb_2\boxtimes\Wbb_1=\bigoplus_{\Wbb_s\in\mc E}\Wbb_s\otimes\mc I{\Wbb_s\choose \Wbb_2\Wbb_1}^*\\
\upsigma=\bigoplus_{\Wbb_s\in\mc E}\id_{\Wbb_s}\otimes \ss^\tr
\end{gathered}
\end{gather}
where $\ss^\tr$ is the transpose of $\ss:\mc I{\Wbb_s\choose \Wbb_2\Wbb_1}\xrightarrow{\simeq}\mc I{\Wbb_s\choose \Wbb_1\Wbb_2}$.

One can check that with the unitors, associators, and braiding operators defined above, $\Rep(\Vbb)$ is a braided tensor category. Moreover, Huang showed in \cite{Hua08b} that
\begin{thm}
Suppose that $\Vbb$ is $C_2$-cofinite, rational, CFT type (i.e. $\Vbb(1)=\Cbb\id$), and self dual (i.e. $\Vbb\simeq\Vbb'$). Then $\Rep(\Vbb)$ is a rigid modular tensor category.
\end{thm}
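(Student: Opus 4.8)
The category $\Rep(\Vbb)$ has already been equipped above with a braided monoidal structure: objects are semisimple $\Vbb$-modules, tensor product is the fusion product $\boxtimes$, and the unitors $\mu_L,\mu_R$, associators $\mc A$ (built from the fusing isomorphism $\mc F=\mc F_{12,3}^{-1}\circ\mc F_{1,23}$) and braidings $\upsigma$ are as constructed. First note that the hypotheses of CFT type and self-duality $\Vbb\simeq\Vbb'$ force $\End_\Vbb(\Vbb)=\Cbb\id_\Vbb$, so $\Vbb$ is a simple module and the unit object is absolutely simple. What remains is (i) \emph{rigidity}: each object has a dual with evaluation and coevaluation satisfying the zigzag identities, and (ii) \emph{non-degeneracy of the braiding}, which, once rigidity and the ribbon twist are in place, upgrades $\Rep(\Vbb)$ to a modular tensor category. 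Since $\Rep(\Vbb)$ is semisimple with finitely many simple objects (Cor. \ref{lb174}), it suffices to treat irreducible modules and then extend by direct sums, using the functoriality of the isomorphism \eqref{eq266}.

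For rigidity, the plan is to take the dual of an irreducible $\Wbb$ to be its contragredient module $\Wbb'$, which is again a finitely-admissible (hence, by rationality, semisimple) $\Vbb$-module with $\Wbb''\simeq\Wbb$ by Thm. \ref{lb94}. Using the canonical pairing $\Wbb\otimes\Wbb'\to\Cbb$ and the identification \eqref{eq266}, one produces distinguished morphisms $e_\Wbb\in\Hom_\Vbb(\Wbb'\boxtimes\Wbb,\Vbb)$ and $i_\Wbb\in\Hom_\Vbb(\Vbb,\Wbb\boxtimes\Wbb')$ corresponding to the intertwining operators naturally attached to $Y_\Wbb$ and to the pairing. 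The zigzag identity for $(\Wbb,\Wbb')$ then reduces, via $\End_\Vbb(\Wbb)=\Cbb\id_\Wbb$ and the functoriality of \eqref{eq266}, to the statement that the composite
\begin{align*}
\Wbb\xrightarrow{\mu_L^{-1}}\Vbb\boxtimes\Wbb&\xrightarrow{i_\Wbb\boxtimes\id_\Wbb}(\Wbb\boxtimes\Wbb')\boxtimes\Wbb\xrightarrow{\mc A}\Wbb\boxtimes(\Wbb'\boxtimes\Wbb)\\
&\xrightarrow{\id_\Wbb\boxtimes e_\Wbb}\Wbb\boxtimes\Vbb\xrightarrow{\mu_R}\Wbb
\end{align*}
is a \emph{nonzero} scalar multiple of $\id_\Wbb$ (and similarly with $\Wbb$ and $\Wbb'$ interchanged). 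To evaluate this scalar one unwinds $\mc A$ through $\mc F=\mc F_{12,3}^{-1}\circ\mc F_{1,23}$: the content is that the genus-zero product $\mc Y_\alpha(w,\rho)\mc Y_\beta(w',r)w''$ of the two canonical intertwining operators, re-expanded in the other channel as $\sum_\gamma\mc Y_\gamma(\mc Y_\delta(\cdot,\rho-r)\cdot,r)$, has a nonzero coefficient along the $\Vbb$-channel. Using scale covariance \eqref{eq258}, the translation property \eqref{eq262}, the explicit form of $e_\Wbb$ and $i_\Wbb$, and the absolute and locally uniform convergence of these iterated intertwining operators (Thm. \ref{lb171}), this coefficient is computed to be a nonzero number times a power of $r,\rho$, the non-vanishing ultimately coming from the non-degeneracy of the pairing $\Wbb\times\Wbb'$ together with the invertibility of the fusing matrices, itself a consequence of the factorization Thm. \ref{lb181}.

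Next I would install the ribbon structure: the twist $\theta_\Wbb=e^{2\pi\im L_0}$ is a well-defined natural automorphism, since the grading and (conformal/scale) covariance formulas make $e^{2\pi\im L_0}$ commute with all intertwining operators up to the expected monodromy; the balancing axiom $\theta_{\Wbb_1\boxtimes\Wbb_2}=(\theta_{\Wbb_1}\boxtimes\theta_{\Wbb_2})\circ\upsigma_{\Wbb_2,\Wbb_1}\circ\upsigma_{\Wbb_1,\Wbb_2}$ follows from tracking the monodromy of $L_0$ through the fusing and braiding isomorphisms, and $\theta_{\Wbb'}=(\theta_\Wbb)'$ from the compatibility of $L_0$ with the contragredient (Thm. \ref{lb94}). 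It then remains to prove that the braiding is non-degenerate, i.e. that the matrix with entries $\wtd S_{ij}=\mathrm{tr}_{\Wbb_i\boxtimes\Wbb_j}\big(\upsigma_{\Wbb_j,\Wbb_i}\circ\upsigma_{\Wbb_i,\Wbb_j}\big)$ (Hopf-link invariants, $\Wbb_i,\Wbb_j\in\mc E$) is invertible. The strategy, following \cite{Hua08b}, is to identify the $\wtd S_{ij}$ with the modular $S$-transformation acting on the space of genus-one one-point conformal blocks: by Zhu's modular invariance theorem \cite{Zhu96} together with the factorization Thm. \ref{lb181} applied to a one-pointed torus, this space has dimension $|\mc E|$ and is spanned by the (suitably normalized) characters of the $\Wbb_i\in\mc E$, on which $\mathrm{SL}(2,\Zbb)$ acts; the categorical $S$ and $T$ matrices realize this action up to scalars, and since $S^2$ is the charge-conjugation permutation, $S$ is invertible. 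Assembling (i), the ribbon structure, and (ii) gives that $\Rep(\Vbb)$ is a rigid modular tensor category, proving the theorem.

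The main obstacle is the non-vanishing of the rigidity scalar: this is Huang's central lemma and is genuinely delicate. It requires (a) careful bookkeeping of branch/argument conventions (Convention \ref{lb183}) when transporting $\mc Y_\alpha(\cdot,\rho)\mc Y_\beta(\cdot,r)$ through $\mc F_{1,23}$ and $\mc F_{12,3}^{-1}$, and (b) the invertibility of the fusing matrices relating the two bases of genus-zero four-point conformal blocks — a deep fact resting on the convergence and regular-singular-point results behind Thm. \ref{lb171} and on the factorization Thm. \ref{lb181}, both of which use $C_2$-cofiniteness and rationality in an essential way. A secondary subtlety is the precise identification of the categorical $S$-matrix with the analytic modular $S$-transform of torus conformal blocks, which requires matching the sewing/propagation description of genus-one blocks with the genus-one change-of-coordinate (modular) action, and is where the self-duality hypothesis $\Vbb\simeq\Vbb'$ enters, ensuring that the vacuum torus block space is well-behaved and compatible with duality.
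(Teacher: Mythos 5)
The paper does not actually prove this theorem: it is quoted from Huang \cite{Hua08b}, with only the remark that the proof ``relies on the convergence of genus-$1$ sewing and factorization \cite{Zhu96,Hua05} and (generalized) Verlinde formula \cite{Hua08a}.'' So there is no in-paper argument to compare against, and your proposal must be judged on its own. As an outline it follows the known (Huang) strategy correctly in most respects: duals via contragredients (Thm. \ref{lb94}), reduction of the zigzag identities to the non-vanishing of a scalar using $\End_\Vbb(\Wbb)=\Cbb\id_\Wbb$ and the functoriality of \eqref{eq266}, the twist $e^{2\pi\im L_0}$, and non-degeneracy of the braiding via the identification of the categorical $\wtd S$-matrix with the modular $S$-transformation on genus-one one-point blocks.

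The genuine gap is in your justification of the non-vanishing of the rigidity scalar. You claim it follows from ``the non-degeneracy of the pairing $\Wbb\times\Wbb'$ together with the invertibility of the fusing matrices, itself a consequence of the factorization Thm. \ref{lb181}.'' Invertibility of the fusing isomorphism $\mc F=\mc F_{12,3}^{-1}\circ\mc F_{1,23}$ is indeed automatic from Thm. \ref{lb171} and Thm. \ref{lb181}, but an invertible matrix can have many vanishing entries, and the rigidity composite is controlled by one \emph{specific} matrix entry of $\mc F$ (the coefficient of the vacuum channel $\Vbb$ in the re-expansion of $\mc Y_{\text{ev}}(w',\rho)\mc Y_{\text{coev}}(\cdot,r)$). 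Nothing in the genus-zero theory forces that particular entry to be nonzero; this is exactly where Huang's proof invokes the (generalized) Verlinde formula \cite{Hua08a}, which expresses that fusing-matrix entry in terms of the modular $S$-matrix (schematically a ratio involving $S_{0,0}$ and related quantities) and derives its non-vanishing from modular invariance of genus-one conformal blocks with intertwining-operator insertions. In other words, genus-one input is needed already for \emph{rigidity}, not only for modularity, and your outline as written has no mechanism to supply it. A secondary (smaller) imprecision: in the modularity step, the statement that ``the categorical $S$ and $T$ matrices realize this action up to scalars'' is itself the content of the Verlinde-formula theorem rather than something that follows from Zhu's theorem plus factorization alone; once that identification is granted, invertibility of $S$ does follow as you say.
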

Huang's proof relies on the convergence of genus-$1$ sewing and factorization \cite{Zhu96,Hua05} and (generalized) Verlinde formula \cite{Hua08a}.

\subsection{}

We give an explicit formula of $\ss\mc Y$ for any $\mc Y\in\mc I{\Wbb_3\choose\Wbb_1\Wbb_2}$. First assume $z$ is positive and $z>1$; in particular $\arg z=0$. Recall the path $\upgamma$ in Subsec. \ref{lb187}. Note that $\mc Y(\cdot, 1)$ is parallel-transported to $\mc Y(\cdot,z)$ along the rightward path $\upalpha_1(t)=(0,1-t+zt)$ in $\Conf^2(\Cbb)$ from $(0,1)$ to $(0,z)$, and $\ss\mc Y$ similarly along the rightward path $\upalpha_2(t)=(1-t+zt,0)$ from $(1,0)$ to $(z,0)$. Therefore, $\mc Y(\cdot,z)$ is parallel-transported to $\ss\mc Y(\cdot,z)$ along the path $\upalpha^{-1}_1*\upgamma*\upalpha_2$  from  $(0,z)$ to $(z,0)$, which is homotopic to $\upgamma_1*\upgamma_2$ where
\begin{itemize}
\item $\upgamma_1(t)=(0,e^{\im \pi t}z)$ is from $(0,z)$ to $(0,-z)$ where the first component is fixed and the second one is the anticlockwise rotation by $\pi$ around $0$ from $1$ to $-1$.
\item $\upgamma_2(t)=(tz,tz-z)$ is the rightward translation from $(0,-z)$ to $(z,0)$.
\end{itemize}
Thus, along $\upgamma_1$, $\mc Y(\cdot,z)$ is parallel-transported to $\mc Y(\cdot,e^{\im\pi}z)$. Let $\upphi_0:\Wbb_1\otimes\Wbb_2\otimes\Wbb_3'\rightarrow\Cbb$ be the conformal block $\bk{w_3',\mc Y(w_1,e^{\im\pi}z)w_2}$ associated to $(\Pbb^1;-1,0,\infty;\zeta+1,\zeta,1/\zeta)$ and $\Wbb_1,\Wbb_2,\Wbb_3'$. Parallel-transporting $\upphi_0$ along the  $\upgamma_2$  gives $\upphi=\upphi_t(w_1\otimes w_2\otimes w_3'):\Wbb_1\otimes\Wbb_2\otimes\Wbb_3'\rightarrow\scr O(\Cbb)$, considered as an $\Hom(\Wbb_1\otimes\Wbb_2\otimes\Wbb_3',\Cbb)$-valued power series of $\tau$. Then according to the definition of $\nabla$, we have
\begin{align*}
\partial_t\upphi_t(w_1\otimes w_2\otimes w_3')=z\upphi_t(L_{-1}w_1\otimes w_2\otimes w_3')+z\upphi_t(w_1\otimes L_{-1}w_2\otimes w_3').
\end{align*}
Let $\uppsi_t(w_1\otimes w_2\otimes w_3)=\bk{e^{tz L_1}w_3',\mc Y(w_1,e^{\im\pi}z)w_2}$. Then $\uppsi_0=\upphi_0$, and by \eqref{eq262},
\begin{align*}
&\partial_t\uppsi_t(w_1\otimes w_2\otimes w_3')=z\bk{L_1e^{tz L_1}w_3',\mc Y(w_1,e^{\im\pi}z)w_2}\\
=&z\uppsi_t(L_{-1}w_1\otimes w_2\otimes w_3')+z\uppsi_t(w_1\otimes L_{-1}w_2\otimes w_3').
\end{align*}
So by Lemma \ref{lb21}, we must have $\upphi_t=\uppsi_t$. Since $\upphi_1$ is given by $\ss\mc Y(\cdot,z)$, we obtain
\begin{align}
\bk{w_3',\ss\mc Y(w_2,z)w_1}=\bk{e^{zL_1}w_3',\mc Y(w_1,e^{\im\pi}z)w_2}\label{eq263}
\end{align}
when $z>1$, and hence for all $z\in\Cbb^\times$ and $\arg z$ by the uniqueness of analytic continuation. We write \eqref{eq263} for short as
\begin{align}
\ss\mc Y(w_2,z)w_1=e^{zL_{-1}}\mc Y(w_1,e^{\im\pi}z)w_2.\label{eq265}
\end{align}

\begin{rem}
Consider the vertex operator $Y=Y_\Vbb$ for $\Vbb$. Using \eqref{eq263}, one checks easily that for all $v'\in\Vbb'$, $\bk{v',Y(\id,z)\id}=\bk{v',\ss Y(\id,z)\id}$. Thus, by Prop. \ref{lb167}, we see that for all $u,v\in\Vbb$,
\begin{align}
\bk{v',Y(u,z)v}=\bk{v',e^{zL_{-1}}Y(v,-z)u}.
\end{align}
Namely,
\begin{align}
Y_\Vbb=\ss Y_\Vbb.
\end{align}
This fact is called \textbf{skew-symmetry}.
\end{rem}

\appendix

\section{Appendix: basic sheaf theory}

The language of sheaves of modules is inevitable in the theory of conformal blocks for the following reason. The spaces of conformal blocks are expected to form a vector bundle (equivalently, locally free sheaves). This result is highly nontrivial. Moreover, we need to formulate the notion of ``forming a vector bundle" in a precise way. To accomplish this goal, we need to expand the concept of vector bundles to that of sheaves of modules.

The goal of this appendix section is to get familiar with the basic language of sheaves. The key points are the following: The equivalence of holomorphic vector bundles and locally free sheaves, the description of dual vector bundles in terms of $\scr O_X$-module morphisms, the fibers of $\scr O_X$-modules and their relationship to the fibers of vector bundles.

\subsection{(Pre)sheaves and stalks}

By definition, a \textbf{presheaf} of (complex) vector spaces $\scr F$ associated to a topological space $X$ consists of the following data: for each open $U\subset X$ there is a vector space $\scr F(U)$, and for each open $V\subset U$, there is a linear map $\scr F(U)\rightarrow\scr F(V),s\mapsto s|_V$ called the \textbf{restriction map} such that $s|_U=s$, and $(s|_V)|_W=s|_W$ for all $s\in\scr F(U)$ if $W\subset V$ is open. Elements in $\scr F(U)$ are called \textbf{sections}.

A presheaf $\scr F$ is called a \textbf{sheaf} if it satisfies:
\begin{itemize}
	\item (Locality) If  $U\subset X$ is a union $U=\bigcup_{\alpha\in\fk A}U_\alpha$ of open subsets, and if $s\in\scr F(U)$ satisfies that $s|_{U_\alpha}=0$ for each $\alpha\in\fk A$, then $s=0$.
	\item (Gluing) If $U\subset X$ is a union $U=\bigcup_{\alpha\in\fk A}U_\alpha$ of open subsets, and if for each $\alpha$ there is an element $s_\alpha\in\scr F(U_\alpha)$ such that $s_\alpha|_{U_\alpha\cap U_\beta}=s_\beta|_{U_\alpha\cap U_\beta}$ for all $\alpha,\beta\in\fk A$, then there exists $s\in \scr F(U)$ whose restriction to each $U_\alpha$ is $s_\alpha$.
\end{itemize}
We also write \index{H@$H^0(X,\scr F)=\scr F(X)$}
\begin{align}
H^0(X,\scr F)=\scr F(X),	
\end{align}
regarding the space of global sections of $\scr F$ as the $0$-th cohomology group of $\scr F$.

If $Y$ is an open subset of $X$, then the set of all $\scr F(U)$ (where $U\subset Y$) form naturally a presheaf, which we denote by $\scr F_Y$ or $\scr F|_Y$.

Let $\scr F$ be a presheaf. For each $x\in X$, we let $\scr F_x$ be the set of all sections $s\in\scr F$ defined on a neighborhood of $x$, mod the equivalence relation that two elements $s,t$ of $\scr F_x$ are regarded as equal iff $s$ equals $t$ on a possibly smaller neighborhood of $x$ inside the open sets on which $s,t$ are defined. $\scr F_x$ is called the \textbf{stalk} of $\scr F$ at $x$, and elements in $\scr F_x$ are called \textbf{germs}. For each $s\in\scr F$ defined near $x$, the corresponding germ at $x$ is denoted by $s_x$. \index{F@$\scr F_x,s_x$, stalks and germs.}

\begin{rem}
	It is easy to see that the presheaf $\scr F$ satisfies locality iff the following holds: for every open $U\subset X$ and section $s\in\scr F(U)$, $s=0$ iff $s_x=0$ for all $x\in U$.
\end{rem}

\subsection{Sheafification}
We are not interested in presheaves that are not sheaves. And each presheaf $\scr F_0$ can be made a sheaf $\scr F$ through the following procedure called \textbf{sheafification}: 

For each open $U\subset X$, let $\scr F_1(U)$ be the set of all $s:=(s_\alpha)_{\alpha\in\fk A}$ where $(U_\alpha)_{\alpha\in\fk A}$ is an open cover of $U$, and $s_{\alpha_1,x}=s_{\alpha_2,x}$ for all $\alpha_1,\alpha_2\in\fk A$ and $x\in U_{\alpha_1}\cap U_{\alpha_2}$. $\scr F(U)$ is $\scr F_1(U)$ mod the following relation: let $(V_\beta)_{\beta\in\fk B}$ be another open cover. Then $s:=(s_\alpha)_{\alpha\in\fk A}$ and $t:=(t_\beta)_{\beta\in\fk B}$ are regarded equal iff $s_{\alpha,x}=t_{\beta,x}$ for all $\alpha\in\fk A$, $\beta\in\fk B$, and $x\in U_\alpha\cap V_\beta$. The linear combinations of $s$ and $t$ can be defined easily by replacing $(U_\alpha)_{\alpha\in\fk A}$ and $(V_\beta)_{\beta\in\fk B}$ by a common finer open cover, e.g. $(U_\alpha\cap V_\beta)_{\alpha\in\fk A,\beta\in\fk B}$.

Note that the stalk $(\scr F_0)_x$ can be naturally identified with that of the sheafification $\scr F_x$.

\subsection{(Pre)sheaves of modules and morphisms}

We now let $X$ be a complex manifold. Then all $\scr O(U)$ (where $U\subset X$ is open) form the sheaf $\scr O_X$ of holomorphic functions on $X$, called the \textbf{structure sheaf} of $X$.

\begin{eg}
Let $U\subset\Cbb^m$ be open. Then the stalk $\scr O_{U,0}=\scr O_{\Cbb^m,0}$ can be identified with the $\Cbb$-subalgebra of elements of $\Cbb[[z_1,\dots,z_m]]$ converging absolutely on an open ball centered at $0$.
\end{eg}

A \textbf{(pre)sheaf of $\scr O_X$-modules} $\scr F$ is a (pre)sheaf such that each $\scr F(U)$ is an $\scr O(U)$-module, and that for each open $V\subset U$, the restriction map $s\in\scr F(U)\mapsto s|_V\in\scr F(V)$ intertwines the actions of $\scr O(U)$, i.e., $(fs)|_V=f|_V\cdot s|_V$ for all $f\in\scr O(U)$. A sheaf of $\scr O_X$-modules is simply called an \textbf{$\scr O_X$-module}.

A \textbf{morphism of (resp. presheaves of) $\scr O_X$-modules} $\varphi:\scr E\rightarrow\scr F$ gives each open $U\subset X$ an $\scr O(U)$-module morphism $\varphi_U:\scr E(U)\rightarrow\scr F(U)$ that is compatible with the restriction to open subsets: if $V\subset U$ is open and $s\in\scr E(U)$ then $\varphi_U(s)|_V=\varphi_V(s|_V)$.

\begin{cv}
	We abbreviate each $\varphi_U(s)$ to $\varphi(s)$. So $\varphi(s|_V)=\varphi(s)|_V$.
\end{cv}

\begin{rem}
	Note that the stalk $\scr O_{X,x}$ of $\scr O_X$ at $x$ is a $\Cbb$-algebra. A morphism $\varphi:\scr E\rightarrow\scr F$ naturally gives an  $\scr O_{X,x}$-module morphism $\varphi_x:\scr E_x\rightarrow\scr F_x$. 
	
	Also, there is a natural $\scr O_X$-module morphism $\scr E^s\rightarrow\scr F^s$ where $\scr E^s$ and $\scr F^s$ are the sheafifications of $\scr E$ and $\scr F$. The corresponding stalk morphism $\varphi_x:\scr E^s_x\rightarrow\scr F^s_x$ agrees with $\varphi_x:\scr E_x\rightarrow\scr F_x$.
\end{rem}

\begin{eg}
	Any (holomorphic) vector bundle $\scr F$ \footnote{Unless otherwise stated, all vector bundles are holomorphic with finite ranks.} over $X$ is an $\scr O_X$-module. 
\end{eg}

\begin{eg}
	If $W$ is a finite dimensional  vector space, let $W\otimes_\Cbb\scr O_X$ be the presheaf whose space of sections on each open $U\subset X$ is $W\otimes_\Cbb\scr O(U)$. Then $W\otimes_\Cbb\scr O_X$ is naturally a sheaf, and hence an $\scr O_X$-module. It is regarded as the trivial vector bundle with fiber $W$. We often suppress the subscript $\Cbb$ in $W\otimes_\Cbb\scr O_X$.
	
When $W$ is infinite-dimensional, the above defined presheaf is not a sheaf since the gluing property does not hold  when considering an open subset $U\subset X$ that has infinitely many connected components. 	 We let $W\otimes_\Cbb\scr O_X$ denote the sheafification of this presheaf. Then $(W\otimes_\Cbb\scr O_X)(U)$ equals $W\otimes\scr O(U)$ if $U$ is connected, or more generally, iff $U$ has finitely many connected components. Thus, we have a natural isomorphism of $\scr O_{X,x}$-modules
\begin{align*}
(W\otimes\scr O_X)_x\simeq W\otimes\scr O_{X,x}.	
\end{align*}

Note that when $U$ is connected, elements of $W\otimes\scr O(U)$ can be viewed as holomorphic functions from $U$ to a finite-dimensional subspace of $W$. We shall call such sections \textbf{$W$-valued holomorphic functions}. \index{00@$W$-valued holomorphic functions} \hfill\qedsymbol
\end{eg}

\begin{cv}\label{lba103}
	The space of $\scr O_X$-module morphisms $\varphi:\scr E\rightarrow\scr F$ form a vector space, which is clearly an $\scr O(X)$-module such that $f\in\scr O(X)$ times $\varphi$ is $f\varphi$, sending each $s\in \scr E(U)$ (where $U\subset X$ is open) to $f|_U\cdot \varphi(s)$. We denote this $\scr O(X)$-module by $\Hom_{\scr O_X}(\scr E,\scr F)$. \index{HomEF@$\Hom_{\scr O_X}(\scr E,\scr F)$}
\end{cv}

\begin{eg}\label{lba104}
	Let $V,W$ be finite dimensional vector spaces. A morphism 
	\begin{align*}
		\varphi:V\otimes\scr O_X\rightarrow W\otimes\scr O_X	
	\end{align*}
	is equivalently a $\Hom(V,W)$-valued holomorphic function $\Phi$ on $X$. Indeed, choose basis $\{e_i\}$ of $V$ and $\{f_j\}$ of $W^*$. Identify each vector of $W$ as a constant section of $W\otimes\scr O(X)$. Then $\varphi(e_i)\in W\otimes\scr O(X)$, and $\Phi$  is a matrix-valued holomorphic function whose $(j,i)$-component is the function $x\mapsto\bk{f_j,\varphi(e_i)(x)}$.
	
	To summarize, we have a canonical isomorphism of $\scr O(X)$-modules
	\begin{align*}
		\Hom_{\scr O_X}(V\otimes\scr O_X,W\otimes\scr O_X)\simeq \Hom_\Cbb(V,W)\otimes\scr O(X).	
	\end{align*}
	\hfill\qedsymbol
\end{eg}

\subsection{Injectivity, surjectivity, isomorphisms}

An $\scr O_X$-module morphism $\varphi:\scr E\rightarrow\scr F$ is called \textbf{injective} resp. \textbf{surjective} if for each $x\in X$ the corresponding stalk morphism $\varphi_x:\scr E_x\rightarrow\scr F_x$ is injective resp. surjective.

\begin{exe}
	Show that $\varphi$ is injective iff $\varphi:\scr E(U)\rightarrow\scr F(U)$ is injective for all open $U\subset X$. Show that $\varphi$ is surjective iff for each $x\in X$ and each section $t\in\scr F$ on a neighborhood $U$ of $X$, by shrinking $U$ to a smaller neighborhood $V\ni x$, we can find $s\in\scr E(V)$ such that $\varphi(s)=t$ when restricted to $V$.
\end{exe}

(Warning: surjectivity does not mean that each $x$ is contained in a neighborhood $U$ such that $\varphi:\scr E(U)\rightarrow\scr F(U)$ is surjective. Thus, surjectivity of sheaves is defined both locally and sectionwisely!)

\begin{rem}\label{lba102}
	Let $\scr E,\scr F$ be presheaves of $\scr O_X$-modules. Suppose that each $\scr E(U)$ is an $\scr O(U)$-submodule of $\scr F(U)$, and the inclusion maps $\scr E(U)\hookrightarrow\scr F(U)$ are compatible with the restriction maps of sheaves. Then there is a natural  morphism $\iota:\scr E\rightarrow\scr F$ such that $\iota_U$ is the inclusion $\scr E(U)\hookrightarrow\scr F(U)$. We say that $\scr E$ is a \textbf{sub-presheaf of $\scr O_X$-modules}  of $\scr F$. If both $\scr E,\scr F$ are sheaves, we say $\scr E$ is an \textbf{$\scr O_X$-submodule} of $\scr F$.
	
	Now suppose $\scr F$ is an $\scr O_X$-modules and $\scr E$ is a sub-presheaf of $\scr O_X$-modules of $\scr F$. Then the sheafification of $\scr E$ can be viewed as an $\scr O_X$-submodule of $\scr F$. Its spaces of sections are all $s\in\scr F(U)$ such that $s_x\in\scr E_x$ for every $x\in U$. \hfill\qedsymbol
\end{rem}

We say that an $\scr O_X$-module morphism $\varphi:\scr E\rightarrow\scr F$ is an \textbf{isomorphism of $\scr O_X$-modules} if it is bijective (i.e. injective+surjective). 
\begin{exe}
	Show that $\varphi$ is an isomorphism if and only if for each open subset $U\subset X$, $\varphi_U:\scr E(U)\rightarrow\scr F(U)$ is an isomorphism of $\scr O(U)$-modules. (Indeed, the only nontrivial part is to show that $\varphi$ being an isomorphism implies the surjectivity of $\varphi_U$. Surprisingly, to prove this part we need the injectivity!) 
\end{exe}

\subsection{Kernals, cokernels, images}

Let $\varphi:\scr E\rightarrow\scr F$ be an $\scr O_X$-module morphism. The \textbf{kernel} $\Ker(\varphi)$ is the presheaf whose space of sections on any open subset $U$ is the kernel of $\varphi:\scr E(U)\rightarrow\scr F$. It is clear that $\Ker(\varphi)$ is a sheaf and is an $\scr O_X$-module. Clearly $\Ker(\varphi)_x$ is the kernel of the stalk map $\varphi:\scr E_x\rightarrow\scr F_x$.

The \textbf{image} $\varphi(\scr E)=\Imag(\varphi)$ is the sheafification of the presheaf whose space of sections on each $U$ is $\varphi(\scr E(U))$.

The \textbf{cokeral} $\Coker(\varphi)$ is the sheafification of the presheaf whose space of sections on each $U$ is $\scr F(U)/\varphi(\scr E(U))$. Equivalently, $\Coker(\varphi)$ is the sheafification of the presheaf whose space of sections on each $U$ is $\scr F(U)/\varphi(\scr E)(U)$. Thus, we also say that $\Coker(\varphi)$ is the \textbf{quotient} of the sheaves $\scr F$ and $\varphi(\scr E)$, and write
\begin{align}
\scr F/\varphi(\scr E)=\coker(\varphi).	
\end{align}

\begin{exe}
	Show that we have natural equivalences
	\begin{gather}
		\varphi(\scr E)_x\simeq\varphi(\scr E_x),\\
		\Coker(\varphi)_x	\simeq \scr F_x/\varphi(\scr E_x).
	\end{gather}
\end{exe}

\subsection{Locally free sheaves}

Let $I$ be an index set. Let $\Cbb^I$ be the direct sum of  $|I|$ copies of $\Cbb$ indexed by elements of $I$. Then $\Cbb^I$ has basis $\{e_i\}_{i\in I}$ where $e_i$ is the vector whose only non-zero component is the $i$-th one, which is $1$.

Let $\scr E$ be an $\scr O_X$-module. A collection of sections $(s_i)_{i\in I}\subset\scr E(X)$ is said to \textbf{generate} (resp. \textbf{generate freely}) $\scr E$ if the natural $\scr O_X$-module $\psi:\Cbb^I\otimes\scr O_X\rightarrow\scr E$ sending each $e_i$ (regarded as a constant section $e_i\otimes 1$) to $s_i$ is surjective (resp. bijective).

Equivalently, $(s_i)_{i\in I}$ generates (resp. generates freely) $\scr E$ iff for each $x\in X$, each $t\in\scr E_x$ can be written as a (resp. unique) $\scr O_{X,x}$-linear combination of the germs $(s_{i,x})_{i\in I}$.

If $U\subset X$ is open, we say $(s_i)_{i\in I}$ generates (resp. freely) $\scr E_U$ if $(s_i|_U)_{i\in I}$ does.

We say that $\scr E$ is \textbf{locally free} if each $x\in X$ is contained in a neighborhood $U$ such that the following equivalent conditions hold:
\begin{itemize}
	\item $\scr E_U$ is generated freely by finitely many sections $s_1,\dots,s_n\in\scr E(U)$. ($s_\blt$ play the role of basis of a vector space.)
	\item $\scr E_U$ is isomorphic to $\Cbb^n\otimes\scr O_U$ for some $n\in\Nbb$. 
\end{itemize}

\begin{rem}\label{lba105}
	It is an important fact that locally free $\scr O_X$-modules are the same as holomorphic vector bundles. Indeed, the sections of vector bundles clearly form a locally free module. Conversely, suppose $\scr E$ is locally free, then we can get a vector bundle whose transition functions are $\psi\circ\varphi^{-1}:W\otimes\scr O_U\xrightarrow{\simeq}W\otimes\scr O_U$ (considered as $\End W$-valued holomorphic functions) where $\varphi,\psi:\scr E\xrightarrow{\simeq} W\otimes\scr O_U$ are trivializations. Equivalently, if $s_1,\dots,s_n$ and $t_1,\dots,t_n$ both generate freely $\scr E_U$, then there is a unique invertible $M_{n\times n}(\Cbb)$-valued holomorphic function $A$ such that $t_i(x)=\sum_j A_{i,j}(x)s_j(x)$. Then $A$ gives a transition function.
\end{rem}

\subsection{Sheaves of morphisms, dual modules}

If $\scr E,\scr F$ are $\scr O_X$-modules, we have a presheaf $\scr G$ whose space of sections on each open $U\subset X$ is $\Hom_{\scr O_U}(\scr E_U,\scr F_U)$. There is an obvious restriction map from $\Hom_{\scr O_U}(\scr E_U,\scr F_U)$ to $\Hom_{\scr O_V}(\scr E_V,\scr F_V)$ if $V\subset U$ is open. $\scr G$ is clearly a presheaf of $\scr O_X$-modules. It is a routine check that $\scr G$ is a sheaf. We denote this sheaf of $\scr O_X$-modules by
\begin{align*}
	\shom_{\scr O_X}(\scr E,\scr F).	
\end{align*}

\begin{exe}
	Find a natural equivalence $\scr F\xrightarrow{\simeq}\shom_{\scr O_X}(\scr O_X,\scr F)$.
\end{exe}

\begin{eg}
	In the setting of Example \ref{lba104}, we have a natural $\scr O_X$-module isomorphism
	\begin{align}
		\shom_{\scr O_X}(V\otimes\scr O_X,W\otimes\scr O_X)\simeq \Hom_\Cbb(V,W)\otimes\scr O_X.	\label{eqa181}
	\end{align}
\end{eg}

We define
\begin{align*}
	\scr E^\vee:=\shom_{\scr O_X}(\scr E,\scr O_X),
\end{align*}
called the \textbf{dual $\scr O_X$-module} of $\scr E$. Then by \eqref{eqa181}, if $\scr E$ is locally free (i.e., a vector bundle), then so is $\scr E^\vee$, and they have the same rank. We regard $\scr E$ as the \textbf{dual vector bundle} of $\scr E$.

\begin{exe}
Let $\scr E$ be an $\scr O_X$-submodule of $\scr F$. Show that $(\scr F/\scr E)^\vee$ is the sheaf whose sections over any open  $U\subset X$ are the $\scr O_U$-module morphisms $\scr F_U\rightarrow\scr O_U$ vanishing on the stalks of $\scr E_U$.
\end{exe}

\begin{cv}
If $U,V\subset X$ are open, $\varphi\in\Hom_{\scr O_V}(\scr E_V,\scr O_V)$ and $s\in\scr E(U)$, we set
\begin{align*}
\bk{\varphi,s}=\varphi(s|_{U\cap V})\qquad (\in\scr O(U\cap V)).	
\end{align*} 
\end{cv}

\begin{rem}
If $\scr E$ is a vector bundle, then the transition functions of $\scr E^\vee$ are the inverses of those of $\scr E$. To see this, choose $s_1,\dots,s_n\in\scr E(U)$ generating freely $\scr E_U$. Then by $\scr E_U\simeq\Cbb^n\otimes\scr O_U$, we can easily find $\wch s_1,\dots,\wch s_n\in\scr E^\vee(U)$ generating freely $\scr E^\vee_U$ such that $\bk{s_j,s_i}$ is the constant section $\delta_{i,j}$. $\wch s_1,\dots,\wch s_n$ are regarded as the dual basis of  $s_1,\dots,s_n$. 

Now, if $t_1,\dots,t_n\in\scr E(U)$ also generates freely $\scr E_U$, then by Rem. \ref{lba105}, the matrix valued holomorphic function $A\in M_{n\times n}\otimes\scr O(U)$ such that $t_i=\sum_j A_{i,j}s_j$ is a transition function of $\scr E$. Let $A^{-1}\in M_{n\times n}\otimes\scr O(U)$ be the function sending $x\in U$ to $A(x)^{-1}$. Then $\wch t_i=\sum_j (A^{-1})_{i,j}\wch s_j$. \hfill\qedsymbol
\end{rem}

\subsection{Fibers}

One can recover the fibers from a locally free sheaf in the following way. Let us consider a general $\scr O_X$-module $\scr E$. For each $x\in X$, let $\fk m_{X,x}$ (or simply $\fk m_x$) \index{mx@$\fk m_{X,x}\equiv\fk m_x$} be the ideal of $\scr O_{X,x}$ consisting of all $s\in\scr O_{X,x}$ whose values at $x$ vanish. Then $\fk m_x\scr E_x$ is an $\scr O_{X,x}$-submodule of $\scr E_x$, and so is the \textbf{fiber} \index{Ex@$\scr E\lvert x=\scr E\lvert_x,s(x)$}
\begin{align}
\scr E|x\equiv\scr E|_x=\frac{\scr E_x}{\fk m_x\scr E_x}.
\end{align} 
where the $\Span_\Cbb$ is suppressed in the notation $\fk m_x\scr E_x$. The equivalence class of $s\in\scr E_x$ in $\scr E|x$ is denoted by $s(x)$, called the value of $s$ on the fiber $\scr E|x$.

If $\varphi:\scr E\rightarrow\scr F$ is an $\scr O_X$-module morphism and $x\in X$, then $\varphi:\scr E_x\rightarrow\scr F_x$ descends to a linear map
\begin{align}
	\varphi:\scr E|x\rightarrow\scr F|x	
\end{align}
since $\varphi(\fk m_x\scr E_x)=\fk m_x\varphi(\scr E_x)\subset\fk m_x\scr F_x$. 

\begin{eg}
Let $U\ni 0$ be an open subset of $\Cbb^m$. Then $\fk m_{U,0}$ is the set of all series $\sum_{n_1,\dots,n_m\in \Nbb}a_{n_1,\dots,n_m}z_1^{n_1}\cdots z_m^{n_m}$ converging absolutely near $0$ such that $a_{0,\dots,0}=0$. Equivalently,
\begin{align*}
\fk m_{\Cbb^m,0}=z_1\scr O_{\Cbb^m,0}+\cdots+z_m\scr O_{\Cbb^m,0}.	
\end{align*}
\end{eg}

\begin{exe}
Let $W$ be a vector space, and let $\scr E=W\otimes\scr O_U$ where $U\subset\Cbb^m$. Let $x\in U$.  Show that the evaluation map
\begin{align}
(W\otimes\scr O_U)_x\rightarrow W, \qquad w\otimes f\mapsto f(x)w.	
\end{align}
descends to an isomorphism of vector spaces $(W\otimes\scr O_X)\big|x\simeq W$.
\end{exe}

\subsection{A criterion on local freeness}

This subsection is needed only in the Sec. \ref{lb153}.

Let $X$ be a complex manifold and $\scr E$ an $\scr O_X$-module. We say that $\scr E$ is of \textbf{finite type} (also called \textbf{finitely generated}) if  each $x\in X$ is contained in a neighborhood $U\subset X$ such that there exist finitely many sections $s_1,\dots,s_n\in\scr E(U)$ generating $\scr E_U$. Equivalently, each $x$ is contained in a neighborhood $U$ such that there is a surjective $\scr O_U$-module morphism $\Cbb^n\otimes\scr O_U\rightarrow\scr E_U$.

Warning: knowing that $\scr E(U)$ is a finitely generated $\scr O(U)$-module is not enough to show that $\scr E_U$ is generated by finitely many elements of $\scr E(U)$.

If $x\in U$ and $s_1,\dots,s_n\in\scr E(U)$ generate $\scr E_U$, then they clearly generate $\scr E_x$, and hence $s_1(x),\dots,s_n(x)$ span the fiber $\scr E|x$. In particular, $\scr E|x$ is finite-dimensional. Conversely, we have:

\begin{pp}[\textbf{Nakayama's lemma}]
Suppose $\scr E$ is of finite type. Choose $x\in X$ and a neighborhood $U\ni x$. Let $s_1,\dots,s_n\in\scr E(U)$ such that $s_1(x),\dots,s_N(x)$ span the fiber $\scr E|x$. Then there exists a neighborhood $V\subset U$ of $x$ such that $s_1|_V,\dots,s_n|_V$ generate $\scr E|_V$.
\end{pp}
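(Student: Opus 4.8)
The plan is to reduce to the classical Nakayama lemma over the local ring $\scr O_{X,x}$ and then to \emph{spread out} the resulting relations to a whole neighborhood; the point that makes the sheaf version work is that the determinant produced by the Nakayama argument is a unit not merely in $\scr O_{X,x}$ but, after shrinking, in $\scr O(V)$.

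First I would invoke the finite-type hypothesis: after shrinking $U$, pick finitely many sections $t_1,\dots,t_m\in\scr E(U)$ generating $\scr E_U$, so in particular the germs $t_{1,x},\dots,t_{m,x}$ generate $\scr E_x$ over $\scr O_{X,x}$. The fiber hypothesis says exactly that $\scr E_x=\fk m_x\scr E_x+\sum_i\scr O_{X,x}\,s_{i,x}$. Writing each $t_{j,x}$ in this form and re-expressing the $\fk m_x\scr E_x$-part through the generators $t_{k,x}$ yields relations
\begin{align*}
t_{j,x}=\sum_i a_{ij}s_{i,x}+\sum_k c_{kj}t_{k,x}\qquad(a_{ij}\in\scr O_{X,x},\ c_{kj}\in\fk m_x),
\end{align*}
i.e. in matrix form $(I_m-C)\mathbf t=A\mathbf s$ with $C=(c_{kj})$ satisfying $C(x)=0$. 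Multiplying by the adjugate matrix gives $\delta\cdot\mathbf t=\mathrm{adj}(I_m-C)\,A\,\mathbf s$, where $\delta=\det(I_m-C)$ has $\delta(x)=1$ and hence is a unit in $\scr O_{X,x}$; already this shows that $s_{1,x},\dots,s_{n,x}$ generate $\scr E_x$.

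To pass from the single stalk at $x$ to a neighborhood, I would represent the finitely many germs $a_{ij},c_{kj}$ by honest holomorphic sections on some neighborhood of $x$ inside $U$, so that the identities $(I_m-C)\mathbf t=A\mathbf s$ hold as equations of sections there; then shrink to a neighborhood $V$ on which the holomorphic function $\delta$ (with $\delta(x)=1$) is nowhere vanishing, hence invertible in $\scr O(V)$. On $V$ we then have $\mathbf t=\delta^{-1}\,\mathrm{adj}(I_m-C)\,A\,\mathbf s$, so each $t_j|_V$ is an $\scr O(V)$-linear combination of $s_1|_V,\dots,s_n|_V$. Since the restrictions $t_1|_V,\dots,t_m|_V$ still generate $\scr E_V$ (restrictions of generators of $\scr E_U$ generate over the smaller open set, as surjectivity of stalk maps persists), it follows that $s_1|_V,\dots,s_n|_V$ generate $\scr E_V$, which is the assertion.

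The routine but slightly delicate step — and the one genuinely beyond plain commutative algebra — is the spreading-out: only finitely many germ-identities occur, so they admit a common representative on a single neighborhood, and the unit $\delta\in\scr O_{X,x}$ becomes a unit on an honest neighborhood precisely because its value at $x$ is nonzero. The determinant trick itself (forming $\mathrm{adj}(I_m-C)$ and using $C(x)=0$) is the heart of the argument; the rest is bookkeeping about shrinking neighborhoods, so I do not expect any serious obstacle.
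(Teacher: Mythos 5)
Your proof is correct, and it reaches the conclusion by a different algebraic device than the paper's. You run the classical determinant-trick form of Nakayama's lemma: assemble all the relations $t_j=\sum_i a_{ij}s_i+\sum_k c_{kj}t_k$ into a single matrix identity $(I_m-C)\mathbf t=A\mathbf s$ with $C(x)=0$, multiply by the adjugate, and invert $\delta=\det(I_m-C)$ on a neighborhood where it is nonvanishing. The paper instead eliminates redundant generators one at a time: it extends $s_1,\dots,s_n$ to a generating tuple $s_1,\dots,s_N$ of $\scr E_U$, writes $s_N=g_1s_1+\cdots+g_Ns_N$ with $g_{n+1}(x)=\cdots=g_N(x)=0$, inverts the scalar $1-g_N$ near $x$ to express $s_N$ through $s_1,\dots,s_{N-1}$, and iterates $N-n$ times, shrinking the neighborhood at each step. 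The two arguments are equally elementary; yours does all the inversion in one shot (a single shrinking to where $\delta\neq 0$) at the cost of the adjugate bookkeeping, while the paper's avoids matrices entirely at the cost of an induction. Your spreading-out step --- finitely many germ identities hold on a common neighborhood, and a germ that is a unit because its value at $x$ is nonzero becomes a unit on an honest neighborhood --- is exactly the point the paper also relies on when it shrinks $U$ so that $g_1,\dots,g_N,(1-g_N)^{-1}$ are holomorphic, and you have handled it correctly.
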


\begin{proof}
By shrinking $U$, we may extend the list $s_1,\dots,s_n$ to $s_1,\dots,s_N\in\scr E(U)$ (where $N\geq n$) such that they generate $\scr E_U$. If $N=n$ then there is nothing to prove. 

Suppose $N>n$. Since $s_1(x),\dots,s_n(x)$ span $\scr E|x=\scr E_x/\fk m_x\scr E_x$, every element of $\scr E_x$, and in particular $s_N$, can be written as 
\begin{align*}
s_N=a_1s_1+\cdots+a_ns_n+\sigma\qquad \in\scr E_x
\end{align*}
where $a_1,\dots,a_n\in\Cbb$ and $\sigma\in\fk m_x\scr E_x$. Since $s_1,\dots,s_N$ generate the $\scr O_{X,x}$-module $\scr E_x$, we have $\sigma=f_1s_1+\cdots+f_Ns_N$ in $\scr E_x$ where $f_1,\dots,f_N\in\fk m_x$. So
\begin{align*}
s_N=g_1s_1+\cdots+g_Ns_N
\end{align*} 
in $\scr E_x$ where $g_1,\dots,g_N\in\scr O_{X,x}$ and $g_{n+1}(x)=\cdots=g_N(x)=0$. Since $g_N(x)=0$, $1-g_N$ is invertible in $\scr O_{X,x}$. So
\begin{align*}
s_N=(1-g_N)^{-1}\sum_{i=1}^{N-1}g_is_i
\end{align*}
in $\scr E_x$. So, after shrinking $U$ to a smaller neighborhood of $x$  on which $g_1,\dots,g_N,(1-g_N)^{-1}$ are holomorphic, the above equation holds in  $\scr E(U)$. This shows that $s_N$ is an $\scr O(U)$-linear combination of $s_1,\dots,s_{N-1}$. So $s_1,\dots,s_{N-1}$ generate $\scr E_U$. By repeating this argument, we see that $s_1,\dots,s_n$ generated $\scr E_U$ for a smaller $U$.
\end{proof}

\begin{thm}\label{lba1}
Assume that $\scr E$ is of finite type. Then the \textbf{rank function}
\begin{align}
r:X\rightarrow\Nbb,\qquad x\mapsto r(x)=\dim\scr E|x
\end{align}
is upper semicontinuous. (So $r(x)\geq r(y)$ for all $y$ in a neighborhood of $x$.)  Moreover, if the rank function is locally constant, then $\scr E$ is locally free.
\end{thm}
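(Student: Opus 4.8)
\textbf{Proof plan for Theorem \ref{lba1}.}
The plan is to prove the two assertions separately, both by a local argument using Nakayama's lemma, which is the engine behind everything here. For the upper semicontinuity of the rank function $r$, fix $x\in X$ and set $n=r(x)=\dim\scr E|x$. Choose a neighborhood $U$ of $x$ and sections $s_1,\dots,s_n\in\scr E(U)$ whose values $s_1(x),\dots,s_n(x)$ form a basis of $\scr E|x$; this is possible because $\scr E|x$ is finite-dimensional (as $\scr E$ is of finite type) and values of germs at $x$ span it. By Nakayama's lemma there is a neighborhood $V\subset U$ of $x$ such that $s_1|_V,\dots,s_n|_V$ generate $\scr E_V$. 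Then for every $y\in V$ the germs $s_{1,y},\dots,s_{n,y}$ generate $\scr E_y$, so $s_1(y),\dots,s_n(y)$ span the fiber $\scr E|y$, giving $r(y)\leq n=r(x)$. This is exactly upper semicontinuity.

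For the second assertion, assume $r$ is locally constant; I want to show $\scr E$ is locally free. Fix $x$, and shrink to a connected neighborhood $V$ on which $r$ is constant, equal to $n$. By the first part (applied at $x$, then shrinking $V$ further) we may assume $\scr E_V$ is generated by sections $s_1,\dots,s_n\in\scr E(V)$ with $s_1(x),\dots,s_n(x)$ a basis of $\scr E|x$. This gives a surjective $\scr O_V$-module morphism
\begin{align*}
\psi:\Cbb^n\otimes\scr O_V\longrightarrow\scr E_V,\qquad e_i\mapsto s_i.
\end{align*}
It remains to prove $\psi$ is injective, i.e.\ that its kernel $\scr K=\Ker(\psi)$ is zero. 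The key point is a dimension count on fibers: for each $y\in V$ the induced map $\psi:\Cbb^n=(\Cbb^n\otimes\scr O_V)|y\to\scr E|y$ is surjective between spaces of the same dimension $n$, hence an isomorphism. From this I would deduce that the stalk map $\psi_y:\Cbb^n\otimes\scr O_{X,y}\to\scr E_y$ is injective as well, and therefore $\scr K=0$; this makes $\psi$ an isomorphism and exhibits $\scr E_V\simeq\Cbb^n\otimes\scr O_V$.

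The main obstacle is precisely this last implication: fiberwise injectivity of $\psi$ does not, in general, force stalkwise injectivity without using that $\scr O_{X,y}$ is a local Noetherian ring (indeed a regular local ring) and a further application of Nakayama. The clean way I would carry it out: the kernel $\scr K$ of a morphism from a free module of finite rank into a finite-type module is itself of finite type over the coherent ring $\scr O_{X,y}$; pick generators $t_1,\dots,t_m$ of $\scr K_y$, and observe that each $t_j$ maps to $0$ in $\Cbb^n\otimes\scr O_{X,y}\otimes\Cbb=(\Cbb^n\otimes\scr O_V)|y$ because $\psi$ is a fiberwise isomorphism, so $t_j\in\fk m_y(\Cbb^n\otimes\scr O_{X,y})$; writing each $t_j$ with coefficients in $\fk m_y$ and iterating (Nakayama applied to $\scr K_y$, using $\fk m_y\scr K_y=\scr K_y$) gives $\scr K_y=0$. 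Since $y\in V$ is arbitrary, $\scr K=0$ and $\psi$ is an isomorphism, so $\scr E$ is locally free near $x$. I would also remark that, as noted in Remark \ref{lba105}, a locally free $\scr O_X$-module of finite rank is the same as a holomorphic vector bundle, so this indeed says "the fibers $\scr E|x$ assemble into a vector bundle."
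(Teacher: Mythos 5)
Your first half (upper semicontinuity) is correct and is exactly the paper's argument: pick $s_1,\dots,s_n$ with $s_i(x)$ a basis of $\scr E|x$, apply Nakayama to make them generate $\scr E$ near $x$, and conclude $r(y)\leq n$ nearby.

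The second half has the right starting point (a surjection $\psi:\Cbb^n\otimes\scr O_V\to\scr E_V$ that is fiberwise an isomorphism by the dimension count), but the step you yourself flag as the main obstacle is where the argument breaks. From the fact that $\psi$ is injective on the fiber at $y$ you correctly get $\scr K_y\subset\fk m_y\,(\Cbb^n\otimes\scr O_{X,y})$, i.e.\ every element of the kernel has all coordinates in $\fk m_y$. But this does \emph{not} give $\scr K_y=\fk m_y\scr K_y$: writing $t_j=\sum_k g_k u_k$ with $g_k\in\fk m_y$ produces coefficients $u_k$ in the ambient free module, not in $\scr K_y$, so you cannot substitute back and "iterate," and Nakayama applied to $\scr K_y$ has no hypothesis to act on. The implication "$\scr K\subset\fk m\cdot(\text{free module})\Rightarrow\scr K=\fk m\scr K$" is false for general local rings (take $R=\Cbb[\epsilon]/(\epsilon^2)$, $\psi:R\to R/\epsilon R$, so $\scr K=\epsilon R\subset\fk m R$ but $\fk m\scr K=0\neq\scr K$); what saves the theorem over a complex manifold is that $\scr O_{X,y}$ is reduced, and your argument never uses that. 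A correct completion along your lines would need, e.g., to pass to the fraction field of the domain $\scr O_{X,y}$ and show the generic rank is also $n$, which is a genuinely heavier detour (and your appeal to coherence/Noetherianity is already more machinery than the statement needs).

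The paper's route to the second half avoids all of this and is worth internalizing: with $r\equiv n$ on $V$, the values $s_1(y),\dots,s_n(y)$ span the $n$-dimensional fiber $\scr E|y$ and are therefore linearly independent for \emph{every} $y\in V$; so if $f_1s_1+\cdots+f_ns_n=0$ with $f_i\in\scr O(V')$, evaluating in each fiber gives $f_i(y)=0$ for all $y$, hence $f_i=0$. This shows the $s_i$ generate $\scr E_V$ freely. The only analytic input is that a holomorphic function vanishing at every point is zero — which is precisely the reducedness your Nakayama step was silently missing.
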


\begin{proof}
Let $n=r(x)$. Choose $s_1,\dots,s_n\in\scr E(U)$ (where $U\ni x$) such that $s_1(x),\dots,s_n(x)$ form a basis of $\scr E|x$. Then by Nakayama's Lemma, after shrinking $U$, $s_1,\dots,s_n$ generate $\scr E|_U$, and hence span $\scr E|y$ for all $y\in U$. This proves the upper semicontinuity.

Now suppose $r$ is constantly $n$ on $U$. Then, as $s_1(y),\dots,s_n(y)$ span $\scr E|y$, and since $\dim\scr E|y=n$, $s_1(y),\dots,s_n(y)$ are linearly independent. Let us show that $s_1,\dots,s_n$ generate freely $\scr E_U$ by showing that they are $\scr O_U$-linearly independent. Choose any open $V\subset U$ and $f_1,\dots,f_n\in\scr O(V)$ such that $f_1s_1+\cdots+f_ns_n=0$. Then for each $y\in V$, $\sum_{i=1}^n f(y)s_n(y)$ equals $0$ in $\scr E|y$. So $f_1(y)=\cdots=f_n(y)=0$ by the linear independence of  $s_1(y),\dots,s_n(y)$. So $f_1=\cdots=f_n=0$.
\end{proof}

\printindex

\noindent {\small \sc Yau Mathematical Sciences Center, Tsinghua University, Beijing, China.}

\noindent {\textit{E-mail}}: binguimath@gmail.com\qquad bingui@tsinghua.edu.cn
\end{document}